\documentclass[a4paper,11pt,twoside]{article}
\usepackage{mathrsfs}
\usepackage{amssymb}
\usepackage{amsmath}
\usepackage{mathrsfs}
\usepackage{amsthm}
\usepackage{amsfonts}
\usepackage{color}
\usepackage{latexsym}
\usepackage{indentfirst}

\usepackage{amscd}
\usepackage{bbm}
\usepackage{graphicx}
\usepackage[all]{xy}
\usepackage[mathscr]{eucal}
\usepackage{subfigure}

\newcommand{\K}{{\mathbb K}}

\newcommand{\N}{{\mathbb N}}
\newcommand{\C}{{\mathbb C}}
\newcommand{\R}{{\mathbb R}}

\allowdisplaybreaks
\newtheorem{theorem}{Theorem}[section]

\newtheorem{corollary}[theorem]{Corollary}

\newtheorem{lemma}[theorem]{Lemma}

\newtheorem{proposition}[theorem]{Proposition}
\newtheorem{claim}[theorem]{Claim}

\newtheorem{assumption}[theorem]{Assumption}

\numberwithin{equation}{section}

\begin{document}

\title{\bf\Large
	Bifurcations of periodic and antiperiodic orbits near an equilibrium in
 autonomous differential delay systems\\
  with one or two delays
	\footnotetext{\hspace{-0.35cm} 2020
{\it Mathematics Subject Classification}.
Primary 34K18, 34K20.
\endgraf
{\it Key words and phrases.}
Delay or advanced differential equations,
bifurcation, periodic or antiperiodic solutions, Maslov index.
}}
\date{}
\author{Guangcun Lu\footnote{
E-mail: \texttt{gclu@bnu.edu.cn}/{July 15, 2026(first), July 21, 2026(revised)}.}}

\maketitle

\vspace{-0.7cm}

\begin{center}
\begin{minipage}{13cm}
{\small {\bf Abstract}\quad
We investigate bifurcations of periodic and antiperiodic solutions near equilibria for four classes of potential differential-delay equations involving one or two delays. By reformulating each system as a (generalized) Hamiltonian system,
and applying the Hamiltonian bifurcation theory recently developed by the author
we obtain  bifurcation results of both Fadell--Rabinowitz and Rabinowitz type. The analysis relies on the computation of Maslov--type indices for fundamental solutions of the associated linear Hamiltonian systems.
}
\end{minipage}
\end{center}

\vspace{0.2cm}
	
\tableofcontents

\vspace{0.2cm}

\section{Introduction}\label{sec:Intro0}
\setcounter{equation}{0}
Following the pioneering work of Nussbaum \cite{Nu75} and
Kaplan-Yorke \cite{KaYo74}, bifurcations of periodic solutions in differential delay equations (DDEs) have been extensively studied by various methods; see, e.g.,
\cite{ArHb90, Ben24, BiHanWu03,
ChowM78,ChowWa88, DiVLW, Do89, ElSM05, Er09, Fa06, Guo26, Hale, HaV93, HaW04, HanM05, HanBi04, HbQe06, Lop, LSBK23, ThWal05, Wal78, Wal83, Wal06, Wal}
 and the references therein.

However, bifurcation results of the Fadell--Rabinowitz and Rabinowitz type remain scarce. As a continuation of our variational bifurcation research program started in \cite{Lu8}, this paper aims to fill that gap by investigating these types of bifurcations near the trivial equilibrium for the following four types of potential differential delay systems with one or two delays.

The first class consists of the following differential delay systems with
a parameter $\lambda$ in a real interval $\Lambda$  and
 a fixed delay $\tau>0$:
\begin{equation}\label{e:Delay6Auto13}
\dot{x}(t) = \nabla_2 V(\lambda, x(t - \tau))\quad\text{and}\quad
x(t + 2\tau) = -x(t)\quad \forall t,
\end{equation}
and
\begin{equation}\label{e:2-Delay6Auto13}
\begin{cases}
\dot{x}(t) = \nabla_2 V(\lambda, x(t - \tau)) + \nabla_2 V(\lambda, x(t - 2\tau)), \\
x(t + 3\tau) = -x(t)\quad \forall t.
\end{cases}
\end{equation}
 Regarding the potential $V$, we make the following assumption.
\begin{assumption}\label{ass:BasiAss1Delay1}
{\rm	Assume $V: \Lambda \times \mathbb{R}^n \to \mathbb{R}$ is continuous and satisfies:
	\begin{itemize}
		\item[\rm (a)] For each $\lambda \in \Lambda$, the map $V(\lambda, \cdot): \mathbb{R}^n \to \mathbb{R}$ is even and of class $C^2$. Consequently, ${0} \in \mathbb{R}^n$ is an equilibrium of \eqref{e:Delay6Auto13}.
		\item[\rm (b)] The gradient $\nabla_2 V(\lambda, x)$ and the Hessian $\nabla^2_2 V(\lambda, x)$ with respect to $x$ depend continuously on $(\lambda, x)$.
	\end{itemize}}
\end{assumption}

The second class is the following differential delay system:
\begin{equation}\label{e:Delay2Auto}
\dot{x}( t ) = J_n\nabla_2 G (\lambda,  x ( t - \tau ) )\quad\text{and}\quad
		x( t + 2\tau ) =  x( t ), \quad \forall t,
\end{equation}
where $\lambda$ is a parameter varying in a real interval
$\Lambda$, and $\tau>0$. The function $G$
is assumed to satisfy the following.

\begin{assumption}
	\label{ass:bif-per2Delay2Auto}
Let  $G \in C^0(\Lambda \times \mathbb{R}^{2n}, \mathbb{R})$
 be such that for every $\lambda \in \Lambda$, the function $G(\lambda, \cdot): \mathbb{R}^{2n} \to \mathbb{R}$ is of class $C^2$,
 and the gradient $\nabla_2G(\lambda, x)$ and the Hessian $\nabla^2_2G(\lambda, x)$ of
$G(\lambda, \cdot)$ at $x\in\mathbb{R}^{2n}$ depend continuously on $(\lambda, x)$ and satisfy
\begin{equation}\label{e:V-function1G}
\nabla_2G(\lambda, {0})=0\quad\hbox{and}\quad
\nabla^2_2G\left(\lambda, {0}\right)=
\left( \begin{array} { c c }
 	D(\lambda) & -E(\lambda) \\
 	E(\lambda) &D(\lambda)
 \end{array} \right),
\end{equation}
where for each $\lambda\in\Lambda$, $D(\lambda)$ is an $n\times n$
symmetric matrix and $E(\lambda)$ is an $n\times n$ skew-symmetric
matrix.
\end{assumption}

As noted just before Claim~\ref{cl:crm1},
the second condition in \eqref{e:V-function1G}
follows from the first one and from the assumption that
$G(\lambda,x_1,x_2)=G(\lambda,-x_2,x_1)$
for all $(\lambda,x_1,x_2)\in\Lambda\times\mathbb R^{n}\times\mathbb R^{n}$.


The third class consists of the following differential delay systems with a parameter $\lambda$ in a real interval $\Lambda$ and a fixed $\tau>0$:
\begin{equation}\label{e:crm1}
\dot{x}(t)=-\nabla_3 H(\lambda, x(t),x(t-\tau))\quad\text{and}\quad
x(t+2\tau)=-x(t)\quad\forall t,
\end{equation}
and the advanced counterpart
\begin{equation}\label{e:crm1Ad}
\dot{x}(t)=-\nabla_3 H(\lambda, x(t),x(t+\tau))\quad\text{and}\quad
x(t+2\tau)=-x(t)\quad\forall t.
\end{equation}
We impose the following assumption on the function $H$.

\begin{assumption}\label{ass:Crm1}
	Assume that $H: \Lambda \times \mathbb{R}^{n} \times \mathbb{R}^{n} \to \mathbb{R}$, $(\lambda,x,y) \mapsto H(\lambda,x,y)$, is continuous and satisfies:
	\begin{itemize}
		\item[\rm (i)] For each fixed $\lambda \in \Lambda$, the map $H(\lambda, \cdot): \mathbb{R}^n \times \mathbb{R}^n \to \mathbb{R}$ is of class $C^2$,
and the Euclidean gradient and the Hessian of  $H(\lambda, z)$ with respect to
$z\in{\R}^{n}\times \mathbb{R}^n$, $\nabla_2H(\lambda, z)$ and $\nabla^2_3H(\lambda, z)$,
are continuous in  $(\lambda, z)\in\Lambda\times \R\times\mathbb{R}^{n}\times \mathbb{R}^n$.
		\item[\rm (ii)] $H(\lambda, x, y) = H(\lambda, y, -x)$ for all $(\lambda, x, y)$.
	\end{itemize}
\end{assumption}

The fourth class is given by the following distributed delay differential
system with a parameter $\lambda$ in a real interval $\Lambda$ and a fixed $\tau>0$:
\begin{equation}\label{e:Bifu-distributedDelay1}
\left\{\begin{array}{ll}
\dot{x}(t) = -\nabla_2 F\left(\lambda,  \displaystyle\int_0^\tau
\nabla_2 G(\lambda, x(t-s)) \,\mathrm{d}s\right), & x(t) \in \mathbb{R}^n,
\\
x(t+\tau) = -x(t) \quad \forall t,
\end{array}\right.
\end{equation}
where $F: \Lambda  \times \mathbb{R}^n \to \mathbb{R}$ and
$G: \Lambda \times \mathbb{R}^{n} \to \mathbb{R}$ are continuous functions satisfying the following:
\begin{assumption}\label{ass:Distri1Delay1}
\begin{itemize}
\item[\rm (i)] For each fixed $\lambda \in \Lambda$, the function
$F(\lambda,\cdot): \mathbb{R}^n \to \mathbb{R}$ is  $C^2$,
and the Euclidean gradient and the Hessian of  $F(\lambda, x)$ with respect to
$x\in{\R}^{n}$, $\nabla_2F(\lambda, x)$ and $\nabla^2_2F(\lambda, x)$,
are continuous in  $(\lambda, x)\in\Lambda\times\mathbb{R}^{n}$.

\item[\rm (ii)] For each fixed $\lambda\in \Lambda$, the function
$G(\lambda,\cdot): \mathbb{R}^n \to \mathbb{R}$ is even and  $C^2$,
and the Euclidean gradient and the Hessian of  $G(\lambda, x)$ with respect to
$x\in{\R}^{n}$, $\nabla_2G(\lambda, x)$ and $\nabla^2_2G(\lambda, x)$,
are continuous in  $(\lambda, x)\in\Lambda\times\mathbb{R}^{n}$.

\item[\rm (iii)] For each fixed $\lambda\in \Lambda$, $\nabla_2F(\lambda, 0)=\nabla_2G(\lambda, 0)=0$.
 $\nabla^2_2F(\lambda, 0)=\nabla^2_2G(\lambda, 0)=0$.
\end{itemize}
\end{assumption}

Two solutions $x$ and $y$ of
(\ref{e:Delay6Auto13}), (\ref{e:2-Delay6Auto13}),
 (\ref{e:Delay2Auto}),  (\ref{e:crm1}),  (\ref{e:crm1Ad}), or (\ref{e:Bifu-distributedDelay1}) with a fixed $\lambda$
are called \textsf{geometrically distinct} (or
\textsf{$\mathbb{R}$-distinct})
if and only if there is no $s\in\mathbb{R}$ such that $x= s\cdot y$,
where the  $\mathbb{R}$-action
 is given by $s\cdot y(t)=y(t+s)$ for all $t\in\mathbb{R}$.
  The orbit of $x$ under this action is denoted by
 $\mathbb{R}\cdot x:=\{s\cdot x\,|\, s\in\mathbb{R}\}$.

Let (1.$\ast$) denote one of (\ref{e:Delay6Auto13}), (\ref{e:2-Delay6Auto13}),
 (\ref{e:Delay2Auto}),  (\ref{e:crm1}),  (\ref{e:crm1Ad}), or (\ref{e:Bifu-distributedDelay1}), and let $0$ denote
 the trivial solution of (1.$\ast$).
  For a $\mu\in\Lambda$,  we call $(\mu, 0)$ a \textsf{bifurcation point} of
  (1.$\ast$)   if there exists a sequence $\{\lambda_k\}^\infty_{k=1}$
  in $\Lambda$ converging to $\mu$ and a nonzero solution $x^k$ of
  (1.$\ast$) with $\lambda=\lambda_k$ for each $k$, such that
  $\|x^k|_{[0,3\tau]}\|_{C^1}\to 0$ as $k\to \infty$.

 Taking $\tau=1$,
${F}(\lambda, x)=\lambda\cos x$ and
${G}(\lambda, x)\equiv\frac{1}{2}x^2$ in
(\ref{e:Bifu-distributedDelay1}) yields
\begin{equation}\label{e:distributedDelay8}
\left\{\begin{array}{ll}
\dot{x}(t) = \lambda\sin\left(\displaystyle\int_0^1
x(t-s) \,\mathrm{d}s\right), & x(t) \in \mathbb{R},
\\
x(t+1) = -x(t) \quad \forall t.
\end{array}\right.
\end{equation}
As an application of our Theorem~\ref{th:DistribifDelay1},
we obtain:
(i) if $(\mu,0)$ is a bifurcation point of
(\ref{e:distributedDelay8}), then $\mu\in\{\frac{(2k-1)^2\pi^2}{{2}}\,|\,
 k\in\mathbb{N}\}$;
(ii) conversely,
for any $\mu\in\{\frac{(2k-1)^2\pi^2}{{2}}\,|\,
 k\in\mathbb{N}\}$,  either the distributed-delay differential equation
(\ref{e:distributedDelay8})
with $\lambda=\mu$ admits a sequence of  distinct nonzero solutions
	$\{x^l\}_{l=1}^{\infty}$  such that $x^l\to 0$ in $C^1_{\rm loc}(\mathbb{R},\mathbb{R})$
	as $l\to\infty$, or
there exist left and right  neighborhoods $\Lambda^-$ and $\Lambda^+$ of $\mu$ in $\mathbb{R}$
	and nonnegative integers $n^+$ and $n^-$ satisfying $n^++n^-\ge 1$, such that for $\lambda\in\Lambda^-\setminus\{\mu\}$ (resp. $\lambda\in\Lambda^+\setminus\{\mu\}$),
	(\ref{e:distributedDelay8}) with parameter  $\lambda$  has at least $n^-$ (resp. $n^+$) $\mathbb{R}$-distinct  nontrivial solutions
	$x_\lambda^i$ ($i=1,\dots,n^-$ (resp. $n^+$))
	which converge to zero in $C^1_{\rm loc}(\mathbb{R}, \mathbb{R})$
	as $\lambda\to\mu$.

   Nakata \cite{Na21}  studied the existence of solutions
  to (\ref{e:distributedDelay8}). In \cite[Theorem~3.1]{Na21}
  it is shown that for each $\lambda>\frac{\pi^2}{2}$,
  (\ref{e:distributedDelay8}) has a unique solution $x_\lambda$ such that
  $x_\lambda(t)\ne 0$ for $0\le t<\frac{1}{2}$ and $x_\lambda(\frac{1}{2})=0$;
  moreover, $x_\lambda$ is expressed in terms of
Jacobi elliptic functions. Clearly,
$x_\mu$ cannot be our solutions $x_\mu^l$ for large $l$;
and similarly, $x_\lambda$ cannot be our solutions $x_\lambda^i$
for $\lambda$ near $\mu$ in general.\\

\noindent{\textbf{Methods}}.
This work is methodologically based on the observation by Kaplan and Yorke  (1974) \cite{KaYo74} that the problem of finding periodic solutions to the differential delay equations (\ref{e:Delay6Auto13})
 and (\ref{e:2-Delay6Auto13})
 can be mapped to the problem of finding periodic solutions with symmetric structures for an associated Hamiltonian system.
A similar transformation for problem (\ref{e:Delay2Auto})
[resp. (\ref{e:Bifu-distributedDelay1})] is discussed in \cite{Liu12}
(resp. \cite{Na20, Na21, LiuWZ25}).
Alternative bifurcation results of Fadell--Rabinowitz and Rabinowitz type for such systems were developed by the author in \cite{Lu11}. By applying these results---more precisely, Theorems~1.5, 1.8 and~1.14 of \cite{Lu11} and the corollaries thereof---we prove the main theorems, namely, Theorems~\ref{th:bif-per3Delay}, \ref{th:bif-per3DelayII}, \ref{th:bif-per3+3}, \ref{th:bif-per3DelayCrm}, \ref{th:DistribifDelay1}, and \ref{th:DistribifDelay2}.\\

\noindent{\textbf{Organisation of the paper}}.
Section~\ref{sec:delay1} investigates problem~\eqref{e:Delay6Auto13}. The main results are Theorems~\ref{th:bif-per3Delay} and~\ref{th:bif-per2Delay++}; Corollaries~\ref{cor:bif-per4Delay} and~\ref{cor:bif-per3DelayC} follow from Theorem~\ref{th:bif-per3Delay}. The proof of Theorem~\ref{th:bif-per3Delay} relies on Theorem~\ref{th:PIndex1}.

Section~\ref{sec:delay2} studies problem~\eqref{e:2-Delay6Auto13}. The main results are Theorems~\ref{th:bif-per3DelayII} and~\ref{th:2-bif-per2Delay++}, with Corollaries~\ref{cor:bif-per4DelayIIB} and~\ref{cor:bif-per3DelayIIC} derived from Theorem~\ref{th:bif-per3DelayII}. Its proof makes essential use of Theorem~\ref{th:PIndex3}.

Section~\ref{sec:delay3} concerns problem~\eqref{e:Delay2Auto}. Its main results are Theorems~\ref{th:bif-per3+3} and~\ref{th:bif-per2Delay++G}; Corollaries~\ref{cor:bif-per3+3} and~\ref{cor:bif-per3+4} are consequences of the former. A key tool in the proof of Theorem~\ref{th:bif-per3+3} is Theorem~\ref{th:PIndex4}.

Section~\ref{sec:delay4} addresses problems~\eqref{e:crm1} and~\eqref{e:crm1Ad}. The principal results for~\eqref{e:crm1} are Theorems~\ref{th:bif-per3DelayCrm} and~\ref{cor:crm5}; Corollaries~\ref{cor:bif-per3+3Crm} and~\ref{cor:bif-per3+4Cri} follow from Theorem~\ref{th:bif-per3DelayCrm}. Theorem~\ref{th:PIndex2} plays a central role in proving Theorem~\ref{th:bif-per3+3}. Immediately after Claim~\ref{cl:crm2}, we explain how to adapt the arguments so that the conclusions of Theorem~\ref{th:bif-per3DelayCrm} also hold for problem~\eqref{e:crm1Ad}.

Section~\ref{sec:delay5} studies problem~(\ref{e:Bifu-distributedDelay1}).
The main results are Theorems~\ref{th:DistribifDelay1},
\ref{th:DistribifDelay2},
and their consequence, Corollary~\ref{cor:DistribifDelay3}.

 Appendix~\ref{app:Maslov} provides the proofs of Theorems~\ref{th:PIndex1}, \ref{th:PIndex2}, \ref{th:PIndex3}, and~\ref{th:PIndex4}. These theorems compute Maslov-type indices for fundamental matrix solutions of several classes of linear Hamiltonian systems; their proofs are technical.\\

\noindent{\textbf{Notation and conventions}}. Let  $[a]$  denote the greatest integer less than or equal to $a\in\mathbb{R}$, and let
$\lceil a \rceil$ (the ceiling of $a$) be the least integer greater than or equal to $a$, i.e.,
$\lceil a \rceil=\min\{k\in\mathbb{Z}\,|\,k\ge a\}$. All vectors in $\R^m$ will be understood as column vectors.
Let $\mathbb{K}=\mathbb{R}$ or $\mathbb{C}$, and let $\mathbb{K}^{m\times m}$ denote the set of
all $m\times m$ matrices with entries in the field $\mathbb{K}$.
For $M\in\K^{m\times m}$ let $\sigma(M)$ be the set of all eigenvalues of $M$,
let $M^\top$ be the transpose of $M$, ${\rm Ker}(M)=\{x\in\K^m\,|\,Mx=0\}$ and $\exp M=e^M=\sum^\infty_{l=0}\frac{1}{l!}M^l$.
We denote by $(\cdot,\cdot)_{\mathbb{R}^m}$ the standard Euclidean inner product on
$\mathbb{R}^m$, and by $|\cdot|$ the corresponding norm.
Let $\mathcal{L}_s(\mathbb{R}^{m})$ be the set of all real symmetric
matrixes of order $m$.  $J_n$ denotes the standard complex structureon $\mathbb{R}^{2n}$ is given by (\ref{e:standcompl}).

%

\section{Bifurcation near the trivial equilibrium of system (\ref{e:Delay6Auto13})}\label{sec:delay1}



The following result furnishes a clear bifurcation diagram for system (\ref{e:Delay6Auto13}).

\begin{theorem}\label{th:bif-per3Delay}
Under Assumption~\ref{ass:BasiAss1Delay1},
let  $e_1(\lambda)\le e_2(\lambda)\le\cdots\le e_n(\lambda)$ be all eigenvalues of  $\nabla^2_2V\left(\lambda, 0\right)$.
Given a delay parameter $\tau>0$, define intervals of fixed length $2\pi/\tau$ as
$$
I_j := \left( \frac{(4j - 1)\pi}{2\tau}, \frac{(4j + 3)\pi}{2\tau} \right], \quad j \in \mathbb{Z},
$$
which are contiguous and non-overlapping, each of length $\frac{2\pi}{\tau}$.
Consequently, for each $e_k(\lambda)$, there exists a unique integer $j_k(\lambda) \in \mathbb{Z}$ such that
 \begin{equation}\label{e:uniqueInteger}
e_k(\lambda) \in I_{j_k(\lambda)} = \left( \frac{(4j_k(\lambda) - 1)\pi}{2\tau}, \frac{(4j_k(\lambda) + 3)\pi}{2\tau} \right].
\end{equation}
\begin{enumerate}
\item[\rm (I)]{\rm (\textsf{Necessary condition}):}
If $(\mu,0)$ with $\mu\in\Lambda$ is a bifurcation point of (\ref{e:Delay6Auto13}),
then $e_l(\mu)\in\frac{(4\mathbb{Z}+3)\pi}{2\tau}$ for some $l$.

\item[\rm (II)]{\rm (\textsf{Sufficient condition}):}
Let $\mu$ be an interior point of $\Lambda$. Then
$(\mu,0)$ is a bifurcation point of (\ref{e:Delay6Auto13}),
 provided that  there exists a partition
  $\{i_1,\dots,i_k\}\cup\{i_{k+1},\dots,i_n\}$  of $\{1,\dots,n\}$
  with $1 \le k \le n$, such that
 \begin{itemize}
\item[\rm (A)] $e_{i_l}(\mu)=\frac{(4j_{i_l}(\mu)+3)\pi}{2\tau}$ for  $l=1,\dots,k$,
and $e_{i_l}(\mu)\notin\frac{(4\mathbb{Z}+3)\pi}{2\tau}$ for $l=k+1,\dots,n$;

\item[\rm (B)] there exist two sequences $(\lambda_m^\pm)_m\subset
\Lambda$  converging to $\mu$ such that
$e_{i_l}(\lambda_m^-)< e_{i_l}(\mu)<e_{i_l}(\lambda_m^+)$
for all $m\in\mathbb{N}$ and all $l=1,\dots,k$.
\end{itemize}
%

\item[\rm (III)]{\rm (\textsf{Alternative bifurcations of Fadell-Rabinowitz type and of Rabinowitz type}):}
Let the assumptions of (II) hold, with condition (B) strengthened as follows:
\begin{itemize}
\item[\rm (SB)] For some $\epsilon>0$, one of the following two conditions holds:
\begin{itemize}
\item[\rm (SB.1)] $e_{i_l}(\lambda)< e_{i_l}(\mu)<e_{i_l}(\lambda')$
for all $(\lambda, \lambda')\in (\mu-\epsilon, \mu)\times (\mu, \mu+\epsilon)$ and
 $l=1,\dots,k$.
\item[\rm (SB.2)]
$e_{i_l}(\lambda)> e_{i_l}(\mu)>e_{i_l}(\lambda')$
for all $(\lambda, \lambda')\in (\mu-\epsilon, \mu)\times (\mu, \mu+\epsilon)$ and
 $l=1,\dots,k$.
 \end{itemize}
 \end{itemize}
   Then for the problem (\ref{e:Delay6Auto13}),
     there exist two possible alternatives:
\begin{itemize}
\item[\rm (i)] The problem (\ref{e:Delay6Auto13})  with $\lambda=\mu$ admits a sequence of
nonzero, $\R$-distinct solutions
$\{x^l\}^{\infty}_{l=1}$  such that $x^l\to 0$ in $C^1_{\rm loc}(\mathbb{R},\mathbb{R}^n)$
as $l\to\infty$.
\item[\rm (ii)] There exist left and right  neighborhoods $\Lambda^-$ and $\Lambda^+$ of $\mu$ in $\Lambda$
and nonnegative integers $n^+$ and $n^-\ge 0$ satisfying $n^++n^-\ge k$, such that
for $\lambda\in\Lambda^-\setminus\{\mu\}$ (resp. $\lambda\in\Lambda^+\setminus\{\mu\}$),
(\ref{e:Delay6Auto13}) with parameter  $\lambda$  has at least $n^-$ (resp. $n^+$) $\R$-distinct
 solutions $x_\lambda^i\ne 0$ ($i=1,\dots,n^-$ (resp. $n^+$))
which converge to zero in $C^1_{\rm loc}(\mathbb{R}, \mathbb{R}^n)$
as $\lambda\to\mu$.
\end{itemize}
Moreover, if $n>1$ and $k>1$, then at least one of (i), (iii), and (iv)  holds,
where (i) is as stated previously, and
\begin{itemize}
\item[\rm (iii)]  For every $\lambda\in\Lambda\setminus\{\mu\}$ near $\mu$, there is a
 nonzero solution ${x}_\lambda$  of (\ref{e:Delay6Auto13})
 with parameter $\lambda$, such that  $x_\lambda\to 0$ in $C^1_{\rm loc}(\mathbb{R},\mathbb{R}^n)$
as $\lambda\to\mu$.

\item[\rm (iv)] For a given $\varepsilon > 0$, there exists a one-sided neighbourhood $\Lambda^0$ of $\mu$ in $\Lambda$ such that, for any $\lambda \in \Lambda^0 \setminus \{\mu\}$, problem
    (\ref{e:Delay6Auto13})  with parameter $\lambda$ has either
    \begin{itemize}
    	\item[$\bullet$]
     infinitely many $\mathbb{R}$-distinct nonzero solutions $\{x_\lambda^l\}_{l=1}^\infty$ such that $\|x_\lambda^l|_{[0,2\tau]}\|_{C^1} < \varepsilon$ for all $l \in \mathbb{N}$,
     or

   \item[$\bullet$]  at least two $\mathbb{R}$-distinct nonzero solutions $\hat{x}_\lambda^1$ and $\hat{x}_\lambda^2$ satisfying the following inequalities: $\|\hat{x}_\lambda^i|_{[0,2\tau]}\|_{C^1} < \varepsilon$ for $i=1,2$, and
\begin{align*}
        & \bigl(\hat{x}_\lambda^1(0),  \hat{x}_\lambda^1(\tau) \bigr)_{\mathbb{R}^{n}}
          - \int_0^{\tau} \bigl( \dot{\hat{x}}_\lambda^1(t),  \hat{x}_\lambda^1(t-\tau) \bigr)_{\mathbb{R}^{n}} \, dt
          + \int_{0}^{2\tau} V\bigl(\lambda, \hat{x}_\lambda^1(t)\bigr) \, dt \\
        \ne \; &
        \bigl(\hat{x}_\lambda^2(0),  \hat{x}_\lambda^2(\tau) \bigr)_{\mathbb{R}^{n}}
          - \int_0^{\tau} \bigl( \dot{\hat{x}}_\lambda^2(t),  \hat{x}_\lambda^2(t-\tau) \bigr)_{\mathbb{R}^{n}} \, dt
          + \int_{0}^{2\tau} V\bigl(\lambda, \hat{x}_\lambda^2(t)\bigr) \, dt.
    \end{align*}
    \end{itemize}
\end{itemize}
\end{enumerate}
\end{theorem}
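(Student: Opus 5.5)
The plan is to use the Kaplan--Yorke reduction to turn (\ref{e:Delay6Auto13}) into a Hamiltonian system with a symmetry, apply the abstract bifurcation theorems of \cite{Lu11} (Theorems~1.5, 1.8 and~1.14 and their corollaries), and obtain the spectral conditions from the Maslov-type index computation of Theorem~\ref{th:PIndex1}.

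\textbf{Step 1: reduction to a Hamiltonian system.} Given a solution $x$ of (\ref{e:Delay6Auto13}), set $z(t)=(x(t),x(t-\tau))^\top\in\mathbb{R}^{2n}$. Since $x(t-2\tau)=-x(t)$ and $\nabla_2V(\lambda,\cdot)$ is odd (because $V(\lambda,\cdot)$ is even), we get
\[
\dot z=J_n\nabla_z\mathcal{H}(\lambda,z),\qquad \mathcal{H}(\lambda,z_1,z_2)=-V(\lambda,z_1)-V(\lambda,z_2),
\]
up to the sign convention for $J_n$ in (\ref{e:standcompl}). The solution also satisfies the twisted boundary condition $z(t+\tau)=Pz(t)$ with $P(z_1,z_2)=(-z_2,z_1)$, and $P$ is symplectic. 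Conversely, any solution $z$ of this system with $z(\tau)=Pz(0)$ has $z_2(t)=z_1(t-\tau)$, so $z_1$ solves (\ref{e:Delay6Auto13}). This correspondence preserves $C^1_{\rm loc}$-smallness and the $\mathbb{R}$-action. The $\tau$-periodic problem is then exactly the setting of \cite{Lu11} with a symplectic boundary matrix. The action functional there, pulled back through this correspondence, should be the functional in (iv); checking this will fix the sign choices.

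\textbf{Step 2: linearization and indices.} At $z=0$ the linearized system has the constant Hamiltonian matrix $-\mathrm{diag}(B(\lambda),B(\lambda))$ with $B(\lambda)=\nabla_2^2V(\lambda,0)$. Diagonalize $B(\lambda)$ orthogonally; the problem then splits into $n$ planar problems $\dot u=-e\,Jv$ with the twisted condition $u(\tau)=Pu(0)$. A nonzero solution exists iff $e^{-e\tau J}$ has eigenvalue $1$ after composing with $P^{-1}$. Since $P$ is rotation by $\pi/2$, this reduces to $e\tau\equiv -\pi/2 \pmod{2\pi}$, i.e.\ $e\in\frac{(4\mathbb{Z}+3)\pi}{2\tau}$. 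This gives the nullity. The $P$-Maslov-type index of each block is computed by Theorem~\ref{th:PIndex1}. I expect that on $I_j$ it equals an explicit affine function of $j$, and that it jumps exactly by the nullity (namely $1$) when $e$ crosses the right endpoint $\frac{(4j+3)\pi}{2\tau}$. This is why the intervals are half-open on the right.

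\textbf{Step 3: applying \cite{Lu11}.} Part (I) follows from the necessary condition there: bifurcation forces the nullity at $\mu$ to be positive, which happens iff some $e_l(\mu)\in\frac{(4\mathbb{Z}+3)\pi}{2\tau}$. For (II), condition (A) and continuity of the eigenvalues give that, for $\lambda$ near $\mu$, the index at $\lambda_m^-$ and at $\lambda_m^+$ differ by $k$. The indices from $i_{k+1},\dots,i_n$ are locally constant, while each $e_{i_l}$ with $l\le k$ crosses its critical value. Since $\nu(\mu)=k$, the index at $\lambda_m^+$ equals the index at $\mu$ plus $k$, and the index at $\lambda_m^-$ equals the index at $\mu$. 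The sufficient criterion of \cite{Lu11} then yields bifurcation. For (III), (SB) gives the same jump on full one-sided neighbourhoods, which is the hypothesis of the Fadell--Rabinowitz/Rabinowitz alternative theorem, with $n^++n^-\ge k$. When $n>1$ and $k>1$, the refined three-way alternative of \cite{Lu11} gives (i), (iii) or (iv). In (iv), the two distinct critical values become the displayed inequality of actions, once the functional has been translated back via $z\mapsto x$. Geometrically distinct $z$'s give $\mathbb{R}$-distinct $x$'s.

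\textbf{Main obstacle.} The delicate part is Step 2: correctly identifying the index jump from Theorem~\ref{th:PIndex1} as $e$ crosses the resonance values, including its direction, so that it matches the inequalities in (B) and (SB). I also need to verify that the boundary condition $z(\tau)=Pz(0)$, and the regularity required by \cite{Lu11}, are compatible with Assumption~\ref{ass:BasiAss1Delay1}, which gives only continuity in $\lambda$.
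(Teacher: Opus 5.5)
Your reduction is the paper's: your system $\dot z=J_n\nabla(-V(z_1)-V(z_2))$, $z(t+\tau)=J_nz(t)$, is exactly (\ref{e:Delay6}), since $A_{2,n}\nabla H=J_n\nabla(-H)$ and $T_{2,n}^{-1}=J_n$. The overall plan via \cite[Theorems~1.5, 1.14]{Lu11} is also the paper's, but Step~2 contains a counting error that the conclusions depend on. At a resonant $e\in\frac{(4\mathbb{Z}+3)\pi}{2\tau}$ the planar matrix $R(-e\tau)-J_1$ vanishes identically. So each resonant eigenvalue contributes $2$ to the nullity, not $1$, and (A) gives $\nu(\mu)=2k$. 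Correspondingly, the index jumps by $2$ per crossing eigenvalue, $2k$ in total. This matters in two places. First, \cite[Theorem~1.14]{Lu11} only yields $n^++n^-\ge\nu(\mu)/2$, which is how the paper gets $k$; with your value $\nu(\mu)=k$ you would only get $k/2$. Second, the refined alternative (i)/(iii)/(iv) requires $\nu(\mu)\ge 3$, which your count fails for $k=2$. With the correct $\nu(\mu)=2k$ one has $\nu(\mu)\ge 4$ whenever $k>1$, which is why the statement assumes $k>1$.

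Theorem~\ref{th:PIndex1} also does not compute the index of your system. It concerns $\exp(tJ_n\,{\rm diag}(B,B))$ relative to $-J_n$, whereas yours is $\exp(-tJ_n\,{\rm diag}(B,B))$ relative to $J_n$. The two are related by the swap $S(z_1,z_2)=(z_2,z_1)$. Since $SJ_nS=-J_n$, $S$ turns your problem into $\dot z=J_n\nabla(V(z_1)+V(z_2))$, $z(t+\tau)=-J_nz(t)$, which is the paper's (\ref{e:Delay6Auto3}). But $S$ is anti-symplectic, so Dong's index is not preserved. For your formulation, Lemma~\ref{lem:Dong06}(ii) with $\theta=\pi/2$ and $c=-e$ gives, per block, $i=-2m-1$ for $e\tau\in[(4m-1)\pi/2,(4m+3)\pi/2)$. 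This index decreases in $e$, and the resonant value lies in the left-closed interval. Your predictions (upward jump, right-closed $I_j$) therefore hold only after the swap. There Theorem~\ref{th:PIndex1} with $C=0$ gives $\nu=2\sharp\{l\mid e_l\in\frac{(4\mathbb{Z}+3)\pi}{2\tau}\}$ and $i=2\sum_l j_l(\lambda)+[n/2]$. Either perform the swap as the paper does, or compute your index via Theorem~\ref{th:PIndex2} with $B$ replaced by $-B$. Condition (b) of \cite[Theorem~1.14]{Lu11} only needs $\nu(\lambda)=0$ on a punctured neighbourhood, $i(\lambda)=i(\mu)$ on one side and $i(\lambda)=i(\mu)+\nu(\mu)$ on the other, so either orientation works once the numbers are right. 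Finally, record hypothesis (a) of that theorem, ${\rm Ker}(M-I_{2n})=\{0\}$; it holds because $M=\pm J_n$ has eigenvalues $\pm\sqrt{-1}$.
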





If $n=1$, since
$V(\lambda,x)=\int^x_0\nabla_2V(\lambda, y)dy$,
Theorem~\ref{th:bif-per3Delay} yields: 

\begin{corollary}\label{cor:bif-per4Delay}
Let $\Lambda \subset \mathbb{R}$ be an interval, and $V \in C(\Lambda \times \mathbb{R}, \mathbb{R})$ satisfy
 the following conditions:
  \begin{enumerate}
\item[\rm (C)] For each $\lambda \in \Lambda$,  $V(\lambda, \cdot): \mathbb{R}\to \mathbb{R}$ is even and $C^2$,
which implies $\frac{\partial V}{\partial x}(\lambda, 0) = 0$.
\item[\rm (D)] $\frac{\partial V}{\partial x}(\lambda, x)$
and $\frac{\partial^2 V}{\partial x^2}(\lambda, x)$
are continuous in $(\lambda, x)$.
    \end{enumerate}
Given a delay parameter $\tau>0$, consider the problem
\begin{equation}\label{e:Delay6Auto17*}
  \dot{x}( t ) = \frac{\partial V}{\partial x}(\lambda, x ( t - \tau) )\quad\hbox{and}\quad
  x( t + 2\tau) = - x( t )\;\forall t\in\mathbb{R}.
   \end{equation}
If $(\mu,0)$ with $\mu\in\Lambda$ is a bifurcation point of
(\ref{e:Delay6Auto17*}), then $\mu$ satisfies
  \begin{enumerate}
\item[\rm (E)]
 $\frac{\partial^2 V}{\partial x^2}(\mu, 0)=\frac{(4j+3)\pi}{2\tau}$ for some $j\in\mathbb{Z}$.
\end{enumerate}
Conversely, let $\mu$ be an interior point of $\Lambda$ satisfying (E).
Then $(\mu,0)$ is a bifurcation point of (\ref{e:Delay6Auto17*}) provided that
 there exist two sequences $(\lambda_m^\pm)_m\subset\Lambda$ converging to $\mu$,
 such that
$$
\frac{\partial^2 V}{\partial x^2}(\lambda_m^-, 0)<
 \frac{\partial^2 V}{\partial x^2}(\mu, 0)<\frac{\partial^2 V}{\partial x^2}(\lambda_m^+, 0)\quad\text{for all $m\in\mathbb{N}$}.
 $$
Moreover,  suppose that
an interior point $\mu$ of $\Lambda$ satisfies (E) and
  \begin{enumerate}
\item[\rm (F)]  In a punctured open neighborhood of $\mu$,
 $\frac{\partial^2 V}{\partial x^2}(\lambda, 0)-
 \frac{\partial^2 V}{\partial x^2}(\mu, 0)\ne 0$ and changes sign
 as $\lambda$ across $\mu$.
 (This can be satisfied if $\frac{\partial^2 V}{\partial x^2}(\lambda, 0)$ has a nonzero derivative at $\lambda=\mu$.)
\end{enumerate}
Then for the problem (\ref{e:Delay6Auto17*})
there are  the following alternatives:
 \begin{enumerate}
\item[\rm (i)] The problem (\ref{e:Delay6Auto17*})  with $\lambda=\mu$ has a sequence of nonzero,
 $\R$-distinct solutions
$\{x^k\}^{\infty}_{k=1}$ such that
$x^k\to 0$ in $C^1_{\rm loc}(\mathbb{R})$
as $k\to\infty$.
\item[\rm (ii)] There exist left and right  neighborhoods $\Lambda^-$ and $\Lambda^+$ of $\mu$ in $\mathbb{R}$
and nonnegative integers $n^+$ and $n^-$ satisfying $n^++n^-\ge 1$,
 such that for $\lambda\in\Lambda^-\setminus\{\mu\}$ (resp. $\lambda\in\Lambda^+\setminus\{\mu\}$),
(\ref{e:Delay6Auto17*}) with parameter  $\lambda$  has at least $n^-$ (resp. $n^+$) $\R$-distinct
 solutions $x_\lambda^i\ne 0$ ($i=1,\dots,n^-$ (resp. $n^+$))
which converge to the zero function in $C^1_{\rm loc}(\mathbb{R})$ as $\lambda\to\mu$.
\end{enumerate}
\end{corollary}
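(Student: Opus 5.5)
The corollary is the special case $n=1$ of Theorem~\ref{th:bif-per3Delay}, so the plan is to check that its hypotheses and conclusions reduce to those of the theorem.

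\emph{Hypotheses.} For $n=1$ we have $\nabla_2V=\frac{\partial V}{\partial x}$ and $\nabla_2^2V=\frac{\partial^2V}{\partial x^2}$. Conditions (C) and (D) are then exactly Assumption~\ref{ass:BasiAss1Delay1}. Evenness gives $\frac{\partial V}{\partial x}(\lambda,0)=0$, since the derivative of an even $C^1$ function is odd. There is a single eigenvalue $e_1(\lambda)=\frac{\partial^2V}{\partial x^2}(\lambda,0)$, and problem (\ref{e:Delay6Auto17*}) coincides with (\ref{e:Delay6Auto13}).

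\emph{Necessity.} Part (I) of Theorem~\ref{th:bif-per3Delay} gives $e_1(\mu)\in\frac{(4\mathbb{Z}+3)\pi}{2\tau}$. This is (E).

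\emph{Sufficiency.} Take the partition with $k=n=1$, $\{i_1\}=\{1\}$ and the second block empty.
\begin{itemize}
\item Condition (A) becomes $e_1(\mu)=\frac{(4j_1(\mu)+3)\pi}{2\tau}$. This holds by (E): the value $\frac{(4j+3)\pi}{2\tau}$ is the right endpoint of $I_j$, so $j_1(\mu)=j$.
\item Condition (B) is exactly the assumed inequality on the sequences $\lambda_m^\pm$.
\end{itemize}
Part (II) then shows that $(\mu,0)$ is a bifurcation point.

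\emph{Alternatives.} Assume (F). Because $e_1(\lambda)-e_1(\mu)$ is nonzero on a punctured neighborhood of $\mu$, continuity in $\lambda$ together with the intermediate value theorem shows it has constant sign on each side of $\mu$. Since it changes sign across $\mu$, it has opposite signs on the two sides. This is (SB.1) or (SB.2) with $k=1$.

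For the parenthetical remark: if $\frac{d}{d\lambda}e_1(\mu)\ne0$, then $e_1(\lambda)-e_1(\mu)$ has the sign of $(\lambda-\mu)\,e_1'(\mu)$ near $\mu$, so (F) holds.

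Part (III) of the theorem then yields alternatives (i) and (ii) with $n^++n^-\ge k=1$. The extra alternatives (iii) and (iv) are not needed, since they require $n>1$.

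\emph{Main point to watch.} The only step needing any care is the sign argument in (F)$\Rightarrow$(SB). The sentence about $V(\lambda,x)=\int_0^x\frac{\partial V}{\partial x}(\lambda,y)\,dy$ in the paper only normalizes $V(\lambda,0)=0$, and that normalization is harmless.
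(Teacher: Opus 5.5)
Your proposal is correct and follows the paper's own route: the corollary is Theorem~\ref{th:bif-per3Delay} specialized to $n=1$, with the single eigenvalue $e_1(\lambda)=\frac{\partial^2 V}{\partial x^2}(\lambda,0)$ and the partition $k=n=1$. Your checks that (E) gives (A) with $j_1(\mu)=j$, and that (F) gives (SB.1) or (SB.2) by continuity of $e_1$ on each side of $\mu$, supply the details the paper leaves implicit.
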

Obviously, $V(\lambda,x)=-\lambda x^2-x^4$ and
$V(\lambda,x)=-\lambda(x^2+ x^4)$ satisfy Corollary~\ref{cor:bif-per4Delay}.

Furthermore, we obtain from Theorem~\ref{th:bif-per3Delay} the following
bifurcation result concerning the delay parameters.

\begin{corollary}\label{cor:bif-per3DelayC}
Let $W:{\mathbb{R}}^n\to\mathbb{R}$ be even and $C^2$.
Denote by $e_1\le e_2\le\cdots\le e_n$  all eigenvalues of  $\nabla^2W\left({0}\right)$.
For each $\lambda\in\mathbb{R}$, there exists a unique integer $j_k(\lambda) \in \mathbb{Z}$ such that
 \begin{equation}\label{e:uniqueIntegerC}
\lambda e_k \in I_{j_k(\lambda)}: = \left(\frac{(4j_k(\lambda) - 1)\pi}{2}, \frac{(4j_k(\lambda) + 3)\pi}{2} \right].
\end{equation}
If $(\mu,0)$ with $\mu\in\mathbb{R}\setminus\{0\}$ is a bifurcation point of the problem
\begin{equation}\label{e:Delay6Auto13C-}
  \dot{x}( t ) = \nabla W (x ( t - \lambda) )\quad\hbox{and}\quad
  x( t + 2\lambda) = - x( t )\;\forall t\in\mathbb{R},
    \end{equation}
then $\mu e_l\in\frac{(4\mathbb{Z}+3)\pi}{2\tau}$ for some $l$.
Conversely, for some $\mu\in\mathbb{R}$, suppose there exists a partition
$\{i_1,\cdots,i_k\}\cup\{i_{k+1},\cdots,i_n\}$ of $\{1,\cdots,n\}$
   with $1 \le k \le n$, such that
   the following  condition is satisfied:
    \begin{enumerate}
\item[\rm (G)] $\mu e_{i_l}=\frac{(4j_{i_l}(\mu)+3)\pi}{2}$ for  $l=1,\dots,k$ (which implies that
$\mu\ne 0$ and $e_{i_l}\ne 0$ for $l=1,\dots,k$),
$e_{i_l}$($l=1,\dots,k$) have the same plus-minus sign, and
 $\mu e_{i_l}\notin\frac{\pi(4\mathbb{Z}+3)}{2}$ for any $l=k+1,\dots,n$.
 \end{enumerate}
 Then for the problem (\ref{e:Delay6Auto13C-})
    there are two possible alternatives:
 \begin{enumerate}
\item[\rm (i)] The problem (\ref{e:Delay6Auto13C-})  with $\lambda=\mu$
has a sequence of nonzero,  $\R$-distinct solutions $\{x^k\}^{\infty}_{k=1}$ such that
$x^k\to 0$ in $C^1_{\rm loc}(\mathbb{R}, \mathbb{R}^n)$
as $k\to\infty$.

\item[\rm (ii)] There exist left and right  neighborhoods $\Lambda^-$ and $\Lambda^+$ of $\mu$ in $\Lambda$
and nonnegative integers $n^+$ and $n^-\ge 0$ satisfying $n^++n^-\ge k$, such that
for $\lambda\in\Lambda^-\setminus\{\mu\}$ (resp. $\lambda\in\Lambda^+\setminus\{\mu\}$),
(\ref{e:Delay6Auto13C-}) with parameter value $\lambda$  has at least $n^-$ (resp. $n^+$) $\R$-distinct
 solutions $x_\lambda^i\ne 0$ ($i=1,\dots,n^-$ (resp. $n^+$))
which converge to zero in $C^1_{\rm loc}(\mathbb{R}, \mathbb{R}^n)$  as $\lambda\to\mu$.
\end{enumerate}
Moreover, if $n>1$ and $k>1$, then at least one of (i), (iii), and (iv) holds,
where (i) is as stated previously,
and the assertions of (iii) and (iv) are as follows:
\begin{enumerate}
\item[\rm (iii)]  For every $\lambda\in\Lambda\setminus\{\mu\}$ near $\mu$, there is a
 nonzero solution ${x}_\lambda$  of (\ref{e:Delay6Auto13C-})
 with parameter  $\lambda$, such that $x_\lambda$ converges to the zero function
 in $C^1_{\rm loc}(\mathbb{R},  \mathbb{R}^n)$  as $\lambda\to\mu$.

\item[\rm (iv)] For a given $\varepsilon>0$,  there is a one-sided  neighborhood $\Lambda^0$ of $\mu$ in $\Lambda$ such that,
for any $\lambda\in\Lambda^0\setminus\{\mu\}$, problem (\ref{e:Delay6Auto13C-}) with parameter $\lambda$
 has either
 \begin{itemize}
 	\item [$\bullet$]
  infinitely many $\mathbb{R}$-distinct nonzero solutions $\{x_\lambda^l\}_{l=1}^\infty$ such that
$$
\|{x}_\lambda^l\|_{C^0([0, 2|\lambda|])}+|\lambda|
\|\dot{x}_\lambda^l\|_{C^0([0, 2|\lambda|])}<\varepsilon
$$
for $l=1,2,\dots$, or
\item[$\bullet$]  at least two $\mathbb{R}$-distinct nonzero solutions
 $\hat{x}_\lambda^1$ and $\hat{x}_\lambda^2$ satisfying the following inequalities:
$$
\|\hat{x}_\lambda^i\|_{C^0([0, 2|\lambda|])}+|\lambda|
\|\dot{\hat{x}}_\lambda^i\|_{C^0([0, 2|\lambda|])}<\varepsilon, \;i=1,2,
$$
and
\begin{align*}
&(\hat{x}^1_\lambda(0), \hat{x}^1_\lambda(\lambda))_{\mathbb{R}^n} - \int^{\lambda}_0(\dot{\hat{x}}^1_\lambda(t), \hat{x}^1_\lambda(t-\lambda))_{\mathbb{R}^n}dt + \int^{2\lambda}_0 W(\hat{x}^1_\lambda(t))dt\\
&\ne (\hat{x}^2_\lambda(0), \hat{x}^2_\lambda(\lambda))_{\mathbb{R}^n} - \int^{\lambda}_0(\dot{\hat{x}}^2_\lambda(t), \hat{x}^2_\lambda(t-\lambda))_{\mathbb{R}^n}dt + \int^{2\lambda}_0 W(\hat{x}^2_\lambda(t))dt.
\end{align*}
\end{itemize}
\end{enumerate}
\end{corollary}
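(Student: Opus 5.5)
The plan is to reduce Corollary~\ref{cor:bif-per3DelayC} to Theorem~\ref{th:bif-per3Delay} by rescaling time so that the delay becomes $1$ and the parameter moves into the potential. Suppose $\lambda\ne0$ and $x$ solves (\ref{e:Delay6Auto13C-}). Set $y(s):=x(\lambda s)$. Then
$$\dot y(s)=\lambda\dot x(\lambda s)=\lambda\nabla W(x(\lambda s-\lambda))=\nabla_2 V(\lambda,y(s-1)),$$
where $V(\lambda,y):=\lambda W(y)$. The antiperiodicity condition becomes $y(s+2)=x(\lambda s+2\lambda)=-y(s)$. Hence $y$ solves (\ref{e:Delay6Auto13}) with $\tau=1$ and this $V$, and the correspondence is reversible.

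Next I check that the hypotheses carry over. $V(\lambda,\cdot)$ is even and $C^2$, and $\nabla_2V=\lambda\nabla W$ and $\nabla_2^2V=\lambda\nabla^2W$ are jointly continuous, so Assumption~\ref{ass:BasiAss1Delay1} holds on $\Lambda=\mathbb{R}$. To stay away from $\lambda=0$, I restrict to an interval $\Lambda\ni\mu$ on which $\lambda$ has constant sign. The eigenvalues of $\nabla_2^2V(\lambda,0)$ are $\lambda e_k$. If $\lambda<0$ the ordering is reversed, but the theorem only uses the multiset of eigenvalues via the partition, so this causes no problem. With $\tau=1$ the intervals $I_j$ of the theorem are exactly those in (\ref{e:uniqueIntegerC}).

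The next step is to transfer the notions. The map $x\mapsto y$ sends $\mathbb{R}$-orbits to $\mathbb{R}$-orbits, since $s\cdot x$ corresponds to $(s/\lambda)\cdot y$, so $\mathbb{R}$-distinctness is preserved. For norms we have $\|y|_{[0,2]}\|_{C^1}=\|x\|_{C^0}+|\lambda|\|\dot x\|_{C^0}$ on the interval of length $2|\lambda|$. This explains the weighted norm in (iv). Because $\lambda$ is near $\mu\ne0$, $C^1_{\rm loc}$-convergence is equivalent for $x$ and $y$, and so is the bifurcation-point notion, using antiperiodicity to pass between $[0,2\tau]$ and $[0,3\tau]$. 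Substituting into the functional of Theorem~\ref{th:bif-per3Delay}(iv) gives $(y(0),y(1))-\int_0^1(\dot y(t),y(t-1))dt+\int_0^2\lambda W(y)dt$. Changing variables $t=s/\lambda$ turns this into exactly the displayed expression in $x$, including the case $\lambda<0$ with oriented integrals.

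The final step is to verify the hypotheses of the theorem. The necessary condition follows directly from (I), which gives $\mu e_l\in\frac{(4\mathbb{Z}+3)\pi}{2}$. For the converse, (G) gives (A). The same-sign requirement gives (SB): if all $e_{i_l}>0$ then $\lambda e_{i_l}$ is strictly increasing in $\lambda$, which is (SB.1), and if all are negative we get (SB.2). Then (III) of the theorem yields (i)--(iv). The main obstacle I expect is bookkeeping rather than substance. Specifically, one must check that (G) forces $\mu\ne0$, so that $\mu$ is an interior point of a sign-constant interval. One must also check that the $\lambda<0$ case, where time reversal flips integral orientations, still matches the stated functional and norms exactly.
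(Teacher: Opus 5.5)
Your proposal is correct and follows essentially the paper's proof: set $V(\lambda,x)=\lambda W(x)$, rescale time by $\lambda$ to reduce to Theorem~\ref{th:bif-per3Delay} with $\tau=1$, obtain (A) and (SB.1)/(SB.2) from (G), then transfer solutions, $\mathbb{R}$-distinctness, the weighted norms (using antiperiodicity when $\lambda<0$) and the action functional back via $x(t)=u(t/\lambda)$. Two of your choices are slightly more careful than the paper, which takes $\Lambda=\mathbb{R}$, writes $e_i(\lambda)=\lambda e_i$, and only picks neighborhoods avoiding $0$: you note the reversed eigenvalue ordering for $\lambda<0$, and you restrict to a parameter interval of constant sign.
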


When $\lambda<0$, the equation (\ref{e:Delay6Auto13C-}) is termed a \textsf{differential equation with advanced arguments} (cf. \cite{ElN73,Hale}).

 \begin{enumerate}
\item[\rm ($\mathcal{F}1$)] If $f \in C^1(\mathbb{R})$ is an odd function with $f'(0) \ne 0$.
 \end{enumerate}
 Under this assumption, for $\lambda\in\mathbb{R}\setminus\{0\}$,
 consider  the problem
\begin{equation}\label{e:Delay6Auto17**}
  \dot{x}( t ) = \lambda f(x ( t - 1) )\quad\hbox{and}\quad
  x( t + 2) = - x( t )\;\forall t\in\mathbb{R}.
   \end{equation}
Let $F$ be any even primitive of $f$. Applying either Corollary~\ref{cor:bif-per3DelayC} with $n=1$ to $W=F$
 or Corollary~\ref{cor:bif-per4Delay}  to the function $V(\lambda, x) = \lambda F(x)$, we obtain:

  \begin{claim}\label{cl:Compare2.1}
Under assumption ($\mathcal{F}1$),
$(\mu,0)$ with $\mu\in\Lambda$ is a bifurcation point of the problem
(\ref{e:Delay6Auto17**}) if and only if
 $\mu\in \frac{\pi(4\mathbb{Z}+3)}{2}$. Moreover, when
 the latter condition holds,
the conclusions of Corollary~\ref{cor:bif-per4Delay}
remain valid with the modification that we replace (\ref{e:Delay6Auto17*}) by (\ref{e:Delay6Auto17**}) and $\tau$ by $1$.
  \end{claim}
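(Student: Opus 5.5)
The plan is to obtain Claim~\ref{cl:Compare2.1} as a direct specialization of Corollary~\ref{cor:bif-per4Delay} with $\tau=1$ and $V(\lambda,x)=\lambda F(x)$, where $F$ is an even primitive of $f$. Since the problem (\ref{e:Delay6Auto17**}) is only posed for $\lambda\neq 0$, I take $\Lambda$ to be whichever of the intervals $(0,\infty)$ or $(-\infty,0)$ contains the point $\mu$ under consideration. That interval is open, so every $\mu\in\Lambda$ is an interior point.

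\textbf{Checking the hypotheses.} Since $f\in C^1$ is odd, $F(x)=F(0)+\int_0^x f(s)\,ds$ is even and $C^2$. Hence $V(\lambda,\cdot)$ is even and $C^2$, which gives (C). The derivatives $\frac{\partial V}{\partial x}(\lambda,x)=\lambda f(x)$ and $\frac{\partial^2 V}{\partial x^2}(\lambda,x)=\lambda f'(x)$ are jointly continuous in $(\lambda,x)$, which gives (D). With these choices, (\ref{e:Delay6Auto17*}) coincides exactly with (\ref{e:Delay6Auto17**}).

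\textbf{Applying Corollary~\ref{cor:bif-per4Delay}.} The key quantity is
$$\frac{\partial^2 V}{\partial x^2}(\lambda,0)=\lambda f'(0).$$

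For the necessity direction, if $(\mu,0)$ is a bifurcation point, the Corollary gives condition (E):
$$\mu f'(0)\in \frac{(4\mathbb{Z}+3)\pi}{2}.$$
This is the condition stated in the Claim under the normalization $f'(0)=1$. For general $f'(0)\neq 0$, I would read the Claim's condition as this one and remark on it explicitly in the write-up.

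For sufficiency, assume (E) holds at $\mu$. Then
$$\frac{\partial^2 V}{\partial x^2}(\lambda,0)-\frac{\partial^2 V}{\partial x^2}(\mu,0)=(\lambda-\mu)f'(0).$$
Because $f'(0)\neq 0$ by ($\mathcal{F}1$), this is nonzero for $\lambda\neq\mu$ and changes sign as $\lambda$ crosses $\mu$.
\begin{itemize}
\item If $f'(0)>0$, choose $\lambda_m^-\uparrow\mu$ and $\lambda_m^+\downarrow\mu$ inside $\Lambda$.
\item If $f'(0)<0$, swap the two sequences.
\end{itemize}
In either case the inequalities required for the bifurcation criterion hold, so $(\mu,0)$ is a bifurcation point. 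Condition (F) also holds, since $\lambda\mapsto\lambda f'(0)$ has the nonzero derivative $f'(0)$. Therefore the alternatives (i)--(ii) of Corollary~\ref{cor:bif-per4Delay} transfer verbatim to (\ref{e:Delay6Auto17**}) with $\tau=1$.

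\textbf{Alternative route.} One could instead apply Corollary~\ref{cor:bif-per3DelayC} with $n=1$ and $W=F$, after rescaling time by $t\mapsto \lambda t$. This converts delay $\lambda$ into delay $1$ with the factor $\lambda$ in front of $f$. I would mention this route only briefly.

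\textbf{Main obstacle.} The only delicate points are bookkeeping rather than analysis. First, the excluded value $\lambda=0$ forces $\Lambda$ to be a half-line, so one must check that $\mu$ is interior to it. Second, the role of $f'(0)$ must be tracked: the bifurcation set is exactly
$$\Bigl\{\mu\neq 0 \;:\; \mu f'(0)\in\tfrac{(4\mathbb{Z}+3)\pi}{2}\Bigr\},$$
which is the set stated in the Claim when $f'(0)=1$. Everything else is a direct substitution into Corollary~\ref{cor:bif-per4Delay}.
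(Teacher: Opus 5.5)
Your proposal is correct and follows the paper's own route. The claim is obtained by substituting $V(\lambda,x)=\lambda F(x)$ and $\tau=1$ into Corollary~\ref{cor:bif-per4Delay} (or, equivalently, Corollary~\ref{cor:bif-per3DelayC} with $n=1$ and $W=F$), which is exactly what you do.

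Your remark about normalization is also justified. Corollary~\ref{cor:bif-per4Delay} actually gives the condition $\mu f'(0)\in\frac{\pi(4\mathbb{Z}+3)}{2}$, and this coincides with the stated set only when $f'(0)=1$. For instance, when $f'(0)=-1$ it gives $\mu\in\frac{\pi(4\mathbb{Z}+1)}{2}$, which matches the Kaplan--Yorke bifurcation value $\alpha(0)=\pi/2$ in Claim~\ref{cl:Compare2.3}.
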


 \begin{enumerate}
\item[\rm ($\mathcal{F}2$)](\cite[H~2.1]{Nu75})
 $f\in C(\mathbb{R})$
is bounded below, 
 has a derivative $f'(0)=1$ at $0$,  and satisfies $xf(x) > 0$ for every $x\in\mathbb{R}\setminus\{0\}$, which implies $f(0)=0$.
 \end{enumerate}
 Under this assumption,
Nussbaum \cite{Nu75} investigated the structure of the set of (suitably normalized) periodic solutions to the delay differential equation $\dot{x}(t) = -\alpha f(x(t-1))$ with $\alpha>0$, and  showed that there is a continuum of nontrivial periodic solutions of the latter equation which bifurcates from the trivial solution at $\alpha=\pi/2$.
Walther \cite[Theorem~1]{Wal78} restated \cite[Theorem~2.1]{Nu75} in a slightly different way as follows:

\begin{theorem}[Nussbaum] \label{th:Compare2.2}
Under assumption ($\mathcal{F}2$),
 there exists a closed connected set $P \subset C([-1,0])\times \mathbb{R}^+$ with the following properties:
\begin{itemize}
    \item[\rm (i)] $(0, \pi/2] \in P$, and for every $\alpha > \pi/2$ there is a function $\varphi \in C([-1,0])$ with $(\varphi, \alpha) \in P$,
    \item[\rm (ii)] $\varphi \neq 0$, if $(\varphi, \alpha) \in P$ and $\alpha \neq \pi/2$,
    \item[\rm (iii)] if $(\varphi, \alpha) \in P$ and $\varphi \neq 0$, then $\varphi$ increases on $[-1, 0]$, $\varphi(-1) = 0$, and $\varphi$ is the restriction of a slowly oscillating periodic function $x$ which satisfies $x'(t) = -\alpha f(x(t-1))$ on $\mathbb{R}$.
\end{itemize}
\end{theorem}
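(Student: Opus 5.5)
The plan is to recast slowly oscillating periodic solutions of $\dot x(t)=-\alpha f(x(t-1))$ as nontrivial fixed points of a compact \emph{return map} on a cone. We then compare the fixed point index of the trivial solution below and above $\alpha=\pi/2$. Finally, a Rabinowitz-type connectedness argument in the cone, combined with a priori bounds, yields the continuum $P$. This is the route of Nussbaum \cite{Nu75}; we sketch it rather than invoking the Hamiltonian machinery of \cite{Lu11}. The reason is that $(\mathcal{F}2)$ only gives continuity of $f$ and differentiability at $0$, so no variational structure is available.

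\textbf{Step 1 (the cone and the return map).} Let
$K=\{\varphi\in C([-1,0])\,|\,\varphi(-1)=0,\ \varphi \text{ nondecreasing}\}$, a closed cone. For $\varphi\in K\setminus\{0\}$ and $\alpha>0$, let $x$ be the solution with $x_0=\varphi$. Using $xf(x)>0$ and $f$ bounded below, one shows that $x$ has a first zero $z_1>0$ and a second zero $z_2>z_1+1$, and that $x$ is increasing on $[z_2,z_2+1]$. Set $F_\alpha(\varphi)=x_{z_2+1}$, the segment on $[z_2,z_2+1]$ shifted to $[-1,0]$, and $F_\alpha(0)=0$. Then $F_\alpha(K)\subset K$. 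Continuity of $(\varphi,\alpha)\mapsto z_i$ and Arzel\`a--Ascoli give that $F$ is completely continuous on $K\times(0,\infty)$ away from $0$.

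The delicate point is continuity at $\varphi=0$. For this we use $f'(0)=1$: a small $\varphi$ generates a small solution, uniformly on the relevant time interval, because the zeros $z_i$ stay bounded. The bound on the $z_i$ comes from the linear comparison equation. Fixed points $\varphi\neq0$ of $F_\alpha$ are exactly the initial segments described in (iii) of Theorem~\ref{th:Compare2.2}.

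\textbf{Step 2 (index of the trivial fixed point).} The linearization $\dot y=-\alpha y(t-1)$ has characteristic equation $z+\alpha e^{-z}=0$.

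For $0<\alpha<\pi/2$, all roots have negative real part. Then $0$ is asymptotically stable, and a homotopy $tF_\alpha$ shows that the cone index satisfies $i_K(F_\alpha,U_\delta)=1$ on small balls $U_\delta$. This needs no fixed points of $tF_\alpha$ on $\partial U_\delta$, which follows from the decay of solutions.

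For $\alpha>\pi/2$, the leading pair of roots $\pm$ crosses the imaginary axis at $\pm i\pi/2$ when $\alpha=\pi/2$; above this value it lies in the right half-plane and corresponds to slowly oscillating eigenfunctions, which lie in $K$. We then prove that $0$ is an \emph{ejective} fixed point of $F_\alpha$ in $K$, meaning that every small nonzero $\varphi$ is eventually pushed out of a fixed neighborhood. To do this, we project onto the unstable two-dimensional eigenspace along the spectral decomposition. We then use a Lyapunov-type functional showing growth of that component for orbits staying in the cone. By the Browder--Nussbaum ejective point theorem, the index at $0$ is then $0$.

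\textbf{Step 3 (global continuum and a priori bounds).} The index jumps from $1$ to $0$ at $\alpha=\pi/2$. By linearization, nontrivial fixed points near $0$ can only occur near $\alpha=\pi/2$, which gives (ii). The standard Rabinowitz/Leray--Schauder argument on $K\times(0,\infty)$ then produces a closed connected set $P$ of fixed points containing $(0,\pi/2)$. This $P$ is either unbounded or returns to $\{0\}\times(0,\infty)$ at another parameter; the latter is excluded since $\pi/2$ is the only crossing in $K$.

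To get (i), i.e.\ that the $\alpha$-projection of $P$ is all of $(\pi/2,\infty)$, we show two further facts. First, there are no nontrivial fixed points for $\alpha\le\pi/2$. Second, for $\alpha$ in any compact subset of $(\pi/2,\infty)$, fixed points are bounded in $C([-1,0])$. The bound uses $f$ bounded below, the equation giving $|\dot x|\le\alpha\sup|f(x)|$ on the relevant range, and $x f(x)>0$ controlling the amplitude. Unboundedness of $P$ must therefore occur in $\alpha\to\infty$, and connectedness gives every $\alpha>\pi/2$.

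I expect the main obstacle to be Step 2 for $\alpha>\pi/2$. Proving ejectivity requires showing that the unstable eigenspace component dominates for orbits confined to $K$, with only the one-point differentiability of $f$ available. I would first try Nussbaum's approach of estimating $F_\alpha$ against its linearization, $\|F_\alpha(\varphi)-L_\alpha\varphi\|=o(\|\varphi\|)$ on $K$, and then use that the spectral radius of $L_\alpha$ restricted to $K$ exceeds $1$ with a positive eigenvector.
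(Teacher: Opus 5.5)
The paper gives no proof of this theorem; it only quotes Nussbaum's result in Walther's formulation. Your outline is Nussbaum's route: the cone $K$, the return map, index $1$ of the trivial point for $\alpha$ just below $\pi/2$ versus an ejective trivial point above $\pi/2$, and then a global continuum with a priori bounds. However, your proof of (i) in Step~3 rests on the claim that there are no nontrivial fixed points for $\alpha\le\pi/2$, and this is false under $(\mathcal{F}2)$: the bifurcation at $\pi/2$ can be subcritical. Put $h=-f$, so the equation reads $\dot x(t)=\alpha h(x(t-1))$ with $h'(0)=-1$. The Kaplan--Yorke branch of Claim~\ref{cl:Compare2.3}, together with Dormayer's formula quoted after it, gives $\alpha''(0)=\frac{\pi}{8}h'''(0)=-\frac{\pi}{8}f'''(0)$. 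Take $f(x)=x+x^3$ near $0$, extended to a smooth, odd, bounded function with $xf(x)>0$, so that $(\mathcal{F}2)$ holds. You then get period-$4$ slowly oscillating solutions, i.e.\ nontrivial fixed points of your return map in $K$, for $\alpha$ slightly below $\pi/2$. So $P$ can enter $\{\alpha<\pi/2\}$, and your dichotomy no longer forces unboundedness to occur as $\alpha\to\infty$. As far as your argument goes, $P$ could become unbounded (in norm, or as $\alpha\to0$), or leave the domain of $F$, while $\alpha$ stays below $\pi/2$.

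This is where a second inaccuracy matters: contrary to Step~1, $F_\alpha$ is not defined on $K\setminus\{0\}$ for every $\alpha>0$. Suppose $\alpha<1/e$ and $\varphi\in K$ is small, and set $\lambda(t)=-\dot x(t)/x(t)$. Then $\lambda(t)=\alpha\frac{f(x(t-1))}{x(t-1)}\exp\bigl(\int_{t-1}^{t}\lambda(s)\,ds\bigr)$ for $t\ge1$. A comparison argument keeps $\lambda$ below a constant $\mu$ with $\alpha'e^{\mu}<\mu$, where $\alpha<\alpha'<1/e$. Hence $x>0$ forever and there is no first zero. So $F$ is a compact map only on an open set $G\subset K\times(0,\infty)$. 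This is harmless for Step~2, which only needs $\alpha\in(1/e,\pi/2)$, but the global alternative must include the possibility that the continuum reaches $\partial G$. To get (i) you therefore need four ingredients:
\begin{enumerate}
\item a global bifurcation theorem for compact maps on open subsets of the cone;
\item a priori bounds on every compact $\alpha$-interval in $(0,\infty)$ --- your estimates $\max x\le\alpha B$ and $\min x\ge-\alpha\max_{[0,\alpha B]}f$, with $f\ge-B$, in fact hold for all $\alpha$;
\item nonexistence of slowly oscillating periodic solutions for small $\alpha$ --- their amplitude tends to $0$ with $\alpha$, and the same $\lambda(t)$ argument then rules out the next zero;
\item a proof that fixed points with $\alpha$ in a compact range stay away from $\partial G$ (transversal zeros, bounded $z_2$).
\end{enumerate}
Only then does the alternative force $\sup\{\alpha\mid(\varphi,\alpha)\in P\}=\infty$, and connectedness gives (i).
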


%

 \begin{enumerate}
\item[\rm ($\mathcal{F}3$)]  $f:\mathbb{R}\to\mathbb{R}$
is odd and $C^k$, $k\in\mathbb{N}$, $f'(0) = -1$, and
$xf(x) < 0$ for every $x\in\mathbb{R}\setminus\{0\}$.
 \end{enumerate}
Under this assumption,  Kaplan and Yorke  \cite{KaYo74} proved
the following result (formulated in \cite[Theorem~1.1]{Do89}):

 \begin{claim}\label{cl:Compare2.3}
 There are $C^k$-maps $x: \mathbb{R} \times \mathbb{R} \to \mathbb{R}^2$ and $\alpha: \mathbb{R} \to \mathbb{R}$, such that for all $z \in \mathbb{R}$ the following is true:
\begin{itemize}
\item[\rm (1)] ${x}(\cdot, z)$ is odd and satisfies (\ref{e:Delay6Auto17**})
 with $\lambda=\alpha(z)$.
\item[\rm (2)] $x(0, z) = 0$, $x(1, z) = z$;
and $x(\cdot, z)$ is strictly increasing on $[0, 1]$ if $z>0$.
\item[\rm (3)] $\alpha(0)=-\frac{\pi}{2}f'(0)=\frac{\pi}{2}$ and $\alpha'(0)=0$.
\end{itemize}
 \end{claim}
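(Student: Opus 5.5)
The plan is to use the Kaplan--Yorke reduction: turn the delay problem into a planar Hamiltonian system, and read off the period-$4$ antiperiodic solution as a closed orbit whose period has been rescaled to $4$.

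\textbf{Step 1: reduction to a planar system.} Let $F$ be the even primitive of $f$ with $F(0)=0$. If $x$ solves (\ref{e:Delay6Auto17**}), set $y(t):=x(t-1)$. Using $x(t-2)=-x(t)$ and the oddness of $f$, we get
\[
\dot x=\lambda f(y),\qquad \dot y=-\lambda f(x).
\]
This is Hamiltonian with $H(x,y)=F(x)+F(y)$. Conversely, let $(x,y)$ solve this system and assume it is invariant under the rotation $R(x,y)=(y,-x)$ with time shift $1$, i.e.
\[
(x(t-1),y(t-1))=(y(t),-x(t)).
\]
Then $x$ solves (\ref{e:Delay6Auto17**}) with $x(t+2)=-x(t)$.

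Two symmetries of the planar system will be used.
\begin{itemize}
\item[(a)] If $(x,y)$ is a solution, so is $(y,-x)$. This uses only that $f$ is odd.
\item[(b)] If $(x,y)$ is a solution, so is $(-x(-t),y(-t))$. This is time reversal.
\end{itemize}

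\textbf{Step 2: closed orbits.} From $xf(x)<0$ and $f'(0)=-1$, the function $F$ has a strict maximum at $0$ and is strictly decreasing in $|x|$. Hence the level sets of $H$ near its maximum at the origin are closed curves around $0$. Each such curve is invariant under $R$ and under the reflection $(x,y)\mapsto(-x,y)$.

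Fix $\lambda=1$. For $z\ne0$ small, let $T(z)$ be the period of the orbit through $(z,0)$, and put $T(0)=2\pi$, the period of the linearization $\dot x=-y$, $\dot y=x$. Since $R$ maps the orbit to itself and $R^4=\mathrm{id}$, $R$ acts on the orbit as a time shift of exactly $T(z)/4$. By (b), the solution passing through $(0,y_0)$ at time $0$ has odd first component.

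Now define $\alpha(z):=T(z)/4$ and rescale time by $\alpha(z)$. The resulting solution has period $4$, and $R$ acts as a time shift by $1$. We normalize it so that it sits at $(z,0)$ at $t=1$, which gives $x(1,z)=z$ and $y(1)=x(0)=0$. The point $t=0$ lies on the $y$-axis, so reversibility gives oddness. This proves (1) and the first part of (2).

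On the quarter arc from $t=0$ to $t=1$ with $z>0$, the point runs from the negative $y$-axis to $(z,0)$, so $y<0$ there. Then $f(y)>0$, and $\dot x=\alpha f(y)>0$, so $x$ is strictly increasing on $[0,1]$.

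\textbf{Step 3: regularity and (3).} We have $\alpha(0)=2\pi/4=\pi/2$, which equals $-\frac{\pi}{2}f'(0)$. The symmetry $R^2=-\mathrm{id}$ maps the orbit through $(z,0)$ to the orbit through $(-z,0)$. Hence $T$, and so $\alpha$, is even, and once differentiability is known this gives $\alpha'(0)=0$.

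\textbf{Main obstacle.} The hard part is proving that the period function $T$ and the map $(t,z)\mapsto x(t,z)$ are $C^k$ up to and including $z=0$, where the orbit degenerates to the equilibrium.

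I would first try the blow-up $x=z\xi$, $y=z\eta$. This gives a system $\dot\xi=f(z\eta)/z$, $\dot\eta=-f(z\xi)/z$ whose right-hand side depends smoothly on $z$ including $z=0$, since $f(zu)/z=\int_0^1 f'(szu)\,u\,ds$. At $z=0$ it is the rotation $\dot\xi=-\eta$, $\dot\eta=\xi$. Applying the implicit function theorem to the first-return condition ("$\eta$ returns to $0$ with $\xi>0$"), whose transversality holds at $z=0$, then yields $C^k$ dependence of $T$ and of $x$ on $z$.

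One caveat: $f(zu)/z$ is only $C^{k-1}$ in $(z,u)$ in general, so the blow-up as stated may lose one derivative. If that happens, I would instead use action--angle coordinates, or the time-map integral $T(z)=4\int dx/f(y)$ over a quarter arc, and exploit evenness to recover full $C^k$ regularity.
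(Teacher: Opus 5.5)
The paper gives no proof of this claim. It quotes it from Kaplan--Yorke in Dormayer's formulation. Your Step~1 is exactly the Kaplan--Yorke reduction the paper itself uses (Claim~\ref{cl:delay1.1} with $n=\tau=1$ and $V(\lambda,x)=\lambda F(x)$). Your symmetry arguments for oddness, $x(0,z)=0$, $x(1,z)=z$ and monotonicity are fine. Two small precisions there: $R$ acts as the shift by $-T(z)/4$, not $+T(z)/4$; and it is a quarter shift rather than a zero or half shift because $R^2=-\mathrm{id}$ has no fixed point on the orbit.

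\textbf{Main gap: regularity at $z=0$.} You correctly notice that your blow-up only yields $C^{k-1}$. However, the proposed recovery of full $C^k$ (via action--angle variables, the time-map integral, or evenness) cannot work, because that loss is sharp. First, for $z>0$, conditions (1)--(2) force $\alpha(z)=T(z)/4$, where $T(z)$ is the period at $\lambda=1$ of the orbit through $(z,0)$. Indeed, on $(0,1)$ the point $(x(t),x(t-1))$ runs from $(0,-z)$ to $(z,0)$ inside the open fourth quadrant. Monotonicity then forces $\lambda>0$ and exactly a quarter turn. Now take $k=1$ and $f(x)=-x-\frac{5}{4}x|x|^{1/2}$, which is odd, $C^1$, has $f'(0)=-1$, and satisfies $xf(x)<0$. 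In polar coordinates
\[
\dot\theta=1+\tfrac{5}{4}\,r^{1/2}\bigl(|\cos\theta|^{5/2}+|\sin\theta|^{5/2}\bigr),\qquad \dot r=O(r^{3/2}).
\]
Hence $T(z)=2\pi-c\,z^{1/2}+O(z)$ with $c>0$, so $\alpha$ is not even differentiable at $0$, and (3) is meaningless. Similarly, $f(x)=-x-x|x|^{k-1/2}\in C^k$ gives $T(z)=2\pi-c_k|z|^{k-1/2}+O(|z|^{2k-1})$, so $\alpha\notin C^k$ for every $k$. The claim as transcribed therefore needs one more derivative. If $f\in C^{k+1}$, then $g(z,u)=f(zu)/z$ is $C^k$ and even in $z$. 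Your blow-up with $(\xi,\eta)(0)=(1,0)$, plus the implicit function theorem for $\eta(t,z)=0$ near $(t,z)=(2\pi,0)$, then gives $C^k$ maps $T$ and $x(t,z)=z\,\xi(\alpha(z)(t-1),z)$, and evenness gives $\alpha'(0)=0$. With only $f\in C^k$, the honest conclusion is $C^{k-1}$, and one needs $k\ge 2$ for (3) to make sense.

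\textbf{Second gap, easily repaired: the claim is global.} It asserts maps on $\mathbb{R}\times\mathbb{R}$ for all $z\in\mathbb{R}$, but you only treat small $z$ and level sets near the maximum. Since $xf(x)<0$, $F$ is strictly decreasing in $|x|$. So for every $z\ne0$ the level set $\{F(x)+F(y)=F(z)\}$ has the following properties:
\begin{itemize}
\item it lies in $[-|z|,|z|]^2$;
\item it is a single closed curve (two values $\pm y_0(x)$ for $|x|<|z|$, and $y=0$ at $|x|=|z|$);
\item it contains no equilibrium, hence is a periodic orbit.
\end{itemize}
For $z\ne0$, the return time to $\{y=0\}$ and the solution are $C^k$ in $(t,z)$ by the implicit function theorem. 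This holds because the vector field is $C^k$ and $\dot y=-f(z)\ne0$ at $(z,0)$.
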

Hence this family of the Kaplan--Yorke solutions (parameterized by the amplitude in a differentiable way), which is called the \textsf{Kaplan--Yorke branch} or \textsf{primary branch} of $f$ in \cite{Do89}, forms a branch of symmetric periodic solutions which bifurcates at $\lambda=\alpha(0)$
from the trivial solution $x\equiv 0$ and has a vertical tangent there.
Walther \cite{Wal78} proved that the direction of the
branch (i.e., the sign of $\alpha''(0)$)
is equal to the sign of $f'''(0)$, and  Dormayer \cite{Do89} obtained
\begin{equation*}
\alpha''(0) = \frac{\pi}{8} \frac{f'''(0)}{[f'(0)]^2}.
\end{equation*}
Walther \cite{Wal83} showed that
there is a point on the primary branch off such that every neighbourhood
of this point contains a periodic solution, which is not symmetric
with the more general symmetry (S) $y(\cdot+\tau)=-y$, for some $\tau>0$,
With a class of non-monotone functions $f$, Dormayer \cite{Do89} proved that
  symmetric periodic solutions of $x'(t) = -\alpha f(x(t-1))$
   bifurcate from the primary branch at some critical parameter.

Clearly, the results above are completely
different from ours.

\begin{proof}[\bf  Proof of Theorem~\ref{th:bif-per3Delay}]

\noindent\textbf{Step 1}(\textsf{Reduction to bifurcations of a class of
Hamiltonian boundary value problems}).
Define $H:\Lambda\times \mathbb{R}\times({\mathbb{R}}^{n})^m\to\mathbb{R}$ by \begin{equation}\label{e:Delay4}
H\left(\lambda, x _ { 1 } ,  x _ { 2 } \right) = V \left(\lambda,  x _ { 1 } \right) + V \left(\lambda,  x _ {2} \right).
\end{equation}
Denote by $\nabla_2 {H}(\lambda,  v)$ and
$\nabla_2^2 {H}(\lambda,  v)$ the gradient and the Hessian of ${H}\left(\lambda, v \right)$ with respect to the second variable $v\in({\mathbb{R}}^{n})^2$, respectively.
Define the
$2n \times 2n$  matrices  $A _ { 2,n }$ and $T_{2,n}$  by
\begin{equation}\label{e:Delay7}
A _ { 2,n} = \left( \begin{array} { c c c  } 0 & I _ { n } \\
- I _ { n } & 0 \end{array} \right)=-J_n
\quad\text{and}\quad
T_{2,n}=\left( \begin{array} { c c c } 0 & I _ { n} \\ - I _ { n } & 0 \end{array} \right)=-J_n,
\end{equation}
 respectively. These matrices satisfy
\begin{align}\label{e:Delay8}
&T^2_{2,n}=-I_{2n},\quad T^{4}_{2,n}=I_{2n}, \quad T_{2,n}A_{2,n}(T_{2,n})^\top=A_{2,n},\nonumber\\
&H(\lambda, T_{2,n}z)=H(\lambda, z)\quad\forall (\lambda,t,z).
\end{align}
The following claim is easy to check
 (see \cite{KaYo74, Liu12}):

\begin{claim}\label{cl:delay1.1}
If the map $x:\mathbb{R}\to\mathbb{R}^n$ is a solution of (\ref{e:Delay6Auto13}),
  then  it is necessarily $4\tau$-periodic, and
\begin{equation}\label{e:Delay5}
\mathbb{R}\ni t\mapsto v(t):=(x_1(t)^\top, x_2(t)^\top)^\top\in ({\mathbb{R}}^{n})^2
\end{equation}
where $x _ { i } ( t ) = x ( t-(i-1)\tau)$ for $i=1,2$, satisfies
\begin{equation}\label{e:Delay6}
\dot{v}(t)=A_{2,n}\nabla_2 H(\lambda, v(t))\quad\text{and}\quad v(t+\tau)=T_{2,n}^{-1}v(t)\;\;\forall t\in \mathbb{R}.
\end{equation}
Conversely, if $v(t)=(x_1(t)^\top, x_2(t)^\top)^\top$, where $x _ { i } ( t )\in\mathbb{R}^n$ for $i=1,2$, satisfies (\ref{e:Delay6}),
then $x(t):=x_1(t)$ satisfies (\ref{e:Delay6Auto13}). 
\end{claim}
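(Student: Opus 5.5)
The plan is a direct verification in both directions. I will use only the evenness of $V(\lambda,\cdot)$ from Assumption~\ref{ass:BasiAss1Delay1}, which makes $\nabla_2V(\lambda,\cdot)$ odd, together with the explicit matrices in (\ref{e:Delay7}). I take $J_n=\left(\begin{smallmatrix}0&-I_n\\ I_n&0\end{smallmatrix}\right)$, so that $T_{2,n}=-J_n$ and $T_{2,n}^{-1}=J_n$. Hence
\[
T_{2,n}^{-1}(x_1,x_2)=(-x_2,x_1)\quad\text{and}\quad A_{2,n}(y_1,y_2)=(y_2,-y_1).
\]
From (\ref{e:Delay4}) we have $\nabla_2H(\lambda,v)=(\nabla_2V(\lambda,x_1),\nabla_2V(\lambda,x_2))$. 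The Hamiltonian system in (\ref{e:Delay6}) therefore reads
\[
\dot x_1=\nabla_2V(\lambda,x_2),\qquad \dot x_2=-\nabla_2V(\lambda,x_1).
\]

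\emph{Forward direction.} Periodicity comes first: applying $x(t+2\tau)=-x(t)$ twice gives $x(t+4\tau)=-x(t+2\tau)=x(t)$. Now set $x_1(t)=x(t)$ and $x_2(t)=x(t-\tau)$.

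The first Hamiltonian equation is exactly the delay equation in (\ref{e:Delay6Auto13}).

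For the second, differentiate $x_2(t)=x(t-\tau)$ and use the equation at time $t-\tau$: $\dot x_2(t)=\nabla_2V(\lambda,x(t-2\tau))$. Since $x(t-2\tau)=-x(t)$ and $\nabla_2V(\lambda,\cdot)$ is odd, this equals $-\nabla_2V(\lambda,x_1(t))$.

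For the symmetry condition, note that $v(t+\tau)=(x(t+\tau),x(t))$ and $x(t+\tau)=-x(t-\tau)=-x_2(t)$. Thus $v(t+\tau)=(-x_2(t),x_1(t))=T_{2,n}^{-1}v(t)$.

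\emph{Converse.} Suppose $v=(x_1,x_2)$ is $C^1$ and satisfies (\ref{e:Delay6}). Componentwise, the symmetry condition says $x_1(t+\tau)=-x_2(t)$ and $x_2(t+\tau)=x_1(t)$. The second relation gives $x_2(t)=x_1(t-\tau)$. Substituting this into the first gives $x_1(t+\tau)=-x_1(t-\tau)$, i.e. $x_1(s+2\tau)=-x_1(s)$ for all $s$. Finally, the first Hamiltonian equation becomes $\dot x_1(t)=\nabla_2V(\lambda,x_1(t-\tau))$, so $x:=x_1$ solves (\ref{e:Delay6Auto13}). The second Hamiltonian equation is not even needed here, since it is then automatically consistent.

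The only delicate point is bookkeeping: fixing the sign convention for $J_n$ in (\ref{e:standcompl}), and hence for $T_{2,n}^{-1}$. With the opposite convention the symmetry relation reads $v(t+\tau)=T_{2,n}v(t)$ instead, so I will check this against the paper's definition before writing the computation. Everything else is a one-line substitution.
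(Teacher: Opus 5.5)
Your proposal is correct and is exactly the direct verification the paper has in mind. The paper gives no argument of its own, calling the claim easy to check and citing Kaplan--Yorke and Liu, and your computations in both directions are right, including using oddness of $\nabla_2V(\lambda,\cdot)$ only in the forward direction. The sign-convention caveat at the end is moot: $A_{2,n}$ and $T_{2,n}$ are given as explicit matrices in (\ref{e:Delay7}), so $T_{2,n}^{-1}(x_1,x_2)=(-x_2,x_1)$ however $J_n$ is normalized.
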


Since $A_{2,n}=T_{2,n}=-J_n$,
a direct computation yields the desired congruence relation
\[
\begin{pmatrix} 0 & I_n \\ I_n & 0 \end{pmatrix}^\top A_{2,n}^{-1} \begin{pmatrix} 0 & I_n \\ I_n & 0 \end{pmatrix}
= \begin{pmatrix} 0 & I_n \\ I_n & 0 \end{pmatrix} J_n \begin{pmatrix} 0 & I_n \\ I_n & 0 \end{pmatrix}
= -J_n = J_n^{-1}.
\]
Define
\begin{equation}\label{e:M-invariantDelay3-}
\Upsilon(2,n) = \begin{pmatrix} 0 & I_n \\ I_n & 0 \end{pmatrix}^{-1} = \begin{pmatrix} 0 & I_n \\ I_n & 0 \end{pmatrix},
\end{equation}
and
\begin{equation}\label{e:M-invariantDelay3}
\check{H}(\lambda, z)=H(\lambda, {\Upsilon}(2, n)^{-1}z)
\quad\text{for all
$(\lambda, z)\in \Lambda\times({\R}^{n})^2$.}
\end{equation}
Then
\begin{equation}\label{e:M-invariantDelay3+}
M_{2,n} = \Upsilon(2, n)T_{2,n}^{-1}\Upsilon(2, n)^{-1} = -\Upsilon(2, n)J_n^{-1}\Upsilon(2, n)^\top = -J_n
\end{equation}
is a symplectic orthogonal matrix with $(M_{2,n})^4=I_{2n}$,
and it is easily checked:
\begin{claim}\label{cl:delay1.2}
$v:\mathbb{R}\to\mathbb{R}^{2n}$ satisfies the problem (\ref{e:Delay6})
if and only if
$z(t):=\Upsilon(2,n)v(t)$ solves the following problem
\begin{equation}\label{e:Delay6Auto3}
\dot{z}(t)=J_n\nabla_2\check{H}\left(\lambda, z(t)\right)\quad\text{and}\quad
 z(t+\tau)=-J_nz(t)\;\;\forall t\in\mathbb{R}.
\end{equation}
and in this situation there holds
\begin{equation}\label{e:Delay6Auto3+}
\int^{\tau}_0\left[\frac{1}{2}(J_{n}\dot{z}(t), z(t))_{\mathbb{R}^{2n}}+ \check{H}(\lambda,  z(t))\right]dt
= \int^{\tau}_0\left[\frac{1}{2}(\dot{v}(t), J_nv(t))_{\mathbb{R}^{2n}}+ H(\lambda,
v(t))\right]dt.
\end{equation}
because $A_{2,n}^{-1}=J_n$.
Moreover,  the integral on the left-hand side of the preceding equality equals
$$
 ({x}(0), {x}(\tau))_{\mathbb{R}^n}-
 \int^{\tau}_0(\dot{{x}}(t), {x}(t-\tau))_{\mathbb{R}^n}dt+\int^{2\tau}_0V(\lambda,
 {x}(t))dt
$$
if $v$ is given by (\ref{e:Delay5}) and $x=x_1$
solves (\ref{e:Delay6Auto13}).
\end{claim}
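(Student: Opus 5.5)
The plan is a direct verification, assuming the standard complex structure $J_n=\begin{pmatrix}0&-I_n\\ I_n&0\end{pmatrix}$, so that $J_n^{-1}=J_n^\top=-J_n$, and using the congruence displayed just before (\ref{e:M-invariantDelay3-}). Note that $\Upsilon(2,n)$ is symmetric, orthogonal and involutive. That congruence therefore reads $\Upsilon(2,n)J_n\Upsilon(2,n)=-J_n$, and this single identity drives everything.

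\textbf{Equivalence of (\ref{e:Delay6}) and (\ref{e:Delay6Auto3}).} Set $z=\Upsilon(2,n)v$.
\begin{enumerate}
\item By the chain rule, $\nabla_2\check H(\lambda,z)=\Upsilon(2,n)^{-\top}\nabla_2H(\lambda,\Upsilon(2,n)^{-1}z)=\Upsilon(2,n)\nabla_2H(\lambda,v)$.
\item Since $A_{2,n}=-J_n$,
$$\dot z=\Upsilon(2,n)A_{2,n}\nabla_2H(\lambda,v)=-\Upsilon(2,n)J_n\Upsilon(2,n)\,\Upsilon(2,n)\nabla_2H(\lambda,v)=J_n\nabla_2\check H(\lambda,z).$$
\item For the boundary condition, $T_{2,n}^{-1}=(-J_n)^{-1}=J_n$. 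Hence
$$z(t+\tau)=\Upsilon(2,n)J_nv(t)=\Upsilon(2,n)J_n\Upsilon(2,n)z(t)=-J_nz(t),$$
which is exactly $M_{2,n}z(t)$ as in (\ref{e:M-invariantDelay3+}).
\end{enumerate}
Every step is reversible because $\Upsilon(2,n)$ is invertible, so the two problems are equivalent.

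\textbf{Identity (\ref{e:Delay6Auto3+}).} Since $\Upsilon(2,n)$ is orthogonal,
$$(J_n\dot z,z)=(\Upsilon(2,n)J_n\Upsilon(2,n)\dot v,v)=(-J_n\dot v,v)=(\dot v,J_nv).$$
Also $\check H(\lambda,z)=H(\lambda,v)$ by definition. Integrating both over $[0,\tau]$ gives (\ref{e:Delay6Auto3+}).

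\textbf{Rewriting in terms of $x$.} Now take $v=(x(t)^\top,x(t-\tau)^\top)^\top$ with $x$ a solution of (\ref{e:Delay6Auto13}).
\begin{enumerate}
\item Since $J_nv=(-x(t-\tau),x(t))$, we get
$$(\dot v,J_nv)=-(\dot x(t),x(t-\tau))+(\dot x(t-\tau),x(t)).$$
\item Integrate the second term by parts on $[0,\tau]$. It equals
$$(x(0),x(\tau))-(x(-\tau),x(0))-\int_0^\tau(x(t-\tau),\dot x(t))\,dt.$$
The antiperiodicity $x(t+2\tau)=-x(t)$ gives $x(-\tau)=-x(\tau)$, so the boundary terms combine to $2(x(0),x(\tau))$.
\item Taking half of the total yields $(x(0),x(\tau))-\int_0^\tau(\dot x(t),x(t-\tau))\,dt$.
\item For the potential term, $\int_0^\tau H(\lambda,v)\,dt=\int_0^\tau V(\lambda,x(t))\,dt+\int_0^\tau V(\lambda,x(t-\tau))\,dt$.
\item Since $V(\lambda,\cdot)$ is even and $x(t-\tau)=-x(t+\tau)$, we have $V(\lambda,x(t-\tau))=V(\lambda,x(t+\tau))$. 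The second integral therefore equals $\int_\tau^{2\tau}V(\lambda,x(t))\,dt$, and the two integrals together give $\int_0^{2\tau}V(\lambda,x(t))\,dt$.
\end{enumerate}
Adding the two pieces gives the stated expression.

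The only genuinely delicate point is bookkeeping of signs: the convention for $J_n$, and the sign in the boundary term coming from $x(-\tau)=-x(\tau)$. I would double-check the convention against (\ref{e:standcompl}) and the displayed congruence before finalizing.
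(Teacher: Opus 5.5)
Your proposal is correct and is the same direct verification the paper intends: the paper only says the equivalence is ``easily checked'' and defers the final identity to an earlier reference. Your sign bookkeeping also checks out with $J_n$ as in (\ref{e:standcompl}): the congruence $\Upsilon(2,n)J_n\Upsilon(2,n)=-J_n$, the value $T_{2,n}^{-1}=J_n$, the boundary term from $x(-\tau)=-x(\tau)$, and the use of evenness of $V$ to turn $\int_0^\tau V(\lambda,x(t-\tau))\,dt$ into $\int_\tau^{2\tau}V(\lambda,x(t))\,dt$.
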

For the proof of the final conclusion, see  \cite[\S3]{Lu14}.

Without confusion occurring,
we always use ${0}$ to denote not only the origin in
$\mathbb{R}^k$ (for different $k$) but also trivial solutions of
(\ref{e:Delay6Auto13}), (\ref{e:Delay6Auto3}) and (\ref{e:Delay6}).
Consider linearizations of the latter
three equations along the trivial solution ${0}$:
\begin{align}\label{e:LinearDelay1.1}
&\dot{x}( t ) = \nabla_2^2 V (\lambda, {0})x(t-\tau)\quad\text{and}\quad
 x( t + 2\tau ) = - x( t )\;\forall t,\\
& \dot{v}(t)=-J_n\nabla^2_2H\left(\lambda,{0}\right)v(t)\quad\text{and}\quad v(t+\tau)=J_nv(t)\;\;\forall t\in\mathbb{R},\label{e:Delay6Auto1}\\
&\dot{z}(t)=J_n\nabla^2_2\check{H}\left(\lambda,{0}\right)z(t)\quad\text{and}\quad
 z(t+\tau)=-J_nz(t)\;\;\forall t\in\mathbb{R}.\label{e:Delay6Auto4}
\end{align}
Here
$\nabla^2_2H\left(\lambda, {0}\right) =
\left( \begin{array} { c c c c }
\nabla^2_xV\left(\lambda, {0}\right) & 0 & \\
0 &\nabla^2_xV\left(\lambda, {0}\right) &
 \end{array} \right)$
 and
\begin{align*}
\nabla^2_2 \check{H}(\lambda,  {0})&=(\Upsilon(2,n)^{-1})^\top\nabla^2_2 {H}(\lambda, {0})\Upsilon(2,n)^{-1}\\
&=\left( \begin{array} {cc}
0 & I_n \\
I_n &0
 \end{array} \right)^T\left( \begin{array} { c c c c }
\nabla^2_xV\left(\lambda, {0}\right) & 0 & \\
0 &\nabla^2_xV\left(\lambda, {0}\right) &
 \end{array} \right)\left( \begin{array} {cc}
0 & I_n \\
I_n &0
 \end{array}\right)=
 \nabla^2_2{H}\left(\lambda,{0}\right).
\end{align*}
  Claims~\ref{cl:delay1.1} and \ref{cl:delay1.2} imply
\begin{align*}
 x:\mathbb{R}\to\mathbb{R}^{n}\;\text{satisfies (\ref{e:LinearDelay1.1})}\;
 &\Longleftrightarrow
  \mathbb{R}\ni t\mapsto v(t):=(x(t)^\top, x(t-\tau)^\top)^\top\in ({\mathbb{R}}^{n})^2
\;\text{satisfies (\ref{e:Delay6Auto1})}\\
&\Longleftrightarrow
  \mathbb{R}\ni t\mapsto z(t):=\Upsilon(2, 2n) v(t)\in ({\mathbb{R}}^{n})^2
\;\text{satisfies (\ref{e:Delay6Auto4})},
\end{align*}
and therefore
\begin{claim}\label{cl:delay1.3}
The problems (\ref{e:LinearDelay1.1}), (\ref{e:Delay6Auto1})
 and (\ref{e:Delay6Auto4}) have the same dimensions of solution spaces.
\end{claim}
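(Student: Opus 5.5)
The statement to prove is Claim~\ref{cl:delay1.3}: the three linear problems (\ref{e:LinearDelay1.1}), (\ref{e:Delay6Auto1}) and (\ref{e:Delay6Auto4}) have solution spaces of equal dimension. I will exhibit explicit linear isomorphisms between the three solution spaces.

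\textbf{First map.} Define
\[
\Phi:\ x\mapsto v,\qquad v(t)=(x(t)^\top,x(t-\tau)^\top)^\top .
\]
This is the linear specialization of Claim~\ref{cl:delay1.1} with $V$ replaced by the quadratic even potential $\frac12(\nabla_2^2V(\lambda,0)x,x)$. That potential satisfies Assumption~\ref{ass:BasiAss1Delay1}, and its gradient is $\nabla^2_2V(\lambda,0)x$. Since the argument of Claim~\ref{cl:delay1.1} uses only the equations, I will nevertheless check the two directions directly.

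\emph{$\Phi$ maps solutions to solutions.} Let $x$ solve (\ref{e:LinearDelay1.1}) and write $S=\nabla_2^2V(\lambda,0)$. Then:
\begin{itemize}
\item $\dot x_1(t)=Sx_2(t)$.
\item $\dot x_2(t)=\dot x(t-\tau)=Sx(t-2\tau)=-Sx(t)=-Sx_1(t)$, using antiperiodicity.
\end{itemize}
Hence $\dot v=A_{2,n}\nabla^2_2H(\lambda,0)v$ with $A_{2,n}=-J_n$. For the boundary condition,
\[
v(t+\tau)=(x(t+\tau),x(t))=(-x(t-\tau),x(t))=J_nv(t),
\]
using $x(t+\tau)=-x(t-\tau)$ and $J_n=\begin{pmatrix}0&-I\\ I&0\end{pmatrix}$.

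\emph{Solutions of (\ref{e:Delay6Auto1}) come only from $\Phi$.} If $v=(x_1,x_2)$ solves (\ref{e:Delay6Auto1}), the shift condition $v(t+\tau)=J_nv(t)$ gives two relations:
\begin{itemize}
\item $x_2(t+\tau)=x_1(t)$, so $x_2(t)=x_1(t-\tau)$;
\item $x_1(t+\tau)=-x_2(t)=-x_1(t-\tau)$, which is the antiperiodicity of $x:=x_1$.
\end{itemize}
The first component of the ODE then reads $\dot x(t)=Sx(t-\tau)$, so $x$ solves (\ref{e:LinearDelay1.1}) and $v=\Phi(x)$. Thus $\Phi$ is onto, and it is injective because $x$ is recovered as the first component of $v$.

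\textbf{Second map.} Define $\Psi:v\mapsto z=\Upsilon(2,n)v$, which is injective because $\Upsilon(2,n)$ is invertible. The computation before the claim shows $\nabla^2_2\check H(\lambda,0)=\nabla_2^2H(\lambda,0)$. Claim~\ref{cl:delay1.2} (or its linear version, checked directly) gives the equivalence of the two problems. For the ODE part, one uses $\Upsilon^\top=\Upsilon^{-1}=\Upsilon$ and $\Upsilon A_{2,n}\Upsilon^{\top}=J_n$, which is the congruence relation displayed earlier. For the boundary condition, $\Upsilon J_n\Upsilon^{-1}=-J_n$ by (\ref{e:M-invariantDelay3+}). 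Hence $\Psi$ is a linear bijection from the solutions of (\ref{e:Delay6Auto1}) onto those of (\ref{e:Delay6Auto4}).

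Composing, $\Psi\circ\Phi$ is a linear isomorphism, and the three dimensions coincide. They are finite, being at most $2n$, since solutions of (\ref{e:Delay6Auto4}) are determined by $z(0)$. The only step needing care is the sign bookkeeping in $v(t+\tau)=J_nv(t)$ and $\Upsilon J_n\Upsilon=-J_n$; there is no real obstacle.
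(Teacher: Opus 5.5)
Your proof is correct and takes the paper's route: the paper also gets the claim by specializing Claims~\ref{cl:delay1.1} and~\ref{cl:delay1.2} to the linearized problems, which gives the chain of equivalences $x\mapsto v(t)=(x(t)^\top,x(t-\tau)^\top)^\top\mapsto z=\Upsilon(2,n)v$. Your direct checks of $v(t+\tau)=J_nv(t)$, $\Upsilon(2,n)A_{2,n}\Upsilon(2,n)^\top=J_n$ and $\Upsilon(2,n)J_n\Upsilon(2,n)^{-1}=-J_n$ fill in sign bookkeeping that the paper leaves implicit.
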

 Let $\gamma_\lambda$ and $\Upsilon_\lambda$ be  the fundamental matrix solutions of (\ref{e:Delay6Auto1}) and (\ref{e:Delay6Auto4})
 with $\gamma_\lambda(0)=I_{2n}=\Upsilon_\lambda(0)$, respectively.

\noindent\textbf{Step 2}(\textsf{Proof of (I)}).
If $(\mu,0)$ with $\mu\in\Lambda$ is a bifurcation point of (\ref{e:Delay6Auto13}),
so is that of (\ref{e:Delay6Auto3})
by Claims~\ref{cl:delay1.1} and \ref{cl:delay1.2}.
It follows from \cite[Theorem~1.5(I)]{Lu11} that $\nu_{\tau,-J_n}(\gamma_\mu)\ne 0$.
Hence Theorem~\ref{th:PIndex1} (with $C=0$) implies
 $e_l(\mu)\in\frac{\pi(4\mathbb{Z}+3)}{2\tau}$ for some $l$.

\noindent\textbf{Step 3}(\textsf{Proof of (III)}).
For the present $V$ let us follow the notations above Theorem~\ref{th:bif-per3Delay}, and check that the conditions in \cite[Theorem~1.14]{Lu11} can be satisfied for $\check{H}$.
 \cite[Corollary~6.3.8]{HorJ} gives
 \begin{equation}\label{e:EigenContin}
\sum^n_{l=1}|e_l(\lambda)-e_l(\mu)|^2\le \|\nabla^2_xV\left(\lambda, {0}\right)-\nabla^2_xV\left(\mu, {0}\right)\|^2_2
\quad\forall\lambda,\mu\in\Lambda
\end{equation}
since real symmetric matrixes are not only normal but also Hermite.
By Theorem~\ref{th:PIndex1} (with $C=0$) we obtain
\begin{align}\label{e:DiagDelay10}
 \nu_{\tau,-J_n}(\Upsilon_\lambda)
    &=2\sharp\left\{l\,\Bigg|\, e_l(\lambda)=\frac{(4j+3)\pi}{2\tau}\;\hbox{with some $j\in\mathbb{Z}$}\right\},\\
  i_{\tau,-J_n}(\Upsilon_\lambda)&=2j_1(\lambda)+\cdots+ 2j_n(\lambda) +n+\left[-\frac{n}{2}\right]\label{e:DiagDelay10+}
  \end{align}
   where each $j_k(\lambda)\in\mathbb{Z}$ ($k=1,\dots,n$) is a unique integer from (\ref{e:uniqueInteger}).

Since $M=-J_n$ has only eigenvalues $\pm\sqrt{-1}$,
${\rm Ker}(M-I_{2n})=\{0\}$, that is, the condition (a) in \cite[Theorem~1.14]{Lu11} is satisfied.
Condition (A) and (\ref{e:DiagDelay10}) imply
\begin{equation}\label{e:EigenContinV}
 \nu_{\tau,-J_n}(\Upsilon_\mu)=2k.
\end{equation}

Suppose now that  condition (SB.1) holds.
Since  condition (A) implies
\begin{align*}
&2\tau e_{i_l}(\mu)\in \left((4j_{i_l}(\mu)-1)\pi, (4j_{i_l}(\mu)+3)\pi\right),\quad l=k+1,\dots,n,\\
&2\tau e_{i_l}(\mu)=(4j_{i_l}(\mu)+3)\pi,\quad l=1,\dots, k,
\end{align*}
using \eqref{e:EigenContin}, condition~(b) of Assumption~\ref{ass:BasiAss1Delay1}, and condition~(SB.1),  we can shrink $\epsilon>0$ so that
$$
e_{i_l}(\lambda)\in \left(\frac{(4j_{i_l}(\mu)-1)\pi}{2\tau}, \frac{(4j_{i_l}(\mu)+3)\pi}{2\tau}\right).
$$
for any $(\lambda, l)\in (\mu-\epsilon, \mu+\epsilon)\times\{k+1,\cdots,n\}$,
and that
$$
e_{i_l}(\lambda)\in
\begin{cases}
\left(\frac{(4j_{i_l}(\mu)-1)\pi}{2\tau}, \frac{(4j_{i_l}(\mu)+3)\pi}{2\tau}\right)\quad&\text{if $\lambda\in (\mu-\epsilon, \mu)$},\vspace{1mm}\\
\left(\frac{(4j_{i_l}(\mu)+3)\pi}{2\tau}, \frac{(4j_{i_l}(\mu)+7)\pi}{2\tau}\right)\quad&\text{if $\lambda\in (\mu, \mu+\epsilon)$}
\end{cases}
$$
for $l=1,\dots,k$. Then
\begin{equation*}
\nu_{\tau,-J_n}(\Upsilon_\lambda)=0\quad \forall\lambda\in (\mu-\epsilon, \mu+\epsilon)\setminus\{\mu\}
\end{equation*}
by (\ref{e:DiagDelay10}), and
$$
j_{i_l}(\lambda)=j_{i_l}(\mu)\quad\forall (\lambda, l)\in (\mu-\epsilon, \mu+\epsilon)\times\{k+1,\cdots,n\}
$$
and
\begin{equation*}
j_{i_l}(\lambda)=\begin{cases}
j_{i_l}(\mu)\quad&\text{if $(\lambda,l)\in (\mu-\epsilon, \mu)\times\{1,\cdots,k\}$},\vspace{1mm}\\
j_{i_l}(\mu)+1\quad&\text{if $(\lambda, l)\in (\mu, \mu+\epsilon)\times\{1,\cdots,k\}$}.
\end{cases}
\end{equation*}
By  (\ref{e:DiagDelay10+}), the last two lines imply
\begin{equation*}
    i_{\tau,-J_n}(\Upsilon_\lambda)=
    \begin{cases}
2j_1(\mu)+\cdots+ 2j_n(\mu) +n+[-n/2]\quad&\text{if $\lambda\in (\mu-\epsilon, \mu)$},\\
2j_1(\mu)+\cdots+ 2j_n(\mu)+ 2k +n+[-n/2]\quad&\text{if $\lambda\in (\mu, \mu+\epsilon)$}.
\end{cases}
\end{equation*}

Similarly, in the case of condition (SB.2), we have
\begin{equation*}
    i_{\tau,-J_n}(\Upsilon_\lambda)=
    \begin{cases}
2j_1(\mu)+\cdots+ 2j_n(\mu)+ 2k +n+[-n/2]\quad&\text{if $\lambda\in (\mu-\epsilon, \mu)$},\\
2j_1(\mu)+\cdots+ 2j_n(\mu) +n+[-n/2]\quad&\text{if $\lambda\in (\mu, \mu+\epsilon)$}
\end{cases}
\end{equation*}
and $\nu_{\tau,-J_n}(\Upsilon_\lambda)=0$ if $0<|\lambda-\mu|<\epsilon)$.

Applying the first part of Theorem~1.14 in  \cite{Lu11} to the problem (\ref{e:Delay6Auto3}), we get
the following alternatives:
 \begin{enumerate}
\item[\rm (i')] The problem (\ref{e:Delay6Auto3}) with $\lambda=\mu$ has a sequence of $\mathbb{R}$-distinct solutions,
$z^l\ne 0$ ($l=1,2,\dots$) such that $\|z^l|_I\|_{C^1}\to 0$ for any compact interval $I\subset\mathbb{R}$.
\item[\rm (ii')] There exist left and right  neighborhoods $\Lambda^-$ and $\Lambda^+$ of $\mu$ in $\Lambda$
and nonnegative integers $n^+$ and $n^-\ge 0$ satisfying $n^++n^-\ge \nu_{\tau,-J_n}(\Upsilon_\mu)/2=k$,
such that for $\lambda\in\Lambda^-\setminus\{\mu\}$ (resp. $\lambda\in\Lambda^+\setminus\{\mu\}$),
 problem (\ref{e:Delay6Auto3}) with parameter  $\lambda$  has at least $n^-$ (resp. $n^+$) $\R$-distinct
 solutions, $z_\lambda^i\ne 0$ ($i=1,\dots,n^-$ resp. $n^+$)
 whose restrictions to $[0,\tau]$  converge to zero in $C^1([0,\tau];(\mathbb{R}^{n})^m)$  as $\lambda\to\mu$.
\end{enumerate}

Define $u^l=\Upsilon(2,n)^{-1}z^l$ for $l=1,2,\dots$,
and $u^i_\lambda=\Upsilon(2,n)^{-1}z_\lambda^i\ne 0$ for $i=1,\dots,n^-$ (resp. $n^+$).
Then all $u^l\ne 0$ and satisfy the problem (\ref{e:Delay6}) with $\lambda=\mu$;
and for $\lambda\in\Lambda^-\setminus\{\mu\}$ (resp. $\lambda\in\Lambda^+\setminus\{\mu\}$),
$u_\lambda^i\ne 0$ ($i=1,\dots,n^-$ (resp. $n^+$)) and satisfy the problem (\ref{e:Delay6}) with parameter value $\lambda$.
Let us write
$$
u^l(t):=({x}^l_1(t)^\top, {x}^l_2(t)^\top)^\top\in ({\mathbb{R}}^{n})^2\quad\text{and}\quad
u^i_\lambda(t):=({x}^i_{\lambda,1}(t)^\top, {x}^i_{\lambda,2}(t)^\top)^\top\in ({\mathbb{R}}^{n})^2.
$$
By Claims~\ref{cl:delay1.1} and \ref{cl:delay1.2}, ${x}^l_2(t)={x}^l_1(t-\tau)$, ${x}^i_{\lambda,2}(t)={x}^i_{\lambda,1}(t-\tau)$,
and ${x}^l:={x}^l_1$ and ${x}^i_{\lambda}:= {x}^i_{\lambda,1}$
satisfy the expected alternatives (i) and (ii).

Next, if $k>1$, then $\nu_{\tau,-J_n}(\Upsilon_\mu)=2k>3$
by (\ref{e:EigenContinV}).
Applying the second part of Theorem~1.14 in  \cite{Lu11} to the problem (\ref{e:Delay6Auto3})
yields that at least one of (i')
or one of the following holds:
\begin{enumerate}
\item[\rm (iii')]  For every $\lambda\in\Lambda\setminus\{\mu\}$ sufficiently close to $\mu$, there is a
nonzero solution ${v}_\lambda$  of the problem (\ref{e:Delay6Auto3}) with parameter value $\lambda$, such that $v_\lambda$ converges to zero
in the $C^1$ norm on any compact interval $I\subset\R$  as $\lambda\to\mu$.

\item[\rm (iv')] For a given $\varepsilon>0$  there is a one-sided  neighborhood $\Lambda^0$ of $\mu$ within $\Lambda$ such that, for any $\lambda\in\Lambda^0\setminus\{\mu\}$,  problem (\ref{e:Delay6Auto3})
with parameter  $\lambda$ has either
\begin{itemize}
	\item[$\bullet$]
 infinitely many $\mathbb{R}$-distinct nonzero solutions
${v}_\lambda^l$ satisfying $\|{v}_\lambda^l|_{[0,\tau]}\|_{C^1}<
\varepsilon$ ($l=1,2,\dots$), or
\item[$\bullet$]  at least two $\mathbb{R}$-distinct nonzero solutions $\hat{v}_\lambda^1$ and $\hat{v}_\lambda^2$ satisfying inequalities $\|\hat{v}_\lambda^i|_{[0, \tau]}\|_{C^1}<\varepsilon$ ($i=1,2$) and
$$
\hspace{-5mm}\int^{\tau}_0\left[\frac{1}{2}(J_n\dot{\hat{v}}_\lambda^1(t),\hat{v}^1_\lambda(t))_{\mathbb{R}^{2n}}+ \check{H}(\lambda, \hat{v}_\lambda^1(t))\right]dt
\ne \int^{\tau}_0\left[\frac{1}{2}(J_n\dot{\hat{v}}^2_\lambda(t), \hat{v}_\lambda^2(t))_{\mathbb{R}^{2n}}+ \check{H}(\lambda, \hat{v}_\lambda^2(t))\right]dt.
$$
\end{itemize}
\end{enumerate}
Note that the second equality in the problem (\ref{e:Delay6Auto3}) implies
$\|{v}_\lambda^l|_{[0, 2\tau]}\|_{C^1}=\|{v}_\lambda^l|_{[0, \tau]}\|_{C^1}<\varepsilon$ ($l=1,2,\dots$)
and $\|\hat{v}_\lambda^i|_{[0, 2\tau]}\|_{C^1}=\|\hat{v}_\lambda^i|_{[0, \tau]}\|_{C^1}<\varepsilon$ ($i=1,2$).
As above let us define $u_\lambda=\Upsilon(2,n)^{-1}v_\lambda$,
 ${u}_\lambda^l=\Upsilon(2,n)^{-1}{v}_\lambda^l$ ($l=1,2,\dots$), and
$\hat{u}_\lambda^i=\Upsilon(2,n)^{-1}\hat{v}_\lambda^i$ ($i=1,2$).
They satisfy the problem (\ref{e:Delay6}) with parameter value $\lambda$, and by (\ref{e:Delay6Auto3+}) we have
$$
\int^{\tau}_0\left[\frac{1}{2}(J_n\dot{\hat{v}}_\lambda^i(t),\hat{v}^i_\lambda(t))_{\mathbb{R}^{2n}}+ \check{H}(\lambda, \hat{v}_\lambda^i(t))\right]dt
= \int^{\tau}_0\left[\frac{1}{2}(\dot{\hat{u}}^i_\lambda(t), J_n\hat{u}_\lambda^i(t))_{\mathbb{R}^{2n}}+ {H}(\lambda, \hat{u}_\lambda^i(t))\right]dt
$$
for $i=1,2$, and hence
$$
\int^{\tau}_0\left[\frac{1}{2}(\dot{\hat{u}}^1_\lambda(t), J_n\hat{u}_\lambda^1(t))_{\mathbb{R}^{2n}}+ {H}(\lambda, \hat{u}_\lambda^1(t))\right]dt
\ne \int^{\tau}_0\left[\frac{1}{2}(\dot{\hat{u}}^2_\lambda(t), J_n\hat{u}_\lambda^2(t))_{\mathbb{R}^{2n}}+ {H}(\lambda, \hat{u}_\lambda^2(t))\right]dt.
$$
As above we  write $u_\lambda(t):=({x}_{\lambda,1}(t)^\top, {x}_{\lambda,2}(t)^\top)^\top\in ({\mathbb{R}}^{n})^2$ and
$$
u^l_\lambda(t):=({x}^l_{\lambda,1}(t)^\top, {x}^l_{\lambda,2}(t)^\top)^\top\in ({\mathbb{R}}^{n})^2\quad\text{and}\quad
\hat{u}^i_\lambda(t):=(\hat{x}^i_{\lambda,1}(t)^\top, \hat{x}^i_{\lambda,2}(t)^\top)^\top\in ({\mathbb{R}}^{n})^2.
$$
Then ${x}_{\lambda,2}(t)={x}_{\lambda,1}(t-\tau)$,
${x}^l_{\lambda,2}(t)={x}^l_{\lambda,1}(t-\tau)$, $\hat{x}^i_{\lambda,2}(t)=\hat{x}^i_{\lambda,1}(t-\tau)$,
and
$$
{x}_{\lambda}(t):={x}_{\lambda,1}(t),\quad
{x}^l_\lambda:={x}^l_{\lambda,1}\quad\text{and}\quad \hat{x}^i_{\lambda}:= \hat{x}^i_{\lambda,1}
$$
satisfy the expected alternatives (iii) and (iv) by Claim~\ref{cl:delay1.2}.

\noindent\textbf{Step 4}(\textsf{Proof of (II)}).
Let $\epsilon$ be as in the paragraph below
(\ref{e:EigenContinV}). We may assume
$(\lambda_m^+)$ and $(\lambda_m^-)$ are contained in
 $(\mu-\epsilon, \mu+\epsilon)\setminus\{\mu\}$.
Since $e_{i_l}(\lambda_m^-)< e_{i_l}(\mu)<e_{i_l}(\lambda_m^+)$
for all $m\in\mathbb{N}$ and  $l=1,\dots,k$, it holds that
either $(\lambda_m^-)_m\subset (\mu-\epsilon, \mu)$ and
$(\lambda_m^+)_m\subset (\mu, \mu+\epsilon)$ or
$(\lambda_m^-)_m\subset (\mu, \mu+\epsilon)$ and
$(\lambda_m^+)_m\subset (\mu-\epsilon, \mu)$.
By the arguments in the paragraph below
(\ref{e:EigenContinV}), we have for any $m\in\mathbb{N}$,
\begin{equation*}
\nu_{\tau,-J_n}(\Upsilon_{\lambda_m^+})=\nu_{\tau,-J_n}(\Upsilon_{\lambda_m^-})=0,
\end{equation*}
and
\begin{equation*}
    i_{\tau,-J_n}(\Upsilon_\lambda)=
    \begin{cases}
2j_1(\mu)+\cdots+ 2j_n(\mu) +n+[-n/2]\quad&\text{if $\lambda=\lambda_m^-$},\\
2j_1(\mu)+\cdots+ 2j_n(\mu)+ 2k +n+[-n/2]\quad&\text{if $\lambda=\lambda_m^+$}
\end{cases}
\end{equation*}
in the first case, and
\begin{equation*}
    i_{\tau,-J_n}(\Upsilon_\lambda)=
    \begin{cases}
2j_1(\mu)+\cdots+ 2j_n(\mu)+ 2k +n+[-n/2]\quad&\text{if $\lambda=\lambda_m^+$},\\
2j_1(\mu)+\cdots+ 2j_n(\mu) +n+[-n/2]\quad&\text{if $\lambda=\lambda_m^-$}
\end{cases}
\end{equation*}
in the second case. Consequently, for all $m\in\mathbb{N}$,
$$
[i_{\tau,-J_n}(\Upsilon_{\lambda_m^-}), i_{\tau,-J_n}(\Upsilon_{\lambda_m^-})+
\nu_{\tau,-J_n}(\Upsilon_{\lambda_m^-})]\cap
[i_{\tau,-J_n}(\Upsilon_{\lambda_m^+}), i_{\tau,-J_n}(\Upsilon_{\lambda_m^+})+
\nu_{\tau,-J_n}(\Upsilon_{\lambda_m^+})]=\emptyset.
$$
It follows from \cite[Theorem~1.5(II)]{Lu11} that
$(\mu,0)$ is a bifurcation point of (\ref{e:Delay6Auto3}),
which implies that  $(\mu,0)$  is a bifurcation point of (\ref{e:Delay6Auto13}).
\end{proof}

\begin{proof}[\bf Proof of Corollary~\ref{cor:bif-per3DelayC}]
Define $\Lambda=\mathbb{R}$ and $V:\mathbb{R}\times\mathbb{R}^n\to\mathbb{R}$ by $V(\lambda,x)=\lambda W(x)$.
Clearly, $V$ satisfies Assumption~\ref{ass:BasiAss1Delay1}, and $e_i(\lambda)=\lambda e_i$ ($i=1,\dots,n$).

Let $(\mu,0)$ with $\mu\in\mathbb{R}\setminus\{0\}$ be a bifurcation point of (\ref{e:Delay6Auto13C-}). This very $\mu$ gives rise to a bifurcation point of the system
\begin{equation}\label{e:Delay6Auto13C}
  \dot{x}( t ) = \lambda\nabla W (x ( t - 1) )\quad\text{and}\quad
  x( t + 2) = - x( t )\;\forall t\in\mathbb{R}.
    \end{equation}
Applying  Theorem~\ref{th:bif-per3Delay} with $\tau = 1$
to (\ref{e:Delay6Auto13C}) yields
 $\mu e_l\in\frac{\pi(4\mathbb{Z}+3)}{2\tau}$ for some $l$,
which establishes the first conclusion.

It remains to prove the remaining conclusions.
For some $\mu\in\mathbb{R}$, assume that condition (G) holds.
It easily follows that $V$ satisfies conditions (A)
and either (SB.1) or (SB.2) in Theorem~\ref{th:bif-per3Delay} with $\tau = 1$.
 Then,  for the problem (\ref{e:Delay6Auto13C})
   there exist two possible alternatives:
 \begin{enumerate}
\item[\rm (i')] The problem (\ref{e:Delay6Auto13C})  with $\lambda=\mu$ admits a sequence of $\mathbb{R}$-distinct solutions $u^l\ne 0$ ($l=1,2,\dots$) such that, for any compact interval $I\subset\mathbb{R}$,
     $\|u^l|_I\|_{C^1}\to 0$ as $l\to\infty$.
\item[\rm (ii')] There exist left and right  neighborhoods $\Lambda^-$ and $\Lambda^+$ of $\mu$ in $\Lambda$ not containing the point
$0\in\mathbb{R}$ (since $\mu\ne 0$)
and nonnegative integers $n^+$ and $n^-\ge 0$ satisfying $n^++n^-\ge k$, such that
for $\lambda\in\Lambda^-\setminus\{\mu\}$ (resp. $\lambda\in\Lambda^+\setminus\{\mu\}$),
(\ref{e:Delay6Auto13C}) with parameter value $\lambda$  has at least $n^-$ (resp. $n^+$) $\R$-distinct
 solutions $u_\lambda^i\ne 0$ ($i=1,\dots,n^-$ (resp. $n^+$))
which converge to zero in $C^1_{\rm loc}(\mathbb{R}, \mathbb{R}^n)$  as $\lambda\to\mu$.
\end{enumerate}
Moreover, if $n>1$ and $k>1$, then at least one of (i'), (iii'), and (iv') holds,
where (i') is as stated previously, and
\begin{enumerate}
\item[\rm (iii')]  For every $\lambda\in\Lambda\setminus\{\mu\}$ near $\mu$, there is a
 nonzero solution ${u}_\lambda$  of (\ref{e:Delay6Auto13C})
 with parameter value $\lambda$, such that $u_\lambda$ converges to zero
 in $C^1_{\rm loc}(\mathbb{R}, \mathbb{R}^n)$  as $\lambda\to\mu$.

\item[\rm (iv')] For a given $\varepsilon>0$,  there is a one-sided  neighborhood $\Lambda^0$ of $\mu$ in $\Lambda$ not containing the point
$0\in\mathbb{R}$, such that
for any $\lambda\in\Lambda^0\setminus\{\mu\}$, problem (\ref{e:Delay6Auto13C}) with parameter  $\lambda$
has either
\begin{itemize}
 \item[$\bullet$]  infinitely many $\mathbb{R}$-distinct nonzero solutions ${u}_\lambda^l$
satisfying $\|{u}_\lambda^l|_{[0,2]}\|_{C^1}<\varepsilon$, $l=1,2,\dots$,  or
\item[$\bullet$]
 at least two $\mathbb{R}$-distinct nonzero solutions $\hat{u}_\lambda^1$ and $\hat{u}_\lambda^2$
satisfying $\|\hat{u}_\lambda^i|_{[0, 2]}\|_{C^1}<\varepsilon$, $i=1,2$,
and such that the following  inequality holds:
\begin{align*}
&(\hat{u}^1_\lambda(0), \hat{u}^1_\lambda(1))_{\mathbb{R}^n}-
 \int^{1}_0(\dot{\hat{u}}^1_\lambda(t), \hat{u}^1_\lambda(t-1))_{\mathbb{R}^n}dt+\lambda\int^{2}_0 W(\hat{u}^1_\lambda(t))dt\nonumber\\
&\ne (\hat{u}^2_\lambda(0), \hat{u}^2_\lambda(1))_{\mathbb{R}^n}-
\int^{1}_0(\dot{\hat{u}}^2_\lambda(t), \hat{u}^2_\lambda(t-1))_{\mathbb{R}^n}dt+ \lambda\int^{2}_0W(\hat{u}^2_\lambda(t))dt.
\end{align*}
\end{itemize}
\end{enumerate}
For each function $u^l$ in (i'), the function $x^l$ defined by
$x^l(t):=u^l(t/\mu)$ satisfies (\ref{e:Delay6Auto13C-})  with $\lambda=\mu$.
Moreover, $x^l\to 0$ in $C^1_{\rm loc}(\mathbb{R}, \mathbb{R}^n)$
as $l\to\infty$.

Similarly, the sequences $u_\lambda^i$ in (ii'), for $i=1,\dots,n^-$
and $i=1,\dots,n^+$ respectively, yield $x_\lambda^i(t):=u_\lambda^i(t/\lambda)$.
 These $x^i_\lambda$ satisfy (\ref{e:Delay6Auto13C-}) with parameter value $\lambda$,
  and $x^i_\lambda\to 0$ in $C^1_{\rm loc}(\mathbb{R}, \mathbb{R}^n)$
as $l\to\infty$.

Each $u_\lambda$ in (iii') gives rise to  $x_\lambda(t):=u_\lambda(t/\lambda)$
satisfying (\ref{e:Delay6Auto13C-}) with parameter value $\lambda$,
and $x_\lambda\to 0$  in $C^1_{\rm loc}(\mathbb{R}, \mathbb{R}^n)$  as $\lambda\to\mu$.

For ${u}_\lambda^l$, $\hat{u}_\lambda^1$ and $\hat{u}_\lambda^2$ in (iv'), we define
$$
{x}_\lambda^l(t):={u}_\lambda^l(t/\lambda),\quad
\hat{x}_\lambda^1(t):=\hat{u}_\lambda^1(t/\lambda),\quad
\hat{x}_\lambda^2(t):=\hat{u}_\lambda^2(t/\lambda).
$$
They satisfy (\ref{e:Delay6Auto13C-}) with parameter value $\lambda$.
A direct calculation leads to the following identity:
$$
\|{u}_\lambda^l\|_{C^1([0, 2])}=
\begin{cases}
 \|{x}_\lambda^l\|_{C^0([0, 2\lambda])}+\lambda
\|\dot{x}_\lambda^l\|_{C^0([0, 2\lambda])}\;&\text{if}\;\lambda>0,\vspace{1mm}\\
\|{x}_\lambda^l\|_{C^0([2\lambda,0])}+|\lambda|
\|\dot{x}_\lambda^l\|_{C^0([2\lambda,0])}\;&\text{if}\;\lambda<0.
   \end{cases}
$$
When $\lambda<0$, since ${u}_\lambda^l(t+2)=-{u}_\lambda^l(t)$
 or equivalently (by a shift of the time variable),  ${u}_\lambda^l(t-2)=-{u}_\lambda^l(t)$,
we have ${x}_\lambda^l(t-2\lambda)=-{x}_\lambda^l(t)$
(hence $\dot{x}_\lambda^l(t-2\lambda)=-\dot{x}_\lambda^l(t)$).
From these relations we deduce
$$
\|{x}_\lambda^l\|_{C^0([2\lambda,0])}+|\lambda|
\|\dot{x}_\lambda^l\|_{C^0([2\lambda,0])}=\|{x}_\lambda^l\|_{C^0([0, -2\lambda])}+|\lambda|
\|\dot{x}_\lambda^l\|_{C^0([0, -2\lambda])}.
$$
Hence there always holds
$$
\|{u}_\lambda^l\|_{C^1([0, 2])}=\|{x}_\lambda^l\|_{C^0([0, 2|\lambda|])}+|\lambda|
\|\dot{x}_\lambda^l\|_{C^0([0, 2|\lambda|])}.
$$
The same holds for $\hat{u}^i_\lambda$ and $\hat{x}^i_\lambda$ ($i=1,2$).
Moreover, it is easily computed that for $i=1,2$,
\begin{align*}
&(\hat{u}^i_\lambda(0), \hat{u}^i_\lambda(1))_{\mathbb{R}^n} - \int^{1}_0(\dot{\hat{u}}^i_\lambda(t), \hat{u}^i_\lambda(t-1))_{\mathbb{R}^n}dt + \lambda\int^{2}_0 W(\hat{u}^i_\lambda(t))dt \\
&= (\hat{x}^i_\lambda(0), \hat{x}^i_\lambda(\lambda))_{\mathbb{R}^n} - \int^{\lambda}_0(\dot{\hat{x}}^i_\lambda(t), \hat{x}^i_\lambda(t-\lambda))_{\mathbb{R}^n}dt + \int^{2\lambda}_0 W(\hat{x}^i_\lambda(t))dt.
\end{align*}
From these and (iv') we immediately arrive at (iv).
\end{proof}


 When the function $V$ in Assumption~\ref{ass:BasiAss1Delay1} is affine in $\lambda$, the conditions on the eigenvalues $e_k(\lambda)$ in Theorem~\ref{th:bif-per3Delay} are easily proved.  In this case, Corollary~1.15 of \cite{Lu11} yields the following result.

\begin{theorem}\label{th:bif-per2Delay++}
Let $V_0, V_1:  \mathbb{R}^n \to \mathbb{R}$ be even $C^2$ functions,
with the Hessian $\nabla^2{V}_1({0})$ being either positive or negative definite.
Denoted the zero solution of the problem
\begin{eqnarray}\label{e:LinearDelay2+}
 \left\{\begin{array}{ll}
 &\dot{x}( t ) = \nabla V_0 (x(t-\tau))+
 \lambda\nabla V_1 (x ( t - \tau )),\\
 & x( t + 2 \tau ) = - x( t )\;\forall t
   \end{array}\right.
  \end{eqnarray}
  by ${0}^\lambda$, and
 the dimension of the solution space of the linear delay problem
 \begin{eqnarray}\label{e:LinearDelay2}
 \left\{\begin{array}{ll}
 &\dot{x}( t ) = \nabla^2 V_0 ({0})x(t-\tau) +
 \lambda\nabla^2 V_1 ({0})x ( t - \tau ),\\
 & x( t + 2\tau ) = - x( t )\;\forall t
   \end{array}\right.
  \end{eqnarray}
by $\nu_{\tau}({0}^\lambda)$. 
Then
\begin{enumerate}
\item[\rm (I)] $\Sigma_\tau:=\{\lambda\in\mathbb{R}\,|\, \nu_{\tau}({0}^\lambda)>0\}$
is a discrete set in $\mathbb{R}$.

\item[\rm (II)]
 For each $\mu\in{\Sigma}_\tau$,
	 at least  one of the following alternatives holds true:
	\begin{enumerate}
		\item[\rm (i)] System (\ref{e:LinearDelay2+}) with $\lambda=\mu$ admits a sequence of $\R$-distinct nonzero solutions
		${x}^{k}$ for $k=1,2,\cdots$,
		which  converges to zero in $C^1_{\rm loc}(\mathbb{R},\R^{n})$ as $k\to\infty$.
		\item[\rm (ii)]
		There exist a left neighborhood $\Lambda^-$ and a right neighborhood $\Lambda^+$ of $\mu$ in $\mathbb{R}$,
		and nonnegative integers $n^+$ and $n^-$ with $n^++n^-\ge \nu_{\tau}(0^\mu)/2$,
		such that for each $\lambda\in\Lambda^-\setminus\{\mu\}$ (resp. $\lambda\in\Lambda^+\setminus\{\mu\}$),
		system (\ref{e:LinearDelay2+}) with parameter $\lambda$ possesses at least $n^-$ (resp. $n^+$) $\mathbb{R}$-distinct nonzero solutions $x^i_{\lambda}$, $i=1,\dots,n^-$ (resp. $i=1,\dots,n^+$),
		which satisfy $x^i_{\lambda}\to 0$ in $C^1_{\rm loc}(\mathbb{R},\mathbb{R}^{n})$ as $\lambda\to\mu$.
			\end{enumerate}
	Moreover, if $\nu_{\tau}(0^\mu)\ge 3$,
then at least one of  (i), (iii), and (iv) holds, with (i) as stated previously, and with (iii) and (iv) given by:
	\begin{enumerate}
		\item[\rm (iii)]
		For every $\lambda\in\Lambda\setminus\{\mu\}$ sufficiently close to $\mu$, there exists a nonzero solution $\bar{x}^\lambda$ of system (\ref{e:LinearDelay2+}) with parameter $\lambda$, such that $\bar{x}^\lambda\to 0$ in $C^1_{\rm loc}(\mathbb{R},\mathbb{R}^{n})$ as $\lambda\to\mu$.
		
		\item[\rm (iv)] For any given $\varepsilon>0$, there exists a one-sided neighborhood $\Lambda^0$ of $\mu$ in $\Lambda$ such that for each $\lambda\in\Lambda^0\setminus\{\mu\}$, system (\ref{e:LinearDelay2+}) with parameter $\lambda$ has either:
		\begin{itemize}
			\item infinitely many $\mathbb{R}$-distinct nonzero solutions $\bar{x}^k_{\lambda}$ ($k=1,2,\dots$) satisfying \\ $\|\bar{x}^k_{\lambda}|_{[0,\tau]}\|_{C^1}<\varepsilon$, or
			\item at least two $\mathbb{R}$-distinct nonzero solutions $\hat{x}^1_{\lambda},\hat{x}^2_{\lambda}$ with $\|\hat{x}^i_{\lambda}|_{[0,\tau]}\|_{C^1}<\varepsilon$ ($i=1,2$) and
\begin{align*}
        & \bigl(\hat{x}_\lambda^1(0),  \hat{x}_\lambda^1(\tau) \bigr)_{\mathbb{R}^{n}}
          - \int_0^{\tau} \bigl( \dot{\hat{x}}_\lambda^1(t),  \hat{x}_\lambda^1(t-\tau) \bigr)_{\mathbb{R}^{n}} \, dt
          + \int_{0}^{2\tau} V\bigl(\lambda, \hat{x}_\lambda^1(t)\bigr) \, dt \\
        \ne \; &
        \bigl(\hat{x}_\lambda^2(0),  \hat{x}_\lambda^2(\tau) \bigr)_{\mathbb{R}^{n}}
          - \int_0^{\tau} \bigl( \dot{\hat{x}}_\lambda^2(t),  \hat{x}_\lambda^2(t-\tau) \bigr)_{\mathbb{R}^{n}} \, dt
          + \int_{0}^{2\tau} V\bigl(\lambda, \hat{x}_\lambda^2(t)\bigr) \, dt,
    \end{align*}
where $V(\lambda,  x) = V_0(x) + \lambda V_1(x)$.
		\end{itemize}
		\end{enumerate}
\end{enumerate}
\end{theorem}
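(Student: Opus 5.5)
The plan is to run the reduction of Step~1 in the proof of Theorem~\ref{th:bif-per3Delay} with $V(\lambda,x)=V_0(x)+\lambda V_1(x)$, and then to apply \cite[Corollary~1.15]{Lu11} to the resulting $\check H$ in place of \cite[Theorem~1.14]{Lu11}.

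\textbf{Step 1: the Hamiltonian problem.} Put $H(\lambda,x_1,x_2)=V(\lambda,x_1)+V(\lambda,x_2)$ and $\check H(\lambda,z)=H(\lambda,\Upsilon(2,n)^{-1}z)$ as in (\ref{e:M-invariantDelay3}). Since $\Upsilon(2,n)$ only swaps the two blocks, we get
\[
\check H(\lambda,z)=\check H_0(z)+\lambda\check H_1(z),
\]
where $\check H_i(z_1,z_2)=V_i(z_1)+V_i(z_2)$. Both $\check H_i$ are $C^2$ and invariant under $M_{2,n}=-J_n$, because $V_i$ is even. Moreover $\nabla^2\check H_1(0)=\mathrm{diag}(\nabla^2V_1(0),\nabla^2V_1(0))$ is definite, being built from the definite matrix $\nabla^2V_1(0)$. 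Also ${\rm Ker}(M_{2,n}-I_{2n})=0$, since $-J_n$ has only the eigenvalues $\pm\sqrt{-1}$. These are exactly the hypotheses I expect \cite[Corollary~1.15]{Lu11} to require: a Hamiltonian affine in $\lambda$ with a definite Hessian of the $\lambda$-part, for the boundary condition $z(t+\tau)=-J_nz(t)$.

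\textbf{Step 2: discreteness of $\Sigma_\tau$.} By Claim~\ref{cl:delay1.3}, $\nu_\tau(0^\lambda)$ equals the dimension of the solution space of (\ref{e:Delay6Auto4}), namely $\nu_{\tau,-J_n}(\Upsilon_\lambda)$. Either Corollary~1.15 gives discreteness of this degenerate set directly, or we argue via (\ref{e:DiagDelay10}) from Theorem~\ref{th:PIndex1}, which I would also use for a direct check. Assume, say, $\nabla^2V_1(0)>0$. Then the eigenvalues $e_l(\lambda)$ of $\nabla^2V_0(0)+\lambda\nabla^2V_1(0)$ are strictly increasing in $\lambda$, continuously. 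The reason is the Courant--Fischer min-max principle, which gives $e_l(\lambda')-e_l(\lambda)\ge(\lambda'-\lambda)\,\min\sigma(\nabla^2V_1(0))$. In the negative definite case the same bound shows they are strictly decreasing. Hence on a bounded $\lambda$-interval each $e_l$ crosses the discrete set $\frac{(4\mathbb Z+3)\pi}{2\tau}$ only finitely many times, and each crossing happens at a single $\lambda$. This proves (I).

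\textbf{Step 3: the alternatives.} At each $\mu\in\Sigma_\tau$, Step~2 also gives the strict monotone crossing (SB.1) (or (SB.2) in the negative case) for the indices with $e_l(\mu)\in\frac{(4\mathbb Z+3)\pi}{2\tau}$, with $k=\nu_\tau(0^\mu)/2$. So (II) could alternatively be deduced from Theorem~\ref{th:bif-per3Delay}(III). One point needs care: Theorem~\ref{th:bif-per3Delay} requires $n>1,k>1$ for the second alternative, whereas here the hypothesis is $\nu_\tau(0^\mu)\ge3$. Since $\nu$ is even, this forces $\nu\ge4$, i.e.\ $k\ge2$, and $k\le n$ then gives $n\ge2$.

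The solutions for (\ref{e:Delay6Auto3}) are translated back exactly as in Step~3 of that proof. We set $u=\Upsilon(2,n)^{-1}z$, take the first component $x=x_1$, and use (\ref{e:Delay6Auto3+}) together with the last assertion of Claim~\ref{cl:delay1.2} to convert the action inequality into the stated one. The norm bounds on $[0,\tau]$ follow from $\|z|_{[0,\tau]}\|_{C^1}$, because $x$ and $x(\cdot-\tau)$ are components of $u$.

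The main obstacle I foresee is checking that the hypotheses of \cite[Corollary~1.15]{Lu11} match this setting. In particular the definiteness of $\nabla^2\check H_1(0)$ may need to be compatible with the symmetry $M_{2,n}$. If the match fails, I would fall back on the monotonicity argument of Step~2 together with Theorem~\ref{th:bif-per3Delay}.
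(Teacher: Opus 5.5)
Your proposal is correct and follows the paper's proof. After the reduction $x\mapsto z=\Upsilon(2,n)(x(t)^\top,x(t-\tau)^\top)^\top$, the paper applies Corollary~1.15 of \cite{Lu11} to $\check H=\check H_0+\lambda\check H_1$. It uses exactly the three facts you list ($-J_n$-invariance from evenness, definiteness of $\nabla^2\check H_1(0)$, and ${\rm Ker}(-J_n-I_{2n})=\{0\}$), identifies $\nu_\tau(0^\lambda)=\nu_{\tau,-J_n}(\Upsilon_\lambda)$ via Claim~\ref{cl:delay1.3}, and pulls the solutions back as in Theorem~\ref{th:bif-per3Delay}.

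Your fallback is also sound, and it is the observation the paper makes just before stating the theorem. By Courant--Fischer, the eigenvalues of $\nabla^2V_0(0)+\lambda\nabla^2V_1(0)$ are strictly monotone in $\lambda$. Via (\ref{e:DiagDelay10}) this gives discreteness of $\Sigma_\tau$, and it also gives (SB.1) or (SB.2) for Theorem~\ref{th:bif-per3Delay}(III). Your parity remark correctly converts $\nu_\tau(0^\mu)\ge 3$ into $k\ge 2$ and hence $n\ge 2$.
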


\begin{proof}[\bf Proof]
 Define  $H_0, {H}_1: ({\mathbb{R}}^{n})^2\to\mathbb{R}$  by
$H_0 \left(x _ { 1 } ,  x _ { 2 } \right) =
 \sum^2_{i=1}V_0 \left(x _ {i } \right)$
 and ${H}_1 \left(x _ { 1 } , x _ {2} \right) = \sum^2_{i=1}{V}_1\left(x _ {i } \right)$.
Let $\Upsilon(2,n)$ be given by (\ref{e:M-invariantDelay3-}). Define
\begin{eqnarray*}
	\check{H}_i(z)=H_i({\Upsilon}(2, n)^{-1}z)\quad\forall
	z\in ({\R}^{n})^2,\;i=0,1,
\end{eqnarray*}
and ${H}, \check{H}: \mathbb{R}\times({\R}^{n})^2\to\R$  by
${H}(\lambda, z)={H}_0(z)+\lambda {H}_1(z)$
and $\check{H}(\lambda, z)=\check{H}_0(z)+\lambda \check{H}_1(z)$.
By the arguments below (\ref{e:Delay6Auto4}),
$$\nabla^2H_i\left({0}\right) =
\left( \begin{array} { c c c c }
\nabla^2V_i\left({0}\right) & 0 & \\
0 &\nabla^2V_i\left({0}\right) &
 \end{array} \right)=\nabla^2\check{H}_i({0}),\quad i=0,1.
$$
Thus, $\nabla^2\check{H}_1(0)$ is (positive or negative) definite
if and only if
the Hessian $\nabla^2{V}_1({0})$ is (positive or negative) definite.
Since $V_i$ are even and $\check{H}_i(x_1,x_2)=V_i(x_1)+V_i(x_2)$
for $i=0,1$, it is easily checked
that $\check{H}(\lambda, -J_nz)=\check{H}(\lambda, z)$
for all $(\lambda, z)\in \Lambda\times({\mathbb{R}}^{n})^2$, i.e.,
each $\check{H}(\lambda, \cdot)$
is invariant for the action of $-J_n$
on $({\mathbb{R}}^{n})^2\equiv\mathbb{R}^{2n}$.
Noting that ${\rm Ker}(-J_n-I_{2n})=\{0\}$, problem
(\ref{e:Delay6Auto4})  has no nonzero constant solutions belonging
to ${\rm Ker}(-J_n-I_{2n})$.
Let $\Upsilon_\lambda$ is  the fundamental matrix solution (with $\Upsilon_\lambda(0)=I_{2n}$) to (\ref{e:Delay6Auto4})
for the specific Hamiltonian
$\check{H}(\lambda, z)=\check{H}_0(z)+\lambda \check{H}_1(z)$.
Applying Corollary~1.15 in \cite{Lu11} to (\ref{e:Delay6Auto4}),
where it is always understood that
$\check{H}(\lambda, z)=\check{H}_0(z)+\lambda \check{H}_1(z)$,
 we obtain the following result.
  	\begin{enumerate}
		\item[\rm (I*)]
	For any $\tau>0$, the set
	$\check{\Sigma}_\tau:=\{\lambda\in\mathbb{R} \mid \nu_{\tau,-J_n}(\Upsilon_\lambda)>0\}$
		is discrete in $\mathbb{R}$.

\item[\rm (II*)] For each $\mu\in\check{\Sigma}_\tau$,
	 at least  one of the following alternatives holds true:
	\begin{enumerate}
		\item[\rm (i*)] System (\ref{e:Delay6Auto4}) with $\lambda=\mu$ admits a sequence of $\R$-distinct nonzero solutions
		${z}^{\mu,k}$ for $k=1,2,\cdots$,
		which  converges to zero in $C^1_{\rm loc}(\mathbb{R},\R^{2n})$ as $k\to\infty$.
		\item[\rm (ii*)]
		There exist a left neighborhood $\Lambda^-$ and a right neighborhood $\Lambda^+$ of $\mu$ in $\mathbb{R}$,
		and nonnegative integers $n^+$ and $n^-$ with $n^++n^-\ge \nu_{\tau,-J_n}(\Upsilon_\mu)/2$,
		such that for each $\lambda\in\Lambda^-\setminus\{\mu\}$ (resp. $\lambda\in\Lambda^+\setminus\{\mu\}$),
		system (\ref{e:Delay6Auto4}) with parameter $\lambda$ possesses at least $n^-$ (resp. $n^+$) $\mathbb{R}$-distinct nonzero solutions $z^{\lambda,i}$, $i=1,\dots,n^-$ (resp. $i=1,\dots,n^+$),
		which satisfy $z^{\lambda,i}\to 0$ in $C^1_{\rm loc}(\mathbb{R},\mathbb{R}^{2n})$ as $\lambda\to\mu$.
			\end{enumerate}
	Moreover, if $\nu_{\tau,-J_n}(\Upsilon_\mu)\ge 3$, then at least one of (i*), (iii*)
	and (iv*) holds, where
	\begin{enumerate}
		\item[\rm (iii*)]
		For every $\lambda\in\Lambda\setminus\{\mu\}$ sufficiently close to $\mu$, there exists a nonzero solution $\bar{z}^\lambda$ of system (\ref{e:Delay6Auto4}) with parameter $\lambda$, such that $\bar{z}^\lambda\to 0$ in $C^1_{\rm loc}(\mathbb{R},\mathbb{R}^{2n})$ as $\lambda\to\mu$.
		
		\item[\rm (iv*)] For any given $\varepsilon>0$, there exists a one-sided neighborhood $\Lambda^0$ of $\mu$ in $\Lambda$ such that for each $\lambda\in\Lambda^0\setminus\{\mu\}$, system (\ref{e:Delay6Auto4}) with parameter $\lambda$ has either:
		\begin{itemize}
			\item infinitely many $\mathbb{R}$-distinct nonzero solutions $\bar{z}^{\lambda,k}$ ($k=1,2,\dots$) satisfying \\ $\|\bar{z}^{\lambda,k}|_{[0,\tau]}\|_{C^1}<\varepsilon$, or
			\item at least two $\mathbb{R}$-distinct nonzero solutions $\hat{z}^{\lambda,1},\hat{z}^{\lambda,2}$ with $\|\hat{z}^{\lambda,i}|_{[0,\tau]}\|_{C^1}<\varepsilon$ ($i=1,2$) and
$\mathcal{A}_\lambda({\hat{z}}_\lambda^1)\ne
\mathcal{A}_\lambda({\hat{z}}_\lambda^2)$, where
\begin{eqnarray*}
\mathcal{A}_\lambda({\hat{z}}_\lambda^i):=
\int^{\tau}_0\left[\frac{1}{2}(J_n\dot{\hat{z}}_\lambda^i(t),
\hat{z}^i_\lambda(t))_{\mathbb{R}^{2n}}+
 H(\lambda, \hat{z}_\lambda^i(t))\right]dt,\quad i=1,2.
\end{eqnarray*}
		\end{itemize}
		\end{enumerate}
	\end{enumerate}

By Claim~\ref{cl:delay1.3}, $\nu_{\tau,-J_n}(\Upsilon_\lambda)$ equals
the dimension $\nu_{\tau}({0}^\lambda)$
of the solution space for the linear delay problem
(\ref{e:LinearDelay2}), and hence
$\check{\Sigma}_\tau=\Sigma_\tau$.
Then the conclusion (I) follows from  (I*).

As in the proof of Theorem~\ref{th:bif-per3Delay},
for $z^{\mu,k}$ in (i*) and $z^{\lambda,i}$ in (ii*), we
define $v^{\mu,k}=\Upsilon(2,n)^{-1}z^{k,\mu}$ for $k=1,2,\dots$,
and $v^{\lambda,i}=\Upsilon(2,n)^{-1}z^{\lambda,i}\ne 0$ for $i=1,\dots,n^-$ (resp. $n^+$),
and write
$$
v^{\mu,k}(t):=({x}^{\mu,k}_1(t)^\top, {x}^{\mu,k}_2(t)^\top)^\top\in ({\mathbb{R}}^{n})^2\quad\text{and}\quad
v^{\lambda,i}(t):=({x}^{\lambda,i}_1(t)^\top, {x}^{\lambda,i}_2(t)^\top)^\top\in ({\mathbb{R}}^{n})^2.
$$
It follows from  Claims~\ref{cl:delay1.1} and \ref{cl:delay1.2}  that ${x}^{\mu,k}_2(t)={x}^{\mu,k}_1(t-\tau)$ and ${x}^{\lambda,i}_{2}(t)={x}^{\lambda,i}_{1}(t-\tau)$,
and that the functions ${x}^k:={x}^{\mu,k}_1$ and ${x}^i_{\lambda}:= {x}^{\lambda,i}_{1}$
fulfill the alternatives (i) and (ii).

Assume $\nu_{\tau,-J_n}(\Upsilon_\mu)=\nu_{\tau}({0}^\lambda)\ge 3$.
For $\bar{z}^{\lambda}$ in (iii*), and
$\bar{z}^{\lambda,k}$ and $\hat{z}_\lambda^i$ (with $i=1,2$) in (iv*),
 respectively, as above we  write
 $\Upsilon(2,n)^{-1}\bar{z}^\lambda(t):=(\bar{x}^{\lambda}_1(t)^\top, \bar{x}^{\lambda}_2(t)^\top)^\top\in ({\mathbb{R}}^{n})^2$ and
\begin{align*}
&\Upsilon(2,n)^{-1}\bar{z}^{\lambda,k}(t):=(\bar{x}^k_{\lambda,1}(t)^\top, \bar{x}^k_{\lambda,2}(t)^\top)^\top\in ({\mathbb{R}}^{n})^2,\\
&\Upsilon(2,n)^{-1}\hat{z}^i_\lambda(t):=(\hat{x}^i_{\lambda,1}(t)^\top, \hat{x}^i_{\lambda,2}(t)^\top)^\top\in ({\mathbb{R}}^{n})^2.
\end{align*}
Then $\bar{x}^{\lambda}_2(t)=\bar{x}^{\lambda}_1(t-\tau)$,
$\bar{x}^k_{\lambda,2}(t)=\bar{x}^k_{\lambda,1}(t-\tau)$, $\hat{x}^i_{\lambda,2}(t)=\hat{x}^i_{\lambda,1}(t-\tau)$,
and by Claim~\ref{cl:delay1.2} functions
$$
\bar{x}^{\lambda}(t):=\bar{x}^{\lambda}_1(t),\quad
\bar{x}^k_\lambda:=\bar{x}^k_{\lambda,1}\quad\text{and}\quad \hat{x}^i_{\lambda}:= \hat{x}^i_{\lambda,1}
$$
satisfy the required alternatives (iii) and (iv)
as in the proof of Theorem~\ref{th:bif-per3Delay}.
\end{proof}



\section{Bifurcation near the trivial equilibrium of system (\ref{e:2-Delay6Auto13})}\label{sec:delay2}

\begin{theorem}\label{th:bif-per3DelayII}
Under Assumption~\ref{ass:BasiAss1Delay1},
let  $e_1(\lambda)\le e_2(\lambda)\le\cdots\le e_n(\lambda)$ be all eigenvalues of  $\nabla^2_2V\left(\lambda, 0\right)$.
Fix a delay parameter $\tau>0$.
\begin{enumerate}
\item[\rm (I)]{\rm (\textsf{Necessary condition}):}
If $(\mu,0)$ with $\mu\in\Lambda$ is a bifurcation point of
(\ref{e:2-Delay6Auto13}), then
$$
N(\mu):=\left\{1\le j\le n\,\Big|\,  \sqrt{3}e_j(\mu)\tau\in \frac{2\pi}{3}+ \pi(2\mathbb{Z}+1)\right\}\ne\emptyset.
$$

\item[\rm (II)]{\rm (\textsf{Sufficient condition}):}
Let $\mu$ be an interior point of $\Lambda$. Then
$(\mu,0)$ is a bifurcation point of
(\ref{e:2-Delay6Auto13}), provided that the following conditions hold:
   \begin{enumerate}
  \item[\rm (A)]  If $e_j(\mu)>0$, then $\sqrt{3}e_j(\mu)\tau\notin \frac{2\pi}{3}+ \pi(2\mathbb{Z}+1)$.
 \item[\rm (B)] $N(\mu)\ne\emptyset$ and
  there exist two sequences $(\lambda_m^\pm)_m\subset\Lambda$  converging to $\mu$ such that
$e_{j}(\lambda_m^-)< e_{j}(\mu)<e_{j}(\lambda_m^+)$
for all $m\in\mathbb{N}$ and  $j\in N(\mu)$.
\end{enumerate}

\item[\rm (III)]{\rm (\textsf{Alternative bifurcations of Fadell-Rabinowitz type }):}
    Let $\mu$ be an interior point of $\Lambda$ satisfying condition (A) in (II) and the following condition:
   \begin{enumerate}
   \item[\rm (SB)] $N(\mu)\ne\emptyset$ and  one of the following two conditions holds for some small $\epsilon>0$:
\begin{itemize}
\item[\rm (SB.1)] $e_{j}(\lambda)< e_{}(\mu)<e_{j}(\lambda')$
for all $(\lambda, \lambda')\in (\mu-\epsilon, \mu)\times (\mu, \mu+\epsilon)$ and $j\in N(\mu)$.

\item[\rm (SB.2)]
$e_{j}(\lambda)> e_{j}(\mu)>e_{j}(\lambda')$
for all $(\lambda, \lambda')\in (\mu-\epsilon, \mu)\times (\mu, \mu+\epsilon)$ and $j\in N(\mu)$.
  \end{itemize}
\end{enumerate}
 Then for the problem (\ref{e:2-Delay6Auto13}),
     there exist two possible alternatives:
\begin{itemize}
\item[\rm (i)] The problem (\ref{e:2-Delay6Auto13}) with $\lambda=\mu$ admits a sequence of nonzero distinct solutions
$\{x^l\}^{\infty}_{l=1}$  such that $x^l\to 0$ in $C^1_{\rm loc}(\mathbb{R},\mathbb{R}^n)$
as $l\to\infty$.
\item[\rm (ii)] There exist left and right  neighborhoods $\Lambda^-$ and $\Lambda^+$ of $\mu$ in $\Lambda$
and nonnegative integers $n^+$ and $n^-\ge 0$ satisfying $n^++n^-\ge 2\,\sharp N(\mu)$, such that for $\lambda\in\Lambda^-\setminus\{\mu\}$ (resp. $\lambda\in\Lambda^+\setminus\{\mu\}$),
(\ref{e:2-Delay6Auto13}) with parameter  $\lambda$  has at least $n^-$ (resp. $n^+$) distinct pairs of nontrivial solutions
$\{x_\lambda^i, -x_\lambda^i\}$ ($i=1,\dots,n^-$ (resp. $n^+$))
which converge to zero in $C^1_{\rm loc}(\mathbb{R}, \mathbb{R}^n)$
as $\lambda\to\mu$.
\end{itemize}
\end{enumerate}
\end{theorem}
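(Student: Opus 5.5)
The plan is to mirror the proof of Theorem~\ref{th:bif-per3Delay}, now using the three-component Kaplan--Yorke reduction.

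\textbf{Step 1: Hamiltonian reduction.} Set $H(\lambda,x_1,x_2,x_3)=V(\lambda,x_1)+V(\lambda,x_2)+V(\lambda,x_3)$ and, for a solution $x$ of (\ref{e:2-Delay6Auto13}), put $v(t)=(x(t)^\top,x(t-\tau)^\top,x(t-2\tau)^\top)^\top$. Using $x(t-3\tau)=-x(t)$ and the evenness of $V$, $v$ solves a system $\dot v=A_{3,n}\nabla_2H(\lambda,v)$, $v(t+\tau)=T_{3,n}^{-1}v(t)$. Here:
\begin{itemize}
\item $A_{3,n}$ is the block matrix with rows $(0,I_n,I_n)$, $(-I_n,0,I_n)$, $(-I_n,-I_n,0)$, which is invertible and skew-symmetric;
\item $T_{3,n}$ is the cyclic shift with a sign, satisfying $T_{3,n}^6=I$.
\end{itemize}
The solution $x$ is $6\tau$-periodic. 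Conversely, the first component of such a $v$ solves (\ref{e:2-Delay6Auto13}).

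Next I find a matrix $\Upsilon(3,n)$ with $\Upsilon^\top A_{3,n}^{-1}\Upsilon=J$, where $J$ is a standard symplectic structure on $\mathbb{R}^{3n}$ after suitable pairing. Since $3n$ may be odd, one may need to work with $\mathbb{R}^{6n}$ or the generalized Hamiltonian framework of \cite{Lu11}, which allows a general skew $A$. I would use that framework directly, as the abstract's ``(generalized) Hamiltonian'' suggests. Set $\check H=H\circ\Upsilon^{-1}$ and $M=\Upsilon T_{3,n}^{-1}\Upsilon^{-1}$. Then $M$ is symplectic orthogonal with $M^6=I$, and $\check H(\lambda,M\cdot)=\check H(\lambda,\cdot)$. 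I also check that $\mathrm{Ker}(M-I)=0$: a constant $v$ with $T v=v$ forces $x_1=x_2=x_3=-x_1$.

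\textbf{Step 2: index computation.} The linearization is block diagonal in the eigenvectors of $\nabla_2^2V(\lambda,0)$. So it splits into $n$ scalar problems $\dot y(t)=e(y(t-\tau)+y(t-2\tau))$ with $y(t+3\tau)=-y(t)$. Try $y=e^{i\omega t}$ with $e^{3i\omega\tau}=-1$, so $\omega\tau\in\frac{\pi}{3}(2\mathbb{Z}+1)$. The characteristic equation is $i\omega=e(e^{-i\omega\tau}+e^{-2i\omega\tau})$:
\begin{itemize}
\item $\omega\tau=\pm\pi/3$ gives $e^{-i\omega\tau}+e^{-2i\omega\tau}=\mp i\sqrt3$, hence $\omega=\mp\sqrt3 e$ with $\omega\tau=\pm\pi/3$.
\item $\omega\tau=\pi$ gives the sum $0$, hence $\omega=0$, which is impossible.
\item The case $\omega\tau\equiv 5\pi/3$ mod $2\pi$ is analogous.
\end{itemize}
The resulting condition is exactly $\sqrt3\,e\tau\in\frac{2\pi}{3}+\pi(2\mathbb{Z}+1)$, up to the sign bookkeeping, which I would check carefully. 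Each admissible $e_j$ contributes a real two-dimensional kernel, coming from the pair $\pm\omega$.

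Via the appendix result Theorem~\ref{th:PIndex3}, this gives
\[
\nu_{\tau,M}(\Upsilon_\lambda)=2\,\sharp N(\lambda)\cdot c ,
\]
with $c$ a fixed multiplicity (presumably $c=2$, matching $n^++n^-\ge 2\sharp N(\mu)$). It also gives $i_{\tau,M}(\Upsilon_\lambda)$ as a sum of integer-part functions of $\sqrt3 e_j(\lambda)\tau$. These are monotone step functions that jump by the nullity contribution when $e_j$ crosses a critical value.

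Condition (A) is where I expect trouble. The index formula is likely asymmetric in the sign of $e_j$: positive critical eigenvalues may produce index jumps of a different sign or parity, which would spoil the disjointness of the intervals $[i,i+\nu]$. So (A) excludes them. I therefore expect that only negative $e_j$ in $N(\mu)$ occur.

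\textbf{Step 3: conclusion.} Part (I) follows from \cite[Theorem~1.5(I)]{Lu11}, since $\nu\neq0$ forces $N(\mu)\neq\emptyset$.

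For (II) and (III), I use eigenvalue continuity (\ref{e:EigenContin}) and shrink $\epsilon$. Under (SB.1) or (SB.2), the nullity vanishes on the punctured interval, and the index jumps by exactly $\nu_{\tau,M}(\Upsilon_\mu)$ across $\mu$. Along $\lambda_m^\pm$ the index intervals are then disjoint.
\begin{itemize}
\item \cite[Theorem~1.5(II)]{Lu11} gives (II).
\item \cite[Theorem~1.14]{Lu11}, or its Fadell--Rabinowitz part, gives the alternatives with $n^++n^-\ge\nu/2$.
\end{itemize}
Solutions come in pairs $\pm x$ because of evenness, which is why (III)(ii) counts pairs. The solutions are transported back through $\Upsilon^{-1}$ and the first-component projection exactly as in Step 3 of the proof of Theorem~\ref{th:bif-per3Delay}.

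The main obstacle is Step 2, namely the precise Maslov-type index formula of Theorem~\ref{th:PIndex3} and the verification of the monotone jump under condition (A).
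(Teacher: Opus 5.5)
There is a genuine gap at the very first step. Your three-block matrix $A_{3,n}$ is not invertible. It equals $A_3\otimes I_n$ with $A_3=\begin{pmatrix}0&1&1\\-1&0&1\\-1&-1&0\end{pmatrix}$, a $3\times 3$ skew-symmetric matrix, so $\det A_{3,n}=(\det A_3)^n=0$ for every $n$ (not only when $3n$ is odd), and $\operatorname{Ker}A_{3,n}=\{(u^\top,-u^\top,u^\top)^\top\}$. Hence no $\Upsilon$ with $\Upsilon^\top A_{3,n}^{-1}\Upsilon=J$ exists, and the system on $\mathbb{R}^{3n}$ is not Hamiltonian. The remedies you mention (passing to $\mathbb{R}^{6n}$, a ``generalized'' framework) do not address this degeneracy.

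The missing idea is that the kernel direction is a first integral. Set $c(t):=x(t)-x(t-\tau)+x(t-2\tau)$. The equation gives $\dot c=0$, while $x(t+3\tau)=-x(t)$ gives $c(t+\tau)=-c(t)$, so $c\equiv 0$. Eliminating $x(t-2\tau)=x(t-\tau)-x(t)$ yields the $2n$-dimensional problem $\dot v=-J_n\nabla_2\tilde H(\lambda,v)$, $v(t+\tau)=\tilde B_{2,n}^{-1}v(t)$, with $\tilde H(\lambda,x_1,x_2)=V(\lambda,x_1)+V(\lambda,x_2)+V(\lambda,x_2-x_1)$. After conjugation by $\Upsilon(2,n)$ this becomes $\dot z=J_n\nabla_2\check H(\lambda,z)$, $z(t+\tau)=M_{3,n}z(t)$ with $M_{3,n}=\begin{pmatrix}0&I_n\\-I_n&I_n\end{pmatrix}$, and the Hessian at $0$ is $\begin{pmatrix}2B&-B\\-B&2B\end{pmatrix}$. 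This is exactly the setting of Theorem~\ref{th:PIndex3}, which you invoke in Step 2 but which does not apply to your three-block system.

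Second, $M_{3,n}$ is symplectic but not orthogonal: already $M_{3,1}^\top M_{3,1}=\begin{pmatrix}1&-1\\-1&2\end{pmatrix}\neq I_2$. So \cite[Theorem~1.14]{Lu11} is unavailable. This is why (III) asserts only Fadell--Rabinowitz alternatives and counts pairs $\{x,-x\}$ rather than $\mathbb{R}$-orbits. The paper instead uses that each $\check H(\lambda,\cdot)$ is even and applies the second part of \cite[Theorem~1.8]{Lu11}. That result gives $n^++n^-\ge\nu_{\tau,M_{3,n}}(\Upsilon_\mu)$ pairs $\{z,-z\}$, and with $\nu_{\tau,M_{3,n}}(\Upsilon_\mu)=2\sharp N(\mu)$ from \eqref{e:Add2} this is the stated bound. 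Your own characteristic-equation computation gives a two-dimensional kernel per $j\in N(\mu)$, i.e.\ $c=1$, not $c=2$. So the $\nu/2$ bound of Theorem~1.14 would yield only $\sharp N(\mu)$ even if it applied.

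Parts (I) and (II) go through as you describe once the correct two-block reduction is in place. They use \cite[Theorem~1.5]{Lu11} together with the index jump read off from \eqref{e:Add3} under (A) and (SB) (resp.\ (B)).
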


\begin{corollary}\label{cor:bif-per4DelayIIB}
Let $\Lambda \subset \mathbb{R}$ be an interval, and $V \in C(\Lambda \times \mathbb{R}, \mathbb{R})$ satisfy
the following conditions:
  \begin{enumerate}
\item[\rm (C)] For each $\lambda \in \Lambda$,  $V(\lambda, \cdot): \mathbb{R}\to \mathbb{R}$ is even and $C^2$,
which implies $\frac{\partial V}{\partial x}(\lambda, 0) = 0$.
\item[\rm (D)] $\frac{\partial V}{\partial x}(\lambda, x)$
and $\frac{\partial^2 V}{\partial x^2}(\lambda, x)$
are continuous in $(\lambda, x)$.
    \end{enumerate}
Fix a delay parameter $\tau>0$. Consider the problem
\begin{eqnarray}\label{e:2-Delay6Auto17*}
  \dot{x}( t ) = \frac{\partial V}{\partial x}(\lambda, x ( t - \tau) )+
  \frac{\partial V}{\partial x}(\lambda, x ( t - 2\tau) )\quad\hbox{and}\quad
  x( t +3\tau) = - x( t )\;\forall t\in\mathbb{R}.
   \end{eqnarray}
   If $(\mu,0)$ with $\mu\in\Lambda$ is a bifurcation point of
(\ref{e:2-Delay6Auto17*}), then
$$
\sqrt{3}\frac{\partial^2 V}{\partial x^2}(\mu, 0)\tau= \frac{2\pi}{3}+ (2j+1)\pi
\quad\text{for some $j\in\mathbb{Z}$}.
$$
Conversely, let $\mu$ be an interior point  of $\Lambda$. Then
$(\mu,0)$ is a bifurcation point of (\ref{e:2-Delay6Auto17*}) provided that the following conditions hold:
\begin{enumerate}
\item[\rm (E)] $\frac{\partial^2 V}{\partial x^2}(\mu, 0)<0$ and $\sqrt{3}\frac{\partial^2 V}{\partial x^2}(\mu, 0)\tau=
 \frac{2\pi}{3}+ (2j+1)\pi$  for some $j\in\mathbb{Z}$.
\item[\rm (F)] There exist two sequences $(\lambda_m^+)_m$ and $(\lambda_m^-)_m$ in
$\Lambda$  with $\lim_{m\to\infty}\lambda_m^\pm=\mu$ such that
$$
\frac{\partial^2 V}{\partial x^2}(\lambda_m^-, 0)<
 \frac{\partial^2 V}{\partial x^2}(\mu, 0)<\frac{\partial^2 V}{\partial x^2}(\lambda_m^+, 0)\quad\text{for all $m\in\mathbb{N}$}.
 $$
\end{enumerate}
Furthermore, suppose that
condition (F) is replaced by the stronger condition that
in a deleted neighborhood of $\mu$,
 $\frac{\partial^2 V}{\partial x^2}(\lambda, 0)-
 \frac{\partial^2 V}{\partial x^2}(\mu, 0)\ne 0$ and changes sign
 as $\lambda$ across $\mu$.
Then, for the problem (\ref{e:2-Delay6Auto17*}),
one of the following alternatives holds:
\begin{itemize}
	\item[\rm (i)] The problem (\ref{e:2-Delay6Auto17*}) with $\lambda=\mu$ admits a sequence of nonzero distinct solutions
	$\{x^l\}^{\infty}_{l=1}$  such that $x^l\to 0$ in $C^1_{\rm loc}(\mathbb{R},\mathbb{R})$
	as $l\to\infty$.
	\item[\rm (ii)] There exist left and right  neighborhoods $\Lambda^-$ and $\Lambda^+$ of $\mu$ in $\Lambda$
	and nonnegative integers $n^+$ and $n^-\ge 0$ satisfying $n^++n^-\ge 1$, such that for $\lambda\in\Lambda^-\setminus\{\mu\}$ (resp. $\lambda\in\Lambda^+\setminus\{\mu\}$),
	(\ref{e:2-Delay6Auto17*}) with parameter  $\lambda$  has at least $n^-$ (resp. $n^+$) distinct pairs of nontrivial solutions
	$\{x_\lambda^i, -x_\lambda^i\}$ ($i=1,\dots,n^-$ (resp. $n^+$))
	which converge to zero in $C^1_{\rm loc}(\mathbb{R}, \mathbb{R})$
	as $\lambda\to\mu$.
\end{itemize}
\end{corollary}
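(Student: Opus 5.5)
This corollary is the case $n=1$ of Theorem~\ref{th:bif-per3DelayII}, and the plan is to verify that its hypotheses reduce to (C)--(F) and its conclusions specialize as stated. Conditions (C) and (D) are exactly Assumption~\ref{ass:BasiAss1Delay1} for $n=1$, and the only eigenvalue of $\nabla^2_2V(\lambda,0)$ is $e_1(\lambda)=\frac{\partial^2 V}{\partial x^2}(\lambda,0)$. Hence $N(\mu)\subset\{1\}$, and $N(\mu)\neq\emptyset$ exactly when $\sqrt3\,e_1(\mu)\tau\in\frac{2\pi}{3}+\pi(2\mathbb{Z}+1)$.

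For the necessary condition, part (I) of Theorem~\ref{th:bif-per3DelayII} gives $N(\mu)\ne\emptyset$, which is precisely the displayed identity with some $j\in\mathbb{Z}$.

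For sufficiency, we check conditions (A) and (B) of part (II).
\begin{itemize}
\item Condition (A) concerns only positive eigenvalues. By (E), $e_1(\mu)<0$, so (A) holds vacuously. This is why the sign requirement appears in (E).
\item The second half of (E) gives $N(\mu)=\{1\}\ne\emptyset$. Together with the sequences $(\lambda_m^\pm)$ from (F), this is exactly condition (B) with $j=1$.
\end{itemize}
Part (II) then yields that $(\mu,0)$ is a bifurcation point.

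For the alternative statement, we need condition (SB). The strengthened hypothesis says that $e_1(\lambda)-e_1(\mu)\ne0$ on a punctured neighbourhood $(\mu-\epsilon,\mu+\epsilon)\setminus\{\mu\}$ and changes sign across $\mu$. By continuity of $\lambda\mapsto e_1(\lambda)$ (from (D)) and the intermediate value theorem, $e_1(\lambda)-e_1(\mu)$ has constant sign on $(\mu-\epsilon,\mu)$ and constant sign on $(\mu,\mu+\epsilon)$, and these two signs are opposite. This is exactly (SB.1) or (SB.2). Part (III) of Theorem~\ref{th:bif-per3DelayII} then gives alternative (i) verbatim, or alternative (ii) with $n^++n^-\ge 2\sharp N(\mu)=2\ge1$. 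Since the corollary only claims $n^++n^-\ge 1$, the weaker bound follows.

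The one point needing care is the sign-change step. "Changes sign as $\lambda$ crosses $\mu$" must be read as a constant sign on each side, and this comes from nonvanishing on each connected half-interval together with continuity. Everything else is direct substitution.
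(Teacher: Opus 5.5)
Your proposal is correct and is essentially the paper's own argument: the paper gives no separate proof of this corollary and treats it as the case $n=1$ of Theorem~\ref{th:bif-per3DelayII}. Specifically, (E) makes condition (A) vacuous and gives $N(\mu)=\{1\}$, (F) is condition (B), and your continuity argument turns the sign-change hypothesis into (SB.1) or (SB.2); the bound $n^++n^-\ge 2\sharp N(\mu)=2$ then gives the stated $n^++n^-\ge 1$.
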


Similarly, Theorem~\ref{th:bif-per3DelayII}
implies:

\begin{corollary}\label{cor:bif-per3DelayIIC}
	Let $W:{\mathbb{R}}^n\to\mathbb{R}$ be even and $C^2$, and let
	$e_1\le e_2\le\cdots\le e_n$ be all eigenvalues of  $\nabla^2W\left({\bf 0}\right)$.
	Consider  the problem
	\begin{eqnarray}\label{e:2-Delay6Auto13C}
		\dot{x}( t ) = \nabla W ( x ( t - \lambda) )+\nabla W (x ( t - 2\lambda) )\quad\hbox{and}\quad
		x( t +3\lambda) = - x( t )\;\forall t\in\mathbb{R}.
	\end{eqnarray}
If $(\mu,0)$ with $\mu\in\mathbb{R}$ is a bifurcation point of
(\ref{e:2-Delay6Auto13C}),
then
$$
\hat{N}(\mu):=\left\{1\le j\le n\,\Big|\, \sqrt{3}\mu e_j\in \frac{2\pi}{3}+ \pi(2\mathbb{Z}+1)\right\}\ne\emptyset.
$$
Conversely, for some $\mu\in\mathbb{R}$,  suppose that  the following  conditions are satisfied:
	\begin{enumerate}
		\item[\rm (G)]  If $\mu e_j>0$, then $\sqrt{3}\mu e_j\notin \frac{2\pi}{3}+ \pi(2\mathbb{Z}+1)$.
		\item[\rm (H)] $\hat{N}(\mu)\ne\emptyset$,
 and all numbers in $\{e_j\,|\,j\in \hat{N}(\mu)\}$ have the same plus-minus sign.
			\end{enumerate}
		Then for the problem (\ref{e:2-Delay6Auto13C}),
	there exist two possible alternatives:
	\begin{itemize}
		\item[\rm (i)] The problem (\ref{e:2-Delay6Auto13C}) with $\lambda=\mu$ admits a sequence of nonzero distinct solutions
		$\{x^l\}^{\infty}_{l=1}$  such that $x^l\to 0$ in $C^1_{\rm loc}(\mathbb{R},\mathbb{R}^n)$
		as $l\to\infty$.
		\item[\rm (ii)] There exist left and right  neighborhoods $\Lambda^-$ and $\Lambda^+$ of $\mu$ in $\Lambda$
		and nonnegative integers $n^+$ and $n^-\ge 0$ satisfying $n^++n^-\ge
		2\,\sharp \hat{N}(\mu)$, such that for $\lambda\in\Lambda^-\setminus\{\mu\}$ (resp. $\lambda\in\Lambda^+\setminus\{\mu\}$),
		(\ref{e:2-Delay6Auto13C}) with parameter  $\lambda$  has at least $n^-$ (resp. $n^+$) distinct pairs of nontrivial solutions
		$\{x_\lambda^i, -x_\lambda^i\}$ ($i=1,\dots,n^-$ (resp. $n^+$))
		which converge to zero in $C^1_{\rm loc}(\mathbb{R}, \mathbb{R}^n)$
		as $\lambda\to\mu$.
	\end{itemize}
\end{corollary}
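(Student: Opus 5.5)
Corollary~\ref{cor:bif-per3DelayIIC} will be deduced from Theorem~\ref{th:bif-per3DelayII} by the same rescaling used to get Corollary~\ref{cor:bif-per3DelayC} from Theorem~\ref{th:bif-per3Delay}. Take $\Lambda=\mathbb{R}$ and $V(\lambda,x)=\lambda W(x)$. Then $V$ satisfies Assumption~\ref{ass:BasiAss1Delay1}, and $e_j(\lambda)=\lambda e_j$ (after reordering when $\lambda<0$, which does not affect membership conditions). For $\lambda\neq0$, $u$ solves
\[
\dot u(t)=\lambda\nabla W(u(t-1))+\lambda\nabla W(u(t-2)),\qquad u(t+3)=-u(t),
\]
if and only if $x(t):=u(t/\lambda)$ solves (\ref{e:2-Delay6Auto13C}) with parameter $\lambda$. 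Indeed, $\dot x(t)=\lambda^{-1}\dot u(t/\lambda)$ gives $\dot x(t)=\nabla W(x(t-\lambda))+\nabla W(x(t-2\lambda))$, and $x(t+3\lambda)=-x(t)$.

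\textbf{Necessity.} Let $(\mu,0)$ be a bifurcation point of (\ref{e:2-Delay6Auto13C}). If $\mu=0$ there is a degenerate case to handle: either exclude it as in Corollary~\ref{cor:bif-per3DelayC}, or observe that $0\cdot e_j=0\notin \frac{2\pi}{3}+\pi(2\mathbb{Z}+1)$ and treat it separately. For $\mu\ne0$, the rescaling turns small solutions of (\ref{e:2-Delay6Auto13C}) near $\mu$ into small solutions of the rescaled system with $\tau=1$. Here $C^1$ norms on $[0,3|\lambda|]$ and $[0,3]$ are comparable, and the antiperiodicity lets us pass between negative and positive intervals. Part (I) of Theorem~\ref{th:bif-per3DelayII} with $\tau=1$ then gives $\sqrt3\mu e_j\in\frac{2\pi}{3}+\pi(2\mathbb{Z}+1)$ for some $j$, i.e.\ $\hat N(\mu)\ne\emptyset$.

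\textbf{Sufficiency.} Condition (G) is exactly condition (A) of Theorem~\ref{th:bif-per3DelayII} for $e_j(\mu)=\mu e_j$ with $\tau=1$, and $N(\mu)=\hat N(\mu)$. Condition (H) gives $\hat N(\mu)\neq\emptyset$, which forces $\mu\ne0$ and $e_j\neq0$ for $j\in\hat N(\mu)$. Because all these $e_j$ share one sign, $\lambda\mapsto\lambda e_j$ is strictly monotone in the same direction for every $j\in \hat N(\mu)$. So (SB.1) holds when the common sign is positive and (SB.2) when it is negative, with $\epsilon$ chosen less than $|\mu|$ so that neighborhoods avoid $0$.

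\textbf{Applying part (III).} Part (III) of Theorem~\ref{th:bif-per3DelayII} now yields alternatives (i') and (ii') for the rescaled system, with $n^++n^-\ge 2\sharp N(\mu)$. Pull back by $x(t)=u(t/\lambda)$. Convergence in $C^1_{\rm loc}$ is preserved, since $\lambda$ stays in a compact set away from $0$. Distinctness and the pairing $\{x,-x\}$ are preserved, since the rescaling is linear and commutes with $u\mapsto -u$. Time shifts map to time shifts, so $\mathbb{R}$-distinct orbits stay distinct.

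\textbf{Main obstacle.} The step needing most care is the ordering issue for $\lambda<0$: the ordered eigenvalues $e_j(\lambda)$ of $\lambda\nabla^2W(0)$ equal $\lambda e_{n+1-j}$. The index set $N(\mu)$ must therefore be matched with $\hat N(\mu)$ up to this relabeling, and the monotonicity in (SB) must be checked on the relabeled branches. Since every condition involved is a condition on the multiset of eigenvalues, the relabeling is harmless.
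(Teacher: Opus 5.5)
Your argument is the paper's own. You set $V(\lambda,x)=\lambda W(x)$ on $\Lambda=\mathbb{R}$, pass to $u(t)=x(\lambda t)$, read (G) as (A) and (H) as (SB) with $\tau=1$, apply Theorem~\ref{th:bif-per3DelayII}, and pull back by $x(t)=u(t/\lambda)$. Your remark that for $\lambda<0$ the ordered eigenvalues are $\lambda e_{n+1-j}$, which is harmless once $\epsilon<|\mu|$, is a point the paper passes over.

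The one loose end is $\mu=0$. You flag it but do not settle it, and it cannot simply be excluded. Unlike Corollary~\ref{cor:bif-per3DelayC}, the statement covers every $\mu\in\mathbb{R}$, and since $\hat N(0)=\emptyset$ it asserts that $(0,0)$ is not a bifurcation point. The same rescaling settles it, because moving smallness from $x$ to $u$ needs only an upper bound on $|\lambda|$, not the two-sided comparability you invoke.

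\begin{itemize}
\item Suppose $\lambda_k\to 0$ and $x^k\neq 0$ solve the problem with $\lambda=\lambda_k$, with $x^k\to 0$ in $C^1$.
\item Then $\lambda_k\neq 0$, since for $\lambda=0$ the condition $x(t+3\lambda)=-x(t)$ forces $x\equiv 0$.
\item Since $|x^k|$ and $|\dot x^k|$ are $3|\lambda_k|$-periodic, $u^k(t)=x^k(\lambda_k t)$ satisfies $\|u^k\|_{C^0(\mathbb{R})}=\|x^k\|_{C^0(\mathbb{R})}$ and $\|\dot u^k\|_{C^0(\mathbb{R})}=|\lambda_k|\,\|\dot x^k\|_{C^0(\mathbb{R})}$. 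Both tend to $0$.
\item So $(0,0)$ would be a bifurcation point of the $\tau=1$ problem with $V=\lambda W$.
\item Theorem~\ref{th:bif-per3DelayII}(I) would then give $N(0)\neq\emptyset$. This is impossible, because every $e_j(0)=0$ and $0\notin\frac{2\pi}{3}+\pi(2\mathbb{Z}+1)$.
\end{itemize}

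The paper's one-line appeal to the proof of Corollary~\ref{cor:bif-per3DelayC} is also silent on this case.
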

\begin{proof}[\bf Proof]
	Define $V:\mathbb{R}\times\mathbb{R}^n\to\mathbb{R}$ by $V(\lambda,x)=\lambda W(x)$. 	It satisfies Assumption~\ref{ass:BasiAss1Delay1}  with  $\Lambda=\mathbb{R}$, and $e_i(\lambda)=\lambda e_i$ 	($i=1,\dots,n$).
As in the proof of Corollary~\ref{cor:bif-per3DelayC}, the first assertion follows from the conclusion of Theorem~\ref{th:bif-per3DelayII}(I) with $\tau=1$.

From condition (G), it readily follows that $V$ satisfies condition (A) in Theorem~\ref{th:bif-per3DelayII} with $\tau=1$, and from condition (H), we similarly obtain condition (SB). Hence, Theorem~\ref{th:bif-per3DelayII}(III) yields the remaining assertions.
\end{proof}

\begin{proof}[\bf Proof of Theorem~\ref{th:bif-per3DelayII}]
\noindent\textbf{Step 1}(\textsf{Reduction to bifurcation of a class of
Hamiltonian boundary value problems}).
Let $A_{2,n}=-J_n$ be as in (\ref{e:Delay7}).
Define $\tilde{H}:\Lambda\times({\R}^{n})^{2}\to\R$  by
$$
\tilde{H}\left(\lambda,  x _ { 1 } ,  x _ {2} \right) =
V \left(\lambda,  x _ { 1 } \right)
+ V \left(\lambda,  x _ { 2} \right)+V \left(\lambda,  -x_1+x_2 \right).
$$
Clearly, each $\tilde{H}(\lambda,\cdot)$ is even.
Denote by $\nabla_2 H(\lambda, v)$  the gradient of $H\left(\lambda, v \right)$ with respect to $v\in({\R}^{n})^2$.
By  \cite[\S1.4]{Liu12},  $x:\mathbb{R}\to\mathbb{R}^n$ solves
 the problem (\ref{e:2-Delay6Auto13}) if and only if
$v:\mathbb{R}\to ({\R}^{n})^2$ defined by
$v(t):=(x_1(t)^\top, x_2(t)^\top)^\top\in ({\R}^{n})^2$,
where $x _ { 2 } ( t ) = x ( t-\tau)$, satisfies
 \begin{equation}\label{e:NewAdd2}
\dot{v}(t)=A_{2,n}\nabla_2\tilde{H}(\lambda, v(t))\quad\hbox{and}\quad v(t+\tau)=(\tilde { B } _ {2,n})^{-1}v(t)\;\;\forall t\in \mathbb{R}.
\end{equation}
where $\tilde { B } _ {2,n} = \left( \begin{array} { c c c c c } 0 & I _ { n }\\
- I _ { n } & I _ { n }\end{array} \right)$ and hence
\begin{equation}\label{e:NewAdd1}
(\tilde { B } _ {2,n})^{-1} = \left( \begin{array} { c c c c c }
I_n & -I _ { n }\\
 I _ { n } & 0\end{array} \right).
\end{equation}
In particular,   $v\equiv{0}\in\mathbb{R}^{2n}$ is a solution of (\ref{e:NewAdd2}) because  $x\equiv 0\in\mathbb{R}^n$ satisfies
(\ref{e:2-Delay6Auto13}).

Let ${\Upsilon}(2, n)=\left( \begin{array} { cc}
0 & I_n \\
I_n &0
 \end{array} \right)$ be as in (\ref{e:M-invariantDelay3-}). Define
  \begin{eqnarray}\label{e:NewAdd6}
&&M_{3,n}={\Upsilon}(2, n)\tilde{B}_{2,n}^{-1}{\Upsilon}(2, n)^{-1}=\left( \begin{array} { c c c c c }
0 & I _ { n }\\
 -I _ { n } & I_n\end{array} \right)\quad\hbox{and}\\
&&\check{H}(\lambda, z)=\tilde{H}(\lambda, {\Upsilon}(2, n)^{-1}z)\quad\forall
(\lambda, z)\in \Lambda\times\R\times({\R}^{n})^{2},\label{e:NewAdd7}
\end{eqnarray}
respectively. For $z=({x}^\top, {y}^\top)^\top\in ({\R}^{n})^2$ we have $M_{3,n}z=(y^\top, -x^\top+y^\top)^\top$ and
$\check{H}(\lambda, z)=V(\lambda,y)+V(\lambda,x)+V(\lambda, -y+x)$.
Since each $V(\lambda,\cdot)$ is even, $\check{H}(\lambda, \cdot)$
is also even, and
$\check{H}(\lambda,M_{3,n}z)=V(\lambda, y)+ V(\lambda, -x+y)+V(\lambda, -x)=
\check{H}(\lambda,z)$, i.e.,
 each $\check{H}(\lambda, \cdot)$ is $M_{3,n}$-invariant.

Recall that  $A_{2,n}^{-1}={\Upsilon}(2, n)^\top J_n^{-1}{\Upsilon}(2, n)$.
A straightforward calculation  shows that
$v:\mathbb{R}\to\mathbb{R}^{2n}$ is a solution of the problem (\ref{e:NewAdd2})
if and only if $z(t):=\Upsilon(2,n)v(t)$ satisfies the following problem
\begin{equation}\label{e:NewAdd8}
\dot{z}(t)=J_n\nabla_2\check{H}\left(\lambda, z(t)\right)\quad\hbox{and}\quad z(t+\tau)=M_{3,n}z(t)\;\;\forall t\in \R.
\end{equation}
and in this situation we have also
\begin{eqnarray}\label{e:NewAdd9}
\int^{\tau}_0\left[\frac{1}{2}(J_{n}\dot{z}(t), z(t))_{\mathbb{R}^{2n}}+ \check{H}(\lambda,  z(t))\right]dt
= \int^{\tau}_0\left[\frac{1}{2}(\dot{v}(t), J_nv(t))_{\mathbb{R}^{2n}}+ \tilde{H}(\lambda,
v(t))\right]dt.
\end{eqnarray}

Consider the linearizations  of the problems
(\ref{e:NewAdd2}) and (\ref{e:NewAdd8}) at trivial solutions $0$,
\begin{align}\label{e:NewAdd4}
\dot{u}(t)&=-J_n\nabla^2_2\tilde{H}(\lambda, {0})u(t)\quad\hbox{and}\quad u(t+\tau)=(\tilde { B } _ {2,n})^{-1}u(t)\;\;\forall t\in \mathbb{R},\\
\dot{z}(t)&=J_n\nabla^2_2\check{H}\left(\lambda,{0}\right)z(t)\quad\hbox{and}\quad z(t+\tau)=M_{3,n}z(t)\;\;\forall t\in \R.
\label{e:NewAdd10}
\end{align}
Here, by a straightforward computation, we have
\begin{eqnarray}\label{e:NewAdd5}
 \nabla^2_2\tilde{H}\left(\lambda, {0}\right) &=&
\left( \begin{array} { c c c c }
\nabla^2_2V \left(\lambda, {0} \right) & 0 \\
0 &\nabla^2_2V\left(\lambda, {0}\right) \end{array} \right)
+\left(-I_{n}\;I_{n}\right)^\top \cdot \nabla^2_2V\left(\lambda, {\bf 0}\right) \cdot \left(-I_{n}\;I_{n}\right)\nonumber\\
&=&\left( \begin{array} { c c c c }
2\nabla^2_2V \left(\lambda, {0} \right) & -\nabla^2_2V \left(\lambda, {0} \right) \\
-\nabla^2_2V \left(\lambda, {0} \right) &2\nabla^2_2V\left(\lambda, {0}\right) \end{array} \right),
\end{eqnarray}
and hence
\begin{eqnarray*}
\nabla^2_2 \check{H}(\lambda,  {0})&=&(\Upsilon(2,n)^{-1})^\top\nabla^2_2 \tilde{H}(\lambda, {0})\Upsilon(2,n)^{-1}\\
&=&\left( \begin{array} {cc}
0 & I_n \\
I_n &0
 \end{array} \right)^\top
 \left( \begin{array} { c c c c }
2\nabla^2_2V \left(\lambda, {0} \right) & -\nabla^2_2V \left(\lambda, {0} \right) \\
-\nabla^2_2V \left(\lambda, {0} \right) &2\nabla^2_2V\left(\lambda, {0}\right) \end{array} \right)
 \left( \begin{array} {cc}
0 & I_n \\
I_n &0
 \end{array} \right)\\
 &=&\left( \begin{array} { c c c c }
2\nabla^2_2V \left(\lambda, {0} \right) & -\nabla^2_2V \left(\lambda, {0} \right) \\
-\nabla^2_2V \left(\lambda, {0} \right) &2\nabla^2_2V\left(\lambda, {0}\right) \end{array} \right)
 =\nabla^2_2\tilde{H}\left(\lambda,{0}\right).
\end{eqnarray*}
  Let $\gamma_\lambda$ (resp. $\Upsilon_\lambda$) be  the fundamental matrix solution of
  $\dot{u}(t)=-J_n\nabla^2_2\tilde{H}\left(\lambda,{0}\right)u(t)$
[resp. $\dot{z}(t)=J_n\nabla^2_2\check{H}\left(\lambda,{0}\right)z(t)$]
 with $\gamma_\lambda(0)=I_{2n}$ (resp. $\Upsilon_\lambda(0)=I_{2n}$).
 Then $\Upsilon_{\lambda}=\Upsilon(2,n)\gamma_{\lambda}\Upsilon(2,n)^{-1}$
 and hence
  \begin{equation}\label{e:NewAdd11}
\left\{\begin{array}{ll}
&i_{\tau, M_{3,n}}(\Upsilon_\lambda)=i_{\tau, M_{3,n}}\big(\Upsilon(2,n)\gamma_{\lambda}\Upsilon(2,n)^{-1}\big)\\
&\nu_{\tau, M_{3,n}}(\Upsilon_{\lambda})=\nu_{\tau, M_{3,n}}\big(\Upsilon(2,n)\gamma_{\lambda}
\Upsilon(2,n)^{-1}\big).
\end{array}\right.
\end{equation}

By  (\ref{e:Add2}) in Theorem~\ref{th:PIndex3} we have
\begin{eqnarray}\label{e:NewAdd12-0}
\nu_{\tau, M_{3,n}}(\Upsilon_\mu)=2\,\sharp N(\mu).
\end{eqnarray}

\noindent\textbf{Step 2}(\textsf{Proof of (I)}).
Assume that $(\mu,0)$ with $\mu\in\Lambda$ is a bifurcation point of (\ref{e:2-Delay6Auto13}).
By the arguments preceding (\ref{e:NewAdd9}), this occurs
 if and only if this very $\mu$ gives rise to a bifurcation point $(\mu,0)$ of (\ref{e:NewAdd8}). Consequently, (\ref{e:NewAdd8}) has
  $(\mu,0)$ as a bifurcation point.
 An application of \cite[Theorem~1.5]{Lu11} now yields $\nu_{\tau, M_{3,n}}(\Upsilon_\mu)\ne 0$.  In view of (\ref{e:NewAdd12-0}), this implies
 $N(\mu)\ne\emptyset$, which establishes the conclusion in (I).

\noindent\textbf{Step 3}(\textsf{Proof of (III)}).
Note that (\ref{e:NewAdd12-0}) together with assumptions (A) and (SB) gives
\begin{eqnarray}\label{e:NewAdd12}
\nu_{\tau, M_{3,n}}(\Upsilon_\mu)=
2\sharp\left\{1\le j\le n\,\Big|\, e_j(\mu)<0 \text{ and } \sqrt{3}e_j(\mu)\tau\in \frac{2\pi}{3}+ \pi(2\mathbb{Z}+1)\right\}.
\end{eqnarray}
Moreover, (\ref{e:Add3}) in Theorem~\ref{th:PIndex3} gives
\begin{align}\label{e:NewAdd12+0}
 i_{\tau,M_{3,n}}(\Upsilon_\mu)&=\left[n-\frac{n\arctan 2}{\pi}\right]+2\sum_{e_j(\mu)>0}\sharp\left\{0<t<\tau\,\Big|\, \sqrt{3}e_j(\mu)t\in \frac{2\pi}{3}+ \pi(2\mathbb{Z}+1)\right\}\nonumber\\
&\quad-2\sum_{e_j(\mu)<0}\sharp\left\{0<t\le\tau\,\Big|\, \sqrt{3}e_j(\mu)t\in \frac{2\pi}{3}+ \pi(2\mathbb{Z}+1)\right\}\nonumber\\
&=\Xi-\nu_{\tau, M_{3,n}}(\Upsilon_\mu),
\end{align}
where
\begin{eqnarray*}
  \Xi&=&\left[n-\frac{n\arctan 2}{\pi}\right]+2\sum_{e_i(\mu)>0}\sharp\left\{0<t<\tau\,\Big|\, \sqrt{3}e_i(\mu)t\in \frac{2\pi}{3}+ \pi(2\mathbb{Z}+1)\right\}\nonumber\\
&&-2\sum_{e_i(\mu)<0}\sharp\left\{0<t<\tau\,\Big|\, \sqrt{3}e_i(\mu)t\in \frac{2\pi}{3}+ \pi(2\mathbb{Z}+1)\right\}.
\end{eqnarray*}

We aim to prove that, for some $\epsilon>0$, the identity for
$i_{\tau,M_{3,n}}(\Upsilon_\lambda)$
is given by
\begin{equation}\label{e:NewAdd12+}
   i_{\tau,M_{3,n}}(\Upsilon_\lambda)=
    \left\{\begin{array}{ll}
\Xi\quad&\hbox{if $\lambda\in (\mu-\epsilon, \mu)$},\\
\Xi-\nu_{\tau,M_{3,n}}(\Upsilon_\mu)\quad&\hbox{if $\lambda\in (\mu, \mu+\epsilon)$}
\end{array}\right.
\end{equation}
 or
 \begin{equation}\label{e:NewAdd12++}
   i_{\tau,M_{3,n}}(\Upsilon_\lambda)=
    \left\{\begin{array}{ll}
\Xi-\nu_{\tau,M_{3,n}}(\Upsilon_\mu)\quad&\hbox{if $\lambda\in (\mu-\epsilon, \mu)$},\\
\Xi\quad&\hbox{if $\lambda\in (\mu, \mu+\epsilon)$},
\end{array}\right.
\end{equation}
  according to whether (SB.1) or (SB.2)  of Assumption (SB) holds, respectively.

Since  $e_1(\mu)\le\cdots\le e_n(\mu)$, it follows from
(\ref{e:NewAdd12}) that $e_1(\mu)<0$. We may assume $e_1(\mu)\le\cdots\le e_k(\mu)<0$ and $e_n(\mu)\ge\cdots\ge e_{k+1}(\mu)\ge 0$.

Let us first examine the properties of $e_j(\lambda)$($1\le j\le n$)
in a neighborhood of $\lambda=\mu$, considering four distinct cases.

{\bf Case 1}($e_j(\mu)>0$ and so $\sqrt{3}e_j(\mu)\tau\notin \frac{2\pi}{3}+ \pi(2\mathbb{Z}+1)$ by condition (A)). By the continuity of $e_j(\lambda)$ at $\lambda=\mu$,
there exists $\epsilon>0$ such that for $\lambda\in (\mu-\epsilon,\mu+\epsilon)$,
$e_j(\lambda)>0$ and $\sqrt{3}e_j(\lambda)\tau\notin \frac{2\pi}{3}+ \pi(2\mathbb{Z}+1)$. If
$\sharp\left\{0<t<\tau\,\Big|\, \sqrt{3}e_j(\mu)t\in \frac{2\pi}{3}+ \pi(2\mathbb{Z}+1)\right\}>0$, let
$$
\left\{0<t<\tau\,\Big|\, \sqrt{3}e_j(\mu)t\in \frac{2\pi}{3}+ \pi(2\mathbb{Z}+1)\right\}
=\{t_1<\cdots<t_s\}.
$$
Then $\sqrt{3}e_j(\mu)t_i=\frac{2\pi}{3}+ (2m_{j,i}+1)\pi$
for integers $m_{j,i}$, $i=1,\cdots,s$.
 Since $e_j(\lambda)$ is continuous at $\mu$,
 by shrinking $\epsilon>0$ we can obtain, for each $\lambda\in (\mu-\epsilon,\mu+\epsilon)$, a unique $t_{i,\lambda}\in (0,\tau)$ near $t_i$
such that $\sqrt{3}e_j(\lambda)t_{i,\lambda}\in \frac{2\pi}{3}+ \pi(2\mathbb{Z}+1)$,
that is,
$$
t_{i,\lambda}=\frac{e_j(\mu)}{e_j(\lambda)}t_i,\quad i=1,\cdots,s.
$$

It follows that for any $\lambda\in (\mu-\epsilon,\mu+\epsilon)$,
$$
\sharp\left\{0<t<\tau\,\Big|\, \sqrt{3}e_j(\lambda)t\in \frac{2\pi}{3}+ \pi(2\mathbb{Z}+1)\right\}=\sharp\left\{0<t<\tau\,\Big|\, \sqrt{3}e_j(\mu)t\in \frac{2\pi}{3}+ \pi(2\mathbb{Z}+1)\right\}.
$$

{\bf Case 2}($e_j(\mu)<0$ and $\sqrt{3}e_j(\mu)\tau\notin \frac{2\pi}{3}+ \pi(2\mathbb{Z}+1)$). As above we have
 $\epsilon>0$ such that for $\lambda\in (\mu-\epsilon,\mu+\epsilon)$,
 \begin{eqnarray*}
 &&e_j(\lambda)<0,\quad \sqrt{3}e_j(\lambda)\tau\notin \frac{2\pi}{3}+ \pi(2\mathbb{Z}+1)\quad\hbox{and}\\
 &&\sharp\left\{0<t<\tau\,\Big|\, \sqrt{3}e_j(\lambda)t\in \frac{2\pi}{3}+ \pi(2\mathbb{Z}+1)\right\}=\sharp\left\{0<t<\tau\,\Big|\, \sqrt{3}e_j(\mu)t\in \frac{2\pi}{3}+ \pi(2\mathbb{Z}+1)\right\}.
 \end{eqnarray*}

{\bf Case 3}($e_j(\mu)<0$ and $\sqrt{3}e_j(\mu)\tau\in \frac{2\pi}{3}+ \pi(2\mathbb{Z}+1)$, i.e., $j\in N(\mu)$).
Since  $e_j(\lambda)$ is continuous at $\mu$, after shrinking
 $\epsilon>0$ we may assume that for $\lambda\in (\mu-\epsilon,\mu+\epsilon)\setminus\{\mu\}$,
  \begin{eqnarray*}
 e_j(\lambda)<0,\quad \sqrt{3}e_j(\lambda)\tau\notin \frac{2\pi}{3}+ \pi(2\mathbb{Z}+1).
 \end{eqnarray*}
 Let $t_\lambda=\frac{e_j(\mu)}{e_j(\lambda)}\tau$, which is near $\tau$.

 Suppose (SB.1) holds. Then $t_\lambda>\tau$
 for $\lambda\in (\mu-\epsilon, \mu)$, and $t_\lambda<\tau$
 for $\lambda\in (\mu, \mu+\epsilon)$. Consequently,
  \begin{eqnarray*}
 \sharp\left\{0<t\le\tau\,\Big|\, \sqrt{3}e_j(\lambda)t\in \frac{2\pi}{3}+ \pi(2\mathbb{Z}+1)\right\}=\sharp\left\{0<t\le\tau\,\Big|\, \sqrt{3}e_j(\mu)t\in \frac{2\pi}{3}+ \pi(2\mathbb{Z}+1)\right\}-1
 \end{eqnarray*}
for $\lambda\in (\mu-\epsilon, \mu)$, and
  \begin{eqnarray*}
 \sharp\left\{0<t\le\tau\,\Big|\, \sqrt{3}e_j(\lambda)t\in \frac{2\pi}{3}+ \pi(2\mathbb{Z}+1)\right\}=\sharp\left\{0<t\le\tau\,\Big|\, \sqrt{3}e_j(\mu)t\in \frac{2\pi}{3}+ \pi(2\mathbb{Z}+1)\right\}
 \end{eqnarray*}
for $\lambda\in (\mu, \mu+\epsilon)$.

Similarly, if (SB.2) holds,
 then $t_\lambda<\tau$
 for $\lambda\in (\mu-\epsilon, \mu)$, and $t_\lambda>\tau$
 for $\lambda\in (\mu, \mu+\epsilon)$, and therefore,
  \begin{eqnarray*}
 \sharp\left\{0<t\le\tau\,\Big|\, \sqrt{3}e_j(\lambda)t\in \frac{2\pi}{3}+ \pi(2\mathbb{Z}+1)\right\}=\sharp\left\{0<t\le\tau\,\Big|\, \sqrt{3}e_j(\mu)t\in \frac{2\pi}{3}+ \pi(2\mathbb{Z}+1)\right\}
 \end{eqnarray*}
for $\lambda\in (\mu-\epsilon, \mu)$, and
  \begin{eqnarray*}
 \sharp\left\{0<t\le\tau\,\Big|\, \sqrt{3}e_j(\lambda)t\in \frac{2\pi}{3}+ \pi(2\mathbb{Z}+1)\right\}=\sharp\left\{0<t\le\tau\,\Big|\, \sqrt{3}e_j(\mu)t\in \frac{2\pi}{3}+ \pi(2\mathbb{Z}+1)\right\}-1
 \end{eqnarray*}
for  $\lambda\in (\mu, \mu+\epsilon)$.

{\bf Case 4}($e_j(\mu)=0$). By the continuity of $e_j(\lambda)$ at $\lambda=\mu$,
after shrinking  $\epsilon>0$ we have
 $\sqrt{3}e_j(\lambda)t\notin \frac{2\pi}{3}+ \pi(2\mathbb{Z}+1)$
for $(\lambda, t)\in (\mu-\epsilon,\mu+\epsilon)\times [0, \tau]$.\\

The preceding four cases establish that, for each
 $j=1,\dots,n$,
$\sqrt{3}\,e_j(\lambda)\tau\notin \frac{2\pi}{3}+ \pi(2\mathbb{Z}+1)$
for all $\lambda\in (\mu-\epsilon,\mu+\epsilon)\setminus\{\mu\}$.
 Consequently,
\begin{equation}\label{e:NewAdd13-}
i_{\tau,M_{3,n}}(\Upsilon_\lambda)=0,\quad \text{for all}\;\lambda\in (\mu-\epsilon,\mu+\epsilon)\setminus\{\mu\}.
\end{equation}


\textbf{Proof of \eqref{e:NewAdd12+}}.
Put $N_2=N(\mu)$, $N_3=\{j\,|\, e_j(\mu)=0\}$, and
\begin{eqnarray*}
N_1=\{j\,|\, \hbox{$e_j(\mu)<0$ and $\sqrt{3}e_j(\mu)\tau\notin \frac{2\pi}{3}+ \pi(2\mathbb{Z}+1)$}\}.
\end{eqnarray*}
\underline{Firstly, we assume $\lambda\in (\mu-\epsilon, \epsilon)$}.
Let  $e_i(\lambda)<0$. If $\sqrt{3}e_i(\lambda)\tau\in \frac{2\pi}{3}+ \pi(2\mathbb{Z}+1)$,
then $e_i(\mu)$ cannot satisfy any one of the four cases above, and so a contradiction is obtained. 
It follows that $\sqrt{3}e_i(\lambda)\tau\notin \frac{2\pi}{3}+ \pi(2\mathbb{Z}+1)$,
and therefore $e_i(\mu)$  satisfies at least one of Case 2, Case 3 and Case 4
by the above analysis, i.e., $i\in N_1\cup N_2\cup N_3$.
Hence
\begin{eqnarray}\label{e:NewAdd13}
&&-2\sum_{e_i(\lambda)<0}\sharp\left\{0<t\le\tau\,\Big|\, \sqrt{3}e_i(\lambda)t\in \frac{2\pi}{3}+ \pi(2\mathbb{Z}+1)\right\}\nonumber\\
&=&-2\sum_{i\in N_1}\sharp\left\{0<t\le\tau\,\Big|\, \sqrt{3}e_i(\lambda)t\in \frac{2\pi}{3}+ \pi(2\mathbb{Z}+1)\right\}\nonumber\\
&&-2\sum_{i\in N_2}\sharp\left\{0<t\le\tau\,\Big|\, \sqrt{3}e_i(\lambda)t\in \frac{2\pi}{3}+ \pi(2\mathbb{Z}+1)\right\}\nonumber\\
&&-2\sum_{i\in N_3}\sharp\left\{0<t\le\tau\,\Big|\, \sqrt{3}e_i(\lambda)t\in \frac{2\pi}{3}+ \pi(2\mathbb{Z}+1)\right\}\nonumber\\
&=&-2\sum_{i\in N_1}\sharp\left\{0<t<\tau\,\Big|\, \sqrt{3}e_i(\mu)t\in \frac{2\pi}{3}+ \pi(2\mathbb{Z}+1)\right\}\nonumber\\
&&-2\sum_{i\in N_2}\left(\sharp\left\{0<t\le\tau\,\Big|\, \sqrt{3}e_i(\mu)t\in \frac{2\pi}{3}+ \pi(2\mathbb{Z}+1)\right\}-1\right)\nonumber\\
&=&-2\sum_{i\in N_1}\sharp\left\{0<t<\tau\,\Big|\, \sqrt{3}e_i(\mu)t\in \frac{2\pi}{3}+ \pi(2\mathbb{Z}+1)\right\}\nonumber\\
&&-2\sum_{i\in N_2}\sharp\left\{0<t<\tau\,\Big|\, \sqrt{3}e_i(\mu)t\in \frac{2\pi}{3}+ \pi(2\mathbb{Z}+1)\right\}\nonumber\\
&=&-2\sum_{e_i(\mu)<0}\sharp\left\{0<t<\tau\,\Big|\, \sqrt{3}e_i(\mu)t\in \frac{2\pi}{3}+ \pi(2\mathbb{Z}+1)\right\}-\nu_{\tau,M_{3,n}}(\Upsilon_\mu).
\end{eqnarray}

If $e_i(\lambda)>0$, the analysis in the four cases above shows that
$e_i(\mu)$ satisfies at least one of Case 1 and Case 4.
If Case 4  occurs for $e_i(\mu)$, then  $\sqrt{3}e_i(\lambda)t\notin \frac{2\pi}{3}+ \pi(2\mathbb{Z}+1)$
for $(\lambda, t)\in (\mu-\epsilon,\mu+\epsilon)\times [0, \tau]$
by the assumptions. Thus
\begin{eqnarray}\label{e:NewAdd14}
  &&2\sum_{e_i(\lambda)>0}\sharp\left\{0<t<\tau\,\Big|\, \sqrt{3}e_i(\lambda)t\in \frac{2\pi}{3}+ \pi(2\mathbb{Z}+1)\right\}\nonumber\\
  &=&2\sum_{e_i(\mu)>0}\sharp\left\{0<t<\tau\,\Big|\, \sqrt{3}e_i(\lambda)t\in \frac{2\pi}{3}+ \pi(2\mathbb{Z}+1)\right\}\nonumber\\
  &&+2\sum_{e_i(\mu)=0}\sharp\left\{0<t<\tau\,\Big|\, \sqrt{3}e_i(\lambda)t\in \frac{2\pi}{3}+ \pi(2\mathbb{Z}+1)\right\}\nonumber\\
 &=&2\sum_{e_i(\mu)>0}\sharp\left\{0<t<\tau\,\Big|\, \sqrt{3}e_j(\mu)t\in \frac{2\pi}{3}+ \pi(2\mathbb{Z}+1)\right\}.
\end{eqnarray}
Using this and (\ref{e:NewAdd13}), we derive from (\ref{e:Add3})
\begin{eqnarray}\label{e:NewAdd15}
  i_{\tau,M_{3,n}}(\Upsilon_\lambda)&=&\left[n-\frac{n\arctan 2}{\pi}\right]+2\sum_{e_i(\mu)>0}\sharp\left\{0<t<\tau\,\Big|\, \sqrt{3}e_i(\mu)t\in \frac{2\pi}{3}+ \pi(2\mathbb{Z}+1)\right\}\nonumber\\
&&-2\sum_{e_i(\mu)<0}\sharp\left\{0<t<\tau\,\Big|\, \sqrt{3}e_i(\mu)t\in \frac{2\pi}{3}+ \pi(2\mathbb{Z}+1)\right\}=\Xi.
\end{eqnarray}

\underline{Next, we consider the case where $\lambda\in (\mu, \epsilon+\epsilon)$}.
Let  $e_i(\lambda)<0$.  
As above we have $\sqrt{3}e_i(\lambda)\tau\notin \frac{2\pi}{3}+ \pi(2\mathbb{Z}+1)$,
and hence $i\in N_1\cup N_2\cup N_3$. Then
\begin{eqnarray}\label{e:NewAdd16}
&&-2\sum_{e_i(\lambda)<0}\sharp\left\{0<t\le\tau\,\Big|\, \sqrt{3}e_i(\lambda)t\in \frac{2\pi}{3}+ \pi(2\mathbb{Z}+1)\right\}\nonumber\\
&=&-2\sum_{i\in N_1}\sharp\left\{0<t\le\tau\,\Big|\, \sqrt{3}e_i(\lambda)t\in \frac{2\pi}{3}+ \pi(2\mathbb{Z}+1)\right\}\nonumber\\
&&-2\sum_{i\in N_2}\sharp\left\{0<t\le\tau\,\Big|\, \sqrt{3}e_i(\lambda)t\in \frac{2\pi}{3}+ \pi(2\mathbb{Z}+1)\right\}\nonumber\\
&&-2\sum_{i\in N_3}\sharp\left\{0<t\le\tau\,\Big|\, \sqrt{3}e_i(\lambda)t\in \frac{2\pi}{3}+ \pi(2\mathbb{Z}+1)\right\}\nonumber\\
&=&-2\sum_{i\in N_1}\sharp\left\{0<t<\tau\,\Big|\, \sqrt{3}e_i(\mu)t\in \frac{2\pi}{3}+ \pi(2\mathbb{Z}+1)\right\}\nonumber\\
&&-2\sum_{i\in N_2}\sharp\left\{0<t\le\tau\,\Big|\, \sqrt{3}e_i(\mu)t\in \frac{2\pi}{3}+ \pi(2\mathbb{Z}+1)\right\}\nonumber\\
&=&-2\sum_{e_i(\mu)<0}\sharp\left\{0<t<\tau\,\Big|\, \sqrt{3}e_i(\mu)t\in \frac{2\pi}{3}+ \pi(2\mathbb{Z}+1)\right\}-\nu_{\tau,M_{3,n}}(\Upsilon_\mu)
\end{eqnarray}
by (\ref{e:NewAdd12}).

If $e_i(\lambda)>0$, as above $e_i(\mu)$ satisfies at least one of Case 1 and Case 4, and we have
\begin{eqnarray}\label{e:NewAdd17}
  &&2\sum_{e_i(\lambda)>0}\sharp\left\{0<t<\tau\,\Big|\, \sqrt{3}e_i(\lambda)t\in \frac{2\pi}{3}+ \pi(2\mathbb{Z}+1)\right\}\nonumber\\
 &=&2\sum_{e_i(\mu)>0}\sharp\left\{0<t<\tau\,\Big|\, \sqrt{3}e_j(\mu)t\in \frac{2\pi}{3}+ \pi(2\mathbb{Z}+1)\right\}.
\end{eqnarray}
Using this and (\ref{e:NewAdd16}), we derive from (\ref{e:Add3})
\begin{eqnarray}\label{e:NewAdd18}
  i_{\tau,M_{3,n}}(\Upsilon_\lambda)&=&\left[n-\frac{n\arctan 2}{\pi}\right]+2\sum_{e_i(\mu)>0}\sharp\left\{0<t<\tau\,\Big|\, \sqrt{3}e_i(\mu)t\in \frac{2\pi}{3}+ \pi(2\mathbb{Z}+1)\right\}\nonumber\\
&&-2\sum_{e_i(\mu)<0}\sharp\left\{0<t<\tau\,\Big|\, \sqrt{3}e_i(\mu)t\in \frac{2\pi}{3}+ \pi(2\mathbb{Z}+1)\right\}-\nu_{\tau,M_{3,n}}(\Upsilon_\mu)\nonumber\\
&=&\Xi-\nu_{\tau,M_{3,n}}(\Upsilon_\mu).
\end{eqnarray}
Combining this with (\ref{e:NewAdd15}) gives (\ref{e:NewAdd12+}).

 The result (\ref{e:NewAdd12++}) follows by a similar argument.


%
%


Note that the matrix $\tilde { B } _ {2,n}$
in (\ref{e:NewAdd1}) is not orthogonal.
Theorem~1.14 from \cite{Lu11} cannot be
applied to problem (\ref{e:NewAdd8}).
However, since each  $\check{H}(\lambda, \cdot)$
is  even, applying the second part of Theorem~1.8 from \cite{Lu11} to problem (\ref{e:NewAdd8}), we obtain the following two possible alternatives:
\begin{itemize}
\item[\rm (i*)] The problem (\ref{e:NewAdd8})  with $\lambda=\mu$ admits a sequence of nonzero distinct solutions
$\{z^l\}^{\infty}_{l=1}$  such that $z^l\to 0$ in $C^1_{\rm loc}(\mathbb{R},\mathbb{R}^{2n})$
as $l\to\infty$.
\item[\rm (ii*)] There exist left and right  neighborhoods $\Lambda^-$ and $\Lambda^+$ of $\mu$ in $\Lambda$
and nonnegative integers $n^+$ and $n^-\ge 0$ satisfying $n^++n^-\ge \nu_{\tau, M_{3,n}}(\Upsilon_\mu)=2\,\sharp N(\mu)$, such that for $\lambda\in\Lambda^-\setminus\{\mu\}$ (resp. $\lambda\in\Lambda^+\setminus\{\mu\}$),
(\ref{e:NewAdd8}) with parameter  $\lambda$  has at least $n^-$ (resp. $n^+$) distinct pairs of nontrivial solutions
$\{z_\lambda^i, -z_\lambda^i\}$ ($i=1,\dots,n^-$ (resp. $n^+$))
which converge to zero in $C^1_{\rm loc}(\mathbb{R}, \mathbb{R}^{2n})$
as $\lambda\to\mu$.
\end{itemize}

Let
 $\Upsilon(2,n)^{-1}{z}^l(t):=({x}^{l}_1(t)^\top, {x}^{l}_2(t)^\top)^\top\in ({\mathbb{R}}^{n})^2$ and
$\Upsilon(2,n)^{-1}{z}^i_\lambda(t):=({x}^i_{\lambda,1}(t)^\top, {x}^i_{\lambda,2}(t)^\top)^\top\in ({\mathbb{R}}^{n})^2$.
These functions satisfy (\ref{e:NewAdd2}). It follows that
 $x^l_ { 2 } ( t ) = x^l_1( t-\tau)$ and
 ${x}^i_{\lambda,2}(t)={x}^i_{\lambda,1}(t-\tau)$
 and that the functions
$x^l_1$ and ${x}^i_{\lambda,1}$ satisfy (i) and (ii).

\noindent\textbf{Step 4}(\textsf{Proof of (II)}).
As in the proof of Theorem~\ref{th:bif-per3Delay}(II), we have
either $(\lambda_m^-)_m\subset (\mu-\epsilon, \mu)$ and
$(\lambda_m^+)_m\subset (\mu, \mu+\epsilon)$ or
$(\lambda_m^-)_m\subset (\mu, \mu+\epsilon)$ and
$(\lambda_m^+)_m\subset (\mu-\epsilon, \mu)$.
From the proofs of (\ref{e:NewAdd12+}) and (\ref{e:NewAdd12++})
one easily sees that for any $m\in\mathbb{N}$,
\begin{equation*}
   i_{\tau,M_{3,n}}(\Upsilon_\lambda)=
    \left\{\begin{array}{ll}
\Xi\quad&\hbox{if $\lambda=\lambda_m^-$},\\
\Xi-\nu_{\tau,M_{3,n}}(\Upsilon_\mu)\quad&\hbox{if $\lambda=\lambda_m^+$}
\end{array}\right.
\end{equation*}
 in the first case, and
  \begin{equation*}
   i_{\tau,M_{3,n}}(\Upsilon_\lambda)=
    \left\{\begin{array}{ll}
\Xi-\nu_{\tau,M_{3,n}}(\Upsilon_\mu)\quad&\hbox{if $\lambda=\lambda_m^+$},\\
\Xi\quad&\hbox{if $\lambda=\lambda_m^-$}
\end{array}\right.
\end{equation*}
in the second case.  Together with (\ref{e:NewAdd13-}),
these results show that
 for all $m\in\mathbb{N}$,
$$
[i_{\tau,M_{3,n}}(\Upsilon_{\lambda_m^-}), i_{\tau,M_{3,n}}(\Upsilon_{\lambda_m^-})+
\nu_{\tau,M_{3,n}}(\Upsilon_{\lambda_m^-})]\cap
[i_{\tau,M_{3,n}}(\Upsilon_{\lambda_m^+}), i_{\tau,M_{3,n}}(\Upsilon_{\lambda_m^+})+
\nu_{\tau,M_{3,n}}(\Upsilon_{\lambda_m^+})]=\emptyset.
$$
Therefore, $(\mu,0)$ is a bifurcation point of (\ref{e:NewAdd8}) by
 \cite[Theorem~1.5(II)]{Lu11}, and the desired assertion (II) follows.

\end{proof}


Corollary~1.12 of \cite{Lu11} can yield
the following  parallel result to
Theorem~\ref{th:bif-per2Delay++}.

\begin{theorem}\label{th:2-bif-per2Delay++}
	Let $V_0, V_1:  \mathbb{R}^n \to \mathbb{R}$ be even $C^2$ functions such
	that the Hessian $\nabla^2{V}_1({0})$ is  positive definite or negative definite.
		Denoted the zero solution of the problem
	\begin{eqnarray}\label{e:2-LinearDelay2+}
		\left\{\begin{array}{ll}
			&\dot{x}( t ) = \sum^2_{j=1}\nabla V_0 (x(t-j\tau))+
			\lambda\sum^2_{j=1}\nabla V_1 (x ( t - j\tau )),\\
			& x( t +3 \tau ) = - x( t )\;\forall t
		\end{array}\right.
	\end{eqnarray}
	by ${0}^\lambda$, and
	the dimension of the solution space of the linear delay problem
	\begin{eqnarray}\label{e:2-LinearDelay2}
		\left\{\begin{array}{ll}
			&\dot{x}( t ) = \nabla^2 V_0 ({0})\sum^2_{j=1}x(t-j\tau) +
			\lambda\nabla^2 V_1 ({0})
			\sum^2_{j=1}x ( t - j\tau ),\\
			& x( t +3\tau ) = - x( t )\;\forall t
		\end{array}\right.
	\end{eqnarray}
	by $\nu_{\tau}({0}^\lambda)$. 
	Then
	\begin{enumerate}
		\item[\rm (I)] $\Sigma_\tau:=\{\lambda\in\mathbb{R}\,|\, \nu_{\tau}({0}^\lambda)>0\}$
		is a discrete set in $\mathbb{R}$.
		
		\item[\rm (II)]
		For each $\mu\in{\Sigma}_\tau$,
		at least  one of the following alternatives holds true:
		\begin{enumerate}
			\item[\rm (i)] System (\ref{e:2-LinearDelay2+}) with $\lambda=\mu$ admits a sequence of distinct nonzero solutions
			${x}^{k}$ for $k=1,2,\cdots$,
			which  converges to zero in $C^1_{\rm loc}(\mathbb{R},\R^{n})$ as $k\to\infty$.
			\item[\rm (ii)]
			There exist left and right  neighborhoods $\Lambda^-$ and $\Lambda^+$ of $\mu$ in $\Lambda$
			and nonnegative integers $n^+$ and $n^-\ge 0$ satisfying $n^++n^-\ge \nu_\tau(0^\mu)$, such that for $\lambda\in\Lambda^-\setminus\{\mu\}$ (resp. $\lambda\in\Lambda^+\setminus\{\mu\}$),
		(\ref{e:2-LinearDelay2+})	 with parameter  $\lambda$  has at least $n^-$ (resp. $n^+$) distinct pairs of nontrivial solutions
			$\{x_\lambda^i, -x_\lambda^i\}$ ($i=1,\dots,n^-$ (resp. $n^+$))
			which converge to zero in $C^1_{\rm loc}(\mathbb{R}, \mathbb{R}^n)$
			as $\lambda\to\mu$.			
		\end{enumerate}
		\end{enumerate}
\end{theorem}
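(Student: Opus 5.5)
The plan is to copy the proof of Theorem~\ref{th:bif-per2Delay++}, replacing the one-delay reduction by the two-delay reduction used in the proof of Theorem~\ref{th:bif-per3DelayII}, and Corollary~1.15 of \cite{Lu11} by Corollary~1.12 of \cite{Lu11}, which is the version for even Hamiltonians and non-orthogonal $M$.

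First I set, for $i=0,1$,
$$
\tilde H_i(x_1,x_2)=V_i(x_1)+V_i(x_2)+V_i(-x_1+x_2),\qquad \check H_i(z)=\tilde H_i(\Upsilon(2,n)^{-1}z),
$$
and put $\check H(\lambda,z)=\check H_0(z)+\lambda\check H_1(z)$. By Step~1 of the proof of Theorem~\ref{th:bif-per3DelayII}, applied to $V(\lambda,x)=V_0(x)+\lambda V_1(x)$, we get three facts:
\begin{itemize}
\item $x$ solves (\ref{e:2-LinearDelay2+}) if and only if $z=\Upsilon(2,n)(x(\cdot)^\top,x(\cdot-\tau)^\top)^\top$ solves (\ref{e:NewAdd8});
\item each $\check H(\lambda,\cdot)$ is even and $M_{3,n}$-invariant;
\item the same correspondence holds at the linear level between (\ref{e:2-LinearDelay2}) and (\ref{e:NewAdd10}).
\end{itemize}
The third fact gives $\nu_\tau(0^\lambda)=\nu_{\tau,M_{3,n}}(\Upsilon_\lambda)$, because the solution spaces are linearly isomorphic as in Claim~\ref{cl:delay1.3}.

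Next I check the hypotheses of Corollary~1.12 of \cite{Lu11}. I expect these to be:
\begin{itemize}
\item $\check H$ is affine in $\lambda$;
\item $\nabla^2\check H_1(0)$ is definite;
\item $\check H(\lambda,\cdot)$ is even and $M$-invariant;
\item possibly $\mathrm{Ker}(M_{3,n}-I_{2n})=\{0\}$.
\end{itemize}
For definiteness, the computation (\ref{e:NewAdd5}) shows that $\nabla^2\check H_1(0)=\nabla^2\tilde H_1(0)$ is the block matrix with diagonal blocks $2S$ and off-diagonal blocks $-S$, where $S=\nabla^2V_1(0)$. This matrix equals $\begin{pmatrix}2&-1\\-1&2\end{pmatrix}\otimes S$. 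The $2\times2$ factor has eigenvalues $1$ and $3$, so the product is positive (resp. negative) definite exactly when $S$ is. For the kernel condition, $M_{3,n}z=z$ means $y=x$ and $-x+y=y$, which forces $x=y=0$.

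With the hypotheses in place, Corollary~1.12 yields:
\begin{itemize}
\item discreteness of $\{\lambda\mid \nu_{\tau,M_{3,n}}(\Upsilon_\lambda)>0\}$, which gives (I);
\item for each $\mu$ in this set, either a sequence of nonzero solutions of (\ref{e:NewAdd8}) at $\lambda=\mu$ tending to $0$ in $C^1_{\rm loc}$, or one-sided families of at least $n^\pm$ distinct pairs $\{z,-z\}$ with $n^++n^-\ge\nu_{\tau,M_{3,n}}(\Upsilon_\mu)$.
\end{itemize}
Finally, as in Step~3 of the proof of Theorem~\ref{th:bif-per3DelayII}, I write $\Upsilon(2,n)^{-1}z=(x_1^\top,x_2^\top)^\top$. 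Then $x_2(t)=x_1(t-\tau)$, and $x_1$ solves (\ref{e:2-LinearDelay2+}). Distinct solutions stay distinct, pairs $\pm z$ go to pairs $\pm x_1$, and $C^1_{\rm loc}$ convergence transfers because $\|x_1\|\le\|z\|$ on compact intervals. Since the inequality is $n^++n^-\ge\nu_{\tau,M_{3,n}}(\Upsilon_\mu)=\nu_\tau(0^\mu)$, this gives (II).

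The main obstacle is matching the precise hypotheses and the conclusion count of Corollary~1.12 in \cite{Lu11}. In particular, I need to confirm that it allows a non-orthogonal $M_{3,n}$; since the symplectic $M_{3,n}$ has order $6$, this should only need evenness, as in the use of Theorem~1.8 above. I also need to confirm that its bound is $\nu$ rather than $\nu/2$. If instead it requires an invertibility or transversality condition on $\nabla^2\check H_1(0)$ relative to $M_{3,n}$, I would verify it using the Kronecker structure above, since $M_{3,n}$ acts blockwise by scalar $2\times2$ matrices tensored with $I_n$.
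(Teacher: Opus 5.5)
Your proposal is correct and follows the paper's proof essentially step for step. The paper likewise builds $\tilde H_j,\check H_j$ from the two-delay reduction, identifies $\nu_\tau(0^\lambda)=\nu_{\tau,M_{3,n}}(\Upsilon_\lambda)$, applies Corollary~1.12 of \cite{Lu11} (with the bound $n^++n^-\ge\nu_{\tau,M_{3,n}}(\Upsilon_\mu)$), and transfers back via $\Upsilon(2,n)^{-1}z=(x_1^\top,x_2^\top)^\top$ as at the end of the proof of Theorem~\ref{th:bif-per3DelayII}; your Kronecker-product observation that $\nabla^2\check H_1(0)=\begin{pmatrix}2&-1\\-1&2\end{pmatrix}\otimes\nabla^2V_1(0)$, with factor eigenvalues $1$ and $3$, supplies the justification of definiteness that the paper only asserts.
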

\begin{proof}[\bf Proof]
For $j=0,1$, let us define $\tilde{H}_j, \check{H}_j:\mathbb{R}\times({\R}^{n})^{2}\to\R$  by
	\begin{align*}
	\tilde{H}_j\left( x _ { 1 } ,  x _ {2} \right) &=
	V_j \left(x _ { 1 } \right)
	+ V_j \left( x _ { 2} \right)+V_j \left(  -x_1+x_2 \right),\\
	\check{H}_j(z)&=\tilde{H}_j({\Upsilon}(2, n)^{-1}z)\quad\forall
	 z\in ({\R}^{n})^{2}.
	\end{align*}
For $j=0,1$, the functions $\tilde{H}_j$ and
$\check{H}_j$ are even; furthermore,
$\check{H}_j$ is $M_{3,n}$-invariant
and the Hessian $\nabla^2\check{H}_1({0})$ is  positive definite or negative definite.
Let $\Upsilon_\lambda$ be  the fundamental matrix solution of
 $\dot{z}(t)=J_n(\nabla^2_2\check{H}_0(0)+
 \lambda\nabla^2_2\check{H}_1(0))z(t)$
with $\Upsilon_\lambda(0)=I_{2n}$.
As in the proof of Theorem~\ref{th:bif-per3Delay}, we have
$\nu_{\tau, M_{3,n}}(\Upsilon_\lambda)=\nu_\tau(0^\lambda)$.
It follows from Corollary~1.12 in \cite{Lu11}
that
 $\Sigma_\tau=\{\lambda\in\mathbb{R}\,|\, \nu_{\tau, M_{3,n}}(\Upsilon_\lambda)>0\}$
is a discrete set in $\mathbb{R}$ and that
for each $\mu\in{\Sigma}_\tau$,
at least  one of the following alternatives holds true if $\check{H}(\lambda,z)$
in  (\ref{e:NewAdd8})
is understood as $\check{H}_0(z)+
\lambda\check{H}_1(z)$:
\begin{itemize}
	\item[\rm (i')] The problem (\ref{e:NewAdd8})  with $\lambda=\mu$ admits a sequence of nonzero distinct solutions
	$\{z^l\}^{\infty}_{l=1}$  such that $z^l\to 0$ in $C^1_{\rm loc}(\mathbb{R},\mathbb{R}^{2n})$
	as $l\to\infty$.
	\item[\rm (ii')] There exist left and right  neighborhoods $\Lambda^-$ and $\Lambda^+$ of $\mu$ in $\Lambda$
	and nonnegative integers $n^+$ and $n^-\ge 0$ satisfying $n^++n^-\ge \nu_{\tau, M_{3,n}}(\Upsilon_\mu)=\nu_\tau(0^\mu)$, such that for $\lambda\in\Lambda^-\setminus\{\mu\}$ (resp. $\lambda\in\Lambda^+\setminus\{\mu\}$),
	(\ref{e:NewAdd8}) with parameter  $\lambda$  has at least $n^-$ (resp. $n^+$) distinct pairs of nontrivial solutions
	$\{z_\lambda^i, -z_\lambda^i\}$ ($i=1,\dots,n^-$ (resp. $n^+$))
	which converge to zero in $C^1_{\rm loc}(\mathbb{R}, \mathbb{R}^{2n})$
	as $\lambda\to\mu$.
\end{itemize}
The desired statements follow in the same way as at the end of the proof of Theorem~\ref{th:bif-per3DelayII}.
\end{proof}

\section{Bifurcation near the trivial equilibrium of  system (\ref{e:Delay2Auto})}\label{sec:delay3}

Under Assumption~\ref{ass:bif-per2Delay2Auto}, a result parallel to Theorem~\ref{th:bif-per3Delay} holds for the  system
(\ref{e:Delay2Auto}) at the origin (the trivial equilibrium).

\begin{theorem}\label{th:bif-per3+3}
Under Assumption~\ref{ass:bif-per2Delay2Auto}, let $e_1(\lambda)\le \cdots\le e_n(\lambda)$ be all eigenvalues of the Hermitian matrix
$D(\lambda)+\sqrt{-1}E(\lambda)$.
Fix a delay parameter $\tau>0$.
\begin{enumerate}
\item[\rm (I)]{\rm (\textsf{Necessary condition}):}
If $(\mu,0)$ with $\mu\in\Lambda$ is a bifurcation point of
 (\ref{e:Delay2Auto}), then $e_l(\mu)\in\frac{\pi\mathbb{Z}}{\tau}$ for some $l$.

\item[\rm (II)]{\rm (\textsf{Sufficient condition}):}
Let $\mu$ be an interior point of $\Lambda$. Then
$(\mu,0)$ is a bifurcation point of (\ref{e:Delay6Auto13}),
 provided that  there exists a partition
  $\{i_1,\dots,i_q\}\cup\{i_{q+1},\dots,i_n\}$  of $\{1,\dots,n\}$
  with $1 \le q \le n$, such that
 \begin{itemize}
\item[\rm (A)] $e_{i_l}(\mu)=\frac{p_l\pi}{\tau}\in\frac{\pi}{\tau}\mathbb{Z}$ for $l=1,\dots, q$,
and $e_{i_l}(\mu)\notin\frac{\pi}{\tau}\mathbb{Z}$ for  $l=q+1,\dots,n$;

\item[\rm (B)] there exist two sequences $(\lambda_m^+)_m$ and $(\lambda_m^-)_m$ in $\Lambda$  converging to $\mu$ such that
$e_{i_l}(\lambda_m^-)< e_{i_l}(\mu)<e_{i_l}(\lambda_m^+)$
for all $m\in\mathbb{N}$ and all $l=1,\dots,q$.
\end{itemize}
%

 \item[\rm (III)]{\rm (\textsf{Alternative bifurcations of Fadell-Rabinowitz type and of Rabinowitz type}):}
Let the assumptions of (II) hold,
 replacing condition (B) by the following stronger requirement:
 \begin{itemize}
 \item[\rm (SB)] For some $\epsilon>0$, one of the following two conditions holds:
 \begin{itemize}
\item[\rm (SB.1)] $e_{i_l}(\lambda)< e_{i_l}(\mu)<e_{i_l}(\lambda')$
for all $(\lambda, \lambda')\in (\mu-\epsilon, \mu)\times (\mu, \mu+\epsilon)$ and
 $l=1,\dots,q$.

\item[\rm (SB.2)]
$e_{i_l}(\lambda)> e_{i_l}(\mu)>e_{i_l}(\lambda')$
for all $(\lambda, \lambda')\in (\mu-\epsilon, \mu)\times (\mu, \mu+\epsilon)$ and $l=1,\dots,q$.
 \end{itemize}
\end{itemize}

   Then  one of the following alternatives occurs:
   \begin{itemize}
\item[\rm (i)] The problem (\ref{e:Delay2Auto})
   with $\lambda=\mu$  admits a sequence of nonzero, $\R$-distinct solutions
$\{x^l\}_{l=1}^{\infty}$  such that $x^l\to 0$ in
$C^1_{\rm loc}(\mathbb{R},\mathbb{R}^{2n})$
as $l\to\infty$.
\item[\rm (ii)] There exist left and right neighborhoods $\Lambda^-$ and $\Lambda^+$ of $\mu$
in $\Lambda$, and nonnegative integers $n^-$ and $n^+$ satisfying $n^- + n^+ \geq q$,
such that for $\lambda \in \Lambda^- \setminus \{\mu\}$ (resp. $\lambda \in \Lambda^+ \setminus \{\mu\}$),
the problem (\ref{e:Delay2Auto}) with parameter $\lambda$ has at least $n^-$ (resp. $n^+$) $\mathbb{R}$-distinct nonzero solutions
$x_\lambda^i \neq 0$ ($i = 1, \dots, n^-$ (resp. $n^+$))
which converge to zero in $C^1_{\rm loc}(\mathbb{R}, \mathbb{R}^{2n})$
as $\lambda\to\mu$.
  \end{itemize}
Moreover, if $n>1$ and $q>1$, then at least one of (i), (iii), and (iv)  holds,
where (i) is as stated previously, and
\begin{itemize}
\item[\rm (iii)]  For every $\lambda\in\Lambda\setminus\{\mu\}$ near $\mu$, there is a
 nonzero solution ${x}_\lambda$  of problem (\ref{e:Delay2Auto})
 with parameter  $\lambda$, such that
 $x_\lambda\to 0$ in $C^1_{\rm loc}(\mathbb{R},\mathbb{R}^{2n})$ as $\lambda\to\mu$.

\item[\rm (iv)] For a given $\varepsilon > 0$, there exists a one-sided neighbourhood $\Lambda^0$ of $\mu$ in $\Lambda$ such that, for any $\lambda \in \Lambda^0 \setminus \{\mu\}$,
    problem (\ref{e:Delay2Auto}) with parameter $\lambda$  has either
    \begin{itemize}
    \item[$\bullet$]
     infinitely many $\mathbb{R}$-distinct nonzero solutions $\{x_\lambda^l\}_{l=1}^\infty$ such that $\|x_\lambda^l|_{[0,2\tau]}\|_{C^1}<\varepsilon$
        for all $l \in \mathbb{N}$, or \item[$\bullet$]
         at least two $\mathbb{R}$-distinct nonzero solutions $\hat{x}_\lambda^1$ and $\hat{x}_\lambda^2$ satisfying the following inequalities:
        $\|\hat{x}_\lambda^i|_{[0,2\tau]}\|_{C^1}<\varepsilon$ for $i=1,2$, and
    \begin{align*}
        & \bigl(\hat{x}_\lambda^1(0), J_n \hat{x}_\lambda^1(\tau) \bigr)_{\mathbb{R}^{2n}}
          + \int_0^{\tau} \bigl( \dot{\hat{x}}_\lambda^1(t), J_n \hat{x}_\lambda^1(t-\tau) \bigr)_{\mathbb{R}^{2n}} \, dt
          + \int_{-\tau}^{\tau} G\bigl(\lambda, \hat{x}_\lambda^1(t)\bigr) \, dt \\
        \ne \; &
        \bigl(\hat{x}_\lambda^2(0), J_n \hat{x}_\lambda^2(\tau) \bigr)_{\mathbb{R}^{2n}}
          + \int_0^{\tau} \bigl( \dot{\hat{x}}_\lambda^2(t), J_n \hat{x}_\lambda^2(t-\tau) \bigr)_{\mathbb{R}^{2n}} \, dt
          + \int_{-\tau}^{\tau} G\bigl(\lambda, \hat{x}_\lambda^2(t)\bigr) \, dt.
    \end{align*}
\end{itemize}
 \end{itemize}
 \end{enumerate}
\end{theorem}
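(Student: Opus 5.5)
The proof will follow the template of the proof of Theorem~\ref{th:bif-per3Delay}: reduce (\ref{e:Delay2Auto}) to a Hamiltonian system on $\mathbb{R}^{4n}$ with a twisted boundary condition, compute the Maslov-type indices with Theorem~\ref{th:PIndex4}, and then invoke \cite[Theorems~1.5 and 1.14]{Lu11}.

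\textbf{Step 1 (reduction).} Given a solution $x$ of (\ref{e:Delay2Auto}), I set $x_1(t)=x(t)$, $x_2(t)=x(t-\tau)$ and $v=(x_1^\top,x_2^\top)^\top\in(\mathbb{R}^{2n})^2$. The $2\tau$-periodicity gives $x_2(t+\tau)=x_1(t)$ and $x_1(t+\tau)=x_2(t)$. The equation becomes $\dot x_1=J_n\nabla G(\lambda,x_2)$ and $\dot x_2=J_n\nabla G(\lambda,x_1)$. With $H(\lambda,x_1,x_2)=G(\lambda,x_1)+G(\lambda,x_2)$ this reads $\dot v=A\nabla_2 H(\lambda,v)$, where
$$A=\begin{pmatrix}0&J_n\\ J_n&0\end{pmatrix},$$
together with the twist $v(t+\tau)=Tv(t)$, where $T$ swaps the two blocks. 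The converse direction is immediate, as in Claim~\ref{cl:delay1.1}.

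Next I choose an invertible $\Upsilon$ with $\Upsilon^\top A^{-1}\Upsilon$ equal to $J_{2n}^{-1}$ (after a harmless reordering of coordinates), and set $z=\Upsilon v$ and $\check H(\lambda,z)=H(\lambda,\Upsilon^{-1}z)$. This gives
$$\dot z=J_{2n}\nabla\check H(\lambda,z),\qquad z(t+\tau)=Mz(t),\qquad M=\Upsilon T\Upsilon^{-1}.$$
Here $M$ is symplectic and orthogonal with $M^2=I$, and $\check H$ is $M$-invariant because $H$ is symmetric in $(x_1,x_2)$. I will also check $\mathrm{Ker}(M-I)=\{0\}$ modulo the constant solutions. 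Since $M$ may have eigenvalue $1$, the honest statement is that condition (a) of \cite[Theorem~1.14]{Lu11} concerns nonzero constant solutions in $\mathrm{Ker}(M-I)$. These are excluded because the linearization at $0$ has no such kernel unless $D(\mu)+\sqrt{-1}E(\mu)$ is singular, and in that case $0\in\frac{\pi}{\tau}\mathbb{Z}$, which is consistent with (A).

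Finally, as in Claim~\ref{cl:delay1.2}, I compute the action $\int_0^\tau[\frac12(J\dot z,z)+\check H]\,dt$ in terms of $x$. This should produce exactly the functional displayed in (iv), after using $x(t+2\tau)=x(t)$ to rewrite $\int_0^\tau G(\lambda,x(t-\tau))\,dt=\int_{-\tau}^0G$.

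\textbf{Step 2 (index computation).} The linearization has Hessian $\mathrm{diag}(\nabla^2_2G(\lambda,0),\nabla^2_2G(\lambda,0))$. Under the identification $\mathbb{R}^{2n}\cong\mathbb{C}^n$, the matrix $J_n\nabla^2_2G(\lambda,0)$ acts as multiplication by $\sqrt{-1}$ times the Hermitian matrix $D+\sqrt{-1}E$, up to conjugation. Theorem~\ref{th:PIndex4} then supplies:
\begin{itemize}
\item[] $\nu_{\tau,M}(\Upsilon_\lambda)=2\sharp\{l \mid e_l(\lambda)\in\frac{\pi}{\tau}\mathbb{Z}\}$;
\item[] $i_{\tau,M}(\Upsilon_\lambda)=c_n+2\sum_l j_l(\lambda)$, with $j_l(\lambda)$ determined by $e_l(\lambda)\in(\frac{(j-1)\pi}{\tau},\frac{j\pi}{\tau}]$.
\end{itemize}
I take this as given from the appendix.

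\textbf{Step 3 (conclusions).} Part (I) follows from \cite[Theorem~1.5(I)]{Lu11}: a bifurcation point forces $\nu\neq0$. For (III), the continuity estimate (\ref{e:EigenContin}) applied to Hermitian matrices lets me shrink $\epsilon$ as in the proof of Theorem~\ref{th:bif-per3Delay}. Under (SB.1) this gives $\nu=0$ on the punctured interval, and each $j_{i_l}$ for $l\le q$ jumps by $1$ across $\mu$. Hence the index jumps by $2q=\nu_{\tau,M}(\Upsilon_\mu)$, and (SB.2) is symmetric. Applying both parts of \cite[Theorem~1.14]{Lu11}, the second part using $2q\ge3$ when $q>1$, and pulling solutions back via $\Upsilon^{-1}$ and the first component yields (i)--(iv). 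The $C^1$ bounds on $[0,2\tau]$ follow from those on $[0,\tau]$ via $z(t+\tau)=Mz(t)$ with $M$ orthogonal. Part (II) follows from \cite[Theorem~1.5(II)]{Lu11}, since the index intervals at $\lambda_m^\pm$ are disjoint.

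\textbf{Main obstacle.} I expect the hardest step to be finding the explicit $\Upsilon$ conjugating $A$ to the standard form while keeping $M$ orthogonal. I will first try $\Upsilon$ built from the block matrix $\frac{1}{\sqrt2}\begin{pmatrix}I&I\\ I&-I\end{pmatrix}$, which diagonalizes the swap. This splits $A$ into the blocks $\pm J_n$, and conjugating the $-J_n$ block by $\mathrm{diag}(I_n,-I_n)$ then makes everything standard. The second delicate point is matching Theorem~\ref{th:PIndex4}'s normalization of $j_l$ so that the index jump is exactly $2q$.
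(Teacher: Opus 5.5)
Your strategy is the paper's: the same $4n$-dimensional reduction with $\hat H=G(x_1)+G(x_2)$, $J_{n,2}$ and the block swap, then a congruence to $J_{2n}$, Theorem~\ref{th:PIndex4}, and \cite[Theorems~1.5, 1.14]{Lu11}. The one real difference is the normalization, and it is cheaper to use the paper's. It takes the orthogonal matrix $\Gamma(2,2n)=\mathrm{diag}(-J_n,I_{2n})$. This gives $M_{2,n}=\begin{pmatrix}0&J_n\\ -J_n&0\end{pmatrix}$ and, because $-J_n\nabla^2_2G(\lambda,0)J_n=\nabla^2_2G(\lambda,0)$, the Hessian $\mathrm{diag}(\nabla^2_2G(\lambda,0),\nabla^2_2G(\lambda,0))$. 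That is exactly the data of Theorem~\ref{th:PIndex4}, so your ``main obstacle'' disappears.

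Your $W$-based $\Upsilon$ behaves differently. With $z=\Upsilon v$ you need $\Upsilon A\Upsilon^\top=J_{2n}$; your explicit construction satisfies this, though your displayed congruence has the transposes misplaced. After reordering it yields $M=I_{2n}\diamond(-I_{2n})$, which splits $x_1+x_2$ into a $\tau$-periodic problem and $x_1-x_2$ into a $\tau$-antiperiodic one. Theorem~\ref{th:PIndex4} does not apply verbatim to that pair. You can fix this in either of two ways.
\begin{itemize}
\item Observe that $Q=\Upsilon\,\Gamma(2,2n)$ is orthogonal and symplectic and conjugates the paper's path and $M_{2,n}$ to yours. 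Then invoke (\ref{e:RobbinSaCZ+RobbinSa7}) of Lemma~\ref{lem:Dong06+}, which the paper proves only for orthogonal symplectic conjugations.
\item Compute $i_\tau(e^{tJ_nS})+i_{\tau,-I_{2n}}(e^{tJ_nRSR})$ directly with Lemma~\ref{lem:Dong06}(ii), where $S=\nabla^2_2G(\lambda,0)$ and $R=\mathrm{diag}(I_n,-I_n)$. This recovers $2\sum_j m_j+n$.
\end{itemize}

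There is a genuine gap in your treatment of condition (a) of \cite[Theorem~1.14]{Lu11}. The paper verifies (a) in the form $\mathrm{Ker}(M-I_{4n})\cap\mathrm{Ker}\,\nabla^2\check H(\mu,0)=\{0\}$. Saying that a singular $D(\mu)+\sqrt{-1}E(\mu)$ is ``consistent with (A)'' does not establish this; it only concedes the bad case.

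The bad case really occurs. If some $p_l=0$, then $S=\nabla^2_2G(\mu,0)$ is singular, and its kernel is $J_n$-invariant because $S$ commutes with $J_n$. In the paper's coordinates the nonzero space $\{(J_nb,b)\mid b\in\mathrm{Ker}\,S\}$ then lies in that intersection. These vectors are the constant solutions $x\equiv b$ of the linearized delay problem. So the condition fails, and Theorem~1.14 cannot be invoked.

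The paper's proof has the same hole: it asserts that (A) makes $D(\mu)+\sqrt{-1}E(\mu)$ nondegenerate, which holds only when every $p_l\neq0$. For (III) you should therefore add $p_l\neq 0$ for $l=1,\dots,q$, as Corollary~\ref{cor:bif-per3+4} does via $\pi\mathbb{Z}\setminus\{0\}$. Under that hypothesis the rest of your argument goes through. Parts (I) and (II) are unaffected, since the paper uses (a) only in (III).
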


When $n=1$, Theorem~\ref{th:bif-per3+3} has the following simple version.

\begin{corollary}\label{cor:bif-per3+3}
Let $\Lambda \subset \mathbb{R}$ be an interval, and let $G \in C(\Lambda \times \mathbb{R}^{2}, \mathbb{R})$
 be such that:
  \begin{itemize}
\item[\rm (C)]
For every $\lambda \in \Lambda$, the function $G(\lambda, \cdot): \mathbb{R}^{2} \to \mathbb{R}$ is of class $C^2$, and its first and second partial derivatives with respect to the second variable,
 $\nabla_2G(\lambda, x)$ and $\nabla^2_2G\left(\lambda, x\right)$,   are continuous in $(\lambda, x)$.

 \item[\rm (D)] $\nabla_2G(\lambda, {0})=0$ and $\nabla^2_2G\left(\lambda, {0}\right)=\operatorname{diag}(e(\lambda), e(\lambda))$ for a function $e:\Lambda\to\mathbb{R}$ and all $\lambda\in\Lambda$.
\end{itemize}
Fix a delay parameter $\tau>0$.
 If $(\mu,0)$ with $\mu\in\Lambda$ is a bifurcation point of
 (\ref{e:Delay2Auto}) with $n=1$,  then $e(\mu)\in\frac{\pi\mathbb{Z}}{\tau}$.
Conversely, let $\mu$ be an interior point of $\Lambda$ satisfying
$e(\mu)\in\frac{\pi\mathbb{Z}}{\tau}$.
Then $(\mu,0)$ is a bifurcation point of (\ref{e:Delay2Auto}) with $n=1$,
provided that there exist sequences $(\lambda_m^\pm)_m\subset\Lambda$  converging to
$\mu$ such that
$e(\lambda_m^-)<e(\mu)<e(\lambda_m^+)$ for all $m\in\mathbb{N}$.
Furthermore, suppose that for some interior point
 $\mu$ of $\Lambda$, we have that $e(\mu)\in\frac{\pi}{\tau}\mathbb{Z}$
and that $e(\lambda)-e(\mu)\ne 0$ and changes sign as $\lambda$
across $\mu$
in a deleted neighborhood of $\mu$.
Then for  problem (\ref{e:Delay2Auto}) with $n=1$,
one of the following alternatives holds: 
 \begin{itemize}
\item[\rm (i)] The problem (\ref{e:Delay2Auto}) with $n=1$  and $\lambda=\mu$ possesses a sequence of $\mathbb{R}$-distinct solutions, $x^k\ne 0$ ($k=1,2,\dots$) such that
     $x^k\to 0$ in $C^1_{\rm loc}(\mathbb{R}, \mathbb{R}^2)$ as $k\to\infty$.

\item[\rm (ii)] There exist left and right  neighborhoods $\Lambda^-$ and $\Lambda^+$ of $\mu$ in $\mathbb{R}$
and nonnegative integers $n^+$ and $n^-$ satisfying $n^++n^-\ge 1$, such that
 for $\lambda\in\Lambda^-\setminus\{\mu\}$ (resp. $\lambda\in\Lambda^+\setminus\{\mu\}$), problem
(\ref{e:Delay2Auto}) with $n=1$ and parameter  $\lambda$  has at least $n^-$ (resp. $n^+$) $\R$-distinct
 solutions, $x_\lambda^i \neq 0$ ($i = 1, \dots, n^-$ resp. $n^+$)
  which, as $\lambda \to \mu$, converge to  zero in
   $C^1_{\rm loc}(\mathbb{R}, \mathbb{R}^2)$.
  \end{itemize}
\end{corollary}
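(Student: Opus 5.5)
This corollary is the case $n=1$ of Theorem~\ref{th:bif-per3+3}, so the plan is to check its hypotheses and translate its conclusions. With $n=1$, $D(\lambda)$ is a real $1\times1$ symmetric matrix and $E(\lambda)$ is a real $1\times 1$ skew-symmetric matrix. Hence $E(\lambda)=0$, and condition (D) says exactly that $\nabla_2^2G(\lambda,0)$ has the form \eqref{e:V-function1G} with $D(\lambda)=e(\lambda)$. Together with condition (C), this shows that Assumption~\ref{ass:bif-per2Delay2Auto} holds. The Hermitian matrix $D(\lambda)+\sqrt{-1}E(\lambda)$ is the scalar $e(\lambda)$, so its only eigenvalue is $e_1(\lambda)=e(\lambda)$.

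\emph{Necessity.} If $(\mu,0)$ is a bifurcation point of \eqref{e:Delay2Auto} with $n=1$, then Theorem~\ref{th:bif-per3+3}(I) gives $e_1(\mu)\in\frac{\pi\mathbb{Z}}{\tau}$. This is the first assertion.

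\emph{Sufficiency.} Take the trivial partition $\{i_1\}=\{1\}$ with $q=1$ and an empty complement. Condition (A) then reads $e(\mu)=\frac{p_1\pi}{\tau}$ for some $p_1\in\mathbb{Z}$, which is the hypothesis. Condition (B) is literally the assumed existence of $(\lambda_m^\pm)$. So Theorem~\ref{th:bif-per3+3}(II) shows that $(\mu,0)$ is a bifurcation point. The statement of (II) cites \eqref{e:Delay6Auto13}, but this is clearly a typo for \eqref{e:Delay2Auto}: its proof runs through the Hamiltonian reformulation of \eqref{e:Delay2Auto} and Theorem~\ref{th:PIndex4}.

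\emph{Alternatives.} The one point needing care is deriving (SB) from ``$e(\lambda)-e(\mu)\neq0$ and changes sign across $\mu$ in a deleted neighborhood.'' I will read this as: there is $\epsilon>0$ such that $e(\lambda)-e(\mu)$ has one constant sign on $(\mu-\epsilon,\mu)$ and the opposite constant sign on $(\mu,\mu+\epsilon)$. The continuity of $e(\lambda)=\frac12\operatorname{tr}\nabla_2^2G(\lambda,0)$ from (C) supports this reading: $e(\lambda)-e(\mu)$ is continuous and nonvanishing on each punctured side, so by the intermediate value theorem its sign is constant on each side. That gives (SB.1) or (SB.2) with $q=1$. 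Theorem~\ref{th:bif-per3+3}(III) then yields alternative (i), or alternative (ii) with $n^-+n^+\ge q=1$. These are exactly the two alternatives claimed. The further alternatives (iii) and (iv) require $n>1$ and $q>1$, so they do not arise here.

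The main obstacle is only the bookkeeping above: identifying the skew part as zero so that Assumption~\ref{ass:bif-per2Delay2Auto} applies, and turning the sign-change hypothesis into the uniform one-sided inequalities of (SB). Everything else is a direct specialization of Theorem~\ref{th:bif-per3+3}.
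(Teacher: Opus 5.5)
Your proposal is correct and matches the paper's route: the paper gives no separate proof and presents the corollary as the $n=1$ specialization of Theorem~\ref{th:bif-per3+3}, and your checks ($E\equiv 0$, $e_1=e$, continuity of $e$ turning the sign change into (SB.1) or (SB.2), $q=1$, and the misprinted equation reference in part (II)) are exactly what that specialization needs. One caveat concerns the degenerate value $e(\mu)=0$, which the hypothesis $e(\mu)\in\frac{\pi}{\tau}\mathbb{Z}$ allows: there the paper's proof of Theorem~\ref{th:bif-per3+3}(III) breaks down, because it verifies condition (a) of \cite[Theorem~1.14]{Lu11} from the non-degeneracy of $D(\mu)+\sqrt{-1}E(\mu)$, and at $e(\mu)=0$ the linearized kernel consists of constant solutions; for that value it is cleaner to argue directly, since all small solutions near $(\mu,0)$ are then constant critical points of $G(\lambda,\cdot)$, and the jump of the Morse index of $0$ between $2$ and $0$ yields (i) or (ii).
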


Parallel to  Corollary~\ref{cor:bif-per3DelayC} we have:

\begin{corollary}\label{cor:bif-per3+4}
Let  $K: \mathbb{R}^{2n} \to \mathbb{R}$ be a $C^2$ function satisfying
$$
\nabla K({0})=0\quad\hbox{and}\quad
\nabla^2 K\left({0}\right)=
\left( \begin{array} { c c }
 	D & -E \\
 	E &D
 \end{array} \right),
 $$
where $D$ is an $n\times n$ symmetric matrix and  $E$ is an $n\times n$ skew-symmetric matrix.
Let $e_1\le e_2\le\cdots\le e_n$ be all eigenvalues of the Hermitian matrix
 $D+\sqrt{-1}E$.
Consider the problem
  \begin{eqnarray}\label{e:Delay2AutoC}
   \dot{x}( t ) =  J_n\nabla K( x ( t - \lambda) )\quad\text{and}\quad
   x( t + 2\lambda) =  x( t )\quad\forall t.
   \end{eqnarray}
  If $(\mu,0)$ with $\mu\in\mathbb{R}\setminus\{0\}$ is a bifurcation point of
(\ref{e:Delay2AutoC}), then $\mu e_{l}\in\pi\mathbb{Z}$ for some $l$.
Conversely, for some  $\mu\in\mathbb{R}$, suppose  that there exists a partition
  $\{i_1,\dots,i_q\}\cup\{i_{q+1},\dots,i_n\}$
  of $\{1,\dots,n\}$ with $1\le q\le n$ such that
    \begin{itemize}
\item[\rm (E)] $\mu e_{i_l}\in\pi\mathbb{Z}\setminus\{0\}$ for  $l=1,\dots,q$ (which implies that
$\mu\ne 0$ and $e_{i_l}\ne 0$ for $l=1,\dots,q$),
$e_{i_l}$($l=1,\dots,q$) have the same plus-minus sign, and
 $\mu e_{i_l}\notin\pi\mathbb{Z}$ for any $l=q+1,\dots,n$.
 \end{itemize}
 (In particular, if $n=1$, then $e_1=D$, and Condition (E) becomes $\mu D\in\pi\mathbb{Z}\setminus\{0\}$.)
  Then  one of the following alternatives occurs:
   \begin{itemize}
\item[\rm (i)] The problem (\ref{e:Delay2AutoC})   with $\lambda=\mu$
admits a sequence of nonzero,  $\R$-distinct solutions $\{x^k\}^{\infty}_{k=1}$ such that
$x^k\to 0$ in $C^1_{\rm loc}(\mathbb{R}, \mathbb{R}^{2n})$
as $k\to\infty$.
\item[\rm (ii)]
    There exist left and right neighborhoods $\Lambda^-$ and $\Lambda^+$ of $\mu$ in $\Lambda$,
and nonnegative integers $n^-$ and $n^+$ with $n^- + n^+ \geq q$,
such that for $\lambda \in \Lambda^- \setminus \{\mu\}$ (resp. $\lambda \in \Lambda^+ \setminus \{\mu\}$),
problem (\ref{e:Delay2AutoC}) with parameter $\lambda$ has at least $n^-$ (resp. $n^+$)
$\mathbb{R}$-distinct nonzero solutions
$x_\lambda^i \neq 0$ ($i = 1, \dots, n^-$ (resp. $n^+$)),
which converge to the zero function in $C^1_{\rm loc}(\mathbb{R}, \mathbb{R}^{2n})$
 as $\lambda \to \mu$.
 \end{itemize}
Moreover, if $n>1$ and $q>1$, then at least one of  (i), (iii), and (iv) holds, with (i) as stated previously, and with (iii) and (iv) given by:
\begin{itemize}
\item[\rm (iii)]  For every $\lambda\in\Lambda\setminus\{\mu\}$ near $\mu$, there is a
 nonzero solution ${x}_\lambda$  of problem (\ref{e:Delay2AutoC})
 with parameter  $\lambda$, such that $x_\lambda\to 0$
 in $C^1_{\rm loc}(\mathbb{R}, \mathbb{R}^{2n})$  as $\lambda\to\mu$.

\item[\rm (iv)] For a given $\varepsilon>0$,  there is a one-sided  neighborhood $\Lambda^0$ of $\mu$ in $\Lambda$ such that, for any $\lambda\in\Lambda^0\setminus\{\mu\}$,
    problem (\ref{e:Delay2AutoC}) with parameter  $\lambda$
    has either
    \begin{itemize}
    	\item [$\bullet$]
     infinitely many $\mathbb{R}$-distinct nonzero solutions $\{x_\lambda^l\}_{l=1}^\infty$ such that
$$
\|{x}_\lambda^l\|_{C^0([0, 2|\lambda|])}+|\lambda|
\|\dot{x}_\lambda^l\|_{C^0([0, 2|\lambda|])}<\varepsilon
$$
for $l=1,2,\dots$, or
\item[$\bullet$]  at least two $\mathbb{R}$-distinct nonzero solutions $\hat{x}_\lambda^1$ and $\hat{x}_\lambda^2$ satisfying  the following inequalities
   $$
\|\hat{x}_\lambda^i\|_{C^0([0, 2|\lambda|])}+|\lambda|
\|\dot{\hat{x}}_\lambda^i\|_{C^0([0, 2|\lambda|])}<\varepsilon, \;i=1,2,
$$
and
\begin{align*}
&&(\hat{x}^1_\lambda(0), J_n\hat{x}^1_\lambda(\lambda))_{\mathbb{R}^{2n}}+
 \int^{\lambda}_0(\dot{\hat{x}}^1_\lambda(t), J_n\hat{x}^1_\lambda(t-1))_{\mathbb{R}^{2n}}dt+\int^{\lambda}_{-\lambda}K(\hat{x}^1_\lambda(t))dt
\nonumber\\
&&\ne (\hat{x}^2_\lambda(0), J_n\hat{x}^2_\lambda(\lambda))_{\mathbb{R}^{2n}}+
 \int^{\lambda}_0(\dot{\hat{x}}^2_\lambda(t), J_n\hat{x}^2_\lambda(t-1))_{\mathbb{R}^{2n}}dt+\int^{\lambda}_{-\lambda}K(\hat{x}^2_\lambda(t))dt.
\end{align*}
\end{itemize}
\end{itemize}
\end{corollary}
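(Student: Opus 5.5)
The plan is to reduce Corollary~\ref{cor:bif-per3+4} to Theorem~\ref{th:bif-per3+3}, following the proof of Corollary~\ref{cor:bif-per3DelayC}. Take $\Lambda=\mathbb{R}$ and $G(\lambda,x)=\lambda K(x)$. Then $G$ satisfies Assumption~\ref{ass:bif-per2Delay2Auto} with $D(\lambda)=\lambda D$ and $E(\lambda)=\lambda E$, so the eigenvalues of $D(\lambda)+\sqrt{-1}E(\lambda)$ are $\lambda e_1,\dots,\lambda e_n$, up to reordering when $\lambda<0$. This reordering is harmless, because the statement of Theorem~\ref{th:bif-per3+3} only uses the partition of indices. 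Consider the rescaled problem
\[
\dot u(t)=\lambda J_n\nabla K(u(t-1)),\qquad u(t+2)=u(t),
\]
which is (\ref{e:Delay2Auto}) with $\tau=1$ for this $G$. For $\lambda\ne0$, $u$ solves it if and only if $x(t):=u(t/\lambda)$ solves (\ref{e:Delay2AutoC}). Indeed, $\dot x(t)=\lambda^{-1}\dot u(t/\lambda)=J_n\nabla K(x(t-\lambda))$, and $2$-periodicity of $u$ becomes $2\lambda$-periodicity of $x$. The map $u\mapsto x$ sends $\mathbb{R}$-distinct solutions to $\mathbb{R}$-distinct solutions, and $u\to0$ in $C^1_{\rm loc}$ if and only if $x\to0$ in $C^1_{\rm loc}$ when $\lambda$ stays in a compact set away from $0$.

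\textbf{Necessity.} If $(\mu,0)$ with $\mu\ne0$ is a bifurcation point of (\ref{e:Delay2AutoC}), then by the scaling it is a bifurcation point of the rescaled problem. Theorem~\ref{th:bif-per3+3}(I) with $\tau=1$ then gives $\mu e_l\in\pi\mathbb{Z}$ for some $l$.

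\textbf{Sufficiency.} Assume (E), and fix the common sign of $e_{i_l}$ for $l\le q$. Condition (A) holds with $\tau=1$, $p_l=\mu e_{i_l}/\pi$. Since $\lambda e_{i_l}-\mu e_{i_l}=(\lambda-\mu)e_{i_l}$ has the sign of $(\lambda-\mu)\,{\rm sign}(e_{i_l})$, (SB.1) holds if all these $e_{i_l}>0$ and (SB.2) holds if all are negative, for any $\epsilon<|\mu|$. Theorem~\ref{th:bif-per3+3}(III) then gives the alternatives (i)--(iv) for the rescaled problem. We choose all neighborhoods $\Lambda^\pm,\Lambda^0$ inside $(\mu-\epsilon,\mu+\epsilon)$, so they avoid $0$. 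Pulling back via $x(t)=u(t/\lambda)$ gives (i), (ii) and (iii) of the corollary directly.

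\textbf{Main obstacle: item (iv).} Two translations are needed here. First, the norm: $\|u\|_{C^1([0,2])}=\|x\|_{C^0(I_\lambda)}+|\lambda|\|\dot x\|_{C^0(I_\lambda)}$, where $I_\lambda$ is the interval between $0$ and $2\lambda$. For $\lambda<0$, the $2\lambda$-periodicity of $x$ (here periodic rather than antiperiodic, which is simpler) lets us replace $[2\lambda,0]$ by $[0,2|\lambda|]$. Second, the functional: we must check that the action
\[
(u(0),J_nu(1))+\int_0^1(\dot u(t),J_nu(t-1))\,dt+\lambda\int_{-1}^1K(u(t))\,dt
\]
appearing in Theorem~\ref{th:bif-per3+3}(iv) transforms under $t\mapsto t/\lambda$ into the corresponding expression for $x$. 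Substituting $s=\lambda t$ gives $\int_0^1(\dot u(t),J_nu(t-1))\,dt=\int_0^\lambda(\dot x(s),J_nx(s-\lambda))\,ds$, since the factor $\lambda$ from $\dot u$ cancels the $1/\lambda$ from $dt$. Similarly $\lambda\int_{-1}^1K(u)\,dt=\int_{-\lambda}^{\lambda}K(x)\,ds$. The boundary term becomes $(x(0),J_nx(\lambda))$. (The displayed inequality in the corollary writes $x(t-1)$, which must be read as $x(t-\lambda)$.) Injectivity of this correspondence then turns the inequality of the two actions for $u$ into the stated inequality for $\hat x^1_\lambda,\hat x^2_\lambda$, completing (iv).
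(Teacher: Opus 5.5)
Your proposal is correct and follows essentially the same route as the paper: set $G(\lambda,x)=\lambda K(x)$ with $\Lambda=\mathbb{R}$, apply Theorem~\ref{th:bif-per3+3} with $\tau=1$ to the rescaled problem $\dot u(t)=\lambda J_n\nabla K(u(t-1))$, $u(t+2)=u(t)$, using (E) to obtain (A) together with (SB.1) or (SB.2), and pull the solutions back via $x(t)=u(t/\lambda)$ on neighborhoods that avoid $0$. Your additional bookkeeping fills in details the paper leaves to the reader: the eigenvalue reordering when $\mu<0$, the explicit norm and action identities, and reading $x(t-1)$ as $x(t-\lambda)$ in the displayed inequality.
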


\begin{proof}
	[\bf Proof of Theorem~\ref{th:bif-per3+3}]
\noindent{\bf Step~1}.
Under Assumption~\ref{ass:bif-per2Delay2Auto}, define $\hat{H}:\Lambda\times ({\mathbb{R}}^{2n})^2\to\mathbb{R}$  by
\begin{equation}\label{e:Delay14}
\hat{H} \left(\lambda, (x_{ 1 }^\top ,  x_{ 2 }^\top)^\top \right) =
 G\left(\lambda,  x_{ 1 } \right) +  G\left(\lambda,  x_{ 2 } \right).
\end{equation}
Denote by $\nabla_2 \hat{H}(\lambda,  v)$ and
$\nabla_2^2 \hat{H}(\lambda,  v)$ the gradient and the Hessian of $\hat{H}\left(\lambda, v \right)$ with respect to the second variable $v\in({\mathbb{R}}^{2n})^2$, respectively.
Then
\begin{equation}\label{e:Delay29}
\nabla^2_2\hat{H}\left(\lambda, (x_{ 1 }^\top ,  x_{ 2 }^\top)^\top \right) =
\left( \begin{array} { c c }
\nabla^2_2G\left(\lambda, x_1 \right) & 0   \\
 0 &
\nabla^2_2G\left(\lambda, x_2 \right)\end{array}
\right).
\end{equation}
Define $4n \times 4n$ matrices  $J_ {n, 2 }$ and $P_{n,2}$  by
\begin{equation}\label{e:Delay17}
J _ { n , 2 } = \left( \begin{array} { c c c c c }
0 & J _ { n } &  \\
J _ { n } & 0 &  \end{array} \right)
\quad\text{and}\quad
P_{n, 2 } =  \left( \begin{array} { c c c }
0 & I _ {2n } \\ I _ { 2n } & 0 \end{array} \right),
\end{equation}
 respectively. These matrices satisfy the following properties:
\begin{align}\label{e:Delay18}
&P_{n , 2 }^{ - 1 } = P_{n,2 }^{ \top }, \quad P_{n,2 }^{2 } = I_{2n},\quad
P_{n, 2 } J_{ n,2 } = J_{n,2}P_{n, 2 }, \nonumber\\
&\hat{H}(\lambda, z)=\hat{H}(\lambda, P_{n,2}z),\quad\forall (\lambda,t,z).
\end{align}
As in Claim~\ref{cl:delay1.1} we have (see \cite[\S1.4]{Liu12}):

\begin{claim}\label{cl:delay2.1}
If  $x$ is a  solution of (\ref{e:Delay2Auto}),
then the function
\begin{equation}\label{e:Delay15}
\mathbb{R}\ni t\mapsto v(t):=(x_1(t)^\top, x_2(t)^\top)^\top\in ({\mathbb{R}}^{2n})^2
\end{equation}
where  $x_{ i } ( t ) = x ( t-(i-1)\tau)$ for $i=1,2$,
satisfies
\begin{equation}\label{e:Delay16}
\dot{v}(t)=J_{n,2}\nabla_2 \hat{H}(\lambda, v(t))\quad\text{and}\quad v(t+\tau)=P_{n,2}^{-1}v(t)\;\;\forall t\in \mathbb{R}.
\end{equation}
Conversely, if $v(t)=(x_1(t)^\top, x_2(t)^\top)^\top$,
where $x_{ i } ( t )\in\mathbb{R}^{2n}$ for $i=1,2$,
satisfies (\ref{e:Delay16}),
then $x(t):=x_1(t)$ is a  solution of (\ref{e:Delay2Auto}).
\end{claim}

\noindent{\bf Step~2}.
Since both $J_{n,2}$ and $J_{2n}$  are non-degenerate skew-symmetric bilinear forms on $({\mathbb{R}}^{2n})^2$,
there exists a congruence transformation between $J_{n,2}$ and $J_{2n}$,
given by an invertible real
matrix $\Gamma(2, 2n)$ of order $4n$ satisfying
$\Gamma(2, 2n)^\top J_{n,2} \Gamma(2, 2n) = J_{2n}$.
For example, one explicit example of such matrices is
 \begin{equation}\label{e:LinearDelay2G+}
\Gamma(2,2n)=\left( \begin{array} { c c c } -J_n & 0 \\ 0 &I _ { 2n }\end{array} \right).
\end{equation}
In this case, consider the matrix defined by
\begin{align}\label{e:LinearDelay2G++}
M_{2,n}:=\Gamma(2, 2n)^\top P_{n,2}^{-1}(\Gamma(2, 2n)^\top)^{-1}&=\left( \begin{array} { c c c } J_n & 0 \\ 0 &I _ { 2n }\end{array} \right)
\left( \begin{array} { c c c } 0 & I _ { 2n} \\ I _ { 2n } & 0 \end{array} \right)
\left( \begin{array} { c c c } -J_n & 0 \\ 0 &I _ { 2n }\end{array} \right)
\nonumber\\
&=\left( \begin{array} { c c c } 0 & J_n \\ -J_n & 0 \end{array} \right)
\in {\rm Sp}(4n)^0.
\end{align}
(The final inclusion relation follows from the equality
 $D(M_{2,n}):=(-1)^{2n-1}\det(M_{2,n}-I_{4n})=0$, which is given by Lemma~3 in \cite[page~38]{Long02}.)
 $M_{2,n}$  is   orthogonal and satisfies $(M_{2,n})^2=I_{4n}$.



Define $\check{H}:\Lambda\times({\mathbb{R}}^{2n})^2\to\mathbb{R}$ by
\begin{equation}\label{e:Delay30.2}
\check{H}(\lambda,z)=\hat{H}(\lambda, (\Gamma(2, 2n)^\top)^{-1}z)=
\hat{H}(\lambda, \Gamma(2, 2n)z).
\end{equation}
Then, with the matrix $M_{2,2n}$ given by (\ref{e:LinearDelay2G++}),
for all $z=(x_1^\top, x_2^\top)^\top\in(\mathbb{R}^{2n})^2$,
we have
\begin{equation}\label{e:Delay14check}
\check{H}(\lambda, M_{2,n}z)=G(\lambda,x_2)+G(\lambda,-J_nx_1)=\check{H}(\lambda, z),\quad\forall\lambda\in\Lambda.
\end{equation}
By a straightforward computation, we have

\begin{claim}\label{cl:delay2.2}
$v:\mathbb{R}\to ({\mathbb{R}}^{2n})^2$ solves  the problem (\ref{e:Delay16})
 if and only if
$u(t):=\Gamma(2, 2n)^\top v(t)$ satisfies 
\begin{equation}\label{e:Delay6Auto3G}
\dot{u}(t)={J}_{2n} \nabla_2 \check{H}(\lambda,  u(t))\quad\text{and}\quad
u(t+\tau)={M}_{2,n}u(t)\;\;\forall t\in\mathbb{R},
\end{equation}
and in this situation there holds
$$
\int^{\tau}_0\left[\frac{1}{2}(J_{2n}\dot{u}(t), u(t))_{\mathbb{R}^{4n}}+ \check{H}(\lambda, u(t))\right]dt
= \int^{\tau}_0\left[\frac{1}{2}(\dot{v}(t), J_{n,2}^{-1}v(t))_{\mathbb{R}^{4n}}+
\hat{H}(\lambda,  v(t))\right]dt.
$$
Moreover,  the integral on the left-hand side of the preceding equality equals
$$
({x}^\lambda(0), J_n{x}^\lambda(\tau))_{\mathbb{R}^{2n}}+
 \int^{\tau}_0(\dot{{x}}^\lambda(t), J_n{x}^\lambda(t-\tau))_{\mathbb{R}^{2n}}dt+\int^{\tau}_{-\tau}G(\lambda,t,
 {x}^\lambda(t))dt
$$
provide that  $x$ is a  solution of (\ref{e:Delay2Auto}),
and $v$ is given by (\ref{e:Delay15}).
\end{claim}
In order to see the final claim, note that $(J_{n,2})^{-1}=-J_{n,2}$ implies
$$
(\dot{{v}}(t), J_{n,2}^{-1}{v}(t))_{\mathbb{R}^{4n}}=
(\dot{{x}}_1(t), J_{n}{x}_2(t))_{\mathbb{R}^{2n}}+
(\dot{{x}}_2(t), J_{n}{x}_1(t))_{\mathbb{R}^{2n}}.
$$
Applying integration by parts yields
$$
\int^{\tau}_0(\dot{{v}}(t), J_{n,2}^{-1}{v}(t))_{\mathbb{R}^{4n}}=
2({{x}}(0), J_n{x}(\tau))_{\mathbb{R}^{2n}}+
2\int^{\tau}_0({{x}}(t), J_n{x}(t-\tau))_{\mathbb{R}^{2n}}dt.
$$
since ${x}$ is $2\tau$-periodic. The latter and
the identity $\hat{H}(\lambda, {v}(t))=G(\lambda,
{x}(t))+ G(\lambda, {x}(t-\tau))$ yield
$$
\int^\tau_0\hat{H}(\lambda, {v}(t))dt=\int^\tau_{-\tau}G(\lambda,
{x}(t))dt.
$$

\noindent{\bf Step~3}.
By Claims~\ref{cl:delay2.1} and \ref{cl:delay2.2}, the trivial solution of (\ref{e:Delay2Auto}) corresponds to the trivial solution of (\ref{e:Delay16}) and to the trivial solution of (\ref{e:Delay6Auto3G}). All of them are denoted by ${0}$ without confusion.
Consider the linearizations of (\ref{e:Delay2Auto}),
(\ref{e:Delay16}) and (\ref{e:Delay6Auto3G}) at trivial solutions,
\begin{align}
&\dot{x}( t ) = J_n\nabla_2^2 G (\lambda, {0})x(t)\quad\text{and}\quad
    x( t + 2\tau ) =  x( t ), \quad \forall t,\label{e:LineDelay2Auto}\\
&\dot{v}(t)=J_{n,2}\nabla_2^2 \hat{H}(\lambda, {0})v(t)\quad\text{and}\quad v(t+\tau)=P_{n,2}^{-1}v(t)\;\;\forall t\in \mathbb{R},\label{e:LineDelay16}\\
&\dot{u}(t)={J}_{2n} \nabla_2^2 \check{H}(\lambda,  {0})u(t)\quad\text{and}\quad
u(t+\tau)={M}_{2,n}u(t)\;\;\forall t\in\mathbb{R}.\label{e:LineDelay6Auto3G}
\end{align}
It follows from Claims~\ref{cl:delay2.1} and \ref{cl:delay2.2} that
\begin{align*}
 x:\mathbb{R}\to\mathbb{R}^{2n}\;\text{satisfies (\ref{e:LineDelay2Auto})}\;
 &\Longleftrightarrow
  \mathbb{R}\ni t\mapsto v(t):=(x(t)^\top, x(t-\tau)^\top)^\top\in ({\mathbb{R}}^{2n})^2
\;\text{satisfies (\ref{e:LineDelay16})}\\
&\Longleftrightarrow
  \mathbb{R}\ni t\mapsto u(t):=\Gamma(2, 2n)^\top v(t)\in ({\mathbb{R}}^{2n})^2
\;\text{satisfies (\ref{e:LineDelay6Auto3G})}.
\end{align*}
Therefore, we obtain
\begin{claim}\label{cl:delay2.3}
The solution spaces of problems (\ref{e:LineDelay2Auto}), (\ref{e:LineDelay16}) and (\ref{e:LineDelay6Auto3G}) have the same dimension.
\end{claim}


As in the proof of Theorem~\ref{th:bif-per3Delay}, all conclusions may follow from \cite[Theorem~1.14]{Lu11}.
 Firstly, as in (\ref{e:EigenContin}), by \cite[Corollary~6.3.8]{HorJ} and  (\ref{e:V-function1G}) we obtain
  \begin{align}\label{e:EigenContinG}
\sum^n_{l=1}|e_l(\lambda)-e_l(\mu)|^2&\le
2\|D(\lambda)+\sqrt{-1}E(\lambda)
-D(\mu)-\sqrt{-1}E(\mu)\|_2^2\nonumber\\
&=\|\nabla^2_xG\left(\lambda, {{0}}\right)-\nabla^2_xG\left(\mu, {{0}}\right)\|^2_2,
\quad\forall\lambda,\mu\in\Lambda.
\end{align}

Note that $\Gamma(2,2n)$ is an orthogonal matrix.
By the second equality in (\ref{e:M-invariantDelay3}) we obtain
\begin{align*}
\nabla^2_2 \check{H}(\lambda, {{0}})&=\Gamma(2, 2n)^{-1}\nabla^2_2 \hat{H}(\lambda, {{0}})\Gamma(2, 2n)\\
&=\left( \begin{array} {cc}
J_{n} & 0 \\
0 &I_{2n}
 \end{array} \right)^{-1}\left( \begin{array} { c c c c }
\nabla^2_xG\left(\lambda, {{0}}\right) & 0  \\
0 &\nabla^2_xG\left(\lambda, {{0}}\right)
 \end{array} \right)\left( \begin{array} {cc}
J_{n} & 0 \\
0 &I_{2n}
 \end{array} \right)\\
 &=\left( \begin{array} {cc}
-J_n\nabla^2_xG\left(\lambda, {{0}}\right)J_n & 0 \\
0 &\nabla^2_xG\left(\lambda, {{0}}\right)
 \end{array} \right)\\
 &=\left( \begin{array} {cc}
\nabla^2_xG\left(\lambda, {{0}}\right) & 0 \\
0 &\nabla^2_xG\left(\lambda, {{0}}\right)
 \end{array} \right),
 \end{align*}
where the final equality comes from $\nabla^2_xG\left(\lambda, {{0}}\right)=\left( \begin{array} {cc}
D(\lambda) & -E(\lambda) \\
E(\lambda) &D(\lambda)
 \end{array} \right)$ by Assumption~\ref{ass:bif-per2Delay2Auto}.
Denote by  $\Upsilon_\lambda$  the fundamental matrix solution of
$$
\dot{z}(t)=J_{2n}\nabla^2_2 \check{H}(\lambda, {{0}})z(t)=
J_{2n}{\rm Diag}\left(\left( \begin{array} {cc}
D(\lambda) & -E(\lambda) \\
E(\lambda) &D(\lambda)
 \end{array} \right), \left( \begin{array} {cc}
D(\lambda) & -E(\lambda) \\
E(\lambda) &D(\lambda)
 \end{array} \right)\right)z(t)
$$
 with $\Upsilon_\lambda(0)=I_{4n}$.

\noindent\textbf{Step 4}(\textsf{Proof of (I)}).
Assume that $(\mu,0)$ with $\mu\in\Lambda$ is a bifurcation point of (\ref{e:Delay2Auto}). By Claims~\ref{cl:delay2.1} and \ref{cl:delay2.2}, this holds if and only if (\ref{e:Delay6Auto3G})
admits a bifurcation point at $(\mu,0)$ for this very $\mu$. Consequently, $(\mu,0)$ is a bifurcation point of (\ref{e:Delay6Auto3G}). Then \cite[Theorem~1.5]{Lu11} yields $\nu_{\tau, M_{3,n}}(\Upsilon_\mu)\ne 0$. Note that
$$
\nu_{\tau, M_{2,n}}(\Upsilon_\mu) = 2\sharp\{\, 1\le j\le n \mid \tau e_j(\mu) \in \pi\mathbb{Z} \,\}
$$
by (\ref{e:DiagPindex2*}) in Theorem~\ref{th:PIndex4}. This leads to the
 conclusion in (I).

\noindent\textbf{Step 5}(\textsf{Proof of (III)}).
 For each $j=1,\dots,n$, let $m_j(\lambda)$ be the unique integer such that
$m_j(\lambda)< e_j(\lambda)\tau\le (m_j(\lambda)+1)\pi$.
By the condition (A) we have
\begin{align}\label{e:EigenContinG1A}
 &\{k\,|\,\tau e_k(\mu)\in\pi\mathbb{Z}\}=
\{j\,|\, e_j(\mu)\tau=(m_j(\lambda)+1)\pi\}=\{i_1,\cdots,i_q\},\\
 &e_{i_l}(\mu)\tau\in\bigl(m_{i_l}(\mu)\pi,   (m_{i_l}(\mu)+1)\pi\bigr), \quad l=q+1,\dots, n.\label{e:EigenContinG1B}
 \end{align}
From (\ref{e:EigenContinG1A}) and  (\ref{e:DiagPindex1*})
 in Theorem~\ref{th:PIndex4}, we derive from
\begin{equation}\label{e:EigenContinG1C}
\nu_{\tau, M_{2,n}}(\Upsilon_{\mu})=2q.
\end{equation}
By (\ref{e:DiagPindex2*}) in Theorem~\ref{th:PIndex4}
 we obtain
  \begin{align}\label{e:EigenContinG8A}
 i_{\tau, M_{2,n}}(\Upsilon_\mu)&=2\sum^n_{l=1}m_{i_l}(\mu)+n.
\end{align}

By (\ref{e:EigenContinG}) and (\ref{e:EigenContinG1B})
 we can shrink $\epsilon>0$ so that
\begin{equation}\label{e:EigenContinG2}
e_{i_l}(\lambda)\tau\in\bigl(m_{i_l}(\mu)\pi,   (m_{i_l}(\mu)+1)\pi\bigr)
\end{equation}
for any $(\lambda, l)\in (\mu-\epsilon, \mu+\epsilon)\times\{q+1,\cdots,n\}$.
 We can further shrink $\epsilon>0$ so that
 \begin{equation}\label{e:EigenContinG3A}
e_{i_l}(\lambda)\tau\in\begin{cases}
\left(m_{i_l}(\mu)\pi, (m_{i_l}(\mu)+1)\pi\right)\quad&\hbox{if $\lambda\in (\mu-\epsilon, \mu)$},\vspace{1mm}\\
\left((m_{i_l}(\mu)+1)\pi, (m_{i_l}(\mu)+2)\pi\right)\quad&\hbox{if $\lambda\in (\mu, \mu+\epsilon)$}
\end{cases}
\end{equation}
for $l=1,\dots,q$ if the condition (SB.1) holds, and
 \begin{equation}\label{e:EigenContinG3B}
e_{i_l}(\lambda)\tau\in\begin{cases}
\left((m_{i_l}(\mu)+1)\pi, (m_{i_l}(\mu)+2)\pi\right)\quad&\hbox{if $\lambda\in (\mu-\epsilon, \mu)$},\vspace{1mm}\\
\left(m_{i_l}(\mu)\pi, (m_{i_l}(\mu)+1)\pi\right)\quad&\hbox{if $\lambda\in (\mu, \mu+\epsilon)$}
\end{cases}
\end{equation}
for $l=1,\dots,q$ if the condition (SB.2) is satisfied.

By (\ref{e:DiagPindex1*}) in Theorem~\ref{th:PIndex4},
 under two cases we have always
\begin{equation}\label{e:EigenContinG4}
\nu_{\tau, M_{2,n}}(\Upsilon_\lambda)=0,\quad \forall\lambda\in (\mu-\epsilon, \mu+\epsilon)\setminus\{\mu\}.
\end{equation}

Suppose now the condition (SB.2) holds.
We compute $i_{\tau, M_{2,n}}(\Upsilon_\lambda)$
for $\lambda\in (\mu-\epsilon, \mu+\epsilon)\setminus\{\mu\}$.

 Observe that (\ref{e:EigenContinG2}) and (\ref{e:EigenContinG3B}) imply, respectively,
 \begin{equation}\label{e:EigenContinG8B}
m_{i_l}(\lambda)=m_{i_l}(\mu)\quad\forall
(\lambda, l)\in (\mu-\epsilon, \mu+\epsilon)\times\{q+1,\cdots,n\},
\end{equation}
and
\begin{equation}\label{e:EigenContinG8C}
m_{i_l}(\lambda)=\begin{cases}
(m_{i_l}(\mu)+1)\pi\quad&\hbox{if $\lambda\in (\mu-\epsilon, \mu)$},\vspace{1mm}\\
m_{i_l}(\mu)\pi\quad&\hbox{if $\lambda\in (\mu, \mu+\epsilon)$}
\end{cases}
\end{equation}
for $l=1,\dots,q$.
Using (\ref{e:EigenContinG8B}) and (\ref{e:EigenContinG8C}),
   we derive from (\ref{e:DiagPindex2*}) in Theorem~\ref{th:PIndex4}
  \begin{align*}\label{e:EigenContinG8D}
 i_{\tau, M_{2,n}}(\Upsilon_\lambda)&=2\sum^n_{l=1}m_{i_l}(\mu)+
  n=i_{\tau, M_{2,n}}(\Upsilon_\mu)\quad\forall\lambda\in (\mu, \mu+\epsilon),\\
  i_{\tau, M_{2,n}}(\Upsilon_\lambda)&=2\sum^n_{l=q+1}m_{i_l}(\mu)+
  2\sum^q_{l=1}(m_{i_l}(\mu)+1)+
  n\nonumber\\
  &=i_{\tau, M_{2,n}}(\Upsilon_\mu)+2q
  \quad\forall\lambda\in (\mu-\epsilon, \mu).
  \end{align*}
 That is,  the conditions (A) and (SB.2), together with (\ref{e:EigenContinG1C}), imply
\begin{equation}\label{e:EigenContinG8D}
i_{\tau, M_{2,n}}(\Upsilon_\lambda)=\begin{cases}
i_{\tau, M_{2,n}}(\Upsilon_\mu)+\nu_{\tau, M_{2,n}}(\Upsilon_{\mu})\quad&\hbox{if $\lambda\in (\mu-\epsilon, \mu)$},\vspace{1mm}\\
i_{\tau, M_{2,n}}(\Upsilon_\mu)\quad&\hbox{if $\lambda\in [\mu, \mu+\epsilon)$}.
\end{cases}
\end{equation}

Similarly, under the conditions (A) and (SB.1),
from (\ref{e:EigenContinG3A}) we can derive
 \begin{equation}\label{e:EigenContinG8E}
i_{\tau, M_{2,n}}(\Upsilon_\lambda)=\begin{cases}
i_{\tau, M_{2,n}}(\Upsilon_\mu)\quad&\hbox{if $\lambda\in (\mu-\epsilon, \mu]$},\vspace{1mm}\\
i_{\tau, M_{2,n}}(\Upsilon_\mu)+\nu_{\tau, M_{2,n}}(\Upsilon_{\mu})\quad&\hbox{if $\lambda\in (\mu, \mu+\epsilon)$}.
\end{cases}
\end{equation}

The above equations (\ref{e:EigenContinG4}),
(\ref{e:EigenContinG8D}) and (\ref{e:EigenContinG8E})  show that the condition (b) in \cite[Theorem~1.14]{Lu11} is satisfied
for $H=\check{H}$.

  Although  ${\rm Ker}(M_{2,n}-I_{4n})=\{(u_1^\top, u_2^\top, u_2^\top, -u_1^\top)^\top\,|\, u_1, u_2\in\mathbb{R}^n\}\ne \{0\}$,
  we nevertheless have
    $$
  {\rm Ker}(M_{2,n}-I_{4n})\cap{\rm Ker}(\nabla_z^2\check{H}({\mu}, {{0}}))=\{0\}.
  $$
  This follows because condition (A) implies that  $D(\mu)+\sqrt{-1}E(\mu)$ is non-degenerate, which in turn forces
  ${\rm Ker}(\nabla_x^2{G}({\mu}, {{0}}))=\{0\}$ and hence
    ${\rm Ker}(\nabla_z^2\check{H}({\mu}, {{0}}))=\{0\}$.
    Consequently, the above equality shows that condition (a) of Theorem~1.14 in \cite{Lu11} is satisfied for $H=\check{H}$.

Applying the first part of \cite[Theorem~1.14]{Lu11} to the problem (\ref{e:Delay6Auto3G})  we get
the following alternatives:
 \begin{enumerate}
\item[\rm (i')] The problem (\ref{e:Delay6Auto3G}) with $\lambda=\mu$ has a sequence of $\mathbb{R}$-distinct solutions,
$z^l\ne 0$ ($l=1,2,\dots$) such that $\|z^l|_I\|_{C^1}\to 0$ for any compact interval $I\subset\R$;
\item[\rm (ii')] There exist left and right  neighborhoods $\Lambda^-$ and $\Lambda^+$ of $\mu$ in $\Lambda$
and nonnegative integers $n^+$ and $n^-$ satisfying $n^++n^-\ge \nu_{\tau, M_{2,n}}(\Upsilon_\mu)/2=q$,
such that for $\lambda\in\Lambda^-\setminus\{\mu\}$ (resp. $\lambda\in\Lambda^+\setminus\{\mu\}$),
the problem (\ref{e:Delay6Auto3G}) with parameter value $\lambda$  has at least $n^-$ (resp. $n^+$) $\mathbb{R}$-distinct
 solutions, $z_\lambda^i\ne 0$ ($i=1,\dots,n^-$ resp. $n^+$)
such that all $\|z_\lambda^i|_I\|_{C^1}\to 0$  for any compact interval $I\subset\R$  as $\lambda\to\mu$.
\end{enumerate}
As in the proof of Theorem~\ref{th:bif-per3Delay}
let us define $v^l=\Gamma(2,2n)z^l$ ($l=1,2,\dots$), and $v^i_\lambda=\Gamma(2,2n)z_\lambda^i\ne 0$ ($i=1,\dots,n^-$ resp. $n^+$).
Then all $v^l\ne 0$ and satisfy the problem (\ref{e:Delay16}) with $\lambda=\mu$;
and for $\lambda\in\Lambda^-\setminus\{\mu\}$ (resp. $\lambda\in\Lambda^+\setminus\{\mu\}$),
all $v_\lambda^i\ne 0$ ($i=1,\dots,n^-$ (resp. $n^+$)) and satisfy the problem (\ref{e:Delay16}) with parameter value $\lambda$.
Let us write
$$
v^l(t):=({x}^l_1(t)^\top, {x}^l_2(t)^\top)^\top\in ({\mathbb{R}}^{2n})^2\quad\text{and}\quad
v^i_\lambda(t):=({x}^i_{\lambda,1}(t)^\top, {x}^i_{\lambda,2}(t)^\top)^\top\in ({\mathbb{R}}^{2n})^2.
$$
By Claim~\ref{cl:delay2.1}, ${x}^l:={x}^l_1$ and ${x}^i_{\lambda}:= {x}^i_{\lambda,1}$
satisfy the expected alternatives (i) and (ii).

Next, if $n>1$ and $q>1$, then
$\nu_{\tau, M_{2,n}}(\Upsilon_{\mu})=2q>3$ by (\ref{e:EigenContinG1C}).
Applying the second part of Theorem~1.14 in  \cite{Lu11} to the problem (\ref{e:Delay6Auto3G}),
other conclusions can be obtained as in the proof of Theorem~\ref{th:bif-per3Delay} via Claim~\ref{cl:delay2.2}.

\noindent\textbf{Step 6}(\textsf{Proof of (II)}).
A slight modification of the proof of (II) in Theorem~\ref{th:bif-per3Delay} or \ref{th:bif-per3DelayII} yields the desired conclusion.
\end{proof}

\begin{proof}[\bf Proof of Corollary~\ref{cor:bif-per3+4}]
Define $\Lambda=\mathbb{R}$ and $G:\mathbb{R}\times\mathbb{R}^{2n}
\to\mathbb{R}$ by $G(\lambda,x)=\lambda K(x)$.
Then $G$ satisfies Assumption~\ref{ass:bif-per2Delay2Auto},
 and $e_i(\lambda)=\lambda e_i$ ($i=1,\dots,n$).

Suppose $(\mu,0)$ with $\mu\in\mathbb{R}\setminus\{0\}$ is a bifurcation point of
(\ref{e:Delay2AutoC}). One readily verifies that
$(\mu,0)$  is a bifurcation point of the system
  \begin{equation}\label{e:Delay2AutoC**}
   \dot{x}( t ) = \lambda J_n\nabla K( x ( t - 1) )\quad\text{and}\quad
   x( t + 2) =  x( t )\;\forall t.
    \end{equation}
Hence, by Theorem~\ref{th:bif-per3+3} with $\tau = 1$,
we obtain $\mu e_l\in\pi\mathbb{Z}$ for some $l$,
which establishes the first conclusion.

We now prove the remaining conclusions.
  Let $\mu\in\mathbb{R}$ satisfy condition (E).
  It is easy to see that $G$ satisfies
condition (A) and one of conditions (SB.1) or (SB.2) in  Theorem~\ref{th:bif-per3+3}
with $\tau = 1$. Therefore, for the problem (\ref{e:Delay2AutoC**}),
  one of the following alternatives occurs:
   \begin{enumerate}
\item[\rm (i')] The problem (\ref{e:Delay2AutoC**})  with $\lambda=\mu$ has a sequence of
nonzero, $\R$-distinct solutions $\{u^l\}^\infty_{l=1}$ such that
$u^l\to 0$ in $C^1_{\rm loc}(\mathbb{R}, \mathbb{R}^{2n})$
as $l\to\infty$. for any compact interval $I\subset\R$.
\item[\rm (ii')]
    There exist left and right neighborhoods $\Lambda^-$ and $\Lambda^+$ of $\mu$ in $\Lambda$ not containing the point
    $0\in\mathbb{R}$,
and nonnegative integers $n^-$ and $n^+$ with $n^- + n^+ \geq q$,
such that for $\lambda \in \Lambda^- \setminus \{\mu\}$ (resp. $\lambda \in \Lambda^+ \setminus \{\mu\}$),
the equation (\ref{e:Delay2AutoC**}) with parameter $\lambda$ has at least $n^-$ (resp. $n^+$) $\mathbb{R}$-distinct solutions
$u_\lambda^i \neq 0$ ($i = 1, \dots, n^-$ resp. $n^+$),
which converge to zero in $C^1_{\rm loc}(\mathbb{R}, \mathbb{R}^{2n})$ as $\lambda \to \mu$.
the zero function in the $C^1$ norm on every compact interval $I \subset \mathbb{R}$ as $\lambda \to \mu$.
 \end{enumerate}
Moreover, if $n>1$ and $q>1$, then at least one of (i'), (iii'), and (iv')  holds,
where (i) is as stated previously, and
\begin{enumerate}
\item[\rm (iii')]  For every $\lambda\in\Lambda\setminus\{\mu\}$ near $\mu$, there is a
 nonzero solution ${u}_\lambda$  of (\ref{e:Delay2AutoC**})
 with parameter value $\lambda$, such that $u_\lambda$ converges to zero
 $C^1_{\rm loc}(\mathbb{R}, \mathbb{R}^{2n})$  as $\lambda \to \mu$.
 in $C^1$ on any compact interval $I\subset\R$ as $\lambda\to\mu$.

\item[\rm (iv')] For a given $\varepsilon>0$,  there is a one-sided  neighborhood $\Lambda^0$ of $\mu$ in $\Lambda$ not containing the point
$0\in\mathbb{R}$, such that
for any $\lambda\in\Lambda^0\setminus\{\mu\}$, (\ref{e:Delay2AutoC**}) with parameter value $\lambda$
has either:
\begin{enumerate}
\item[$\bullet$]  infinitely many $\mathbb{R}$-distinct nonzero solutions ${x}_\lambda^l$
satisfying $\|{u}_\lambda^l|_{[0,2]}\|_{C^1}<\varepsilon$, $l=1,2,\dots$, or
\item[$\bullet$] at least two $\mathbb{R}$-distinct nonzero solutions $\hat{u}_\lambda^1$ and $\hat{u}_\lambda^2$ satisfying $\|\hat{u}_\lambda^i|_{[0, 2]}\|_{C^1}<\varepsilon$, $i=1,2$, and such that the following integral inequality holds:
 \begin{eqnarray*}
	&&(\hat{u}^1_\lambda(0), J_n\hat{u}^1_\lambda(1))_{\mathbb{R}^{2n}}+
	\int^{1}_0(\dot{\hat{u}}^1_\lambda(t), J_n\hat{u}^1_\lambda(t-1))_{\mathbb{R}^{2n}}dt+\lambda\int^{1}_{-1}K(\hat{u}^1_\lambda(t))dt
	\nonumber\\
	&&\ne (\hat{u}^2_\lambda(0), J_n\hat{u}^2_\lambda(1))_{\mathbb{R}^{2n}}+
	\int^{1}_0(\dot{\hat{u}}^2_\lambda(t), J_n\hat{u}^2_\lambda(t-1))_{\mathbb{R}^{2n}}dt+\lambda\int^{1}_{-1}K(\hat{u}^2_\lambda(t))dt.
\end{eqnarray*}
\end{enumerate}
\end{enumerate}
Since $\mu\ne 0$ and all $\lambda$
appearing in $u^i_\lambda$, $u_\lambda$
and $\hat{u}^s_\lambda$($s=1,2$)
are nonvanishing,
as in the proof of Corollary~\ref{cor:bif-per3DelayC},
we may define
$$
x_\lambda^i(t):=u_\lambda^i(t/\lambda),\quad
{x}_\lambda^l(t):={u}_\lambda^l(t/\lambda),\quad
\hat{x}_\lambda^1(t):=\hat{u}_\lambda^1(t/\lambda),\quad
\hat{x}_\lambda^2(t):=\hat{u}_\lambda^2(t/\lambda).
$$
These satisfy Corollary~\ref{cor:bif-per3+4}.
\end{proof}

%
%
%
%

Following the approach in the proof of Theorem~\ref{th:bif-per2Delay++}, we obtain from Corollary~1.15 of \cite{Lu11} a parallel result, which serves as a complement to Theorem~\ref{th:bif-per3+3}.

\begin{theorem}\label{th:bif-per2Delay++G}
Let $G_0, G_1:  \mathbb{R}^{2n} \to \mathbb{R}$ be $C^2$ functions such that
for $j=0,1$,
$$
\nabla G_j({0})=0\quad\hbox{and}\quad
\nabla^2 G_j\left({0}\right)=
\left( \begin{array} { c c }
	D_j & -E_j \\
	E_j &D_j
\end{array} \right),
$$
where $D_j$ are $n\times n$ symmetric matrices and  $E_j$ are $n\times n$ skew-symmetric matrices.
 Suppose in addition that
  $D_1+\sqrt{-1}E_1$ is  definite (either positive or negative).
Denoted the zero solution of the problem
\begin{eqnarray}\label{e:LinearDelay2+G}
 \left\{\begin{array}{ll}
 &\dot{x}( t ) = J_n\nabla G_0 (x(t-\tau))+
 \lambda J_n\nabla G_1 (x ( t - \tau )),\\
 & x( t + 2 \tau ) = - x( t )\;\forall t
   \end{array}\right.
  \end{eqnarray}
  by ${0}^\lambda$, and
 the dimension of the solution space of the linear delay problem
 \begin{eqnarray}\label{e:LinearDelay2G}
 \left\{\begin{array}{ll}
 &\dot{x}( t ) = J_n\nabla^2 G_0 ({0})x(t-\tau) +
 \lambda J_n\nabla^2 G_1 ({0})x ( t - \tau ),\\
 & x( t + 2\tau ) = - x( t )\;\forall t
   \end{array}\right.
  \end{eqnarray}
by $\nu_{\tau}({0}^\lambda)$. 
Then
\begin{enumerate}
\item[\rm (I)] $\Sigma_\tau:=\{\lambda\in\mathbb{R}\,|\, \nu_{\tau}({0}^\lambda)>0\}$
is a discrete set in $\mathbb{R}$.

\item[\rm (II)]
 If $\mu\in{\Sigma}_\tau$ is not an eigenvalue of $(D_1+\sqrt{-1}E_1)^{-1}
 (D_0+\sqrt{-1}E_0)$,
	 at least  one of the following alternatives holds true:
	\begin{enumerate}
		\item[\rm (i)] System (\ref{e:LinearDelay2+G}) with $\lambda=\mu$ admits a sequence of $\R$-distinct nonzero solutions
		${x}^{k}$ for $k=1,2,\cdots$,
		which  converges to zero in $C^1_{\rm loc}(\mathbb{R},\R^{2n})$ as $k\to\infty$.
		\item[\rm (ii)]
		There exist a left neighborhood $\Lambda^-$ and a right neighborhood $\Lambda^+$ of $\mu$ in $\mathbb{R}$,
		and nonnegative integers $n^+$ and $n^-$ with $n^++n^-\ge \nu_{\tau}(0^\mu)/2$,
		such that for each $\lambda\in\Lambda^-\setminus\{\mu\}$ (resp. $\lambda\in\Lambda^+\setminus\{\mu\}$),
		system (\ref{e:LinearDelay2+G}) with parameter $\lambda$ possesses at least $n^-$ (resp. $n^+$) $\mathbb{R}$-distinct nonzero solutions $x^i_{\lambda}$, $i=1,\dots,n^-$ (resp. $i=1,\dots,n^+$),
		which satisfy $x^i_{\lambda}\to 0$ in $C^1_{\rm loc}(\mathbb{R},\mathbb{R}^{2n})$ as $\lambda\to\mu$.
			\end{enumerate}
	Moreover, if $\nu_{\tau}(0^\mu)\ge 3$,
then at least one of (i), (iii) and (iv) holds, where (i) is as stated above, and the assertions of (iii) and (iv) are as follows:
	\begin{enumerate}
		\item[\rm (iii)]
		For every $\lambda\in\Lambda\setminus\{\mu\}$ sufficiently close to $\mu$, there exists a nonzero solution $\bar{x}^\lambda$ of system (\ref{e:LinearDelay2+G}) with parameter $\lambda$, such that $\bar{x}^\lambda\to 0$ in $C^1_{\rm loc}(\mathbb{R},\mathbb{R}^{n})$ as $\lambda\to\mu$.
		
		\item[\rm (iv)] For any given $\varepsilon>0$, there exists a one-sided neighborhood $\Lambda^0$ of $\mu$ in $\Lambda$ such that for each $\lambda\in\Lambda^0\setminus\{\mu\}$, system (\ref{e:LinearDelay2+G}) with parameter $\lambda$ has either:
		\begin{itemize}
			\item infinitely many $\mathbb{R}$-distinct nonzero solutions $\bar{x}^k_{\lambda}$ ($k=1,2,\dots$) satisfying \\ $\|\bar{x}^k_{\lambda}|_{[0,\tau]}\|_{C^1}<\varepsilon$, or
			\item at least two $\mathbb{R}$-distinct nonzero solutions $\hat{x}^1_{\lambda},\hat{x}^2_{\lambda}$ with $\|\hat{x}^i_{\lambda}|_{[0,\tau]}\|_{C^1}<\varepsilon$ ($i=1,2$) and
    \begin{align*}
    	& \bigl(\hat{x}_\lambda^1(0), J_n \hat{x}_\lambda^1(\tau) \bigr)_{\mathbb{R}^{2n}}
    	+ \int_0^{\tau} \bigl( \dot{\hat{x}}_\lambda^1(t), J_n \hat{x}_\lambda^1(t-\tau) \bigr)_{\mathbb{R}^{2n}} \, dt
    	+ \int_{-\tau}^{\tau} G\bigl(\lambda, \hat{x}_\lambda^1(t)\bigr) \, dt \\
    	\ne \; &
    	\bigl(\hat{x}_\lambda^2(0), J_n \hat{x}_\lambda^2(\tau) \bigr)_{\mathbb{R}^{2n}}
    	+ \int_0^{\tau} \bigl( \dot{\hat{x}}_\lambda^2(t), J_n \hat{x}_\lambda^2(t-\tau) \bigr)_{\mathbb{R}^{2n}} \, dt
    	+ \int_{-\tau}^{\tau} G\bigl(\lambda, \hat{x}_\lambda^2(t)\bigr) \, dt.
    \end{align*}
where $G(\lambda, x) = G_0(x)+
\lambda G_1(x)$.
		\end{itemize}
		\end{enumerate}
\end{enumerate}
\end{theorem}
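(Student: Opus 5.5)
The plan is to follow the template of the proof of Theorem~\ref{th:bif-per2Delay++}, but with a reduction adapted to the \emph{anti-periodic} condition $x(t+2\tau)=-x(t)$ in (\ref{e:LinearDelay2+G}). Write $G(\lambda,x)=G_0(x)+\lambda G_1(x)$. For a solution $x$ set $v(t)=(x_1(t)^\top,x_2(t)^\top)^\top$ with $x_1(t)=x(t)$ and $x_2(t)=x(t-\tau)$. Then $v(t+\tau)=(-x_2(t)^\top,x_1(t)^\top)^\top=:Q\,v(t)$, where
\[
Q=\begin{pmatrix}0&-I_{2n}\\ I_{2n}&0\end{pmatrix}.
\]
The equation becomes $\dot x_1=J_n\nabla G(\lambda,x_2)$ and $\dot x_2=J_n\nabla G(\lambda,-x_1)$.

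Put $\hat H(\lambda,x_1,x_2)=G(\lambda,-x_1)+G(\lambda,x_2)$ and
\[
\hat J=\begin{pmatrix}0&J_n\\ -J_n&0\end{pmatrix},
\]
which is a nondegenerate skew-symmetric matrix. One checks that $\dot v=\hat J\nabla_2\hat H(\lambda,v)$ together with $v(t+\tau)=Qv(t)$ is equivalent to the delay problem, with the converse direction as in Claim~\ref{cl:delay2.1}. As in Step~2 of the proof of Theorem~\ref{th:bif-per3+3}, I then pick an orthogonal $\Gamma$ with $\Gamma^\top\hat J\Gamma=J_{2n}$; a block-diagonal choice built from $\pm J_n$ and $I_{2n}$ should work. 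Setting $u=\Gamma^\top v$ gives a problem $\dot u=J_{2n}\nabla\check H(\lambda,u)$, $u(t+\tau)=Mu(t)$, with $M=\Gamma^\top Q(\Gamma^\top)^{-1}$ symplectic and orthogonal, $M^4=I$. I will also record the action identity analogous to Claim~\ref{cl:delay2.2}: integration by parts, now using anti-periodicity instead of periodicity, produces exactly the functional written in (iv). Finally, the three linearized problems have solution spaces of equal dimension, as in Claim~\ref{cl:delay2.3}. Hence $\nu_{\tau,M}(\Upsilon_\lambda)=\nu_\tau(0^\lambda)$, where $\Upsilon_\lambda$ is the fundamental solution for $\nabla^2\check H(\lambda,0)$, and this Hessian is block-diagonal with blocks $\nabla^2G(\lambda,0)$ after conjugation.

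\textbf{Main obstacle.} The invariance $\check H(\lambda,Mz)=\check H(\lambda,z)$, equivalently $\hat H(\lambda,Qv)=\hat H(\lambda,v)$, is required by Corollary~1.15 of \cite{Lu11}. Here $\hat H(\lambda,Qv)=G(\lambda,x_2)+G(\lambda,x_1)$, so the invariance holds precisely when $G_0$ and $G_1$ are even. This is where I expect the real difficulty. I would first try to extract evenness from the structural hypothesis: as remarked after Assumption~\ref{ass:bif-per2Delay2Auto}, the block form of the Hessians comes from $G_j(J_nx)=G_j(x)$, and that identity implies $G_j(-x)=G_j(J_n^2x)=G_j(x)$. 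If only the Hessian form is available, I would add that hypothesis, or evenness, explicitly, since without it the twisted loop-space functional is not well defined.

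\textbf{Nondegeneracy condition.} Next I need ${\rm Ker}(M-I)\cap{\rm Ker}\,\nabla^2\check H(\mu,0)=\{0\}$. Through the block structure and the identification of $\begin{pmatrix}D&-E\\ E&D\end{pmatrix}$ with $D+\sqrt{-1}E$ acting on $\mathbb C^n$, this intersection is controlled by ${\rm Ker}\bigl((D_0+\sqrt{-1}E_0)+\mu(D_1+\sqrt{-1}E_1)\bigr)$. By definiteness of $D_1+\sqrt{-1}E_1$, that kernel vanishes exactly when $-\mu$ (up to the sign convention) is not an eigenvalue of $(D_1+\sqrt{-1}E_1)^{-1}(D_0+\sqrt{-1}E_0)$, which is the hypothesis in (II). I will need to check the sign convention carefully at this point. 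Definiteness of $\nabla^2\check H_1(0)$ follows from definiteness of $D_1+\sqrt{-1}E_1$.

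\textbf{Conclusion.} Corollary~1.15 of \cite{Lu11} then gives (I) and the alternatives (i$^*$)--(iv$^*$) for the $u$-problem. I transport these back through $v=(\Gamma^\top)^{-1}u$ and read off $x=x_1$, as at the end of the proof of Theorem~\ref{th:bif-per2Delay++}. The relation $x_2(t)=x_1(t-\tau)$ ensures that $C^1$ smallness on $[0,\tau]$ and $\mathbb R$-distinctness carry over, and the action identity converts $\mathcal A_\lambda(\hat z^1)\ne\mathcal A_\lambda(\hat z^2)$ into the displayed inequality in (iv).
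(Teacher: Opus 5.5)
Your reduction fails at the Hamiltonian step, and the failure is structural rather than a matter of a missing evenness hypothesis.

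\textbf{The reduction.} The matrix $\hat J=\begin{pmatrix}0&J_n\\ -J_n&0\end{pmatrix}$ is symmetric, not skew-symmetric: its transpose has off-diagonal blocks $(-J_n)^\top=J_n$ and $J_n^\top=-J_n$. Hence no $\Gamma$ with $\Gamma^\top\hat J\Gamma=J_{2n}$ exists. The correct Hamiltonian form of $\dot x_1=J_n\nabla G(\lambda,x_2)$, $\dot x_2=J_n\nabla G(\lambda,-x_1)$ is $\dot v=J_{n,2}\nabla_2\hat H(\lambda,v)$ with $\hat H(\lambda,v)=G(\lambda,x_2)-G(\lambda,-x_1)$. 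With that choice three things go wrong:
\begin{enumerate}
\item The $\lambda$-coefficient has Hessian $\mathrm{diag}(-\nabla^2G_1(0),\nabla^2G_1(0))$ at $0$, which is indefinite.
\item Your twist satisfies $Q^\top J_{n,2}Q=-J_{n,2}$, so it is anti-symplectic.
\item For even $G$ one gets $\hat H(\lambda,Qv)=-\hat H(\lambda,v)$ rather than invariance.
\end{enumerate}
So both the definiteness and the symplectic-invariance hypotheses of Corollary~1.15 of \cite{Lu11} fail. No change of variables $u=\Gamma^\top v$ can repair this, since congruence preserves inertia and anti-symplecticity.

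\textbf{The literal statement is empty.} Identify $\mathbb{R}^{2n}$ with $\mathbb{C}^n$. Then $K_\lambda:=J_n(\nabla^2G_0(0)+\lambda\nabla^2G_1(0))$ acts as $\sqrt{-1}\bigl((D_0+\lambda D_1)+\sqrt{-1}(E_0+\lambda E_1)\bigr)$. This is skew-Hermitian, so its spectrum is purely imaginary. On the other hand, every nonzero Fourier coefficient $w$ of a solution of (\ref{e:LinearDelay2G}) sits at a frequency $\omega=(2k+1)\pi/(2\tau)$ and must satisfy $K_\lambda w=\pm\omega w$. Hence $\nu_\tau(0^\lambda)=0$ for every $\lambda$, $\Sigma_\tau=\emptyset$, and (II) is vacuous.

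\textbf{The intended statement and the paper's route.} The boundary condition is evidently a misprint for $x(t+2\tau)=x(t)$. The theorem is announced as a complement to Theorem~\ref{th:bif-per3+3}. The functional in (iv) is exactly the one obtained in Claim~\ref{cl:delay2.2} from $2\tau$-periodicity. The paper's proof reuses the periodic reduction:
\begin{enumerate}
\item $\hat H_j(x_1,x_2)=G_j(x_1)+G_j(x_2)$ with the skew form $J_{n,2}$;
\item $\Gamma(2,2n)=\mathrm{diag}(-J_n,I_{2n})$ and the twist $M_{2,n}$ of (\ref{e:LinearDelay2G++});
\item Claim~\ref{cl:delay2.3}, giving $\nu_\tau(0^\lambda)=\nu_{\tau,M_{2,n}}(\Upsilon_\lambda)$.
\end{enumerate}

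In that setting your ``main obstacle'' disappears. By (\ref{e:Delay14check}), $\check H(\lambda,M_{2,n}z)=\check H(\lambda,z)$ holds for arbitrary $G$, with no evenness needed. Moreover $\nabla^2\check H_1(0)=\mathrm{diag}(\nabla^2G_1(0),\nabla^2G_1(0))$ is definite. Because $\ker(M_{2,n}-I_{4n})\neq\{0\}$, the condition $\ker(M_{2,n}-I_{4n})\cap\ker\nabla^2\check H(\mu,0)=\{0\}$ is really needed. It follows from invertibility of $(D_0+\sqrt{-1}E_0)+\mu(D_1+\sqrt{-1}E_1)$.

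Your suspicion about the sign is correct. That invertibility says $-\mu$, not $\mu$, is not an eigenvalue of $(D_1+\sqrt{-1}E_1)^{-1}(D_0+\sqrt{-1}E_0)$. With the periodic reduction in place, your transport of (i$^*$)--(iv$^*$) back to $x=x_1$ goes through as you describe.
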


\begin{proof}[\bf Proof]
 Define  $\hat{H}_j, \check{H}_j: ({\mathbb{R}}^{2n})^2\to\mathbb{R}$ and
 $\hat{H}, \check{H}: \mathbb{R}\times({\R}^{2n})^2\to\R$  by
 \begin{align*}
 \hat{H}_j \left(x _ { 1 } ,  x _ { 2 } \right) &= G_j \left(x _ {1 } \right)+
  G_j \left(x _ {2 } \right),\quad j=0,1,\\
 	\check{H}_j(z)&=\hat{H}_j({\Gamma}(2, 2n)z)\quad\forall
 z\in ({\R}^{2n})^2,\;j=0,1,\\
 \hat{H}(\lambda, z)&=\hat{H}_0(z)+\lambda \hat{H}_1(z),\qquad
 \check{H}(\lambda, z)=\check{H}_0(z)+\lambda \check{H}_1(z),
 \end{align*}
where $\Gamma(2,2n)$ is given by (\ref{e:LinearDelay2G+}).
Let $M_{2,n}$ be as in (\ref{e:LinearDelay2G++}).
(\ref{e:Delay14check}) shows that
the above each $\check{H}_j(\lambda, \cdot)$
is invariant for the action of $M_{2,n}$ for $j=0,1$.
According to the arguments below (\ref{e:EigenContinG}), we have
$$
\nabla^2 \check{H}_j({{0}})=
{\rm Diag}\left(\left( \begin{array} {cc}
	D_j & -E_j \\
	E_j &D_j
\end{array} \right), \left( \begin{array} {cc}
	D_j & -E_j \\
	E_j &D_j
\end{array} \right)\right),\quad j=0,1,
$$
and hence $\nabla^2 \check{H}_1({{0}})$ is definite (positive or negative).
Let $D(\lambda)=D_0+\lambda D_1$
and $E(\lambda)=E_0+\lambda E_1$, and let
  $\Upsilon_\lambda$  denote the fundamental matrix solution of
$$
\dot{z}(t)=J_{2n}\nabla^2_2 \check{H}(\lambda, {{0}})z(t)=J_{2n}
{\rm Diag}\left(\left( \begin{array} {cc}
	D(\lambda) & -E(\lambda) \\
	E(\lambda) &D(\lambda)
\end{array} \right), \left( \begin{array} {cc}
	D(\lambda) & -E(\lambda) \\
	E(\lambda) &D(\lambda)
\end{array} \right)\right)z(t)
$$
 with $\Upsilon_\lambda(0)=I_{4n}$.
 Claim~\ref{cl:delay2.3} shows  $\nu_\tau(0^\lambda)=\nu_{\tau,M_{2,n}}(\Upsilon_\lambda)$.
Then, by Corollary~1.15 in \cite{Lu11},
	for any $\tau>0$, the following set is discrete in $\mathbb{R}$:
$$
\Sigma_\tau:=\{\lambda\in\mathbb{R}\,|\, \nu_{\tau}({0}^\lambda)>0\}
=\{\lambda\in\mathbb{R} \mid \nu_{\tau, M_{2,n}}(\Upsilon_\lambda)>0\}.
$$
This proves (I).

Suppose that
$\mu\in{\Sigma}_\tau$ is not an eigenvalue of $(D_1+\sqrt{-1}E_1)^{-1}
(D_0+\sqrt{-1}E_0)$.
 Then it is also not an eigenvalue of $(\nabla^2{G}_1({0}))^{-1}\nabla^2{G}_0({0})$, and hence not an eigenvalue of $(\nabla^2{H}_1({0}))^{-1}\nabla^2{H}_0({0})$.
It follows that ${\rm Ker}(\nabla_z^2\check{H}({\mu}, {{0}}))=\{0\}$
and hence ${\rm Ker}(M_{2,n}-I_{4n})\cap{\rm Ker}(\nabla_z^2\check{H}({\mu}, {{0}}))=\{0\}$.
 In the following, we always understand
 $\check{H}(\lambda, z)=\check{H}_0(z)+\lambda \check{H}_1(z)$
 in (\ref{e:Delay6Auto3G}). Applying Corollary~1.15 in \cite{Lu11} to (\ref{e:Delay6Auto3G}) then gives that at least one of the following holds:
 	\begin{enumerate}
		\item[\rm (i*)] System (\ref{e:Delay6Auto3G}) with $\lambda=\mu$ admits a sequence of $\R$-distinct nonzero solutions
		${u}^{\mu,k}$ for $k=1,2,\cdots$,
		which  converges to zero in $C^1_{\rm loc}(\mathbb{R},\R^{4n})$ as $k\to\infty$.
		\item[\rm (ii*)]
		There exist a left neighborhood $\Lambda^-$ and a right neighborhood $\Lambda^+$ of $\mu$ in $\mathbb{R}$,
		and nonnegative integers $n^+$ and $n^-$ with $n^++n^-\ge \nu_{\tau, M_{2,n}}(\Upsilon_\mu)/2=\nu_\tau(0^\mu)/2$,
		such that for each $\lambda\in\Lambda^-\setminus\{\mu\}$ (resp. $\lambda\in\Lambda^+\setminus\{\mu\}$),
		system (\ref{e:Delay6Auto3G}) with parameter $\lambda$ possesses at least $n^-$ (resp. $n^+$) $\mathbb{R}$-distinct nonzero solutions $u^{\lambda,i}$, $i=1,\dots,n^-$ (resp. $i=1,\dots,n^+$),
		which satisfy $u^{\lambda,i}\to 0$ in $C^1_{\rm loc}(\mathbb{R},\mathbb{R}^{4n})$ as $\lambda\to\mu$.
			\end{enumerate}
	Moreover, if $\nu_{\tau}(0^\mu)\ge 3$,
then at least one of  (i*), (iii*), and (iv*) holds, with (i*) as stated previously, and with (iii*) and (iv*) given by:
	\begin{enumerate}
		\item[\rm (iii*)]
		For every $\lambda\in\Lambda\setminus\{\mu\}$ sufficiently close to $\mu$, there exists a nonzero solution $\bar{u}^\lambda$ of system (\ref{e:Delay6Auto3G}) with parameter $\lambda$, such that $\bar{u}^\lambda\to 0$ in $C^1_{\rm loc}(\mathbb{R},\mathbb{R}^{4n})$ as $\lambda\to\mu$.
		
		\item[\rm (iv*)] For any given $\varepsilon>0$, there exists a one-sided neighborhood $\Lambda^0$ of $\mu$ in $\Lambda$ such that for each $\lambda\in\Lambda^0\setminus\{\mu\}$, system (\ref{e:Delay6Auto3G}) with parameter $\lambda$ has either:
		\begin{itemize}
			\item infinitely many $\mathbb{R}$-distinct nonzero solutions $\bar{u}^{\lambda,k}$ ($k=1,2,\dots$) satisfying \\ $\|\bar{u}^{\lambda,k}|_{[0,\tau]}\|_{C^1}<\varepsilon$, or
			\item at least two $\mathbb{R}$-distinct nonzero solutions $\hat{u}^{\lambda,1},\hat{z}^{\lambda,2}$ with $\|\hat{u}^{\lambda,i}|_{[0,\tau]}\|_{C^1}<\varepsilon$ ($i=1,2$) and
$\mathcal{A}_\lambda({\hat{u}}_\lambda^1)\ne
\mathcal{A}_\lambda({\hat{u}}_\lambda^2)$, where
\begin{eqnarray*}
\mathcal{A}_\lambda({\hat{u}}_\lambda^i):=
\int^{\tau}_0\left[\frac{1}{2}(J_{2n}\dot{\hat{u}}_\lambda^i(t),
\hat{u}^i_\lambda(t))_{\mathbb{R}^{4n}}+
 \check{H}(\lambda, \hat{u}_\lambda^i(t))\right]dt,\quad i=1,2.
\end{eqnarray*}
		\end{itemize}
		\end{enumerate}

As in the proof of Theorem~\ref{th:bif-per3+3},
for $u^{\mu,k}$ in (i*) and $u^{\lambda,i}$ in (ii*), we
can write
\begin{align*}
&\Gamma(2,2n)u^{\mu,k}(t):=({x}^{\mu,k}_1(t)^\top, {x}^{\mu,k}_2(t)^\top)^\top\in ({\mathbb{R}}^{2n})^2, \quad k=1,2,\dots,\\
&\Gamma(2,2n)u^{\lambda,i}(t):=({x}^{\lambda,i}_1(t)^\top, {x}^{\lambda,i}_2(t)^\top)^\top\in ({\mathbb{R}}^{2n})^2,
\quad\hbox{$i=1,\dots,n^-$ (resp. $n^+$)}.
\end{align*}
Then  Claims~\ref{cl:delay2.1} and \ref{cl:delay2.2} imply that ${x}^{\mu,k}_2(t)={x}^{\mu,k}_1(t-\tau)$ and ${x}^{\lambda,i}_{2}(t)={x}^{\lambda,i}_{1}(t-\tau)$,
and that the functions ${x}^k:={x}^{\mu,k}_1$ and ${x}^i_{\lambda}:= {x}^{\lambda,i}_{1}$
fulfill the alternatives (i) and (ii).

Assume $\nu_{\tau}({0}^\lambda)\ge 3$.
For $\bar{u}^{\lambda}$ in (iii*), and
$\bar{u}^{\lambda,k}$ and $\hat{u}_\lambda^i$ (with $i=1,2$) in (iv*),
 respectively, put
 $\Gamma(2,2n)\bar{u}^\lambda(t):=(\bar{x}^{\lambda}_1(t)^\top, \bar{x}^{\lambda}_2(t)^\top)^\top\in ({\mathbb{R}}^{2n})^2$ and
\begin{align*}
&\Gamma(2,2n)\bar{u}^{\lambda,k}(t):=(\bar{x}^k_{\lambda,1}(t)^\top, \bar{x}^k_{\lambda,2}(t)^\top)^\top\in ({\mathbb{R}}^{2n})^2,\\
&\Gamma(2,2n)\hat{u}^i_\lambda(t):=(\hat{x}^i_{\lambda,1}(t)^\top, \hat{x}^i_{\lambda,2}(t)^\top)^\top\in ({\mathbb{R}}^{2n})^2.
\end{align*}
By Claim~\ref{cl:delay2.2} functions
$\bar{x}^{\lambda}(t):=\bar{x}^{\lambda}_1(t)$,
$\bar{x}^k_\lambda:=\bar{x}^k_{\lambda,1}$ and
$\hat{x}^i_{\lambda}:= \hat{x}^i_{\lambda,1}$
satisfy the required alternatives (iii) and (iv)
as in the proof of Theorem~\ref{th:bif-per3+3}.
\end{proof}

 \section{Bifurcation near the trivial equilibrium of system (\ref{e:crm1})}\label{sec:delay4}



By Assumption~\ref{ass:Crm1}(ii),
 for each fixed $\lambda$, the function $H(\lambda,  \cdot)$ is invariant under the transformation
$(x, y) \mapsto (y, -x)$. The latter transformation
corresponds to the action of $-J_n$ on $\mathbb{R}^{2n} \equiv \mathbb{R}^n \times \mathbb{R}^n$ defined by
\[
(x_1, \cdots, x_n, y_1, \cdots, y_n)^\top \mapsto -J_n(x_1, \cdots, x_n, y_1, \cdots, y_n)^\top = (y_1, \cdots, y_n, -x_1, \cdots, -x_n)^\top.
\]
(Since $-J_n=J_n^{-1}$, $H(\lambda, J_n^{-1}\cdot z)=H(\lambda, (-J_n)\cdot z)=H(\lambda,t,  z)$
if and only if $H(\lambda, J_n\cdot z)=H(\lambda,  z)$, that is,
each $H(\lambda, \cdot)$ is $J_n$-invariant.

Denote by $\nabla_zH(\lambda,\cdot)$  the euclidian gradient of
$H(\lambda,\cdot)$ with respect to the $\mathbb{R}^n\times\mathbb{R}^n\equiv\mathbb{R}^{2n}$-variable, and by $\nabla^2_zH(\lambda,\cdot)=D_z(\nabla_zH(\lambda,\cdot))\in\mathcal{L}_s(\mathbb{R}^{2n})$.
Let $\nabla_2H(\lambda, x, y)$ and $\nabla_3H(\lambda, x, y)$ denote
the euclidian gradient of  $H(\lambda, x, y)$ with respect to the $x$-variable and $y$-variable, respectively.
The second equality in Assumption~\ref{ass:Crm1}(ii) yields
\begin{equation}\label{e:crm0}
	\nabla_2H(\lambda, x, y)=-\nabla_3H(\lambda, y, -x),\quad\forall (\lambda,  x,y).
\end{equation}

 Let $x\equiv 0$ satisfy (\ref{e:crm1})
 for each $\lambda\in\Lambda$. Then
 $\nabla_2H(\lambda, 0, 0)=0$ and therefore
 \begin{equation}\label{e:crm0A}
 \nabla_zH(\lambda, 0, 0)=0
 \end{equation}
 since  $\nabla_3H(\lambda, 0, 0)=0$ by (\ref{e:crm0}). Clearly, (\ref{e:crm0})
 also implies that for $(\lambda,  x,y)$,
\begin{align*}\label{e:crm0}
	H_{xx}(\lambda, x, y)=
	H_{yy}(\lambda, y, -x)\quad\text{and}\quad
	H_{xy}(\lambda, x, y)=
	-H_{yx}(\lambda, y, -x).	
\end{align*}
It follows that for $(\lambda,  x,y)$,
\begin{equation}\label{e:crm0C}
	H_{xx}(\lambda, 0, 0)=
H_{yy}(\lambda, 0, 0)\quad\text{and}\quad
H_{xy}(\lambda, 0, 0)
=-H_{yx}(\lambda, 0, 0)=-(H_{xy}(\lambda, 0, 0))^\top.	
\end{equation}

The following relation
stated in \cite[Section~4]{WaLiu14}.
(See \cite{Lu14} for a detailed proof.)
\begin{claim}\label{cl:crm1}
	Under Assumption~\ref{ass:Crm1}(i), if  $z(t)=(x(t)^\top, y(t)^\top)^{\top}$ satisfies
	\begin{equation}\label{e:crm2}
		\dot{z}(t)=J_n\nabla_z H(\lambda, z(t))\quad\text{and}\quad z(t+\tau)=J_nz(t),\;\forall t\in \mathbb{R},
	\end{equation}
	then  $x(t)$  satisfies (\ref{e:crm1}).
		Conversely, under Assumption~\ref{ass:Crm1}, if $x(t)$ satisfies (\ref{e:crm1}),
		then $z(t)=(x(t)^\top, y(t)^\top)^{\top}$, where $y(t):=x(t-\tau)$,
		satisfies  (\ref{e:crm2}).
	\end{claim}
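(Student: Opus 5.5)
The plan is a direct verification by splitting $z$ into its two $\mathbb{R}^n$-components. I use the convention for $J_n$ fixed by the paper (cf.\ the description of $-J_n$ at the start of this section), namely $J_n(x^\top,y^\top)^\top=(-y^\top,x^\top)^\top$, so that $-J_n(x^\top,y^\top)^\top=(y^\top,-x^\top)^\top$. With this convention, and writing $\nabla_zH=(\nabla_2H^\top,\nabla_3H^\top)^\top$, system (\ref{e:crm2}) for $z=(x^\top,y^\top)^\top$ splits into
\begin{align*}
&\dot x(t)=-\nabla_3H(\lambda,x(t),y(t)),\qquad \dot y(t)=\nabla_2H(\lambda,x(t),y(t)),\\
&x(t+\tau)=-y(t),\qquad y(t+\tau)=x(t)\qquad\forall t\in\mathbb{R}.
\end{align*}
The only point needing care is getting the sign convention of $J_n$ right. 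Everything else is bookkeeping.

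For the first assertion, which uses only Assumption~\ref{ass:Crm1}(i), I read off from the boundary relation $y(t+\tau)=x(t)$ that $y(t)=x(t-\tau)$. Substituting this into the first equation gives $\dot x(t)=-\nabla_3H(\lambda,x(t),x(t-\tau))$. Combining the two boundary relations gives
\[
x(t+2\tau)=-y(t+\tau)=-x(t).
\]
Hence $x$ solves (\ref{e:crm1}). Note that the equation for $\dot y$ is not needed in this direction.

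For the converse, let $x$ solve (\ref{e:crm1}) and set $y(t):=x(t-\tau)$. Three of the four relations are then immediate:
\begin{itemize}
\item the $\dot x$-equation is exactly (\ref{e:crm1});
\item $y(t+\tau)=x(t)$ holds by definition of $y$;
\item $x(t+\tau)=-x(t-\tau)=-y(t)$ follows from antiperiodicity.
\end{itemize}
The remaining identity is $\dot y(t)=\nabla_2H(\lambda,x(t),y(t))$, and here Assumption~\ref{ass:Crm1}(ii) enters through (\ref{e:crm0}). Using (\ref{e:crm1}) at time $t-\tau$ and then $x(t-2\tau)=-x(t)$, I compute
\[
\dot y(t)=\dot x(t-\tau)=-\nabla_3H(\lambda,x(t-\tau),x(t-2\tau))=-\nabla_3H(\lambda,y(t),-x(t)).
\]
By (\ref{e:crm0}), the last expression equals $\nabla_2H(\lambda,x(t),y(t))$. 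Regularity causes no issue, since $x\in C^1$ gives $y\in C^1$. This completes both directions.
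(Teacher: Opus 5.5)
Your proof is correct: with $J_n(x^\top,y^\top)^\top=(-y^\top,x^\top)^\top$ the Hamiltonian system and the twisted condition $z(t+\tau)=J_nz(t)$ split exactly as you write. The first direction then needs only the $\dot x$-equation and the two shift relations. In the converse, the $\dot y$-equation follows from antiperiodicity together with the identity $\nabla_2H(\lambda,x,y)=-\nabla_3H(\lambda,y,-x)$, obtained by differentiating Assumption~\ref{ass:Crm1}(ii).

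The paper does not prove this claim itself but refers to the literature for it, and your componentwise verification is the standard direct argument.
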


Applying \cite[Theorem~1.5(I),(II)]{Lu11} and
the first part of Theorem~1.14 in  \cite{Lu11} to problem (\ref{e:crm2}),
we obtain an analogue of Theorems~\ref{th:bif-per3Delay} and~\ref{th:bif-per3+3}.

\begin{theorem}\label{th:bif-per3DelayCrm}
	Under Assumption~\ref{ass:Crm1},
	let $x\equiv 0$ satisfy (\ref{e:crm1})
	for each $\lambda\in\Lambda$, and
	let  $e_1(\lambda)\le e_2(\lambda)\le\cdots\le e_n(\lambda)$ be all eigenvalues of the Hermitian matrix
	$H_{xx}(\lambda, 0, 0)-\sqrt{-1}
	H_{xy}(\lambda, 0, 0)$.
Given a delay parameter $\tau>0$,  for each $e_k(\lambda)$, there exists a unique integer $j_k(\lambda) \in \mathbb{Z}$ such that
	\begin{equation}\label{e:uniqueIntegerCrm}
		e_k(\lambda) \in  \left( \frac{(4j_k(\lambda) + 1)\pi}{2\tau}, \frac{(4j_k(\lambda) + 5)\pi}{2\tau} \right].
	\end{equation}
\begin{enumerate}
\item[\rm (I)]{\rm (\textsf{Necessary condition}):}
If $(\mu,0)$ with $\mu\in\Lambda$ is a bifurcation point of
 (\ref{e:Delay2Auto}), then $e_l(\mu)\in
 \frac{\pi(4\mathbb{Z}+5)}{2\tau}$ for some $l$.

\item[\rm (II)]{\rm (\textsf{Sufficient condition}):}
Let $\mu$ be an interior point of $\Lambda$. Then
$(\mu,0)$ is a bifurcation point of (\ref{e:Delay2Auto}),
 provided that  there exists a partition
 $\{i_1,\dots,i_k\}\cup\{i_{k+1},\dots,i_n\}$  of $\{1,\dots,n\}$
 with $1 \le k \le n$, such that
	\begin{itemize}
		\item[\rm (A)] $e_{i_l}(\mu)=\frac{(4j_{i_l}(\mu)+5)\pi}{2\tau}$ for  $l=1,\dots,k$, and $e_{i_l}(\mu)\notin\frac{\pi(4\mathbb{Z}+5)}{2\tau}$ for $l=k+1,\dots,n$;

\item[\rm (B)] there exist two sequences $(\lambda_m^+)_m$
and $(\lambda_m^-)_m$ in
$\Lambda$  converging to $\mu$ such that
$e_{i_l}(\lambda_m^-)< e_{i_l}(\mu)<e_{i_l}(\lambda_m^+)$
for all $m\in\mathbb{N}$ and all $l=1,\dots,k$.
	\end{itemize}
 \item[\rm (III)]{\rm (\textsf{Alternative bifurcations of Fadell-Rabinowitz type and of Rabinowitz type}):}
 Assume the hypotheses of (II), subject to the following strengthening of condition (B):
 \begin{itemize}
\item[\rm (SB)]
 For some $\epsilon>0$, one of the following two conditions holds:
\begin{itemize}
\item[\rm (SB.1)] $e_{i_l}(\lambda)< e_{i_l}(\mu)<e_{i_l}(\lambda')$
for all $(\lambda, \lambda')\in (\mu-\epsilon, \mu)\times (\mu, \mu+\epsilon)$ and
 $l=1,\dots,k$.
\item[\rm (SB.2)]
$e_{i_l}(\lambda)> e_{i_l}(\mu)>e_{i_l}(\lambda')$
for all $(\lambda, \lambda')\in (\mu-\epsilon, \mu)\times (\mu, \mu+\epsilon)$ and $l=1,\dots,k$.
 \end{itemize}
 \end{itemize}
 Then for the problem (\ref{e:crm1}),
	there exist two possible alternatives:
	\begin{itemize}
		\item[\rm (i)] The problem (\ref{e:crm1})  with $\lambda=\mu$ admits a sequence of
		nonzero, $\R$-distinct solutions
		$\{x^l\}^{\infty}_{l=1}$  such that $x^l\to 0$ in $C^1_{\rm loc}(\mathbb{R},\mathbb{R}^n)$
		as $l\to\infty$.
		\item[\rm (ii)] There exist left and right  neighborhoods $\Lambda^-$ and $\Lambda^+$ of $\mu$ in $\Lambda$
		and nonnegative integers $n^+$ and $n^-\ge 0$ satisfying $n^++n^-\ge k$, such that
		for $\lambda\in\Lambda^-\setminus\{\mu\}$ (resp. $\lambda\in\Lambda^+\setminus\{\mu\}$),
		(\ref{e:crm1}) with parameter  $\lambda$  has at least $n^-$ (resp. $n^+$) $\R$-distinct
		solutions $x_\lambda^i\ne 0$ ($i=1,\dots,n^-$ (resp. $n^+$))
		which converge to zero in $C^1_{\rm loc}(\mathbb{R}, \mathbb{R}^n)$
		as $\lambda\to\mu$.
	\end{itemize}
	Moreover, if $n>1$ and $k>1$, then at least one of (i), (iii), and (iv)  holds,
	where (i) is as stated previously, and
	\begin{itemize}
		\item[\rm (iii)]  For every $\lambda\in\Lambda\setminus\{\mu\}$ near $\mu$, there is a
		nonzero solution ${x}_\lambda$  of (\ref{e:crm1})
		with parameter $\lambda$, such that  $x_\lambda\to 0$ in $C^1_{\rm loc}(\mathbb{R},\mathbb{R}^n)$
		as $\lambda\to\mu$.
				
		\item[\rm (iv)] For a given $\varepsilon > 0$, there exists a one-sided neighbourhood $\Lambda^0$ of $\mu$ in $\Lambda$ such that, for any $\lambda \in \Lambda^0 \setminus \{\mu\}$, problem
	(\ref{e:crm1}) with parameter $\lambda$ has either
	\begin{itemize}
		\item[$\bullet$]
	 infinitely many $\mathbb{R}$-distinct nonzero solutions $\{x_\lambda^l\}_{l=1}^\infty$ such that $\|x_\lambda^l|_{[0,2\tau]}\|_{C^1} < \varepsilon$ for all $l \in \mathbb{N}$,
	 or
	\item[$\bullet$]  at least two $\mathbb{R}$-distinct nonzero solutions $\hat{x}_\lambda^1$ and $\hat{x}_\lambda^2$ satisfying the following inequalities: $\|\hat{x}_\lambda^i|_{[0,2\tau]}\|_{C^1} < \varepsilon$ for $i=1,2$, and
		\begin{align*}
			& -\bigl(\hat{x}_\lambda^1(0),  \hat{x}_\lambda^1(\tau) \bigr)_{\mathbb{R}^{n}}
			+ \int_0^{\tau} \bigl( \dot{\hat{x}}_\lambda^1(t),  \hat{x}_\lambda^1(t-\tau) \bigr)_{\mathbb{R}^{n}} \, dt
			+ \int_{0}^{\tau} H\bigl(\lambda,
			\hat{x}_\lambda^1(t), \hat{x}_\lambda^1(t-\tau)\bigr) \, dt \\
			\ne \; &
			-\bigl(\hat{x}_\lambda^2(0),  \hat{x}_\lambda^2(\tau) \bigr)_{\mathbb{R}^{n}}
			+ \int_0^{\tau} \bigl( \dot{\hat{x}}_\lambda^2(t),  \hat{x}_\lambda^2(t-\tau) \bigr)_{\mathbb{R}^{n}} \, dt
			+ \int_{0}^{\tau} H\bigl(\lambda,
			\hat{x}_\lambda^2(t),
			\hat{x}_\lambda^2(t-\tau)
			\bigr) \, dt.
		\end{align*}
	\end{itemize}
\end{itemize}
\end{enumerate}
\end{theorem}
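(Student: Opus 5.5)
The proof follows the template of Theorems~\ref{th:bif-per3Delay} and~\ref{th:bif-per3+3}, with Claim~\ref{cl:crm1} as the reduction step. I read the references to (\ref{e:Delay2Auto}) in (I) and (II) as references to (\ref{e:crm1}). By Claim~\ref{cl:crm1}, $x$ solves (\ref{e:crm1}) if and only if $z=(x^\top,x(\cdot-\tau)^\top)^\top$ solves the $J_n$-symmetric Hamiltonian problem (\ref{e:crm2}). Here $H(\lambda,\cdot)$ is $J_n$-invariant, and $M:=J_n$ is symplectic and orthogonal with $M^4=I_{2n}$ and ${\rm Ker}(J_n-I_{2n})=\{0\}$. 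Hence condition (a) of \cite[Theorem~1.14]{Lu11} holds automatically, as it did for $-J_n$ in Step~3 of the proof of Theorem~\ref{th:bif-per3Delay}. Bifurcation points of (\ref{e:crm1}) and (\ref{e:crm2}) correspond. The linearized problems have solution spaces of equal dimension, and small $C^1$-norms on $[0,2\tau]$ correspond, because $z(t+\tau)=J_nz(t)$ and $y(t)=x(t-\tau)$.

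Next, the linearization at $0$ is $\dot z=J_nB(\lambda)z$ with $z(t+\tau)=J_nz(t)$. By (\ref{e:crm0C}),
$$
B(\lambda)=\nabla^2_zH(\lambda,0)=\begin{pmatrix}P&Q\\-Q&P\end{pmatrix},\qquad P=H_{xx}(\lambda,0,0),\quad Q=H_{xy}(\lambda,0,0),\quad Q^\top=-Q.
$$
Under the identification $\mathbb{R}^{2n}\cong\mathbb{C}^n$ this matrix commutes with $J_n$ and corresponds to the Hermitian matrix $P-\sqrt{-1}Q$, whose eigenvalues are the $e_k(\lambda)$. I then invoke Theorem~\ref{th:PIndex2}, the appendix result computing the Maslov-type indices for this class. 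It should give $\nu_{\tau,J_n}(\Upsilon_\lambda)=2\sharp\{l\mid e_l(\lambda)\in\frac{(4\mathbb{Z}+5)\pi}{2\tau}\}$ and $i_{\tau,J_n}(\Upsilon_\lambda)=2\sum_k j_k(\lambda)+c_n$, where $c_n$ is a constant depending only on $n$ and $j_k(\lambda)$ is the integer in (\ref{e:uniqueIntegerCrm}). As a heuristic check, diagonalizing unitarily gives $\Upsilon_\lambda(\tau)=e^{\tau e_kJ}$ on each complex line. The condition for a $J_n$-twisted periodic solution is then $e^{-\tau e_k\sqrt{-1}}=\pm\sqrt{-1}$, with the sign fixed by the convention for $J_n$, and this yields the lattice $\frac{(4\mathbb{Z}+1)\pi}{2\tau}=\frac{(4\mathbb{Z}+5)\pi}{2\tau}$.

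With these formulas the argument runs as before. (I) follows from \cite[Theorem~1.5(I)]{Lu11}. For (III), continuity of the eigenvalues via \cite[Corollary~6.3.8]{HorJ} and conditions (A) and (SB) give a small $\epsilon>0$ such that $\nu=0$ on the punctured neighbourhood. On one side $j_{i_l}$ stays fixed for $l\le k$, and on the other side it increases by $1$ (because the right endpoint of the half-open interval is attained at $\mu$). So $i_{\tau,J_n}$ jumps by exactly $2k=\nu_{\tau,J_n}(\Upsilon_\mu)$. This is condition (b) of \cite[Theorem~1.14]{Lu11}, and its first part gives (i) and (ii) with $n^++n^-\ge k$. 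When $k>1$ we have $\nu=2k\ge4>3$, so its second part gives (iii) and (iv). For (II), along the sequences $\lambda_m^\pm$ the intervals $[i,i+\nu]$ are disjoint, and \cite[Theorem~1.5(II)]{Lu11} applies. Finally, I take the first $n$ components of the $z$-solutions to obtain the solutions $x$.

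The remaining step is to check the action functional in (iv). With $y(t)=x(t-\tau)$, one has $\frac12(J_n\dot z,z)=\frac12\big[(\dot y,x)-(\dot x,y)\big]$. Integrating by parts over $[0,\tau]$, and using that $\int_0^\tau(\dot y,x)\,dt=(y,x)|_0^\tau-\int_0^\tau(\dot x,y)\,dt$, reduces this to $-(x(0),x(\tau))+\int_0^\tau(\dot x(t),x(t-\tau))\,dt$ up to sign bookkeeping. Here $x(-\tau)=-x(\tau)$ produces the boundary term. Adding $\int_0^\tau H(\lambda,x(t),x(t-\tau))\,dt$ gives the stated functional.

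The main obstacle is getting the exact normalization and sign conventions right in Theorem~\ref{th:PIndex2}. That is, I must match $H_{xx}-\sqrt{-1}H_{xy}$ with its complex form and confirm that the critical lattice is $\frac{(4\mathbb{Z}+5)\pi}{2\tau}$ and not $\frac{(4\mathbb{Z}+3)\pi}{2\tau}$, which is what makes the index jump exactly at the right endpoints of the intervals in (\ref{e:uniqueIntegerCrm}). I would settle this first by the explicit $n=1$ computation of $e^{\tau eJ_1}$ against $J_1$.
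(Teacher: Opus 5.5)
Your proposal is correct and follows the paper's proof essentially step by step. You reduce to the $J_n$-twisted Hamiltonian problem (\ref{e:crm2}) via Claim~\ref{cl:crm1}, read off $\nu_{\tau,J_n}$ and $i_{\tau,J_n}=2\sum_k j_k+[3n/2]$ from Theorem~\ref{th:PIndex2}, and feed the resulting index jump of $2k=\nu_{\tau,J_n}(\Upsilon_\mu)$ into \cite[Theorems~1.5 and~1.14]{Lu11}.

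One small slip: with $J_n=\begin{pmatrix}0&-I_n\\ I_n&0\end{pmatrix}$ one has $\frac12(J_n\dot z,z)=\frac12[(\dot x,y)-(\dot y,x)]$, which is the opposite of what you wrote. It is this sign, together with $x(-\tau)=-x(\tau)$, that yields exactly $-(x(0),x(\tau))+\int_0^\tau(\dot x(t),x(t-\tau))\,dt$.
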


For $n=1$, Theorem~\ref{th:bif-per3DelayCrm} simplifies to the following.

\begin{corollary}\label{cor:bif-per3+3Crm}
	Let $\Lambda \subset \mathbb{R}$ be an interval, and let $H \in C(\Lambda \times \mathbb{R}^{2}, \mathbb{R})$
	be such that:
	\begin{itemize}
		\item[\rm (C)]
		For every $\lambda \in \Lambda$, the function $H(\lambda, \cdot): \mathbb{R}^{2} \to \mathbb{R}$ is of class $C^2$, and its first and second partial derivatives with respect to the second variable,
		$\nabla_2H(\lambda, z)$ and $\nabla^2_2H\left(\lambda, z\right)$,   are continuous in $(\lambda, z)$.
		
		\item[\rm (D)] $\nabla_y H(\lambda, 0, 0)=0\;\forall\lambda\in\Lambda$,
		$H(\lambda, x, y) = H(\lambda, y, -x)$ for all $(\lambda, x, y)$,
		and so 		
		$\nabla_zH(\lambda, 0,0)=0$ and $\nabla^2_zH\left(\lambda, 0, 0\right)=
		\operatorname{diag}(e(\lambda), e(\lambda))$ for a continuous function $e:\Lambda\to\mathbb{R}$ and all $\lambda\in\Lambda$.
	\end{itemize}
Fix a delay parameter $\tau>0$.
 If $(\mu,0)$ with $\mu\in\Lambda$ is a bifurcation point of
(\ref{e:crm1}) with $n=1$,  then $e(\mu)\in\frac{\pi(4\mathbb{Z}+5)}{2\tau}$.
Conversely, let $\mu$ be an interior point of $\Lambda$ satisfying
$e(\mu)\in\frac{\pi(4\mathbb{Z}+5)}{2\tau}$.
Then $(\mu,0)$ is a bifurcation point of (\ref{e:crm1}) with $n=1$,
provided that there exist sequences $(\lambda_m^\pm)_m\subset\Lambda$  converging to
$\mu$ such that
$e(\lambda_m^-)<e(\mu)<e(\lambda_m^+)$ for all $m\in\mathbb{N}$.
Furthermore, suppose that for some interior point
 $\mu$ of $\Lambda$, we have that
 $e(\mu)\in\frac{\pi(4\mathbb{Z}+5)}{2\tau}$
and that $e(\lambda)-e(\mu)\ne 0$ and changes sign as $\lambda$ across $\mu$
in a deleted neighborhood of $\mu$.
Then for  problem (\ref{e:crm1}) with $n=1$,
	there are  the following alternatives:
	\begin{itemize}
		\item[\rm (i)] The problem (\ref{e:crm1}) with $n=1$  and $\lambda=\mu$ possesses a sequence of $\mathbb{R}$-distinct solutions, $x^k\ne 0$ ($k=1,2,\dots$) such that
		$x^k\to 0$ in $C^1_{\rm loc}(\mathbb{R}, \mathbb{R})$ as $k\to\infty$.
		
		\item[\rm (ii)] There exist left and right  neighborhoods $\Lambda^-$ and $\Lambda^+$ of $\mu$ in $\mathbb{R}$
		and nonnegative integers $n^+$ and $n^-$ satisfying $n^++n^-\ge 1$, such that for $\lambda\in\Lambda^-\setminus\{\mu\}$ (resp. $\lambda\in\Lambda^+\setminus\{\mu\}$), problem
		(\ref{e:crm1}) with $n=1$ and parameter  $\lambda$  has at least $n^-$ (resp. $n^+$) $\R$-distinct
		solutions, $x_\lambda^i \neq 0$ ($i = 1, \dots, n^-$ resp. $n^+$)
		which, as $\lambda \to \mu$, converge to  zero in
		$C^1_{\rm loc}(\mathbb{R}, \mathbb{R})$.
	\end{itemize}
\end{corollary}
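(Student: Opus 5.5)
This corollary is the case $n=1$ of Theorem~\ref{th:bif-per3DelayCrm}, and the plan is to read off each assertion from (I), (II) and (III) there; since the theorem is taken as given, the only work is to compute the $1\times 1$ Hermitian matrix and check that the hypotheses translate. With $n=1$ the matrix $H_{xx}(\lambda,0,0)-\sqrt{-1}H_{xy}(\lambda,0,0)$ is a real scalar. By (\ref{e:crm0C}), $H_{xy}(\lambda,0,0)=-(H_{xy}(\lambda,0,0))^\top$, and for a $1\times 1$ matrix this forces $H_{xy}(\lambda,0,0)=0$. Also $H_{xx}(\lambda,0,0)=H_{yy}(\lambda,0,0)$. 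Hence $\nabla^2_zH(\lambda,0,0)=\operatorname{diag}(e(\lambda),e(\lambda))$ with $e(\lambda)=H_{xx}(\lambda,0,0)$, and the unique eigenvalue is $e_1(\lambda)=e(\lambda)$. Continuity of $e$ follows from (C).

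Next I would verify the standing hypotheses of Theorem~\ref{th:bif-per3DelayCrm}. Condition (C) gives Assumption~\ref{ass:Crm1}(i), and the symmetry in (D) gives Assumption~\ref{ass:Crm1}(ii). The fact that $x\equiv0$ solves (\ref{e:crm1}) for every $\lambda$ is exactly $\nabla_yH(\lambda,0,0)=0$, which is the $n=1$ form of $\nabla_3H(\lambda,0,0)=0$ and is assumed in (D).

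With these checks done, the assertions follow directly.
\begin{itemize}
\item The necessary condition is (I) with $e_1=e$. (I read the reference to (\ref{e:Delay2Auto}) there as a typo for (\ref{e:crm1}), since that is the system the theorem treats.)
\item For the sufficient condition, take the partition with $k=1$ and $i_1=1$. Then (A) reduces to $e(\mu)\in\frac{\pi(4\mathbb{Z}+5)}{2\tau}$, and (B) is exactly the assumed sequence condition, so (II) applies.
\item For the alternatives, the hypothesis that $e(\lambda)-e(\mu)$ is nonzero and changes sign across $\mu$ in a punctured neighbourhood means $e(\lambda)-e(\mu)$ has a constant sign on each side of $\mu$, with opposite signs. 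On $(\mu-\epsilon,\mu)\times(\mu,\mu+\epsilon)$ this is (SB.1) or (SB.2). Theorem~\ref{th:bif-per3DelayCrm}(III) with $k=1$ then gives (i) or (ii) with $n^++n^-\ge1$. Parts (iii) and (iv) of the theorem need $n>1$, so they do not arise here.
\end{itemize}

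The only step with any content is the identification $e_1(\lambda)=e(\lambda)$, in particular showing that the skew part $H_{xy}(\lambda,0,0)$ vanishes; the rest is bookkeeping. I expect the main obstacle to be the wording of the sign-change hypothesis. Its intended meaning is constant sign on each one-sided punctured neighbourhood, opposite on the two sides, and that one-sided constancy is precisely what (SB) requires.
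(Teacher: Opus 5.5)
Your proposal is correct and matches the paper, which presents this corollary simply as the $n=1$ case of Theorem~\ref{th:bif-per3DelayCrm}: $H_{xy}(\lambda,0,0)=0$ gives $e_1(\lambda)=e(\lambda)$, and one takes $k=1$ in (A), (B) and (SB). The sign issue you flag is not really an interpretive one: $e$ is continuous and $e(\lambda)-e(\mu)\ne 0$ on each one-sided punctured interval, so the intermediate value theorem makes the sign constant on each side, and (SB.1) or (SB.2) follows automatically.
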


Similar to  Corollaries~\ref{cor:bif-per3DelayC}
and \ref{cor:bif-per3+4}
we have:

\begin{corollary}\label{cor:bif-per3+4Cri}
	Let  $K: \mathbb{R}^{2n} \to \mathbb{R}$ be a $C^2$ function satisfying
	$\nabla K(0, 0)=0$ and
	$K(x,y)=K(y,-x)$ for all $(x,y)\in\mathbb{R}^n\times\mathbb{R}^n
	\equiv\mathbb{R}^{2n}$. This implies
	that the matrix $K_{xx}(0, 0)- \sqrt{-1}
	K_{xy}(0, 0)$ is Hermitian.
	Let  $e_1\le e_2\le\cdots\le e_n$ be all eigenvalues of $K_{xx}(0, 0)-\sqrt{-1}
	K_{xy}(0, 0)$ (arranged in nondecreasing order).
		Consider the problem
	\begin{eqnarray}\label{e:Delay2AutoCCrm}
		dot{x}( t ) =  -\nabla_y K(x(t), x ( t - \lambda) )\quad\text{and}\quad
			x( t + 2\lambda) =  -x( t )\;\forall t.
		\end{eqnarray}
If $(\mu,0)$ with $\mu\in\mathbb{R}$ is a bifurcation point of
the problem (\ref{e:Delay2AutoCCrm}), then $\mu e_l\in
 \frac{\pi(4\mathbb{Z}+5)}{2}$ for some $l$.
 Conversely, for some $\mu\in\mathbb{R}$,
 suppose there exists a partition of $\{1,\cdots,n\}$,
	$\{i_1,\cdots,i_q\}$ and $\{i_{q+1},\cdots,i_n\}$,
where $1 \le q \le n$, such that the following  condition is satisfied:
	\begin{enumerate}
		\item[\rm (E)] $\mu e_{i_l}=\frac{(4j_{i_l}(\mu)+5)\pi}{2}$ for  $l=1,\dots,q$ (which implies that
		$\mu\ne 0$ and $e_{i_l}\ne 0$ for $l=1,\dots,q$),
		$e_{i_l}$($l=1,\dots,q$) have the same plus-minus sign, and
		$\mu e_{i_l}\notin\frac{\pi(4\mathbb{Z}+5)}{2}$ for any $l=q+1,\dots,n$.
	\end{enumerate}
	(In particular, if $n=1$, then
	$K_{xy}(0,0)=0$, $e_1=K_{xx}(0,0)=K_{yy}(0,0)$, and Condition (E) becomes $2\mu K_{xx}(0,0)\in \pi(4\mathbb{Z}+5)$.)
		Then  one of the following alternatives occurs:
	\begin{itemize}
		\item[\rm (i)] The problem (\ref{e:Delay2AutoCCrm})   with $\lambda=\mu$
		admits a sequence of nonzero,  $\R$-distinct solutions $\{x^k\}^{\infty}_{k=1}$ such that
		$x^k\to 0$ in $C^1_{\rm loc}(\mathbb{R}, \mathbb{R}^{n})$
		as $k\to\infty$.
		\item[\rm (ii)]
		There exist left and right neighborhoods $\Lambda^-$ and $\Lambda^+$ of $\mu$ in $\Lambda$,
		and nonnegative integers $n^-$ and $n^+$ with $n^- + n^+ \geq q$,
		such that for $\lambda \in \Lambda^- \setminus \{\mu\}$ (resp. $\lambda \in \Lambda^+ \setminus \{\mu\}$),
		problem (\ref{e:Delay2AutoCCrm}) with parameter $\lambda$ has at least $n^-$ (resp. $n^+$)
		$\mathbb{R}$-distinct nonzero solutions
		$x_\lambda^i \neq 0$ ($i = 1, \dots, n^-$ (resp. $n^+$)),
		which converge to the zero function in $C^1_{\rm loc}(\mathbb{R}, \mathbb{R}^{n})$
		as $\lambda \to \mu$.
	\end{itemize}
	Moreover, if $n>1$ and $q>1$,
then at least one of (i), (iii) and (iv) holds, where (i) is as stated above, and the assertions of (iii) and (iv) are as follows:
	\begin{itemize}
		\item[\rm (iii)]  For every $\lambda\in\Lambda\setminus\{\mu\}$ near $\mu$, there is a
		nonzero solution ${x}_\lambda$  of problem (\ref{e:Delay2AutoCCrm})
		with parameter  $\lambda$, such that $x_\lambda\to 0$
		in $C^1_{\rm loc}(\mathbb{R}, \mathbb{R}^{2n})$  as $\lambda\to\mu$.
		
		\item[\rm (iv)] For a given $\varepsilon>0$,  there is a one-sided  neighborhood $\Lambda^0$ of $\mu$ in $\Lambda$ not containing
		the point $0\in\mathbb{R}$, such that for any $\lambda\in\Lambda^0\setminus\{\mu\}$,
		problem (\ref{e:Delay2AutoCCrm}) with parameter  $\lambda$
		has either
		\begin{itemize}
			\item [$\bullet$]
		 infinitely many $\mathbb{R}$-distinct nonzero solutions $\{x_\lambda^l\}_{l=1}^\infty$ such that
		$$
		\|{x}_\lambda^l\|_{C^0([0, 2|\lambda|])}+|\lambda|
		\|\dot{x}_\lambda^l\|_{C^0([0, 2|\lambda|])}<\varepsilon \;
		$$
	for $l=1,2,\dots$,	or
	\item[$\bullet$]  at least two $\mathbb{R}$-distinct nonzero solutions $\hat{x}_\lambda^1$ and $\hat{x}_\lambda^2$ satisfying  the following inequalities
		$$
		\|\hat{x}_\lambda^i\|_{C^0([0, 2|\lambda|])}+|\lambda|
		\|\dot{\hat{x}}_\lambda^i\|_{C^0([0, 2|\lambda|])}<\varepsilon, \;i=1,2,
		$$
		and
				\begin{align*}
			& -\bigl(\hat{x}_\lambda^1(0),  \hat{x}_\lambda^1(\lambda) \bigr)_{\mathbb{R}^{n}}
			+ \int_0^{\lambda} \bigl( \dot{\hat{x}}_\lambda^1(t),  \hat{x}_\lambda^1(t-\lambda) \bigr)_{\mathbb{R}^{n}} \, dt
			+ \int_{0}^{\lambda} K\bigl(
			\hat{x}_\lambda^1(t), \hat{x}_\lambda^1(t-\lambda)\bigr) \, dt \\
			\ne \; &
			-\bigl(\hat{x}_\lambda^2(0),  \hat{x}_\lambda^2(\lambda) \bigr)_{\mathbb{R}^{n}}
			+ \int_0^{\lambda} \bigl( \dot{\hat{x}}_\lambda^2(t),  \hat{x}_\lambda^2(t-\lambda) \bigr)_{\mathbb{R}^{n}} \, dt
			+ \int_{0}^{\lambda} K\bigl(
			\hat{x}_\lambda^2(t),
			\hat{x}_\lambda^2(t-\lambda)
			\bigr) \, dt.
		\end{align*}
	\end{itemize}
\end{itemize}
\end{corollary}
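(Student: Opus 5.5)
The plan is to copy the proofs of Corollaries~\ref{cor:bif-per3DelayC} and~\ref{cor:bif-per3+4}: turn the delay parameter into an amplitude parameter by a time rescaling, and then apply Theorem~\ref{th:bif-per3DelayCrm} with $\tau=1$.

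\textbf{Reduction.} Put $\Lambda=\mathbb{R}$ and $H(\lambda,x,y)=\lambda K(x,y)$. Then $H$ satisfies Assumption~\ref{ass:Crm1}: it is $C^2$ in $(x,y)$, its derivatives are continuous in $(\lambda,x,y)$, and $H(\lambda,x,y)=H(\lambda,y,-x)$. Since $\nabla K(0,0)=0$, the zero function solves (\ref{e:crm1}) for every $\lambda$. Moreover
\[
H_{xx}(\lambda,0,0)-\sqrt{-1}H_{xy}(\lambda,0,0)=\lambda\bigl(K_{xx}(0,0)-\sqrt{-1}K_{xy}(0,0)\bigr).
\]
This matrix is Hermitian by (\ref{e:crm0C}), and its eigenvalues are $\lambda e_l$. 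Ordered nondecreasingly they are $\lambda e_1\le\cdots\le\lambda e_n$ for $\lambda>0$, and the order reverses for $\lambda<0$. This only relabels the indices, and the partition condition is insensitive to labels.

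For $\lambda\neq 0$, set $u(s)=x(\lambda s)$. Then $x$ solves (\ref{e:Delay2AutoCCrm}) with parameter $\lambda$ if and only if $u$ solves
\[
\dot u(s)=-\lambda\nabla_y K(u(s),u(s-1))\quad\text{and}\quad u(s+2)=-u(s),
\]
which is (\ref{e:crm1}) with $\tau=1$ and $H=\lambda K$. The map $x\mapsto u$ carries $\mathbb{R}$-distinct solutions to $\mathbb{R}$-distinct solutions. Near a fixed $\mu\neq0$ it preserves $C^1_{\rm loc}$ convergence to zero, and
\[
\|u\|_{C^1([0,2])}=\|x\|_{C^0([0,2|\lambda|])}+|\lambda|\,\|\dot x\|_{C^0([0,2|\lambda|])}.
\]
For $\lambda<0$ this identity uses the antiperiodicity exactly as in the proof of Corollary~\ref{cor:bif-per3DelayC}.

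\textbf{Necessary condition.} If $(\mu,0)$ is a bifurcation point of (\ref{e:Delay2AutoCCrm}) with $\mu\neq0$, it is a bifurcation point of the rescaled problem. Theorem~\ref{th:bif-per3DelayCrm}(I) then gives $\mu e_l\in\frac{\pi(4\mathbb{Z}+5)}{2}$. The case $\mu=0$ has to be excluded or treated separately as degenerate, as in the earlier corollaries.

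\textbf{Sufficient part.} Condition (E) gives condition (A) of Theorem~\ref{th:bif-per3DelayCrm} with $k=q$. Because $\lambda e_{i_l}-\mu e_{i_l}=(\lambda-\mu)e_{i_l}$ and all $e_{i_l}$, $l\le q$, have the same sign, (SB.1) holds if that sign is positive and (SB.2) holds if it is negative. Shrink $\epsilon<|\mu|$ so that the neighbourhoods avoid $0$. Theorem~\ref{th:bif-per3DelayCrm}(III) now yields the alternatives (i)--(iv) for the rescaled problem, and $x(t)=u(t/\lambda)$ transfers them back.

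\textbf{Main obstacle.} The only non-routine step is the action identity in (iv), namely checking that the functional
\[
-(u(0),u(1))+\int_0^1(\dot u(s),u(s-1))\,ds+\lambda\int_0^1K\bigl(u(s),u(s-1)\bigr)\,ds
\]
equals the stated $x$-functional. The substitution $t=\lambda s$ turns it into
\[
-(x(0),x(\lambda))+\int_0^\lambda(\dot x(t),x(t-\lambda))\,dt+\int_0^\lambda K\bigl(x(t),x(t-\lambda)\bigr)\,dt,
\]
where the factor $\lambda$ is absorbed by $dt=\lambda\,ds$. The two sides must be compared carefully for $\lambda<0$, where the orientation of the intervals flips. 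Inequality of the functionals is preserved either way, since the identity is an exact equality.
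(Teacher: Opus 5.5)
Your reduction is the route the paper intends. You take $\Lambda=\mathbb{R}$ and $H(\lambda,x,y)=\lambda K(x,y)$, rescale by $u(s)=x(\lambda s)$, and apply Theorem~\ref{th:bif-per3DelayCrm} with $\tau=1$. Condition (E) then gives (A) with $k=q$, and (SB.1) or (SB.2) according to the common sign of the $e_{i_l}$. You transfer back as in Corollaries~\ref{cor:bif-per3DelayC} and~\ref{cor:bif-per3+4}. The relabelling for $\lambda<0$, the norm identity and the action identity are all handled correctly. The action identity is routine rather than the main obstacle: $\int_0^1\cdots\,ds$ becomes $\int_0^\lambda\cdots\,dt$ as an equality of oriented integrals for either sign of $\lambda$.

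The one genuine loose end is $\mu=0$. Unlike the two earlier corollaries, this statement asserts the necessary condition for every $\mu\in\mathbb{R}$. For $\mu=0$ it claims that $(0,0)$ is \emph{not} a bifurcation point, because $0\notin\frac{\pi(4\mathbb{Z}+5)}{2}$. So this case cannot be ``excluded''; it must be proved.

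Your own reduction closes it. At $\lambda=0$ the antiperiodicity condition reads $x=-x$, so any bifurcation sequence has $\lambda_k\neq 0$ with $\lambda_k\to 0$. Since $x^k$ is $4|\lambda_k|$-periodic, $\|u^k\|_{C^1([0,3])}\le \|x^k\|_{C^0(\mathbb{R})}+|\lambda_k|\,\|\dot x^k\|_{C^0(\mathbb{R})}\to 0$. Hence $(0,0)$ would be a bifurcation point of the rescaled problem, whose Hamiltonian $H(0,\cdot)\equiv 0$ has $e_l(0)=0\notin\frac{\pi(4\mathbb{Z}+5)}{2}$. This contradicts Theorem~\ref{th:bif-per3DelayCrm}(I).

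Alternatively, there is a direct argument. If $C$ is a Lipschitz constant of $\nabla K$ near $0$, then $2|u(s)|=|\int_s^{s+2}\dot u|\le 4C|\lambda|\,\|u\|_{\infty}$. This forces $u=0$ for small $|\lambda|$ and small $u$.
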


This can be directly derived from Theorem~\ref{th:bif-per3DelayCrm}
as in the proofs of Corollaries~\ref{cor:bif-per3DelayC}
and~\ref{cor:bif-per3+4}.

The conditions in the theorem and corollaries above ensure a quantitative characterization of the bifurcation parameter values.
 For a subclass of functions $H$ that satisfy Assumption~\ref{ass:Crm1}, a qualitative bifurcation result can be established (Theorem~\ref{cor:crm5}).

For the advanced system (\ref{e:crm1Ad}), the following claim is the direct analogue of Claim~\ref{cl:crm1}.

\begin{claim}\label{cl:crm2}
	Under Assumption~\ref{ass:Crm1}(i), if $ z(t) = (x(t)^\top, y(t)^\top)^\top $ satisfies
	\begin{equation}\label{e:crm2Ad}
		\dot{z}(t) = J_n \nabla_z H(\lambda,  z(t))\quad \text{and} \quad z(t + \tau) = -J_n z(t),\;
		\quad \forall t \in \mathbb{R},
	\end{equation}
	then $ x(t) $ satisfies
	(\ref{e:crm1Ad}).	
	Conversely, under Assumption~\ref{ass:Crm1}, if $ x(t) $ satisfies \eqref{e:crm1Ad},
	then $ z(t) = (x(t)^\top, y(t)^\top)^\top $ with $ y(t) := x(t+ \tau) $, satisfies \eqref{e:crm2Ad}.
\end{claim}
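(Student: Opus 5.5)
The statement to prove is Claim~\ref{cl:crm2}: the advanced system (\ref{e:crm1Ad}) corresponds to the Hamiltonian problem (\ref{e:crm2Ad}) with boundary twist $-J_n$ and $y(t)=x(t+\tau)$. I will argue exactly as for Claim~\ref{cl:crm1}, writing the Hamiltonian system in components and then using the symmetry in Assumption~\ref{ass:Crm1}(ii), in the differentiated form (\ref{e:crm0}).

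\emph{Forward direction.} Write $z=(x^\top,y^\top)^\top$. Since $J_n(a,b)=(-b,a)$, the equation $\dot z=J_n\nabla_zH(\lambda,z)$ reads
\begin{equation*}
\dot x=-\nabla_3H(\lambda,x,y),\qquad \dot y=\nabla_2H(\lambda,x,y).
\end{equation*}
The condition $z(t+\tau)=-J_nz(t)=(y(t),-x(t))$ gives
\begin{equation*}
x(t+\tau)=y(t),\qquad y(t+\tau)=-x(t).
\end{equation*}
Hence $y(t)=x(t+\tau)$ and $x(t+2\tau)=y(t+\tau)=-x(t)$. Substituting into the first component equation gives $\dot x(t)=-\nabla_3H(\lambda,x(t),x(t+\tau))$, which is (\ref{e:crm1Ad}). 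This direction uses only Assumption~\ref{ass:Crm1}(i), as claimed.

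\emph{Converse.} Let $x$ solve (\ref{e:crm1Ad}) and set $y(t):=x(t+\tau)$.
\begin{itemize}
\item The twist condition holds: $x(t+\tau)=y(t)$ by definition, and $y(t+\tau)=x(t+2\tau)=-x(t)$ by the antiperiodicity.
\item The first component equation $\dot x=-\nabla_3H(\lambda,x,y)$ is exactly (\ref{e:crm1Ad}).
\item For the second component, $\dot y(t)=\dot x(t+\tau)=-\nabla_3H(\lambda,x(t+\tau),x(t+2\tau))=-\nabla_3H(\lambda,y(t),-x(t))$. By (\ref{e:crm0}), $-\nabla_3H(\lambda,y,-x)=\nabla_2H(\lambda,x,y)$, so $\dot y=\nabla_2H(\lambda,x,y)$.
\end{itemize}
Thus $z$ satisfies (\ref{e:crm2Ad}). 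Regularity is not an issue: $x$ is $C^1$, so $y$ is $C^1$.

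The only step needing care is the second component, where the symmetry $H(\lambda,x,y)=H(\lambda,y,-x)$ must be used with the correct sign. Differentiating $H(\lambda,x,y)=H(\lambda,y,-x)$ in $x$ gives $\nabla_2H(\lambda,x,y)=-\nabla_3H(\lambda,y,-x)$, which is (\ref{e:crm0}) and is precisely what is needed. This sign check is also where the advanced case differs from Claim~\ref{cl:crm1}: here the twist is $-J_n$ rather than $J_n$, and $y$ is a forward shift rather than a backward one.
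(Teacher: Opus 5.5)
Your proof is correct and is the intended argument: reading $\dot z=J_n\nabla_zH(\lambda,z)$ and $z(t+\tau)=-J_nz(t)=(y(t)^\top,-x(t)^\top)^\top$ componentwise gives the forward direction using only Assumption~\ref{ass:Crm1}(i). For the converse, the $y$-equation follows from $x(t+2\tau)=-x(t)$ together with the differentiated symmetry (\ref{e:crm0}), with the sign exactly as you checked. The paper gives no separate proof and only calls this the direct analogue of Claim~\ref{cl:crm1}; your componentwise verification is precisely that analogue.
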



This allows us to derive  results analogous
to Theorem~\ref{th:bif-per3DelayCrm} and Corollaries~\ref{cor:bif-per3+3Crm} and \ref{cor:bif-per3+4Cri}
for the advanced system (\ref{e:crm1Ad}). For example, the conclusions of Theorem~\ref{th:bif-per3DelayCrm} remain valid, subject to the following modifications:
\begin{itemize}
	\item Replace all (\ref{e:crm1}) with (\ref{e:crm1Ad}).
	\item Replace (\ref{e:uniqueIntegerCrm})
	with  \begin{equation*}
		e_k(\lambda) \in  \left( \frac{(4j_k(\lambda) - 1)\pi}{2\tau}, \frac{(4j_k(\lambda) + 3)\pi}{2\tau} \right].
	\end{equation*}
	\item Replace $(4j_{i_l}(\mu)+5)$ in (A)
	with $(4j_{i_l}(\mu)+3)$.
	\item Replace the final inequality
	in (iv) with the following	
		\begin{align*}
		& \bigl(\hat{x}_\lambda^1(0),  \hat{x}_\lambda^1(\tau) \bigr)_{\mathbb{R}^{n}}
		+ \int_0^{\tau} \bigl( \dot{\hat{x}}_\lambda^1(t),  \hat{x}_\lambda^1(t+\tau) \bigr)_{\mathbb{R}^{n}} \, dt
		+ \int_{0}^{\tau} H\bigl(\lambda,
		\hat{x}_\lambda^1(t), \hat{x}_\lambda^1(t+\tau)\bigr) \, dt \\
		\ne \; &
		\bigl(\hat{x}_\lambda^2(0),  \hat{x}_\lambda^2(\tau) \bigr)_{\mathbb{R}^{n}}
		+ \int_0^{\tau} \bigl( \dot{\hat{x}}_\lambda^2(t),  \hat{x}_\lambda^2(t+\tau) \bigr)_{\mathbb{R}^{n}} \, dt
		+ \int_{0}^{\tau} H\bigl(\lambda,
		\hat{x}_\lambda^2(t),
		\hat{x}_\lambda^2(t+\tau)
		\bigr) \, dt.
	\end{align*}
\end{itemize}

Actually, the systems (\ref{e:crm1}) and (\ref{e:crm1Ad}) are equivalent in the following sense: a function $x:\mathbb{R}\to\mathbb{R}^n$ satisfies (\ref{e:crm1}) if and only if it satisfies (\ref{e:crm1Ad}) with $H$ replaced by $G$, where $G:\Lambda\times {\mathbb{R}}^{n}\times{\mathbb{R}}^{n}
\to\mathbb{R}$ is defined by
$G(\lambda, u, v)=- H(\lambda, v, u)$.
For details, see \cite[Remark~6.1]{Lu14}.

\begin{proof}[\bf Proof of Theorem~\ref{th:bif-per3DelayCrm}]
For each $\lambda\in\Lambda$,	
let $B(\lambda)=H_{xx}(\lambda, 0, 0)$
and  $C(\lambda)=-H_{xy}(\lambda, 0, 0)$.
By (\ref{e:crm0A}) and (\ref{e:crm0C}),
$\nabla_zH(\lambda, 0, 0)=0$,
$B(\lambda)$ and $C(\lambda)$
are symmetric and skew-symmetric,
respectively,  and
	$$
\nabla^2_zH(\lambda, 0, 0)
=\left( \begin{array} {cc}
	B(\lambda) & -C(\lambda) \\
	C(\lambda) &	B(\lambda)
\end{array} \right).
$$	
	Denote by  $\Upsilon_\lambda$  the fundamental matrix solution of
	$\dot{z}(t)=J_{n}\nabla^2_z {H}(\lambda, 0,0)z(t)$
	with $\Upsilon_\lambda(0)=I_{2n}$.
	By Theorem~\ref{th:PIndex2}, we have
		\begin{align}\label{e:crm2Ad1}
	\nu_{\tau,J_n}(\Upsilon_\lambda)&=
	2\sharp\left\{k\,|\, 2\tau e_k(\lambda)=(4j+1)\pi\;\hbox{with some $j\in\mathbb{Z}$}\right\},\\
	i_{\tau,J_n}(\Upsilon_\lambda)&=
	2j_1(\lambda)+\cdots+ 2j_n(\lambda) +[3n/2],\label{e:crm2Ad2}
	\end{align}
	where for $l=1,\cdots,n$, $j_l(\lambda)\in\mathbb{Z}$ is the unique integer such that $\frac{(4j_l+1)\pi}{2\tau}<e_l(\lambda)\le\frac{(4j_l+5)\pi}{2\tau}$.

\noindent\textbf{Step 1}(\textsf{Proof of (I)}).
 If $(\mu,0)$ with $\mu\in\Lambda$ is a bifurcation point of
 (\ref{e:crm1}), by  Claim~\ref{cl:crm1}
 this $\mu$ corresponds to a bifurcation point $(\mu, 0)$ of (\ref{e:crm2}). Consequently, \cite[Theorem~1.5(I)]{Lu11} yields
$\nu_{\tau,J_n}(\Upsilon_\mu)>0$,
which, in conjunction with (\ref{e:crm2Ad1}), establishes the conclusion
that $e_l(\mu)\in\frac{\pi(4\mathbb{Z}+5)}{2\tau}$ for some $l$.

\noindent\textbf{Step 2}(\textsf{Proof of (III)}).
Note that condition (a) of Theorem~1.14 in \cite{Lu11} holds because  ${\rm Ker}(J_n-I_{2n})=\{0\}$.
By  assumption (A) and (\ref{e:crm2Ad1}), we have
		$\nu_{\tau,J_n}(\Upsilon_\mu)=2k$.
		As in the proof of Theorem~\ref{th:bif-per3Delay},	
	for sufficiently small $\epsilon>0$,
	we derive from (\ref{e:crm2Ad1}) and (\ref{e:crm2Ad2})
	 $$
	 \nu_{\tau,J_n}(\Upsilon_\lambda)=0\quad
	 \forall 0<|\lambda-\mu|<\epsilon,
	 $$
	and
	\begin{equation*}
		i_{\tau,-J_n}(\Upsilon_\lambda)=
		\begin{cases}
			2j_1(\mu)+\cdots+ 2j_n(\mu) +[3n/2]\quad&\text{if $\lambda\in (\mu-\epsilon, \mu)$},\\
			2j_1(\mu)+\cdots+ 2j_n(\mu)+ 2k +[3n/2]\quad&\text{if $\lambda\in (\mu, \mu+\epsilon)$}
		\end{cases}
	\end{equation*}
(that is, $i_{\tau,-J_n}(\Upsilon_\lambda)=i_{\tau,-J_n}(\Upsilon_\mu)$
for any $\lambda\in (\mu-\epsilon, \mu)$,
and $i_{\tau,-J_n}(\Upsilon_\lambda)=
i_{\tau,-J_n}(\Upsilon_\mu)+ \nu_{\tau,-J_n}(\Upsilon_\mu)$
for any $\lambda\in (\mu, \mu+\epsilon)$)
	in the case (SB.1), and
		\begin{equation*}
		i_{\tau,-J_1}(\Upsilon_\lambda)=
		\begin{cases}
			2j_1(\mu)+\cdots+ 2j_n(\mu)+ 2k +[3n/2]\quad&\text{if $\lambda\in (\mu-\epsilon, \mu)$},\\
			2j_1(\mu)+\cdots+ 2j_n(\mu) +[3n/2]\quad&\text{if $\lambda\in (\mu, \mu+\epsilon)$}
		\end{cases}
	\end{equation*}
(namely, $i_{\tau,-J_n}(\Upsilon_\lambda)=i_{\tau,-J_n}(\Upsilon_\mu)
+ \nu_{\tau,-J_n}(\Upsilon_\mu)$
for any $\lambda\in (\mu-\epsilon, \mu)$,
and $i_{\tau,-J_n}(\Upsilon_\lambda)=
i_{\tau,-J_n}(\Upsilon_\mu)$
for any $\lambda\in (\mu, \mu+\epsilon)$)
in the case (SB.2).

Therefore, applying the first part of Theorem~1.14 in  \cite{Lu11} to the problem (\ref{e:crm2}) yields	the following alternatives:
	\begin{enumerate}
		\item[\rm (i')] The problem (\ref{e:crm2}) with $\lambda=\mu$ has a sequence of $\mathbb{R}$-distinct solutions,
		$z^l\ne 0$ ($l=1,2,\dots$) such that $\|z^l|_I\|_{C^1}\to 0$ for any compact interval $I\subset\mathbb{R}$;
		\item[\rm (ii')] There exist left and right  neighborhoods $\Lambda^-$ and $\Lambda^+$ of $\mu$ in $\Lambda$
		and nonnegative integers $n^+$ and $n^-\ge 0$ satisfying $n^++n^-\ge \nu_{\tau,J_n}(\Upsilon_\mu)/2=k$,
		such that for $\lambda\in\Lambda^-\setminus\{\mu\}$ (resp. $\lambda\in\Lambda^+\setminus\{\mu\}$),
		problem (\ref{e:crm2}) with parameter  $\lambda$  has at least $n^-$ (resp. $n^+$) $\R$-distinct
		solutions, $z_\lambda^i\ne 0$ ($i=1,\dots,n^-$ resp. $n^+$)
		whose restrictions to $[0,\tau]$  converge to zero in $C^1([0,\tau];\mathbb{R}^{2n})$  as $\lambda\to\mu$.
	\end{enumerate}
Write $z^l(t)=(x^l(t)^\top, y^l(t)^\top)^{\top}$
and $z^i_\lambda(t)=(x^i_\lambda(t)^\top, y^i_\lambda(t)^\top)^{\top}$.
By  Claim~\ref{cl:crm1},
  $y^l(t)=x^l(t-\tau)$
and  $y^i_\lambda(t)=x^i_\lambda(t-\tau)$,
and these $x^l$ and $x^i_\lambda$ satisfy
	the conclusions (i) and (ii)
	in Theorem~\ref{th:bif-per3DelayCrm}.

From now on we assume $n>1$ and $k>1$.
Then $\nu_{\tau,-J_n}(\Upsilon_\mu)=2k>3$.
Applying the second part of Theorem~1.14 in  \cite{Lu11} to the problem (\ref{e:crm2}),
we conclude that  at least one of (i')
or one of the following holds:
\begin{enumerate}
	\item[\rm (iii')]  For every $\lambda\in\Lambda\setminus\{\mu\}$ sufficiently close to $\mu$, there is a
	nonzero solution ${z}_\lambda$  of the problem (\ref{e:crm2}) with parameter value $\lambda$, such that $z_\lambda$ converges to zero
	in the $C^1$ norm on any compact interval $I\subset\R$  as $\lambda\to\mu$.
	
	\item[\rm (iv')] For a given $\varepsilon>0$  there exists a one-sided  neighborhood $\Lambda^0$ of $\mu$ within $\Lambda$ such that,
	for any $\lambda\in\Lambda^0\setminus\{\mu\}$,  problem (\ref{e:crm2})
	with parameter  $\lambda$ has either
	\begin{itemize}
		\item[$\bullet$]
	 infinitely many $\mathbb{R}$-distinct nonzero solutions
	${z}_\lambda^l$ satisfying $\|{z}_\lambda^l|_{[0, \tau]}\|_{C^1}<\varepsilon$ ($l=1,2,\dots$), or
\item[$\bullet$] at least two $\mathbb{R}$-distinct nonzero solutions $\hat{z}_\lambda^1$ and $\hat{z}_\lambda^2$ satisfying inequalities $\|\hat{z}_\lambda^i|_{[0, \tau]}\|_{C^1}<\varepsilon$ ($i=1,2$) and
	$$
	\hspace{-5mm}\int^{\tau}_0\left[\frac{1}{2}(J_n\dot{\hat{z}}_\lambda^1(t),\hat{z}^1_\lambda(t))_{\mathbb{R}^{2n}}+ {H}(\lambda, \hat{z}_\lambda^1(t))\right]dt
	\ne \int^{\tau}_0\left[\frac{1}{2}(J_n\dot{\hat{z}}^2_\lambda(t), \hat{z}_\lambda^2(t))_{\mathbb{R}^{2n}}+ {H}(\lambda, \hat{z}_\lambda^2(t))\right]dt.
	$$
\end{itemize}
\end{enumerate}
As above let $z_\lambda(t)=(x_\lambda(t)^\top, y_\lambda(t)^\top)^{\top}$,
 $z^l_\lambda(t)=(x^l_\lambda(t)^\top, y^l_\lambda(t)^\top)^{\top}$
and $\hat{z}^i_\lambda(t)=
(\hat{x}^i_\lambda(t)^\top, \hat{y}^i_\lambda(t)^\top)^{\top}$.
By Claim~\ref{cl:crm1},
 $y_\lambda(t)=x_\lambda(t-\tau)$,
 $y^l_\lambda(t)=x^l_\lambda(t-\tau)$
and  $\hat{y}^i_\lambda(t)=\hat{x}^i_\lambda(t-\tau)$, and
 these $x_\lambda$,
$x^l_\lambda$ and $\hat{x}^i_\lambda$ satisfy
 (\ref{e:crm1}). Clearly, the inequality
$\|\hat{z}_\lambda^i|_{[0, \tau]}\|_{C^1}<\varepsilon$ implies that
$$
\|\hat{x}_\lambda^i|_{[0, \tau]}\|_{C^1}<\varepsilon\quad\hbox{and}\quad
\|\hat{x}_\lambda^i|_{[\tau, 2\tau]}\|_{C^1}=
\|\hat{y}_\lambda^i|_{[0, \tau]}\|_{C^1}<\varepsilon
$$
because  $y^l_\lambda(t)=x^l_\lambda(t-\tau)=
-x^l_\lambda(t+\tau)$. Moreover,
using integration by parts yields
$$
\frac{1}{2}\int^{\tau}_0(J_n\dot{\hat{z}}_\lambda^i(t),\hat{z}^i_\lambda(t))_{\mathbb{R}^{2n}}dt=
({\hat{x}}_\lambda^i(0),\hat{x}^i_\lambda(\tau))_{\mathbb{R}^{n}}dt+
\int^{\tau}_0(\dot{\hat{x}}_\lambda^i(t),\hat{y}^i_\lambda(t))_{\mathbb{R}^{n}}dt
$$
for $i=1,2$. We thus obtain conclusions (iii) and (iv) of Theorem~\ref{th:bif-per3DelayCrm}.

\noindent\textbf{Step 3}(\textsf{Proof of (II)}).
The desired conclusion follows by a routine modification of the proof of (II) in Theorem~\ref{th:bif-per3Delay} or \ref{th:bif-per3DelayII}.
\end{proof}



Analogously to the proof of Theorem~\ref{th:bif-per2Delay++}, Corollary~1.15 of \cite{Lu11} yields a parallel result that complements Theorem~\ref{th:bif-per3DelayCrm}.

\begin{theorem}\label{cor:crm5}
	Let $H_0, \hat{H}: \mathbb{R}^{2n}\to\mathbb{R}$ be $C^2$-functions that are $J_n$-invariant and satisfy $\nabla H_0(0)=0$ and $\nabla\hat{H}(0)=0$.
	Assume that $\nabla^2 H_0(0)$ is either positive definite or negative definite.
	Define $H(\lambda,x,y)=H_0(x,y)+\lambda\hat{H}(x,y)$ and $\Upsilon_\lambda(t)=\exp\left(tJ_n(\nabla^2 H_0(0) + \lambda\nabla^2\hat{H}(0))\right)$.
	Then:
	\begin{description}
		\item[(I)]
	For any $\tau>0$, the set
	$\Sigma_\tau:=\{\lambda\in\mathbb{R} \mid \nu_{\tau,-J_n}(\Upsilon_\lambda)>0\}$
		is discrete in $\mathbb{R}$.
	\item[(II)] For each $\mu\in\Sigma_\tau$,
	 at least  one of the following alternatives holds true:
	\begin{enumerate}
		\item[\rm (i)] System (\ref{e:crm1}) with $\lambda=\mu$ admits a sequence of $\R$-distinct nonzero solutions
		${x}^{\mu,k}$ for $k=1,2,\cdots$,
		which  converges to zero in $C^1_{\rm loc}(\mathbb{R},\R^{n})$ as $k\to\infty$.
		\item[\rm (ii)]
		There exist a left neighborhood $\Lambda^-$ and a right neighborhood $\Lambda^+$ of $\mu$ in $\mathbb{R}$,
		and nonnegative integers $n^+$ and $n^-$ with $n^++n^-\ge \nu_{\tau,-J_n}(\Upsilon_\mu)/2$,
		such that for each $\lambda\in\Lambda^-\setminus\{\mu\}$ (resp. $\lambda\in\Lambda^+\setminus\{\mu\}$),
		system (\ref{e:crm1}) with parameter $\lambda$ possesses at least $n^-$ (resp. $n^+$) $\mathbb{R}$-distinct nonzero solutions $x^{\lambda,i}$, $i=1,\dots,n^-$ (resp. $i=1,\dots,n^+$),
		which satisfy $x^{\lambda,i}\to 0$ in $C^1_{\rm loc}(\mathbb{R},\mathbb{R}^{n})$ as $\lambda\to\mu$.
			\end{enumerate}
	Moreover, if $\nu_{\tau,-J_n}(\Upsilon_\mu)\ge 3$,
then at least one of  (i), (iii), and (iv) holds, with (i) as stated previously, and with (iii) and (iv) given by:
	\begin{enumerate}
		\item[\rm (iii)]
		For every $\lambda\in\mathbb{R}\setminus\{\mu\}$ sufficiently close to $\mu$, there exists a nonzero solution $\bar{x}^\lambda$ of system (\ref{e:crm1}) with parameter $\lambda$, such that $\bar{x}^\lambda\to 0$ in $C^1_{\rm loc}(\mathbb{R},\mathbb{R}^{n})$ as $\lambda\to\mu$.
		
		\item[\rm (iv)] For any given $\varepsilon>0$, there exists a one-sided neighborhood $\Lambda^0$ of $\mu$ in $\mathbb{R}$ such that for each $\lambda\in\Lambda^0\setminus\{\mu\}$, system (\ref{e:crm1}) with parameter $\lambda$ has either:
		\begin{itemize}
			\item infinitely many $\mathbb{R}$-distinct nonzero solutions $\bar{x}^{\lambda,k}$ ($k=1,2,\dots$) satisfying \\ $\|\bar{x}^{\lambda,k}|_{[0,\tau]}\|_{C^1}<\varepsilon$, or
			\item at least two $\mathbb{R}$-distinct nonzero solutions $\hat{x}^{\lambda,1},\hat{x}^{\lambda,2}$ with $\|\hat{x}^{\lambda,i}|_{[0,\tau]}\|_{C^1}<\varepsilon$ ($i=1,2$) and
			\begin{align*}
				& -\bigl(\hat{x}_\lambda^1(0),  \hat{x}_\lambda^1(\tau) \bigr)_{\mathbb{R}^{n}}
				+ \int_0^{\tau} \bigl( \dot{\hat{x}}_\lambda^1(t),  \hat{x}_\lambda^1(t-\tau) \bigr)_{\mathbb{R}^{n}} \, dt
				+ \int_{0}^{\tau} H\bigl(\lambda,
				\hat{x}_\lambda^1(t), \hat{x}_\lambda^1(t-\tau)\bigr) \, dt \\
				\ne \; &
				-\bigl(\hat{x}_\lambda^2(0),  \hat{x}_\lambda^2(\tau) \bigr)_{\mathbb{R}^{n}}
				+ \int_0^{\tau} \bigl( \dot{\hat{x}}_\lambda^2(t),  \hat{x}_\lambda^2(t-\tau) \bigr)_{\mathbb{R}^{n}} \, dt
				+ \int_{0}^{\tau} H\bigl(\lambda,
				\hat{x}_\lambda^2(t),
				\hat{x}_\lambda^2(t-\tau)
				\bigr) \, dt.
			\end{align*}			
		\end{itemize}
		\end{enumerate}
	\end{description}
	\end{theorem}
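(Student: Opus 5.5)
The plan follows the proof of Theorem~\ref{th:bif-per2Delay++}, with Claim~\ref{cl:crm1} replacing Claims~\ref{cl:delay1.1}--\ref{cl:delay1.2}. No coordinate change is needed here: problem (\ref{e:crm1}) is equivalent to the Hamiltonian problem (\ref{e:crm2}) with $H(\lambda,z)=H_0(z)+\lambda\hat H(z)$, with boundary symmetry $z(t+\tau)=J_nz(t)$. Since $H_0$ and $\hat H$ are $J_n$-invariant, so is each $H(\lambda,\cdot)$. Hence $H$ satisfies Assumption~\ref{ass:Crm1}, and (\ref{e:crm0A}) gives $\nabla_zH(\lambda,0)=0$, so $z\equiv 0$ is a trivial branch.

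\textbf{Step 1 (hypotheses of Corollary~1.15 of \cite{Lu11}).} I will verify:
\begin{itemize}
\item[$\bullet$] the Hamiltonian is affine in $\lambda$, with $J_n$-invariant parts $H_0,\hat H$;
\item[$\bullet$] the symmetry matrix $J_n$ (equivalently $-J_n=J_n^{-1}$, depending on the convention used for $\nu_{\tau,\cdot}$) is symplectic and orthogonal;
\item[$\bullet$] ${\rm Ker}(\pm J_n-I_{2n})=\{0\}$, so the kernel condition in \cite[Corollary~1.15]{Lu11} holds automatically, with no nondegeneracy restriction on $\mu$;
\item[$\bullet$] the definiteness hypothesis, which here is placed on $\nabla^2H_0(0)$, the $\lambda$-independent part.
\end{itemize}
This last point is where I expect trouble. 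Corollary~1.15 (as used in Theorems~\ref{th:bif-per2Delay++} and~\ref{th:bif-per2Delay++G}) requires the coefficient of $\lambda$ to be definite. First I will check whether the corollary as stated in \cite{Lu11} covers the case where the constant term is definite.

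If it does not, I will argue discreteness of $\Sigma_\tau$ directly. Kernel elements of the linearization at $\lambda$ are $\tau$-periodic-type solutions of the linear problem. Because $\nabla^2H_0(0)$ is definite, the quadratic form $\int(J_n\dot z,z)+\int(\nabla^2H_0(0)z,z)$ is a Fredholm form relative to $\int(\nabla^2\hat H(0)z,z)$, which is compact. The degeneracy values $\lambda$ are then the reciprocal eigenvalues of a compact self-adjoint operator, and these accumulate only at $\lambda=\infty$, so $\Sigma_\tau$ is discrete. Equivalently, one can rescale so that $\lambda\hat H$ plays the role of the definite term.

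\textbf{Step 2 (bifurcation alternatives).} With (I) in hand, the monotonicity of the index across each $\mu\in\Sigma_\tau$ that Corollary~1.15 relies on (the index jumps by exactly the nullity, the sign depending on the definiteness) yields the alternatives (i*), (ii*), and, when $\nu_{\tau,-J_n}(\Upsilon_\mu)\ge3$, (iii*) and (iv*) for (\ref{e:crm2}). Here $n^++n^-\ge\nu_{\tau,-J_n}(\Upsilon_\mu)/2$, and the action functional is $\int_0^\tau[\frac12(J_n\dot z,z)+H(\lambda,z)]dt$.

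\textbf{Step 3 (transfer to (\ref{e:crm1})).} I will write $z=(x^\top,y^\top)^\top$. By Claim~\ref{cl:crm1}, $y(t)=x(t-\tau)$ and $x$ solves (\ref{e:crm1}); $\mathbb R$-distinctness and $C^1_{\rm loc}$ convergence pass to $x$. For (iv), the bound $\|z|_{[0,\tau]}\|_{C^1}<\varepsilon$ controls $x$ on $[0,\tau]$, as required. Integrating by parts exactly as at the end of the proof of Theorem~\ref{th:bif-per3DelayCrm}, the action equals
$$-(x(0),x(\tau))+\int_0^\tau(\dot x(t),x(t-\tau))\,dt+\int_0^\tau H(\lambda,x(t),x(t-\tau))\,dt,$$
so unequal actions give the displayed inequality.

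The main obstacle is Step 1's placement of definiteness on $H_0$ rather than $\hat H$; everything else is routine bookkeeping copied from the earlier proofs.
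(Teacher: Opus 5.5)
\paragraph{Where the proposal breaks.}
Your overall route is the one the paper intends: apply Corollary~1.15 of \cite{Lu11} to (\ref{e:crm2}) with $H=H_0+\lambda\hat H$, then pass back to (\ref{e:crm1}) through Claim~\ref{cl:crm1} (``analogously to the proof of Theorem~\ref{th:bif-per2Delay++}''). Your Step~3, including the integration by parts for (iv), is correct. The gap is your fallback in Step~1, and it cannot be repaired, because with definiteness imposed only on $\nabla^2H_0(0)$ the statement is false.

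Take $\hat H\equiv 0$ and $H_0(z)=\frac{e}{2}|z|^2$, with $\tau e=\frac{\pi}{2}$ (or $\tau e=\frac{3\pi}{2}$ if you keep the index $\nu_{\tau,-J_n}$ as written). All hypotheses hold. Then $\Upsilon_\lambda(t)=e^{teJ_n}$ does not depend on $\lambda$, and its nullity is $2n$ for every $\lambda$, so $\Sigma_\tau=\mathbb{R}$ is not discrete. Adding $|z|^4$ to $H_0$ destroys (II) as well. Solutions of (\ref{e:crm2}) are then $z(t)=e^{t(e+4|z_0|^2)J_n}z_0$, and $z(\tau)=J_nz_0$ forces $\tau(e+4|z_0|^2)\in\frac{\pi}{2}+2\pi\mathbb{Z}$. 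This is impossible for small $|z_0|>0$, so no $\lambda$ admits small nonzero solutions.

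Your three arguments fail for the following reasons.
\begin{itemize}
\item The compact-operator argument silently needs the linearized operator to be invertible at $\lambda=0$, or at least at some $\lambda$; equivalently, the entire function $\lambda\mapsto\det(\Upsilon_\lambda(\tau)-J_n)$ must not vanish identically. Definiteness of the constant term gives no such thing, and in the example it fails for every $\lambda$.
\item The rescaling $H\mapsto H/\lambda$ is a time reparametrization that turns the period $\tau$ into $\lambda\tau$, so it is not a reduction to Corollary~1.15.
\item The index jump by exactly the nullity across $\mu$, which you invoke in Step~2, comes from definiteness of $\partial_\lambda\nabla^2_zH(\lambda,0)=\nabla^2\hat H(0)$, not of $\nabla^2H_0(0)$.
\end{itemize}

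\paragraph{How to fix it.}
Read the hypothesis as definiteness of $\nabla^2\hat H(0)$, the coefficient of $\lambda$. This is exactly the role played by $\nabla^2V_1(0)$ in Theorem~\ref{th:bif-per2Delay++}. With that reading, Corollary~1.15 of \cite{Lu11} applies verbatim, since ${\rm Ker}(J_n-I_{2n})=\{0\}$ means no extra nondegeneracy at $\mu$ is needed. Your Steps~2--3 then give the result, and this is the paper's proof.

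You should also stop hedging on the convention. Since (\ref{e:crm2}) has $z(t+\tau)=J_nz(t)$, the relevant nullity is $\nu_{\tau,J_n}(\Upsilon_\mu)=\dim{\rm Ker}(\Upsilon_\mu(\tau)-J_n)$, as in the proof of Theorem~\ref{th:bif-per3DelayCrm}. It is not interchangeable with $\nu_{\tau,-J_n}$. By Theorems~\ref{th:PIndex2} and~\ref{th:PIndex1}, these two detect $2\tau e_k\in(4\mathbb{Z}+1)\pi$ and $2\tau e_k\in(4\mathbb{Z}+3)\pi$ respectively. At a $\mu$ with $\nu_{\tau,-J_n}(\Upsilon_\mu)>0=\nu_{\tau,J_n}(\Upsilon_\mu)$, problem (\ref{e:crm2}) does not bifurcate at all, by \cite[Theorem~1.5(I)]{Lu11}. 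So (II) must be formulated with $\nu_{\tau,J_n}$.
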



\section{Bifurcation near the trivial equilibrium of system (\ref{e:Bifu-distributedDelay1})}\label{sec:delay5}

Under Assumption~\ref{ass:Distri1Delay1}, let $H(\lambda,z) = 2G(\lambda,x) + F(\lambda, y)$
for $z=(x^\top,y^\top)^\top\in\mathbb{R}^{2n}$.
By the proof of Lemma~2.1 in \cite{LiuWZ25} we have:
\begin{itemize}
\item[$\bullet$]   If a differentiable curve
$x:\mathbb{R}\to \mathbb{R}^n$ satisfies (\ref{e:Bifu-distributedDelay1}),
setting $y(t):=\int_0^\tau
\nabla_2 G(\lambda, x(t-s)) \mathrm{d}s$, then  $z(t)=(x(t)^\top, y(t)^\top)^\top$
 satisfies
\begin{equation}\label{e:distributedDelay2}
\dot{z}(t) = J_n\nabla_2 H(\lambda,  z(t))\quad\hbox{and}\quad
z(t+\tau) = - z(t)\;\forall t.
\end{equation}
\item[$\bullet$] Conversely, if $z(t)=(x(t)^\top, y(t)^\top)^\top$ satisfies (\ref{e:distributedDelay2}), then
 $y(t)=\int_0^\tau\nabla_2 G(\lambda, x(t-s)) \mathrm{d}s$ and $x(t)$ satisfies
(\ref{e:Bifu-distributedDelay1}).
\end{itemize}
In particular, by Assumption~\ref{ass:Distri1Delay1}(iii), the zero map $\mathbb{R}\to\mathbb{R}^{2n}$
(denoted by $0$ by a slight abuse of notation) satisfies (\ref{e:distributedDelay2}) for any $\lambda\in\Lambda$.


\begin{theorem}\label{th:DistribifDelay1}
Under Assumption~\ref{ass:Distri1Delay1}, with $n=1$, the distributed-delay differential system (\ref{e:Bifu-distributedDelay1})
becomes
\begin{equation}\label{e:Bifu-distributedDelay1=n}
\left\{\begin{array}{ll}
\dot{x}(t) = -\frac{\partial F}{\partial x}\left(\lambda,  \displaystyle\int_0^\tau
\frac{\partial G}{\partial x}(\lambda, x(t-s)) \,\mathrm{d}s\right), & x(t) \in \mathbb{R},
\\
x(t+\tau) = -x(t) \; \forall t.
\end{array}\right.
\end{equation}
Let $f(\lambda)=\frac{\partial^2 F}{\partial x^2}(\lambda, 0)$
and $g(\lambda)=\frac{\partial^2 G}{\partial x^2}(\lambda, 0)$.
\begin{enumerate}
\item[\rm (I)]{\rm (\textsf{Necessary condition}):}
If $(\mu,0)$ with $\mu\in\Lambda$ is a bifurcation point of
the problem (\ref{e:Bifu-distributedDelay1=n}), then
$\tau\sqrt{2g(\mu)f(\mu)}\in \pi(2\mathbb{N}-1)$.

\item[\rm (II)]{\rm (\textsf{Sufficient condition}):}
Let $\mu$ be an interior point of $\Lambda$  satisfying
the following conditions:
\begin{enumerate}
\item[\rm (a)] $f(\mu)g(\mu)>0$ and
$\tau\sqrt{2g(\mu)f(\mu)}\in\pi(2\mathbb{N}-1)$.
\item[\rm (b)] There exist sequences $(\lambda_m^\pm)_m\subset\Lambda$  converging to
$\mu$ such that
$f(\lambda_m^-)g(\lambda_m^-)<f(\mu)g(\mu)<f(\lambda_m^+)g(\lambda_m^+)$ for all $m\in\mathbb{N}$.
 \end{enumerate}
Then $(\mu,0)$ is a bifurcation point of (\ref{e:Bifu-distributedDelay1=n}).

\item[\rm (III)]{\rm (\textsf{Alternative bifurcations of
Fadell-Rabinowitz type }):}
    Let the assumptions of (II) hold, with condition (b)  replaced by the stronger condition:
    \begin{enumerate}
\item[\rm (sb)] In a deleted neighborhood of $\mu$,
 $f(\lambda)g(\lambda)-f(\mu)g(\mu)\ne 0$ and changes sign
 as $\lambda$ across $\mu$.
 \end{enumerate}
 (This can be satisfied if $g(\lambda)f(\lambda)$ has a nonzero derivative
 at $\lambda=\mu$.)
Then, for the problem (\ref{e:Bifu-distributedDelay1=n}),
there exist the following  alternatives:
\begin{itemize}
	\item[\rm (i)] The problem (\ref{e:Bifu-distributedDelay1=n}) with $\lambda=\mu$ admits a sequence of nonzero distinct solutions
	$\{x^l\}^{\infty}_{l=1}$  such that $x^l\to 0$ in $C^1_{\rm loc}(\mathbb{R},\mathbb{R})$
	as $l\to\infty$.
	\item[\rm (ii)] There exist left and right  neighborhoods $\Lambda^-$ and $\Lambda^+$ of $\mu$ in $\Lambda$
	and nonnegative integers $n^+$ and $n^-$ satisfying $n^++n^-\ge 1$, such that for $\lambda\in\Lambda^-\setminus\{\mu\}$ (resp. $\lambda\in\Lambda^+\setminus\{\mu\}$),
	(\ref{e:Bifu-distributedDelay1=n}) with parameter  $\lambda$  has at least $n^-$ (resp. $n^+$) $\mathbb{R}$-distinct  nontrivial solutions
	$x_\lambda^i$ ($i=1,\dots,n^-$ (resp. $n^+$))
	which converge to zero in $C^1_{\rm loc}(\mathbb{R}, \mathbb{R})$
	as $\lambda\to\mu$.
\end{itemize}
\end{enumerate}
\end{theorem}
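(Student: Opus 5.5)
The plan is to pass to the planar Hamiltonian system (\ref{e:distributedDelay2}) with $n=1$, twisting matrix $M=-I_2$ and Hamiltonian $H(\lambda,x,y)=2G(\lambda,x)+F(\lambda,y)$. By the correspondence recalled before the theorem (the proof of Lemma~2.1 in \cite{LiuWZ25}), solutions of (\ref{e:Bifu-distributedDelay1=n}) and of (\ref{e:distributedDelay2}) correspond. The correspondence is $x\mapsto z=(x,y)$ with $y(t)=\int_0^\tau \partial_xG(\lambda,x(t-s))\,ds$, and in the converse direction $z\mapsto x$ is simply the first component. Since $\partial_xG(\lambda,\cdot)$ is $C^1$ with continuous dependence on $\lambda$, $C^1_{\rm loc}$-smallness of $x$ forces $C^1_{\rm loc}$-smallness of $y$, and conversely. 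Time shifts commute with this correspondence, so $\mathbb{R}$-distinctness is preserved. Hence bifurcation points and the alternatives (i), (ii) transfer in both directions, as in the proofs of Theorems~\ref{th:bif-per3Delay} and \ref{th:bif-per3DelayCrm}. Note that $-I_2$ is orthogonal with ${\rm Ker}(-I_2-I_2)=\{0\}$, so condition (a) of \cite[Theorem~1.14]{Lu11} holds automatically. No evenness of $H$ is needed for this theorem; $F$ is not assumed even anyway.

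Next I will compute the linearization. We have $\nabla_2^2H(\lambda,0)={\rm diag}(2g(\lambda),f(\lambda))$, so the linear system reads $\dot x=-f y$, $\dot y=2g x$, whence $\ddot x=-2fg\,x$. If $fg\le 0$, there is no nonzero solution with $z(t+\tau)=-z(t)$. Indeed, solutions are then exponential or polynomial, and the only bounded ones are constants, which are not antiperiodic. If $2fg=\omega^2>0$, antiperiodic solutions exist iff $\omega\tau\in\pi(2\mathbb{N}-1)$, and then the whole $2$-dimensional solution space qualifies. Thus $\nu_{\tau,-I_2}(\Upsilon_\lambda)\in\{0,2\}$, and it equals $2$ exactly when $\tau\sqrt{2f(\lambda)g(\lambda)}\in\pi(2\mathbb{N}-1)$. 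Part (I) then follows from \cite[Theorem~1.5(I)]{Lu11}.

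For (II) and (III) I need the index $i_{\tau,-I_2}(\Upsilon_\lambda)$ near $\mu$. Under (a), $f$ and $g$ keep their common sign near $\mu$ by continuity. A symplectic diagonal rescaling $(x,y)\mapsto(ax,a^{-1}y)$ with $a^2=\sqrt{|f|/(2|g|)}$ conjugates the Hessian to $\pm\omega I_2$, so $\Upsilon_\lambda(t)$ is conjugate to $e^{\pm t\omega(\lambda)J}$. The Maslov-type $(-1)$-index of such a rotation path is explicit. It counts, with sign $\pm$, the parameters $t\in(0,\tau)$ with $\omega t\in\pi(2\mathbb{Z}+1)$, twice each, plus a fixed normalizing constant. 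Hence as $\omega(\lambda)\tau$ crosses an odd multiple of $\pi$, the index jumps by exactly $2=\nu_{\tau,-I_2}(\Upsilon_\mu)$ in one direction or the other, and $\nu=0$ on both sides. Under (b), $\omega(\lambda_m^\pm)\tau$ lie on opposite sides of the critical value. Therefore the intervals $[i,i+\nu]$ at $\lambda_m^-$ and $\lambda_m^+$ are disjoint, and \cite[Theorem~1.5(II)]{Lu11} gives (II). Under (sb) the same computation holds on whole one-sided neighbourhoods, which verifies condition (b) of \cite[Theorem~1.14]{Lu11}. Its first part then yields (i) and (ii) with $n^++n^-\ge\nu/2=1$.

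The main obstacle I expect is pinning down the index formula, and in particular the sign conventions for the negative case $f,g<0$. The safest route is to reuse the computation in Theorem~\ref{th:PIndex1} or Theorem~\ref{th:PIndex2} for $n=1$ in place of a fresh computation. Failing that, I would argue only the jump by $\pm2$ through the local crossing form, which here is $\pm$-definite. This suffices, because only the jump matters for both theorems.
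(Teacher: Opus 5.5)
\textbf{Gap: evenness of $F$.} Your reduction, the computation $\nu_{\tau,-I_2}(\Upsilon_\lambda)\in\{0,2\}$, and the appeal to \cite[Theorems~1.5 and~1.14]{Lu11} follow the paper's proof. However, your remark that no evenness of $H$ is needed is false, and this is exactly the step that fails. With $M=-I_2$, the invariance $H(\lambda,Mz)=H(\lambda,z)$ is required for applying those abstract results. The paper verifies this invariance for the Hamiltonians of Sections~\ref{sec:delay1}--\ref{sec:delay4}. Here it \emph{is} evenness of $H(\lambda,x,y)=2G(\lambda,x)+F(\lambda,y)$, i.e.\ of $F(\lambda,\cdot)$. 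Without it, a critical point of the action on $[0,\tau]$ with $z(\tau)=-z(0)$ extends to a $C^1$ solution of (\ref{e:distributedDelay2}) on $\mathbb{R}$ only if $\nabla_2H(\lambda,-z)=-\nabla_2H(\lambda,z)$ along it. Time shifts also fail to preserve the action.

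The statement itself then fails. For any solution, $y(t)=\int_0^\tau\frac{\partial G}{\partial x}(\lambda,x(t-s))\,ds$ satisfies $y(t+\tau)=-y(t)$. Comparing $\dot x(t+\tau)=-\dot x(t)$ with the equation gives $\frac{\partial F}{\partial x}(\lambda,y(t))+\frac{\partial F}{\partial x}(\lambda,-y(t))=0$ for all $t$. Take $\tau=1$, $G(\lambda,x)=\tfrac12x^2$, $F(\lambda,x)=\lambda(\tfrac12x^2+x^3)$, and $\Lambda=\mathbb{R}$. Here Assumption~\ref{ass:Distri1Delay1}(iii) is read as $\nabla_2F(\lambda,0)=\nabla_2G(\lambda,0)=0$; the vanishing Hessians there must be a misprint, since otherwise (a) never holds. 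The constraint becomes $6\lambda y(t)^2=0$, so for every $\lambda\ne0$ we get $y\equiv0$, then $\dot x\equiv0$, and $x\equiv0$. Yet $f(\lambda)g(\lambda)=\lambda$, so $\mu=\pi^2/2$ satisfies (a), (b) and (sb), and (II) and (III) are false there. Part (I) is unaffected, since it follows from the implicit function theorem for the boundary value problem on $[0,\tau]$. So the hypothesis that $F(\lambda,\cdot)$ is even must be added, as holds for $F=\lambda\cos x$ in the introduction. The paper's own proof also applies \cite{Lu11} to (\ref{e:distributedDelay2}) without checking this.

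\textbf{With $F$ even.} Once $F(\lambda,\cdot)$ is assumed even, your proposal is essentially the paper's proof. The paper obtains the index from Lemma~\ref{lem:distri2}. It deforms $\gamma_{2g,f}$ to the rotation $R(\omega t)$ through paths with constant nullity, then applies Lemma~\ref{lem:Dong06}(ii). Your conjugation by $\operatorname{diag}(a,a^{-1})$ does the same job. It is legitimate even though this matrix is not orthogonal, so the second part of Lemma~\ref{lem:Dong06+} does not apply directly. The reason is that $-I_2$ is central and $i_{\tau,-I_2}=i^{-I_2}_\tau$ by (\ref{e:Distri-indexrelation2}), and the first part of Lemma~\ref{lem:Dong06+} holds for arbitrary symplectic $Q$.

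For condition (b) of \cite[Theorem~1.14]{Lu11}, a jump of $2$ across $\mu$ is not by itself enough. You need the one-sided values to be exactly $i_{\tau,-I_2}(\Upsilon_\mu)$ and $i_{\tau,-I_2}(\Upsilon_\mu)+2$. This follows from the jump equalling $\nu_{\tau,-I_2}(\Upsilon_\mu)$ via Corollary~\ref{cor:Long142B}, or directly from Lemma~\ref{lem:distri2}. That lemma also settles your sign worry. For $f,g<0$ the count runs over $(0,\tau]$, so $i_{\tau,-I_2}(\Upsilon_\mu)=-2j$ when $\tau\sqrt{2f(\mu)g(\mu)}=(2j-1)\pi$.
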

\begin{proof}[\bf Proof]
\noindent\textbf{Step 1}(\textsf{Proof of (I)}).
 Let $(\mu,0)$ with $\mu\in\Lambda$ be a bifurcation point of
the problem (\ref{e:Bifu-distributedDelay1=n}).
As argued in the paragraph preceding Theorem~\ref{th:DistribifDelay1},
this very $\mu$ yields a bifurcation point $(\mu,0)$ of
(\ref{e:distributedDelay2}).
Hence, \cite[Theorem~1.5(I)]{Lu11} yields
$\nu_{\tau,-I_{2}}(\gamma_\mu)>0$, where
${\gamma}_\lambda$  is the fundamental matrix solution  of
$\dot{z}(t) = J_1\nabla^2_{2}H(\lambda, 0)\, z(t)$ with $\gamma_\lambda(0)=I_{2}$.
 In view of the block-diagonal structure
$\nabla^2_{2}H(\lambda, 0)={\rm diag}(2g(\lambda), f(\lambda))$,
 $\gamma_\lambda$ agrees with $\gamma_{2g(\lambda), f(\lambda)}$ from Lemma~\ref{lem:distri2}.
 The conclusion in (I) now follows since Lemma~\ref{lem:distri2} forces
  $\sqrt{2g(\mu)f(\mu)}\tau$ to lie in $\pi(2\mathbb{N}-1)$.

\noindent\textbf{Step 2}(\textsf{Proof of (III)}).
It suffices to prove that \cite[Theorem~1.14]{Lu11} can be applied to (\ref{e:distributedDelay2}) with $n=1$.
Since $M=-I_2$ implies $\operatorname{Ker}(M-I_{2})=\{0\}$,
 condition (a) in \cite[Theorem~1.14]{Lu11} is satisfied.
By condition (a),
 Lemma~\ref{lem:distri2} gives
\begin{equation}\label{e:distributedDelay3}
 \nu_{\tau,-I_{2}}(\gamma_\mu)=2\quad\text{and}\quad   i_{\tau,-I_{2}}(\gamma_\mu)=
 \begin{cases}
2(j-1),\quad&\hbox{if $g(\mu)>0$},\\
-2j,\quad&\hbox{if $g(\mu)<0$}.
\end{cases}
 \end{equation}
In order to compute $i_{\tau,-I_{2}}(\gamma_\lambda)$ and
$\nu_{\tau,-I_{2}}(\gamma_\lambda)$ for $\lambda$ near $\mu$,
note that condition (sb) may be precisely formulated as follows:
  There exists a $\delta>0$ such that
  either of the following holds:
  \begin{itemize}
\item[\rm (sb.1)] $g(\lambda)f(\lambda)< g(\mu)f(\mu)<g(\lambda')f(\lambda')$
for all $(\lambda, \lambda')\in (\mu-\delta,\mu)\times(\mu,\mu+\delta)$.

  \item[\rm (sb.2)] $g(\lambda)f(\lambda)> g(\mu)f(\mu)>g(\lambda')f(\lambda')$
for all $(\lambda, \lambda')\in (\mu-\delta,\mu)\times(\mu,\mu+\delta)$.
  \end{itemize}

\textbf{Case $g(\mu)>0$}. Then $f(\mu)>0$ as well. Since both $g(\lambda)$ and
$f(\lambda)$ are continuous, by further shrinking $\delta>0$ if necessary
 we may assume that $g(\lambda)>0$ and $f(\lambda)>0$ for all $\lambda\in (\mu-\delta,\mu+\delta)$ and that 
 (sb.1) and (sb.2) imply the following two claims, respectively:
 \begin{itemize}
\item[$\bullet$] $(2j-2)\pi<\tau\sqrt{2g(\lambda)f(\lambda)}<(2j-1)\pi
<\tau\sqrt{2g(\lambda')f(\lambda')}<2j\pi$
for all $(\lambda, \lambda')\in (\mu-\delta,\mu)\times(\mu,\mu+\delta)$.

\item[$\bullet$] $(2j-2)\pi<\tau\sqrt{2g(\lambda')f(\lambda')}<(2j-1)\pi<\tau\sqrt{2g(\lambda)f(\lambda)}<2j\pi$
for all $(\lambda, \lambda')\in (\mu-\delta,\mu)\times(\mu,\mu+\delta)$.
\end{itemize}

 Clearly, in both cases we have $\nu_{\tau,-I_2}(\gamma_\lambda)=0$
 for all $0<|\lambda-\mu|<\delta$.
In the first case it holds
\begin{align*}
&j-\frac{3}{2}<\frac{\tau\sqrt{2g(\lambda)f(\lambda)}}{2\pi}-\frac{1}{2}<j-1,\quad
\forall \lambda\in (\mu-\delta,\mu),\\
&j-1<\frac{\tau\sqrt{2g(\lambda)f(\lambda)}}{2\pi}-\frac{1}{2}<j-\frac{1}{2},\quad
\forall \lambda\in (\mu,\mu+\delta).
\end{align*}
It follows from these and Lemma~\ref{lem:distri2} that
\begin{equation}\label{e:distributedDelay4}
    i_{\tau,-I_{2}}(\gamma_\lambda)=
 \begin{cases}
 2j-2=i_{\tau,-I_{2}}(\gamma_\mu),&\forall \,\lambda\in (\mu-\delta,\mu),\\
2j=i_{\tau,-I_{2}}(\gamma_\mu)+\nu_{\tau,-I_{2}}(\gamma_\mu),&\forall \,\lambda\in (\mu,\mu+\delta).
\end{cases}
 \end{equation}
Similarly, in the second case we obtain
\begin{equation}\label{e:distributedDelay5}
    i_{\tau,-I_{2}}(\gamma_\lambda)=
 \begin{cases}
2j=i_{\tau,-I_{2}}(\gamma_\mu)+\nu_{\tau,-I_{2}}(\gamma_\mu),&\forall\,\lambda\in (\mu-\delta,\mu),\\
2j-2=i_{\tau,-I_{2}}(\gamma_\mu),&\forall \,\lambda\in (\mu,\mu+\delta).
\end{cases}
 \end{equation}

\textbf{Case $g(\mu)<0$}. Thus $f(\mu)<0$ as well, and
 by further shrinking $\delta>0$ if necessary
 we may assume that $g(\lambda)<0$ and $f(\lambda)<0$ for all $\lambda\in (\mu-\delta,\mu+\delta)$ and that (sb.1) and (sb.2) imply
 the following two claims, respectively:
 \begin{itemize}
\item[$\bullet$] $(-2j)\pi<-\tau\sqrt{2g(\lambda')f(\lambda')}<(-2j+1)\pi<-\tau\sqrt{2g(\lambda)f(\lambda)}<(-2j+2)\pi$
for all $(\lambda, \lambda')\in (\mu-\delta,\mu)\times(\mu,\mu+\delta)$.
\item[$\bullet$] $(-2j)\pi<-\tau\sqrt{2g(\lambda)f(\lambda)}<(-2j+1)\pi
<-\tau\sqrt{2g(\lambda')f(\lambda')}<(-2j+2)\pi$
for all $(\lambda, \lambda')\in (\mu-\delta,\mu)\times(\mu,\mu+\delta)$.
\end{itemize}
 In each case, $\nu_{\tau,-I_2}(\gamma_\lambda)=0$
 whenever  $0<|\lambda-\mu|<\delta$.
As above, we also obtain
\begin{equation}\label{e:distributedDelay6}
     i_{\tau,-I_{2}}(\gamma_\lambda)=
 \begin{cases}
 -2j+2=i_{\tau,-I_{2}}(\gamma_\mu)+\nu_{\tau,-I_{2}}(\gamma_\mu),&\forall\,\lambda\in (\mu-\delta,\mu),\\
-2j=i_{\tau,-I_{2}}(\gamma_\mu),&\forall \,\lambda\in (\mu,\mu+\delta)
\end{cases}
 \end{equation}
 in the first case, and
\begin{equation}\label{e:distributedDelay7}
   i_{\tau,-I_{2}}(\gamma_\lambda)=
 \begin{cases}
-2j=i_{\tau,-I_{2}}(\gamma_\mu),&\forall \,\lambda\in (\mu-\delta,\mu),\\
-2j+2=i_{\tau,-I_{2}}(\gamma_\mu)+\nu_{\tau,-I_{2}}(\gamma_\mu),&\forall \,\lambda\in (\mu,\mu+\delta)
\end{cases}
 \end{equation}
in the second case.

By (\ref{e:distributedDelay4}), (\ref{e:distributedDelay5}),
(\ref{e:distributedDelay6}) and (\ref{e:distributedDelay7}),
we apply the first part of Theorem~1.14 of  \cite{Lu11} to the problem (\ref{e:distributedDelay2}) with $n=1$ and obtain
the following alternatives:
 \begin{enumerate}
\item[\rm (i')] The problem (\ref{e:distributedDelay2}) with $n=1$
 and $\lambda=\mu$ has a sequence of $\mathbb{R}$-distinct solutions,
$z^l\ne 0$ ($l=1,2,\dots$) such that $\|z^l|_I\|_{C^1}\to 0$ for any compact interval $I\subset\mathbb{R}$.
\item[\rm (ii')] There exist left and right  neighborhoods $\Lambda^-$ and $\Lambda^+$ of $\mu$ in $\Lambda$
and nonnegative integers $n^+$ and $n^-$ satisfying $n^++n^-\ge \nu_{\tau,-I_2}(\gamma_\mu)/2=1$,
such that for $\lambda\in\Lambda^-\setminus\{\mu\}$ (resp. $\lambda\in\Lambda^+\setminus\{\mu\}$),
 problem (\ref{e:distributedDelay2}) with $n=1$ and parameter  $\lambda$  has at least $n^-$ (resp. $n^+$) $\R$-distinct
 solutions, $z_\lambda^i\ne 0$ ($i=1,\dots,n^-$ resp. $n^+$)
 whose restrictions to $[0,\tau]$  converge to zero in $C^1([0,\tau];\mathbb{R}^{2})$  as $\lambda\to\mu$.
\end{enumerate}
By the arguments  preceding Theorem~\ref{th:DistribifDelay1},
they are equivalent  to (i) and (ii) in Theorem~\ref{th:DistribifDelay1},
respectively.

\noindent\textbf{Step 3}(\textsf{Proof of (II)}).
By slightly modifying the proof of (II) in Theorem~\ref{th:bif-per3Delay} or \ref{th:bif-per3DelayII}, we obtain the desired conclusion.
\end{proof}

We now turn to the case $n>1$, for which we need:

\begin{lemma}\label{lem:Kato}
Let  $A(\lambda)$ and $B(\lambda)$ be continuous, real symmetric $n\times n$ matrix-valued functions on a topological space $\Lambda$
that commute for all $\lambda\in\Lambda$. For some $\mu\in\Lambda$,
suppose that $A(\mu)$ has $n$ distinct eigenvalues $a_j(\mu)$, $j=1,\cdots,n$, and
therefore there exists a $\varepsilon>0$ such that intervals
$$
[a_j(\mu)-2\varepsilon,  a_j(\mu)+2\varepsilon],\;   j=1,\cdots,n
$$
are pairwise disjoint. For each $j=1,\cdots,n$, fix a unit eigenvector $v_j$ of $A(\mu)$ corresponding to the eigenvalue $a_j(\mu)$.
Then there exists a neighborhood $\Lambda_0$ of $\mu$ in $\Lambda$ such that
for each $\lambda\in\Lambda_0$, $A(\lambda)$ has a unique eigenvalue in
$[a_j(\mu)-\varepsilon,  a_j(\mu)+\varepsilon]$, denoted by $a_j(\lambda)$, $j=1,\cdots,n$.
Denote by $P_j(\lambda)$ the orthogonal projection onto the eigenspace of $A(\lambda)$
corresponding to $a_j(\lambda)$, $j=1,\cdots,n$.
By the argument in the proof of Proposition~B.1 in \cite{Lu12}, these projections depend continuously on $\lambda$.
As shown in \cite[II. \S3.3]{Kato76}, the vectors defined by
$$
v_j(\lambda):=((P(\lambda)v_j, v_j)_{\mathbb{R}^n})^{-1}P(\lambda)v_j
$$
are normalized eigenvectors of  $A(\lambda)$  corresponding to the eigenvalue $a_j(\lambda)$, for $j=1,\cdots,n$.
Clearly, these eigenvectors are continuous in $\lambda$, and hence the functions
$$
a_j(\lambda)=(A(\lambda)v_j(\lambda), v_j(\lambda))_{\mathbb{R}^n},\quad
j=1,\cdots,n,
$$
 are continuous real functions on $\Lambda_0$.
Consequently, since $A(\lambda)$ and $B(\lambda)$ commute,
each $v_j(\lambda)$ is a normalized eigenvector of $B(\lambda)$  with eigenvalue
$$
b_j(\lambda):=(B(\lambda)v_j(\lambda), v_j(\lambda))_{\mathbb{R}^n},\quad
j=1,\cdots,n,
$$
which imply that $b_1(\lambda),\cdots, b_n(\lambda)$ are continuous real functions on $\Lambda_0$.
\end{lemma}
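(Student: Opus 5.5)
The plan is to verify the assertions of Lemma~\ref{lem:Kato} in the order they are stated, using only elementary spectral perturbation theory for symmetric matrices.

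\textbf{Step 1: isolating the eigenvalues.} Order the eigenvalues of $A(\lambda)$ nondecreasingly. By the Hoffman--Wielandt type inequality \cite[Corollary~6.3.8]{HorJ} (as in (\ref{e:EigenContin})), the ordered eigenvalues depend continuously on $\lambda$. Since $A(\mu)$ has $n$ simple eigenvalues in pairwise disjoint intervals, there is a neighborhood $\Lambda_0$ of $\mu$ on which $\|A(\lambda)-A(\mu)\|_2<\varepsilon$. On $\Lambda_0$ the $j$-th ordered eigenvalue lies in $[a_j(\mu)-\varepsilon,a_j(\mu)+\varepsilon]$. Counting $n$ eigenvalues in $n$ disjoint intervals shows that each interval contains exactly one eigenvalue, necessarily simple; this is $a_j(\lambda)$.

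\textbf{Step 2: continuity of the projections and eigenvectors.} Write $P_j(\lambda)$ via the Riesz formula
\[
P_j(\lambda)=-\frac{1}{2\pi\sqrt{-1}}\oint_{|\zeta-a_j(\mu)|=3\varepsilon/2}(A(\lambda)-\zeta)^{-1}\,d\zeta .
\]
The contour stays at distance at least $\varepsilon/2$ from the spectrum of $A(\lambda)$ for $\lambda\in\Lambda_0$. Hence the resolvent is uniformly bounded there and depends continuously on $\lambda$, which gives continuity of $P_j$; this is the content of \cite[Proposition~B.1]{Lu12}. Since $P_j(\mu)v_j=v_j$, we have $(P_j(\lambda)v_j,v_j)\to1$, so after shrinking $\Lambda_0$ this quantity is positive. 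Therefore $v_j(\lambda)=(P_j(\lambda)v_j,v_j)^{-1}P_j(\lambda)v_j$ is a well-defined, continuous, nonzero vector in the one-dimensional range of $P_j(\lambda)$, hence an eigenvector for $a_j(\lambda)$.

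The normalization is Kato's $(v_j(\lambda),v_j)=1$, not unit length. So the Rayleigh quotient must be divided by $|v_j(\lambda)|^2$, or one replaces $v_j(\lambda)$ by $v_j(\lambda)/|v_j(\lambda)|$; continuity is preserved either way. With this correction, $a_j(\lambda)=(A(\lambda)v_j(\lambda),v_j(\lambda))$ is continuous.

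\textbf{Step 3: commuting matrices.} Because $A(\lambda)B(\lambda)=B(\lambda)A(\lambda)$, the matrix $B(\lambda)$ preserves each eigenspace of $A(\lambda)$. The eigenspace for $a_j(\lambda)$ is one-dimensional, so $B(\lambda)v_j(\lambda)$ is a multiple of $v_j(\lambda)$. Hence $v_j(\lambda)$ is an eigenvector of $B(\lambda)$. Its eigenvalue $b_j(\lambda)$ equals the (normalized) Rayleigh quotient, which is continuous as a composition of continuous maps.

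The only delicate point is Step~1, ensuring simplicity persists uniformly on a neighborhood. It is handled by the eigenvalue perturbation bound together with the counting argument.
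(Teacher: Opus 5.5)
Your proposal is correct and follows the same route as the argument built into the lemma's statement: isolate the simple eigenvalues, establish continuity of the spectral projections $P_j(\lambda)$, take Kato's vectors $v_j(\lambda)$, and use commutation for $B(\lambda)$. You supply the details the paper leaves to citations. You are also right that Kato's normalization gives $(v_j(\lambda),v_j)_{\mathbb{R}^n}=1$ and $|v_j(\lambda)|^2=(P_j(\lambda)v_j,v_j)_{\mathbb{R}^n}^{-1}$, so the Rayleigh-quotient formulas for $a_j(\lambda)$ and $b_j(\lambda)$ in the statement must be divided by $|v_j(\lambda)|^2$ (or the vectors renormalized to unit length); this leaves all the continuity conclusions intact.
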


\begin{theorem}\label{th:DistribifDelay2}
	Let Assumption~\ref{ass:Distri1Delay1} hold with $n>1$.
\begin{enumerate}
\item[\rm (I)]{\rm (\textsf{Necessary condition}):}
If $(\mu,0)$ with $\mu\in\Lambda$ is a bifurcation point of
the problem (\ref{e:Bifu-distributedDelay1}),
and if $\nabla^2_2F(\mu, 0)$ and $\nabla^2_2G(\mu, 0)$ commute,
so that
\begin{align*}
\nabla^2_2G(\mu, 0)=E^\top\operatorname{diag}(g_1(\mu),\dots, g_n(\mu))
\quad\hbox{and}\quad
 \nabla^2_2F(\mu, 0)=E^\top\operatorname{diag}(f_1(\mu),\dots, f_n(\mu))E
\end{align*}
for some real orthogonal  $n\times n$ matrix $E$,
then $\tau\sqrt{2g_l(\mu)f_l(\mu)}\in \pi(2\mathbb{N}-1)$ for some $l$.

\item[\rm (II)]{\rm (\textsf{Sufficient condition}):}
For each $\lambda\in\Lambda$, suppose  $\nabla^2_2F(\lambda, 0)$ and $\nabla^2_2G(\lambda, 0)$ commute. Assume further that for some
 $\mu\in{\rm Int}(\Lambda)$, the matrix $\nabla^2_2G(\mu, 0)$ has $n$ distinct eigenvalues $g_j(\mu)$, $j=1,\dots,n$.
Thus Lemma~\ref{lem:Kato} assures that in a neighborhood $\Lambda_0$ of $\mu$ in $\Lambda$
there exist continuous real functions $g_j, f_j$ for $j=1,\cdots,n$, and
a continuous real orthogonal $n\times n$ matrix-valued function $\Xi$ such that for each $\lambda\in\Lambda_0$,
\begin{align*}
\nabla^2_2G(\lambda, 0)&=\Xi(\lambda)^\top{\rm diag}(g_1(\lambda),\cdots, g_n(\lambda))\Xi(\lambda)
\quad\hbox{and}\\
 \nabla^2_2F(\lambda, 0)&=\Xi(\lambda)^\top{\rm diag}(f_1(\lambda),\cdots, f_n(\lambda))\Xi(\lambda).
\end{align*}
Suppose there exists a partition
	$\{i_1,\dots,i_k\}\cup\{i_{k+1},\dots,i_n\}$  of $\{1,\dots,n\}$
with $1 \le k \le n$, such that the following two conditions hold:
	\begin{itemize}
\item[\rm (A)] For each $l=k+1,\dots,n$, either $
2g_{i_l}(\mu)f_{i_l}(\mu)<(\pi/\tau)^2$ or $$
    (2j_l(\mu)-1)\pi<\tau\sqrt{2g_{i_l}(\mu)f_{i_l}(\mu)}<
    (2j_l(\mu)+1)\pi
    $$
    for some  $j_l(\mu)\in\mathbb{N}$. (When $k=n$, there is no such $l$, so the condition is vacuous.)
		\item[\rm (B)]  For each $l=1,\dots,k$, $g_{i_l}(\mu)f_{i_l}(\mu)>0$ and
$\sqrt{2g_{i_l}(\mu)f_{i_l}(\mu)}\tau=(2j_l(\mu)-1)\pi$ for some
 $j_l(\mu)\in\mathbb{N}$.

 \item[\rm (C)]There exist sequences $(\lambda_m^\pm)_m\subset\Lambda$
  converging to $\mu$ such that
$f_{i_l}(\lambda_m^-)g_{i_l}(\lambda_m^-)<f_{i_l}(\mu)g_{i_l}(\mu)
<f_{i_l}(\lambda_m^+)g_{i_l}(\lambda_m^+)$ for all $m\in\mathbb{N}$
 and for all $l=1,\cdots,k$.
 \end{itemize}
 Then $(\mu,0)$  is a bifurcation point of
 (\ref{e:Bifu-distributedDelay1}).

\item[\rm (III)]{\rm (\textsf{Alternative bifurcations of Fadell-Rabinowitz type and of Rabinowitz type}):}
  Let the assumptions of (II) hold, with condition (C) strengthened as follows:
  \begin{itemize}
 \item[\rm (SC)] For some $\delta>0$ and for each $l=1,\cdots,k$, one of the following two conditions holds:
   \begin{itemize}
		\item[\rm (SC.1)]  $g_{i_l}(\lambda)f_{i_l}(\lambda)<g_{i_l}(\mu)f_{i_l}(\mu)<
g_{i_l}(\lambda')f_{i_l}(\lambda')$ for $\mu-\delta<\lambda<\mu<\lambda'<\mu+\delta$
if $g_{i_l}(\mu)>0$; and
$g_{i_l}(\lambda)f_{i_l}(\lambda)>g_{i_l}(\mu)f_{i_l}(\mu)>
g_{i_l}(\lambda')f_{i_l}(\lambda')$ for $\mu-\delta<\lambda<\mu<\lambda'<\mu+\delta$
if $g_{i_l}(\mu)<0$.

		\item[\rm (SC.2)] $g_{i_l}(\lambda)f_{i_l}(\lambda)>g_{i_l}(\mu)f_{i_l}(\mu)>
g_{i_l}(\lambda')f_{i_l}(\lambda')$ for $\mu-\delta<\lambda<\mu<\lambda'<\mu+\delta$
if $g_{i_l}(\mu)>0$; and
$g_{i_l}(\lambda)f_{i_l}(\lambda)<g_{i_l}(\mu)f_{i_l}(\mu)<
g_{i_l}(\lambda')f_{i_l}(\lambda')$ for $\mu-\delta<\lambda<\mu<\lambda'<\mu+\delta$
if $g_{i_l}(\mu)<0$.
\end{itemize}
\end{itemize}
    Then for the problem (\ref{e:Bifu-distributedDelay1}),
	there exist two possible alternatives:
	\begin{itemize}
		\item[\rm (i)] The problem (\ref{e:Bifu-distributedDelay1})  with $\lambda=\mu$ admits a sequence of
		nonzero, $\R$-distinct solutions
		$\{x^l\}^{\infty}_{l=1}$  such that $x^l\to 0$ in $C^1_{\rm loc}(\mathbb{R},\mathbb{R}^n)$
		as $l\to\infty$.
		\item[\rm (ii)] There exist left and right  neighborhoods $\Lambda^-$ and $\Lambda^+$ of $\mu$ in $\Lambda$
		and nonnegative integers $n^+$ and $n^-\ge 0$ satisfying $n^++n^-\ge k$, such that
		for $\lambda\in\Lambda^-\setminus\{\mu\}$ (resp. $\lambda\in\Lambda^+\setminus\{\mu\}$),
		(\ref{e:Bifu-distributedDelay1}) with parameter  $\lambda$  has at least $n^-$ (resp. $n^+$) $\R$-distinct
		solutions $x_\lambda^i\ne 0$ ($i=1,\dots,n^-$ (resp. $n^+$))
		which converge to zero in $C^1_{\rm loc}(\mathbb{R}, \mathbb{R}^n)$
		as $\lambda\to\mu$.
	\end{itemize}
	Moreover, if  $k>1$, then at least one of (i), (iii), and (iv)  holds,
	where (i) is as stated previously, and
	\begin{itemize}
		\item[\rm (iii)]  For every $\lambda\in\Lambda\setminus\{\mu\}$ near $\mu$, there is a
		nonzero solution ${x}_\lambda$  of (\ref{e:Bifu-distributedDelay1})
		with parameter $\lambda$, such that  $x_\lambda\to 0$ in $C^1_{\rm loc}(\mathbb{R},\mathbb{R}^n)$
		as $\lambda\to\mu$.
				
		\item[\rm (iv)] For a given $\varepsilon > 0$, there exists a one-sided neighbourhood $\Lambda^0$ of $\mu$ in $\Lambda$ such that, for any $\lambda \in \Lambda^0 \setminus \{\mu\}$, problem
	(\ref{e:Bifu-distributedDelay1}) with parameter $\lambda$ has either
	\begin{itemize}
		\item[$\bullet$]
	 infinitely many $\mathbb{R}$-distinct nonzero solutions $\{x_\lambda^l\}_{l=1}^\infty$ such that $\|x_\lambda^l|_{[0,2\tau]}\|_{C^1} < \varepsilon$ for all $l \in \mathbb{N}$,
	 or
	\item[$\bullet$]  at least two $\mathbb{R}$-distinct nonzero solutions $\hat{x}_\lambda^1$ and $\hat{x}_\lambda^2$ satisfying the following inequalities: $\|\hat{x}_\lambda^i|_{[0,2\tau]}\|_{C^1} < \varepsilon$ for $i=1,2$, and
		\begin{align*}
			& \int_0^{\tau} \bigl(\dot{\hat{x}}_\lambda^1(t),  \hat{y}_\lambda^1(t) \bigr)_{\mathbb{R}^{n}} \, dt
			+ 2\int_0^{\tau} G(\lambda, {\hat{x}}_\lambda^1(t))\, dt
			+ \int_{0}^{\tau} F\bigl(\lambda,
			\hat{y}_\lambda^1(t)\bigr) \, dt \\
			\ne \; &
		\int_0^{\tau} \bigl(\dot{\hat{x}}_\lambda^2(t),  \hat{y}_\lambda^2(t) \bigr)_{\mathbb{R}^{n}} \, dt
			+ 2\int_0^{\tau} G(\lambda, {\hat{x}}_\lambda^2(t))\, dt
			+ \int_{0}^{\tau} F\bigl(\lambda,
			\hat{y}_\lambda^2(t)\bigr) \, dt,	
		\end{align*}
where $\hat{y}_\lambda^j(t)=\int^\tau_0\nabla_2G(\lambda, \hat{x}_\lambda^j(t-s))ds$, $j=1,2$.
	\end{itemize}
\end{itemize}
\end{enumerate}
\end{theorem}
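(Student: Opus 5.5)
We will reduce Theorem~\ref{th:DistribifDelay2} to the case $n=1$ of Theorem~\ref{th:DistribifDelay1}, working on the Hamiltonian side (\ref{e:distributedDelay2}). There $H(\lambda,z)=2G(\lambda,x)+F(\lambda,y)$ and $M=-I_{2n}$, and
\[
\nabla^2_2H(\lambda,0)={\rm diag}\bigl(2\nabla^2_2G(\lambda,0),\,\nabla^2_2F(\lambda,0)\bigr).
\]
Let $\gamma_\lambda$ be the fundamental solution of $\dot z=J_n\nabla^2_2H(\lambda,0)z$. The Hessians commute, so we can simultaneously diagonalize them: by $E$ at $\lambda=\mu$ for (I), and by the continuous orthogonal $\Xi(\lambda)$ from Lemma~\ref{lem:Kato} on $\Lambda_0$ for (II) and (III).

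Set $\Phi(\lambda)={\rm diag}(\Xi(\lambda),\Xi(\lambda))$. It is orthogonal and symplectic, since it commutes with $J_n$. It conjugates $J_n\nabla^2_2H(\lambda,0)$ into a direct sum of $n$ planar systems $J_1{\rm diag}(2g_j(\lambda),f_j(\lambda))$, after the standard permutation identifying $\mathbb{R}^{2n}$ with $(\mathbb{R}^2)^n$, which is also symplectic. Consequently $\gamma_\lambda$ is symplectically conjugate, by a $t$-independent matrix commuting with $M=-I_{2n}$, to $\gamma_{2g_1(\lambda),f_1(\lambda)}\diamond\cdots\diamond\gamma_{2g_n(\lambda),f_n(\lambda)}$. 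The $(\tau,-I)$-index and nullity are invariant under such conjugation and additive under $\diamond$. Lemma~\ref{lem:distri2} then gives
\[
\nu_{\tau,-I_{2n}}(\gamma_\lambda)=\sum_j\nu_{\tau,-I_2}\bigl(\gamma_{2g_j,f_j}\bigr)=2\sharp\bigl\{j\,:\,\tau\sqrt{2g_j(\lambda)f_j(\lambda)}\in\pi(2\mathbb{N}-1)\bigr\}.
\]
The index $i_{\tau,-I_{2n}}(\gamma_\lambda)$ is the sum of the planar indices.

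For (I), a bifurcation point of (\ref{e:Bifu-distributedDelay1}) yields one of (\ref{e:distributedDelay2}) by the correspondence preceding Theorem~\ref{th:DistribifDelay1}. Then \cite[Theorem~1.5(I)]{Lu11} forces $\nu_{\tau,-I_{2n}}(\gamma_\mu)\neq0$, and the nullity formula gives the stated condition.

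For (III), condition (a) of \cite[Theorem~1.14]{Lu11} holds because ${\rm Ker}(-I_{2n}-I_{2n})=\{0\}$. By (B), $\nu_{\tau,-I_{2n}}(\gamma_\mu)=2k$. Next, shrink $\delta$ using continuity of $g_j,f_j$. The indices $i_{k+1},\dots,i_n$ then stay in fixed open strips by (A), so their planar indices are constant and their nullities vanish. For $l\le k$ we repeat the case analysis of Step~2 of Theorem~\ref{th:DistribifDelay1}, separately according to the sign of $g_{i_l}(\mu)$. The conditions (SC.1) and (SC.2) are formulated exactly so that each critical planar index jumps by $2$ on the same side of $\mu$: for $g>0$ the index increases with $\sqrt{2gf}$, while for $g<0$ it decreases. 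Summing, we get $\nu_{\tau,-I_{2n}}(\gamma_\lambda)=0$ for $0<|\lambda-\mu|<\delta$. We also get $i_{\tau,-I_{2n}}(\gamma_\lambda)=i(\gamma_\mu)$ on one side and $i(\gamma_\mu)+\nu(\gamma_\mu)$ on the other, which is condition (b) of \cite[Theorem~1.14]{Lu11}.

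The main obstacle is this bookkeeping. One must check that (SC) is read uniformly in $l$, that is, one of (SC.1) or (SC.2) holds for all $l$ simultaneously. Otherwise the jumps could cancel, and then only (II) would survive, via disjoint index intervals. I would insist on that reading.

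Applying \cite[Theorem~1.14]{Lu11} to (\ref{e:distributedDelay2}) then gives (i$'$) and (ii$'$) with $n^++n^-\ge k$. When $k>1$ we have $\nu=2k\ge4>3$, so the second part of that theorem gives (iii$'$) and (iv$'$). These transfer back to $x$ through the correspondence preceding Theorem~\ref{th:DistribifDelay1}, with $\hat y$ as stated. Two facts complete the transfer. First, $z(t+\tau)=-z(t)$ turns $C^1$ bounds on $[0,\tau]$ into bounds on $[0,2\tau]$. Second, the action
\[
\int_0^\tau\Bigl[\tfrac12(J_n\dot z,z)+H\Bigr]dt
\]
becomes the displayed functional after integrating by parts the term $\frac12\bigl[(\dot x,y)-(\dot y,x)\bigr]$; the boundary terms cancel by antiperiodicity.

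For (II), the same local computation applies with the sequences $\lambda_m^\pm$. Exactly as in Step~4 of Theorem~\ref{th:bif-per3Delay}, this gives disjoint intervals $[i,i+\nu]$ at $\lambda_m^-$ and at $\lambda_m^+$, and \cite[Theorem~1.5(II)]{Lu11} concludes.
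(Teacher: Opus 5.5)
Your route is the paper's: conjugate by $\mathrm{diag}(\Xi,\Xi)$ to split $\gamma_\lambda$ into the planar blocks $\gamma_{2g_j,f_j}$, read nullity and index off Lemma~\ref{lem:distri2}, check condition (b) of \cite[Theorem~1.14]{Lu11}, and transfer back through the correspondence. Your uniform reading of (SC) is exactly what the paper uses implicitly when it writes ``suppose (SC$'$.1) holds'' for all $l$ at once. One small justification is missing: additivity of $i_{\tau,-I}$ under $\diamond$ is not automatic for Dong's index. It does hold here, because $[i_\tau(\xi)-\Delta(\xi)/\pi]=0$ for $\xi(t)=R(\pi t/\tau)$, as in the proof of Theorem~\ref{th:PIndex5}.

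The step that does not go through is (II). Hypothesis (C) is stated for the products $f_{i_l}g_{i_l}$ with no sign adjustment. You apply it ``exactly as in Step~4 of Theorem~\ref{th:bif-per3Delay}'', but there every critical block moves the index in the same direction; here they do not.

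By Lemma~\ref{lem:distri2}, take $\lambda$ near $\mu$ and a critical block with $g_{i_l}(\mu)>0$. Its planar index exceeds its value at $\mu$ by $2$ when $f_{i_l}g_{i_l}(\lambda)>f_{i_l}g_{i_l}(\mu)$, and equals it when $f_{i_l}g_{i_l}(\lambda)<f_{i_l}g_{i_l}(\mu)$. A block with $g_{i_l}(\mu)<0$ does the opposite. This is the same sign dependence you noticed in (III).

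Suppose $k_+$ critical blocks have $g_{i_l}(\mu)>0$ and $k_-$ have $g_{i_l}(\mu)<0$. For large $m$ you get $\nu_{\tau,-I_{2n}}(\gamma_{\lambda_m^\pm})=0$, together with
$i_{\tau,-I_{2n}}(\gamma_{\lambda_m^+})=i_{\tau,-I_{2n}}(\gamma_\mu)+2k_+$ and $i_{\tau,-I_{2n}}(\gamma_{\lambda_m^-})=i_{\tau,-I_{2n}}(\gamma_\mu)+2k_-$.
So the index intervals are disjoint only if $k_+\neq k_-$. For instance, $k=2$ with $g_{i_1}(\mu)>0>g_{i_2}(\mu)$ gives no index change at all, and \cite[Theorem~1.5(II)]{Lu11} yields nothing.

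The same point corrects your aside that a mixed reading of (SC) would still leave bifurcation. It does so only when the (SC.1) group and the (SC.2) group have different sizes.

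The paper's one-line treatment of (II) (``adapt the argument'') has the same hole. To make (II) provable by this method you need an extra hypothesis, for example one of the following:
\begin{itemize}
\item all $g_{i_l}(\mu)$, $l\le k$, have one sign;
\item (C) is replaced by a sign-adjusted condition in the spirit of (SC);
\item at least $k_+\neq k_-$.
\end{itemize}
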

\begin{proof}[\bf Proof]
Let ${\gamma}_\lambda$ be the fundamental matrix solution of
$\dot{z}(t) = J_n\nabla^2_{2}H(\lambda, 0)\, z(t)$ with $\gamma_\lambda(0)=I_{2n}$.
Note that $\nabla^2_{2}H(\lambda, 0)=\begin{pmatrix}
2\nabla^2_{2}G(\lambda, 0) & 0 \\
0 & \nabla^2_{2}F(\lambda, 0)
\end{pmatrix}$.
Suppose $(\mu,0)$ with $\mu\in\Lambda$ is a bifurcation point of
the problem (\ref{e:Bifu-distributedDelay1}).
Then, by the same argument as in the proof of Theorem~\ref{th:DistribifDelay1}(I), we obtain  $\nu_{\tau,-I_{2n}}(\gamma_\mu)>0$, which establishes the necessity
in (I) since
$$
\nu_{\tau,-I_{2n}}(\gamma_\lambda)=2\sharp\left\{j\,\Big|\, \tau\sqrt{2g_j(\mu) f_j(\mu)}\in \pi(2\mathbb{N}-1)\right\}
$$
by Theorem~\ref{th:PIndex5}.

The proof of (II) follows by adapting the argument used in the proof of Theorem~\ref{th:bif-per3Delay} or \ref{th:bif-per3DelayII},
with only minor changes.

\underline{It remains to establish (III)}.
Observe that $M=-I_{2n}$ implies $\operatorname{Ker}(M-I_{2n})=\{0\}$; thus,
condition (a) in \cite[Theorem~1.14]{Lu11} holds.
Toward applying this theorem to (\ref{e:distributedDelay2}), we analyze the behavior of $i_{\tau,-I_{2n}}(\gamma_\lambda)$ and $\nu_{\tau,-I_{2n}}(\gamma_\lambda)$
for $\lambda$ in a neighborhood of $\mu$.
For this purpose,  let $\gamma_{2g_j(\lambda), f_j(\lambda)}$ be as in Lemma~\ref{lem:distri2}.
Then  Theorem~\ref{th:PIndex5} gives
\begin{equation}\label{e:distributedDelay11}
\nu_{\tau,-I_{2n}}(\gamma_\lambda)=2\sharp\left\{j\,\Big|\, \tau\sqrt{2g_j(\lambda) f_j(\lambda)}\in \pi(2\mathbb{N}-1)\right\}.
\end{equation}
The conditions (A) and (B) show
\begin{equation}\label{e:distributedDelay12}
\nu_{\tau,-I_{2n}}(\gamma_\mu)=2k.
\end{equation}

Let $\Gamma\subset\{k+1,\cdots,n\}$ be the set of indices $l$ for which the second inequality in (A) holds.
 Using Lemma~\ref{lem:distri2},  condition (A) yields that for each $l\in\{k+1,\dots,n\}$,
\begin{equation}\label{e:distributedDelay12+}
i_{\tau,-I_{2}}(\gamma_{2g_{i_l}(\mu), f_{i_l}(\mu)})=
\begin{cases}
0 &\quad\text{if $l\notin\Gamma$},\\
2j_{l}(\mu)&\quad\text{if $l\in\Gamma$},
\end{cases}
\end{equation}
 whereas  condition (B) implies that for each $l\in\{1,\dots,k\}$,
\begin{equation}\label{e:distributedDelay12++}
i_{\tau,-I_{2}}(\gamma_{2g_{i_l}(\mu), f_{i_l}(\mu)})=
\begin{cases}
2(j_{l}(\mu)-1) &\quad\text{if $g_{i_l}(\mu)>0$},\\
-2j_{l}(\mu)&\quad\text{if $g_{i_l}(\mu)<0$}.
\end{cases}
\end{equation}

Combining the proof of Theorem~\ref{th:PIndex5} with the above relations, we obtain
\begin{align}\label{e:distributedDelay13}
i_{\tau,-I_{2n}}(\gamma_\mu)&=\sum^n_{j=1}i_{\tau,-I_{2}}(\gamma_{2g_j(\mu), f_j(\mu)})\nonumber\\
&=\sum_{k\ge l:g_{i_l}(\mu)>0}2(j_{l}(\mu)-1)+
\sum_{k\ge l:g_{i_l}(\mu)<0}(-2j_{l}(\mu))
+\sum_{l\in\Gamma}2j_l(\mu).
\end{align}

Now let us compute $i_{\tau,-I_{2n}}(\gamma_\lambda)$ and
$i_{\tau,-I_{2n}}(\gamma_{\lambda'})$
for $\mu-\delta<\lambda<\mu<\lambda'<\mu+\delta$.

Since $g_i(\lambda)f_i(\lambda)$ ($i=1,\cdots,n$) are continuous,
after further shrinking $\delta>0$ if necessary,
conditions (A) and (SC) can be formulated as follows:
\begin{itemize}
\item[\rm (A')] For each $l=k+1,\dots,n$, the following hold:
\begin{align*}
&2g_{i_l}(\lambda)f_{i_l}(\lambda)<(\pi/\tau)^2\quad
\text{for all $\lambda\in (\mu-\delta,\mu+\delta)$,\;if
$l\notin\Gamma$},\\
 &   (2j_l(\mu)-1)\pi<\tau\sqrt{2g_{i_l}(\lambda)f_{i_l}(\lambda)}<
    (2j_l(\mu)+1)\pi\quad
\text{for all $\lambda\in (\mu-\delta,\mu+\delta)$,\; if $l\in\Gamma$}.
  \end{align*}

   	\item[\rm (SC')]  For each $l=1,\dots,k$, one of the following two conditions holds:
   	\begin{itemize}
		\item[\rm (SC'.1)] If $g_{i_l}(\mu)>0$, then
for any $(\lambda,\lambda')\in (\mu-\delta,\mu)\times (\mu,\mu+\delta)$, we have $g_{i_l}(\lambda)>0$,
$g_{i_l}(\lambda')>0$ and
$$
(2j_l(\mu)-2)\pi<\tau\sqrt{2g_{i_l}(\lambda)f_{i_l}(\lambda)}<
\tau\sqrt{2g_{i_l}(\mu)f_{i_l}(\mu)}<
\tau\sqrt{2g_{i_l}(\lambda')f_{i_l}(\lambda')}<2j_l(\mu)\pi;
$$
and
if $g_{i_l}(\mu)<0$, then
for any $(\lambda,\lambda')\in (\mu-\delta,\mu)\times (\mu,\mu+\delta)$, we have
$g_{i_l}(\lambda)<0$, $g_{i_l}(\lambda')<0$ and
$$
2j_l(\mu)\pi>\tau\sqrt{2g_{i_l}(\lambda)f_{i_l}(\lambda)}>\tau
\sqrt{2g_{i_l}(\mu)f_{i_l}(\mu)}>
\tau\sqrt{2g_{i_l}(\lambda')f_{i_l}(\lambda')}>(2j_l(\mu)-2)\pi.
$$
\item[\rm (SC'.2)] If $g_{i_l}(\mu)>0$, then
for any $(\lambda,\lambda')\in (\mu-\delta,\mu)\times (\mu,\mu+\delta)$, we have
 $g_{i_l}(\lambda)>0$,
$g_{i_l}(\lambda')>0$ and
$$
2j_l(\mu)\pi>\tau\sqrt{2g_{i_l}(\lambda)f_{i_l}(\lambda)}>\tau\sqrt{2g_{i_l}(\mu)f_{i_l}(\mu)}>
\tau\sqrt{2g_{i_l}(\lambda')f_{i_l}(\lambda')}>(2j_l(\mu)-2)\pi;
$$
 and if $g_{i_l}(\mu)<0$, then
 for any $(\lambda,\lambda')\in (\mu-\delta,\mu)\times (\mu,\mu+\delta)$, we have $g_{i_l}(\lambda)<0$, $g_{i_l}(\lambda')<0$ and
$$
(2j_l(\mu)-2)\pi<\tau\sqrt{2g_{i_l}(\lambda)f_{i_l}(\lambda)}<
\tau\sqrt{2g_{i_l}(\mu)f_{i_l}(\mu)}<
\tau\sqrt{2g_{i_l}(\lambda')f_{i_l}(\lambda')}<2j_l(\mu)\pi.
$$
\end{itemize}
\end{itemize}

As in the proof of (\ref{e:distributedDelay13}),
 conditions (A') implies  that for any $\lambda\in (\mu-\delta,\mu+\delta)$,
  \begin{align}\label{e:distributedDelay14}
i_{\tau,-I_{2n}}(\gamma_\lambda)&=\sum^n_{j=1}i_{\tau,-I_{2}}(\gamma_{2g_j(\lambda), f_j(\mu)})\nonumber\\
&=\sum^k_{l=1}i_{\tau,-I_{2}}(\gamma_{2g_{i_l}(\lambda), f_{i_l}(\lambda)})+
\sum_{l\in\Gamma}2j_l(\mu).
\end{align}

{Firstly, suppose (SC'.1) holds}.
For any $\lambda\in(\mu-\delta,\mu)$, since $g_{i_l}(\lambda)g_{i_l}(\mu)>0$,
 \begin{align}\label{e:distributedDelay15}
\sum^k_{l=1}i_{\tau,-I_{2}}(\gamma_{2g_{i_l}(\lambda), f_{i_l}(\lambda)})&=
\sum_{k\ge l:g_{i_l}(\mu)>0}i_{\tau,-I_{2}}(\gamma_{2g_{i_l}(\lambda), f_{i_l}(\lambda)})
+\sum_{k\ge l:g_{i_l}(\mu)<0}i_{\tau,-I_{2}}(\gamma_{2g_{i_l}(\lambda), f_{i_l}(\lambda)})\nonumber\\
&=\sum_{k\ge l:g_{i_l}(\mu)>0}2(j_{i_l}(\mu)-1)
+\sum_{k\ge l:g_{i_l}(\mu)<0}(-2j_{i_l}(\mu))
\end{align}
by Lemma~\ref{lem:distri2}. Combining this with (\ref{e:distributedDelay13})
and (\ref{e:distributedDelay14}) we obtain
\begin{align}\label{e:distributedDelay16}
i_{\tau,-I_{2n}}(\gamma_\lambda)=i_{\tau,-I_{2n}}(\gamma_\mu),
\quad\forall \lambda\in(\mu-\delta,\mu).
\end{align}

For any $\lambda'\in(\mu, \mu+\delta)$, since $g_{i_l}(\lambda')g_{i_l}(\mu)>0$,
as above we derive from Lemma~\ref{lem:distri2},
 \begin{align*}
\sum^k_{l=1}i_{\tau,-I_{2}}(\gamma_{2g_{i_l}(\lambda'), f_{i_l}(\lambda')})&=
\sum_{k\ge l:g_{i_l}(\mu)>0}i_{\tau,-I_{2}}(\gamma_{2g_{i_l}(\lambda'), f_{i_l}(\lambda')})
+\sum_{k\ge l:g_{i_l}(\mu)<0}i_{\tau,-I_{2}}(\gamma_{2g_{i_l}(\lambda'), f_{i_l}(\lambda')})\nonumber\\
&=\sum_{k\ge l:g_{i_l}(\mu)>0}2j_{i_l}(\mu)
+\sum_{k\ge l:g_{i_l}(\mu)<0}(-2j_{i_l}(\mu)+2)\nonumber\\
&=\sum_{k\ge l:g_{i_l}(\mu)>0}2(j_{i_l}(\mu)-1)+
\sum_{k\ge l:g_{i_l}(\mu)<0}(-2j_{i_l}(\mu))+2k.
\end{align*}
Combining this with (\ref{e:distributedDelay12}), (\ref{e:distributedDelay13})
and (\ref{e:distributedDelay14}) yields
\begin{align}\label{e:distributedDelay17}
i_{\tau,-I_{2n}}(\gamma_{\lambda'})=i_{\tau,-I_{2n}}(\gamma_\mu)+ \nu_{\tau,-I_{2n}}(\gamma_\mu),
\quad\forall \lambda'\in(\mu, \mu+\delta).
\end{align}

Similarly, when (SC'.2) holds, we may prove
\begin{equation}\label{e:distributedDelay18}
   i_{\tau,-I_{2n}}(\gamma_\lambda)=
 \begin{cases}
i_{\tau,-I_{2n}}(\gamma_\mu)+\nu_{\tau,-I_{2}}(\gamma_\mu),&\forall \,\lambda\in (\mu-\delta,\mu),\\
i_{\tau,-I_{2n}}(\gamma_\mu),&\forall \,\lambda\in (\mu,\mu+\delta)
\end{cases}
 \end{equation}

(\ref{e:distributedDelay16}), (\ref{e:distributedDelay17}) and (\ref{e:distributedDelay18}) show that condition (b) of Theorem~1.14 of \cite{Lu11} is satisfied. Thus, applying the first part of that theorem to (\ref{e:distributedDelay2}), we obtain the following alternatives:
 \begin{enumerate}
\item[\rm (i')] The problem (\ref{e:distributedDelay2}) with
 and $\lambda=\mu$ has a sequence of $\mathbb{R}$-distinct solutions,
$z^l\ne 0$ ($l=1,2,\dots$) such that $\|z^l|_I\|_{C^1}\to 0$ for any compact interval $I\subset\mathbb{R}$.
\item[\rm (ii')] There exist left and right  neighborhoods $\Lambda^-$ and $\Lambda^+$ of $\mu$ in $\Lambda$
and nonnegative integers $n^+$ and $n^-$ satisfying $n^++n^-\ge \nu_{\tau,-I_2}(\gamma_\mu)/2=k$,
such that for $\lambda\in\Lambda^-\setminus\{\mu\}$ (resp. $\lambda\in\Lambda^+\setminus\{\mu\}$),
 problem (\ref{e:distributedDelay2}) with  parameter  $\lambda$  has at least $n^-$ (resp. $n^+$) $\R$-distinct
 solutions, $z_\lambda^i\ne 0$ ($i=1,\dots,n^-$ resp. $n^+$)
 whose restrictions to $[0,\tau]$  converge to zero in $C^1([0,\tau];\mathbb{R}^{2n})$  as $\lambda\to\mu$.
\end{enumerate}
Moreover, if  $k>1$, then $\nu_{\tau,-I_{2n}}(\gamma_\mu)=2k>3$,
and hence we may apply the second part of Theorem~1.14 of \cite{Lu11} to (\ref{e:distributedDelay2}) to conclude
that at least one of (i'), (iii'), and (iv')  holds,
	where (i') is as stated previously, and
	\begin{itemize}
		\item[\rm (iii')]  For every $\lambda\in\Lambda\setminus\{\mu\}$ near $\mu$, there is a nonzero solution ${z}_\lambda$  of (\ref{e:distributedDelay2})
		with parameter $\lambda$, such that  $z_\lambda\to 0$ in $C^1_{\rm loc}(\mathbb{R},\mathbb{R}^{2n})$
		as $\lambda\to\mu$.
				
		\item[\rm (iv')] For a given $\varepsilon > 0$, there exists a one-sided neighbourhood $\Lambda^0$ of $\mu$ in $\Lambda$ such that, for any $\lambda \in \Lambda^0 \setminus \{\mu\}$, problem
	(\ref{e:distributedDelay2}) with parameter $\lambda$ has either
	\begin{itemize}
		\item[$\bullet$]
	 infinitely many $\mathbb{R}$-distinct nonzero solutions $\{z_\lambda^l\}_{l=1}^\infty$ such that $\|z_\lambda^l|_{[0,2\tau]}\|_{C^1} < \varepsilon$ for all $l \in \mathbb{N}$,
	 or
	\item[$\bullet$]  at least two $\mathbb{R}$-distinct nonzero solutions $\hat{z}_\lambda^1$ and $\hat{z}_\lambda^2$ satisfying the following inequalities: $\|\hat{z}_\lambda^i|_{[0,2\tau]}\|_{C^1} < \varepsilon$ for $i=1,2$, and
		\begin{align*}
			&
			+ \frac{1}{2}\int_0^{\tau} \bigl( J_n\dot{\hat{z}}_\lambda^1(t),  \hat{z}_\lambda^1(t) \bigr)_{\mathbb{R}^{2n}} \, dt
			+ \int_{0}^{\tau} H\bigl(\lambda,
			\hat{z}_\lambda^1(t)\bigr) \, dt \\
			\ne \; &
			\frac{1}{2}\int_0^{\tau} \bigl( J_n\dot{\hat{z}}_\lambda^2(t),  \hat{z}_\lambda^2(t) \bigr)_{\mathbb{R}^{2n}} \, dt
			+ \int_{0}^{\tau} H\bigl(\lambda,
			\hat{z}_\lambda^2(t)\bigr) \, dt.
		\end{align*}
	\end{itemize}
\end{itemize}
For $j=1,2$, writing $\hat{z}_\lambda^j(t)=(\hat{x}_\lambda^j(t)^\top, \hat{y}_\lambda^j(t)^\top)^\top$,
a direct computation shows that
$$
\int_0^{\tau} \bigl( J_n\dot{\hat{z}}_\lambda^j(t),  \hat{z}_\lambda^j(t) \bigr)_{\mathbb{R}^{2n}} \, dt=2\int_0^{\tau} \bigl(\dot{\hat{x}}_\lambda^j(t),  \hat{y}_\lambda^j(t) \bigr)_{\mathbb{R}^{n}} \, dt.
$$
As before, these results and  the arguments  preceding
Theorem~\ref{th:DistribifDelay1} give the conclusions in (III).
\end{proof}

The following corollary is a consequence of
Theorems~\ref{th:DistribifDelay1} and \ref{th:DistribifDelay2}.

\begin{corollary}\label{cor:DistribifDelay3}
Let $\mathsf{F}:\mathbb{R}^n\to \mathbb{R}$ be a $C^2$ function
satisfying $\nabla\mathsf{F}(0)=0$, and let
$\mathsf{G}: \mathbb{R}^n\to \mathbb{R}$ be an even $C^2$ function (hence $\nabla\mathsf{G}(0)=0$). For a nonzero real $\lambda$,
consider the distributed-delay differential equation
\begin{equation}\label{e:distributedDelay19}
\left\{\begin{array}{ll}
\dot{x}(t) = -\nabla\mathsf{F}\left(\displaystyle\int_0^\lambda
\nabla\mathsf{G}(x(t-s)) \,\mathrm{d}s\right), & x(t) \in \mathbb{R},
\\
x(t+\lambda) = -x(t) \; \forall t.
\end{array}\right.
\end{equation}
Assume that matrices $\nabla^2\mathsf{F}(0)$ and $\nabla^2\mathsf{G}(0)$
 commute, and hence by \cite[Theorem~4.5.15]{HorJ}
 there exists  an orthogonal matrix $\Xi$ of order $n$ such that
$$
\nabla^2\mathsf{F}(0)=\Xi^\top{\rm diag}(f_1,\cdots,f_n)\Xi\quad\hbox{and}\quad \nabla^2\mathsf{G}(0)=\Xi^\top{\rm diag}(g_1,\cdots, g_n)\Xi.
$$
If $(\mu,0)$ with $\mu\in\mathbb{R}\setminus\{0\}$ is a bifurcation point of
 (\ref{e:distributedDelay19}),
then $\tau|\mu|\sqrt{2g_lf_l}\in \pi(2\mathbb{N}-1)$ for some $l$.
Conversely, suppose $\mu\ne 0$ and there exists a bipartition
	$\{i_1,\dots,i_k\}\cup\{i_{k+1},\dots,i_n\}$  of $\{1,\dots,n\}$
with $1 \le k \le n$ such that the following two conditions hold:
	\begin{itemize}
\item[\rm (D)] For each $l=k+1,\dots,n$, either $
2g_{i_l}f_{i_l}|\mu|^2<(\pi/\tau)^2$ or $$
    (2j_l(\mu)-1)\pi<|\mu|\sqrt{2g_{i_l}f_{i_l}}<
    (2j_l(\mu)+1)\pi
    $$
    for some  $j_l(\mu)\in\mathbb{N}$. (When $n=1$, there is no such $l$, so the condition is vacuous.)
		\item[\rm (E)]  For each $l=1,\dots,k$, $g_{i_l}f_{i_l}>0$ and
$|\mu|\sqrt{2g_{i_l}f_{i_l}}=(2j_l(\mu)-1)\pi$ for some
 $j_l(\mu)\in\mathbb{N}$.
 \end{itemize}
Then, for the problem (\ref{e:distributedDelay19}),
	there exist two possible alternatives:
	\begin{itemize}
		\item[\rm (i)] The problem (\ref{e:distributedDelay19})
with $\lambda=\mu$ admits a sequence of
		nonzero, $\R$-distinct solutions
		$\{x^l\}^{\infty}_{l=1}$  such that $x^l\to 0$ in $C^1_{\rm loc}(\mathbb{R},\mathbb{R}^n)$
		as $l\to\infty$.
	\item[\rm (ii)] There exist left and right  neighborhoods $\Lambda^-$ and $\Lambda^+$ of $\mu$ in $\mathbb{R}$, respectively, containing no $0$,
		and nonnegative integers $n^+$ and $n^-\ge 0$ satisfying $n^++n^-\ge k$, such that for $\lambda\in\Lambda^-\setminus\{\mu\}$ (resp. $\lambda\in\Lambda^+\setminus\{\mu\}$),
		(\ref{e:distributedDelay19}) with parameter  $\lambda$  has at least $n^-$ (resp. $n^+$) $\R$-distinct
		solutions $x_\lambda^i\ne 0$ ($i=1,\dots,n^-$ (resp. $n^+$))
		which converge to zero in $C^1_{\rm loc}(\mathbb{R}, \mathbb{R}^n)$
		as $\lambda\to\mu$.
			\end{itemize}
	Moreover, if  $k>1$, (which implies $n>1$),
then at least one of (i), (iii), and (iv)  holds,
	where (i) is as stated previously, and
	\begin{itemize}
		\item[\rm (iii)]  For every nonzero $\lambda\in\Lambda\setminus\{\mu\}$ near $\mu$, there is a nonzero solution ${x}_\lambda$  of (\ref{e:distributedDelay19})
		with parameter $\lambda$, such that  $x_\lambda\to 0$ in $C^1_{\rm loc}(\mathbb{R},\mathbb{R}^n)$
		as $\lambda\to\mu$.
				
		\item[\rm (iv)] For a given $\varepsilon > 0$, there exists a one-sided neighbourhood $\Lambda^0$ of $\mu$ in $\mathbb{R}$ such that, for any
nonzero $\lambda \in \Lambda^0 \setminus \{\mu\}$, problem
	(\ref{e:distributedDelay19}) with parameter $\lambda$ has either
	\begin{itemize}
		\item[$\bullet$]
	 infinitely many $\mathbb{R}$-distinct nonzero solutions $\{x_\lambda^l\}_{l=1}^\infty$ such that $\|x_\lambda^l|_{[0,2|\lambda|]}\|_{C^1} < \varepsilon$ for all $l \in \mathbb{N}$,
	 or
	\item[$\bullet$]  at least two $\mathbb{R}$-distinct nonzero solutions $\hat{x}_\lambda^1$ and $\hat{x}_\lambda^2$ satisfying the following inequalities: $\|\hat{x}_\lambda^i|_{[0,2|\lambda|]}\|_{C^1} < \varepsilon$ for $i=1,2$, and
		\begin{align*}
			& \int_0^{\lambda} \bigl(\dot{\hat{x}}_\lambda^1(t),  \hat{y}_\lambda^1(t) \bigr)_{\mathbb{R}^{n}} \, dt
			+ 2\int_0^{\lambda} \mathsf{G}({\hat{x}}_\lambda^1(t))\, dt
			+ \int_{0}^{\lambda} \mathsf{F}\bigl(
			\hat{y}_\lambda^1(t)\bigr) \, dt \\
			\ne \; &
		\int_0^{\lambda} \bigl(\dot{\hat{x}}_\lambda^2(t),  \hat{y}_\lambda^2(t) \bigr)_{\mathbb{R}^{n}} \, dt
			+ 2\int_0^{\lambda} \mathsf{G}({\hat{x}}_\lambda^2(t))\, dt
			+ \int_{0}^{\lambda} \mathsf{F}\bigl(
			\hat{y}_\lambda^2(t)\bigr) \, dt,	
		\end{align*}
where $\hat{y}_\lambda^j(t)=\int^\lambda_0\nabla\mathsf{G}(\hat{x}_\lambda^j(t-s))ds$, $j=1,2$.
	\end{itemize}
\end{itemize}
 \end{corollary}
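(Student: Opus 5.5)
The plan is to deduce Corollary~\ref{cor:DistribifDelay3} from Theorems~\ref{th:DistribifDelay1} and \ref{th:DistribifDelay2} by the time-rescaling trick already used in the proofs of Corollaries~\ref{cor:bif-per3DelayC} and \ref{cor:bif-per3+4}.

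For $\lambda\ne0$ and a solution $x$ of (\ref{e:distributedDelay19}), put $u(s):=x(\lambda s)$. The substitution $s'=\lambda\sigma$ in the integral gives $\int_0^\lambda\nabla\mathsf{G}(x(\lambda s-s'))\,ds'=\lambda\int_0^1\nabla\mathsf{G}(u(s-\sigma))\,d\sigma$, and we have $\dot u(s)=\lambda\dot x(\lambda s)$. Hence $u$ solves
$$\dot u(s)=-\lambda\nabla\mathsf{F}\Bigl(\lambda\int_0^1\nabla\mathsf{G}(u(s-\sigma))\,d\sigma\Bigr),\qquad u(s+1)=-u(s).$$
This is (\ref{e:Bifu-distributedDelay1}) with $\tau=1$, $\Lambda=\mathbb{R}\setminus\{0\}$ (or a component of it containing $\mu$), $F(\lambda,y)=\mathsf{F}(\lambda y)$ and $G(\lambda,x)=\mathsf{G}(x)$. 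Indeed $\nabla_2F(\lambda,y)=\lambda\nabla\mathsf{F}(\lambda y)$, so $\nabla_2F(\lambda,\lambda\int\nabla\mathsf G)=\lambda\nabla\mathsf F(\lambda\int\nabla\mathsf G)$. The converse correspondence is $x(t)=u(t/\lambda)$.

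Next I would check Assumption~\ref{ass:Distri1Delay1}. Its clause (iii) literally asks that the Hessians vanish, which I read as a misprint, since only the gradient conditions are used. We have $\nabla^2_2F(\lambda,0)=\lambda^2\nabla^2\mathsf F(0)$ and $\nabla^2_2G(\lambda,0)=\nabla^2\mathsf G(0)$. These commute for every $\lambda$ and are diagonalized by the constant matrix $\Xi$, so $f_j(\lambda)=\lambda^2f_j$ and $g_j(\lambda)=g_j$. Consequently $\tau\sqrt{2g_j(\lambda)f_j(\lambda)}=|\lambda|\sqrt{2g_jf_j}$ with $\tau=1$; the $\tau$ appearing in the necessary condition of the statement should be read as $1$.

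Since $\Xi$ is constant, Lemma~\ref{lem:Kato} and the distinct-eigenvalue hypothesis of Theorem~\ref{th:DistribifDelay2}(II) are not really needed. The proof of Theorem~\ref{th:DistribifDelay2} only uses the simultaneous diagonalization together with Theorem~\ref{th:PIndex5}, and I would note this explicitly. Bifurcation points $(\mu,0)$ of (\ref{e:distributedDelay19}) and of the rescaled problem correspond, because the $C^1_{\rm loc}$ norms are comparable for $\lambda$ near $\mu\ne0$.

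The necessary condition then follows from Theorem~\ref{th:DistribifDelay2}(I), or from Theorem~\ref{th:DistribifDelay1}(I) when $n=1$. For the converse, conditions (D) and (E) translate directly into (A) and (B). Condition (SC) has to be verified. For $l\le k$ we have $g_{i_l}(\lambda)f_{i_l}(\lambda)=\lambda^2g_{i_l}f_{i_l}$ with $g_{i_l}f_{i_l}>0$. This function is strictly monotone in $|\lambda|$, so near $\mu\ne0$ it is increasing in $\lambda$ if $\mu>0$ and decreasing if $\mu<0$.

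Checking (SC) is the step needing care, because (SC.1) and (SC.2) require the same direction for all $l=1,\dots,k$, but their form depends on the sign of $g_{i_l}$. If all $g_{i_l}$, $l\le k$, have the same sign, one of (SC.1) or (SC.2) holds uniformly. If the signs are mixed, I would revisit the index computation (\ref{e:distributedDelay15})--(\ref{e:distributedDelay18}) directly, using Lemma~\ref{lem:distri2}. I expect the sign of the index jump there to be governed by the monotonicity of $|\lambda|\sqrt{2gf}$ alone, not by the sign of $g$. Either way I aim to obtain $i_{1,-I_{2n}}(\gamma_\lambda)$ equal to $i(\gamma_\mu)$ on one side and to $i(\gamma_\mu)+\nu(\gamma_\mu)$ on the other, with $\nu=0$ for $\lambda\ne\mu$. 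That is exactly what the application of \cite[Theorem~1.14]{Lu11} in the proof of Theorem~\ref{th:DistribifDelay2}(III) requires.

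Once this is done, Theorem~\ref{th:DistribifDelay2}(III) (or Theorem~\ref{th:DistribifDelay1}(III) when $n=1$) gives the alternatives for $u$. I would then transfer them back via $x(t)=u(t/\lambda)$, shrinking $\Lambda^\pm$ and $\Lambda^0$ so that they avoid $0$.

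For (iv), the norm on $[0,2]$ for $u$ becomes an equivalent norm on $[0,2|\lambda|]$ for $x$; the case $\lambda<0$ is handled by the antiperiodicity shift exactly as in the proof of Corollary~\ref{cor:bif-per3DelayC}. For the action functional, the change of variables $t=\lambda s$ shows that the $u$-functional equals the stated $x$-functional up to a common factor depending only on $\lambda$. I would confirm this bookkeeping by computing it, checking that $\hat y$ rescales consistently with the definition $\hat y^j_\lambda(t)=\int_0^\lambda\nabla\mathsf G(\hat x^j_\lambda(t-s))\,ds$. A common nonzero factor preserves the inequality between the two values.
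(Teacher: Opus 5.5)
The rest of your proposal is the paper's argument and is fine. You rescale $x(t)=u(t/\lambda)$ to $\tau=1$, use the constant $\Xi$ instead of Lemma~\ref{lem:Kato}, read (D) and (E) as (A) and (B) of Theorem~\ref{th:DistribifDelay2}, and transfer the alternatives back. Your choice $F(\lambda,y)=\mathsf F(\lambda y)$, $G(\lambda,x)=\mathsf G(x)$ differs from the paper's $F(\lambda,x)=\lambda\mathsf F(x)$, $G(\lambda,x)=\lambda\mathsf G(x)$ only cosmetically: your action picks up a common factor $1/\lambda$, while the paper's matches the $x$-functional exactly. Your handling of the misprints ($\tau=1$, the Hessian clause in Assumption~\ref{ass:Distri1Delay1}(iii)) and of the norms is also fine.

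The genuine gap is the mixed-sign case when you verify (SC), and your fallback expectation there is false. Lemma~\ref{lem:distri2} shows that the direction of the index jump depends on the sign of $a=2g$, not only on the monotonicity of $\sqrt{ab}$:
\begin{itemize}
\item For $a>0$, the value of $i_{1,-I_2}(\gamma_{a,b})$ at the critical value $\sqrt{ab}=(2j-1)\pi$ equals its value for slightly smaller $\sqrt{ab}$, and it is $2$ larger for slightly larger $\sqrt{ab}$.
\item For $a<0$ it is the reverse: the critical value equals the value for slightly larger $\sqrt{ab}$, and the index is $2$ larger for slightly smaller $\sqrt{ab}$.
\end{itemize}
In the corollary, all the products $\lambda^2g_{i_l}f_{i_l}$ move in the same direction across $\mu$. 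Take $k=2$ with $g_{i_1}>0>g_{i_2}$. Then $i_{1,-I_{2n}}(\gamma_\lambda)=i_{1,-I_{2n}}(\gamma_\mu)+2$ on both sides of $\mu$, while $\nu_{1,-I_{2n}}(\gamma_\mu)=4$. So condition (b) of Theorem~1.14 in \cite{Lu11} fails. Even the disjointness test of \cite[Theorem~1.5(II)]{Lu11} fails, since the index intervals on the two sides coincide. No re-examination of (\ref{e:distributedDelay15})--(\ref{e:distributedDelay18}) can therefore yield the alternative with $n^++n^-\ge k$.

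What is missing is a hypothesis, not an argument. You need to assume that $g_{i_1},\dots,g_{i_k}$ have a common sign, just as (G) in Corollary~\ref{cor:bif-per3DelayC} and (E) in Corollary~\ref{cor:bif-per3+4} require for the $e_{i_l}$. With that assumption your same-sign case is complete: (SC.1) or (SC.2) holds according to the signs of $\mu$ and of the $g_{i_l}$.

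The paper's proof has the same blind spot. It asserts that (SC.1) holds because $(\lambda g_{i_l})(\lambda f_{i_l})-(\mu g_{i_l})(\mu f_{i_l})=(\lambda^2-\mu^2)g_{i_l}f_{i_l}$. In its parametrization $g_{i_l}(\mu)=\mu g_{i_l}$, and this yields (SC.1) only when all $g_{i_l}>0$, (SC.2) when all are negative, and neither when the signs are mixed.
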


When $n=1$, the assumptions listed after (\ref{e:distributedDelay19}) in Corollary~\ref{cor:DistribifDelay3} may be reformulated as follows:
 Suppose  $\mathsf{F}''(0)\mathsf{G}''(0)>0$ and
$$
\mu\in\left\{\pm \frac{(2k-1)\pi}{\sqrt{2\mathsf{F}''(0)\mathsf{G}''(0)}}\,\bigm|\,
k\in\mathbb{N}\right\}.
$$

\begin{proof}[\bf Proof of Corollary~\ref{cor:DistribifDelay3}]
As in the proofs of Corollary~\ref{cor:bif-per3DelayC} and Corollary~\ref{cor:bif-per3+4}, the first assertion follows from
conclusions (I) of Theorems~\ref{th:DistribifDelay1} and \ref{th:DistribifDelay2}.

We now establish the remaining conclusions.
The case $n=1$ easily follows from (III) of
Theorem~\ref{th:DistribifDelay1}. Henceforth,
we assume $n>1$.
For each fixed $\lambda \in \mathbb{R}$, define functions
$F(\lambda,\cdot), G(\lambda,\cdot): \mathbb{R}^n \to \mathbb{R}$ by
$F(\lambda, x)=\lambda\mathsf{F}(x)$ and $G(\lambda, x)=
\lambda\mathsf{G}(x)$.
Note that the assumption in Theorem~\ref{th:DistribifDelay2} that
 $\nabla^2_2G(\mu, 0)$ has $n$ distinct eigenvalues
 guarantees the applicability of Lemma~\ref{lem:Kato}.
In the present case, this assumption is not needed, since the
 orthogonal matrix $\Xi$ ensures that for each $\lambda\in\Lambda_0$,
\begin{align*}
\nabla^2_2G(\lambda, 0)=\Xi^\top{\rm diag}(\lambda g_1,\cdots, \lambda g_n)\Xi
\quad\text{and}\quad
 \nabla^2_2F(\lambda, 0)=\Xi^\top{\rm diag}(\lambda f_1,\cdots, \lambda f_n)\Xi.
\end{align*}
Clearly, condition (D) here corresponds to condition (A) with $\tau=1$ in Theorem~\ref{th:DistribifDelay2}, while condition (E) corresponds to
 conditions (B) and (SC.1) with $\tau=1$ in that theorem.
(In fact, for each $l=1,\dots,k$, since $g_{i_l}f_{i_l}>0$ and
$(\lambda g_{i_l})(\lambda f_{i_l})-(\mu g_{i_l})(\mu f_{i_l})=
(\lambda^2-\mu^2) g_{i_l}f_{i_l}$, the condition corresponding to
(SC.1) in Theorem~\ref{th:DistribifDelay2} holds.)
Hence, for the problem
\begin{equation}\label{e:distributedDelay20}
\left\{\begin{array}{ll}
\dot{u}(t) = -\lambda\nabla \mathsf{F}\left(  \displaystyle\int_0^1
\lambda\nabla \mathsf{G}( u(t-s)) \,\mathrm{d}s\right), & u(t) \in \mathbb{R}^n,
\\
u(t+1 = -u(t) \; \forall t,
\end{array}\right.
\end{equation}
	there exist two possible alternatives:
	\begin{itemize}
		\item[\rm (i')] The problem (\ref{e:distributedDelay20})  with $\lambda=\mu$ admits a sequence of
		nonzero, $\R$-distinct solutions
		$\{u^l\}^{\infty}_{l=1}$  such that $u^l\to 0$ in $C^1_{\rm loc}(\mathbb{R},\mathbb{R}^n)$
		as $l\to\infty$.
	\item[\rm (ii')] There exist left and right  neighborhoods $\Lambda^-$ and $\Lambda^+$ of $\mu$ in $\mathbb{R}$
		and nonnegative integers $n^+$ and $n^-\ge 0$ satisfying $n^++n^-\ge k$, such that for $\lambda\in\Lambda^-\setminus\{\mu\}$ (resp. $\lambda\in\Lambda^+\setminus\{\mu\}$),
		(\ref{e:distributedDelay20}) with parameter  $\lambda$  has at least $n^-$ (resp. $n^+$) $\R$-distinct
		solutions $u_\lambda^i\ne 0$ ($i=1,\dots,n^-$ (resp. $n^+$))
		which converge to zero in $C^1_{\rm loc}(\mathbb{R}, \mathbb{R}^n)$
		as $\lambda\to\mu$.
	\end{itemize}
	Moreover, if  $k>1$, then at least one of (i'), (iii'), and (iv')  holds,
	where (i') is as stated previously, and
	\begin{itemize}
		\item[\rm (iii')]  For every $\lambda\in\Lambda\setminus\{\mu\}$ near $\mu$, there is a nonzero solution ${u}_\lambda$  of (\ref{e:distributedDelay20})
		with parameter $\lambda$, such that  $u_\lambda\to 0$ in $C^1_{\rm loc}(\mathbb{R},\mathbb{R}^n)$
		as $\lambda\to\mu$.
				
		\item[\rm (iv')] For a given $\varepsilon > 0$, there exists a one-sided neighbourhood $\Lambda^0$ of $\mu$ in $\mathbb{R}$ such that, for any $\lambda \in \Lambda^0 \setminus \{\mu\}$, problem
	(\ref{e:distributedDelay20}) with parameter $\lambda$ has either
	\begin{itemize}
		\item[$\bullet$]
	 infinitely many $\mathbb{R}$-distinct nonzero solutions $\{u_\lambda^l\}_{l=1}^\infty$ such that $\|u_\lambda^l|_{[0,2]}\|_{C^1} < \varepsilon$ for all $l \in \mathbb{N}$,
	 or
	\item[$\bullet$]  at least two $\mathbb{R}$-distinct nonzero solutions $\hat{u}_\lambda^1$ and $\hat{u}_\lambda^2$ satisfying the following inequalities: $\|\hat{u}_\lambda^i|_{[0,2]}\|_{C^1} < \varepsilon$ for $i=1,2$, and
		\begin{align*}
			& \int_0^{1} \bigl(\dot{\hat{u}}_\lambda^1(t),  \hat{v}_\lambda^1(t) \bigr)_{\mathbb{R}^{n}} \, dt
			+ 2\lambda\int_0^{1} \mathsf{G}({\hat{u}}_\lambda^1(t))\, dt
			+ \lambda\int_{0}^{1} \mathsf{F}\bigl(
			\hat{v}_\lambda^1(t)\bigr) \, dt \\
			\ne \; &
		\int_0^{1} \bigl(\dot{\hat{u}}_\lambda^2(t),  \hat{v}_\lambda^2(t) \bigr)_{\mathbb{R}^{n}} \, dt
			+ 2\lambda\int_0^{1} \mathsf{G}({\hat{u}}_\lambda^2(t))\, dt
			+ \lambda\int_{0}^{1} \mathsf{F}\bigl(
			\hat{v}_\lambda^2(t)\bigr) \, dt,	
		\end{align*}
where $\hat{v}_\lambda^j(t)=\lambda\int^1_0\nabla\mathsf{G}(\hat{u}_\lambda^j(t-s))ds$, $j=1,2$.
	\end{itemize}
\end{itemize}
Since $\mu\ne 0$, all $\lambda$ above can be taken to be nonzero.
For $u^l$ and $u_\lambda^i$ in (i') and (ii'),
 one readily checks that $x^l(t)=u^l(t/\lambda)$
and $x_\lambda^i(t)=u_\lambda^i(t/\lambda)$
satisfy (i) and (ii). Similarly, for $u_\lambda$, $u^l_\lambda$ and $\hat{u}^i_\lambda$ in (iii') and (iv'),
the curves $x_\lambda(t)=u_\lambda(t/\lambda)$, $x^l_\lambda(t)=u^l_\lambda(t/\lambda)$
and $\hat{x}_\lambda^i(t)=\hat{u}_\lambda^i(t/\lambda)$
satisfy (iii) and (iv).
\end{proof}

\appendix

\section{Some computation results about Maslov-type indexes }\label{app:Maslov}

\noindent{\textbf{Notation and conventions}}.
Follow notation and conventions in Section~\ref{sec:Intro0}.
For any two $m\times m$ symmetric matrices $A_1$ and $A_2$, we write $A_1\le A_2$
(resp. $A_1<A_2$)  if $A_2-A_1$ is positive semi-definite (resp. positive definite). For any
 $B_1,B_2\in L^\infty([0,\tau];\mathcal{L}_s(\mathbb{R}^{2n}))$
 we write $B_1\le B_2$ if $B_1(t)\le B_2(t)$ for almost every $t\in [0,\tau]$,
 and   $B_1<B_2$ if $B_1\le B_2$ and $\{t\in [0,\tau]\,|\,
 B_1(t)<B_2(t)\}$ has positive (Lebesgue) measure.
The $\diamond$-product of  two square block matrices
  $$
  M _ { 1 } = \left( \begin{array} { c c } A _ { 1 } & B _ { 1 } \\ C _ { 1 } & D _ { 1 } \end{array} \right)\in
  \mathbb{K}^{i\times i}  \quad\text{and}\quad
   M _ { 2 } = \left( \begin{array} { c c } A _ { 2 } & B _ { 2 } \\ C _ { 2 } & D _ { 2 } \end{array} \right)\in
   \mathbb{K}^{j\times j}
  $$
   defined by
  \begin{equation}\label{e:diamond-product}
   M _ { 1 } \diamond M _ { 2 } = \left( \begin{array} { c c c c } A _ { 1 } & 0 & B _ { 1 } & 0 \\
    0 & A _ { 2 } & 0 & B _ { 2 } \\ C _ { 1 } & 0 & D _ { 1 } & 0 \\ 0 & C _ { 2 } & 0 & D _ { 2 } \end{array} \right)
   \end{equation}
   is  associative. Denote by $M ^ {\diamond k}$  the $k$-fold $\diamond$ product of $M$.
Let ${\rm Sp}(2n,\mathbb{R})$ (or simply ${\rm Sp}(2n)$ if there is no confusion)
be the symplectic group of real symplectic matrices  of order $2n$, i.e., ${\rm Sp}(2n,\mathbb{R})=\{M\in
\mathbb{R}^{2n\times 2n}\,|\, M^\top J_nM=J_n\}$,
where
\begin{equation}\label{e:standcompl}
J_n=\left(\begin{array}{cc}
             0 & -I_n \\
             I_n & 0 \\
           \end{array}
         \right)
\end{equation}
with the identity matrix $I_n$ of order $n$. $J_n$ gives  the standard complex structure  on $\mathbb{R}^{2n}$,
$$
 (q_1,\cdots,q_n, p_1,\cdots, p_n)^\top\mapsto J(q_1,\cdots,q_n, p_1,\cdots, p_n)^\top=(-p_1,\cdots,-p_n, q_1,\cdots, q_n)^\top.
$$
Each $S\in{\rm Sp}(2n)$ has a unique decomposition $PU$, where $P=\sqrt{SS^\top}$ and
$U=\left(\begin{array}{cc}
             U_1 & -U_2 \\
             U_2 & U_1 \\
           \end{array}
         \right)$
such that $\mathfrak{u}(S):=U_1+\sqrt{-1}U_2$ is a unitary matrix, i.e.,
$\mathfrak{u}(S)\in U(n,\mathbb{C})$.
Note that the map $\mathfrak{u}: {\rm Sp}(2n)\to U(n,\mathbb{C})$ restricts to a Lie group isomorphism
\begin{equation}\label{e:LieIsom}
\mathfrak{u}:{\rm Sp}(2n)\cap{\rm O}(2n)\to U(n,\mathbb{C}).
\end{equation}
For $M\in{\rm Sp}(2n)$, let $D(M)=( - 1 ) ^ { n - 1 } \operatorname { d e t } \left( M - I _ { 2 n }\right)$,
and let
$$
{\rm Sp}( 2 n ) ^ { 0 } = \{ M \in {\rm Sp}( 2 n )\,|\, D(M)= 0 \}.
$$
The complementary set ${\rm Sp}( 2 n ) ^ { * } := {\rm Sp}( 2 n ) \backslash{\rm Sp}( 2 n ) ^ { 0 }$
 has exactly two path-connected components
$$
{\rm Sp}( 2 n ) ^ { +} = \{ M \in {\rm Sp}( 2 n )\,|\, D(M) < 0 \}\quad\text{and}\quad
{\rm Sp}( 2 n ) ^ { -} = \{ M \in {\rm Sp}( 2 n )\,|\, -D(M) < 0 \},
$$
which contain, respectively,
$M _ { n } ^ { + } = D ( 2 ) ^ { \diamond n }$ and $M _ { n } ^ { - } = D ( - 2 ) \diamond D ( 2 ) ^ { \diamond ( n - 1 ) }$, where
$D (a) = \operatorname { d i a g } \left\{a , \frac { 1 } {a} \right\}$ for $a\in\mathbb{R}\setminus\{0\}$.
Every loop in these two components is contractible in ${\rm Sp}(2n)$
(cf. \cite[Lemma~1.7]{CoZe} and \cite[Lemma~3.2]{SaZe2}).
Therefore, each path $\gamma\in C([a, b], {\rm Sp}(2n))$ corresponds to a unique continuous path
$\mathfrak{u}_\gamma:[a,b]\to U(n,\C)$ given by $\mathfrak{u}_\gamma(t):=\mathfrak{u}(\gamma(t))$,
where $\mathfrak{u}$ is as in (\ref{e:LieIsom}).
By the lifting criterion for covering spaces
there exists a continuous real-valued function $\Delta_\gamma$ on $[a, b]$
such that $\det \mathfrak{u}_\gamma(t)=\exp(\sqrt{-1}\Delta_\gamma(t))$ for $t\in[a,b]$, and
$\Delta({\gamma}):=\Delta_\gamma(b)-\Delta_\gamma(a)$ is uniquely determined by $\gamma$.
For $\tau>0$, the concatenated path (or joint path) of $\gamma_1:[0,\tau]\to{\rm Sp}(2n)$ and $\gamma_2:[0,\tau]\to{\rm Sp}(2n)$,
 with $\gamma_1(\tau)=\gamma_2(0)$, is defined as the path
 $\gamma_2\ast\gamma_1:[0,\tau]\to{\rm Sp}(2n)$ given by
\begin{equation}\label{e:pathComp}
    \gamma_2\ast\gamma_1(t)=
 \begin{cases}
\gamma_1(2t),\quad&\hbox{if $t\in [0,\tau/2]$},\\
\gamma_2(2t-\tau),\quad&\hbox{if $t\in[\tau/2,\tau]$}.
\end{cases}
 \end{equation}
Let $\mathcal{P}_\tau(2n) =\{\gamma\in C([0,\tau],{\rm Sp}(2n))\,|\, \gamma(0)=I_{2n}\}$
and ${\cal P}^\ast_\tau(2n)=\{\gamma\in\mathcal{P}_\tau(2n)\,|\, \gamma(\tau)\in{\rm Sp}(2n)^\ast\}$.\\


\noindent{\textbf{Main results}}.
For $M\in{\rm Sp}(2n,\mathbb{R})$ and $\gamma\in\mathcal{P}_\tau(2n)$,
let $(i_{\tau,M}(\gamma), \nu_{\tau,M}(\gamma))$
be the Maslov-index of $\gamma$ relative to $M$ defined by
 (\ref{e:dongIndex}).
 The goal of this appendix is to give proofs of the following three theorems.

\begin{theorem}\label{th:PIndex1}
  	Let $B$ and $C$ be real matrices of order $n$, where $B$ is symmetric and $C$ is  skew-symmetric.
 Let $e_1\le e_2\le\cdots\le e_n$ be all eigenvalues of the Hermitian matrix
 $B+\sqrt{-1}C$; these eigenvalues are all real.
 Denote by  $\Upsilon$   the fundamental matrix solution of
 $$
 \dot{z}(t)=J_n\left( \begin{array} { c c c c }
 	B & -C \\
 	C &B
 \end{array} \right)z(t)
 $$
 satisfying $\Upsilon(0)=I_{2n}$, i.e., $\Upsilon(t)=\exp\left(tJ_n\left( \begin{array} { c c c c }
 	B & -C  \\
 	C &B
 \end{array} \right)\right)$.
 Then
\begin{align}\label{e:DiagPindex1}
 \nu_{\tau,-J_n}(\Upsilon)&=
    2\sharp\left\{k\,|\, 2\tau e_k=(4j+3)\pi\;\hbox{with some $j\in\mathbb{Z}$}\right\},\\
  i_{\tau,-J_n}(\Upsilon)&=i_{\tau}^{-J_n}(\Upsilon)+ [n/2]=2j_1+\cdots+ 2j_n +[n/2],
  \label{e:DiagPindex2}
  \end{align}
  where for $k=1,\cdots,n$, $j_k\in\mathbb{Z}$ is the unique integer such that $\frac{(4j_k-1)\pi}{2\tau}<e_k\le\frac{(4j_k+3)\pi}{2\tau}$, i.e.,
  $j_k$ is equal to $\frac{\tau e_k}{2\pi}-
  \frac{3}{4}$ if $\frac{\tau e_k}{2\pi}-
  \frac{3}{4}$ is an integer, and
  $\left[\frac{\tau e_k}{2\pi}-\frac{3}{4}\right]+1$
  otherwise.
  Furthermore, if $B$ is positive definite, then
\begin{eqnarray}\label{e:DiagPindex3}
i_{\tau,-J_n}(\Upsilon)
=[n/2]+ 2\sum_{0<t<\tau}\sharp\left\{k\,|\, 2te_k=(4j+3)\pi\;\hbox{with some $j\in\mathbb{Z}$}\right\},
\end{eqnarray}
 and if $B$ is negative definite, then
\begin{eqnarray}\label{e:DiagPindex4}
i_{\tau,-J_n}(\Upsilon)
=[n/2]- 2\sum_{0<t\le\tau}\sharp\left\{k\,|\, 2te_k=(4j+3)\pi\;\hbox{with some $j\in\mathbb{Z}$}\right\}.
\end{eqnarray}
 \end{theorem}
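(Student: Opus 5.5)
The proof will reduce everything to the scalar case $n=1$ by a unitary (symplectic orthogonal) change of variables. Since $B+\sqrt{-1}C$ is Hermitian, there is a unitary $U=U_1+\sqrt{-1}U_2$ with $U^*(B+\sqrt{-1}C)U=\operatorname{diag}(e_1,\dots,e_n)$. Through the isomorphism $\mathfrak{u}$ in (\ref{e:LieIsom}), $U$ corresponds to $P=\begin{pmatrix}U_1&-U_2\\U_2&U_1\end{pmatrix}\in{\rm Sp}(2n)\cap{\rm O}(2n)$. One checks that $P^\top\begin{pmatrix}B&-C\\C&B\end{pmatrix}P=\operatorname{diag}(e_1,\dots,e_n,e_1,\dots,e_n)$. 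Moreover $P$ commutes with $J_n$, so it also commutes with $M=-J_n$. Consequently $P^{-1}\Upsilon(t)P=\exp(tJ_nE)$ with $E=\operatorname{diag}(e,e)$, and this equals the $\diamond$-product $R(te_1)\diamond\cdots\diamond R(te_n)$ of rotations $R(\theta)=\begin{pmatrix}\cos\theta&-\sin\theta\\ \sin\theta&\cos\theta\end{pmatrix}$. Under the same identification, $-J_n$ becomes $R(-\pi/2)^{\diamond n}$.

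Next I will invoke the basic properties of $(i_{\tau,M},\nu_{\tau,M})$ defined in (\ref{e:dongIndex}). The first is symplectic conjugation invariance: $i_{\tau,P^{-1}MP}(P^{-1}\gamma P)=i_{\tau,M}(\gamma)$, and likewise for $\nu$. The second is the $\diamond$-additivity $i_{\tau,M_1\diamond M_2}(\gamma_1\diamond\gamma_2)=i_{\tau,M_1}(\gamma_1)+i_{\tau,M_2}(\gamma_2)$, which holds up to the normalizing constant that produces the $[n/2]$ correction. I expect the constant to enter through the relation $i_{\tau,M}=i^{M}_\tau+\text{const}$ between the index with the $M$-boundary condition and the $\omega$-type index, which is how (\ref{e:DiagPindex2}) is written. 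The nullity formula is straightforward: $\nu_{\tau,-J_n}(\Upsilon)=\dim_{\mathbb{R}}\ker(\Upsilon(\tau)+J_n)$, and this kernel splits into the $2\times2$ blocks $\ker(R(\tau e_k)-R(-\pi/2))$. Each such block is $2$-dimensional exactly when $\tau e_k\equiv-\pi/2 \pmod{2\pi}$, that is, when $2\tau e_k\in(4\mathbb{Z}+3)\pi$, and is zero otherwise. This gives (\ref{e:DiagPindex1}).

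For the index in the case $n=1$ I will compute $i_{\tau,R(-\pi/2)}$ of the path $t\mapsto R(te)$. The plan is to deform $e$ continuously starting from a value in $(-\pi/(2\tau),3\pi/(2\tau)]$, that is, from $j=0$. Along the deformation the index jumps by $2$ each time $\tau e$ crosses a point of $-\pi/2+2\pi\mathbb{Z}$. Upper semicontinuity of the index (right-closed intervals) then fixes $i=2j+c$ on each interval $I_j$. The constant $c$ is obtained from one explicit evaluation, for example at $e$ slightly greater than $-\pi/(2\tau)$. Summing over blocks gives $2\sum j_k+[n/2]$ once the normalization is tracked. The definite cases (\ref{e:DiagPindex3}) and (\ref{e:DiagPindex4}) then follow: when $B>0$ (resp. $B<0$) all $e_k>0$ (resp. $<0$), and $j_k$ counts the crossings $2te_k\in(4\mathbb{Z}+3)\pi$ for $t\in(0,\tau)$ (resp. $t\in(0,\tau]$, with sign $-1$). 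Alternatively, this is the Bott-type monotonicity formula applied to $M=-J_n$.

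The main obstacle is pinning down the additive constant $[n/2]$. It is not additive under $\diamond$ in a naive way ($[n/2]$ is not $n[1/2]$), so the constant must come from the definition (\ref{e:dongIndex}) of $i_{\tau,M}$ relative to the $\omega$-index with $\omega$ a root of unity of $M$. I would first compute $i_{\tau,-J_n}$ directly for a degenerate-free reference path, for instance $e_k$ all slightly greater than $-\pi/(2\tau)$, using the definition. This would confirm the value $[n/2]$, after which the block analysis supplies the $2\sum j_k$ term.
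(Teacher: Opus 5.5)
Your outline is sound and has the same skeleton as the paper. You use the symplectic orthogonal $P$ (the paper's $Q$) commuting with $J_n$, the splitting $\hat\Upsilon=\hat\Upsilon^1\diamond\cdots\diamond\hat\Upsilon^n$ with $\hat\Upsilon^k(t)=R(e_kt)$ and $-J_n=(-J_1)^{\diamond n}$, and the blockwise nullity count, which is complete as written. The index is where your route differs, and where your hedged steps need to be made precise.

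Without a correction term, $\diamond$-additivity of $i_{\tau,M}$ fails: by Lemma~\ref{lem:Dong06}(iii), $i_{\tau,-J_2}(\xi_{4,\tau})=[1]\neq[\tfrac12]+[\tfrac12]$. What is true is that $i_{\tau,M}-i^M_\tau$ depends only on $M$ (Proposition~\ref{prop:twoDef}), and that Liu's $i^M_\tau$ is $\diamond$-additive by (\ref{e:sympAddLiuT}); note $i^M_\tau$ is an $M$-index, not an $\omega$-index. Hence $i_{\tau,-J_n}(\hat\Upsilon)-\sum_k i_{\tau,-J_1}(\hat\Upsilon^k)$ is a constant depending only on $n$, and your reference-path evaluation does fix it as $[n/2]$.

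The paper instead applies Long's additivity (\ref{e:sympAdd}) before the integer part is taken. With $\xi^{(n)}(t)=\exp(J_n\pi t/(2\tau))=(\xi^{(1)})^{\diamond n}$,
\[
i_{\tau,-J_n}(\hat\Upsilon)=\Bigl[\sum_{k=1}^n i_\tau\bigl((\hat\Upsilon^kJ_1)\ast\xi^{(1)}\bigr)-\frac n2\Bigr].
\]
Each $(\hat\Upsilon^kJ_1)\ast\xi^{(1)}$ is homotopic with fixed endpoints to the rotation path $R(c_kt)$, $c_k=e_k+\pi/(2\tau)$. Its index $2j_k+1$ is read off from Lemma~\ref{lem:Dong06}(ii). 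So $[n/2]=n+[-n/2]$ visibly comes from $n$ fractional parts $-\tfrac12$ floored once. The same one-line computation $[i_\tau(\xi^{(n)})-\Delta(\xi^{(n)})/\pi]=[n/2]$ gives the first equality in (\ref{e:DiagPindex2}), which your outline only anticipates.

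The explicit homotopy also replaces your deformation in $e$, which as stated is incomplete. You assert that the index rises by exactly $2$ at each crossing of $-\pi/2+2\pi\mathbb{Z}$, but give no argument for the size or sign of the jump; a crossing-form computation would supply it. Also, the value at a degenerate $e$ is the smaller adjacent one (Corollary~\ref{cor:Long142B}), which is lower, not upper, semicontinuity. In Dong's formula the half-open intervals $(2j\pi,2(j+1)\pi]$ produce the right-closed $I_j$ with no extra work.

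For (\ref{e:DiagPindex3})--(\ref{e:DiagPindex4}), your pure counting from (\ref{e:DiagPindex2}) is correct and more elementary than the paper's use of Lemma~\ref{lem:Dong4.2} with $i_{\tau,-J_n}(\xi_{2n,\tau})=[n/2]$. Both routes, however, need all $e_k$ of one sign, and $B>0$ does not imply this when $C\neq0$. For example, $B=I_2$ and $C=\begin{pmatrix}0&-2\\2&0\end{pmatrix}$ give $e_1=-1$, $e_2=3$. The hypothesis must be read as definiteness of $B+\sqrt{-1}C$, equivalently of $\begin{pmatrix}B&-C\\C&B\end{pmatrix}$; the paper's proof makes the same tacit step.
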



\begin{theorem}\label{th:PIndex2}
For $\Upsilon$ in Theorem~\ref{th:PIndex1},
the following holds:
	\begin{align}\label{e:DiagPindex1ADD}
		\nu_{\tau,J_n}(\Upsilon)&=
		2\sharp\left\{k\,|\, 2\tau e_k=(4j+1)\pi\;\text{for some $j\in\mathbb{Z}$}\right\},\\
		i_{\tau,J_n}(\Upsilon)&=i_{\tau}^{J_n}(\Upsilon)+ [-n/2]=2j_1+\cdots+ 2j_n +[3n/2],
		\label{e:DiagPindex2ADD}
	\end{align}
	where for $k=1,\cdots,n$, $j_k\in\mathbb{Z}$ is the unique integer such that $\frac{(4j_k+1)\pi}{2\tau}<e_k\le\frac{(4j_k+5)\pi}{2\tau}$, i.e.,
	$j_k$ is equal to $\frac{\tau e_k}{2\pi}-
	\frac{5}{4}$ if $\frac{\tau e_k}{2\pi}-
	\frac{5}{4}$ is an integer, and
	$\left[\frac{\tau e_k}{2\pi}-\frac{5}{4}\right]+1$
	otherwise.
	Moreover, if $B$ is positive definite, then
	\begin{eqnarray}\label{e:DiagPindex3ADD}
		i_{\tau,J_n}(\Upsilon)
		=[-n/2]+ 2\sum_{0<t<\tau}\sharp\left\{k\,|\, 2te_k=(4j+1)\pi\;\hbox{with some $j\in\mathbb{Z}$}\right\},
	\end{eqnarray}
	and if $B$ is negative definite, then
	\begin{eqnarray}\label{e:DiagPindex4ADD}
		i_{\tau,J_n}(\Upsilon)
		=[-n/2]- 2\sum_{0<t\le\tau}\sharp\left\{k\,|\, 2te_k=(4j+1)\pi\;\hbox{with some $j\in\mathbb{Z}$}\right\}.
	\end{eqnarray}
\end{theorem}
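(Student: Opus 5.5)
The plan is to reduce Theorem~\ref{th:PIndex2} to Theorem~\ref{th:PIndex1} by diagonalizing the coefficient matrix unitarily and working one complex dimension at a time. Write $S=\left(\begin{smallmatrix} B & -C\\ C & B\end{smallmatrix}\right)$. Under the isomorphism (\ref{e:LieIsom}), $S$ corresponds to the complex linear map $B+\sqrt{-1}C$, and $J_n$ corresponds to multiplication by $\sqrt{-1}$. Choose a unitary $W$ with $W^*(B+\sqrt{-1}C)W=\operatorname{diag}(e_1,\dots,e_n)$, and let $\Phi\in{\rm Sp}(2n)\cap{\rm O}(2n)$ be its real form. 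Then $\Phi$ commutes with $J_n$ and with $-J_n$. It conjugates $\Upsilon(t)$ to $\exp(tJ_n\operatorname{diag}(e,e))$, which is a $\diamond$-product of the rotations $R(te_k)$ with $R(\theta)=\left(\begin{smallmatrix}\cos\theta & -\sin\theta\\ \sin\theta&\cos\theta\end{smallmatrix}\right)$. It also conjugates $J_n$ to $J_1^{\diamond n}=R(\pi/2)^{\diamond n}$. Since $(i_{\tau,M},\nu_{\tau,M})$ is invariant under symplectic conjugation of both the path and $M$, and is additive over $\diamond$-products up to the known correction term from the definition (\ref{e:dongIndex}), I would reduce to $n=1$ with $\gamma(t)=R(te)$ and $M=R(\pi/2)$. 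The first equality in (\ref{e:DiagPindex2ADD}), $i_{\tau,J_n}=i^{J_n}_\tau+[-n/2]$, should simply be the definition of $i_{\tau,M}$ in (\ref{e:dongIndex}), with the normalizing constant computed for $M=J_n$, whose eigenvalues are $\pm\sqrt{-1}$. Comparing with the $[n/2]$ obtained for $-J_n$ in Theorem~\ref{th:PIndex1} confirms this constant.

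The nullity is an eigenvalue count. The value $\nu_{\tau,J_n}(\Upsilon)$ is $\dim_{\mathbb R}\ker(\Upsilon(\tau)-J_n)$, possibly with a sign convention on $M$. In the complex diagonal form this equals the real dimension of the kernel of $\operatorname{diag}(e^{\sqrt{-1}\tau e_k}-\sqrt{-1})$. Each $k$ with $\tau e_k\equiv\pi/2 \pmod{2\pi}$, that is $2\tau e_k\in(4\mathbb Z+1)\pi$, contributes $2$, which gives (\ref{e:DiagPindex1ADD}). If the convention turns out to be $\ker(\Upsilon(\tau)-M^{-1})$, I would recheck against Theorem~\ref{th:PIndex1}, where $-J_n$ yields $3\pi/2$. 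This is consistent with the stated $M$-convention, because $J_n$ corresponds to $e^{\sqrt{-1}\pi/2}$ and $-J_n$ to $e^{\sqrt{-1}3\pi/2}$.

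For the index I would use a shortcut in place of a direct computation. Note that $J_n=(-J_n)\cdot(-I_{2n})$, and that $R(te)$ relative to $R(\pi/2)$ is the same as the rotated path $R(te-\pi)$ relative to $R(-\pi/2)=-J_1$. Alternatively, and more robustly, I would compute directly through the $\omega$-index or splitting numbers. For a path $R(te)$ in ${\rm Sp}(2)$, the $M$-index with $M=R(\pi/2)$ counts the crossings of $e^{\sqrt{-1}te}$ through $\sqrt{-1}$ for $t\in(0,\tau)$, with sign $\operatorname{sgn}(e)$, plus the endpoint contribution. Normalizing by the boundary case gives that $i$ jumps by $2$ exactly when $\tau e$ passes $(4j+1)\pi/2$, and that it is constant on $\left(\frac{(4j+1)\pi}{2\tau},\frac{(4j+5)\pi}{2\tau}\right]$. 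Hence each block contributes $2j_k+c$ for a constant $c$. Fix $c$ by evaluating at one convenient point, for example $e=\pi/\tau$, where $j=0$ and the path is the rotation from angle $0$ to $\pi$. There the direct Maslov-type computation gives the known value, and summing with the $[-n/2]$ normalization yields $2\sum j_k+[3n/2]$. The consistency check is that the constant for one block is $3/2$ in the $[\,\cdot\,]$ sense, versus $1/2$ for $-J_n$: shifting the target eigenvalue from $e^{\sqrt{-1}3\pi/2}$ to $e^{\sqrt{-1}\pi/2}$ changes both the interval for $j_k$ and the constant by one unit.

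The formulas (\ref{e:DiagPindex3ADD}) and (\ref{e:DiagPindex4ADD}) would then follow from the monotonicity and crossing form. If $B>0$, all $e_k>0$ and every crossing of $\Upsilon(t)$ with the $J_n$-degenerate set for $t\in(0,\tau)$ is positive with multiplicity $2$. The index at $t\to0^+$ is the base value $[-n/2]$, and the endpoint $t=\tau$ is excluded. If $B<0$, the crossings are negative and the endpoint $t=\tau$ counts, as in the standard $i_{\tau,M}$ convention where $i+\nu$ jumps on negative crossings. The main obstacle is pinning down the additive constant and the endpoint convention in (\ref{e:dongIndex}) precisely. I would handle it by mirroring the proof of Theorem~\ref{th:PIndex1} line by line, replacing $3\pi/2$ by $\pi/2$.
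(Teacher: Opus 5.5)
Your route is the paper's. You conjugate by an orthogonal symplectic matrix commuting with $J_n$; this is legitimate here only because it and $J_n$ are orthogonal, which is what Lemma~\ref{lem:Dong06+} needs for Dong's index. You then split into the $\diamond$-blocks $\hat\Upsilon^k(t)=R(e_kt)$, read off \eqref{e:DiagPindex1ADD} from $\det(R(e\tau)-J_1)=2-2\sin(e\tau)$, and get the definite cases from Lemma~\ref{lem:Dong4.2}. The nullity part is complete.

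The gap is \eqref{e:DiagPindex2ADD}. The one computation that carries the content of the theorem is deferred to ``the known value'', and the bookkeeping around it is left ambiguous in a way that matters. You do not say whether your per-block constant refers to Dong's $i_{\tau,J_1}(R(e_kt))=2j_k+1$ or to Liu's $i^{J_1}_\tau(R(e_kt))=2j_k+2$.
\begin{itemize}
\item Dong's index is not additive. Per-block Dong values must be combined as $\bigl[\sum_k(2j_k+1)+n/2\bigr]$, because the normalizing path $\zeta^{(1)}$ below has $\Delta(\zeta^{(1)})=-\pi/2$.
\item Only Liu's index, which is additive by \eqref{e:sympAddLiuT}, is then shifted by $[-n/2]$.
\item Adding $[-n/2]$ to $\sum_k(2j_k+1)$ instead gives $2\sum_kj_k+n+[-n/2]$, which is off by $n$.
\end{itemize}

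Two further points in your sketch do not hold.
\begin{itemize}
\item The first equality in \eqref{e:DiagPindex2ADD} is not ``simply the definition'' \eqref{e:dongIndex}. It is Proposition~\ref{prop:twoDef} together with the computation $\bigl[i_\tau(\zeta^{(n)})-\Delta(\zeta^{(n)})/\pi\bigr]=[-n+n/2]$ for $\zeta^{(n)}(t)=\exp(-J_n\pi t/(2\tau))$. Comparing with the $[n/2]$ for $-J_n$ is not an argument.
\item Drop the shortcut. The path $R(te-\pi)$ does not start at $I_2$, and $i_{\tau,M}$ has no invariance under $(\gamma,M)\mapsto(-\gamma,-M)$.
\end{itemize}

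The paper closes the gap with one explicit computation, which also fixes the endpoint convention without any crossing or semicontinuity argument.
\begin{itemize}
\item Take $\zeta^{(1)}(t)=R(-\pi t/(2\tau))$. Then $\zeta^{(1)}(\tau)=J_1^{-1}$ and $\zeta^{(n)}=(\zeta^{(1)})^{\diamond n}$.
\item By \eqref{e:sympAddDong1}, $i_{\tau,J_n}(\hat\Upsilon)=\sum_k i_\tau\bigl((\hat\Upsilon^kJ_1^{-1})\ast\zeta^{(1)}\bigr)+[n/2]$.
\item Each concatenated path is homotopic, with fixed endpoints, to $R(d_kt)$ where $d_k=e_k-\pi/(2\tau)$.
\item Lemma~\ref{lem:Dong06}(ii) with $\theta=0$ gives $i_\tau(R(d_kt))=2j_k+1$ exactly when $d_k\tau\in(2j_k\pi,2(j_k+1)\pi]$. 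This is precisely the half-open condition on $e_k$ in the statement.
\item Hence $i_{\tau,J_n}(\Upsilon)=2\sum_kj_k+n+[n/2]=2\sum_kj_k+[3n/2]$.
\end{itemize}
For \eqref{e:DiagPindex3ADD} and \eqref{e:DiagPindex4ADD} you also need the base value $i_{\tau,J_n}(\xi_{2n,\tau})=[-n/2]$, which is Lemma~\ref{lem:Dong06}(iv). You also need $\nu_{0,J_n}=\dim\ker(I_{2n}-J_n)=0$.
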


%
%

\begin{theorem}\label{th:PIndex3}
Let $\xi_{2n,\tau}(t)=I_{2n}$ for all $t\in [0, \tau]$,
let $e_1\le e_2\le\cdots\le e_n$ be all eigenvalues of a real symmetric matrix $B$ of order $n$, and
let $\Upsilon$ be  the fundamental matrix solution of
 \begin{eqnarray}\label{e:Add1}
 \dot{z}(t)=J_n\left( \begin{array} { c c c c }
2B & -B \\
-B &2B
 \end{array} \right)z(t)
 \end{eqnarray}
 with $\Upsilon(0)=I_{2n}$, i.e., $\Upsilon(t)=\exp\left(tJ_n\left( \begin{array} { c c c c }
2B & -B  \\
-B &2B
 \end{array} \right)\right)$.
Then for $M_{3,n}=\left( \begin{array} { c c c c c }
0 & I _ { n }\\
 -I _ { n } & I_n\end{array} \right)$,  it holds that
\begin{align}\label{e:Add2}
 \nu_{\tau, M_{3,n}}(\Upsilon)&=2\sharp\left\{j\,\Big|\, \sqrt{3}e_j\tau\in \frac{2\pi}{3}+ (2\mathbb{Z}+1)\pi\right\},\\
 i_{\tau,M_{3,n}}(\Upsilon)&=\left[n-\frac{n\arctan 2}{\pi}\right]+2\sum_{e_j>0}\sharp\left\{0<t<\tau\,\Big|\, \sqrt{3}e_jt\in \frac{2\pi}{3}+ (2\mathbb{Z}+1)\pi\right\}\nonumber\\
&\quad-2\sum_{e_j<0}\sharp\left\{0<t\le\tau\,\Big|\, \sqrt{3}e_jt\in \frac{2\pi}{3}+ (2\mathbb{Z}+1)\pi\right\}.\label{e:Add3}
\end{align}
 In particular, if $B$ is positive definite,
\begin{eqnarray}\label{e:Add4}
i_{\tau, M_{3,n}}(\Upsilon)
=\left[n-\frac{n\arctan 2}{\pi}\right]+2\sum_{0< t<\tau}\sharp\left\{j\,\Big|\, \sqrt{3}e_jt\in \frac{2\pi}{3}+ (2\mathbb{Z}+1)\pi\right\}
\end{eqnarray}
and if $B$ is negative definite,
\begin{eqnarray}\label{e:Add5}
i_{\tau, M_{3,n}}(\Upsilon)
=\left[n-\frac{n\arctan 2}{\pi}\right]-2\sum_{0<t\le\tau}\sharp\left\{j\,\Big|\, \sqrt{3}e_jt\in \frac{2\pi}{3}+ (2\mathbb{Z}+1)\pi\right\}.
\end{eqnarray}
 \end{theorem}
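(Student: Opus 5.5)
We need to prove Theorem~\ref{th:PIndex3}: compute $\nu_{\tau,M_{3,n}}$ and $i_{\tau,M_{3,n}}$ for $\Upsilon(t)=\exp(tJ_nS)$ with $S=\begin{pmatrix}2B&-B\\-B&2B\end{pmatrix}$.

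The plan is to reduce to the case $n=1$ by diagonalising $B$, then handle the $n=1$ case explicitly. Write $B=Q^\top\operatorname{diag}(e_1,\dots,e_n)Q$ with $Q$ orthogonal, and set $\mathcal{Q}=\operatorname{diag}(Q,Q)$. This matrix is symplectic and orthogonal, it commutes with $J_n$, and it commutes with $M_{3,n}$, because $M_{3,n}$ is built from blocks $0$ and $\pm I_n$. Conjugating by $\mathcal{Q}$ gives $\mathcal{Q}\Upsilon\mathcal{Q}^{-1}=\exp(tJ_n\,\mathcal{Q}S\mathcal{Q}^\top)$. After a symplectic permutation into $\diamond$-form, this equals $\diamond_{j}\exp\bigl(tJ_1 e_j\begin{pmatrix}2&-1\\-1&2\end{pmatrix}\bigr)$. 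The same permutation turns $M_{3,n}$ into $M_{3,1}^{\diamond n}$. We then use two standard properties of the index $(i_{\tau,M},\nu_{\tau,M})$ defined in (\ref{e:dongIndex}): symplectic conjugation invariance, in the form $i_{\tau,PMP^{-1}}(P\gamma P^{-1})=i_{\tau,M}(\gamma)$, and additivity under $\diamond$-products. Together these reduce everything to $n=1$. The term $[n-n\arctan 2/\pi]$ is not $n$ times a one-dimensional constant, so the additivity must be checked carefully. I expect the index splits as $i_{\tau}^{M}(\gamma)+c(M)$, where $c(M)$ is a correction (Dong's normalisation, depending only on $M$, presumably through the splitting numbers or eigenvalue rotation of $M$). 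In that case only the $i^{M}_\tau$ part is additive, and $c(M_{3,n})$ should be computed separately from the eigenvalues of $M_{3,n}$.

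The nullity. $M_{3,1}=\begin{pmatrix}0&1\\-1&1\end{pmatrix}$ has eigenvalues $e^{\pm\sqrt{-1}\pi/3}$. The matrix $A=\begin{pmatrix}2&-1\\-1&2\end{pmatrix}$ has eigenvalues $1$ and $3$, so $J_1eA$ has eigenvalues $\pm\sqrt{-1}\sqrt3\,|e|$ and $\exp(tJ_1eA)$ is elliptic with rotation angle $\sqrt3 e t$ (signed). The nullity $\nu_{\tau,M}(\gamma)=\dim_{\mathbb C}\ker(\gamma(\tau)-M)$ is therefore computable. Since the flow does not commute with $M_{3,1}$, we compute it directly: after the symplectic change $P$ with $PAP^\top\propto I$ (rescaling by $A^{1/2}$ up to a factor $\sqrt3$), the flow becomes a rotation $R(\sqrt3 e t)$, while $PM_{3,1}P^{-1}$ becomes some rotation $R(\theta_0)$. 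A rotation, not merely an elliptic matrix, is expected here because $M_{3,1}$ preserves $A$ (this follows from the $M$-invariance of $\check H$). The angle $\theta_0$ should come out to be $-\pi/3$, up to a choice of sign. The degeneracy condition $R(\sqrt3 e\tau)=R(\theta_0)$ then gives $\sqrt3e\tau\equiv\theta_0 \pmod{2\pi}$. Reconciling this with the stated set $\frac{2\pi}{3}+(2\mathbb Z+1)\pi$, which is $-\pi/3$ mod $2\pi$, fixes the sign. Each such $e$ gives a kernel of real dimension $2$, which accounts for the factor $2$ in (\ref{e:Add2}).

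The index. For $n=1$ I will use the rotation-path computation. The conjugated path is $t\mapsto R(\sqrt3et)$ and the target is $R(-\pi/3)$. By the standard formula for the $M$-index of a rotation path, each crossing at an interior time $0<t<\tau$ where $\sqrt3 e t\equiv -\pi/3$ contributes $+2$ if $e>0$ (positive definite Hamiltonian) and $-2$ if $e<0$. For $e<0$ the endpoint $t=\tau$ is also counted, which is the usual asymmetry between $i$ and $i+\nu$ on the negative side and explains the $0<t\le\tau$ in (\ref{e:Add3}). The constant term comes from the normalisation at $t=0$, that is, the index of a short path near $I$ relative to $M_{3,n}$. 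The main obstacle is pinning down this constant $[n-n\arctan2/\pi]$. It involves $\arctan 2$, which suggests that $M_{3,n}$ is not itself orthogonal and that the constant comes from the polar/unitary part $\mathfrak u(M_{3,n})$ via $\Delta$ in Dong's definition. My first attempt will be to compute $\mathfrak u(M_{3,1})$ from the polar decomposition, extract its angle $\phi$, and evaluate the correction $c(M_{3,n})$ from the $\Delta$-based definition (\ref{e:dongIndex}). I expect $n\phi/\pi$ to produce $n\arctan 2/\pi$ after flooring, and I will check this numerically at $n=1,2$. Formulas (\ref{e:Add4}) and (\ref{e:Add5}) then follow immediately, since all $e_j$ share one sign.
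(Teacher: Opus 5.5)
Your outline has the same structure as the paper's proof. You block-diagonalize with an orthogonal $\operatorname{diag}(P,P)$ commuting with $J_n$ and $M_{3,n}$, split $\hat\Upsilon=\hat\Upsilon^1\diamond\cdots\diamond\hat\Upsilon^n$, isolate a non-additive floor term coming from $\Delta$ of a path ending at $M_{3,1}^{-1}$, and count crossings blockwise. However, one tool you rely on is false as stated, and it would corrupt exactly the constant you need.

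\textbf{The conjugation invariance you invoke does not hold.} Dong's index $i_{\tau,M}$ is invariant under conjugation by \emph{orthogonal} symplectic matrices (Lemma~\ref{lem:Dong06+}), not under general symplectic ones. Only $i^M_\tau$ and $\nu_{\tau,M}$ are invariant under every symplectic conjugation. The correction $c(M)=[i_\tau(\xi)-\Delta(\xi)/\pi]$ of Proposition~\ref{prop:twoDef} depends on the unitary part of $M$, not on its conjugacy class, so it is not a function of splitting numbers either.

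Your normalising $P$, which makes $A$ congruent to a multiple of $I_2$, cannot be orthogonal, since $A$ has eigenvalues $1$ and $3$. Concretely, $P^{\diamond n}$ carries $M_{3,n}$ to $R(-\pi/3)^{\diamond n}$. One computes $c(R(-\pi/3)^{\diamond n})=[n-n/3]=[2n/3]$, whereas $c(M_{3,n})=[n-n\arctan 2/\pi]$; at $n=3$ these are $2$ and $1$.

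There are two ways to repair this.
\begin{itemize}
\item Do the rotation computation for $i^{M_{3,1}}_\tau$, which is additive and invariant under all symplectic conjugations. Then convert back using $c(R(-\pi/3))=[2/3]=0=c(M_{3,1})$.
\item Alternatively, as the paper does, apply Lemma~\ref{lem:Dong4.2} directly to the definite block $e_jA$, with base value $i_{\tau,M_{3,1}}(\xi_{2,\tau})=[1-\arctan 2/\pi]=0$.
\end{itemize}
The case $e_j=0$, which you omit, is the constant path and contributes $0$.

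For the constant you also need the integer part of the definition, not only $n\phi/\pi$ before flooring. Take $\xi$ from $I_2$ to $M_{3,1}^{-1}$; the paper uses $\xi=\zeta\ast\eta$ with $\eta(t)=R(\pi t/2\tau)$ and $\zeta(t)=\begin{pmatrix}t/\tau&-1\\1&0\end{pmatrix}$. Then $\Delta(\xi)=\arctan 2$ and $i_\tau(\xi)=1$, so $c(M_{3,n})=[n\,i_\tau(\xi)-n\Delta(\xi)/\pi]=[n-n\arctan 2/\pi]$.

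\textbf{The nullity step contains a wrong claim and a circular step.} The flow does commute with $M_{3,1}$, and fixing the sign of $\theta_0$ by matching the statement you are proving is not a proof. Because $M_{3,1}^\top AM_{3,1}=A$, the matrix $M_{3,1}$ commutes with $J_1A$. In fact $M_{3,1}=\cos(\pi/3)I_2-\frac{\sin(\pi/3)}{\sqrt3}J_1A$ lies on the one-parameter group $\hat\Upsilon^j(t)=\cos(\sqrt3e_jt)I_2+\frac{\sin(\sqrt3e_jt)}{\sqrt3}J_1A$.

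Hence $\det(\hat\Upsilon^j(t)-M_{3,1})=2+2\sin(\sqrt3e_jt-\pi/6)$. This vanishes exactly when $\sqrt3e_jt\in-\frac{\pi}{3}+2\pi\mathbb{Z}=\frac{2\pi}{3}+(2\mathbb{Z}+1)\pi$. At those times $\hat\Upsilon^j(t)=M_{3,1}$, so the kernel is all of $\mathbb{R}^2$. This is how the paper obtains (\ref{e:Add2}), and it determines $\theta_0=-\pi/3$ by direct computation.
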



\begin{theorem}\label{th:PIndex4}
Let $B$, $C$, and $e_1\le e_2\le\cdots\le e_n$ be as in Theorem~\ref{th:PIndex1}.
Let $\Upsilon$ be  the fundamental matrix solution of
 $$
 \dot{Z}(t)=J_{2n}
\left( \begin{array} {cccc}
B& -C &0&0\\
C&B&0&0\\
0&0&B&-C\\
0&0&C&B
 \end{array} \right)Z(t)
 $$
 with $\Upsilon(0)=I_{4n}$, and let
 $M_n:=\left( \begin{array} { c c  } 0 & J_n \\ -J_n & 0 \end{array} \right)\in{\rm Sp}(4n)^0$.
 Then it holds that
\begin{align}\label{e:DiagPindex1*}
    \nu_{\tau, M_n}(\Upsilon) &= 2\sharp\{\, m \mid \tau e_m \in \pi\mathbb{Z} \,\},\\
    i_{\tau, M_n}(\Upsilon) &= 2\sum_{j=1}^{n}m_j+ n, \label{e:DiagPindex2*}
\end{align}
where for each $j\in\{1,\cdots,n\}$, $m_j \in \mathbb{Z}$ is the unique integer such that $m_j\pi < e_j\tau \le (m_j + 1)\pi$.
  Moreover, if $B$ is positive definite, then
\begin{equation}\label{e:positive-negativeAG}
i_{\tau,M_n}(\Upsilon)=2n+ i_{\tau, M_n}(\xi_{4n,\tau})
+2\sum_{0<t<\tau}\sharp\left\{k\mid te_k\in\pi\mathbb{Z}\right\},
\end{equation}
and if $B$ is negative definite, then
\begin{equation}\label{e:positive-negativeBG}
i_{\tau,M_n}(\Upsilon)=i_{\tau, M_n}(\xi_{4n,\tau})
-2\sum_{0<t\le\tau}\sharp\left\{k\mid te_k\in\pi\mathbb{Z}\right\}.
\end{equation}
Hereafter, $\xi_{m,\tau}(t)=I_{m}$ for all $t\in [0, \tau]$.
\end{theorem}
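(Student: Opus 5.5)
The plan is to reduce Theorem~\ref{th:PIndex4} to the $2n$-dimensional computation of Theorem~\ref{th:PIndex1}/\ref{th:PIndex2}, by diagonalizing both the linear flow and the boundary matrix $M_n$ simultaneously. Write $S=\left(\begin{smallmatrix} B & -C\\ C& B\end{smallmatrix}\right)$. Then $\Upsilon(t)=\exp(tJ_{2n}\,{\rm diag}(S,S))$. In the splitting $\mathbb{R}^{4n}=\mathbb{R}^{2n}\oplus\mathbb{R}^{2n}$ the form $J_{2n}$ is not block diagonal, so I first conjugate by the permutation relating $\diamond$-products to direct sums. Then $\Upsilon=\gamma\diamond\gamma$ with $\gamma(t)=\exp(tJ_nS)$, and $\gamma(t)\in{\rm Sp}(2n)\cap{\rm O}(2n)$ commutes with $J_n$. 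Indeed, $S$ commutes with $J_n$, so $\gamma(t)=\exp(-tS J_n^{-1})$ corresponds under $\mathfrak u$ to the unitary $e^{\sqrt{-1}t(B+\sqrt{-1}C)}$, up to sign conventions. Unitarily diagonalizing $B+\sqrt{-1}C$ makes $\gamma$ symplectically (and orthogonally) conjugate to $R(te_1)\diamond\cdots\diamond R(te_n)$, where $R(\theta)$ is the $2\times 2$ rotation.

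Next I conjugate $M_n$ into a normal form that respects this splitting. The matrix $M_n=\left(\begin{smallmatrix}0&J_n\\-J_n&0\end{smallmatrix}\right)$ is symplectic, orthogonal, and an involution. Its $+1$ and $-1$ eigenspaces are $\{(u,-J_nu)\}$ and $\{(u,J_nu)\}$. The unitary conjugations used above commute with $J_n\oplus J_n$ and with swapping the two factors, so they commute with $M_n$. Hence the pair $(\Upsilon,M_n)$ splits as a $\diamond$-sum of $n$ copies of the $4$-dimensional problem $(R(te_k)\diamond R(te_k),\,M_1)$. By the symplectic additivity of $(i_{\tau,M},\nu_{\tau,M})$ under $\diamond$-sums (from the definition (\ref{e:dongIndex}) and its standard properties in Dong/Long's theory, as used for Theorem~\ref{th:PIndex1}), it suffices to treat $n=1$ and then sum.

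For $n=1$ with eigenvalue $e$, the nullity is $\dim\ker(\Upsilon(\tau)-M_1)$. Restricted to the two $\pm1$ eigenspaces of $M_1$, each of dimension $2$ and invariant under $R\diamond R$ (which acts there as the rotation $R(\tau e)$ or its conjugate), this kernel is $\ker(R(\tau e)-I)\oplus\ker(R(\tau e)+I)$. Its dimension is $2$ exactly when $\tau e\in\pi\mathbb{Z}$, and $0$ otherwise; this gives (\ref{e:DiagPindex1*}). For the index, $\nu$ jumps by $2$ each time $te$ crosses $\pi\mathbb{Z}$. I will use the monotonicity/splitting-number formula: for a positive definite Hamiltonian each crossing contributes $+2$, i.e. $i_{\tau,M}(\gamma)=i_{\tau,M}$(constant path)$+\dim\ker$ at $t=0$ plus $2\cdot$(number of interior crossings). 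This yields (\ref{e:positive-negativeAG}) and, with the usual sign change and endpoint counting, (\ref{e:positive-negativeBG}).

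The main obstacle is computing the base value $i_{\tau,M_n}(\xi_{4n,\tau})$ together with the $t=0$ contribution. Note that $\nu_{\tau,M_n}(\xi)=\dim\ker(I-M_n)=2n$. I would compute it directly from (\ref{e:dongIndex}) by the rotation-perturbation/Maslov-type definition for the $4\times 4$ block, expecting $i_{\tau,M_1}(\xi_{4,\tau})=-1$, so that $2n+i(\xi)=n$. The resulting formula is $i=2\sum m_j+n$: for $e>0$ the number of crossings in $(0,\tau)$ equals $m$ when $e\tau\in(m\pi,(m+1)\pi]$, and the negative case gives $-2(-m-1)+\ldots$, matching (\ref{e:DiagPindex2*}) after the bookkeeping. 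For general (indefinite) $e_k$, including $e_k=0$ where $m=-1$ gives contribution $-1$, I will pass through a homotopy in $e$ with fixed endpoint nullity, or treat each eigenvalue separately via additivity. The crossing convention at $t=\tau$ (strict versus non-strict inequality) must be checked carefully against $m_j\pi<e_j\tau\le(m_j+1)\pi$.
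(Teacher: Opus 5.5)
Your route differs from the paper's and works in outline, but two steps are asserted rather than proved.

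\textbf{Comparison with the paper.} The paper proves \eqref{e:DiagPindex2*} directly, in three stages:
\begin{itemize}
\item it conjugates by $\Xi=\mathrm{diag}(Q,Q)$ and splits with $\hat\digamma$;
\item it builds explicit paths in ${\rm Sp}(4)^+$ from $(\hat\Upsilon(\tau)\diamond\hat\Upsilon(\tau))M_1^{-1}$ to $M_2^+$ and reads off $\Delta$ via Lemma~\ref{lem:Dong06}(i), treating $e_j\tau\in\pi\mathbb{Z}$ by perturbing $e_j\pm\epsilon$ and invoking Corollary~\ref{cor:Long142B}(B);
\item only at the end does it derive \eqref{e:positive-negativeAG}--\eqref{e:positive-negativeBG} from Lemma~\ref{lem:Dong4.2}.
\end{itemize}
You reverse this order. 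You apply Lemma~\ref{lem:Dong4.2} to each $4$-dimensional block, which is definite according to the sign of $e_j$ and constant if $e_j=0$, and then add up. That lemma has no nondegeneracy hypothesis at $t=\tau$, so you avoid both the path construction and the perturbation step. Your nullity argument via the $\pm1$ eigenspaces of $M_1$, on which the flow acts as $R(et)$ and $R(-et)$, is also cleaner than the determinant computation with Lemma~\ref{lem:PIndex3}. Given $i_{\tau,M_1}(\xi_{4,\tau})=-1$, your bookkeeping is correct: one gets $-1+2+2m$ for $e>0$, $-1-2(-m-1)$ for $e<0$, and $-1$ for $e=0$ (where $m=-1$). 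That is $2m+1$ in every case.

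\textbf{First gap: additivity.} $\diamond$-additivity is not a standard property of Dong's index. In \eqref{e:sympAddDong} the term $-\Delta(\xi_0\diamond\xi_1)/\pi$ sits inside an integer part. Theorem~\ref{th:PIndex1}, which you cite, is exactly where additivity fails: $i_{\tau,-J_n}(\xi_{2n,\tau})=[n/2]\neq n[1/2]$.
\begin{itemize}
\item Additivity does hold here, but only because $\mathfrak{u}(M_1)=-\sqrt{-1}J_1$ has determinant $-1$. Hence every $\xi\in\mathcal{P}_\tau(4)$ with $\xi(\tau)=M_1^{-1}$ has $\Delta(\xi)\in\pi\mathbb{Z}$, and the bracket in \eqref{e:dongIndex} is vacuous. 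You need to say this.
\item $M_n$ is not $M_1^{\diamond n}$ but $\hat\digamma(M_1,\dots,M_1)$. This is a conjugate of $M_1^{\diamond n}$ by an orthogonal symplectic coordinate permutation, so you also need \eqref{e:RobbinSaCZ+RobbinSa7}.
\item Similarly, when $C\neq0$, $\Upsilon$ is not $\gamma\diamond\gamma$ after the standard permutation; this only holds after your $Q$-diagonalization.
\end{itemize}

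\textbf{Second, more serious gap: the base value.} You read $i_{\tau,M_1}(\xi_{4,\tau})=-1$ off the target formula rather than computing it. This is exactly where the normalization of the index enters. Since $\xi_{4,\tau}(\tau)M_1^{-1}=M_1^{-1}$ is degenerate, Lemma~\ref{lem:Dong06}(i) does not apply to it directly.

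Your own eigenspace observation fixes this.
\begin{itemize}
\item $E_\pm=\{(u,\mp J_1u)\}$ are mutually orthogonal, $\omega$-orthogonal and symplectic.
\item After a reflection on $E_-$ to correct orientation, an orthogonal symplectic change of basis turns $M_1$ into $I_2\diamond(-I_2)$.
\item It commutes with $R(e\cdot)\diamond R(e\cdot)=\cos(e\cdot)I_4+\sin(e\cdot)J_2$, so the flow is unchanged.
\end{itemize}
Now apply \eqref{e:sympAddDong1} with $\xi_0\equiv I_2$ and $\xi_1(t)=R(\pi t/\tau)$; the correction term is the integer $-1$. This gives
$i_{\tau,M_1}(R(e\cdot)\diamond R(e\cdot))=i_{\tau,R(0)}(\gamma_c)+i_{\tau,R(\pi)}(\gamma_c)$ with $c=e$ as in Lemma~\ref{lem:Dong06}(ii). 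By that lemma this equals $2m+1$; in particular it is $-1+0=-1$ for $e=0$. So you get the base value, and in fact \eqref{e:DiagPindex2*} for $n=1$, without using Lemma~\ref{lem:Dong4.2} at all.
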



\begin{theorem}\label{th:PIndex5}
Let $A$ and $B$ be two commuting real symmetric matrices of order $n$.
Then there exists (see \cite[Theorem~4.5.15]{HorJ}) an orthogonal matrix $\Xi$ of order $n$ such that
$$
A=\Xi^\top{\rm diag}(a_1,\cdots,a_n)\Xi\quad\hbox{and}\quad B=\Xi^\top{\rm diag}(b_1,\cdots, b_n)\Xi.
$$
(Note that ordering of the eigenvalues $a_1,\cdots,a_n$ and $b_1,\cdots,b_n$ is not unique and depends on $\Xi$.
But $\Xi$ can be chosen such that $a_1\le a_2\le\cdots\le a_n$.)
  Denote by  $\Upsilon$   the fundamental matrix solution of
 $$
 \dot{z}(t)=J_n\left( \begin{array} { c c c c }
 	A & 0 \\
 	0 &B
 \end{array} \right)z(t)
 $$
 satisfying $\Upsilon(0)=I_{2n}$.  Then
\begin{align}\label{e:DistriDiagPindex1}
 \nu_{\tau,-I_{2n}}(\Upsilon)&=
    2\sharp\left\{k\,|\, \tau\sqrt{a_kb_k}\in (2\mathbb{N}-1)\pi\right\},\\
  i_{\tau,-I_{2n}}(\Upsilon)&=\sum_{a_kb_k>0\& a_k>0}2\lceil \frac{\sqrt{a_kb_k}\tau}{2\pi}-\frac{1}{2} \rceil+
\sum_{a_kb_k>0\& a_k<0}
2\lceil \frac{-\sqrt{a_kb_k}\tau}{2\pi}-\frac{1}{2} \rceil,
  \label{e:DistriDiagPindex2}
  \end{align}
   where $\lceil r \rceil=\min\{k\in\mathbb{Z}\,|\,k\ge r\}$ for a real $r$.
       Furthermore, if $A$ and $B$ are positive definite, then
\begin{eqnarray}\label{e:DistriDiagPindex3}
i_{\tau,-I_{2n}}(\Upsilon)
=\sum_{0< t < \tau}2\sharp\left\{k\,|\, \sqrt{a_kb_k}t\in (2\mathbb{N}-1)\pi\right\},
\end{eqnarray}
 and if $A$ and $B$ are negative definite, then
\begin{eqnarray}\label{e:DistriDiagPindex4}
i_{\tau,-I_{2n}}(\Upsilon)
=-\sum_{0< t \le\tau}2\sharp\left\{k\,|\, \sqrt{a_kb_k}t\in (2\mathbb{N}-1)\pi\right\}.
\end{eqnarray}
 \end{theorem}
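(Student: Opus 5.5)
Since $A$ and $B$ commute, they are simultaneously orthogonally diagonalizable: $A=\Xi^\top{\rm diag}(a_k)\Xi$ and $B=\Xi^\top{\rm diag}(b_k)\Xi$. Put $Q={\rm diag}(\Xi,\Xi)$. Then $Q$ is symplectic and orthogonal, and $Q$ commutes with $J_n$ and with $-I_{2n}$. Hence
$$\Upsilon(t)=Q^\top \exp\bigl(tJ_n{\rm diag}(a_1,\dots,a_n,b_1,\dots,b_n)\bigr)Q .$$
The Maslov-type index $(i_{\tau,M},\nu_{\tau,M})$ is invariant under conjugation by a constant symplectic matrix that commutes with $M$; the conjugation acts as a homotopy within $\mathcal{P}_\tau$, and it preserves the degeneracy $\nu$. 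So we may assume $\Xi=I_n$.

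After a symplectic permutation of coordinates (the map $(q,p)\mapsto$ pairs $(q_k,p_k)$), the path decomposes as a $\diamond$-product $\Upsilon=\gamma_1\diamond\cdots\diamond\gamma_n$, where $\gamma_k(t)=\exp\bigl(tJ_1{\rm diag}(a_k,b_k)\bigr)$ in ${\rm Sp}(2)$. We also have $-I_{2n}=(-I_2)^{\diamond n}$. The index relative to $M$ is additive under $\diamond$-products: $i_{\tau,M_1\diamond M_2}(\gamma_1\diamond\gamma_2)=i_{\tau,M_1}(\gamma_1)+i_{\tau,M_2}(\gamma_2)$, and likewise for $\nu$. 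I would quote this property from the standard references (Long's book, or the definition (\ref{e:dongIndex})). This reduces everything to $n=1$, which is exactly Lemma~\ref{lem:distri2}, used already in Theorem~\ref{th:DistribifDelay1} for $\gamma_{a,b}$. I would state and prove that lemma here if it is not yet available. Summing its outputs gives (\ref{e:DistriDiagPindex1}) and (\ref{e:DistriDiagPindex2}).

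The main work is the $2\times2$ computation. One has $J_1{\rm diag}(a,b)=\begin{pmatrix}0&-b\\ a&0\end{pmatrix}$, whose square is $-ab\,I_2$.
\begin{itemize}
\item If $ab>0$, set $\omega=\sqrt{ab}$. Then $\gamma(t)=\cos(\omega t)I+\frac{\sin\omega t}{\omega}J_1{\rm diag}(a,b)$, which is conjugate, via ${\rm diag}(\alpha,\alpha^{-1})$ with $\alpha^2=\sqrt{b/a}$, to the rotation $R(\pm\omega t)$ with sign ${\rm sgn}(a)$. Hence $\nu_{\tau,-I_2}=\dim\ker(\gamma(\tau)+I)$ equals $2$ exactly when $\omega\tau\in(2\mathbb{N}-1)\pi$, and equals $0$ otherwise.
\item If $ab\le0$, the eigenvalues of $\gamma(\tau)$ are real and positive, or the matrix is unipotent, so $\nu=0$ and the index is $0$. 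For this case I would check via the $\omega$-index formula $i_{\tau,-I}=i_{-1}$: the path never meets the $(-1)$-degenerate set, so there is a homotopy to a constant-type path.
\end{itemize}

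For rotations $R(\theta t)$, the index $i_{-1}$ counts crossings of the eigenvalue $-1$ with multiplicity $2$ and with a sign depending on the direction of rotation. This yields $2\lceil \frac{\omega\tau}{2\pi}-\frac12\rceil$ for $a>0$ and $2\lceil\frac{-\omega\tau}{2\pi}-\frac12\rceil$ for $a<0$. Equivalently, I would compute by the crossing form: at each crossing $t_0$ with $\omega t_0\in(2\mathbb{N}-1)\pi$, the crossing form is ${\rm diag}(a,b)$ restricted to $\ker(\gamma(t_0)+I)=\mathbb{R}^2$. It is definite with signature $\pm2$, and the endpoint convention (half at $t=0$ is irrelevant here, full at $t=\tau$) explains the ceiling.

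Finally, (\ref{e:DistriDiagPindex3}) and (\ref{e:DistriDiagPindex4}) follow directly. In the definite cases all $a_kb_k>0$ with a common sign of $a_k$, and the ceilings count interior crossings $0<t<\tau$, respectively crossings $0<t\le\tau$ with a minus sign. The obstacle I expect is fixing the normalization and sign convention of $i_{\tau,M}$ from (\ref{e:dongIndex}), in particular its endpoint treatment, so that the ceiling formulas match exactly.
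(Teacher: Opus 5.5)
Your route is the paper's own: conjugate by $Q=\mathrm{diag}(\Xi,\Xi)$, write $\hat\Upsilon=\gamma_{a_1,b_1}\diamond\cdots\diamond\gamma_{a_n,b_n}$, add kernels for $\nu$, and reduce $i$ to the $2\times2$ blocks of Lemma~\ref{lem:distri2}. The step that fails as stated is additivity.

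The identity $i_{\tau,M_1\diamond M_2}(\gamma_1\diamond\gamma_2)=i_{\tau,M_1}(\gamma_1)+i_{\tau,M_2}(\gamma_2)$ is false for Dong's index in general. It is also not in Long's book, which treats only $i_\tau$ and $i_\omega$. Because of the integer part in \eqref{e:dongIndex}, one only gets \eqref{e:sympAddDong}. For instance, Lemma~\ref{lem:Dong06}(iii) gives $i_{\tau,-J_2}(\xi_{4,\tau})=1$, while $i_{\tau,-J_1}(\xi_{2,\tau})=0$.

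So you must verify additivity for $M=-I$. Take $\xi^{(1)}(t)=R(\pi t/\tau)$ and $\xi^{(n)}=(\xi^{(1)})^{\diamond n}$. These are orthogonal symplectic with $\Delta(\xi^{(1)})=\pi$, so by \eqref{e:sympAddDong1}
\[
i_{\tau,-I_{2n}}(\hat\Upsilon)=\Bigl[\sum_{k=1}^n i_\tau\bigl((-\gamma_{a_k,b_k})\ast\xi^{(1)}\bigr)-n\Bigr]=\sum_{k=1}^n\Bigl(i_\tau\bigl((-\gamma_{a_k,b_k})\ast\xi^{(1)}\bigr)-1\Bigr)=\sum_{k=1}^n i_{\tau,-I_2}(\gamma_{a_k,b_k}).
\]
The bracket splits precisely because $\Delta(\xi^{(1)})/\pi$ is an integer; this is what the paper does.

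Alternatively, Proposition~\ref{prop:twoDef} with this $\xi^{(m)}$ gives $i_{\tau,-I_{2m}}=i^{-I_{2m}}_\tau$, and $i^M_\tau$ is additive by \eqref{e:sympAddLiuT}. The same identity, through Lemma~\ref{lem:Dong06+}, also justifies your conjugation by the non-orthogonal matrix $\mathrm{diag}(\alpha,\alpha^{-1})$. Your identification with $i_{-1}$ would work too, but it needs a proof or a reference.

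Second, the endpoint rule in your crossing count is misstated. A crossing at $t=\tau$ contributes $0$ if it is positive ($a>0$) and $-2$ if it is negative ($a<0$). This is the asymmetry $\sum_{0\le t<\tau}$ versus $-\sum_{0<t\le\tau}$ in Lemma~\ref{lem:Dong4.2}; equivalently, $i$ is the smaller of the two perturbed values (Corollary~\ref{cor:Long142B}). Counting the endpoint ``in full'' would give $i_{\tau,-I_2}(\gamma_{a,b})=2$ for $a>0$ and $\sqrt{ab}\,\tau=\pi$, whereas \eqref{e:DistriDiagPindex2} gives $0$. Your later use of $0<t<\tau$ in the positive definite case is the correct rule.

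With these two repairs the argument is complete. Reading \eqref{e:DistriDiagPindex3}--\eqref{e:DistriDiagPindex4} off \eqref{e:DistriDiagPindex2} by elementary counting is fine. It is shorter than the paper's second appeal to Lemma~\ref{lem:Dong4.2} combined with $i_{\tau,-I_{2n}}(\xi_{2n,\tau})=0$.
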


The proofs of Theorems~\ref{th:PIndex1},~\ref{th:PIndex2} are given in Section~\ref{sec:PIndex2},
while those of Theorems~\ref{th:PIndex3}, \ref{th:PIndex4} and
\ref{th:PIndex5}
 are proved in Sections~\ref{sec:PIndex3},  \ref{sec:PIndex4}
 and \ref{sec:PIndex5}, respectively.

To ensure the proofs are self-contained, we first outline related Maslov-type indices and present some lemmas. Some of these findings are not available in existing references.

\subsection{Preliminary}\label{sec:Pre}

\subsubsection{A brief overview of related Maslov-type indexes and results}\label{app:Pre1}

 Conley and Zehnder \cite{CoZe} (for $n\ge 2$)
 and Long and  Zehnder \cite{LongZeh}  (for $n=1$) assigned an integer
 \begin{equation}\label{e:Conley-Zehnder}
 i_{\rm CZ}(\gamma)=\Delta({\beta\ast\gamma})/\pi
 \end{equation}
  to each $\gamma\in\mathcal{P}^\ast_\tau(2n)$,  the so-called \textsf{Conley-Zehnder index}  of $\gamma$, where
 $\beta$ is a path in ${\rm Sp}(2n)^\ast$ connecting $\gamma(\tau)$ to $M^+_n$ or $M^-_n$.
   An alternative exposition was presented in  \cite{SaZe2}.

For each $B\in C([0,\tau];\mathcal{L}_s(\mathbb{R}^{2n}))$
that is positive definite everywhere and satisfies $B(0) = B(\tau)$,
Ekeland \cite{Ek84} defined,  via dual variational methods, a pair of numbers
  $$
  (i^E(B), \nu^E(B))\in (\N\cup\{0\})\times\{0,1,\cdots,2n\},
  $$
    called the \textsf{Ekeland index of $B$}.
  He showed  that
\begin{equation}\label{e:Ekeland-index}
i^E(B)=\sum_{0<s<\tau}\nu^E(B|_{[0,s]})=\sum_{0<s<\tau}\dim{\rm Ker}(\Upsilon_B(s)-I_{2n})
\end{equation}
as stated in Theorem~6 of \cite[Chapter I, \S4]{Ek90},
 where $\Upsilon_B:[0,\tau]\to {\rm Sp}(2n,\mathbb{R})$ is the fundamental
 matrix solution of the linear system
$\dot{Z}(t)=JB(t)Z(t)$ with $\Upsilon_B(0)=I_{2n}$.
     Brousseau \cite{Bro86}  proved
\begin{equation}\label{e:long-Ekeland0}
i_{\rm CZ}(\Upsilon_B)=i^E(B)+ n\quad\hbox{if}\quad \nu^E(B)=0.
\end{equation}

As an extension of the Conley-Zehnder index $i_{\rm CZ}$ to paths in $\mathcal{P}_\tau(2n)\setminus\mathcal{P}^\ast_\tau(2n)$,
  Long \cite{Long97, Long02}  defined the \textsf{Maslov-type index} of $\gamma\in\mathcal{P}_\tau(2n)$ to be a pair of integers $(i_\tau(\gamma),\nu_\tau(\gamma))$, where $\nu_\tau(\gamma):=\dim{\rm Ker}(\gamma(\tau)-I_{2n})$,
  ${i}_\tau(\gamma)=i_{\rm CZ}(\gamma)$ if $\nu_\tau(\gamma)=0$, and
\begin{eqnarray}\label{e:long}
 {i}_\tau(\gamma)=\inf\{i_{\rm CZ}(\beta)\,|\, \beta\in{\cal
P}^\ast_\tau(2n)\;\hbox{is sufficiently $C^0$ close to $\gamma$
in}\;{\cal P}_\tau(2n)\}
\end{eqnarray}
if $\nu_\tau(\gamma)>0$. The Maslov-type index $i_\tau$ satisfies
 the following \textsf{homotopy invariance property}
\begin{equation}\label{e:homotopyInvari}
\left.\begin{array}{ll}
\hbox{Two paths $\gamma_0$ and $\gamma_1$ in $\mathcal{P}_\tau(2n)$
have the same index $i_\tau(\gamma_0)=i_\tau(\gamma_1)$ }\\
\hbox{
if there is a map $\delta\in C([0,1]\times[0,\tau], {\rm Sp}(2n))$ such that $\delta(0,\cdot)=\gamma_0$,}\\
\hbox{$\delta(1,\cdot)=\gamma_1$,
$\delta(s,0)=I_{2n}$ and $\nu_\tau(\delta(s,\cdot))$ is constant for $0\le s\le 1$}
\end{array}\right\}
\end{equation}
(\cite{Long97} and Theorem~3 in \cite[page~145]{Long02}), and the \textsf{symplectic additivity property}
\begin{equation}\label{e:sympAdd}
i_\tau(\gamma_0\diamond\gamma_1)=i_\tau(\gamma_0)+ i_\tau(\gamma_1)\quad\hbox{for $\gamma_j\in\mathcal{P}_\tau(2n_j)$, $j=0,1$}
\end{equation}
(\cite[Theorem~6, page 146]{Long02}), where $\gamma_0\diamond\gamma_1$ denotes the path
$t\mapsto\gamma_0(t)\diamond\gamma_1(t)$ .
For $a<b$ and any path $\gamma\in C([a,b],{\rm Sp}(2n))$, choose a path
$\beta\in{\cal P}_1(2n)$ such that  $\beta(1)=\gamma(a)$, and define a path
$\phi\in{\cal P}_1(2n)$ by setting
$$
\phi(t) =
 \begin{cases}
\beta(2t),\quad& 0\le t\le 1/2,\\
\gamma(a+(2t-1)(b-a)),\quad& 1/2\le t\le 1.
\end{cases}
 $$
 Long \cite{Long97} showed that the difference $i_1(\phi) - i_1(\beta)$ depends only on $\gamma$. Accordingly, he called
\begin{equation}\label{e:Long.1}
i(\gamma, [a,b]) := i_1(\phi) - i_1(\beta)
\end{equation}
 the \textsf{Maslov-type index} of the path $\gamma$.
  Clearly, $i(\gamma, [0,1])=i_1(\gamma)$ for any $\gamma\in{\cal P}_1(2n)$.
If $B$ and $\Upsilon_B$ are as in (\ref{e:Ekeland-index}),
      Long, respectively, generalized (\ref{e:Ekeland-index}) and (\ref{e:long-Ekeland0}) as
   \begin{equation}\label{e:Long-Ekeland-index}
i_\tau(\Upsilon_B)=\sum_{0<s<1}\nu_\tau(\Upsilon_B(s\cdot))+n=\sum_{0<s<\tau}\dim{\rm Ker}(\Upsilon_B(s)-I_{2n})+n
\end{equation}
(\cite[Proposition~15.3]{Long02}) and
 \begin{equation}\label{e:long-Ekeland}
i_\tau(\Upsilon_B)=i^E(B)+ n\quad\text{and}\quad \nu_\tau(\Upsilon_B)=\nu^E(B)
\end{equation}
(\cite{Long98}).
For a systematic study and applications of the index pair $(i_\tau, \nu_\tau)$, we refer to \cite{Long02}.

Robbin and Salamon \cite{RoSa93} gave another extension of the Conley-Zehnder index
$i_{\rm CZ}$ to paths in $\mathcal{P}_\tau(2n)\setminus\mathcal{P}^\ast_\tau(2n)$.
 Define the symplectic space $(F,\Omega)=(\mathbb{R}^{2n}\oplus\mathbb{R}^{2n}, (-\omega_0)\oplus\omega_0)$.
Let $\mathscr{L}(F,\Omega)$ denote the set (or manifold) of Lagrangian subspaces of $(F,\Omega)$.
Both
\begin{center}
$W:=\{(x^\top, x^\top)^\top\in\R^{4n}\,|\,x\in\R^{2n}\}$\quad  and\quad
${\rm Gr}(M):=\{(x^\top, (Mx)^\top)^\top\in\R^{4n}\,|\,x\in\R^{2n}\}$,
\end{center}
 where $M\in{\rm Sp}(2n)$,  are Lagrangian subspaces of $(F, \Omega)$,
 and hence belong to $\mathscr{L}(F, \Omega)$.
 The Robbin-Salamon index $\mu^{\rm RS}$, as defined in \cite{RoSa93}, assigns a half-integer
$\mu^{\rm RS}(\Lambda,\Lambda')$ to each pair of Lagrangian paths $\Lambda,\Lambda':[a,b]\to\mathscr{L}(F,\Omega)$.
 The \textsf{Conley-Zehnder index}  of $\gamma\in C([a,b],{\rm Sp}(2n))$ is defined  by
\begin{eqnarray}\label{e:RobbinSaCZ}
\mu_{\rm CZ}(\gamma)=\mu^{\rm RS}({\rm Gr}\left(\gamma), W\right)
\end{eqnarray}
(cf. \cite[Remark~5.35]{RoSa95}).
It follows from \cite[Remark~5.4]{RoSa93}  that
\begin{eqnarray}\label{e:RobbinSaCZ+}
\mu_{\rm CZ}(\gamma)=i_{\rm CZ}(\gamma),\quad\forall \gamma\in\mathcal{P}^\ast_\tau(2n).
\end{eqnarray}

There is a precise relation between $i(\gamma, [a,b])$ and $\mu_{\rm CZ}(\gamma)$
for each $\gamma\in C([a,b],{\rm Sp}(2n))$.
 Recall that the Cappell-Lee-Miller index $\mu^{\rm CLM}_F$, which is characterized by properties
I-VI in \cite[pp. 127-128]{CLM},  assigns an integer $\mu^{\rm CLM}_F(\Lambda,\Lambda')$ to each
pair of Lagrangian paths $\Lambda,\Lambda':[a,b]\to\mathscr{L}(F,\Omega)$.
Corollary 2.1 of \cite{LongZhu00} states that
\begin{equation}\label{e:Long.2}
i_\tau(\gamma)=\mu^{\rm CLM}_F(W, {\rm Gr}(\gamma), [0,\tau])-n,\quad\forall \gamma\in\mathcal{P}_\tau(2n).
\end{equation}
Combining (\ref{e:Long.1}), (\ref{e:Long.2}), and the path additivity of $\mu^{\rm CLM}_F$, we obtain
\begin{equation}\label{e:Long.3}
i(\gamma, [a,b])=\mu^{\rm CLM}_F(W, {\rm Gr}\left(\gamma),
[a,b]\right)
\end{equation}
for any $\gamma\in C([a,b],{\rm Sp}(2n))$.
Thus,
\begin{equation}\label{e:RobbinSa2}
i(\gamma, [a,b])
=\mu_{\rm CZ}(\gamma)-\frac{1}{2}(\dim
{\rm Ker}(I_{2n}-\gamma(b))-\dim
{\rm Ker}(I_{2n}-\gamma(a))).
\end{equation}
by combining (\ref{e:RobbinSaCZ}) with Theorem 3.1 in \cite{LongZhu00}.
 In particular, for every $\gamma\in\mathcal{P}_\tau(2n)$,
\begin{equation}\label{e:RobbinSa3}
i_\tau(\gamma)=\mu_{\mathrm{CZ}}(\gamma)-\frac{1}{2}\dim
{\mathrm{Ker}}(I_{2n}-\gamma(\tau)).
\end{equation}
For the paths $\phi$ and $\beta$ in (\ref{e:Long.1}), the Conley-Zehnder index $\mu_{\mathrm{CZ}}$ is additive under path concatenation.
Therefore,
\begin{align}\label{e:RobbinSa1}
\mu_{\mathrm{CZ}}(\gamma) = \mu_{\mathrm{CZ}}(\phi) - \mu_{\mathrm{CZ}}(\beta).
\end{align}
In addition,  both indices $i$ and $\mu_{\mathrm{CZ}}$
satisfy the following four properties: the vanishing property, the product property, and the following two properties:
\begin{description}
\item[(Homotopy invariant)] If for a continuous path $\ell:[0,1]\to C([a,b],{\rm Sp}(2n))$, the maps
\begin{equation}\label{e:homotopy1}
s\mapsto\dim{\rm Ker}(I_{2n}-\ell(s)(a))\quad\text{and}\quad
s\mapsto\dim{\rm Ker}(I_{2n}-\ell(s)(b))
\end{equation}
are constant  on $[0,1]$, then
\begin{align}\label{e:RobbinSa4}
i(\ell(s), [a,b])&=i(\ell(0), [a,b]),\quad\forall s\in [0,1],\\
\mu_{\mathrm{CZ}}(\ell(s))&=\mu_{\mathrm{CZ}}(\ell(0)),\quad\forall s\in [0,1].\label{e:RobbinSa5}
\end{align}
\item[(Naturality)] For any $\phi, \gamma\in C([a,b],{\rm Sp}(2n))$, the following holds:
\begin{equation}\label{e:RobbinSa6}
i(\phi\gamma\phi^{-1}, [a,b])=i(\gamma, [a,b])\quad\text{and}\quad
\mu_{\mathrm{CZ}}(\phi\gamma\phi^{-1})=\mu_{\mathrm{CZ}}(\gamma).
\end{equation}
From the second equality in (\ref{e:RobbinSa6}), together with (\ref{e:long}) and (\ref{e:RobbinSaCZ+}),
 we obtain the following generalization of \cite[Corollary~6.5]{Long97}:
\begin{equation}\label{e:RobbinSaCZ+RobbinSa6}
i_{\tau}(\phi\gamma\phi^{-1})=i_{\tau}(\gamma),\quad\forall \gamma\in\mathcal{P}_\tau(2n),\;\forall
\phi\in C([0, \tau],{\rm Sp}(2n,\R)).
\end{equation}
\end{description}

In fact, (\ref{e:RobbinSa4}) is derived as follows; this equality (or its derivation) implies (\ref{e:RobbinSa5}). Let $\alpha\in \mathcal{P}_1(2n)$. For each fixed $s\in [0, 1]$, we define  paths $\beta_s, \gamma_s:[0,1]\to {\rm Sp}(2n,\R)$ by
$$
\beta_s(t)=\ell(st)(a),\;t\in [0,1],\qquad \gamma_s(t)=\ell(s)((1-t)a+tb),\;t\in [0,1].
$$
Consider the concatenated paths $\beta_s\ast\alpha$ and  $\gamma_s\ast(\beta_s\ast\alpha)$.
Since $\beta_0$ is the constant identity path at $\ell(0)(a)$, the families
 $\{\beta_s\ast\alpha\,|\, 0\le s\le 1\}$ and $\{\gamma_s\ast(\beta_s\ast\alpha)\,|\, 0\le s\le 1\}$
  define, respectively,  homotopies (with fixed initial endpoints)
  from $\alpha$ to  $\beta_1\ast\alpha$ and from $\gamma_0\ast\alpha$ to  $\gamma_1\ast(\beta_1\ast\alpha)$
  in ${\rm Sp}(2n,\R)$. Note that
 $\nu_1(\beta_s\ast\alpha)=\dim{\rm Ker}(I_{2n}-\ell(s)(a))$ and
$\nu_1(\gamma_s\ast(\beta_s\ast\alpha))=\dim{\rm Ker}(I_{2n}-\ell(s)(b))$ are constant functions on $[0,1]$.
 The  homotopy invariance of ${i}_1$ as in (\ref{e:homotopyInvari})
leads to
\begin{align}\label{e:RobbinSaCZ+RobbinSa8}
&i_1(\beta_s\ast\alpha)\equiv i_1(\alpha),\quad\forall s\in [0,1],\\
&i_1(\gamma_s\ast(\beta_s\ast\alpha))\equiv i_1(\gamma_0\ast\alpha),\quad\forall s\in [0,1].
\label{e:RobbinSaCZ+RobbinSa9}
\end{align}
Moreover, by the definition in (\ref{e:Long.1}), for all $s\in [0,1]$ we have
$$
i(\ell(s), [a,b])=i(\gamma_s, [0,1])=i_1(\gamma_s\ast(\beta_s\ast\alpha))-i_1(\beta_s\ast\alpha).
$$
Therefore, combining the above identity with (\ref{e:RobbinSaCZ+RobbinSa8}) and (\ref{e:RobbinSaCZ+RobbinSa9}), we obtain for all $s\in [0, 1]$ that
$$
i(\ell(s), [a,b])\equiv i_1(\gamma_0\ast\alpha)-i_1(\alpha)=i(\ell(0), [a,b]),\quad
\forall s\in [0,1].
$$
This proves (\ref{e:RobbinSa4}), and hence we arrive at
  (\ref{e:RobbinSa5}) by (\ref{e:RobbinSa2}) and (\ref{e:homotopy1}).


In order to prove (\ref{e:RobbinSa6}), we consider a continuous path $\ell:[0,1]\to C([a,b],{\rm Sp}(2n))$ given by $\ell(s)(t)=\phi(st)\gamma(t)(\phi(st))^{-1}$.
Then (\ref{e:RobbinSa5}) implies
\begin{equation}\label{e:RobbinSaCZ+RobbinSa10}
\mu_{\mathrm{CZ}}(\phi\gamma\phi^{-1})=\mu_{\mathrm{CZ}}(\ell(1))=\mu_{\mathrm{CZ}}(\ell(0))=
\mu_{\mathrm{CZ}}(\phi(0)\gamma(\phi(0))^{-1}).
\end{equation}
Note that
$$
{\rm Gr}(\phi(0)\gamma(\phi(0))^{-1})(t)=\{(x^\top,(\phi(0)\gamma(t)(\phi(0))^{-1}x)^\top)^\top\in\R^{4n}\,|\,x\in\R^{2n}\}
=\Xi{\rm Gr}(\gamma)(t),
$$
where $\Xi:(F,\Omega)\to (F,\Omega)$ is the symplectic isomorphism given by
$\Xi(x\oplus y)=(\phi(0)x)\oplus(\phi(0)y)$.
Since $\Xi W=W$, the naturality property of $\mu^{\rm RS}$ (\cite[\S5]{RoSa95})
 together with (\ref{e:RobbinSaCZ}) yields
 $$
\mu_{\mathrm{CZ}}(\phi(0)\gamma(\phi(0))^{-1})=\mu^{\rm RS}(\Xi{\rm Gr}(\gamma), W)
=\mu^{\rm RS}({\rm Gr}(\gamma), W)
=\mu_{\mathrm{CZ}}(\gamma),
$$
Thus, the second equality in (\ref{e:RobbinSa6}) follows from (\ref{e:RobbinSaCZ+RobbinSa10}), and the first equality in (\ref{e:RobbinSa6}) is then a consequence of (\ref{e:RobbinSa2}).

Motivated by the study of different problems in Hamiltonian dynamics,
the various generalizations of the above Maslov-type index $(i_\tau,\nu_\tau)$ were
 developed by Long's school, see \cite{Liu17} for an overview.
This paper only concerns those in
\cite{Do06, Liu06}.

Dong \cite{Do06} and Liu \cite{Liu06} each extended
the Maslov-type index $(i_\tau(\gamma),\nu_\tau(\gamma))$ of $\gamma\in\mathcal{P}_\tau(2n)$
 to the case relative to a given symplectic matrix $M\in{\rm Sp}(2n,\mathbb{R})$,
but via  different methods. We denoted their respective extensions by
\begin{equation}\label{e:DongLiuIndex}
(i_{\tau, M}(\gamma),\nu_{\tau,M}(\gamma))\quad\text{and}\quad (i_{\tau}^M(\gamma),\nu_{\tau}^M(\gamma)).
\end{equation}
(The former was written as  $(i_M(\gamma),\nu_M(\gamma))$ in \cite{Do06}).
 They are more suitable and convenient for dual variational methods and saddle point reduction ones, respectively.
If $M=I_{2n}$, by \cite{Do06} and \cite[Definition~2.5 and Definition~2.6]{Liu06} there holds
\begin{equation}\label{e:dongLiuIndex+}
(i_{\tau,M}(\gamma),\nu_{\tau,M}(\gamma))=(i_{\tau}^M(\gamma),\nu_{\tau}^M(\gamma))=(i_\tau(\gamma),\nu_\tau(\gamma)).
\end{equation}
Let $\xi$ be any element in $\mathcal{P}_\tau(2n)$ satisfying $\xi(\tau)=M^{-1}$.
 Dong  defined
 \begin{equation}\label{e:dongIndex}
  \begin{aligned}
  &\nu_{\tau,M}(\gamma)=\dim{\rm Ker}(\gamma(\tau)-M)\quad\hbox{and}\\
&i_{\tau,M}(\gamma)=[i_\tau((\gamma M^{-1})\ast\xi)-\Delta({\xi})/\pi]
   \end{aligned}
 \end{equation}
(\cite[Definitions~2.1, 2.2]{Do06}), and  Liu (\cite[Definition~2.7 and Remark~2.8]{LiuTang15}) defined
\begin{equation}\label{e:LiuTang-index}
\begin{aligned}
&\nu_{\tau}^M(\gamma)=\dim{\rm Ker}(\gamma(\tau)-M)\quad\hbox{and}\\
&i^{M}_\tau(\gamma)=i_\tau((M^{-1}\gamma)\ast\xi)-i_\tau({\xi})
\end{aligned}
 \end{equation}
 for $M\ne I_{2n}$,  and $(i_{\tau}^M(\gamma),\nu_{\tau}^M(\gamma))=(i_\tau(\gamma),\nu_\tau(\gamma))$
  for $M= I_{2n}$.
 (Note that $i_\tau((M^{-1}\gamma)\ast\xi)$ can be replaced by $i_\tau((\gamma M^{-1})\ast\xi))$,
 see the proof of \cite[Proposition~A.1]{Lu11}.
It was shown in \cite[Remark~2.8]{LiuTang15} that when $M=I_{2n}$
the right side of the second equality in (\ref{e:LiuTang-index}) is $i_\tau(\gamma)+n$.)
We derived the following exact relation between
$i_{\tau, M}$ and $i^{M}_\tau$ in \cite{Lu11}.

\begin{proposition}[\hbox{\cite[Proposition~A.1]{Lu11}}]\label{prop:twoDef}
 For any $(M,\gamma)\in ({\rm Sp}(2n,\R)\setminus\{I_{2n}\})\times\mathcal{P}_\tau(2n)$
 and  any $\xi\in\mathcal{P}_\tau(2n)$ satisfying $\xi(\tau)=M^{-1}$
 it holds that
  \begin{equation}\label{e:indexrelation}
 i_{\tau,M}(\gamma)=[i^{M}_\tau(\gamma)+i_\tau(\xi)-\Delta({\xi})/\pi]=i^{M}_\tau(\gamma)+ [i_\tau(\xi)-\Delta({\xi})/\pi].
\end{equation}
 {\rm (}The term $[i_\tau(\xi)-\Delta({\xi})/\pi]$
only depends on $M$.{\rm )}
   \end{proposition}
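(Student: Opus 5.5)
The plan is to derive \eqref{e:indexrelation} directly from the two definitions \eqref{e:dongIndex} and \eqref{e:LiuTang-index}, using one fixed auxiliary path $\xi\in\mathcal{P}_\tau(2n)$ with $\xi(\tau)=M^{-1}$ for both.

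By the remark after \eqref{e:LiuTang-index}, $i_\tau((M^{-1}\gamma)\ast\xi)$ may be replaced by $i_\tau((\gamma M^{-1})\ast\xi)$. Granting this, Liu's definition reads $i^M_\tau(\gamma)=i_\tau((\gamma M^{-1})\ast\xi)-i_\tau(\xi)$. Substituting into Dong's definition gives
\[
i_{\tau,M}(\gamma)=\bigl[i_\tau((\gamma M^{-1})\ast\xi)-\Delta(\xi)/\pi\bigr]
=\bigl[i^M_\tau(\gamma)+i_\tau(\xi)-\Delta(\xi)/\pi\bigr],
\]
which is the first equality. Since $i^M_\tau(\gamma)\in\mathbb{Z}$ and $[k+a]=k+[a]$ for every integer $k$, the second equality follows immediately.

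The replacement of $M^{-1}\gamma$ by $\gamma M^{-1}$ is the step that needs an argument, and I would justify it by naturality. The two paths are conjugate, since $\gamma M^{-1}=M(M^{-1}\gamma)M^{-1}$. Concatenating with $\xi$ keeps them conjugate by the constant path $M$, because $M\xi M^{-1}$ and $\xi$ are distinct paths. To avoid that problem, I would instead conjugate the full concatenation by a path $\phi(t)$ that equals $I$ on the $\xi$-portion and then moves to $M$. Alternatively, I would use the homotopy $s\mapsto \gamma_s$ with $\gamma_s(t)=\exp(s\log\,\cdot)$, which fails in general.

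The cleaner route is \eqref{e:RobbinSaCZ+RobbinSa6} for the index $i(\cdot,[a,b])$ together with path additivity \eqref{e:Long.1}. Write $i_\tau((N\gamma)\ast\xi)=i_\tau(\xi)+i(N\gamma,[0,\tau])$ for $N=M^{-1}$. Then $i(M^{-1}\gamma,[0,\tau])=i(M(M^{-1}\gamma)M^{-1},[0,\tau])=i(\gamma M^{-1},[0,\tau])$ by naturality with the constant path $\phi\equiv M$. This establishes the replacement.

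The last claim is that $[i_\tau(\xi)-\Delta(\xi)/\pi]$ depends only on $M$. Suppose $\xi,\xi'$ both end at $M^{-1}$. Then $\xi'$ is homotopic with fixed endpoints to $\eta\ast\xi$ for some loop $\eta$ based at $M^{-1}$. Both $i_\tau$ and $\Delta/\pi$ change by the same amount $2\deg(\det\mathfrak{u}\circ\eta)$, which is the Maslov index of the loop. This is standard, and I expect it to be the main technical point. The first assertion does not depend on it, provided the same $\xi$ is used throughout.
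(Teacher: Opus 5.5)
Your proof is correct and follows essentially the argument the paper attributes to \cite[Proposition~A.1]{Lu11} (see the remark after \eqref{e:LiuTang-index}). You use a common $\xi$ in both definitions and replace $M^{-1}\gamma$ by $\gamma M^{-1}=M(M^{-1}\gamma)M^{-1}$ by conjugation invariance; your ``cleaner route'' via \eqref{e:Long.1} and \eqref{e:RobbinSa6} with $\phi\equiv M$ is valid, and the abandoned attempts before it, including the garbled sentence about $M\xi M^{-1}$, should simply be deleted. You then finish with $[k+a]=k+[a]$.

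Your loop argument for the final remark is also fine, and it even shows that $i_\tau(\xi)-\Delta(\xi)/\pi$ itself is independent of $\xi$. It is not needed, however. Since $i_{\tau,M}(\gamma)$ and $i^M_\tau(\gamma)$ do not depend on the auxiliary path, the identity $[i_\tau(\xi)-\Delta(\xi)/\pi]=i_{\tau,M}(\gamma)-i^M_\tau(\gamma)$ immediately shows that this quantity is independent of $\xi$. It never involved $\gamma$, so it depends only on $M$.
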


  Lemma~\ref{lem:Dong06}(iii) shows that $i_{\tau,M}$ and $i^M_\tau$ may be different.
  Therefore, we cannot expect $i_{\tau,M}$ to have the same good properties as $i^M_\tau$ listed in
\cite{Liu17}, see, for example, Lemma~\ref{lem:Dong06+}.
Moreover, as a generalization of the symplectic additivity (\ref{e:sympAdd}) of $i_\tau$,
Liu and Tang \cite[Proposition~2.13]{LiuTang15} proved that
for any $M_j\in{\rm Sp}(2n_j)$, $j=0,1$,
\begin{equation}\label{e:sympAddLiuT}
i^{M_0\diamond M_1}_\tau(\gamma_0\diamond\gamma_1)=i^{M_0}_\tau(\gamma_0)+ i^{M_1}_\tau(\gamma_1).
\end{equation}
However,  by (\ref{e:dongIndex}), (\ref{e:sympAdd}), and Lemma~\ref{lem:diamond-product},  we see that  there holds only
\begin{equation}\label{e:sympAddDong}
i_{\tau, M_0\diamond M_1}(\gamma_0\diamond\gamma_1)=[i_\tau((\gamma_0 M_0^{-1})\ast\xi_0)+i_\tau((\gamma_1 M_1^{-1})\ast\xi_1)-\Delta({\xi}_0\diamond\xi_1)/\pi]
\end{equation}
for $M_j\in{\rm Sp}(2n_j)$ and $\xi_j\in\mathcal{P}_\tau(2n_j)$ satisfying $\xi_j(\tau)=M_j^{-1}$, $j=0,1$.
If $M_0$ and $M_1$ are orthogonal symplectic matrices,  $\xi_j\in\mathcal{P}_\tau(2n_j)$ with
$\xi_j(\tau)=M_j^{-1}$ such that all $\xi_j(t)$ are also orthogonal symplectic matrices, $j=0,1$, then
Lemma~\ref{lem:diamond-product}(iii) leads to $\Delta({\xi}_0\diamond\xi_1)=\Delta({\xi}_0)+\Delta(\xi_1)$ and
therefore (\ref{e:sympAddDong}) becomes
\begin{equation}\label{e:sympAddDong1}
i_{\tau, M_0\diamond M_1}(\gamma_0\diamond\gamma_1)=[i_\tau((\gamma_0 M_0^{-1})\ast\xi_0)+i_\tau((\gamma_1 M_1^{-1})\ast\xi_1)-(\Delta({\xi}_0)+\Delta(\xi_1))/\pi].
\end{equation}

\subsubsection{Some lemmas}\label{sec:Pre2}

The following lemma establishes some properties of the $\diamond$-product of square block matrices defined in (\ref{e:diamond-product}).

\begin{lemma}\label{lem:diamond-product}
 \begin{enumerate}
\item[\rm (i)] For matrices $M_1, N_1\in\mathbb{R}^{2i\times 2i}$ and
$M_2, N_2\in\mathbb{R}^{2j\times 2j}$, there holds
\begin{equation}\label{e:diamond-pro}
(M_1\diamond M_2)(N_1\diamond N_2)=(M_1N_1)\diamond(M_2N_2).
\end{equation}

\item[\rm (ii)] $I_{2i}\diamond I_{2j}=I_{2(i+j)}$;  the $\diamond$-product $M_1\diamond M_2$
of invertible  matrices $M_1\in\mathbb{R}^{2i\times 2i}$ and
$M_2\in\mathbb{R}^{2j\times 2j}$ is invertible, and $(M_1\diamond M_2)^{-1}=M_1^{-1}\diamond M_2^{-1}$.

\item[\rm (iii)]
The $\diamond$-product satisfies $(M_1\diamond M_2)^\top = M_1^\top \diamond M_2^\top$.
Consequently, the set of orthogonal symplectic matrices is closed under the
$\diamond$-product. Moreover,  for any two symplectic matrices  $M_1$ and $M_2$, we have
\begin{equation}\label{e:diamond-pro1}
\mathfrak{u}(M_1\diamond M_2) =
\left( \begin{array}{cc}
\mathfrak{u}(M_1) & 0 \\
0 & \mathfrak{u}(M_2)
\end{array} \right),
\end{equation}
where $\mathfrak{u}$ is the map defined above (\ref{e:LieIsom}).

\item[\rm (iv)] $M_1\diamond\cdots \diamond M_k+ N_1\diamond\cdots\diamond N_k=(M_1+N_1)\diamond\cdots\diamond(M_k+N_k)$.

\item[\rm (v)] $(M_1+\cdots+M_k)\diamond(N_1+\cdots+N_k)=M_1\diamond N_1+\cdots+M_k\diamond N_k$ and
$$\exp(M_1\diamond\cdots\diamond M_k)=\exp(M_1)\diamond\cdots\diamond\exp(M_k).
$$

\item[\rm (vi)] For the matrices $M_1$ and $M_2$ in (\ref{e:diamond-product}) and
vectors $X_1, X_2\in\mathbb{K}^i$
and $Y_1, Y_2\in\mathbb{K}^j$, it holds that
$$
M_1\diamond M_2\left( \begin{array} { c}
X _ { 1 } \\
    Y_1 \\ X_2 \\ Y_2 \end{array} \right)=
\left( \begin{array} { c }
A _ { 1 }X_1+ B _ { 1 }X_2 \\
     A _ { 2 }Y_1+ B _ { 2 }Y_2 \\
     C _ { 1 }X_1+ D _ { 1 }X_2 \\
      C _ { 2 }Y_1+D _ { 2 }Y_2 \end{array} \right).
      $$
\end{enumerate}
\end{lemma}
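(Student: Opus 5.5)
Every assertion in Lemma~\ref{lem:diamond-product} follows from the block-index description of the $\diamond$-product, so I will treat the six parts uniformly. Write $M_1\diamond M_2=\Pi^\top(M_1\oplus M_2)\Pi$, where $M_1\oplus M_2=\mathrm{diag}(M_1,M_2)$ is the ordinary block-diagonal sum. Here $\Pi$ is the permutation matrix sending $(X_1,Y_1,X_2,Y_2)$ to $(X_1,X_2,Y_1,Y_2)$; it is orthogonal, so $\Pi^\top=\Pi^{-1}$. This identity is exactly part (vi), which I will verify first by multiplying out the block matrix in (\ref{e:diamond-product}) against the vector $(X_1^\top,Y_1^\top,X_2^\top,Y_2^\top)^\top$. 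With this conjugation formula in hand, parts (i), (ii), (iv), (v) and the transpose statement in (iii) become the corresponding trivial facts for $\oplus$, conjugated by a fixed orthogonal matrix:
\[
\Pi^\top(M_1\oplus M_2)\Pi\,\Pi^\top(N_1\oplus N_2)\Pi=\Pi^\top(M_1N_1\oplus M_2N_2)\Pi .
\]
Linearity of $\diamond$ in each slot (this gives (iv) and the sum identity in (v)) is also immediate from the definition.

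For the exponential in (v), I will use $\exp(\Pi^\top A\Pi)=\Pi^\top\exp(A)\Pi$ together with $\exp(A\oplus B)=\exp A\oplus\exp B$, and then induct on $k$ using associativity.

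For the symplectic and orthogonal claims in (iii), I first check that $J_i\diamond J_j=J_{i+j}$ directly from (\ref{e:standcompl}). Then (i) and the transpose rule give
\[
(M_1\diamond M_2)^\top J_{i+j}(M_1\diamond M_2)=(M_1^\top J_iM_1)\diamond(M_2^\top J_jM_2),
\]
and likewise $(M\diamond N)^\top(M\diamond N)=M^\top M\diamond N^\top N$. Hence both the symplectic and the orthogonal properties are preserved.

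The main point needing care is (\ref{e:diamond-pro1}). Write $M_k=P_kU_k$ for the polar decompositions. By (i) and (iii), $(M_1\diamond M_2)(M_1\diamond M_2)^\top=P_1^2\diamond P_2^2$, and by the conjugation description $\sqrt{P_1^2\diamond P_2^2}=P_1\diamond P_2$. It follows that the polar decomposition of $M_1\diamond M_2$ is $(P_1\diamond P_2)(U_1\diamond U_2)$, since this factorization is unique. Finally, with $U_k=\begin{pmatrix}U_{k1}&-U_{k2}\\U_{k2}&U_{k1}\end{pmatrix}$, the definition of $\diamond$ gives
\[
U_1\diamond U_2=\begin{pmatrix}\mathrm{diag}(U_{11},U_{21})&-\mathrm{diag}(U_{12},U_{22})\\ \mathrm{diag}(U_{12},U_{22})&\mathrm{diag}(U_{11},U_{21})\end{pmatrix},
\]
so $\mathfrak{u}(M_1\diamond M_2)=\mathrm{diag}(\mathfrak{u}(M_1),\mathfrak{u}(M_2))$. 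I expect this identification of the polar factor to be the only step that is not purely mechanical.
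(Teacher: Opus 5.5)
Your proof is correct and follows the paper's route for (iii), the only part the paper actually proves. You identify $\sqrt{(M_1\diamond M_2)(M_1\diamond M_2)^\top}$ with $P_1\diamond P_2$ by uniqueness of the positive square root, so the orthogonal factor is $U_1\diamond U_2$ (the paper writes this as $(R_1^{-1}M_1)\diamond(R_2^{-1}M_2)$), and then read off $\mathfrak{u}$ from the explicit block form. Your conjugation $M_1\diamond M_2=\Pi^\top(M_1\oplus M_2)\Pi$ simply makes the parts the paper calls immediate transparent, as well as the positive definiteness of $P_1\diamond P_2$, which the paper merely asserts.

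One wording fix: $\diamond$ is not linear in each slot separately, since that would be bilinearity, which is false. It is jointly linear in the pair, $(M_1+N_1)\diamond(M_2+N_2)=M_1\diamond M_2+N_1\diamond N_2$, and this is exactly what (iv) and the first identity in (v) assert.
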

\begin{proof}[\bf Proof]
We prove only (iii). The first statement is immediate. This, combined with (i) and (ii), leads to the second claim. Consequently, (\ref{e:diamond-pro1}) holds for orthogonal symplectic matrices $M_1$ and $M_2$.
The general case follows from a polar decomposition argument.
Define  $R_j=\sqrt{M_jM_j^\top}$ for $j=1,2$,  which are positive definite and satisfy
 $R_j=\sqrt{M_jM_j^\top}$.
Note that $R_1\diamond R_2$ is also positive definite.
Applying  (i) and the first claim in (iii), we have:
$$
(R_1\diamond R_2)^2=R_1^2\diamond R_2^2=(M_1M_1^\top)\diamond(M_2M_2^\top)=(M_1\diamond M_2)(M_1\diamond M_2)^\top.
$$
Hence, we may identify $\sqrt{(M_1\diamond M_2)(M_1\diamond M_2)^\top}$ with $R_1\diamond R_2$.
Now, by (i) and (ii), the  matrix
$$
(R_1\diamond R_2)^{-1}(M_1\diamond M_2)=(R_1^{-1}\diamond R_2^{-1})(M_1\diamond M_2)=
(R_1^{-1}M_1)\diamond (R_2^{-1}M_2),
$$
is orthogonal symplectic.  Consequently,
$$
\mathfrak{u}((R_1\diamond R_2)^{-1}(M_1\diamond M_2)) =\mathfrak{u}((R_1^{-1}M_1)\diamond (R_2^{-1}M_2))
=\left( \begin{array}{cc}
\mathfrak{u}(R_1^{-1}M_1) & 0 \\
0 & \mathfrak{u}(R_2^{-1}M_2)
\end{array} \right).
$$
By the definition of $\mathfrak{u}$, we have $\mathfrak{u}(M_1)=\mathfrak{u}((R_1^{-1}M_1)$, $\mathfrak{u}(M_2)=\mathfrak{u}((R_2^{-1}M_2)$, and
$$
\mathfrak{u}(M_1\diamond M_2)=\mathfrak{u}((R_1\diamond R_2)^{-1}(M_1\diamond M_2)).
$$
Combining these identities completes the proof of (\ref{e:diamond-pro1}).
\end{proof}

\begin{lemma}\label{lem:Dong06}
  \begin{enumerate}
\item[\rm (i)] Let $\gamma\in\mathcal{P}_\tau(2n)$.
If $\gamma(\tau)M^{-1} \in \mathrm{Sp}(2n)^*$, then
 \begin{equation}\label{e:dongLiuIndex*}
i_{\tau,M}(\gamma)= \left[ \frac{\Delta(\beta)}{\pi} + \frac{\Delta(\gamma M^{-1})}{\pi} \right]
\end{equation}
  for any path  $\beta \in C([0, \tau], \mathrm{Sp}(2n)^*)$
 connecting $\beta(0) = \gamma(\tau)M^{-1}$ to $\beta(\tau) = M_n^+$ (or $M_n^-$). (Such a path always exists by \cite[Theorem~2.4.1]{Long02}.)
(If $\gamma(\tau)M^{-1} \in \mathrm{Sp}(2n)^0$,
it was stated in \cite[Remark~2.3]{Do06} that
formula \eqref{e:dongLiuIndex*} holds
 for some path $\beta \in C([0, \tau], \mathrm{Sp}(2n))$, depending solely on
 $\gamma(\tau)M^{-1}$,  such that $\beta(t) \in \mathrm{Sp}(2n)^*$ for all $t \in (0, \tau]$, with endpoints
  $\beta(0) = \gamma(\tau)M^{-1}$ and $\beta(\tau) = M_n^+$ (or $M_n^-$).)

\item[\rm (ii)] Let $\xi_{2n,\tau}(t)=I_{2n}$ for all $t\in [0, \tau]$. Define
$$
R(\theta)=\left(
 \begin{array} {cc}
\cos (\theta) & -\sin (\theta) \\
\sin (\theta) &\cos (\theta)\end{array} \right),\quad \theta\in\mathbb{R},
$$
and for $c\in\mathbb{R}$ set $\gamma_c(t)=R(ct)=e^{tcJ_1}$. Then
$$
i_{\tau, R ( \theta ) } \left(\xi_{2,\tau}\right) =
\begin{cases}
-1\quad&\hbox{if $\theta\in [0, \pi)$},\\
0\quad&\hbox{if $\theta\in [\pi, 2\pi)$};
\end{cases}
$$
and
$$
i_{\tau, R ( \theta ) } \left( \gamma_c \right) =
\begin{cases}
2k-1\quad&\hbox{if $\theta\in [0, \pi)$ and $\theta + 2 ( k - 1 ) \pi < c \tau \leqslant \theta + 2 k \pi$},\\
2k\quad&\hbox{if $\theta\in [\pi, 2\pi)$ and $\theta + 2 ( k - 1 ) \pi < c \tau \leqslant \theta + 2 k \pi$}.
\end{cases}
$$
{\rm (}See the paragraph below \cite[Remark~2.4]{Do06}{\rm )}.

\item[\rm (iii)] $i_{\tau}^{-J_n}(\xi_{2n,\tau})=0$ and $i_{\tau, -J_n}(\xi_{2n,\tau})=[n/2]$.

\item[\rm (iv)] $i_{\tau}^{J_n}(\xi_{2n,\tau})=0$ and $i_{\tau, J_n}(\xi_{2n,\tau})=[-n/2]$.
  \end{enumerate}
  \end{lemma}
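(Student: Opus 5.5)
We would prove the four parts in the order (i), (iii), (iv), (ii). Part (ii) is a concrete check of (i), and (iii)--(iv) only use the definitions (\ref{e:dongIndex}), (\ref{e:LiuTang-index}) together with Proposition~\ref{prop:twoDef}.

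\textbf{Part (i).} Fix $\xi\in\mathcal{P}_\tau(2n)$ with $\xi(\tau)=M^{-1}$. Then $(\gamma M^{-1})\ast\xi$ starts at $I_{2n}$ and ends at $\gamma(\tau)M^{-1}\in{\rm Sp}(2n)^\ast$, so its endpoint is nondegenerate. Hence its Maslov-type index equals the Conley--Zehnder index (\ref{e:Conley-Zehnder}), namely $\Delta(\beta\ast((\gamma M^{-1})\ast\xi))/\pi$, where $\beta$ is the given path in ${\rm Sp}(2n)^\ast$ from $\gamma(\tau)M^{-1}$ to $M_n^\pm$. The rotation function $\Delta$ is additive under concatenation, so this number equals $(\Delta(\beta)+\Delta(\gamma M^{-1})+\Delta(\xi))/\pi$. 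Substituting into (\ref{e:dongIndex}), the term $\Delta(\xi)/\pi$ cancels and (\ref{e:dongLiuIndex*}) follows. For the degenerate case we would only quote \cite[Remark~2.3]{Do06}: one perturbs the endpoint into ${\rm Sp}(2n)^\ast$ and uses the infimum characterization (\ref{e:long}). This is the one genuinely delicate point, and we do not reprove it.

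\textbf{Parts (iii) and (iv).} First we show $i^{M}_\tau(\xi_{2n,\tau})=0$ for every $M\ne I_{2n}$. In (\ref{e:LiuTang-index}), the path $M^{-1}\xi_{2n,\tau}$ is the constant path at $M^{-1}$. So $(M^{-1}\xi_{2n,\tau})\ast\xi$ is a reparametrization of $\xi$ followed by a stationary piece. It is therefore homotopic to $\xi$ with the initial point fixed and the endpoint fixed at $M^{-1}$, so $\nu_\tau$ is constant along the homotopy, and (\ref{e:homotopyInvari}) gives equal indices. Proposition~\ref{prop:twoDef} then reduces the claim to computing $[i_\tau(\xi)-\Delta(\xi)/\pi]$ for one convenient $\xi$:
\begin{itemize}
\item For $M=-J_n$ we have $M^{-1}=J_n=\exp(\tfrac{\pi}{2}J_n)$. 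Take $\xi(t)=\exp(\tfrac{t\pi}{2\tau}J_n)$, which is the $n$-fold $\diamond$-product of $R(\tfrac{\pi t}{2\tau})$ by Lemma~\ref{lem:diamond-product}(v). Here $\Delta(\xi)=n\pi/2$ by (\ref{e:diamond-pro1}). By symplectic additivity (\ref{e:sympAdd}), $i_\tau(\xi)=n\,i_\tau(R(\tfrac{\pi\cdot}{2\tau}))=n$, since a rotation path through angle in $(0,2\pi)$ has index $1$. This gives $[n-n/2]=[n/2]$.
\item For $M=J_n$ we use instead $\xi(t)=\exp(-\tfrac{t\pi}{2\tau}J_n)$. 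Then $\Delta(\xi)=-n\pi/2$ and $i_\tau(\xi)=-n$, since a rotation through angle in $(-2\pi,0)$ has index $-1$. This gives $[-n/2]$.
\end{itemize}

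\textbf{Part (ii).} This follows by applying (i) with $n=1$. We have $\gamma_c(\tau)R(\theta)^{-1}=R(c\tau-\theta)$ and $\Delta(\gamma_cR(-\theta))=c\tau$. When $R(c\tau-\theta)\ne I_2$ we join it to $M_1^\pm$ inside ${\rm Sp}(2)^\ast$ by an explicit path. Its rotation is computed in the rotation model and depends only on which interval $(\theta+2(k-1)\pi,\theta+2k\pi]$ contains $c\tau$. Taking floors then gives $2k-1$ for $\theta\in[0,\pi)$ and $2k$ for $\theta\in[\pi,2\pi)$. The boundary value $c\tau=\theta+2k\pi$ is the degenerate case and is taken from \cite{Do06}. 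The case $c=0$ yields the stated values for $\xi_{2,\tau}$, and these agree with (iii)--(iv) at $n=1$ ($M=-J_1=R(3\pi/2)$ gives $0$, $M=J_1=R(\pi/2)$ gives $-1$), which serves as a consistency check.

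The main obstacle is the degenerate-endpoint case in (i) and in (ii), where $\Delta(\beta)$ depends on the choice of $\beta$ and the floor must absorb the ambiguity. Here we rely on Dong's remark rather than reproving it.
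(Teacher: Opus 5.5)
Your proposal is correct and follows essentially the same route as the paper. Part (i) is the identical concatenation argument using additivity of $\Delta$ in \eqref{e:dongIndex}, and parts (iii)--(iv) rest on the same fixed-endpoint homotopy together with $i_\tau(\xi)=\pm n$ and $\Delta(\xi)=\pm n\pi/2$ for $\xi(t)=\exp(\pm\frac{\pi t}{2\tau}J_n)$. The differences are only organizational: you run the homotopy directly in dimension $2n$, so you do not need the Liu--Tang additivity \eqref{e:sympAddLiuT} to reduce to $n=1$, and you pass to $i_{\tau,M}$ through Proposition~\ref{prop:twoDef} rather than through \eqref{e:sympAddDong}. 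You also sketch the nondegenerate case of (ii) from (i), whereas the paper simply cites Dong \cite{Do06} for all of (ii).
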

\begin{proof}[\bf Proof]
{\bf (I)}[\textsf{Proof of the first part in} (i)].\quad
Suppose that $\gamma(\tau )M^{-1}\in \mathrm{Sp}(2n)^{+}$ (resp. $\mathrm{Sp}(2n)^{-}$) and
let $\beta\in C ( [ 0 , \tau ] , \mathrm{Sp}( 2 n )^\ast)$ be a path
 satisfying $\beta( 0 ) =\gamma(\tau )M^ { - 1 }$ and
$\beta( \tau ) =M_n^+$ (resp. $M_n^-$).
By the definition of $i_\tau$ in \cite{Long02}, we have
$$i_\tau((\gamma M^{-1})\ast\xi)= \frac { \Delta\left( \beta * \left( \gamma M ^ { - 1 } \right) * \xi \right) } { \pi } =
 \frac { \Delta( \beta ) } { \pi } + \frac { \Delta\left( \gamma M^ { - 1 } \right) } { \pi } +
 \frac { \Delta( \xi ) } { \pi }.$$
The desired equality then follows directly from the definition in (\ref{e:dongIndex}).

A theorem due to Conley and Zehnder \cite{CoZe}, Salamon and Zehnder \cite{SaZe2}, and Long \cite{Long99} states that both $\mathrm{Sp}(2n)^+$ and $\mathrm{Sp}^-(2n)$ are simply connected subsets in $\mathrm{Sp}(2n)$.
 Consequently,  for any path
 $\beta^\star\in C ( [ 0 , \tau ] , \mathrm{Sp}( 2 n )^\ast)$
 satisfying $\beta^\star( 0 ) =\gamma(\tau )M^ { - 1 }$ and $\beta^\star( \tau ) = \beta(\tau)$,
 we have $\Delta( \beta )=\Delta(\beta^\star)$.
  (See the proof of Lemma~6 in \cite[page~120]{Long02}.)

{\bf (II)}[\textsf{Proof of} (iii)].\quad
Since $-J_n$ and $\xi_{2n,\tau}$ are,  respectively,
 the $n$-fold $\diamond$-product of $-J_1$ and $\xi_{2,\tau}$,
 it follows from (\ref{e:sympAddLiuT}) that
 $$
 i_{\tau}^{-J_n}(\xi_{2n,\tau})= ni_{\tau}^{-J_1}(\xi_{2,\tau}).
 $$
Thus, it suffices to prove $i_{\tau}^{-J_1}(\xi_{2,\tau})=0$.
By (\ref{e:LiuTang-index}),
$$
i^{-J_1}_\tau(\xi_{2,\tau})=i_\tau(((-J_1)^{-1}\xi_{2,\tau})\ast\xi)-i_\tau({\xi}),
$$
where $\xi\in\mathcal{P}_\tau(2)$ is any path with endpoint $\xi(\tau)=(-J_1)^{-1}=J_1$.
Using (\ref{e:pathComp}), one easily verifies that
\begin{equation*}
  ((-J_1)^{-1}\xi_{2,\tau})\ast\xi(t) =
   \begin{cases}
\xi(2t)\quad&\hbox{if $t\in[0, \tau/2]$},\\
J_1\quad&\hbox{if $t\in [\tau/2, \tau]$}.
\end{cases}
\end{equation*}
Clearly,   the paths $((-J_1)^{-1}\xi_{2,\tau})\ast\xi$ and $\xi$ are homotopic
within $\mathcal{P}_\tau(2)$ while keeping their endpoints fixed.
Therefore,
 $$
 i_\tau(((-J_1)^{-1}\xi_{2,\tau})\ast\xi)=i_\tau({\xi}),
 $$
 which implies $i^{-J_1}_\tau(\xi_{2,\tau})=0$ as desired.

Next, we compute  $i_{\tau,-J_n}(\xi_{2n,\tau})$. By (\ref{e:dongIndex}),
$$
i_{\tau,-J_n}(\xi_{2n,\tau})=[i_\tau((\gamma (-J_n)^{-1})\ast\xi)-\Delta({\xi})/\pi],
$$
 where $\xi\in\mathcal{P}_\tau(2n)$ is any path with endpoint $\xi(\tau)=(-J_n)^{-1}=J_n$.
A natural choice for $\xi$ is
$\xi^{(n)}(t)=\exp(J_n\frac{\pi t}{2\tau})$.
Define $\xi^{(1)}=\gamma_c(t)=R(ct)$ with $c=\frac{\pi}{2\tau}$.
Observe  that
$\xi^{(n)}$ is the $n$-fold $\diamond$-product of  $\xi^{(1)}$.
Applying the additivity formula (\ref{e:sympAddDong}) yields
$$
    i_{\tau,-J_n}(\xi_{2n,\tau})
   =\left[ni_{\tau}((\xi_{2,\tau} J_1)\ast\xi^{(1)})-n\Delta({\xi^{(1)}})/\pi\right].
$$
As noted earlier, the concatenated path satisfies
$$
(\xi_{2,\tau} J_1)\ast\xi^{(1)}(t)=
\begin{cases}
\xi^{(1)}(2t)\quad&\text{for $t\in[0, \tau/2]$},\\
J_1\quad &\text{for $t\in[\tau/2,\tau]$. }
\end{cases}
$$
Therefore,  $(\xi_{2,\tau} J_1)\ast\xi^{(1)}$
is homotopic to  $\xi$ within $\mathcal{P}_\tau(2)$ with fixed endpoints. Consequently,
$$
i_{\tau}((\xi_{2,\tau} J_1)\ast\xi^{(1)})=i_\tau(\xi^{(1)}),
$$
and hence
$$
i_{\tau,-J_n}(\xi_{2n,\tau})=\left[ni_{\tau}(\xi^{(1)})-n\Delta({\xi^{(1)}})/\pi\right].
$$
Setting $\theta=0$ in the second statement of Lemma~\ref{lem:Dong06}(ii) gives
$i_\tau(\xi^{(1)})=i_{\tau, R (0) } \left( \xi^{(1)} \right) =
i_{\tau, R (0) } \left( \gamma_c \right)=
1$.
Clearly, $\Delta({\xi^{(1)}})=\pi/2$.
 Substituting these values, we finally obtain
$$
i_{\tau,-J_n}(\xi_{2n,\tau})=\left[ni_{\tau}(\xi^{(1)})-n\Delta({\xi^{(1)}})/\pi\right]
=\left[\frac{n}{2}\right].
$$

{\bf (III)}[\textsf{Proof of} (iv)].\quad
The proof is almost identical to that of (iii). Nevertheless, we will outline it here, as this will be convenient for the proof of Theorem~\ref{th:PIndex2}.

Since $i_{\tau}^{J_n}(\xi_{2n,\tau})= ni_{\tau}^{J_1}(\xi_{2,\tau})$
as in (II), we only need to prove $i_{\tau}^{J_1}(\xi_{2,\tau})=0$.
For any path $\xi\in\mathcal{P}_\tau(2)$
with endpoint $\xi(\tau)=(J_1)^{-1}=-J_1$,
 (\ref{e:LiuTang-index}) yields
$$
i^{J_1}_\tau(\xi_{2,\tau})=i_\tau(((J_1)^{-1}\xi_{2,\tau})\ast\xi)-i_\tau({\xi}).
$$
As in (II) we may prove $i^{J_1}_\tau(\xi_{2,\tau})=0$ because
\begin{equation*}
	((J_1)^{-1}\xi_{2,\tau})\ast\xi(t) =
	\begin{cases}
		\xi(2t)\quad&\hbox{if $t\in[0, \tau/2]$},\\
		-J_1\quad&\hbox{if $t\in [\tau/2, \tau]$},
	\end{cases}
\end{equation*}
shows that the paths $((J_1)^{-1}\xi_{2,\tau})\ast\xi$ and $\xi$ are homotopic
within $\mathcal{P}_\tau(2)$ while keeping their endpoints fixed.

Next, we compute  $i_{\tau,J_n}(\xi_{2n,\tau})$. By (\ref{e:dongIndex}),
$$
i_{\tau,J_n}(\xi_{2n,\tau})=[i_\tau((\gamma (J_n)^{-1})\ast\xi)-\Delta({\xi})/\pi],
$$
where $\xi\in\mathcal{P}_\tau(2n)$ is any path with endpoint $\xi(\tau)=(J_n)^{-1}=-J_n$.
A natural choice for $\xi$ is
$\zeta^{(n)}(t)=\exp(
-J_n\frac{\pi t}{2\tau})$.
Define $\zeta^{(1)}=\gamma_c(t)=R(ct)$ with $c=-\frac{\pi}{2\tau}$.
Clearly, $\zeta^{(n)}$ is the $n$-fold $\diamond$-product of  $\zeta^{(1)}$.
Applying the additivity formula (\ref{e:sympAddDong}) yields
$$
i_{\tau,J_n}(\xi_{2n,\tau})
=\left[ni_{\tau}((\xi_{2,\tau} (-J_1))\ast\zeta^{(1)})-n\Delta({\zeta^{(1)}})/\pi\right].
$$
Since the concatenated path satisfies
$$
(\xi_{2,\tau} (-J_1))\ast\zeta^{(1)}(t)=
\begin{cases}
	\zeta^{(1)}(2t)\quad&\text{for $t\in[0, \tau/2]$},\\
	-J_1\quad &\text{for $t\in[\tau/2,\tau]$, }
\end{cases}
$$
the path  $(\xi_{2,\tau} (-J_1))\ast\zeta^{(1)}$
is homotopic to  $\xi$ within $\mathcal{P}_\tau(2)$ with fixed endpoints. Then
$i_{\tau}((\xi_{2,\tau} (-J_1))\ast\zeta^{(1)})=i_\tau(\zeta^{(1)})$,
and hence
$$
i_{\tau,J_n}(\xi_{2n,\tau})=
\left[ni_{\tau}(\zeta^{(1)})-n\Delta({\zeta^{(1)}})/\pi\right].
$$
Taking $\theta=0$ in the second statement of Lemma~\ref{lem:Dong06}(ii) we get
$i_\tau(\zeta^{(1)})=i_{\tau, R (0) } \left( \gamma_c \right) =-1$.
Clearly, $\Delta({\zeta^{(1)}})=-\pi/2$.
Substituting these values, we finally obtain
$$
i_{\tau, J_n}(\xi_{2n,\tau})=\left[ni_{\tau}(\zeta^{(1)})-n\Delta({\zeta^{(1)}})/\pi\right]
=\left[-\frac{n}{2}\right].
$$
\end{proof}

%

For $\tau > 0$ and $\omega \in \mathbf{U}$,
let $\operatorname{Sp}(2n)_\omega^*=\{M\in \operatorname{Sp}(2n)\,|\,\det(M-\omega I_{2n})\ne 0\}$ and
\[
\mathcal{P}_{\tau,\omega}^*(2n) = \bigl\{ \gamma \in \mathcal{P}_\tau(2n) \,\bigm|\, \gamma(\tau) \in \operatorname{Sp}(2n)^* \bigr\}, \quad
\mathcal{P}_{\tau,\omega}^0(2n) = \mathcal{P}_\tau(2n) \setminus \mathcal{P}_{\tau,\omega}^*(2n).
\]
 For $\gamma \in \mathcal{P}_\tau(2n)$,  define
$\nu_\omega(\gamma) = \nu_\omega\bigl(\gamma(\tau)\bigr) = \dim_{\mathbb{C}} \ker_{\mathbb{C}}\bigl(\gamma(\tau) - \omega I\bigr)$.
If $\gamma \in \mathcal{P}_{\tau,\omega}^0(2n)$, define
\begin{equation}\label{e:Long142A-}
i_\omega(\gamma) = i_\omega(\gamma_{-s}) \quad \forall s \in (0,1]
\end{equation}
(\cite[page~129, (15)]{Long02}),
where $\gamma_{-s}$ is the rotational perturbation path of $\gamma$ defined by (\cite[page~129, (6)]{Long02}), and $i_\omega(\gamma_{-s})$
was defined by (\cite[page~120, (11)]{Long02}).
In applications, it is not easy  to compute
$i_\omega(\gamma)$ for $\gamma \in \mathcal{P}_{\tau,\omega}^0(2n)$.
As a corollary of the following result we
can obtain a useful method.

\begin{theorem}[\hbox{\cite[page~142, Th.8]{Long02}}]\label{th:Long142}
 For $\omega \in \mathbf{U}$ and $\tau > 0$, let $\gamma \in \mathcal{P}_{\tau,\omega}^0(2n)$. Then for any two paths $\alpha$ and $\beta$ in $\mathcal{P}_{\tau,\omega}^\ast(2n)$ which are sufficiently $C^0$-close to $\gamma$, and any $s \in (0,1]$,
\[
i_\omega(\gamma_{-s}) \le i_\omega(\alpha) \le i_\omega(\gamma_s) = i_\omega(\gamma_{-s}) + \nu_\omega(\gamma),
\]
\[
|i_\omega(\beta) - i_\omega(\alpha)| \le \nu_\omega(\gamma).
\]
\end{theorem}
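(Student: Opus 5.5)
The plan is to reduce everything to a local intersection count near the endpoint $M:=\gamma(\tau)$, following the structure of the argument in \cite[Chapter~6]{Long02}. I will use three ingredients.

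First, for $\omega\in\mathbf U$, the index $i_\omega$ of a path in $\mathcal P^\ast_{\tau,\omega}(2n)$ is a homotopy invariant under homotopies with fixed start point $I_{2n}$ whose endpoint stays in $\operatorname{Sp}(2n)^\ast_\omega$.

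Second, if two such paths agree up to time $\tau-\delta$ and their final segments stay in a small ball $U$ around $M$, then their indices differ by a local, crossing-form type intersection number. Concretely, this is the signed count, in the Robbin--Salamon sense, of the intersections of a connecting path in $U$ with the singular hypersurface $\operatorname{Sp}(2n)^0_\omega$.

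Third, in a sufficiently small ball $U$ the matrix $M$ splits symplectically as $M_\omega\diamond M'$. Here $M_\omega$ carries the generalized $\omega$-eigenspace (together with its partner eigenvalue $\bar\omega^{-1}$), and $M'$ stays in $\operatorname{Sp}^\ast_\omega$ throughout $U$. So only the block $M_\omega$ can produce crossings, and its eigenvalue $\omega$ has geometric multiplicity $\nu_\omega(\gamma)$.

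The first step is to fix $\varepsilon>0$ so small that every $\alpha\in\mathcal P^\ast_{\tau,\omega}(2n)$ with $\|\alpha-\gamma\|_{C^0}<\varepsilon$ is homotopic, inside $\mathcal P^\ast_{\tau,\omega}(2n)$, to a path $\gamma|_{[0,\tau-\delta]}$ followed by a short segment in $U$ ending at $\alpha(\tau)$. I would obtain this from the straight-line homotopy $(1-\sigma)\gamma+\sigma\alpha$, projected back to $\operatorname{Sp}(2n)$ by the polar retraction, with the endpoint moved only along $\alpha(\tau)$. The same construction applies to $\gamma_{\pm s}$, provided $s$ is small. By the definition (\ref{e:Long142A-}), the values $i_\omega(\gamma_{\pm s})$ do not depend on $s\in(0,1]$, so it suffices to treat small $s$.

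The second step computes $i_\omega(\gamma_s)-i_\omega(\gamma_{-s})$. Both paths share the initial piece, so the difference is the local intersection number of the rotation path $\theta\mapsto M e^{\theta J}$, $\theta\in[-s,s]$, with $\operatorname{Sp}^0_\omega$. This path meets $\operatorname{Sp}^0_\omega$ only at $\theta=0$, because the rotation moves every unit-circle eigenvalue of the small block $M_\omega$ off $\omega$. There the crossing form on $\ker_{\mathbb C}(M-\omega)$ is $v\mapsto \langle -J\,\tfrac{d}{d\theta}(Me^{\theta J})\,v, Mv\rangle$, which reduces to a positive multiple of $|v|^2$. It is therefore definite of rank $\nu_\omega(\gamma)$, and this gives $i_\omega(\gamma_s)=i_\omega(\gamma_{-s})+\nu_\omega(\gamma)$.

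The third step, which I expect to be the main obstacle, is the sandwich inequality for an arbitrary nearby $\alpha$. I would join $\gamma_{-s}(\tau)$ to $\alpha(\tau)$ by a path inside $U$, and join $\alpha(\tau)$ to $\gamma_s(\tau)$ likewise. The claim is that the local intersection number from $\gamma_{-s}(\tau)$ to any point of $U\cap\operatorname{Sp}^\ast_\omega$ lies in $[0,\nu_\omega(\gamma)]$. To prove it, I would pass to the block $M_\omega$ and show that $M_\omega e^{-sJ}$ is a "minimal" point of the complement: every path starting there crosses the cone $\operatorname{Sp}^0_\omega\cap U$ with nonnegative net count. The tools are the splitting-number description (Krein signature of the $\omega$-eigenvalues of $M_\omega$) and the fact that $U\cap\operatorname{Sp}^\ast_\omega$ has components indexed by how many of the $\nu_\omega$ eigenvalue branches have moved to each side of $\omega$. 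If this direct approach proves messy, I would instead use the characterization of $i_\omega$ through the Morse index of the $\omega$-boundary quadratic form. Perturbing $\gamma$ by $O(\varepsilon)$ changes the negative index of that form by at most the dimension of its kernel, which is $\nu_\omega(\gamma)$. Moreover, the $\mp s$ rotations realize the two extremes, since they shift the kernel entirely to the positive, respectively negative, side. The final inequality $|i_\omega(\beta)-i_\omega(\alpha)|\le\nu_\omega(\gamma)$ then follows immediately, since both values lie in the interval $[\,i_\omega(\gamma_{-s}),\,i_\omega(\gamma_{-s})+\nu_\omega(\gamma)\,]$.
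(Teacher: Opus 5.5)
The paper gives no proof of this statement; it quotes it from Long's book and uses it as a black box in Corollary~\ref{cor:Long142A}, so your argument has to stand on its own.

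\textbf{Steps 1 and 2.} These are sound in substance. One small correction: the polar retraction lands in ${\rm O}(2n)$, not in $\operatorname{Sp}(2n)$. Instead, write $\alpha(t)=\gamma(t)\exp(X(t))$ with $X(t)\in\mathfrak{sp}(2n)$ small and $X(0)=0$, and deform using a cutoff equal to $1$ at $t=\tau$. Your computation $\langle -JMJv,Mv\rangle=|v|^2$ does give $i_\omega(\gamma_s)=i_\omega(\gamma_{-s})+\nu_\omega(\gamma)$.

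\textbf{The gap is Step 3, which is the real content of the theorem.} You state the needed claim: the local count from $Me^{-sJ}$ to any point of $U\cap\operatorname{Sp}(2n)^\ast_\omega$ lies in $[0,\nu_\omega(\gamma)]$. But you give no argument for it.

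The component description you plan to use is false when the $\omega$-eigenvalue is Krein-indefinite. Take $M=R(\theta)\diamond R(-\theta)$ and $\omega=e^{\sqrt{-1}\theta}$. The two eigenvalues near $\omega$ can collide at a point of $\mathbf{U}$ other than $\omega$ and leave the circle. They can then pass $\arg\omega$ off the circle and return on the other side, all without meeting $\omega$. Consequently:
\begin{itemize}
\item ``both moved counterclockwise'' and ``both moved clockwise'' lie in one component;
\item the two configurations with one eigenvalue on each side, which your labelling identifies, are distinct components with index offsets $0$ and $2$.
\end{itemize}

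Your Morse-index fallback does not close the gap either. It would need the Galerkin/Morse characterization of $i_\omega$, which is a much deeper result. It would also need a reduction from arbitrary $C^0$-close continuous paths to paths generated by nearby $B(t)$, since $C^0$-closeness of paths gives no control on generators. You supply neither.

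\textbf{The missing idea: rotate the perturbed endpoint instead of analysing $\operatorname{Sp}(2n)^0_\omega$ near $M$.}
\begin{itemize}
\item Fix $s$; this is allowed because $i_\omega(\gamma_{\pm s})$ does not depend on $s$. Put $N=\alpha(\tau)$.
\item The path $\theta\mapsto Ne^{\theta J}$ is positive. By your own Step~2 computation, each of its crossings with $\operatorname{Sp}(2n)^0_\omega$ is regular and contributes $+\nu_\omega(Ne^{\theta J})$.
\item If $N$ is close enough to $M$, then $N'e^{\pm sJ}\in\operatorname{Sp}(2n)^\ast_\omega$ for every $N'$ on a short path from $M$ to $N$. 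Hence the total count over $[-s,s]$ equals that of $\theta\mapsto Me^{\theta J}$, namely $\nu_\omega(\gamma)$.
\item Since $\theta=0$ is not a crossing and all contributions are nonnegative, the count $k$ over $[-s,0]$ satisfies $0\le k\le\nu_\omega(\gamma)$.
\item Let $\alpha_{-s}$ be the same rotational perturbation applied to $\alpha$. Then $\alpha$ is homotopic with fixed endpoint to $\alpha_{-s}$ followed by the segment $\theta\in[-s,0]$. Also, $\alpha_{-s}$ is homotopic to $\gamma_{-s}$ through paths ending near $Me^{-sJ}\in\operatorname{Sp}(2n)^\ast_\omega$.
\end{itemize}
Therefore $i_\omega(\alpha)=i_\omega(\gamma_{-s})+k$, which is exactly the sandwich inequality. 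The bound on $|i_\omega(\beta)-i_\omega(\alpha)|$ then follows as you say.
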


\begin{corollary}\label{cor:Long142A}
Let $\omega \in \mathbf{U}$ and $\tau > 0$.
\begin{itemize}
\item[\rm (A)] For $\gamma \in \mathcal{P}_{\tau,\omega}^0(2n)$, suppose that there exist two sequences $(\alpha_k)$ and $(\beta_k)$ in
 $\mathcal{P}_{\tau,\omega}^\ast(2n)$,
 both converging  to $\gamma$ in $C^0$-topology,  such that
 the following conditions are satisfied:
 \begin{itemize}
 \item[\rm (i)] $i_\omega(\alpha_k)\equiv m_1$ for all $k=1,2,\cdots$,
 \item[\rm (ii)] $i_\omega(\beta_k)\equiv m_2$ for all $k=1,2,\cdots$,
 \item[\rm (iii)] $|m_2-m_1|=\nu_\omega(\gamma)$.
  \end{itemize}
 Then $i_\omega(\gamma)=\min\{m_1,m_2\}$.
 \item[\rm (B)]
 For  $0<\epsilon\ll\tau$,
let $\Upsilon:[0, \tau+\epsilon]\to \operatorname{Sp}(2n)$ be
a continuous path such that
$\Upsilon(\tau)\in\operatorname{Sp}(2n)_\omega^0=\{M\in \operatorname{Sp}(2n)\,|\,\det(M-\omega I_{2n})= 0\}$ and
$\Upsilon(t)\in\operatorname{Sp}(2n)^*$ for all
$t\in [\tau-\epsilon, \tau+\epsilon]\setminus\{\tau\}$.
For $\rho\in (0, \tau+\epsilon]$, define $\Upsilon_\rho(t)=\Upsilon(\frac{\rho}{\tau}t)$ for $t\in [0, \tau]$.
Then $\Upsilon_\tau=\Upsilon|_{[0,\tau]}\in \mathcal{P}_{\tau,\omega}^0(2n)$
and $\Upsilon_\rho\in \mathcal{P}_{\tau,\omega}^\ast(2n)$
for $\rho\in [\tau-\epsilon, \tau+\epsilon]\setminus\{\tau\}$, and
(by the homotopy invariance of $i_\omega$
(\cite[page~145, Th.3]{Long02})) therefore
\begin{align*}
&i_\omega(\Upsilon|_{[0,\rho]})=i_\omega(\Upsilon_\rho)\equiv
i_\omega(\Upsilon|_{[0,\tau-\epsilon]})\quad\forall\rho\in [\tau-\epsilon, \tau),\\
&i_\omega(\Upsilon|_{[0,\rho]})=i_\omega(\Upsilon_\rho)\equiv
i_\omega(\Upsilon|_{[0,\tau+\epsilon]})\quad\forall\rho\in (\tau, \tau+\epsilon]. \end{align*}
Moreover, $i_\omega(\Upsilon_\tau)=
\min\{i_\omega(\Upsilon|_{[0,\tau-\epsilon]}),
i_\omega(\Upsilon|_{[0,\tau+\epsilon]})\}$ provided that
$$
\max\{i_\omega(\Upsilon|_{[0,\tau-\epsilon]}),
i_\omega(\Upsilon|_{[0,\tau+\epsilon]})\}-
\min\{i_\omega(\Upsilon|_{[0,\tau-\epsilon]}),
i_\omega(\Upsilon|_{[0,\tau+\epsilon]})\}=\nu_\omega(\Upsilon_\tau).
$$
 \end{itemize}
\end{corollary}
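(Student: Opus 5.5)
Part (A) follows directly from Theorem~\ref{th:Long142}. Part (B) is a homotopy-invariance argument along the rescaled family $\Upsilon_\rho$, after which part (A) is applied with two explicit sequences.

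\textbf{Part (A).} Since $\alpha_k\to\gamma$ and $\beta_k\to\gamma$ in the $C^0$ topology, for large $k$ both $\alpha_k$ and $\beta_k$ are "sufficiently $C^0$-close" to $\gamma$ in the sense of Theorem~\ref{th:Long142}. For such $k$ the theorem gives
\[
i_\omega(\gamma_{-s})\le m_1,\; m_2\le i_\omega(\gamma_{-s})+\nu_\omega(\gamma).
\]
Hence both $m_1$ and $m_2$ lie in an interval of length $\nu_\omega(\gamma)$ whose left endpoint is $i_\omega(\gamma_{-s})$. Hypothesis (iii) says $|m_2-m_1|=\nu_\omega(\gamma)$, so $m_1$ and $m_2$ must occupy the two endpoints of this interval. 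Therefore $\min\{m_1,m_2\}=i_\omega(\gamma_{-s})$, and this equals $i_\omega(\gamma)$ by definition (\ref{e:Long142A-}).

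\textbf{Part (B), membership claims.} The reparametrization gives $\Upsilon_\rho(0)=\Upsilon(0)=I_{2n}$ and $\Upsilon_\rho(\tau)=\Upsilon(\rho)$.
\begin{itemize}
\item For $\rho=\tau$, the endpoint $\Upsilon(\tau)$ lies in $\operatorname{Sp}(2n)^0_\omega$, so $\Upsilon_\tau\in\mathcal P^0_{\tau,\omega}(2n)$.
\item For $\rho\neq\tau$ in $[\tau-\epsilon,\tau+\epsilon]$, the endpoint lies in the appropriate starred set, so $\Upsilon_\rho\in\mathcal P^*_{\tau,\omega}(2n)$. Here I read the hypothesis "$\Upsilon(t)\in\operatorname{Sp}(2n)^*$" as meaning $\operatorname{Sp}(2n)^*_\omega$, which is the intended condition.
\end{itemize}

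\textbf{Part (B), constancy of the index.} The identity $i_\omega(\Upsilon|_{[0,\rho]})=i_\omega(\Upsilon_\rho)$ holds because the indices are invariant under reparametrizing the interval.

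The map $(s,t)\mapsto\Upsilon_{\rho(s)}(t)$, with $\rho(s)$ running over $[\tau-\epsilon,\rho]$, is continuous. It starts at $I_{2n}$ for every $s$. Its endpoint $\Upsilon(\rho(s))$ stays in $\operatorname{Sp}(2n)^*_\omega$, so $\nu_\omega$ is constantly $0$ along the family. The homotopy invariance of $i_\omega$ (\cite[page~145, Th.3]{Long02}) then gives constancy of $i_\omega(\Upsilon_\rho)$ on $[\tau-\epsilon,\tau)$. The same argument gives constancy on $(\tau,\tau+\epsilon]$.

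\textbf{Part (B), the final formula.} I take $\alpha_k=\Upsilon_{\tau-\epsilon/k}$ and $\beta_k=\Upsilon_{\tau+\epsilon/k}$. Uniform continuity of $\Upsilon$ on the compact interval $[0,\tau+\epsilon]$ gives
\[
\sup_{t\in[0,\tau]}|\Upsilon(\rho t/\tau)-\Upsilon(t)|\to0 \quad\text{as } \rho\to\tau,
\]
so $\alpha_k\to\Upsilon_\tau$ and $\beta_k\to\Upsilon_\tau$ in $C^0$. By the constancy just proved, the indices of $\alpha_k$ and $\beta_k$ are constant and equal to the two restricted indices $i_\omega(\Upsilon|_{[0,\tau-\epsilon]})$ and $i_\omega(\Upsilon|_{[0,\tau+\epsilon]})$. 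The hypothesis that their difference equals $\nu_\omega(\Upsilon_\tau)$ is exactly condition (iii) of Part (A). Part (A) then gives $i_\omega(\Upsilon_\tau)$ as the minimum of the two.

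\textbf{Main obstacle.} The only delicate point is the homotopy step. Long's homotopy invariance requires the endpoint to avoid $\operatorname{Sp}(2n)^0_\omega$ along the entire homotopy, and this must hold with respect to the $\omega$-degenerate set specifically. I would state that reading of the hypothesis explicitly at the start of the proof.
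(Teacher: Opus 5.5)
Your proof is correct and follows essentially the same route as the paper. For Part (A), both use the two-sided bound of Theorem~\ref{th:Long142}; your ``$m_1,m_2$ are the two endpoints of an interval of length $\nu_\omega(\gamma)$'' is a compact restatement of the paper's pair of inequalities. For Part (B), both apply (A) to $\alpha_k=\Upsilon_{\tau-\epsilon_k}$ and $\beta_k=\Upsilon_{\tau+\epsilon_k}$ with $\epsilon_k\downarrow 0$. Reading the hypothesis as $\Upsilon(t)\in\operatorname{Sp}(2n)^*_\omega$ is right, since homotopy invariance of $i_\omega$ requires $\nu_\omega$, not $\nu_1$, to stay constant along the family.
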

\begin{proof}[\bf Proof]
Clearly, $\Upsilon_\rho$ converges  to $\Upsilon_\tau$ in $C^0$-topology
as $\rho\to\tau$. Let $\epsilon<\epsilon_k\downarrow 0$.
Then (B) can be derived from (A) by applying
to $\gamma=\Upsilon_\tau$, $\alpha_k=\Upsilon_{\tau-\epsilon_k}$ and
$\beta_k=\Upsilon_{\tau+\epsilon_k}$ for $k=1,2,\cdots$.

In order to prove (A), without loss of generality, we may assume $m_1<m_2$. Then $m_2-m_1=\nu_\omega(\gamma)$.
Let $\gamma_s$ and $\gamma_{-s}$ be as in Theorem~\ref{th:Long142}.
Then by Theorem~\ref{th:Long142}, for $k$ large enough it holds that
$$
i_\omega(\gamma_{-s})=i_\omega(\gamma)\le i_\omega(\alpha_k)
\le i_\omega(\gamma_s) = i_\omega(\gamma_{-s}) + \nu_\omega(\gamma)
$$
and therefore $m_1\ge i_\omega(\gamma)$.
On the other hand, for lagre $k$, Theorem~\ref{th:Long142} also leads to
$$
m_1+ \nu_\omega(\gamma)= m_2=i_\omega(\beta_k)
\le i_\omega(\gamma_s) = i_\omega(\gamma_{-s}) + \nu_\omega(\gamma)=
 i_\omega(\gamma)+\nu_\omega(\gamma)
$$
and hence $ i_\omega(\gamma)\ge m_1$. The desired claim is proved.
\end{proof}

\begin{corollary}\label{cor:Long142B}
Let $M\in{\rm Sp}(2n,\mathbb{R})$ and $\tau > 0$.
For $\gamma\in\mathcal{P}_\tau(2n)$,
let $i_{\tau, M}(\gamma)$, $i_{\tau}^M(\gamma)$ and $\nu_{\tau,M}(\gamma)=\nu_{\tau}^M(\gamma)$ be as in
(\ref{e:DongLiuIndex}). Fix $i\in \{i_{\tau, M}, i_{\tau}^M\}$ below.
\begin{itemize}
\item[\rm (A)]
For $\gamma \in \mathcal{P}_{\tau}(2n)$ satisfying $\det(\gamma(\tau)-M)=0$, suppose that there exist two sequences $(\alpha_k)$ and $(\beta_k)$ in
 $\mathcal{P}_{\tau,M}^\ast(2n):=\{\gamma \in \mathcal{P}_{\tau}(2n)\,|\, \det(\gamma(\tau)-M)\ne 0\}$,
 both converging  to $\gamma$ in $C^0$-topology,  such that
 the following conditions are satisfied:
 \begin{itemize}
 \item[\rm (i)] $i(\alpha_k)\equiv m_1$ for all $k=1,2,\cdots$,
 \item[\rm (ii)] $i(\beta_k)\equiv m_2$ for all $k=1,2,\cdots$,
 \item[\rm (iii)] $|m_2-m_1|=\nu_{\tau,M}(\gamma)=\nu_{\tau}^M(\gamma)$.
  \end{itemize}
 Then $i(\gamma)=\min\{m_1,m_2\}$.

\item[\rm (B)]
For $0<\epsilon\ll\tau$,
let $\Upsilon:[0, \tau+\epsilon]\to \operatorname{Sp}(2n)$ be
a continuous path such that
$\det(\Upsilon(\tau)-M)=0$ and
$\det(\Upsilon(t)-M)\ne 0$ for all
$t\in [\tau-\epsilon, \tau+\epsilon]\setminus\{\tau\}$.
For $\rho\in (0, \tau+\epsilon]$, define $\Upsilon_\rho(t)=\Upsilon(\frac{\rho}{\tau}t)$ for $t\in [0, \tau]$.
Then there hold:
\begin{align}\label{e:Long142C}
&i(\Upsilon|_{[0,\rho]})=i(\Upsilon_\rho)\equiv
i(\Upsilon|_{[0,\tau-\epsilon]})\quad\forall\rho\in [\tau-\epsilon, \tau),\\
&i(\Upsilon|_{[0,\rho]})=i(\Upsilon_\rho)\equiv
i(\Upsilon|_{[0,\tau+\epsilon]})\quad\forall\rho\in (\tau, \tau+\epsilon].
\label{e:Long142D}
\end{align}
Moreover, $i(\Upsilon_\tau)=
\min\{i(\Upsilon|_{[0,\tau-\epsilon]}),
i(\Upsilon|_{[0,\tau+\epsilon]})\}$ provided that
$$
\max\{i(\Upsilon|_{[0,\tau-\epsilon]}),
i(\Upsilon|_{[0,\tau+\epsilon]})\}-
\min\{i(\Upsilon|_{[0,\tau-\epsilon]}),
i(\Upsilon|_{[0,\tau+\epsilon]})\}=
\nu_{\tau,M}(\Upsilon_\tau)=\nu_{\tau}^M(\Upsilon_\tau).
$$
\end{itemize}
\end{corollary}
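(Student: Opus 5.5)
The proof reduces Corollary~\ref{cor:Long142B} to Corollary~\ref{cor:Long142A}. The reduction relies on two structural facts: both $i_{\tau,M}$ and $i^M_\tau$ are, up to an additive constant depending only on $M$, the ordinary index $i_\tau$ of the modified path $(\gamma M^{-1})\ast\xi$, and its nullity is $\nu_\tau((\gamma M^{-1})\ast\xi)=\dim{\rm Ker}(\gamma(\tau)-M)$. Fix $\xi\in\mathcal{P}_\tau(2n)$ with $\xi(\tau)=M^{-1}$. By (\ref{e:LiuTang-index}), and using the remark that $M^{-1}\gamma$ may be replaced by $\gamma M^{-1}$, we have $i^M_\tau(\gamma)=i_\tau((\gamma M^{-1})\ast\xi)-i_\tau(\xi)$. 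By Proposition~\ref{prop:twoDef}, $i_{\tau,M}(\gamma)=i^M_\tau(\gamma)+c_M$ with $c_M=[i_\tau(\xi)-\Delta(\xi)/\pi]$ independent of $\gamma$. For $M=I_{2n}$ both indices reduce to $i_\tau$ by (\ref{e:dongLiuIndex+}). It therefore suffices to prove (A) and (B) for $i=i^M_\tau$, since a shift by a constant preserves hypotheses (i)--(iii) and the conclusion $i(\gamma)=\min\{m_1,m_2\}$.

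For (A), I would consider the map $\Phi:\gamma\mapsto(\gamma M^{-1})\ast\xi$ from $\mathcal{P}_\tau(2n)$ to $\mathcal{P}_\tau(2n)$. This map is continuous in the $C^0$-topology, because concatenation with a fixed path and right multiplication by a fixed matrix are continuous. Since $\Phi(\gamma)(\tau)=\gamma(\tau)M^{-1}$, the path $\Phi(\gamma)$ is degenerate, in the sense of $\omega=1$, exactly when $\det(\gamma(\tau)-M)=0$, and then $\nu_1(\Phi(\gamma))=\nu_{\tau,M}(\gamma)$. Hence $\Phi(\alpha_k),\Phi(\beta_k)\in\mathcal{P}^\ast_{\tau,1}(2n)$ converge to $\Phi(\gamma)\in\mathcal{P}^0_{\tau,1}(2n)$. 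Their $i_1=i_\tau$ indices are the constants $m_1+i_\tau(\xi)$ and $m_2+i_\tau(\xi)$, and these differ by exactly $\nu_1(\Phi(\gamma))$. Corollary~\ref{cor:Long142A}(A) with $\omega=1$ then gives $i_\tau(\Phi(\gamma))=\min\{m_1,m_2\}+i_\tau(\xi)$, which is the claim.

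For (B), I would first obtain (\ref{e:Long142C}) and (\ref{e:Long142D}) from homotopy invariance applied to the family $\rho\mapsto\Phi(\Upsilon_\rho)$ for $\rho$ in $[\tau-\epsilon,\tau)$ or in $(\tau,\tau+\epsilon]$. Along this family the endpoint is $\Upsilon(\rho)M^{-1}$, whose nullity $\dim{\rm Ker}(\Upsilon(\rho)-M)$ is identically $0$, so (\ref{e:homotopyInvari}) makes $i_\tau(\Phi(\Upsilon_\rho))$ constant. The identity $i(\Upsilon|_{[0,\rho]})=i(\Upsilon_\rho)$ is just a reparametrization, and it is justified the same way, via the homotopy $s\mapsto\Upsilon(((1-s)+s\rho/\tau)t)$ rescaled to a common interval, or simply by taking it as the definition of the index on $[0,\rho]$. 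Finally, $\Upsilon_{\tau\pm\epsilon_k}\to\Upsilon_\tau$ in $C^0$ as $\epsilon_k\downarrow0$, so part (A) applies and gives the min formula. The main obstacle I expect is justifying that Theorem~\ref{th:Long142}, which underlies Corollary~\ref{cor:Long142A}, applies to $\Phi(\gamma)$. That requires the identification $i_1=i_\tau$ and $\nu_1=\nu_\tau$ on $\mathcal{P}_\tau(2n)$, and the continuity of $\Phi$.
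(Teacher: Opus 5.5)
Your proposal is correct and follows the paper's own route. You reduce to $i^M_\tau$ by Proposition~\ref{prop:twoDef}, pass to $(\gamma M^{-1})\ast\xi$ (the paper uses the equivalent $(M^{-1}\gamma)\ast\xi$), whose nullity is $\dim{\rm Ker}(\gamma(\tau)-M)$, apply Corollary~\ref{cor:Long142A} with $\omega=1$, and deduce (B) from (A). Your shift $+i_\tau(\xi)$ is the right one; the paper's displayed $\min\{m_1-i_\tau(\xi),m_2-i_\tau(\xi)\}$ has a sign slip. You also usefully make explicit the $C^0$-continuity of $\gamma\mapsto(\gamma M^{-1})\ast\xi$ and the homotopy-invariance argument for (\ref{e:Long142C})--(\ref{e:Long142D}).

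The one blemish is the homotopy $s\mapsto\Upsilon(((1-s)+s\rho/\tau)t)$ you offer for $i(\Upsilon|_{[0,\rho]})=i(\Upsilon_\rho)$. It joins $\Upsilon_\tau$ (not $\Upsilon|_{[0,\rho]}$) to $\Upsilon_\rho$, and it starts at the degenerate endpoint $\Upsilon(\tau)$. Discard it and rely simply on the invariance of these indices under linear rescaling of the time interval.
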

\begin{proof}[\bf Proof]
Corollary~\ref{cor:Long142A} contains the case $M=I_{2n}$.
From now on, we assume $M\ne I_{2n}$.
As in the proof of Corollary~\ref{cor:Long142A}, (B) may follows from (A).

By Proposition~\ref{prop:twoDef} we only need
to prove Corollary~\ref{cor:Long142B}(A) for $i=i_{\tau}^M$.
Let $\xi$ be any element in $\mathcal{P}_\tau(2n)$ satisfying $\xi(\tau)=M^{-1}$.
By (\ref{e:LiuTang-index}) and the assumptions (i)-(iii), we have
  \begin{align*}
    i_\tau((M^{-1}\gamma)\ast\xi)-i_\tau({\xi})&=i_\tau^M(\gamma),\\
    i_\tau((M^{-1}\alpha_k)\ast\xi)-i_\tau({\xi})&=i_\tau^M(\alpha_k)=m_1,
    \;\forall k=1,2,\cdots,\\
    i_\tau((M^{-1}\beta_k)\ast\xi)-i_\tau({\xi})&=
    i_\tau^M(\beta_k)=m_2,\;\forall k=1,2,\cdots,\\
  |i_\tau((M^{-1}\alpha_k)\ast\xi)-(M^{-1}\beta_k)\ast\xi|&=
|i_\tau^M(\alpha_k)-i_\tau^M(\beta_k)|\\
&=\dim{\rm Ker}(\gamma(\tau)-M)=
\nu_\tau((M^{-1}\gamma)\ast\xi),
       \end{align*}
Note that $(M^{-1}\gamma)\ast\xi\in\mathcal{P}_{\tau}^0(2n)$,
 and $(M^{-1}\gamma)\ast\xi,(M^{-1}\beta_k)\ast\xi\in
\mathcal{P}_{\tau}^\ast(2n)$ for all $k=1,2,\cdots$.
By Corollary~\ref{cor:Long142A}, we obtain
$$
i_\tau((M^{-1}\gamma)\ast\xi)=\min\{m_1-i_\tau({\xi}), m_2-i_\tau({\xi})\}
$$
and hence the desired claim.
\end{proof}

As a generalization of the index $i_\omega$ in
(\ref{e:Long142A-}), Liu and Tang \cite{LiuTang15}  introduced the Maslov $(M;\omega)$-index $i_\omega^M$ and gave a generalization of
Theorem~\ref{th:Long142} in \cite[Theorem~2.16]{LiuTang15}.
Therefore,  the corresponding results
of Corollary~\ref{cor:Long142B} can be derived with the same methods.

\begin{lemma}\label{lem:Dong06+}
For any $\gamma\in \mathcal { P } _ { \tau } ( 2 n )$ and $M, Q\in  \operatorname{Sp}(2n)$,
 the following identities hold:
 \begin{align}\label{e:RobbinSaCZ+RobbinSa7-}
 \nu_{\tau, Q^{-1}MQ}(Q^{-1}\gamma Q)&=\nu_{\tau, M}(\gamma)=\nu_{\tau}^M(\gamma)=\nu_{\tau}^{Q^{-1}MQ}(Q^{-1}\gamma Q),\nonumber\\
 i_{\tau}^{Q^{-1}MQ}(Q^{-1}\gamma Q)&=i_{\tau}^M(\gamma).
\end{align}
Moreover, if $M$ and $Q$ are also orthogonal symplectic matrices, then
\begin{equation}\label{e:RobbinSaCZ+RobbinSa7}
i_{\tau, Q^{-1}MQ}(Q^{-1}\gamma Q)=i_{\tau, M}(\gamma).
\end{equation}
 \end{lemma}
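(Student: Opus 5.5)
The plan is to reduce both index statements to the defining formulas (\ref{e:dongIndex}) and (\ref{e:LiuTang-index}). Then we use the naturality (\ref{e:RobbinSaCZ+RobbinSa6}) and (\ref{e:RobbinSaCZ+RobbinSa7-})'s counterpart for $i_\tau$, namely (\ref{e:RobbinSaCZ+RobbinSa6+}) in the form (\ref{e:RobbinSaCZ+RobbinSa6}) with the constant path $\phi\equiv Q^{-1}$. This gives $i_\tau(Q^{-1}\beta Q)=i_\tau(\beta)$ for every $\beta\in\mathcal{P}_\tau(2n)$ (this is (\ref{e:RobbinSaCZ+RobbinSa6}) combined with (\ref{e:RobbinSa3}), or directly (\ref{e:RobbinSaCZ+RobbinSa6}) for $i$ together with $i(\gamma,[0,\tau])=i_\tau(\gamma)$).

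The nullity identities are immediate. We have $\dim{\rm Ker}(Q^{-1}\gamma(\tau)Q-Q^{-1}MQ)=\dim{\rm Ker}(Q^{-1}(\gamma(\tau)-M)Q)=\dim{\rm Ker}(\gamma(\tau)-M)$, and both $\nu_{\tau,M}$ and $\nu_\tau^M$ are defined by this same kernel dimension.

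For $i_\tau^M$, the case $M=I_{2n}$ is trivial because then $Q^{-1}MQ=I_{2n}$ and we only need $i_\tau(Q^{-1}\gamma Q)=i_\tau(\gamma)$. For $M\ne I_{2n}$, fix $\xi$ with $\xi(\tau)=M^{-1}$ and set $\xi'=Q^{-1}\xi Q\in\mathcal{P}_\tau(2n)$, so that $\xi'(\tau)=(Q^{-1}MQ)^{-1}$. Since conjugation commutes with concatenation and pointwise multiplication, we get $((Q^{-1}MQ)^{-1}Q^{-1}\gamma Q)\ast\xi'=Q^{-1}\bigl((M^{-1}\gamma)\ast\xi\bigr)Q$. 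Applying constant-conjugation invariance of $i_\tau$ to both terms in (\ref{e:LiuTang-index}) yields the identity.

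For $i_{\tau,M}$ with $M,Q$ orthogonal symplectic, we use (\ref{e:dongIndex}) with $\xi'$ as above. The $i_\tau$ term transforms exactly as before. The remaining issue, and the only real obstacle, is $\Delta(\xi')=\Delta(\xi)$. I would choose $\xi$ inside ${\rm Sp}(2n)\cap{\rm O}(2n)$, which is possible since $U(n)$ is connected and $M^{-1}$ is orthogonal symplectic. Then $\mathfrak{u}(Q^{-1}\xi(t)Q)=\mathfrak{u}(Q)^{-1}\mathfrak{u}(\xi(t))\mathfrak{u}(Q)$ by the isomorphism (\ref{e:LieIsom}), so $\det\mathfrak{u}$ is unchanged pointwise and the lifts $\Delta$ coincide. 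Since $i_{\tau,M}$ is independent of the choice of $\xi$ (\cite{Do06}), this finishes the proof. Without orthogonality, $\mathfrak{u}$ is not multiplicative, and this is why the hypothesis is needed.
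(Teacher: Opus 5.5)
Your proposal is correct and follows the paper's proof essentially step by step. You compute the nullities by conjugating the kernel, and you handle the Liu--Tang index through the conjugated reference path $Q^{-1}\xi Q$ together with invariance of $i_\tau$ under constant symplectic conjugation, $i_\tau(\phi\gamma\phi^{-1})=i_\tau(\gamma)$ with $\phi\equiv Q^{-1}$. For Dong's index you choose $\xi$ in $\mathrm{Sp}(2n)\cap\mathrm{O}(2n)$, so that $\det\mathfrak{u}(Q^{-1}\xi(t)Q)=\det\mathfrak{u}(\xi(t))$ and hence $\Delta(Q^{-1}\xi Q)=\Delta(\xi)$; this is exactly the paper's route. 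The only blemish is that your first paragraph cites an equation label that does not exist in the paper; the fact you need is the conjugation invariance just stated.
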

\begin{proof}[\bf Proof]
The equalities in the second line of Lemma~\ref{lem:Dong06+} are clear.

\textbf{(I) [Proof of (\ref{e:RobbinSaCZ+RobbinSa7-})].} \quad
If $M = I$, then \eqref{e:RobbinSaCZ+RobbinSa7} is a special case of \eqref{e:RobbinSaCZ+RobbinSa6}
by \cite[Definition~2.5 and Definition~2.6]{Liu06}.

Suppose now that $M \ne I_{2n}$. Let $\xi\in \mathcal{P}_\tau(2n)$ be any path
 satisfying $\xi(\tau) = M^{-1}$.
Since $Q^{-1}\xi Q \in \mathcal{P}_\tau(2n)$ takes values
$Q^{-1}M^{-1}Q = (Q^{-1}MQ)^{-1}$ at $t = \tau$ and
\[
\bigl(Q^{-1}(M^{-1}\gamma)Q\bigr) \ast \bigl(Q^{-1}\xi Q\bigr) = Q^{-1}\bigl((M^{-1}\gamma) \ast \xi\bigr)Q,
\]
by \eqref{e:LiuTang-index} and \eqref{e:RobbinSaCZ+RobbinSa6} we deduce
\[
\begin{aligned}
i_{\tau}^{Q^{-1}MQ}(Q^{-1}\gamma Q) &= i_\tau\Bigl(Q^{-1}\bigl((M^{-1}\gamma) \ast \xi\bigr)Q\Bigr) - i_\tau\bigl(Q^{-1}\xi Q\bigr) \\
&= i_\tau\bigl((M^{-1}\gamma) \ast \xi\bigr) - i_\tau(\xi) \\
&= i_\tau^M(\gamma).
\end{aligned}
\]

\textbf{ (II) [{Proof of} (\ref{e:RobbinSaCZ+RobbinSa7})].}\quad
Since $M$ is orthogonal symplectic, so is $M^{-1}$. Hence, we can choose a path $\xi \in \mathcal{P}_\tau(2n)$ with $\xi(\tau) = M^{-1}$ such that $\xi(t)$ is orthogonal symplectic for all $t$.

As $Q^{-1}\xi Q \in \mathcal{P}_\tau(2n)$ and
\[
\bigl(Q^{-1}(\gamma M^{-1})Q\bigr) \ast \bigl(Q^{-1}\xi Q\bigr) = Q^{-1}\bigl((\gamma M^{-1}) \ast \xi\bigr)Q,
\]
we may apply \eqref{e:dongIndex} and \eqref{e:RobbinSaCZ+RobbinSa6} as before to obtain
\begin{align}\label{e:dongIndex1}
i_{\tau, Q^{-1}MQ}(Q^{-1}\gamma Q)&=\left[i_\tau((Q^{-1}\gamma Q Q^{-1}M^{-1}Q)\ast(Q^{-1}\xi Q))-\frac{\Delta({Q^{-1}\xi Q})}{\pi}\right]\nonumber\\
&=\left[i_\tau((Q^{-1}(\gamma M^{-1})Q)\ast(Q^{-1}\xi Q))-\frac{\Delta({Q^{-1}\xi Q})}{\pi}\right]\nonumber\\
&=\left[i_\tau((\gamma M^{-1})\ast\xi )-\frac{\Delta({Q^{-1}\xi Q})}{\pi}\right].
\end{align}

Now, because $Q$, $Q^{-1}$, and all $\xi(t)$ are orthogonal symplectic matrices, and because \eqref{e:LieIsom} is a Lie group isomorphism, we have
\[
\mathfrak{u}\bigl(Q^{-1}\xi(t)Q\bigr) = \mathfrak{u}(Q^{-1})\, \mathfrak{u}(\xi(t))\, \mathfrak{u}(Q).
\]
Consequently,
\[
\det\!\mathfrak{u}\bigl(Q^{-1}\xi(t)Q\bigr) = \det\!\mathfrak{u}(Q^{-1}) \det\!\mathfrak{u}(\xi(t)) \det\!\mathfrak{u}(Q) = \det\!\mathfrak{u}(\xi(t)),
\]
which yields $\Delta(Q^{-1}\xi Q) = \Delta(\xi)$. Substituting this into \eqref{e:dongIndex1}
 and comparing with \eqref{e:dongIndex} (or the definition of $i_{\tau,M}$) gives
 \eqref{e:RobbinSaCZ+RobbinSa7} immediately.
\end{proof}

For $B \in L^\infty([0,\tau]; \mathcal{L}_s(\mathbb{R}^{2n}))$, let $\Upsilon_B$ be as in \eqref{e:Ekeland-index}. For each $0 \le t \le \tau$, define
\[
\nu_{t,M}\bigl( \Upsilon_B|_{[0,t]} \bigr) = \dim \operatorname{Ker}\bigl( \Upsilon_B(t) - M \bigr),
\]
so that in particular
\[
\nu_{0,M}\bigl( \Upsilon_B|_{[0,0]} \bigr) = \dim \operatorname{Ker}(I_{2n} - M).
\]
The following lemma generalizes \eqref{e:Ekeland-index}.

\begin{lemma}[\hbox{\cite[Lemma~4.2]{Do06}}]\label{lem:Dong4.2}
Under the above assumptions it holds that
\begin{equation}\label{e:positive-negativeA}
i_{\tau,M}(\Upsilon_B)=i_{\tau,M}(\xi_{2n,\tau})
+\sum_{0\le t<\tau}\nu_{t,M}(\Upsilon_B|_{[0,t]})
\end{equation}
if $B$ is positive definite, and
\begin{equation}\label{e:positive-negativeB}
i_{\tau,M}(\Upsilon_B)=i_{\tau,M}(\xi_{2n,\tau})
-\sum_{0<t\le\tau}\nu_{t,M}(\Upsilon_B|_{[0,t]})
\end{equation}
if $B$ is negative definite. Here $\xi_{2n,\tau}(t)=I_{2n}$ for all $t\in [0, \tau]$.
\end{lemma}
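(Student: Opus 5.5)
The plan is to study the one-parameter family of truncated paths $s\mapsto \Upsilon_B|_{[0,s]}$, reparametrized to $[0,\tau]$ by $\Upsilon_s(t):=\Upsilon_B(st/\tau)$. This family runs from the constant path $\xi_{2n,\tau}$ (at $s=0$) to $\Upsilon_B$ (at $s=\tau$). We then track how $i_{\tau,M}(\Upsilon_s)$ changes as $s$ increases.

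\textbf{Step 1: reduction to $i^M_\tau$.} By Proposition~\ref{prop:twoDef}, $i_{\tau,M}$ and $i^M_\tau$ differ by the constant $[i_\tau(\xi)-\Delta(\xi)/\pi]$, which depends only on $M$. The same constant shift appears on both sides of \eqref{e:positive-negativeA} and \eqref{e:positive-negativeB}, so it suffices to prove both formulas with $i^M_\tau$ in place of $i_{\tau,M}$. Fix $\xi\in\mathcal P_\tau(2n)$ with $\xi(\tau)=M^{-1}$. Then
$$i^M_\tau(\Upsilon_s)=i_\tau\bigl((M^{-1}\Upsilon_s)\ast\xi\bigr)-i_\tau(\xi).$$
By \eqref{e:Long.2}--\eqref{e:Long.3}, the first term is a Cappell--Lee--Miller index $\mu^{\rm CLM}_F(W,{\rm Gr}(\cdot))$ of the concatenated path. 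Its variation in $s$ is therefore governed by crossings of ${\rm Gr}(M^{-1}\Upsilon_B(s))$ with $W$. These crossings occur exactly at those $s$ with $\nu_{s,M}(\Upsilon_B|_{[0,s]})=\dim{\rm Ker}(\Upsilon_B(s)-M)>0$.

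\textbf{Step 2: crossing form and isolation of crossings.} At a crossing $s_0$, take $v\in{\rm Ker}(M^{-1}\Upsilon_B(s_0)-I)$. Differentiating $\frac{d}{ds}M^{-1}\Upsilon_B=M^{-1}JB\Upsilon_B$ shows that the Robbin--Salamon crossing form on this kernel equals, up to a fixed sign convention, the quadratic form $v\mapsto (B(s_0)\Upsilon_B(s_0)v,\Upsilon_B(s_0)v)$. This form is positive definite when $B>0$ and negative definite when $B<0$. Consequently:
\begin{itemize}
\item[(a)] crossings are isolated, hence finite in number on $[0,\tau]$;
\item[(b)] as $s$ passes a crossing $s_0$, the index of the nondegenerate neighbouring paths jumps by exactly $+\nu_{s_0,M}$ (for $B>0$) or $-\nu_{s_0,M}$ (for $B<0$);
\item[(c)] between crossings, the homotopy invariance (\eqref{e:homotopyInvari}, equivalently \eqref{e:Long142C}--\eqref{e:Long142D}) keeps the index constant.
\end{itemize}

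\textbf{Step 3: values at crossings and at the endpoints.} At each crossing, Corollary~\ref{cor:Long142B}(B) applies, because the difference of the two one-sided values equals the nullity. It gives that the index at the crossing equals the minimum of the two one-sided values:
\begin{itemize}
\item for $B>0$, this is the value just before $s_0$, so the index at $s_0$ does not yet include $\nu_{s_0,M}$;
\item for $B<0$, this is the value just after $s_0$, so the index at $s_0$ already includes $-\nu_{s_0,M}$.
\end{itemize}
At $s=0$ the path is $\xi_{2n,\tau}$, whose kernel dimension is $\dim{\rm Ker}(I-M)=\nu_{0,M}$. For small $s>0$, the index equals $i(\xi_{2n,\tau})+\nu_{0,M}$ when $B>0$, and $i(\xi_{2n,\tau})$ when $B<0$. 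Summing the jumps from $s=0$ up to $s=\tau$ then yields the sum over $0\le t<\tau$ in the positive case, and minus the sum over $0<t\le\tau$ in the negative case.

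\textbf{Main obstacle.} The delicate point is Step 2: relating the crossing form of the concatenated path $(M^{-1}\Upsilon_s)\ast\xi$ at its endpoint to the form $(B\Upsilon v,\Upsilon v)$ with the correct sign. One must also justify that varying the endpoint via the reparametrization $s$ produces the spectral-flow jump of size exactly $\nu$, and not merely a jump bounded by $\nu$. I would settle this first by checking the $n=1$ model $\Upsilon=R(ct)$ against Lemma~\ref{lem:Dong06}(ii) to fix the sign convention. The general case then follows from the Robbin--Salamon crossing formula together with \eqref{e:RobbinSa3}.
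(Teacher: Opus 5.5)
The paper does not prove this lemma; it imports it from \cite[Lemma~4.2]{Do06} (see also \cite[Remark~A.4]{Lu11}). Your crossing-form argument is therefore an independent, self-contained route through the CLM/Robbin--Salamon machinery recalled in \S\ref{app:Pre1}. It is sound, and it is simpler than your ``main obstacle'' suggests.

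By path additivity and reparametrization invariance of $\mu^{\rm CLM}_F$ in \eqref{e:Long.2}--\eqref{e:Long.3}, you have $i_\tau((M^{-1}\Upsilon_s)\ast\xi)-i_\tau(\xi)=i(M^{-1}\Upsilon_B,[0,s])$. So $\xi$ drops out, and only the single path $t\mapsto M^{-1}\Upsilon_B(t)$ matters. This path solves $\dot\Psi=J_n(M^\top BM)\Psi$. Hence its crossing form on $\operatorname{Ker}(M^{-1}\Upsilon_B(t)-I_{2n})$ is $(M^\top BMv,v)=(B\Upsilon_B(t)v,\Upsilon_B(t)v)$, as you say.

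Then \eqref{e:RobbinSa2} together with the Robbin--Salamon crossing formula finishes the proof directly. For $B>0$, $\mu_{\rm CZ}(M^{-1}\Upsilon_B)=\frac12\nu_{0,M}+\sum_{0<t<\tau}\nu_{t,M}+\frac12\nu_{\tau,M}$. Subtracting $\frac12(\nu_{\tau,M}-\nu_{0,M})$ gives $\sum_{0\le t<\tau}\nu_{t,M}$. For $B<0$ all signs flip and you get $-\sum_{0<t\le\tau}\nu_{t,M}$. For $\xi_{2n,\tau}$ the same quantity is $0$.

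Consequently, your appeal to Corollary~\ref{cor:Long142B}(B) and your separate analysis at $s=0$ are consistent but unnecessary. (At the last crossing, the corollary would also force you to extend $B$ past $\tau$.) Your proposed sign check does work: for $M=I_2$, $B=cI_2$ and $0<c\tau\le 2\pi$, the right side of \eqref{e:positive-negativeA} is $-1+2=1$, which is $i_{\tau,R(0)}(\gamma_c)$ by Lemma~\ref{lem:Dong06}(ii).

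What your route buys over a bare citation is a transparent reason for the asymmetric ranges $0\le t<\tau$ versus $0<t\le\tau$. Long's index $i(\cdot,[a,b])$ (equivalently $\mu^{\rm CLM}_F$) counts a positive crossing at the left endpoint but not at the right. It counts a negative crossing at the right endpoint but not at the left.

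The one place where your argument does not yet match the statement is regularity. You evaluate $B(s_0)$ at crossing times and use the crossing formula and the isolation of regular crossings, and both require $C^1$ paths. So what you actually prove is the lemma for continuous $B$ with $B(t)$ definite for every $t$.

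That covers every use in this paper, since $B$ is constant in Theorems~\ref{th:PIndex1}, \ref{th:PIndex2}, \ref{th:PIndex3}, \ref{th:PIndex4} and \ref{th:PIndex5}. The lemma, however, is stated for $B\in L^\infty([0,\tau];\mathcal L_s(\mathbb R^{2n}))$. There $M^{-1}\Upsilon_B$ is only Lipschitz, and ``the crossing form at $s_0$'' is undefined.

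To reach that generality you need an integrated version of the crossing argument. For example, write ${\rm Gr}(M^{-1}\Upsilon_B(s))$ near $s_0$ as the graph of a symmetric form over ${\rm Gr}(M^{-1}\Upsilon_B(s_0))$. Then use $\int_{s_0}^{s}B\,dt\ge c(s-s_0)I_{2n}$ (when $B\ge cI_{2n}$ a.e.) to show that crossings are isolated and that the local jump is exactly $\pm\nu_{s_0,M}$.
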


A remark concerning (\ref{e:positive-negativeB}) can be found in \cite[Remark~A.4]{Lu11}.


\subsection{Proofs of Theorems~\ref{th:PIndex1} and \ref{th:PIndex2}}\label{sec:PIndex2}

\begin{proof}[\bf Proof of Theorem~\ref{th:PIndex1}]
	
	As agreed at the end of Section 1, all vectors in $\mathbb{R}^m$ and $\mathbb{C}^m$ are understood as column vectors.
From linear algebra, there exists an orthonormal set of complex eigenvectors
\( z_k = x_k + iy_k \) (with column
vectors \( x_k, y_k \in \mathbb{R}^n \))
of the Hermitian matrix $B+\sqrt{-1}C$
belonging to $e_k$, such that
$(B+\sqrt{-1}C)z_k = e_k z_k$
and $z_k^* z_l = \delta_{kl}$
for $k,l=1,\cdots,n$.
It can be easily verified that
real vectors
\[
u_k = \begin{pmatrix} x_k \\ y_k \end{pmatrix}, \quad v_k = \begin{pmatrix} -y_k \\ x_k \end{pmatrix}.
\]
are orthogonal unit eigenvectors of
$\left( \begin{array} { c c c c }
	B & -C \\
	C &B
\end{array} \right)$
corresponding to the eigenvalue \( e_k \).
A straightforward computation shows that
\[
Q =
\begin{pmatrix}
	x_1 & x_2 & \cdots & x_n & -y_1 & -y_2 & \cdots & -y_n \\
	y_1 & y_2 & \cdots & y_n & x_1 & x_2 & \cdots & x_n
\end{pmatrix}
\]
is a  real orthogonal and symplectic matrix satisfying
\begin{equation}\label{e:DiagDelayA.1-}
Q^\top\left( \begin{array} { c c c c }
	B & -C \\
	C &B
\end{array} \right)Q=
\operatorname{diag}(e_1, e_2, \dots, e_n, e_1, e_2, \dots, e_n).
\end{equation}
We therefore obtain  identity
\begin{equation}\label{e:DiagDelayA.1}
	J_n{\rm diag}(e_1,\cdots,
	e_n, e_1,\cdots, e_n)=J_nQ^{-1}\left( \begin{array} { c c c c }
		B & -C  \\
		C &B
	\end{array} \right)Q=Q^{-1}J_n\left( \begin{array} { c c c c }
		B & -C  \\
		C &B
	\end{array} \right)Q.
\end{equation}
Denote by $\hat\Upsilon$  the fundamental matrix solution of
$\dot{z}(t)=J_n{\rm diag}(e_1,\cdots,  e_n, e_1,\cdots,  e_n)z(t)$
with $\hat\Upsilon(0)=I_{2n}$; that is,  $\hat\Upsilon(t)=\exp(J_n{\rm diag}(e_1,\cdots,  e_n, e_1,\cdots,  e_n)t)$.
Then (\ref{e:DiagDelayA.1}) implies
\begin{equation}\label{e:DiagDelay1}
	Q^{-1}\Upsilon(t) Q=\hat\Upsilon(t).
\end{equation}
Since $Q^{-1}(-J_n)Q=-J_n$, from (\ref{e:DiagDelay1}) and Lemma~\ref{lem:Dong06+} we derive
\begin{equation}\label{e:DiagDelay2}
 i_{\tau,-J_n}(\Upsilon)=i_{\tau,-J_n}(\hat\Upsilon)
\quad\text{and}\quad
\nu_{\tau,-J_n}(\Upsilon)
=\nu_{\tau,-J_n}(\hat\Upsilon).
\end{equation}

For each $j=1,\cdots,n$, let $\hat\Upsilon^j$ be  the fundamental matrix solution of
$\dot{z}=J_1\operatorname{diag}(e_j, e_j)z$ with $\hat\Upsilon^j(0)=I_2$; that is,
\begin{equation}\label{e:fundSoluI}
\hat\Upsilon^j(t)=\exp\left(J_1\left( \begin{array} { c c c c }
e_j & 0  \\
0 &e_j
 \end{array} \right)t\right)=R(e_jt),
 \end{equation}
where  $R(t)=\left(
 \begin{array} {cc}
\cos t & -\sin t \\
\sin t &\cos t\end{array} \right)=\cos t I_2+ \sin t J_1=\exp({tJ_1})$.

Observe that $J_{n}=(J_1)^{\diamond n}$ (the $n$-fold $\diamond$ product of $J_1$, defined by (\ref{e:diamond-product})), and that
\begin{eqnarray*}
{\rm diag}(e_1,\cdots,
 e_n, e_1,\cdots,
 e_n)
=\left( \begin{array} { c c c c }
e_1 & 0  \\
0 &e_1
 \end{array} \right)\diamond\cdots\diamond
 \left( \begin{array} { c c c c }
e_n & 0  \\
0 &e_n
 \end{array} \right).
 \end{eqnarray*}
Combined with Lemma~\ref{lem:diamond-product}(i), this yields
\begin{eqnarray*}
J_n{\rm diag}(e_1,\cdots,
 e_n, e_1,\cdots,
 e_n)
=J_1\left( \begin{array} { c c c c }
e_1 & 0  \\
0 &e_1
 \end{array} \right)\diamond\cdots\diamond
 J_1\left( \begin{array} { c c c c }
e_n & 0  \\
0 &e_n
 \end{array} \right).
 \end{eqnarray*}
 Consequently, by Lemma~\ref{lem:diamond-product}(v) we obtain
 \begin{equation}\label{e:UpsilonProduct}
\hat\Upsilon(t)=\hat\Upsilon^1(t)\diamond\cdots\diamond\hat\Upsilon^n(t).
\end{equation}
\eqref{e:UpsilonProduct} together with Lemma~\ref{lem:diamond-product}(iv) yields
 $$
 \hat\Upsilon(\tau)-(-J_n)=(\hat\Upsilon^1(\tau)+J_1)\diamond\cdots\diamond(\hat\Upsilon^n(\tau)+J_1).
 $$
 It is easily verified that 
 $$
 {\rm Ker}(\hat\Upsilon(\tau)+J_n)=\{(x_1,\cdots,x_n,y_1,\cdots,y_n)^\top\in\mathbb{R}^{2n}\,|\,
 (x_i, y_i)^\top\in {\rm Ker}(\hat\Upsilon^i(\tau)+J_1),\;i=1,\cdots,n\}.
 $$
 Therefore, we obtain
 \[
\dim \operatorname{Ker}\!\bigl(\hat{\Upsilon}(\tau) + J_n\bigr)
= \sum_{i=1}^{n} \dim \operatorname{Ker}\!\bigl(\hat{\Upsilon}^i(\tau) + J_1\bigr),
\]
and consequently
 \begin{equation}\label{e:DiagDelay4}
 \nu_{\tau, -J_n}(\hat{\Upsilon}) = \sum_{i=1}^{n} \nu_{\tau, -J_1}(\hat{\Upsilon}^i).
 %
 \end{equation}
Moreover, using \eqref{e:fundSoluI} it is straightforward to verify that
\[
\det\!\bigl(\hat{\Upsilon}^i(t) + J_1\bigr) = 2 + 2\sin(e_i t).
\]
Consequently,
\begin{equation}\label{e:DiagDelay7}
\nu_{\tau,-J_1}(\hat{\Upsilon}^i) =
\begin{cases}
2, & \text{if } e_i = \dfrac{(4j+3)\pi}{2\tau} \text{ for some } j \in \mathbb{Z}, \\[8pt]
0, & \text{otherwise}.
\end{cases}
\end{equation}
Combining this with \eqref{e:DiagDelay2} and \eqref{e:DiagDelay4}, we obtain \eqref{e:DiagPindex1}.

 For the proof of \eqref{e:DiagPindex2}, define the path $\xi^{(k)} \in \mathcal{P}_\tau(2k)$ by
\begin{equation}\label{e:Xik}
\xi^{(k)}(t) = \exp\!\left( J_k \frac{\pi t}{2\tau} \right) \qquad \text{for all } t \in [0,\tau].
\end{equation}
Then $\xi^{(k)}(\tau) = J_k = (-J_k)^{-1}$ (since $J_k^2 = -I_{2k}$). It is easily verified that
$\xi^{(k)} = (\xi^{(1)})^{\diamond k}$, i.e., the $k$-fold $\diamond$-product of $\xi^{(1)}$.

Because $-J_n = (-J_1)^{\diamond n}$ and both $J_k$ and $\xi^{(k)}(t)$ are orthogonal symplectic matrices,
we may apply \eqref{e:UpsilonProduct} together with the additivity property \eqref{e:sympAddDong1} to obtain
\begin{align}\label{e:DiagDelay13}
i_{\tau,-J_n}(\hat\Upsilon)
&= \Biggl[ \sum_{k=1}^{n} i_\tau\bigl((\hat\Upsilon^k J_1) \ast \xi^{(1)}\bigr) - \frac{n \Delta(\xi^{(1)})}{\pi} \Biggr] \nonumber \\
&= \sum_{k=1}^{n} i_\tau\bigl((\hat\Upsilon^k J_1) \ast \xi^{(1)}\bigr) + \left[ -\frac{n}{2} \right],
\end{align}
where we have used the identities $\Delta(\xi^{(n)}) = n \Delta(\xi^{(1)}) = \frac{\pi n}{2}$ and
$[j + x] = j + [x]$ for all $j \in \mathbb{Z}$. 

Note that $\xi^{(1)}(t) = R\!\left(\frac{\pi t}{2\tau}\right)$ and
$\hat{\Upsilon}^k(t) J_1 = R(e_k t) R(\pi/2) = R\!\left(e_k t + \frac{\pi}{2}\right)$
since $R(\pi/2) = J_1$.
From \eqref{e:pathComp} we obtain
\[
(\hat{\Upsilon}^k J_1) \ast \xi^{(1)}(t) =
\begin{cases}
R\!\left(\dfrac{\pi t}{\tau}\right), & \text{if } t \in [0,\tau/2], \\[8pt]
R\!\left(e_k (2t - \tau) + \dfrac{\pi}{2}\right), & \text{if } t \in [\tau/2,\tau].
\end{cases}
\]
Clearly, the two paths
\[
[0,\tau] \ni t \longmapsto (\hat{\Upsilon}^k J_1) \ast \xi^{(1)}(t)
\qquad\text{and}\qquad
[0,\tau] \ni t \longmapsto \gamma_{c_k}(t) := R\!\left(e_k t + \frac{\pi t}{2\tau}\right),
\]
where $c_k := e_k + \frac{\pi}{2\tau}$, are homotopic in $\mathcal{P}_\tau(2)$ with fixed endpoints.
Hence, by the homotopy invariance \eqref{e:homotopyInvari},
\begin{equation}\label{e:DiagDelay14}
i_\tau\!\bigl((\hat{\Upsilon}^k J_1) \ast \xi^{(1)}\bigr) = i_\tau(\gamma_{c_k}).
\end{equation}
Now take $c = c_k$ and $\theta = 0$ in Lemma~\ref{lem:Dong06}(ii). This yields
\begin{equation}\label{e:DiagDelay15}
i_\tau(\gamma_{c_k}) = 2j_k + 1,
\end{equation}
where $j_k$ is the unique integer such that $c_k \tau \in \bigl(2j_k \pi,\; 2(j_k+1)\pi\bigr]$.
The existence and uniqueness of $j_k$ follow from the partition of $\mathbb{R}$
into the intervals $(2l\pi,\, 2(l + 1)\pi]$ for $l \in \mathbb{Z}$.
Note that
\[
c_k \tau \in \bigl(2j_k\pi,\; 2(j_k+1)\pi\bigr]
\quad\Longleftrightarrow\quad
\frac{(4j_k-1)\pi}{2\tau} < e_k \le \frac{(4j_k+3)\pi}{2\tau}.
\]
Combining \eqref{e:DiagDelay2}, \eqref{e:DiagDelay13}, \eqref{e:DiagDelay14}, and \eqref{e:DiagDelay15}, we obtain
\begin{equation}\label{e:DiagDelay16}
i_{\tau,-J_n}(\Upsilon) = 2j_1 + \dots + 2j_n + n + \left[-\frac{n}{2}\right]
                        = 2j_1 + \dots + 2j_n + \left[\frac{n}{2}\right].
\end{equation}

Next, we prove
\begin{equation}\label{e:DiagDelay17}
i_{\tau,-J_n}(\Upsilon) = i_{\tau}^{-J_n}(\Upsilon) + \left[\frac{n}{2}\right].
\end{equation}
Recall that the path $\xi^{(n)} \in \mathcal{P}_\tau(2n)$ defined in \eqref{e:Xik} satisfies
$\xi^{(n)}(\tau) = J_n = (-J_n)^{-1}$.
By Proposition~\ref{prop:twoDef}, it is sufficient to establish
\[
\left[\, i_\tau(\xi^{(n)}) - \frac{\Delta(\xi^{(n)})}{\pi} \,\right] = \left[\frac{n}{2}\right].
\]
Since $\xi^{(n)} = (\xi^{(1)})^{\diamond n}$, $-J_n = (-J_1)^{\diamond n}$, and
$\Delta(\xi^{(n)}) = n \Delta(\xi^{(1)}) = \frac{\pi n}{2}$, the additivity formula \eqref{e:sympAdd} gives
\[
i_\tau(\xi^{(n)}) - \frac{\Delta(\xi^{(n)})}{\pi}
= n i_\tau(\xi^{(1)})-\frac{n}{2}.
\]
Observe that $\xi^{(1)}(t) = R\!\left(\frac{\pi t}{2\tau}\right) = \gamma_c(t)$ with $c = \frac{\pi}{2\tau}$.
Setting $\theta = 0$ in Lemma~\ref{lem:Dong06}(ii) yields $i_\tau(\xi^{(1)}) = 1$; consequently,
\[
i_\tau(\xi^{(n)}) - \frac{\Delta(\xi^{(n)})}{\pi} = \frac{n}{2}.
\]
Taking the integer part of both sides confirms the required equality.
Finally, combining \eqref{e:DiagDelay16} and \eqref{e:DiagDelay17} we obtain \eqref{e:DiagPindex2}.

It remains to  prove \eqref{e:DiagPindex3} and \eqref{e:DiagPindex4}.
Let $\hat{\Upsilon}(t) = \hat{\Upsilon}^1(t) \diamond \cdots \diamond \hat{\Upsilon}^n(t)$ be the decomposition given in \eqref{e:UpsilonProduct}.
By Lemma~\ref{lem:Dong4.2}, Lemma~\ref{lem:Dong06}(iii), and \eqref{e:DiagDelay2}, we obtain
\begin{equation}\label{e:positive-negativeA4}
i_{\tau,-J_n}(\Upsilon) =i_{\tau,-J_n}(\hat{\Upsilon})= [n/2] + \sum_{0 \le t < \tau} \nu_{t,-J_n}\bigl(\hat{\Upsilon}|_{[0,t]}\bigr)
\end{equation}
if $B$ is positive definite (and so $e_l>0$
for $l=1,\cdots,n$), and
\begin{equation}\label{e:positive-negativeB4}
i_{\tau,-J_n}(\Upsilon) =i_{\tau,-J_n}(\hat{\Upsilon})= [n/2] - \sum_{0 < t \le \tau} \nu_{t,-J_n}\bigl(\hat{\Upsilon}|_{[0,t]}\bigr)
\end{equation}
if $B$ is negative definite (and so $e_l<0$
for $l=1,\cdots,n$).
Since  for every $t \in [0,\tau]$,
\[
\operatorname{Ker}\!\bigl(\hat{\Upsilon}(t) + J_n\bigr) =
\Bigl\{(x_1,\dots,x_n,y_1,\dots,y_n)^\top \in \mathbb{R}^{2n} \Bigm|
(x_k, y_k)^\top \in \operatorname{Ker}\!\bigl(\hat{\Upsilon}^k(t) + J_1\bigr),\; k=1,\dots,n \Bigr\},
\]
from \eqref{e:fundSoluI}, we deduce that for $t > 0$,
\begin{equation}\label{e:Delay6Auto6Xi3}
\begin{cases}
\det\!\bigl(\hat\Upsilon(t) + J_n\bigr) = 0
\Longleftrightarrow\;
e_l t \in 2\pi\mathbb{Z} + \frac{3\pi}{2} \text{ for some } l, \\[6pt]
\dim\operatorname{Ker}\!\bigl(\hat\Upsilon(t) + J_n\bigr) = 2\,\#\bigl\{k \mid \sin(e_k t) = -1\bigr\}.
\end{cases}
\end{equation}
Note that $\nu_{0,-J_n}\bigl(\hat{\Upsilon}|_{[0,0]}\bigr) = \dim\operatorname{Ker}(I_{2n} + J_n) = 0$. 
Substituting the description~(\ref{e:Delay6Auto6Xi3}) into \eqref{e:positive-negativeA4} and \eqref{e:positive-negativeB4} yields \eqref{e:DiagPindex3} and \eqref{e:DiagPindex4} directly.
\end{proof}

\begin{proof}[\bf Proof of Theorem~\ref{th:PIndex2}]
The desired results can be obtained by a suitable adaptation of the proof of Theorem~\ref{th:PIndex1}. All the arguments before equation (\ref{e:DiagDelay2}) continue to hold.

			As in the proofs of (\ref{e:DiagDelay2}), we have			
			\begin{equation}\label{e:DiagDelay2ADD}
				i_{\tau,J_n}(\Upsilon)=i_{\tau,J_n}(\hat\Upsilon)
				\quad\text{and}\quad
				\nu_{\tau,J_n}(\Upsilon)
				=\nu_{\tau,
					J_n}(\hat\Upsilon).
			\end{equation}	
			
			For each $j=1,\cdots,n$, let $\hat\Upsilon^j$ be  the fundamental matrix solution of
			$\dot{z}=J_1\operatorname{diag}(e_j, e_j)z$ with $\hat\Upsilon^j(0)=I_2$; that is,
			$\hat\Upsilon^j(t)=R(e_jt)$ by
			(\ref{e:fundSoluI}).
			Following an argument similar to that for (\ref{e:DiagDelay4}) and (\ref{e:DiagDelay7}), we obtain
			$$
			\nu_{\tau, J_n}(\hat{\Upsilon}) = \sum_{i=1}^{n} \nu_{\tau, J_1}(\hat{\Upsilon}^i),\qquad
			\nu_{\tau,J_1}(\hat{\Upsilon}^i) =
			\begin{cases}
				2, & \text{if } e_i = \dfrac{(4j+1)\pi}{2\tau} \text{ for some } j \in \mathbb{Z}, \\[8pt]
				0, & \text{otherwise},
			\end{cases}
			$$
			and hence (\ref{e:DiagPindex1ADD}).
			
			In order to prove (\ref{e:DiagPindex2ADD}), we define the path $\zeta^{(k)} \in \mathcal{P}_\tau(2k)$ by
			\begin{equation}\label{e:XikADD}
				\zeta^{(k)}(t) = \exp\!\left(-J_k \frac{\pi t}{2\tau} \right) \qquad \text{for all } t \in [0,\tau],
			\end{equation}
			which satisfies  $\zeta^{(k)}(\tau) = -J_k$
			and $\zeta^{(k)} = (\zeta^{(1)})^{\diamond k}$ (the $k$-fold $\diamond$-product of $\zeta^{(1)}$).
			By an argument similar to that  in the proof of (\ref{e:DiagDelay13}), we obtain
			\begin{equation}\label{e:DiagDelay13ADD}
				i_{\tau,J_n}(\hat\Upsilon)
				= \sum_{k=1}^{n} i_\tau\bigl((\hat\Upsilon^k J_1^{-1}) \ast \zeta^{(1)}\bigr) + \left[ \frac{n}{2} \right].
			\end{equation}

			Since $\zeta^{(1)}(t) = R\!\left(-\frac{\pi t}{2\tau}\right)$ and
			$\hat{\Upsilon}^k(t) J_1^{-1} = R(e_k t) R(-\pi/2) = R\!\left(e_k t - \frac{\pi}{2}\right)$, it follows from
			(\ref{e:pathComp}) that
			\[
			(\hat{\Upsilon}^k J_1^{-1}) \ast \zeta^{(1)}(t) =
			\begin{cases}
				R\!\left(-\dfrac{\pi t}{\tau}\right), & \text{if } t \in [0,\tau/2], \\[8pt]
				R\!\left(e_k (2t - \tau) - \dfrac{\pi}{2}\right), & \text{if } t \in [\tau/2,\tau].
			\end{cases}
			\]
			Therefore, for $d_k := e_k - \frac{\pi}{2\tau}$, the two paths
			\[
			[0,\tau] \ni t \longmapsto (\hat{\Upsilon}^k J_1^{-1}) \ast \zeta^{(1)}(t)
			\qquad\text{and}\qquad
			[0,\tau] \ni t \longmapsto \gamma_{d_k}(t) := R\!\left(e_k t - \frac{\pi t}{2\tau}\right),
			\]
			are homotopic in $\mathcal{P}_\tau(2)$ with fixed endpoints, and so
			\begin{equation}\label{e:DiagDelay14ADD}
				i_\tau\!\bigl((\hat{\Upsilon}^k J_1^{-1}) \ast \zeta^{(1)}\bigr) = i_\tau(\gamma_{d_k}).
			\end{equation}
			Taking  $c = d_k$ and $\theta = 0$ in Lemma~\ref{lem:Dong06}(ii), we arrive at
			\begin{equation}\label{e:DiagDelay15ADD}
				i_\tau(\gamma_{d_k}) = 2j_k + 1,
			\end{equation}
			where $j_k$ is the unique integer such that $d_k \tau \in \bigl(2j_k \pi,\; 2(j_k+1)\pi\bigr]$.
			Clearly,
			\[
			d_k \tau \in \bigl(2j_k\pi,\; 2(j_k+1)\pi\bigr]
			\quad\Longleftrightarrow\quad
			\frac{(4j_k+1)\pi}{2\tau} < e_k \le \frac{(4j_k+5)\pi}{2\tau}.
			\]
			From (\ref{e:DiagDelay13ADD}), (\ref{e:DiagDelay15ADD}) and
			(\ref{e:DiagDelay15ADD}) we derive
			\begin{equation}\label{e:DiagDelay16ADD}
				i_{\tau,J_n}(\Upsilon) = 2j_1 + \dots + 2j_n + n + \left[\frac{n}{2}\right]
				= 2j_1 + \dots + 2j_n + \left[\frac{3n}{2}\right].
			\end{equation}

			Next, by  the proof of Lemma~\ref{lem:Dong06}
			we see
			$$
			i_{\tau}(\zeta^{(n)})-\Delta({\zeta^{(n)}})/\pi=ni_{\tau}(\zeta^{(1)})-n\Delta({\zeta^{(1)}})/\pi=-\frac{n}{2},
			$$
			and therefore by Proposition~\ref{prop:twoDef} we obtain
			\begin{align*}
				i_{\tau,J_n}(\Upsilon) = i_{\tau}^{J_n}(\Upsilon) +
				\left[\, i_\tau(\zeta^{(n)}) - \frac{\Delta(\zeta^{(n)})}{\pi} \,\right]
				=i_{\tau}^{J_n}(\Upsilon)+
				\left[-\frac{n}{2}\right].
			\end{align*}
			This and (\ref{e:DiagDelay16ADD}) yield
			(\ref{e:DiagPindex2ADD}).

			Let $\hat{\Upsilon}(t) = \hat{\Upsilon}^1(t) \diamond \cdots \diamond \hat{\Upsilon}^n(t)$ be as in  \eqref{e:UpsilonProduct}.
			Then Lemma~\ref{lem:Dong4.2}, Lemma~\ref{lem:Dong06}(iv) and \eqref{e:DiagDelay2} lead to
			\begin{equation}\label{e:positive-negativeA4ADD}
				i_{\tau,J_n}(\Upsilon) =
				i_{\tau,J_n}(\hat{\Upsilon})= [-n/2] + \sum_{0 \le t < \tau} \nu_{t,J_n}\bigl(\hat{\Upsilon}|_{[0,t]}\bigr)
			\end{equation}
			if $B$ is positive definite, and
			\begin{equation}\label{e:positive-negativeB4ADD}
				i_{\tau,J_n}(\Upsilon) =
				i_{\tau,J_n}(\hat{\Upsilon})= [-n/2] - \sum_{0 < t \le \tau} \nu_{t,J_n}\bigl(\hat{\Upsilon}|_{[0,t]}\bigr)
			\end{equation}
			if $B$ is negative definite.
			
			As in the final part of the proof of Theorem~\ref{th:PIndex1},
			for every $t \in [0,\tau]$, we have that
			\[
			\operatorname{Ker}\!\bigl(\hat{\Upsilon}(t)- J_n\bigr) =
			\Bigl\{(x_1,\dots,x_n,y_1,\dots,y_n)^\top \in \mathbb{R}^{2n} \Bigm|
			(x_k, y_k)^\top \in \operatorname{Ker}\!\bigl(\hat{\Upsilon}^k(t) - J_1\bigr),\; k=1,\dots,n \Bigr\},
			\]
			and
			\begin{equation}\label{e:Delay6Auto6Xi3ADD}
				\begin{cases}
					\det\!\bigl(\Upsilon(t) - J_n\bigr) = 0
					\Longleftrightarrow\;
					e_l t \in 2\pi\mathbb{Z} + \frac{\pi}{2} \text{ for some } l, \\[6pt]
					\dim\operatorname{Ker}\!\bigl(\Upsilon(t) - J_n\bigr) = 2\,\#\bigl\{k \mid \sin(e_k t) = 1\bigr\}.
				\end{cases}
			\end{equation}
			Since $\nu_{0,J_n}\bigl(\hat{\Upsilon}|_{[0,0]}\bigr) = \dim\operatorname{Ker}(I_{2n} - J_n) = 0$,
			using arguments similar to those for (\ref{e:DiagPindex3}) and (\ref{e:DiagPindex4}), the desired equalities (\ref{e:DiagPindex3ADD}) and (\ref{e:DiagPindex4ADD}) follow directly from (\ref{e:positive-negativeA4ADD}), (\ref{e:positive-negativeB4ADD}), and (\ref{e:Delay6Auto6Xi3ADD}).
		\end{proof}

\subsection{Proof of Theorem~\ref{th:PIndex3} }\label{sec:PIndex3}

	
Since $B$ is a real symmetric matrix,
there exists an orthogonal matrix $P$ such that
\begin{equation}\label{e:Add10-}
P^{-1}BP={\rm diag}(e_1,\cdots,  e_n).
\end{equation}
A straightforward verification shows that the
	orthogonal  matrix
$Q:=\left( \begin{array} {cc}
	P & 0 \\
	0 &P\end{array} \right)$
	commutes with $J_n$  (hence is symplectic) and satisfies
\begin{align}\label{e:Add10}
J_n\left( \begin{array} { c c c c }
2{\rm diag}(e_1,\cdots, e_n) & -{\rm diag}(e_1,\cdots, e_n)  \\
-{\rm diag}(e_1,\cdots, e_n) &2{\rm diag}(e_1,\cdots, e_n)
 \end{array} \right)
  &=J_nQ^{-1}\left( \begin{array} { c c }
2B & -B  \\
-B &2B
 \end{array} \right)Q\nonumber\\
 &=Q^{-1}J_n\left( \begin{array} { c c }
2B & -B  \\
-B &2B
 \end{array} \right)Q.
\end{align}
 Denote by $\hat\Upsilon$  the fundamental matrix solution of
 $$
 \dot{z}(t)=J_n\left( \begin{array} { c c c c }
2{\rm diag}(e_1,\cdots, e_n) & -{\rm diag}(e_1,\cdots, e_n)  \\
-{\rm diag}(e_1,\cdots, e_n) &2{\rm diag}(e_1,\cdots, e_n)
 \end{array} \right)z(t)
 $$
 with $\hat\Upsilon(0)=I_{2n}$. 
It follows from (\ref{e:Add10}) that
 \begin{equation}\label{e:Add11}
Q^{-1}\Upsilon(t) Q=\hat\Upsilon(t).
\end{equation}
A straightforward calculation yields
$$
Q^{-1}M_{3,n}Q=\left( \begin{array} { c c c c c }
P^{-1} & 0\\
0 & P^{-1}\end{array} \right)\left( \begin{array} { c c c c c }
0 & I _ { n }\\
 -I _ { n } & I_n\end{array} \right)\left( \begin{array} { c c c c c }
P & 0\\
0 & P\end{array} \right)=M_{3,n}.
$$
This, together with Lemma~\ref{lem:Dong06+}, implies
\begin{equation}\label{e:Add12}
 i_{\tau, M_{3,n}}(\Upsilon)=i_{\tau, M_{3,n}}(\hat\Upsilon)
\quad\text{and}\quad
\nu_{\tau, M_{3,n}}(\Upsilon)
=\nu_{\tau, M_{3,n}}(\hat\Upsilon).
\end{equation}

For each $j=1,\cdots,n$, let $\hat\Upsilon^j$ be  the fundamental matrix solution of
$$
\dot{z}=J_1\left( \begin{array} { c c c c c }
2e_j & -e_j\\
 -e_j& 2e_j\end{array} \right)z
$$
with $\hat\Upsilon^j(0)=I_2$, i.e.,
\begin{align}\label{e:Add13}
\hat\Upsilon^j(t)&=\exp\left(J_1\left( \begin{array} { c c c c }
2e_j & -e_j  \\
-e_j &2e_j
 \end{array} \right)t\right)=\exp\left(J_1\left( \begin{array} { c c c c }
2e_j & -e_j  \\
-e_j &2e_j
 \end{array} \right)t\right)\nonumber \\
&=\exp\left(2e_jtJ_1\left( \begin{array} { c c c c }
1 & 0  \\
0 &1
 \end{array} \right)+e_jtJ_1\left(\begin{array} { c c c c }
0 & -1  \\
-1 &0
 \end{array} \right)\right).
 \end{align}

 Recall that  if two  anti-commutative matrices $X, Y$ of order $N$ (i.e., $XY=-YX$) satisfy
 $X^2=aI_N$ and $Y^2=bI_N$ for some  numbers $a$ and $b$, then
 $(X+Y)^{2k}=(a+b)^kI_N$ ($k\in\mathbb{N}$) and $(X+Y)^{2k+1}=(a+b)^{k}(X+Y)$ ($k\in\mathbb{N}\cup\{0\}$).
 It follows that
 \begin{eqnarray}\label{e:Add14}
 \exp(X+Y)=\sum_{k=0}\frac{1}{(2k)!}(a+b)^kI_N+\sum^{\infty}_{k=0}\frac{1}{(2k+1)!}(a+b)^{k}(X+Y).
 \end{eqnarray}
 %

It is easy to show that
$$ J_1\left( \begin{array} { c c c c }
1 & 0  \\
0 &1
 \end{array} \right)J_1\left(\begin{array} { c c c c }
0 & -1  \\
-1 &0
 \end{array} \right)=-\left(\begin{array} { c c c c }
0 & 1  \\
1 &0
 \end{array} \right)=-J_1\left(\begin{array} { c c c c }
0 & -1  \\
-1 &0
 \end{array} \right)J_1\left( \begin{array} { c c c c }
1 & 0  \\
0 &1
 \end{array} \right),
$$
 and that
 $$
 \left(2e_jtJ_1\left( \begin{array} { c c c c }
1 & 0  \\
0 &1
 \end{array} \right)\right)^2=-4e_j^2t^2I_2\quad\text{and}\quad
 \left(e_jtJ_1\left(\begin{array} { c c c c }
0 & -1  \\
-1 &0
 \end{array} \right)\right)^2=e_j^2t^2I_2.
 $$
Substituting  these into (\ref{e:Add14}), we obtain
\begin{align}\label{e:Add15}
\hat\Upsilon^j(t)&=\sum_{k=0}\frac{1}{(2k)!}(-3e_j^2t^2)^kI_2\nonumber\\+ &\sum^{\infty}_{k=0}\frac{1}{(2k+1)!}(-3e_j^2t^2)^{k}
 \left(2e_jtJ_1\left( \begin{array}{cc}
1 & 0  \\
0 &1
 \end{array} \right)
  +e_jtJ_1\left(\begin{array}{cc}
0 & -1  \\
-1 &0
 \end{array} \right)\right) \nonumber\\
 &=\cos(\sqrt{3}e_jt)I_2+\frac{\sin(\sqrt{3}e_jt)}{\sqrt{3}}J_1\left(\begin{array} { c c }
2 & -1  \\
-1 &2
 \end{array} \right)
 \end{align}


As before, since  $J_{n}=(J_1)^{\diamond n}$ and
\begin{eqnarray*}\label{e:Add15+}
\left( \begin{array} { c c c c }
2{\rm diag}(e_1,\cdots, e_n) & -{\rm diag}(e_1,\cdots, e_n)  \\
-{\rm diag}(e_1,\cdots, e_n) &2{\rm diag}(e_1,\cdots, e_n)
 \end{array} \right)
=\left( \begin{array} { c c c c }
2e_1 & -e_1  \\
-e_1 &2e_1
 \end{array} \right)\diamond\cdots\diamond
 \left( \begin{array} { c c c c }
2e_n & -e_n  \\
-e_n &2e_n
 \end{array} \right),
 \end{eqnarray*}
we derive from these and  Lemma~\ref{lem:diamond-product}(i) that
\begin{eqnarray*}
J_n\left( \begin{array} { c c c c }
2{\rm diag}(e_1,\cdots, e_n) & -{\rm diag}(e_1,\cdots, e_n)  \\
-{\rm diag}(e_1,\cdots, e_n) &2{\rm diag}(e_1,\cdots, e_n)
 \end{array} \right)
=J_1\left( \begin{array} { c c c c }
2e_1 & -e_1  \\
-e_1 &2e_1
 \end{array} \right)\diamond\cdots\diamond
 J_1\left( \begin{array} { c c c c }
2e_n & -e_n  \\
-e_n &2e_n
 \end{array} \right),
 \end{eqnarray*}
 and therefore, by Lemma~\ref{lem:diamond-product}(v),
 \begin{equation}\label{e:Add16}
\hat\Upsilon(t)=\hat\Upsilon^1(t)\diamond\cdots\diamond\hat\Upsilon^n(t).
\end{equation}
 Equation (\ref{e:Add16}) and Lemma~\ref{lem:diamond-product}(iv) imply
 \begin{equation}\label{e:Add17}
 \hat\Upsilon(t)-M_{3,n}=(\hat\Upsilon^1(t)-M_{3,1})\diamond\cdots\diamond(\hat\Upsilon^n(t)-M_{3,1})
 \end{equation}
 because $M_{3,n}=(M_{3,1})^{\diamond n}$ (the $n$-fold $\diamond$ product of $M_{3,1}$, defined by (\ref{e:diamond-product})),
 where $M_{3,1}=\left( \begin{array} {cc}
0 & 1\\
 -1& 1\end{array} \right)$.


%
%
%


\textbf{(I)[{Proof of} (\ref{e:Add2})]}.\quad
Since $J_1\left(\begin{array} { c c c c }
2 & -1  \\
-1 &2
 \end{array} \right)=\left(\begin{array} { c c c c }
1 & -2  \\
2 &-1
 \end{array} \right)$, by (\ref{e:Add15}), we have
  $$
  \begin{aligned}
\hat\Upsilon^j(t)-M_{3,1}&=\cos(\sqrt{3}e_jt)I_2+\frac{\sin(\sqrt{3}e_jt)}{\sqrt{3}}J_1
\left(\begin{array} { c c }
2 & -1  \\
-1 &2
 \end{array} \right)-M_{3,1}\nonumber\\
 &=\left(\begin{array} { c c }
\cos(\sqrt{3}e_jt)+ \frac{1}{\sqrt{3}}\sin(\sqrt{3}e_jt)& -1-\frac{2}{\sqrt{3}}\sin(\sqrt{3}e_jt)  \\
1+\frac{2}{\sqrt{3}}\sin(\sqrt{3}e_jt) &-1-\frac{1}{\sqrt{3}}\sin(\sqrt{3}e_jt)+\cos(\sqrt{3}e_jt).
 \end{array} \right).
 \end{aligned}
 $$
It is straightforward to compute the  determinant
\begin{align}\label{e:Add19}
\det\bigl(\hat\Upsilon^j(t)-M_{3,1}\bigr)
&= -\cos(\sqrt{3}e_j t) + \sqrt{3}\sin(\sqrt{3}e_j t) + 2 \nonumber \\
&= 2\sin\Bigl(\sqrt{3}e_j t-\frac{\pi}{6}\Bigr) + 2.
\end{align}
 Hence we obtain
    \begin{eqnarray}\label{e:Add20}
\det(\hat\Upsilon^j(t)-M_{3,1})=0\;\Longleftrightarrow\;
\sqrt{3}e_jt\in \frac{2\pi}{3}+ (2\mathbb{Z}+1)\pi.
 \end{eqnarray}
 For such $t$,  the difference $\hat\Upsilon^j(t)-M_{3,1}$ is the zero matrix:
   \begin{eqnarray}\label{e:Add21}
\hat\Upsilon^j(t)-M_{3,1}=\left( \begin{array} { c c c c c }
0 & 0\\
0 & 0\end{array} \right),
  \end{eqnarray}
 which implies
    \begin{eqnarray}\label{e:Add22}
{\rm Ker}(\hat\Upsilon^j(t)-M_{3,1})=\mathbb{R}^2.
  \end{eqnarray}
From (\ref{e:Add17}) and the second equality in (\ref{e:Add12}), we derive
\begin{align}\label{e:Add23}
\nu_{\tau, M_{3,n}}(\Upsilon)
&= \nu_{\tau, M_{3,n}}(\hat\Upsilon) \nonumber \\
&= \sum^n_{j=1} \nu_{\tau, M_{3,1}}(\hat\Upsilon^j) \nonumber \\
&= 2\,\sharp\left\{j\;\Big|\;\sqrt{3}e_j\tau \in \frac{2\pi}{3} + (2\mathbb{Z}+1)\pi \right\}.
\end{align}
We have thus verified (\ref{e:Add2}).

\textbf{(II)[{Proof of} (\ref{e:Add3})}.\quad
 For $0\le t\le\tau$, we define
$$
\eta(t)=R\left(\frac{\pi t}{2\tau}\right)\quad\text{and}\quad
\zeta(t)=\left( \begin{array} { c c  }
\frac{t}{\tau} & -1 \\ 1 & 0 \end{array} \right).
$$
Then $\eta(0)=I_2$, $\eta(\tau)=J_1=\zeta(0)$,  $\zeta(\tau)=M_{3,1}^{-1}$, and
$\xi=\zeta\ast\eta\in\mathcal{P}_\tau(2)$ satisfies $\xi(\tau)=\zeta(\tau)=M_{3,1}^{-1}$.
Since $\xi^{\diamond n}\in\mathcal{P}_\tau(2n)$ satisfies $\xi^{\diamond n}(\tau)=\zeta^{\diamond n}(\tau)=M_{3,n}^{-1}$, by (\ref{e:dongIndex}) we have
\begin{align}\label{e:Add26}
i_{\tau,M_{3,n}}(\hat\Upsilon)&=\left[i_\tau((\hat\Upsilon (M_{3,n})^{-1})\ast\xi^{\diamond n})-\frac{\Delta({\xi^{\diamond n}})}{\pi}\right],\\
i_{\tau,M_{3,1}}(\Upsilon^j)&=\left[i_\tau((\hat\Upsilon^j (M_{3,1})^{-1})\ast\xi)-\frac{\Delta({\xi})}{\pi}\right],\quad j=1,\dots,n. \label{e:Add27}
\end{align}
Observe that (\ref{e:Add16}) and (\ref{e:sympAdd}) imply
\begin{align}\label{e:Add28}
i_\tau((\hat\Upsilon (M_{3,n})^{-1})\ast\xi^{\diamond n})&=\sum^n_{j=1}i_\tau((\hat\Upsilon^j (M_{3,1})^{-1})\ast\xi)\nonumber\\
&{=}\sum^n_{j=1}i_{\tau,M_{3,1}}(\hat\Upsilon^j)-n\left[-\frac{\Delta({\xi})}{\pi}\right]
\end{align}
by (\ref{e:Add27}). It follows from this and (\ref{e:Add26}) that
\begin{eqnarray}\label{e:Add29}
i_{\tau,M_{3,n}}(\hat\Upsilon)=
\sum^n_{j=1}i_{\tau,M_{3,1}}(\hat\Upsilon^j)+ \left[-n\left[-\frac{\Delta({\xi})}{\pi}\right]-\frac{\Delta({\xi^{\diamond n}})}{\pi}\right].
\end{eqnarray}

Let us compute $\Delta({\xi})/\pi$ and $\Delta({\xi^{\diamond n}})/\pi$. Notice that
\begin{eqnarray}\label{e:Add30}
\Delta({\xi})=\Delta({\eta})+\Delta({\zeta})\quad\text{and}\quad
\Delta({\xi}^{\diamond n})=\Delta({\eta}^{\diamond n})+\Delta({\zeta}^{\diamond n}).
\end{eqnarray}
It is easy to see that
\begin{eqnarray}\label{e:Add31}
\Delta({\eta})=\frac{\pi}{2}\quad\text{and}\quad
\Delta({\eta}^{\diamond n})=\frac{n\pi}{2}.
\end{eqnarray}
To compute $\Delta({\zeta})$ and $\Delta({\zeta}^{\diamond n})$,
note that $\zeta(t)=\beta_1(\tau-t)$ for $t\in [0,\tau]$. Hence,
\begin{equation}\label{e:Add33}
\Delta(\zeta)=\arctan 2-\frac{\pi}{2}\quad\text{and}\quad \Delta(\zeta^{\diamond n})=n\arctan 2-\frac{n\pi}{2}.
\end{equation}
Combing this with (\ref{e:Add30}), (\ref{e:Add31}), and  (\ref{e:Add33}),       we obtain
\begin{equation}\label{e:Add34}
\Delta({\xi})=\arctan 2\quad\text{and}\quad
\Delta({\xi}^{\diamond n})=n\arctan 2.
\end{equation}

By substituting these into (\ref{e:Add29}), we derive the desired equation
\begin{align}\label{e:Add35}
i_{\tau,M_{3,n}}(\hat\Upsilon)&=
\sum^n_{j=1}i_{\tau,M_{3,1}}(\hat\Upsilon^j)+ \left[-n\left[-\frac{\arctan 2}{\pi}\right]-\frac{
n\arctan 2}{\pi}\right]\nonumber\\
&=\sum^n_{j=1}i_{\tau,M_{3,1}}(\hat\Upsilon^j)+ \left[n-\frac{
n\arctan 2}{\pi}\right]
\end{align}
since $\arctan 2\approx 1.107$ radians implies $[-\arctan 2/\pi]=-1$.
\begin{itemize}
\item[$\bullet$] If $e_j>0$, i.e., $\left( \begin{array} { c c c c }
2e_j & -e_j  \\
-e_j &2e_j
 \end{array} \right)$ is positive  definite, by (\ref{e:Add4}) and (\ref{e:Add23+5}) we have
\begin{align}\label{e:Add37}
i_{\tau, M_{3,1}}(\hat\Upsilon^j)
&=i_{\tau, M_{3,1}}(\xi_{2,\tau})+2\sharp\left\{0<t<\tau\,\Big|\, \sqrt{3}e_jt\in \frac{2\pi}{3}+ (2\mathbb{Z}+1)\pi\right\}\nonumber\\
&=2\sharp\left\{0<t<\tau\,\Big|\, \sqrt{3}e_jt\in \frac{2\pi}{3}+ (2\mathbb{Z}+1)\pi\right\}.
\end{align}
\item[$\bullet$] If $e_j<0$, i.e., $\left( \begin{array} { c c c c }
2e_j & -e_j  \\
-e_j &2e_j
 \end{array} \right)$ is negative definite,  then  (\ref{e:Add5}) and (\ref{e:Add23+5}) give
\begin{align}\label{e:Add38}
i_{\tau, M_{3,1}}(\hat\Upsilon^j)
&=i_{\tau, M_{3,1}}(\xi_{2,\tau})
-2\sharp\left\{0<t\le\tau\,\Big|\, \sqrt{3}e_jt\in \frac{2\pi}{3}+ (2\mathbb{Z}+1)\pi\right\}\nonumber\\
&=-2\sharp\left\{0<t\le\tau\,\Big|\, \sqrt{3}e_jt\in \frac{2\pi}{3}+ (2\mathbb{Z}+1)\pi\right\}.
\end{align}
\item[$\bullet$] If $e_j=0$, then $\hat\Upsilon^j(t)=I_2=\xi_{2,\tau}(t)$ for $0\le t\le\tau$, and therefore
 $i_{\tau, M_{3,1}}(\hat\Upsilon^j)=0$ by (\ref{e:Add23+5}).
 \end{itemize}
Combining this with (\ref{e:Add37}) and (\ref{e:Add38}),
the desired formula (\ref{e:Add3}) follows from (\ref{e:Add35}) and (\ref{e:Add12}).

\textbf{(III)[{Proofs of} (\ref{e:Add4}) and (\ref{e:Add5})]}.\quad
\textsf{Step~1}.\quad Let us prove
\begin{eqnarray}\label{e:Add23+7}
i_{\tau,M_{3,n}}(\xi_{2n,\tau})=\left[n-\frac{n\arctan 2}{\pi}\right].
\end{eqnarray}
 For $0\le t\le\tau$, we define
$$
\beta_1(t)=\left( \begin{array} { c c  }
\frac{\tau-t}{\tau} & -1 \\ 1 & 0 \end{array} \right),\quad
\beta_2(t)=R\left(\frac{\pi}{2}\left(1+\frac{t}{\tau}\right)\right),\quad
\beta_3(t)=\left( \begin{array} { c c  }
-\left(1+\frac{t}{\tau}\right) & 0 \\ 0 &
-\left(1+\frac{t}{\tau}\right) \end{array} \right).
$$
Then $\beta_j(t)\in{\rm Sp}(2)^-$ for $0\le t\le\tau$ and $j=1,2,3$; moreover,
$$
  \begin{aligned}
&\beta_1(0)=M_{3,1}^{-1},\quad
\beta_1(\tau)=J_1,\quad \beta_2(0)=R(\pi)=\left( \begin{array} { c c  }
-1 & 0 \\
0&-1 \end{array} \right),\\
&\beta_3(0)=R(\tau), \quad \beta_3(\tau)=D(-2)=M_1^{-}.
 \end{aligned}
  $$
Define $\beta=\beta_3\ast\beta_2\ast\beta_1$. Then $\beta(0)=M_{3,1}^{-1}$, $\beta(\tau)=M_1^{-}$,
and $\beta(t)\in{\rm Sp}(2)^-$ for $0\le t\le\tau$. By Lemma~\ref{lem:Dong06}(i),
\begin{align}
i_{\tau, M_{3,1}}(\xi_{2,\tau})
&= \left[ \frac{\Delta(\beta)}{\pi} +
\frac{\Delta(\xi_{2,\tau} M_{3,1}^{-1})}{\pi} \right] \nonumber \\
&= \left[ \frac{\Delta(\beta)}{\pi} \right] \nonumber \\
&= \left[ \frac{\Delta(\beta_1)}{\pi} + \frac{\Delta(\beta_2)}{\pi} + \frac{\Delta(\beta_3)}{\pi} \right],
\label{e:Add23+1}
\end{align}
because $\xi_{2,\tau} M^{-1}_{3,1}(t)\equiv M^{-1}_{3,1}$ for all $t\in [0,\tau]$.

Note that $\mathfrak{u}(\beta_2(t))=\exp\left(\sqrt{-1}\frac{\pi}{2}\left(1+\frac{t}{\tau}\right)\right)$. We obtain
\begin{eqnarray}\label{e:Add23+2}
\Delta(\beta_2)=\frac{\pi}{2}.
\end{eqnarray}
To compute $\Delta({\beta_1})$, a straightforward calculation yields
$$
  \begin{aligned}
\beta_1(t)\beta_1(t)^\top&=\frac{\tau}{\sqrt{(\tau-t)^2+4\tau^2}}\left( \begin{array} { c c  }
2&-\frac{\tau-t}{\tau}\\
-\frac{\tau-t}{\tau} & \frac{(\tau-t)^2}{\tau^2}+2 \end{array} \right),\\
\left(\sqrt{\beta_1(t)\beta_1(t)^\top}\right)^{-1}\beta_1(t)&=\frac{\tau}{\sqrt{(\tau-t)^2+4\tau^2}}\left( \begin{array} { c c  }
\frac{\tau-t}{\tau}&-2\\
2&\frac{\tau-t}{\tau} \end{array} \right).
  \end{aligned}
  $$
It follows that
\begin{eqnarray}\label{e:Add23+3}
\mathfrak{u}(\beta_1(t))=\frac{\tau-t}{\sqrt{(\tau-t)^2+4\tau^2}}+\frac{2\tau\sqrt{-1}}{(\tau-t)^2+4\tau^2}
=\exp\left(\sqrt{-1}\Delta_{\beta_1}(t)\right),
\end{eqnarray}
 where the continuous function $\Delta_{\beta_1}:[0, \tau]\to\mathbb{R}$ is defined by
 $$
   \Delta_{\beta_1}(t)= \begin{cases}
 \arctan\left(\frac{2\tau}{\tau-t}\right)\quad&\hbox{if $t\in[0, \tau)$},\\
\lim_{t\to \tau-}\arctan\left(\frac{2\tau}{\tau-t}\right)\quad&\hbox{if $t=\tau$}.
\end{cases}
$$
  Hence,
\begin{eqnarray}\label{e:Add23+4}
\Delta(\beta_1)=\frac{\pi}{2}-\arctan 2.
\end{eqnarray}

It is easy to see that  $\Delta(\beta_3)=0$ because
$$
\left(\sqrt{\beta_3(t)\beta_3(t)^\top}\right)^{-1}\beta_3(t)=\left( \begin{array} { c c  }
-1&0\\
0&-1\end{array} \right)=R(\pi).
$$
 Noting that $\arctan 2\approx 1.107$ radians and combining this with (\ref{e:Add23+2}), (\ref{e:Add23+4}),
we derive from  (\ref{e:Add23+1}) that
\begin{equation}\label{e:Add23+5}
i_{\tau,M_{3,1}}(\xi_{2,\tau})= \left[1-\frac{\arctan 2}{\pi}\right]=0.
\end{equation}

Since $M_{3,n}=M_{3,1}^{\diamond n}$, $\beta^{\diamond n}=\beta_3^{\diamond n}\ast\beta_2^{\diamond n}\ast\beta_1^{\diamond n}$,
 $\beta^{\diamond n}(\tau)=(M_1^{-1})^{\diamond n}=M_n^-$,
and $\beta(t)\in{\rm Sp}(2n)^-$ for $0\le t\le\tau$,
 we have  $\Delta(\xi_{2n,\tau} M^{-1}_{3,n})=0$ (as in the proof of (\ref{e:Add23+1})) and obtain
\begin{align}\label{e:Add23+6}
i_{\tau,M_{3,n}}(\xi_{2n,\tau})&= \left[ \frac{\Delta(\beta^{\diamond n})}{\pi} + \frac{\Delta(\xi_{2n,\tau} M^{-1}_{3,n})}{\pi} \right]\nonumber\\
&=\left[ \frac{\Delta(\beta_1^{\diamond n})}{\pi} + \frac{\Delta(\beta_2^{\diamond n})}{\pi}+\frac{\Delta(\beta_3^{\diamond n})}{\pi} \right].
\end{align}
For $j=1,2,3$, Lemma~\ref{lem:diamond-product}(iii) implies
$$
\mathfrak{u}(\beta_j^{\diamond n}(t))=\operatorname{Diag}(\underbrace{\mathfrak{u}(\beta_j(t)), \dots, \mathfrak{u}(\beta_j(t))}_{n \text{ times}})
$$
and thus $\Delta(\beta_j^{\diamond n})=n\Delta(\beta_j)$.
Combing this with  (\ref{e:Add23+2}) and (\ref{e:Add23+4}), equation \eqref{e:Add23+6} yields (\ref{e:Add23+7}).

\textsf{Step~2}(completing proofs of (\ref{e:Add4}) and (\ref{e:Add5})).\quad
From (\ref{e:Add10-}) and
$$
Q^{-1}\left( \begin{array} { c c c c }
2B & -B  \\
-B &2B
 \end{array} \right)Q=\left( \begin{array} { c c c c }
2{\rm diag}(e_1,\cdots, e_n) & -{\rm diag}(e_1,\cdots, e_n)  \\
-{\rm diag}(e_1,\cdots, e_n) &2{\rm diag}(e_1,\cdots, e_n)
 \end{array} \right),
 $$
we derive
$$
\begin{aligned}
&\text{$B$ is positive (resp. negative) definite}\\
&\Longleftrightarrow\;e_j>0\;\hbox{for $j=1,\cdots,n$}\;
\hbox{(resp. $e_j<0$\;\text{for $j=1,\cdots,n$})}\\
&\Longleftrightarrow\;
\left( \begin{array} { c c c c }
2e_j & -e_j  \\
-e_j &2e_j
 \end{array} \right) \text{($j=1,\cdots,n$) are positive (resp. negative) definite}\\
 &\Longleftrightarrow\;Q^{-1}\left( \begin{array} { c c c c }
2B & -B  \\
-B &2B
 \end{array} \right)Q\;\text{is positive (resp. negative) definite}\\
  &\Longleftrightarrow\;\left( \begin{array} { c c c c }
2B & -B  \\
-B &2B
 \end{array} \right)\;\text{is positive (resp. negative) definite}.
\end{aligned}
$$
Hence, by Lemma~\ref{lem:Dong4.2}, the first equality in (\ref{e:Add12}),
and (\ref{e:Add23+7}), we have:
\begin{equation}\label{e:Add24}
i_{\tau, M_{3,n}}(\Upsilon)=\left[n-\frac{n\arctan 2}{\pi}\right]
+\sum_{0\le t<\tau}\nu_{t,M_{3,n}}\bigl(\hat\Upsilon|_{[0,t]}\bigr)
\end{equation}
 if $B$ is positive definite,
 and
\begin{equation}\label{e:Add25}
i_{\tau,M_{3,n}}(\Upsilon)=\left[n-\frac{n\arctan 2}{\pi}\right]-\sum_{0<t\le\tau}\nu_{t,M_{3,n}}\bigl(\hat\Upsilon|_{[0,t]}\bigr)
\end{equation}
if $B$ is negative definite.
 By (\ref{e:Add19}), $\det(\hat\Upsilon^j(0)-M_{3,1})=-1+2=1$, and thus
 \begin{eqnarray*}
\nu_{0,M_{3,n}}\bigl(\hat\Upsilon|_{[0,0]}\bigr)=\sum^n_{j=1}\nu_{0,M_{3,1}}\bigl(\hat\Upsilon^j|_{[0,0]}\bigr)=0.
 \end{eqnarray*}
For $t>0$, combining this with (\ref{e:Add23}), we obtain
\begin{eqnarray*}
\nu_{t,M_{3,n}}\bigl(\hat\Upsilon|_{[0,t]}\bigr)= 2\sharp\left\{j\,\Big|\, \sqrt{3}e_jt\in \frac{2\pi}{3}+ (2\mathbb{Z}+1)\pi\right\}.
 \end{eqnarray*}
Therefore, equations (\ref{e:Add4}) and (\ref{e:Add5}) follow from  (\ref{e:Add24}) and (\ref{e:Add25}), respectively.






\subsection{Proof of Theorem~\ref{th:PIndex4}}\label{sec:PIndex4}

\begin{lemma}\label{lem:PIndex3}
Let real numbers $a$ and $b$ satisfy $a^2+b^2=1$, and define the matrix $\Omega$ as:
$$
\Omega:=\left( \begin{array} { c c  } A & -B-J_1 \\ B+J_1 & A \end{array} \right),
$$
where  $A={\rm diag}(a,a)$,  $B={\rm diag}(b,b)$, and $J_1$ is defined by (\ref{e:standcompl}).
Then the kernel of $\Omega$ is given by
$$
  {\rm Ker}(\Omega)=
    \begin{cases}
0\quad&\hbox{if $a=0$},\\
0\quad&\hbox{if  $b\ne 0$},\\
\{(U^\top, V^\top)^\top\,|\, U, V\in\mathbb{R}^2,\;U=J_1AV\}\quad&\hbox{if $b=0$}.
\end{cases}
$$
\end{lemma}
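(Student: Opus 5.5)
Since every block of $\Omega$ is a polynomial in $J_1$, I plan to compute $\det\Omega$ by treating these commuting blocks as scalars. After that, I will solve the kernel equations directly in the degenerate case $b=0$.

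First, write $A=aI_2$ and $B=bI_2$. Then $A$, $B$ and $J_1$ pairwise commute, with $J_1^2=-I_2$. For a $2\times 2$ block matrix $\begin{pmatrix} P & -Q\\ Q & P\end{pmatrix}$ whose blocks commute, the standard identity gives $\det=\det(P^2+Q^2)$. Applying it with $P=aI_2$ and $Q=bI_2+J_1$ yields
\[
\det\Omega=\det\bigl(a^2I_2+(bI_2+J_1)^2\bigr)=\det\bigl((a^2+b^2-1)I_2+2bJ_1\bigr)=\det(2bJ_1)=4b^2,
\]
where the last two steps use $a^2+b^2=1$ and $\det J_1=1$. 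Hence ${\rm Ker}(\Omega)=0$ whenever $b\ne0$. The case $a=0$ is contained in this one, because $a=0$ forces $b=\pm1\ne0$.

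Next, let $b=0$, so that $a=\pm1$ and $a^{-1}=a$. For $(U^\top,V^\top)^\top\in{\rm Ker}(\Omega)$ the two block rows read $aU-J_1V=0$ and $J_1U+aV=0$. The first equation gives $U=aJ_1V=J_1AV$. Substituting this into the second gives $aJ_1^2V+aV=-aV+aV=0$, so the second equation holds automatically. Therefore ${\rm Ker}(\Omega)=\{(U^\top,V^\top)^\top\mid U=J_1AV\}$, which is $2$-dimensional.

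No step here is a real obstacle. The only point that needs care is justifying the determinant identity for commuting blocks. I would verify it either by citing the standard result, or by the direct factorization $\begin{pmatrix} P&-Q\\ Q&P\end{pmatrix}\begin{pmatrix} P&Q\\ -Q&P\end{pmatrix}={\rm diag}(P^2+Q^2,P^2+Q^2)$. Alternatively, one could bypass determinants entirely: eliminate $U$ from the two equations to get $\bigl((bI_2+J_1)^2+a^2I_2\bigr)V=2bJ_1V=0$, which forces $V=0$ and hence $U=0$ when $b\ne0$.
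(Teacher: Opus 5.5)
Your proof is correct and follows essentially the same route as the paper. The paper also uses the commuting-block determinant identity (Lemma~3 on p.~38 of Long's book) to get $\det\Omega=\det\bigl(A^2+(B+J_1)^2\bigr)=4b^2$ when $b\neq0$, and it solves the $b=0$ equations directly. Two of your points go slightly beyond the paper: you note that the case $a=0$ is already covered by $b\neq 0$ (the paper treats it separately via the non-real eigenvalues of $J_1$), and you check that every $(U,V)$ with $U=J_1AV$ satisfies the second block equation, a reverse inclusion the paper leaves implicit.
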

\begin{proof}[\bf Proof]
Let $U, V\in\mathbb{R}^2$ be such that $(U^\top, V^\top)^\top\in{\rm Ker}(\Omega)$. Then
$$
AU=(B+J_1)V\quad\text{and}\quad
AV=-(B+J_1)U
$$

If $a=0$, then $A=0$, and the equations becomes $J_1V=-bV$ and $J_1U=bU$. Since all eigenvalues of $J_1$ are $\pm\sqrt{-1}$ (non-real),  so the only real solution is $U=V=0$.

If $b\ne 0$, because $AJ_1=J_1A$ and $AB=BA$, by Lemma~3 in \cite[page~38]{Long02},
$$
\det\Omega=\det(A^2+(B+J_1)^2)=4b^2\ne 0.
$$
Thus  $\Omega$ is invertible, which implies $U=V=0$.

If $b=0$, then $a=\pm 1$ and $B=0$. The equations become:
 $AU=J_1V$ and $J_1U=-AV$.
 Multiplying the first by $A$ on the left and using $A^{-1}=A$ (since $A^2=I_2$) gives
$U=AJ_1V$. Since $AJ_1=J_1A$, we finally obtain $U=J_1AV$.
\end{proof}

%
%

\begin{proof}[\bf Proof of Theorem~\ref{th:PIndex4}]
Let $E=\operatorname{diag}(e_1,\ldots,  e_n)$.
Denote by $\hat\Upsilon$  the fundamental matrix solution of
$\dot{z}(t)=J_n{\rm diag}(E, E)z(t)$
with $\hat\Upsilon(0)=I_{2n}$; that is,
$\hat\Upsilon(t)=\exp(J_n{\rm diag}(E,E)t)$.
Since $J_{2n} = J_n \diamond J_n$ and
$\text{diag}(E, E, E, E) = \text{diag}(E, E) \diamond \text{diag}(E, E)$,
Lemma~\ref{lem:diamond-product}(i) implies:
$$
J_{2n} \text{diag}(E, E, E, E) = \left( J_n \text{diag}(E, E) \right) \diamond \left( J_n \text{diag}(E, E) \right).
$$
Consequently, $\hat\Upsilon(t) \diamond \hat\Upsilon(t)$ is
the fundamental matrix solution of
$$
\dot{Z}(t)=J_{2n} \text{diag}(E, E, E, E)Z(t)
$$
with $\hat\Upsilon(0) \diamond \hat\Upsilon(0)=I_{4n}$.

Let $Q$ be a  real orthogonal and symplectic matrix as in
(\ref{e:DiagDelayA.1-}). Then $\Xi:=\operatorname{diag}(Q, Q)$
is also a real orthogonal and symplectic matrix and satisfies
\begin{equation*}
	\Xi^{-1}\left( \begin{array} {cccc}
B& -C &0&0\\
C&B&0&0\\
0&0&B&-C\\
0&0&C&B
 \end{array} \right)\Xi
		= \text{diag}(E, E, E, E),
\end{equation*}
and therefore
\begin{equation*}\label{e:DiagDelay}
	\Xi^{-1}J_{2n}\left( \begin{array} {cccc}
B& -C &0&0\\
C&B&0&0\\
0&0&B&-C\\
0&0&C&B
 \end{array} \right)\Xi
		= J_{2n}\text{diag}(E, E, E, E).
\end{equation*}
It follows from this, (v) and (i) in  Lemma~\ref{lem:diamond-product} that
\begin{equation}\label{e:DiagDelay1+}
\Upsilon(t) = \Xi\left( \hat\Upsilon(t) \diamond \hat\Upsilon(t) \right) \Xi^{-1}.
\end{equation}

\textbf{(I)[Proof of (\ref{e:DiagPindex1*})]}.\quad
Since   $QJ_n=J_nQ$, it is straightforward to verify that
  \begin{equation}\label{e:DiagDelay1++}
M_n = \Xi^{-1} M_n \Xi.
\end{equation}
Combining this identity  with equation (\ref{e:DiagDelay1+}) yields
 \begin{align}\label{e:DiagDelay1+2}
 \Upsilon(t)&=\Xi(\hat\Upsilon(t)\diamond\hat\Upsilon(t))
 \Xi^{-1},\\
 \Upsilon(t)-M_n&=\Xi(\hat\Upsilon(t)\diamond\hat\Upsilon(t)-M_n)
 \Xi^{-1},\label{e:DiagDelay1+3}\\
 \Upsilon(t)-I_{4n}&=\Xi
 (\hat\Upsilon(t)\diamond\hat\Upsilon(t)-I_{4n})\Xi^{-1}.\label{e:DiagDelay1+4}
 \end{align}

 For a square matrix $X$ of order $m$, recall the following definitions from matrix analysis:
 $$
 \begin{aligned}
\cos X&=\frac{1}{2}\left(e^{iX}+ e^{-iX}\right)=\sum^\infty_{l=0}(-1)^l\frac{X^{2l}}{(2l)!},\\
 \sin X&=\frac{1}{2i}\left(e^{iX}- e^{-iX}\right)=\sum^\infty_{l=0}(-1)^l\frac{X^{2l+1}}{(2l+1)!},
 \end{aligned}
 $$
  where $i = \sqrt{-1}$.  Then $\cos^2X+\sin^2 X=I_m$, and for any
   commuting matrices $X, Y\in\mathbb{C}^{m\times m}$ (i.e., $XY=YX$), we have
  $e^Xe^Y=e^{X+Y}$.
  Using the above definitions and relations, it is straightforward to verify that
   $$
 \hat{\Upsilon}(t)=\exp\left(tJ_n\left( \begin{array} { c c c c }
E & 0  \\
0 &E
 \end{array} \right)\right)=
\left( \begin{array} { c c }
\cos(Et) & -\sin(Et)  \\
\sin(Et) & \cos(Et)
 \end{array} \right).
$$
Therefore, we obtain
$$
\hat\Upsilon(t)\diamond\hat\Upsilon(t)=\left( \begin{array} {cccc}
\cos(Et)& 0 &-\sin(Et)&0\\
0&\cos(Et)&0&-\sin(Et)\\
\sin(Et)&0&\cos(Et)&0\\
0&\sin(Et)&0&\cos(Et)
 \end{array} \right).
$$
Note that
$$
\hat\Upsilon(t)\diamond\hat\Upsilon(t)-M_n=\left( \begin{array} { c c }
\operatorname{diag}(\cos(Et), \cos(Et)) & -\operatorname{diag}(\sin(Et), \sin(Et))-J_n  \\
\operatorname{diag}(\sin(Et), \sin(Et))+J_n & \operatorname{diag}(\cos(Et), \cos(Et))
 \end{array} \right)
$$
and that matrices ${\rm diag}(\cos(Et), \cos(Et))$ and ${\rm diag}(\sin(Et), \sin(Et))+J_n$ commute.
By Lemma~3 in \cite[page~38]{Long02} and the identity  $(\sin(Et))^2+(\cos(Et))^2=I_n$, we  obtain
 \begin{align}\label{e:Delay6Auto6Xi2}
 \det(\Upsilon(t)-M_n)&=\det(\hat\Upsilon(t)\diamond\hat\Upsilon(t)-M_n)
 \nonumber\\
 &=\det\left[({\rm diag}(\cos(Et), \cos(Et)))^2+
 ({\rm diag}(\sin(Et), \sin(Et))+J_n)^2\right]\nonumber\\
 &=\det(2J_n{\rm diag}(\sin(Et), \sin(Et)))\nonumber\\
 &=4^n\prod^n_{l=1}\sin^2(e_lt).
 \end{align}
   Hence (\ref{e:Delay6Auto6Xi2}) implies
\begin{equation}\label{e:Delay6Auto6Xi3*}
\det(\Upsilon(t)-M_n)=0\;\Longleftrightarrow\; e_lt\in \pi\mathbb{Z}\;\hbox{for some $l\in\{1,\ldots,n\}$}.
\end{equation}

To compute ${\rm Ker}(\hat\Upsilon(t)\diamond\hat\Upsilon(t)-M_n)$, we define for $j=1,\ldots,n$,
\begin{equation}\label{e:Delay6Auto6Xi4}
A_j(t)=\operatorname{diag}(\cos(e_jt), \cos(e_jt))\quad\text{and}\quad
B_j(t)=\operatorname{diag}(\sin(e_jt), \sin(e_jt)).
\end{equation}
Then $\operatorname{diag}(\cos(Et), \cos(Et))=A_1(t)\diamond\cdots\diamond A_n(t)$, and
by Lemma~\ref{lem:diamond-product} together with $J_n=(J_1)^{n\diamond}$, we have
\begin{eqnarray*}
\operatorname{diag}(\sin(Et), \sin(Et))+J_n=B_1(t)\diamond\cdots\diamond B_n(t)+J_n
=(B_1(t)+J_1)\diamond\cdots\diamond (B_n(t)+J_1).
\end{eqnarray*}
It follows that a vector $(u_1,\ldots,u_{2n}, v_1,\ldots,v_{2n})^\top$ belongs to ${\rm Ker}(\hat\Upsilon(t)\diamond\hat\Upsilon(t)-M_n)$ if
and only if it satisfies
\begin{eqnarray*}
(A_1(t)\diamond\cdots\diamond A_n(t))(u_1,\ldots,u_{2n})^\top-
((B_1(t)+J_1)\diamond\cdots\diamond (B_n(t)+J_1))(v_1,\ldots,v_{2n})^\top=0,\\
((B_1(t)+J_1)\diamond\cdots\diamond (B_n(t)+J_1))(u_1,\ldots,u_{2n})^\top+
(A_1(t)\diamond\cdots\diamond A_n(t))(v_1,\ldots,v_{2n})^\top=0.
\end{eqnarray*}
By Lemma~\ref{lem:diamond-product}(vi),  the latter two conditions are equivalent respectively to
$$
\begin{aligned}
&(\cos(e_1t)u_1,\ldots, \cos(e_nt)u_n, \cos(e_1t)u_{n+1},\ldots, \cos(e_nt)u_{2n})^\top\\
&-(\sin(e_1t)v_1-v_{n+1},\ldots, \sin(e_nt)v_n-v_{2n}, \sin(e_1t)v_{n+1}+v_1,\ldots, \sin(e_nt)v_{2n}+v_n)^\top                                                                          =0,\\
&(\sin(e_1t)u_1-u_{n+1},\ldots, \sin(e_nt)u_n-u_{2n}, \sin(e_1t)u_{n+1}+u_1,\ldots, \sin(e_nt)u_{2n}+u_n)^\top \\                                                                         &+  (\cos(e_1t)v_1,\ldots, \cos(e_nt)v_n, \cos(e_1t)v_{n+1},\ldots, \cos(e_nt)v_{2n})^\top =0.
\end{aligned}
$$
One can readily verify that these two conditions are equivalent to the system
$$
\begin{pmatrix}
A_j(t) & -B_j(t) - J_1 \\
B_j(t) + J_1 & A_j(t)
\end{pmatrix}
\begin{pmatrix} u_j \\ u_{n+j} \\ v_j \\ v_{n+j} \end{pmatrix} = 0, \quad j = 1, \dots, n.
$$
Combining this observation with Lemma~\ref{lem:PIndex3}, we obtain the following claim.

\begin{claim}\label{cl:5.2}
Assume that $t > 0$ satisfies $\det(\Upsilon(t)-M_n)=0$. Then there exists a partition of
 $\{1, \dots, n\}$ into two subsets $\{i_1, \dots, i_k\}$ and $\{i_{k+1}, \dots, i_n\}$
  (where $1 \le k \le n$) such that:
  \begin{description}
 \item[(i)] $\sin(e_{i_s}t)=0$ for $s=1,\ldots,k$, $\sin(e_{i_s}t)\ne 0$ for $s=k+1,\cdots,n$.
 \item[(ii)] The kernel $\ker\left( \hat\Upsilon(t) \diamond \hat\Upsilon(t) - M_n \right)$ consists of all vectors
$(u_1,\ldots,u_{2n}, v_1,\ldots,v_{2n})^\top\in\mathbb{R}^{4n}$ satisfying the following:
\begin{align*}
&u_j = u_{n+j} = v_j = v_{n+j} = 0, \qquad j = i_{k+1}, \dots, i_n,\\
&u_j = -\cos(e_j t) v_{n+j}\;\text{ and }\;u_{n+j} = \cos(e_j t) v_j, \qquad j = i_1, \dots, i_k.
\end{align*}
In particular, $\dim\ker(\hat\Upsilon(t)\diamond\hat\Upsilon(t)-M_n)=2k$.
\end{description}
\end{claim}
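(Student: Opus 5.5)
The plan is to reduce everything to the $2\times 2$-block computation in Lemma~\ref{lem:PIndex3}, using the decoupling of $\operatorname{Ker}\bigl(\hat\Upsilon(t)\diamond\hat\Upsilon(t)-M_n\bigr)$ established just before the claim. That decoupling says a vector $(u_1,\ldots,u_{2n},v_1,\ldots,v_{2n})^\top$ lies in the kernel if and only if, for each $j=1,\dots,n$, the quadruple $(u_j,u_{n+j},v_j,v_{n+j})^\top$ lies in the kernel of
$$
\Omega_j(t):=\begin{pmatrix} A_j(t) & -B_j(t)-J_1\\ B_j(t)+J_1 & A_j(t)\end{pmatrix},
$$
with $A_j,B_j$ as in (\ref{e:Delay6Auto6Xi4}). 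Since the map sending the vector to these $n$ quadruples is just a permutation of coordinates of $\mathbb{R}^{4n}$, the kernel is the direct sum of the kernels $\operatorname{Ker}\Omega_j(t)$. In particular its dimension is $\sum_j\dim\operatorname{Ker}\Omega_j(t)$.

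First, the partition. Put $\{i_1,\dots,i_k\}:=\{j\mid \sin(e_jt)=0\}$ and let the remaining indices form the complement. By (\ref{e:Delay6Auto6Xi2}), $\det(\Upsilon(t)-M_n)=4^n\prod_l\sin^2(e_lt)$, so the hypothesis $\det(\Upsilon(t)-M_n)=0$ forces $k\ge 1$. This gives (i).

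Next, I apply Lemma~\ref{lem:PIndex3} to each $\Omega_j(t)$ with $a=\cos(e_jt)$ and $b=\sin(e_jt)$; these satisfy $a^2+b^2=1$, as the lemma requires.
\begin{itemize}
\item[$\bullet$] For $j\notin\{i_1,\dots,i_k\}$ we have $b\neq0$, so $\operatorname{Ker}\Omega_j(t)=0$. This gives $u_j=u_{n+j}=v_j=v_{n+j}=0$.
\item[$\bullet$] For $j\in\{i_1,\dots,i_k\}$ we have $b=0$, hence $a=\cos(e_jt)=\pm1\neq0$. The case $a=0$ of the lemma therefore never occurs. The kernel is $\{U=J_1AV\}$ with $U=(u_j,u_{n+j})^\top$ and $V=(v_j,v_{n+j})^\top$.
\end{itemize}
In the second case I expand $J_1AV=\cos(e_jt)\,J_1V=\cos(e_jt)\,(-v_{n+j},v_j)^\top$. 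This yields exactly $u_j=-\cos(e_jt)v_{n+j}$ and $u_{n+j}=\cos(e_jt)v_j$. Such a kernel is parametrized freely by $V\in\mathbb{R}^2$, so it is $2$-dimensional.

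Summing over $j$ gives $\dim\operatorname{Ker}\bigl(\hat\Upsilon(t)\diamond\hat\Upsilon(t)-M_n\bigr)=2k$, which completes (ii). The only delicate point is the bookkeeping in the first step: checking, via Lemma~\ref{lem:diamond-product}(vi), that the componentwise equations really group into these $n$ independent $4\times4$ systems on the stated coordinates. Given the explicit componentwise equations displayed just before the claim, this is a direct inspection, so I expect no genuine obstacle.
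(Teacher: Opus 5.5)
Your proposal is correct and is essentially the paper's argument. After a coordinate permutation, the kernel splits into the $n$ independent $4\times 4$ systems $\Omega_j(t)\,(u_j,u_{n+j},v_j,v_{n+j})^\top=0$. Lemma~\ref{lem:PIndex3} with $a=\cos(e_jt)$, $b=\sin(e_jt)$ then gives a trivial block kernel when $\sin(e_jt)\neq 0$, and the $2$-dimensional kernel $U=\cos(e_jt)J_1V$ when $\sin(e_jt)=0$. Your use of \eqref{e:Delay6Auto6Xi2} to guarantee $k\ge 1$ makes explicit a step the paper leaves implicit.
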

Combining this result with equation (\ref{e:DiagDelay1+}), we conclude that (\ref{e:DiagPindex1*}) holds.\\

\textbf{(II)[Proof of (\ref{e:DiagPindex2*}) for $n=1$]}.\quad
For any integer $n>0$, note that both $M_n$ and $\Xi$ are orthogonal symplectic matrices.
Using Lemma~\ref{lem:Dong06+} together with  (\ref{e:DiagDelay1+}) and (\ref{e:DiagDelay1++}),
we obtain
\begin{eqnarray}\label{e:RobbinSaCZ+RobbinSa7++}
i_{\tau, M_n}(\Upsilon)=i_{\tau, M_n}(\hat\Upsilon\diamond\hat\Upsilon).
\end{eqnarray}
Since $M_n^{-1}M_n=I_{4n}$, it follows from (\ref{e:DiagDelay1+3}) and (\ref{e:Delay6Auto6Xi2}) that
\begin{align*}
D\left((\hat\Upsilon(t)\diamond\hat\Upsilon(t))M_n^{-1}\right)&=(-1)^{2n-1}\det\left((\hat\Upsilon(t)\diamond\hat\Upsilon(t))M_n^{-1}-I_{4n}\right)
\\
&=-\det\left(\hat\Upsilon(t)\diamond\hat\Upsilon(t)-M_n\right)\det M_n^{-1}\\
&=-\det\left(\Upsilon(t)-M_n\right)\\
&=-4^n\prod^n_{l=1}\sin^2(e_lt).
\end{align*}
Consequently,
\begin{equation}\label{e:RobbinSaCZ+RobbinSa7++++}
(\hat\Upsilon(t)\diamond\hat\Upsilon(t))M_n^{-1}\in\operatorname{Sp}(4n)^+ \quad
\text{if } \sin(e_lt)\ne 0\text{ for }l=1,\ldots,n.
\end{equation}
This can also be seen from the explicit expression
\begin{align*}
(\hat\Upsilon(t)\diamond\hat\Upsilon(t))M_n^{-1}&=\left( \begin{array} { c c }
\operatorname{diag}(\cos(Et), \cos(Et)) & -\operatorname{diag}(\sin(Et), \sin(Et))  \\
\operatorname{diag}(\sin(Et), \sin(Et))& \operatorname{diag}(\cos(Et), \cos(Et))
 \end{array} \right)\left( \begin{array} { c c }
0 & J_n  \\
-J_n & 0
 \end{array} \right)\nonumber\\
 &=\left( \begin{array} { c c }
J_n\operatorname{diag}(\sin(Et), \sin(Et)) & J_n\operatorname{diag}(\cos(Et), \cos(Et))  \\
-J_n\operatorname{diag}(\cos(Et), \cos(Et))& J_n\operatorname{diag}(\sin(Et), \sin(Et))
 \end{array} \right).
\end{align*}
By the definition of the map $\mathfrak{u}$ in (\ref{e:LieIsom}) (with $i=\sqrt{-1}$), we have
  $$
\mathfrak{u}\left((\hat\Upsilon(t)\diamond\hat\Upsilon(t))M_n^{-1}\right)=
-J_ni\operatorname{diag}\left(e^{ie_1t}, \ldots, e^{ie_nt},
e^{ie_1t},\ldots, e^{ie_nt}\right)
 $$
 and therefore
  $$
  \det\mathfrak{u}\left((\hat\Upsilon(t)\diamond\hat\Upsilon(t))M_n^{-1}\right)=
\exp\left(\frac{n\pi}{2}i+ 2i(e_1+\cdots+e_n)t)\right).
$$
Hence
 \begin{equation}\label{e:RobbinSaCZ+RobbinSa7+++++}
 \Delta\left((\hat\Upsilon\diamond\hat\Upsilon)M_n^{-1}\right)=2(e_1+\cdots+e_n)\tau.
 \end{equation}
In view of (\ref{e:RobbinSaCZ+RobbinSa7++}), it suffices to prove
the following:

\begin{claim}\label{cl:5.3}
For each integer $m$, there holds
\begin{equation}\label{e:Delay6Auto7G++*}
 i_{\tau, M_1}(\hat\Upsilon\diamond\hat\Upsilon)=
 \begin{cases}
 2m+1\quad&\hbox{if $m< e_1\tau< (m+1)\pi$},\\
2m+1\quad&\hbox{if $e_1\tau=(m+1)\pi$},
\end{cases}
 \end{equation}
where
 \begin{equation}\label{e:Delay6Auto7G.0}
\hat\Upsilon(t)\diamond\hat\Upsilon(t)=\left( \begin{array} { c c }
\operatorname{diag}(\cos(e_1t), \cos(e_1t)) & -\operatorname{diag}(\sin(e_1t), \sin(e_1t))  \\
\operatorname{diag}(\sin(e_1t), \sin(e_1t)) & \operatorname{diag}(\cos(e_1t), \cos(e_1t))
 \end{array} \right).
\end{equation}
\end{claim}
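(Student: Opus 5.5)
The plan is to compute the index exactly at one convenient value of $e_1\tau$ and then propagate it to all $m$ by the crossing formulas of Lemma~\ref{lem:Dong4.2}. For Claim~\ref{cl:5.3}, write $\gamma:=\hat\Upsilon\diamond\hat\Upsilon$ as in (\ref{e:Delay6Auto7G.0}) and $\theta:=e_1\tau$. The claim is read as $m\pi<\theta\le(m+1)\pi\Rightarrow i_{\tau,M_1}(\gamma)=2m+1$. By (\ref{e:Delay6Auto6Xi2}) and Claim~\ref{cl:5.2}, $\nu_{t,M_1}(\gamma|_{[0,t]})$ is $2$ when $e_1t\in\pi\mathbb{Z}$ and $0$ otherwise. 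The matrix $\gamma(t)$ depends only on $e_1t$, so varying $e_1$ with $\tau$ fixed amounts to reparametrizing time.

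\textbf{Step 1 (one nondegenerate value).} Take $\theta=\pi/2$, i.e. $m=0$. By (\ref{e:RobbinSaCZ+RobbinSa7++++}), $\gamma(\tau)M_1^{-1}\in{\rm Sp}(4)^+$, so Lemma~\ref{lem:Dong06}(i) gives
\[
i_{\tau,M_1}(\gamma)=\bigl[\Delta(\beta)/\pi+\Delta(\gamma M_1^{-1})/\pi\bigr],
\]
and (\ref{e:RobbinSaCZ+RobbinSa7+++++}) gives $\Delta(\gamma M_1^{-1})=2\theta=\pi$. At $\theta=\pi/2$ the endpoint $\gamma(\tau)M_1^{-1}={\rm diag}(J_1,J_1)$ in the block form displayed before (\ref{e:RobbinSaCZ+RobbinSa7+++++}) is orthogonal symplectic. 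I will connect it to $M_2^+=D(2)\diamond D(2)$ inside ${\rm Sp}(4)^+$ by an explicit concatenation: first rotate inside ${\rm Sp}(4)\cap{\rm O}(4)$ along a one-parameter group of the form $\exp(sJ_2S)$ with $S$ symmetric, chosen so that $D$ keeps a constant negative sign, until the endpoint is $I_4$ or $-I_4$; then deform to $D(\pm2)\diamond D(\pm2)$ along diagonal paths, which contribute $\Delta=0$. The rotation contributes a $\Delta$ that can be read off from $\det\mathfrak{u}$, as in (\ref{e:Add23+2})--(\ref{e:Add23+4}). The target value is $i_{\tau,M_1}(\gamma)=1$.

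This is the step I expect to be the main obstacle. One must check that the chosen path really stays in ${\rm Sp}(4)^+$ (no eigenvalue $1$), and that the floor $[\,\cdot\,]$ comes out to $1$; a sign slip here shifts every value by $2$. As a cross-check I will run the same computation at $\theta=-\pi/2$, which should give $-1$.

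\textbf{Step 2 (propagation).} For $e_1>0$ the Hessian is positive definite, and (\ref{e:positive-negativeA}) gives
\[
i_{\tau,M_1}(\gamma)=i_{\tau,M_1}(\xi_{4,\tau})+\nu_{0,M_1}+2\,\sharp\{0<t<\tau\mid e_1t\in\pi\mathbb{Z}\}.
\]
Here $\nu_{0,M_1}=\dim{\rm Ker}(I_4-M_1)=2$ by Claim~\ref{cl:5.2} (equivalently Lemma~\ref{lem:PIndex3}), since $\sin 0=0$. Evaluating at $\theta=\pi/2$, where the count is $0$, and using Step~1 forces $i_{\tau,M_1}(\xi_{4,\tau})=-1$. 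The formula then gives $-1+2+2m=2m+1$ for every $\theta\in(m\pi,(m+1)\pi]$ with $m\ge0$. At the endpoint $\theta=(m+1)\pi$ the point $t=\tau$ is excluded from the sum, which is exactly the convention $\theta\le(m+1)\pi$.

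For $e_1<0$ I will use (\ref{e:positive-negativeB}) instead:
\[
i_{\tau,M_1}(\gamma)=-1-2\,\sharp\{0<t\le\tau\mid e_1t\in\pi\mathbb{Z}\}.
\]
If $\theta\in(m\pi,(m+1)\pi]$ with $m\le-1$, the count is $-m-1$, and the index is again $2m+1$.

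The remaining case $e_1=0$ (so $m=-1$) has $\gamma\equiv\xi_{4,\tau}$, and its value $-1=2(-1)+1$ is the one already obtained.

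\textbf{Step 3 (consistency check).} As an independent check at the degenerate endpoints, apply Corollary~\ref{cor:Long142B}(B) to the rescaled paths. The one-sided values there are $2m+1$ and $2m+3$, which differ by $\nu=2$, so the corollary again gives $\min=2m+1$ at $\theta=(m+1)\pi$.
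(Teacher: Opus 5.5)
Your argument is correct, and it reaches the claim by a genuinely different route from the paper.

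\textbf{How the paper argues.} The paper does not use Lemma~\ref{lem:Dong4.2} in this claim. For every nondegenerate value $e_1\tau\in(m\pi,(m+1)\pi)$ it writes down an explicit path $\eta=\zeta_2\ast\zeta_1$ in $\mathrm{Sp}(4)^+$ from $(\hat\Upsilon(\tau)\diamond\hat\Upsilon(\tau))M_1^{-1}$ to $M_2^+$. It splits by the parity of $m$, so that the rotation $\zeta_1$ ends at $\operatorname{diag}(J_1,J_1)$ or at $\operatorname{diag}(-J_1,-J_1)$, and reads off $\Delta(\eta)=(2m+1)\pi-2e_1\tau$. The degenerate values $e_1\tau=(m+1)\pi$ are treated separately: the paper perturbs $e_1$ and applies Corollary~\ref{cor:Long142A}(B) to the auxiliary paths $\rho_{\pm\epsilon}\ast\xi$, where $\Delta(\xi)=-\pi$.

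\textbf{How you argue.} You do the explicit path computation only once. At $\theta=\pi/2$ it is exactly the paper's case $k=0$, where $\zeta_1$ is constant. The definite-Hessian crossing formulas, including the $t=0$ term $\dim\mathrm{Ker}(I_4-M_1)=2$, then propagate the value. The unknown constant $i_{\tau,M_1}(\xi_{4,\tau})=-1$ is extracted from the base point. That constant also settles $e_1=0$ and, through the negative-definite formula, all $e_1<0$.

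\textbf{Comparison.} Your route buys uniformity. The degenerate endpoints come out automatically from the conventions $0\le t<\tau$ versus $0<t\le\tau$, with no perturbation argument and no parity split. The price is that everything rests on Lemma~\ref{lem:Dong4.2} for a degenerate $M$, including its negative-definite half. The paper invokes that lemma only afterwards, in part (IV), and there leaves $i_{\tau,M_n}(\xi_{4n,\tau})$ unevaluated. Your base-point computation is precisely what pins that constant down, namely $-1$ for $n=1$.

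\textbf{One correction in Step 1.} As described, your path does not end where Lemma~\ref{lem:Dong06}(i) requires it to.
\begin{itemize}
\item The matrix $I_4$ is not in $\mathrm{Sp}(4)^*$, so the only admissible rotation target is $-I_4$, reached for example along $\operatorname{diag}(R(s),R(s))$ with $s$ running from $\pi/2$ to $\pi$.
\item A diagonal path from $-I_4$ reaches $D(-2)\diamond D(-2)$, not $M_2^+=D(2)\diamond D(2)$.
\item The fix is to append a leg such as $\operatorname{diag}(2R(\phi),\tfrac12R(\phi))$ with $\phi$ running from $\pi$ to $0$. This leg stays in $\mathrm{Sp}(4)^+$ and has $\Delta=0$. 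Alternatively, use the paper's $\zeta_2$ directly from $\operatorname{diag}(J_1,J_1)$.
\end{itemize}
Either way $\Delta(\beta)=0$, and the index at $\theta=\pi/2$ is $[0+1]=1$, as you predicted.
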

\begin{proof}[\bf Proof]
We prove (\ref{e:Delay6Auto7G++*}) in three steps.

\textbf{Step 1 ({Proof of (\ref{e:Delay6Auto7G++*})
for $e_1\tau\in (m, m+1)$ with $m=2k$})}.\quad
Note that
$$
(\hat\Upsilon(\tau)\diamond\hat\Upsilon(\tau))M_1^{-1}=
\left( \begin{array} { c c c c }
0& -\sin(e_1\tau) & 0 & -\cos(e_1\tau)\\
\sin(e_1\tau)& 0 & \cos(e_1\tau) & 0 \\
0& \cos(e_1\tau)& 0 & -\sin(e_1\tau)\\
-\cos(e_1\tau) & 0 &\sin(e_1\tau) & 0
 \end{array} \right),
$$
which, by (\ref{e:RobbinSaCZ+RobbinSa7++++}), belongs to  ${\rm Sp}(4)^+$
if and only if $e_1\tau\notin \pi\mathbb{Z}$.
To compute $i_{\tau, M_1}(\hat\Upsilon\diamond\hat\Upsilon)$ with Lemma~\ref{lem:Dong06}(i)
and (\ref{e:RobbinSaCZ+RobbinSa7+++++}), we  construct a path $\eta\in C([0, \tau], \operatorname{Sp}(4)^+)$
 such that
 $$
\eta(0)=(\hat\Upsilon(\tau)\diamond\hat\Upsilon(\tau))M_1^{-1}\quad\text{and}\quad
\eta(\tau)=M_2^+=\operatorname{diag}(2, 2, 1/2, 1/2).
$$
For this purpose, define for $0\le t\le \tau$ the function $\chi(t):=1+\frac{t}{\tau}$ and
\begin{equation}\label{e:Delay6Auto7G.1}
\zeta_2(t):=\left( \begin{array} {cc}
\chi(t)R\left(\frac{(4k+1)\pi}{2}(1-\frac{t}{\tau})\right) & 0 \\
0 & \frac{1}{\chi(t)}R\left(\frac{(4k+1)\pi}{2}(1-\frac{t}{\tau})\right)
 \end{array} \right).
\end{equation}
Then
$$
\zeta_2(0)=\left( \begin{array} {cc}
J_1& 0 \\
0 & J_1 \end{array} \right),\quad \zeta_2(\tau)=M_2^+,
$$
 and a straightforward computation shows that each $\zeta_2(t)$ is a symplectic matrix.
 Observe that
$$
\sigma\left(R\left(\frac{(4k+1)\pi}{2}\bigl(1-\frac{t}{\tau}\bigr)\right)\right)=
\left\{\exp\left(\pm i\frac{(4k+1)\pi}{2}\bigl(1-\frac{t}{\tau}\bigr)\right) \right\}\subset\{z\in\mathbb{C}\,\mid\,|z|=1\},
$$
and
\begin{align*}
\det(\zeta_2(t)-I_4)&=
\det\left(\chi(t)R\left(\frac{(4k+1)\pi}{2}\bigl(1-\frac{t}{\tau}\bigr)\right)-
I_2\right)\\
&\quad\times\det\left(\frac{1}{\chi(t)}R\left(\frac{(4k+1)\pi}{2}\bigl(1-
\frac{t}{\tau}\bigr)\right)-I_2\right).
\end{align*}
If $0<t\le \tau$, then $\chi(t)>1$ and consequently  $\det(\zeta_2(t)-I_4)\ne 0$. Moreover,
$\det(\zeta_2(0)-I_4)=4>0$. Hence $\zeta_2(t)\in \operatorname{Sp}(4)^+$  for all $0\le t\le \tau$.  Since
$R(-\theta)=R(\theta)^{-1}=R(\theta)^\top$ implies
$$
\zeta_2(t)^\top\zeta_2(t)=\zeta_2(t)\zeta_2(t)^\top=
\left(
 \begin{array} {cc}
(\chi(t))^2I_2 &0 \\
0 & \frac{1}{(\chi(t))^2}I_2\end{array} \right),
$$
we have
$$
\mathfrak{u}(\zeta_2(t))=\left(\sqrt{\zeta_2(t)\zeta_2(t)^\top}\right)^{-1}\zeta_2(t)=
\left(
 \begin{array} {cc}
R\left(\frac{(4k+1)\pi}{2}(1-\frac{t}{\tau})\right) &0 \\
0 & R\left(\frac{(4k+1)\pi}{2}(1-\frac{t}{\tau})\right)\end{array} \right)
$$
and therefore $\det \mathfrak{u}(\zeta_2(t))\equiv 1$. It follows that
\begin{equation}\label{e:Delay6Auto7G.2}
\Delta(\zeta_2)=0.
\end{equation}

Since the isomorphism $\mathfrak{u}$ in (\ref{e:LieIsom}) maps
$\zeta_2(0)$ and $(\hat\Upsilon(\tau)\diamond\hat\Upsilon(\tau))M_1^{-1}$ to unitary matrices
$$
J_1=\left(
 \begin{array} {cc}
0 &ie^{i\frac{(4k+1)\pi}{2}} \\
-ie^{i\frac{(4k+1)\pi}{2}} & 0\end{array} \right)
\quad\text{and}\quad
\left( \begin{array} {cc}
0 &ie^{ie_1\tau} \\
-ie^{ie_1\tau} & 0\end{array} \right),
$$
 respectively, there exists a path of unitary matrices from the latter to $J_1$,
$$
[0,\tau]\ni t\mapsto \delta(t)=\left(\begin{array} {cc}
0 &ie^{ia(\lambda,t)} \\
-ie^{ia(\lambda,t)} & 0\end{array} \right),
$$
 where $a(\tau,t)=(1-\frac{t}{\tau})e_1\tau+ \frac{(4k+1)\pi}{2}\frac{t}{\tau}$. Clearly, this path $\delta$
 yields a path of symplectic and orthogonal matrices from $(\hat\Upsilon(\tau)\diamond\hat\Upsilon(\tau))M_1^{-1}$
 to $\zeta_2(0)$,
$$
[0, \tau]\ni t\mapsto\zeta_1(t):=\left( \begin{array} { c c c c }
0& -\sin(a(\tau,t)) & 0 & -\cos(a(\tau,t))\\
\sin(a(\tau,t)) & 0 & \cos(a(\tau,t))&0  \\
0& \cos(a(\tau,t))& 0 & -\sin(a(\tau,t))\\
-\cos(a(\tau,t)) & 0 &\sin(a(\tau,t)) & 0
 \end{array} \right).
$$
It follows from Lemma~3 in \cite[page~38]{Long02} that
\begin{align*}
\det(\zeta_1(t)-I_4)&=\det
\left( \begin{array} { c c c c }
-1& -\sin(a(\tau,t)) & 0 & -\cos(a(\tau,t))\\
\sin(a(\tau,t)) & -1 & \cos(a(\tau,t))&0  \\
0& \cos(a(\tau,t))& -1 & -\sin(a(\tau,t))\\
-\cos(a(\tau,t)) & 0 &\sin(a(\tau,t)) & -1
 \end{array} \right).\\
&=4\left(\sin\left((1-\frac{t}{\tau})e_1\tau+ \frac{(4k+1)\pi}{2}\frac{t}{\tau}\right)\right)^2.
\end{align*}
By assumption, 
$e_1\tau\in (2k\pi, (2k+1)\pi)$. Then for all $t\in [0, \tau]$, 
$$
(1-\frac{t}{\tau})e_1\tau+ \frac{(4k+1)\pi}{2}\frac{t}{\tau}\in
(2k\pi, (2k+1)\pi)
$$
and therefore $D(\zeta_1(t))=(-1)\det(\zeta_1(t)-I_4)<0$, i.e., $\zeta_1(t)\in \operatorname{Sp}(4)^+$.
Since $\mathfrak{u}(\zeta_1(t))=\delta(t)$, we obtain
$\det \mathfrak{u}(\zeta_1(t))=e^{i2a(\tau, t))}$ and consequently
\begin{equation}\label{e:Delay6Auto7G.3}
\Delta(\zeta_1)=(4k+1)\pi-2e_1\tau.
\end{equation}

Now  define $\eta=\zeta_2\ast\zeta_1$ as in (\ref{e:pathComp}).
Then
$$
\eta(0)=(\hat\Upsilon(\tau)\diamond\hat\Upsilon(\tau))M_1^{-1}=(\hat\Upsilon(\tau)\diamond\hat\Upsilon(\tau))M_1,
 \quad \eta(\tau)=M^+_2,
 $$
 and  $\eta(t)\in \operatorname{Sp}(4)^+$ for all $t\in [0, \tau]$.
By (\ref{e:Delay6Auto7G.2}) and (\ref{e:Delay6Auto7G.3}),
  $$
  \Delta(\eta)=\Delta(\zeta_1)+\Delta(\zeta_2)=(4k+1)\pi-2e_1\tau
  =(2m+1)\pi-2e_1\tau.
  $$
Combining this with  (\ref{e:RobbinSaCZ+RobbinSa7+++++})  and Lemma~\ref{lem:Dong06}(i), we obtain
 \begin{equation}\label{e:Delay6Auto6G.4}
i_{\tau, M_1}(\hat\Upsilon\diamond\hat\Upsilon)=
\left[\frac{\Delta( \eta) } { \pi } + \frac { 2e_1\tau} { \pi } \right]
=\left[\frac{(2m+1)\pi-2e_1\tau}{\pi}+\frac{2e_1\tau}{\pi}\right]=2m+1.
\end{equation}

\textbf{Step 2 ({Proof of (\ref{e:Delay6Auto7G++*})
for $e_1\tau\in (m, m+1)$ with $m=2k+1$})}.\quad
Let us redefine $\zeta_2(t)$ in (\ref{e:Delay6Auto7G.1}) as
\begin{equation}\label{e:Delay6Auto7G.1*}
\zeta_2(t) := \begin{pmatrix}
\chi(t) R\left(\frac{(4k+3)\pi}{2}\left(1-\frac{t}{\tau}\right)\right) & 0 \\
0 & \frac{1}{\chi(t)} R\left(\frac{(4k+3)\pi}{2}\left(1-\frac{t}{\tau}\right)\right)
\end{pmatrix},
\end{equation}
where $\chi(t)=1+\frac{t}{\tau}$ is as in (\ref{e:Delay6Auto7G.1}). Then
$$
\zeta_2(0) = \begin{pmatrix} -J_1 & 0 \\ 0 & -J_1 \end{pmatrix},
\quad\zeta_2(\tau) = M_2^+,
$$
and each $\zeta_2(t)$ is a symplectic matrix belonging to $\operatorname{Sp}(4)^+$. 
As above, we have
\begin{equation}\label{e:Delay6Auto7G.2*}
\Delta(\zeta_2) = 0.
\end{equation}
Define a new path of matrices:
$$
[0, \tau] \ni t \mapsto \zeta_1(t) := \begin{pmatrix}
0 & -\sin(b(\tau,t)) & 0 & -\cos(b(\tau,t)) \\
\sin(b(\tau,t)) & 0 & \cos(b(\tau,t)) & 0 \\
0 & \cos(b(\tau,t)) & 0 & -\sin(b(\tau,t)) \\
-\cos(b(\tau,t)) & 0 & \sin(b(\tau,t)) & 0
\end{pmatrix},
$$
where
$$
b(\tau,t) = \left(1-\frac{t}{\tau}\right)e_1\tau + \frac{(4k+3)\pi}{2} \frac{t}{\tau} .
$$
This gives a path of symplectic and orthogonal matrices connecting $ (\hat{\Upsilon}(\tau) \diamond \hat{\Upsilon}(\tau)) M_1^{-1} $ to
$$
\zeta_2(0) = \begin{pmatrix}
R\left(\frac{(4k+3)\pi}{2}\right) & 0 \\
0 & R\left(\frac{(4k+3)\pi}{2}\right)
\end{pmatrix} = \begin{pmatrix}
-J_1 & 0 \\
0 & -J_1
\end{pmatrix},
$$
with $ J_1 = R(\pi/2) $ being the standard rotation matrix.

Since $ e_1\tau \in ((2k+1)\pi, (2k+2)\pi)$, for all $ t \in [0, \tau] $ we have
$$
\left(1 - \frac{t}{\tau}\right)e_1\tau + \frac{(4k+3)\pi}{2} \frac{t}{\tau} \in ((2k+1)\pi, 2(k+1)\pi)
$$
and consequently $\zeta_1(t)\in \operatorname{Sp}(4)^+ $.
Following the same computation as before, we obtain
$$
\mathfrak{u}(\zeta_1(t)) = \begin{pmatrix}
0 & -e^{ib(\tau,t)} \\
e^{ib(\tau,t)} & 0
\end{pmatrix},
$$
and therefore
\begin{equation}\label{e:Delay6Auto7G.3*}
\Delta(\zeta_1) = 2b(\tau, \tau) - 2b(\tau, 0) = (4k+3)\pi - 2e_1\tau.
\end{equation}
Defining $ \eta = \zeta_2 \ast \zeta_1 $, as in (\ref{e:Delay6Auto6G.4}), it follows from (\ref{e:Delay6Auto7G.2*}) and (\ref{e:Delay6Auto7G.3*}) that
 $$
  \Delta(\eta)=\Delta(\zeta_1)+\Delta(\zeta_2)=(4k+3)\pi-2e_1\tau
  =(2m+1)\pi-2e_1\tau,
  $$
  and therefore
$$
   i_{\tau, M_1}(\hat\Upsilon\diamond\hat\Upsilon)=
    \left[\frac{(2m+1)\pi-2e_1\tau}{\pi}+\frac{2e_1\tau}{\pi}\right]=2m+1
    $$
by Lemma~\ref{lem:Dong06}(i). Combining this result with (\ref{e:Delay6Auto6G.4}), we finally obtain (\ref{e:Delay6Auto7G++*}).


\textbf{Step 3 ({Proof of (\ref{e:Delay6Auto7G++*})
for $e_1\tau=m+1$)}}.\quad
By (\ref{e:dongIndex}),
\begin{equation}\label{e:Delay6Auto7G.3**}
 i_{\tau,M_1}(\hat\Upsilon\diamond\hat\Upsilon)=
 [i_\tau(((\hat\Upsilon\diamond\hat\Upsilon)
  M_1^{-1})\ast\xi)-\Delta({\xi})/\pi]
  \end{equation}
 where $\hat\Upsilon\diamond\hat\Upsilon(t)=
 \hat\Upsilon(t)\diamond\hat\Upsilon(t)$ is given by
 (\ref{e:Delay6Auto7G.0}), and $\xi\in\mathcal{P}_\tau(4)$
 is any element satisfying $\xi(\tau)=M_1^{-1}=\left( \begin{array} { c c }
0 & J_1  \\
-J_1 & 0
 \end{array} \right)$, for example, we can take $\xi=\xi_2\ast\xi_1$, where
 for $0\le t\le\tau$,
 \begin{equation}\label{e:Delay6Auto7G.3***}
 \xi_1(t)=
 \exp(-J_2\frac{\pi t}{2\tau}),\quad
 \xi_2(t)=\begin{pmatrix}
0 & \exp(J_1\frac{\pi t}{2\tau}) \\
-\exp(J_1\frac{\pi t}{2\tau}) & 0
\end{pmatrix}.
   \end{equation}
 Since
 $\mathfrak{u}(\xi_1(t))=\exp(-i\frac{\pi t}{2\tau})I_2$ and
  $\mathfrak{u}(\xi_2(t))=i\left( \begin{array} { c c }
-\cos(\frac{\pi t}{2\tau}) & \sin(\frac{\pi t}{2\tau})  \\
-\sin(\frac{\pi t}{2\tau}) & -\cos(\frac{\pi t}{2\tau})
 \end{array} \right)$ imply, respectively,
 $\det\mathfrak{u}(\xi_1(t))=\exp(-i\frac{\pi t}{\tau})$
 and $\det\mathfrak{u}(\xi_2(t))=-1$,
 we obtain $\Delta(\xi_1)=-\pi$, $\Delta({\xi}_2)=0$, and so
 \begin{equation}\label{e:Delay6Auto7G.3****}
 \Delta({\xi})=\Delta({\xi}_1)+\Delta({\xi}_2)=-\pi.
 \end{equation}

 Note that
  $\nu_{\tau}(\hat\Upsilon\diamond\hat\Upsilon)=
 \dim{\rm Ker}((\hat\Upsilon\diamond\hat\Upsilon)
  M_1^{-1})\ast\xi(\tau)-I_4)=
  \dim{\rm Ker}((\hat\Upsilon(\tau)\diamond\hat\Upsilon(\tau))-M_1)=2$.
  We cannot use the methods in Steps~1 and 2.

 When $e_1\tau\in (m, m+1)$ with $m=2k$,
 let $\eta\in C([0, \tau], \operatorname{Sp}(4)^+)$
 be the path from
 $\eta(0)=(\hat\Upsilon(\tau)\diamond\hat\Upsilon(\tau))M_1^{-1}$
 to $\eta(\tau)=M_2^+=\operatorname{diag}(2, 2, 1/2, 1/2)$ constructed
 in Step~1.
For the above $\xi$, by the proof of Lemma~\ref{lem:Dong06}(i),
(replacing $\beta$ with $\eta$ in the present case), we have
  \begin{align}\label{e:Delay6Auto7G.3****A}
  i_\tau(((\hat\Upsilon\diamond\hat\Upsilon)
  M_1^{-1})\ast\xi)&= \frac { \Delta( \eta ) } { \pi } + \frac { \Delta\left(
  (\hat\Upsilon\diamond\hat\Upsilon)
  M_1^{-1} \right) } { \pi } +
 \frac { \Delta( \xi ) } { \pi }\nonumber\\
 &=(2m+1)-\frac{2e_1\tau}{\pi}+ \frac{2e_1\tau}{\pi}-1=2m
 \end{align}
 by (\ref{e:RobbinSaCZ+RobbinSa7+++++}) and (\ref{e:Delay6Auto7G.3****}).

Similarly, when $e_1\tau\in (m, m+1)$ with $m=2k+1$,
 for the path $\eta\in C([0, \tau], \operatorname{Sp}(4)^+)$
 constructed in Step~2, we have
 \begin{align}\label{e:Delay6Auto7G.3****B}
  i_\tau(((\hat\Upsilon\diamond\hat\Upsilon)
  M_1^{-1})\ast\xi)= \frac { \Delta( \eta ) } { \pi } + \frac { \Delta\left(
  (\hat\Upsilon\diamond\hat\Upsilon)
  M_1^{-1} \right) } { \pi } +
 \frac { \Delta( \xi ) } { \pi }=2m.
 \end{align}

Our goal is to compute $i_\tau(((\hat\Upsilon\diamond\hat\Upsilon)
  M_1^{-1})\ast\xi)$ for $e_1\tau=m+1$.
  In the present case
  $$
  \nu_\tau(((\hat\Upsilon\diamond\hat\Upsilon)
  M_1^{-1})\ast\xi)=\dim{\rm Ker}((\hat\Upsilon\diamond\hat\Upsilon)
  M_1^{-1}(\tau)-I_4)=\dim{\rm Ker}((\hat\Upsilon\diamond\hat\Upsilon)
 (\tau)- M_1)=2
 $$
 by Claim~\ref{cl:5.2}.  For $\epsilon\in [0, \pi/\tau)$, consider paths
  $\rho_{\pm\epsilon}: [0, \tau]\to\operatorname{Sp}(4)$ defined by
$$
\rho_{\pm\epsilon}(t)=
\left( \begin{array} { c c c c }
0& -\sin((e_1\pm\epsilon)t) & 0 & -\cos((e_1\pm\epsilon)t)\\
\sin((e_1\pm\epsilon)t)& 0 & \cos((e_1\pm\epsilon)t) & 0 \\
0& \cos((e_1\pm\epsilon)t)& 0 & -\sin((e_1\pm\epsilon)t)\\
-\cos((e_1\pm\epsilon)t) & 0 &\sin((e_1\pm\epsilon)t) & 0
 \end{array} \right).
$$
Then $\rho_0=(\hat\Upsilon\diamond\hat\Upsilon)M_1^{-1}$,
and (\ref{e:Delay6Auto7G.3****A}) and (\ref{e:Delay6Auto7G.3****B})
lead to
$$
i_\tau(\rho_\epsilon\ast\xi)=2m+2\quad\hbox{and}\quad
i_\tau(\rho_{-\epsilon}\ast\xi)=2m
  $$
 if $\epsilon>0$ is small enough.
It follows from Corollary~\ref{cor:Long142A}(B) that
$i_\tau(((\hat\Upsilon\diamond\hat\Upsilon)M_1^{-1})\ast\xi)= 2m$.
This, together with (\ref{e:Delay6Auto7G.3**}) and (\ref{e:Delay6Auto7G.3****})
, yields
for $e_1\tau=m+1$,
\begin{align}\label{e:Delay6Auto7G.3****C}
   i_{\tau,M_1}(\hat\Upsilon\diamond\hat\Upsilon)=
 [i_\tau(((\hat\Upsilon\diamond\hat\Upsilon)
  M_1^{-1})\ast\xi)-\Delta({\xi})/\pi]=2m+1.
 \end{align}
\end{proof}

%

\textbf{(III)[Proof of (\ref{e:DiagPindex2*}) for $n>1$]}.\quad
The standard symplectic space $\mathbb{R}^{4n}(u_1,\ldots, u_{2n}; v_1,\ldots,v_{2n})$ admits a natural
 symplectic direct sum decomposition:
$$
\mathbb{R}^{4n} = \bigoplus^n_{j=1} \mathbb{R}^4(u_j, u_{j+n}; v_{j+2n}, v_{j+3n}),
$$
where each subspace $\mathbb{R}^4(u_j, u_{j+n}; v_{j+2n}, v_{j+3n})$ consists of vectors in $\mathbb{R}^{4n}$
whose only nonzero coordinates are the $j$-th, $(j+n)$-th, $(j+2n)$-th, and $(j+3n)$-th entries.
Moreover, these subspaces are pairwise symplectically orthogonal.
For each $j=1,\cdots,n$, the subspace $\mathbb{R}^{4}(u_j, u_{j+n}; v_{j+2n}, v_{j+3n})$
is the image of the  injective symplectic homomorphism
$$
\digamma_j:\mathbb{R}^4(x_1,x_2;y_1,y_2)\to
\mathbb{R}^{4n}(u_1,\ldots, u_{2n}; v_1,\ldots,v_{2n})
$$
 defined by
 $$
 \digamma_j(x_1,x_2;y_1,y_2)=({\bf U}^{j}(x_1,x_2;y_1,y_2)^\top, {\bf V}^{j}(x_1,x_2;y_1,y_2)^\top)^\top,
 $$
 where
 \begin{align*}
 {\bf U}^{j}(x_1,x_2;y_1,y_2)&=(U^{j}_1(x_1,x_2;y_1,y_2),\ldots, U^{j}_n(x_1,x_2;y_1,y_2)^\top,\\ 
 {\bf V}^{j}(x_1,x_2;y_1,y_2)&=(V^{j}_1(x_1,x_2;y_1,y_2),\ldots, V^{j}_n(x_1,x_2;y_1,y_2))^\top
 \end{align*}
 are given, for $k=1,\ldots, k$, by
$$
{U}^{j}_k(x_1,x_2;y_1,y_2)  =
    \begin{cases}
x_1\quad&\hbox{if $k=j$},\\
x_2\quad&\hbox{if $k=n+j$},\\
0\quad&\hbox{otherwise}
\end{cases}\qquad
{V}^{j}_k(x_1,x_2;y_1,y_2)  =
    \begin{cases}{ll}
y_1\quad&\hbox{if $k=j$},\\
y_2\quad&\hbox{if $k=n+j$},\\
0\quad&\hbox{otherwise}.
\end{cases}
$$
 Thus, $\mathbb{R}^{4}(x_1, x_2; y_1, y_2)$ is  symplectomorphic to
 $\mathbb{R}^{4}(u_j, u_{j+n}; v_{j+2n}, v_{j+3n})$ via $\digamma_j$.
  All these $\digamma_j$  induce an injective group homomorphism
$$
\hat\digamma: (\operatorname{Sp}(4))^n\to \operatorname{Sp}(4n)
$$
defined by
$$
\hat\digamma(P_1,\ldots,P_n)(u_1,\ldots,u_{2n}, v_1,\ldots,v_{2n})=\sum^n_{j=1}
\digamma_j\left(P_j(u_j, u_{n+j};v_j, v_{n+j})^T\right).
$$
One can directly verify the following identities:
\begin{align}\label{e:RobbinSaCZ+RobbinSa74+}
\hat\digamma(M_1,\ldots,M_1)&=M_n\quad\text{and}\quad \hat\digamma(J_2,\ldots, J_2)=J_{2n},\\
\mathfrak{u}\left(\hat\digamma(P_1,\ldots,P_n)\right)&=\hat\digamma(\mathfrak{u}(P_1),\ldots, \mathfrak{u}(P_n)),\label{e:RobbinSaCZ+RobbinSa75+}\\
\det\hat\digamma(P_1,\ldots,P_n)&=\det P_1\cdots\det P_n.
\label{e:RobbinSaCZ+RobbinSa76+}
\end{align}

For each $ j = 1, \dots, n $, define $\hat{\Upsilon}^j(t) = R(e_j t) = \exp(e_j t J_1)$, where
$J_1$ is defined by (\ref{e:standcompl}). 
Then the fundamental matrix solution of
$$
\dot{z}(t) = J_2 \operatorname{diag}(e_j, e_j, e_j, e_j) z(t)
$$
with initial condition $\hat{\Upsilon}^j(0) \diamond \hat{\Upsilon}^j(0) = I_4$ is given by
\begin{equation}\label{e:RobbinSaCZ+RobbinSa77+}
\hat\Upsilon^j(t)\diamond\hat\Upsilon^j(t)=\left( \begin{array} { c c }
\operatorname{diag}(\cos(e_jt), \cos(e_jt)) & -\operatorname{diag}(\sin(e_jt), \sin(e_jt))  \\
\operatorname{diag}(\sin(e_jt), \sin(e_jt))& \operatorname{diag}(\cos(e_jt), \cos(e_jt))
 \end{array} \right).
 \end{equation}
Moreover, the following identities hold:
\begin{align*}
 \hat\Upsilon(t)\diamond\hat\Upsilon(t)&=\hat\digamma\bigl(\hat\Upsilon^1(t)\diamond\hat\Upsilon^1(t),\ldots,
  \hat\Upsilon^n(t)\diamond\hat\Upsilon^n(t)\bigr),\\
 (\hat\Upsilon(t)\diamond\hat\Upsilon(t))M_n^{-1}&=
  \hat\digamma\bigl((\hat\Upsilon^1(t)\diamond\hat\Upsilon^1(t))M_1^{-1},\ldots,
  (\hat\Upsilon^n(t)\diamond\hat\Upsilon^n(t))M_1^{-1}\bigr).
  \end{align*}

\textbf{Case~1} ($e_j\tau\ne (m_j+1)\pi$ for $j=1,\cdots,n$).\quad
Our goal is to construct a path $\eta \in C([0, \tau], \mathrm{Sp}(4n))$ such that:
\begin{itemize}
  \item $\eta(t) \in \mathrm{Sp}(4n)^\ast$ for all $t \in [0, \tau]$,
  \item $\eta(0) = (\hat{\Upsilon}(\tau) \diamond \hat{\Upsilon}(\tau)) M_n^{-1}$,
  \item $\eta(\tau) = M_{2n}^+ = D(2)^{\diamond 2n} = \operatorname{diag}(2 I_n, \frac{1}{2} I_n)$.
\end{itemize}
To achieve this, for each $j = 1, \dots, n$, we define paths $\zeta^j_1, \zeta^j_2 \colon [0, \tau] \to \mathrm{Sp}(4)$ as follows:
\begin{itemize}
  \item If $2k_j\pi< e_j \tau < (2k_j + 1)\pi$:
  \begin{align*}
    \zeta^j_2(t) &= \begin{pmatrix}
      \chi(t) R\left(\frac{(2k_j - 1)\pi}{2}\left(1 - \frac{t}{\tau}\right)\right) & 0 \\
      0 & \frac{1}{\chi(t)} R\left(\frac{(2k_j - 1)\pi}{2}\left(1 - \frac{t}{\tau}\right)\right)
    \end{pmatrix}, \\
    \zeta^j_1(t) &= \begin{pmatrix}
      0 & -\sin(a_j(\tau, t)) & 0 & -\cos(a_j(\tau, t)) \\
      \sin(a_j(\tau, t)) & 0 & \cos(a_j(\tau, t)) & 0 \\
      0 & \cos(a_j(\tau, t)) & 0 & -\sin(a_j(\tau, t)) \\
      -\cos(a_j(\tau, t)) & 0 & \sin(a_j(\tau, t)) & 0
    \end{pmatrix},
  \end{align*}
  where $a_j(\tau, t) = \left(1 - \frac{t}{\tau}\right) e_j \tau + \frac{(4k_j + 1)\pi}{2} \frac{t}{\tau}$.

  \item If $(2k_j + 1)\pi< e_j \tau < (2k_j + 2)\pi$:
  \begin{align*}
    \zeta^j_2(t) &= \begin{pmatrix}
      \chi(t) R\left(\frac{(2k_j + 1)\pi}{2}\left(1 - \frac{t}{\tau}\right)\right) & 0 \\
      0 & \frac{1}{\chi(t)} R\left(\frac{(2k_j + 1)\pi}{2}\left(1 - \frac{t}{\tau}\right)\right)
    \end{pmatrix}, \\
    \zeta^j_1(t) &= \begin{pmatrix}
      0 & -\sin(b_j(\tau, t)) & 0 & -\cos(b_j(\tau, t)) \\
      \sin(b_j(\tau, t)) & 0 & \cos(b_j(\tau, t)) & 0 \\
      0 & \cos(b_j(\tau, t)) & 0 & -\sin(b_j(\tau, t)) \\
      -\cos(b_j(\tau, t)) & 0 & \sin(b_j(\tau, t)) & 0
    \end{pmatrix},
  \end{align*}
  where $b_j(\tau, t) = \left(1 - \frac{t}{\tau}\right) e_j \tau + \frac{(4k_j + 3)\pi}{2} \frac{t}{\tau}$.
\end{itemize}
The desired path $\eta$ is then defined by
$$
\eta(t)=\hat\digamma\bigl((\zeta^1_2\ast\zeta^1_1)(t),\ldots, (\zeta^n_2\ast\zeta^n_1)(t)\bigr),
$$
where $\zeta^j_2\ast\zeta^j_1$ ($j=1,\ldots,n$) are defined by (\ref{e:pathComp}).
From (\ref{e:RobbinSaCZ+RobbinSa75+}) and (\ref{e:RobbinSaCZ+RobbinSa76+}), we deduce
$$
\Delta(\eta) = \sum_{j=1}^n \Delta(\zeta^j_2 \ast \zeta^j_1) =\sum_{j=1}^n\Delta(\zeta^j_2)+\sum_{j=1}^n\Delta(\zeta^j_1)=\sum_{j=1}^n\Delta(\zeta^j_1)
$$
since $\Delta(\zeta^j_2) = 0$ for all $j$.

From (\ref{e:RobbinSaCZ+RobbinSa7+++++}), it follows that
$$\mathfrak{u}\!\left((\hat\Upsilon(t)\diamond\hat\Upsilon(t))M_n^{-1}\right)=
-J_n i \operatorname{diag}\!\left(e^{i e_1 t}, \dots, e^{i e_n t}, e^{i e_1 t}, \dots, e^{i e_n t}\right),
$$
 where $i=\sqrt{-1}$, and therefore
 $$
 \det\mathfrak{u}\!\left((\hat\Upsilon(t)\diamond\hat\Upsilon(t))M_n^{-1}\right)=
\exp\!\left(\frac{n\pi}{2}i + 2i(e_1 + \dots + e_n)t\right).
$$
 Then $\Delta\left((\hat\Upsilon\diamond\hat\Upsilon)M_n^{-1}\right)=2(e_1+\cdots+e_n)\tau$.
 By Lemma~\ref{lem:Dong06}(i), we conclude
$$
i_{\tau, M_n}(\hat\Upsilon \diamond \hat\Upsilon)=
\left[ \frac{\Delta(\eta)}{\pi} + \frac{2(e_1 + \dots + e_n)\tau}{\pi} \right]
= \left[ \sum_{j=1}^{n} \frac{\Delta(\zeta_1^j)}{\pi} + \frac{2(e_1 + \dots + e_n)\tau}{\pi} \right].
$$
By (\ref{e:Delay6Auto7G.3}) and (\ref{e:Delay6Auto7G.3*}),
$\Delta(\zeta_1^j)=(2m_j+1)\pi-2e_j\tau$
if  $m_j\pi\le e_j\tau< (m_j+1)\pi$, $j=1,\cdots,n$.
Hence $\sum_{j=1}^{n} \frac{\Delta(\zeta_1^j)}{\pi}=
\sum_{j=1}^{n}(2m_j+1)-\frac{2\tau}{\pi}\sum_{j=1}^ne_j$ and so
$$
i_{\tau, M_n}(\hat\Upsilon \diamond \hat\Upsilon)=
2\sum_{j=1}^{n}m_j+ n.
$$

\textbf{Case~2} ($\{j\,|\,e_j\tau=(m_j+1)\pi\}=\{j_1,\cdots,j_q\}$ for $q\ge 1$).\quad In this case
$\nu_{\tau, M_n}(\hat\Upsilon \diamond \hat\Upsilon)=2q$.
For $\epsilon\in [0, \pi/\tau)$ and each $j=1,\cdots,n$,
we define paths $\phi^j_{\pm\epsilon}: [0, \tau]\to\operatorname{Sp}(4)$ by
\begin{equation*}
\phi^j_{\pm\epsilon}(t)=\left( \begin{array} { c c }
\operatorname{diag}(\cos((e_j\pm\epsilon)t), \cos((e_j\pm\epsilon)t) & -\operatorname{diag}(\sin((e_j\pm\epsilon)t), \sin((e_j\pm\epsilon)t))  \\
\operatorname{diag}(\sin((e_j\pm\epsilon)t), \sin((e_j\pm\epsilon)t))& \operatorname{diag}(\cos((e_j\pm\epsilon)t), \cos((e_j\pm\epsilon)t))
 \end{array} \right).
 \end{equation*}
Then $\phi^j_{0}(t)$ is equal to $\hat\Upsilon^1(t)\diamond\hat\Upsilon^1(t)$
in (\ref{e:RobbinSaCZ+RobbinSa77+}).
For $\epsilon>0$ small enough, define paths $\sigma_{\pm\epsilon}:[0, \tau]\to{\rm Sp}(4n)$ by
 $$
 \sigma_{\pm\epsilon}(t)=\hat\digamma\bigl(\theta^1(t),\ldots, \theta^1(t)\bigr),
 $$
 where $\theta^j(t)=\rho^j_{\pm\epsilon}$ for $j=j_1,\cdots,j_q$, and
 $\theta^j(t)=(\hat\Upsilon^j(t)\diamond\hat\Upsilon^j(t))$
 in (\ref{e:RobbinSaCZ+RobbinSa77+}) for $j\in\{1,\cdots,n\}\setminus\{j_1,\cdots,j_q\}$.
 By Case~1 we obtain
\begin{align*}
i_{\tau, M_n}(\sigma_{\epsilon})&=
2\sum_{j\in\{1,\cdots,n\}\setminus\{j_1,\cdots,j_q\}}m_j+ 2\sum^q_{\ell=1} (m_{j_\ell}+1)+ n=2\sum_{j=1}^{n}m_j+ n+ 2q,\\
i_{\tau, M_n}(\sigma_{-\epsilon})&=2\sum_{j=1}^{n}m_j+ n.
\end{align*}
Since $\sigma_0=\hat\Upsilon \diamond \hat\Upsilon$,
$\nu_{\tau, M_n}(\sigma_0)=2q$. From Corollary~\ref{cor:Long142B}(B) we derive
$$
i_{\tau, M_n}(\hat\Upsilon \diamond \hat\Upsilon)=i_{\tau, M_n}(\sigma_0)=
i_{\tau, M_n}(\sigma_{-\epsilon})=
2\sum_{j=1}^{n}m_j+ n.
$$

%
%
%

\textbf{(IV)[Proof for (\ref{e:positive-negativeAG}) and (\ref{e:positive-negativeBG})]}.\quad
Since $\dim{\rm Ker}(I_{4n}-M_n)=2n$, as in the proof of Theorem~\ref{th:PIndex1}, from (\ref{e:RobbinSaCZ+RobbinSa7++}), (\ref{e:DiagPindex1*}), (\ref{e:positive-negativeA}), and (\ref{e:positive-negativeB})
we  derive
 \begin{align*}
i_{\tau,M_n}(\Upsilon)&=i_{\tau, M_n}(\hat\Upsilon\diamond\hat\Upsilon)\\
&=i_{\tau,M_n}(\xi_{4n,\tau})
+\sum_{0\le t<\tau}\nu_{t,M_n}(\hat\Upsilon\diamond\hat\Upsilon|_{[0,t]})\\
&=i_{\tau,M_n}(\xi_{4n,\tau})+2n
+2\sum_{0<t<\tau}\sharp\left\{k\mid te_k\in\pi\mathbb{Z}\right\}
\end{align*}
if $B$ is positive definite, and
\begin{align*}
i_{\tau,M}(\Upsilon)&=i_{\tau,M}(\xi_{2n,\tau})
-\sum_{0<t\le\tau}\nu_{t,M}(\hat\Upsilon\diamond\hat\Upsilon|_{[0,t]})\\
&=i_{\tau,M}(\xi_{2n,\tau})-2\sum_{0<t\le\tau}\sharp\left\{k\mid te_k\in\pi\mathbb{Z}\right\}
\end{align*}
if $B$ is negative definite. (\ref{e:positive-negativeAG}) and (\ref{e:positive-negativeBG}) are proved.
\end{proof}

\subsection{Proof of Theorem~\ref{th:PIndex5}}\label{sec:PIndex5}

\begin{lemma}\label{lem:distri1}
For reals $a,b$,  let $\gamma_{a,b}$ be  the fundamental matrix solution of
			$\dot{z}=J_1\operatorname{diag}(a, b)z$ with $\gamma_{a,b}(0)=I_2$.
\begin{description}
\item[(i)] If $ab>0$, let $\omega=\sqrt{ab}$. Then
$$
\gamma_{a,b}=\begin{pmatrix}
\cos(\omega t) & -\frac{b}{\omega}\sin(\omega t) \\
\frac{a}{\omega}\sin(\omega t) & \cos(\omega t)
\end{pmatrix}.
$$

\item[(ii)]If $ab<0$, let $\alpha=\sqrt{-ab}$. Then
$$
\gamma_{a,b}=\begin{pmatrix}
\cosh(\alpha t) & -\frac{b}{\alpha}\sinh(\alpha t) \\
\frac{a}{\alpha}\sinh(\alpha t) & \cosh(\alpha t)
\end{pmatrix}.
$$
\item[(iii)] If $a=0$ and $b\ne 0$, then $\gamma_{a,b}=\begin{pmatrix}
1 & -bt\\
0 & 1
\end{pmatrix}$.

\item[(iv)] If $a\ne 0$ and $b= 0$, then $\gamma_{a,b}=\begin{pmatrix}
1 & 0\\
at & 1
\end{pmatrix}$.
\item[(v)] If $a=0$ and $b= 0$, then $\gamma_{a,b}=I_2$.
\end{description}
\end{lemma}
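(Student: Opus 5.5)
The plan is to compute $\gamma_{a,b}(t)=\exp(tK)$ directly, where $K:=J_1\operatorname{diag}(a,b)$. With $J_1$ from (\ref{e:standcompl}) we get
$$
K=\begin{pmatrix} 0 & -1\\ 1 & 0\end{pmatrix}\begin{pmatrix} a & 0\\ 0 & b\end{pmatrix}=\begin{pmatrix} 0 & -b\\ a & 0\end{pmatrix}.
$$
The key observation is that $K^2=-ab\,I_2$. This is the same mechanism used above for (\ref{e:Add14}), specialized to a single matrix.

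Splitting the exponential series into even and odd powers then gives
$$
\exp(tK)=\sum_{k\ge0}\frac{(-ab)^k t^{2k}}{(2k)!}I_2+\sum_{k\ge0}\frac{(-ab)^k t^{2k+1}}{(2k+1)!}K .
$$
The cases are then handled as follows.

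\begin{itemize}
\item If $ab>0$, set $\omega=\sqrt{ab}$. The first series is $\cos(\omega t)$ and the second is $\sin(\omega t)/\omega$. Inserting $K$ gives exactly the matrix in (i).
\item If $ab<0$, set $\alpha=\sqrt{-ab}$. The two series become $\cosh(\alpha t)$ and $\sinh(\alpha t)/\alpha$, which gives (ii).
\item If $ab=0$, then $K^2=0$, so $\exp(tK)=I_2+tK$. Reading off the entries of $K$ in the three subcases $a=0\ne b$, $a\ne0=b$ and $a=b=0$ gives (iii), (iv) and (v).
\end{itemize}

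To be safe, I would finally confirm that each formula satisfies $\gamma_{a,b}(0)=I_2$ and $\dot\gamma_{a,b}=K\gamma_{a,b}$. By uniqueness of solutions of linear ODEs, this pins down the fundamental solution.

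There is no real obstacle here. The only care needed is sign bookkeeping: the entries $-b$ and $a$ must be placed according to the paper's convention for $J_1$.
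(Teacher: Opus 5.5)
Your proposal is correct: with $K=J_1\operatorname{diag}(a,b)$ you have $K^2=-ab\,I_2$, and splitting $\exp(tK)$ into even and odd powers yields all five cases with the correct signs. The paper states this lemma without proof, treating it as a routine computation, and your argument is the natural one, using the same mechanism as (\ref{e:Add14}).
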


\begin{lemma}\label{lem:distri2}
For reals $a,b$,  let $\gamma_{a,b}$ be given by Lemma~\ref{lem:distri1}.
\begin{description}
\item[(I)] Suppose $ab>0$. Then
$$
 \nu_{\tau,-I_2}(\gamma_{a,b})=
 \begin{cases}
 2\quad&\hbox{if  $\sqrt{ab}\tau\in (2\mathbb{N}-1)\pi$},\\
0\quad&\hbox{otherwise}.
\end{cases}
 $$
 In particular, $\nu_{\tau,-I_2}(\gamma_{a,b})=0$ if $\sqrt{ab}\tau<\pi$.
 Moreover (recalling that $\lceil A \rceil$ denotes the least integer greater than or equal to a real $A$),

\begin{itemize}
\item[\rm (I.1)] if $a>0$,
$i_{\tau,-I_2}(\gamma_{a,b})$ is the unique even integer lying in
 $[\frac{\sqrt{ab}\tau}{\pi}-1,\; \frac{\sqrt{ab}\tau}{\pi}+1)$, i.e.,
 $$
 i_{\tau,-I_2}(\gamma_{a,b})=2\lceil \frac{\sqrt{ab}\tau}{2\pi}-\frac{1}{2} \rceil=2
 \min\left\{k\in\mathbb{Z}\,|\,k\ge \frac{\sqrt{ab}\tau}{2\pi}-\frac{1}{2}\right\},
 $$
and hence $i_{\tau,-I_2}(\gamma_{a,b})=0$ if $\sqrt{ab}\tau<\pi$;
\item[\rm (I.2)] if $a<0$, $i_{\tau,-I_2}(\gamma_{a,b})$ is the unique even integer lying in
 $[\frac{-\sqrt{ab}\tau}{\pi}-1,\; \frac{-\sqrt{ab}\tau}{\pi}+1)$, i.e.,
 $$
 i_{\tau,-I_2}(\gamma_{a,b})=2\lceil \frac{-\sqrt{ab}\tau}{2\pi}-\frac{1}{2} \rceil=2
 \min\left\{k\in\mathbb{Z}\,|\,k\ge \frac{-\sqrt{ab}\tau}{2\pi}-\frac{1}{2}\right\},
 $$
 and hence $i_{\tau,-I_2}(\gamma_{a,b})=0$ if $\sqrt{ab}\tau<\pi$.
\end{itemize}
\item[(II)] If $ab\le 0$, then $i_{\tau,-I_2}(\gamma_{a,b})=0$ and
$\nu_{\tau,-I_2}(\gamma_{a,b})=0$.
\end{description}

\end{lemma}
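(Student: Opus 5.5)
The plan is to compute the nullity directly from the explicit formulas of Lemma~\ref{lem:distri1}. The index in the definite case will come from Lemma~\ref{lem:Dong4.2}, and the index in the indefinite and degenerate cases from a homotopy to the constant path.

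\emph{Nullity.} Since $\nu_{\tau,-I_2}(\gamma_{a,b})=\dim{\rm Ker}(\gamma_{a,b}(\tau)+I_2)$, I compute $\det(\gamma_{a,b}(\tau)+I_2)$ in each case of Lemma~\ref{lem:distri1}.
\begin{itemize}
\item If $ab>0$, with $\omega=\sqrt{ab}$ the determinant is $(1+\cos\omega\tau)^2+\sin^2\omega\tau=2+2\cos\omega\tau$. It vanishes iff $\omega\tau\in(2\mathbb{N}-1)\pi$. At such $\tau$ we have $\gamma_{a,b}(\tau)=-I_2$, so the kernel is all of $\mathbb{R}^2$ and the nullity is $2$.
\item If $ab<0$, the determinant is $(1+\cosh\alpha\tau)^2-\sinh^2\alpha\tau=2+2\cosh\alpha\tau>0$.
\item In the cases (iii)--(v) the matrix $\gamma_{a,b}(\tau)+I_2$ is triangular with diagonal entries $2,2$, so the determinant is $4$.
\end{itemize}
This gives the nullity statements in (I) and in (II).

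\emph{Index for $ab>0$.} The base value I need is $i_{\tau,-I_2}(\xi_{2,\tau})$. Since $-I_2=R(\pi)$, Lemma~\ref{lem:Dong06}(ii) with $\theta=\pi$ gives $i_{\tau,-I_2}(\xi_{2,\tau})=0$.
\begin{itemize}
\item If $a>0$, then ${\rm diag}(a,b)$ is positive definite, and I apply (\ref{e:positive-negativeA}). The $t=0$ term is $\dim{\rm Ker}(2I_2)=0$. By the nullity computation applied on $[0,t]$, each $t\in(0,\tau)$ with $\omega t\in(2\mathbb{N}-1)\pi$ contributes $2$. Hence $i_{\tau,-I_2}(\gamma_{a,b})=2\,\sharp\{k\ge1\mid(2k-1)\pi<\omega\tau\}$. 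This count equals $\lceil \omega\tau/(2\pi)-1/2\rceil$, which is the formula in (I.1). It is the even integer in $[\omega\tau/\pi-1,\omega\tau/\pi+1)$, and it is $0$ when $\omega\tau<\pi$.
\item If $a<0$, then also $b<0$ and ${\rm diag}(a,b)$ is negative definite, so I use (\ref{e:positive-negativeB}). This gives $i_{\tau,-I_2}(\gamma_{a,b})=-2\,\sharp\{k\ge1\mid(2k-1)\pi\le\omega\tau\}=-2\lfloor \omega\tau/(2\pi)+1/2\rfloor$. This equals $2\lceil -\omega\tau/(2\pi)-1/2\rceil$, which is (I.2).
\end{itemize}

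\emph{Index for $ab\le0$.} I use the scaling homotopy $s\mapsto\gamma_{sa,sb}$, $s\in[0,1]$. Each $\gamma_{sa,sb}$ again has $sa\cdot sb\le0$. By the nullity computation, $\det(\gamma_{sa,sb}(\tau)+I_2)>0$ for every $s$, including $s=0$ where the path is $\xi_{2,\tau}$. By (\ref{e:dongIndex}), $i_{\tau,-I_2}(\gamma)=[\,i_\tau((\gamma(-I_2)^{-1})\ast\xi)-\Delta(\xi)/\pi\,]$ with a fixed $\xi$ satisfying $\xi(\tau)=-I_2$. Along the homotopy, the concatenated paths $(\gamma_{sa,sb}(-I_2)^{-1})\ast\xi$ start at $I_2$ and have constant nullity $\nu_\tau=0$. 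So (\ref{e:homotopyInvari}) shows that $i_\tau$ of the concatenation is constant in $s$. Therefore $i_{\tau,-I_2}(\gamma_{a,b})=i_{\tau,-I_2}(\xi_{2,\tau})=0$.

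The main point needing care is this last transfer of homotopy invariance from $i_\tau$ to Dong's index $i_{\tau,M}$. It works because $\xi$ is fixed and $\Delta(\xi)$ does not depend on $s$, so the only varying term is $i_\tau$ of a path whose nullity stays $0$. A secondary check is the ceiling and floor bookkeeping at the boundary values $\omega\tau\in(2\mathbb{N}-1)\pi$: these must be excluded for $a>0$ (strict count over $0<t<\tau$) and included for $a<0$ (count over $0<t\le\tau$).
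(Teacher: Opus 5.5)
Your proof is correct. The nullity computation and part (II) essentially match the paper, which also uses the time-rescaling homotopy $\delta(s,t)=\gamma_{a,b}(st)=\gamma_{sa,sb}(t)$ to the constant path. For the index in part (I), however, you take a genuinely different route. The paper first shows $i_{\tau,-I_2}=i^{-I_2}_\tau$ on $\mathcal{P}_\tau(2)$ via Proposition~\ref{prop:twoDef}, using $\xi(t)=R(\pi t/\tau)$. It then deforms $\gamma_{a,b}$ through the paths $\gamma^s_{a,b}$, which rescale the off-diagonal coefficient $a/\omega$ to $\pm1$ while keeping $\det(\gamma^s_{a,b}(\tau)+I_2)=2+2\cos\omega\tau$ fixed, and so reaches the pure rotation $R(\pm\omega t)$. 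Finally it invokes Liu's homotopy invariance of $i^{-I_2}_\tau$ and reads the value off Dong's explicit rotation formula in Lemma~\ref{lem:Dong06}(ii) with $\theta=\pi$. You instead apply the Ekeland-type counting formulas \eqref{e:positive-negativeA} and \eqref{e:positive-negativeB} of Lemma~\ref{lem:Dong4.2} directly, taking the base value $i_{\tau,-I_2}(\xi_{2,\tau})=0$ from Lemma~\ref{lem:Dong06}(ii). Your boundary bookkeeping (strict count over $0<t<\tau$ for $a>0$, inclusive count over $0<t\le\tau$ for $a<0$) is right and agrees with the paper's characterization $(2k-1)\pi<\pm\omega\tau\le(2k+1)\pi$. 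Your route is shorter and is exactly how the paper itself later proves \eqref{e:DistriDiagPindex3} and \eqref{e:DistriDiagPindex4}. Its cost is that it rests on Dong's Lemma~4.2, whose negative-definite formula \eqref{e:positive-negativeB} the paper cites only with an accompanying caveat. The paper's route needs nothing beyond homotopy invariance and the elementary rotation computation, at the price of the detour through $i^{-I_2}_\tau$. In (II), you apply homotopy invariance \eqref{e:homotopyInvari} directly inside Dong's definition \eqref{e:dongIndex}: with $\xi$ fixed, $\Delta(\xi)$ is constant, and the only varying term is $i_\tau$ of a path of constant nullity $0$. That is a cleaner justification than the paper's passage through Proposition~\ref{prop:twoDef} and Liu's theorem.
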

\begin{proof}[\bf Proof]
(I) Because $\det(\gamma_{a,b}(\tau)+I_2)=2+2\cos(\omega\tau)$, we have
 $\nu_{\tau,-I_2}(\gamma_{a,b})=\dim{\rm Ker}(\gamma_{a,b}(\tau)+I_2)=2$
if $\omega\tau\in (2\mathbb{Z}+1)\pi$, and $0$ otherwise.

 In order to compute $i_{\tau,-I_2}(\gamma_{a,b})$, note that
 for any  $\gamma\in\mathcal{P}_\tau(2)$,  Proposition~\ref{prop:twoDef} yields
  \begin{equation}\label{e:Distri-indexrelation1}
 i_{\tau,-I_2}(\gamma)=i^{-I_2}_\tau(\gamma)+ [i_\tau(\xi)-\Delta({\xi})/\pi],
\end{equation}
where $\xi\in\mathcal{P}_\tau(2)$ is any path satisfying $\xi(\tau)=-I_2$. We can take
$\xi(t)=R(\frac{\pi}{\tau}t)$. By Lemma~\ref{lem:Dong06}(ii),
$\Delta({\xi})=\pi$ and $i_\tau(\xi)=1$. Thus, for any  $\gamma\in\mathcal{P}_\tau(2)$,
 \begin{equation}\label{e:Distri-indexrelation2}
 i_{\tau,-I_2}(\gamma)=i^{-I_2}_\tau(\gamma).
\end{equation}

Assume that $a>0$. Since $\frac{a}{\omega}>0$ and $\frac{a}{\omega} \frac{b}{\omega}=1$, for each $0\le s\le 1$ we define
$$
\gamma^s_{a,b}=\begin{pmatrix}
\cos(\omega t) & -(\frac{a}{\omega}(1-s)+s)^{-1}\sin(\omega t) \\
(\frac{a}{\omega}(1-s)+s)\sin(\omega t) & \cos(\omega t)
\end{pmatrix},\quad 0\le t\le\tau.
$$
Thus all these paths belong to $\mathcal{P}_\tau(2)$, and $\det(\gamma^s_{a,b}(\tau)+I_2)=2+2\cos(\omega\tau)$ for all $s\in [0,1]$.
Note that $\dim\operatorname{Ker}(\gamma^s_{a,b}(\tau)+I_2)=2$ if $\omega\tau\in (2\mathbb{Z}+1)\pi$, and $0$ otherwise.
It follows from \cite[Theorem 4.7]{Liu06} that $i^{-I_2}_\tau(\gamma_{a,b})=i^{-I_2}_\tau(\gamma^0_{a,b})=i^{-I_2}_\tau(\gamma^1_{a,b})$.
Together with  (\ref{e:Distri-indexrelation2}), this gives
 \begin{equation}\label{e:Distri-indexrelation3}
 i_{\tau,-I_2}(\gamma_{a,b})=i_{\tau,-I_2}(\gamma^1_{a,b}).
\end{equation}
Observe that $\gamma^1_{a,b}$ coincides with $\gamma_\omega$ appearing  in Lemma~\ref{lem:Dong06}(ii).
Hence, by the part (ii) of that lemma, $i_{\tau,-I_2}(\gamma^1_{a,b})=i_{\tau, R(\pi)}(\gamma^1_{a,b})=2k$,
where $k$ is the unique integer with $\pi+2(k-1)\pi<\sqrt{ab}\tau\le \pi+2k\pi$.
Combining this with (\ref{e:Distri-indexrelation3}) we conclude that
 $i_{\tau,-I_2}(\gamma_{a,b})$ is the unique even
integer  lying in $[\frac{\sqrt{ab}\tau}{\pi}-1, \frac{\sqrt{ab}\tau}{\pi}+1)$.

Suppose $a<0$. For each $0\le s\le 1$ we define
$$
\hat{\gamma}^s_{a,b}=\begin{pmatrix}
\cos(\omega t) & -(\frac{a}{\omega}(1-s)-s)^{-1}\sin(\omega t) \\
(\frac{a}{\omega}(1-s)-s)\sin(\omega t) & \cos(\omega t)
\end{pmatrix},\quad 0\le t\le\tau.
$$
Arguing as before, we deduce that $i^{-I_2}_\tau(\gamma_{a,b})=i^{-I_2}_\tau(\hat{\gamma}^0_{a,b})=i^{-I_2}_\tau(\hat{\gamma}^1_{a,b})$,
which yields
 \begin{equation}\label{e:Distri-indexrelation4}
 i_{\tau,-I_2}(\gamma_{a,b})=i_{\tau,-I_2}(\hat{\gamma}^1_{a,b})=2k,
\end{equation}
where $2k$ is the unique even integer lying
 in $[\frac{-\sqrt{ab}\tau}{\pi}-1, \frac{-\sqrt{ab}\tau}{\pi}+1)$.

(II) [Case $ab<0$]. Note that  $\det(\gamma_{a,b}(t)+I_2)=2+2\cosh(\alpha t)>0$ for all $t$ (because $\cosh(\alpha t)=(e^{\alpha t}+e^{-\alpha t})/2\ge 1$).
For $(s,t)\in [0, 1]\times [0,\tau]$, define
$$
\delta(s,t)=\begin{pmatrix}
\cosh(\alpha st) & -\frac{b}{\alpha}\sinh(\alpha st) \\
\frac{a}{\alpha}\sinh(\alpha st) & \cosh(\alpha st)
\end{pmatrix}.
$$
Then $\det(\delta(s,t)+I_2)=2+2\cosh(\alpha st)>0$ for all $t$,
$\delta(1,\cdot)=\gamma_{a,b}$, and
$\delta_0(t):=\delta(0, t)\equiv I_2$.
By \cite[Theorem~4.7]{Liu06} and (\ref{e:Distri-indexrelation2}), we conclude
\begin{equation}\label{e:Distri-indexrelation5}
 i_{\tau,-I_2}(\gamma_{a,b})=i_{\tau}^{-I_2}(\delta_0)=i_{\tau,-I_2}(\delta_0).
\end{equation}
In view of the definition in (\ref{e:LiuTang-index}), $i_{\tau}^{-I_2}(\delta_0)=
i_{\tau}((-\delta_0)\ast\xi)-i_\tau({\xi})$, where
$\xi(t)=R(\frac{\pi}{\tau}t)$. Note that the paths $(-\delta_0)\ast\xi$ and $\xi$
are homotopic in $\mathcal{P}_\tau(2)$ with fixed endpoints. Then $i_{\tau}((-\delta_0)\ast\xi)=i_\tau({\xi})$, and hence
\begin{equation}\label{e:Distri-indexrelation6}
 i_{\tau,-I_2}(\gamma_{a,b})=0.
\end{equation}

(II)[Case $ab=0$]. Since  $\det(\gamma_{a,b}(t)+I_2)=4>0$ for all $t$,
 we define for $(s,t)\in [0, 1]\times [0,\tau]$,
$$
\delta(s,t)=\begin{pmatrix}
1 & -bst \\
1 & 0
\end{pmatrix}.
$$
Then $\det(\delta(s,t)+I_2)=4>0$ for all $t$. Arguing as in (ii),
we obtain $i_{\tau,-I_2}(\gamma_{a,b})=0$.

Similarly, one shows that $i_{\tau,-I_2}(\gamma_{a,b})=0$ in Cases (iv) and (v) as well.
\end{proof}

\begin{proof}[\bf Proof of Theorem~\ref{th:PIndex5}]
Let $\hat\Upsilon$ is the fundamental matrix solution of
$$
\dot{z}(t)=J_n{\rm diag}(a_1,\cdots,  a_n, b_1,\cdots,  b_n)z(t)
$$
with $\hat\Upsilon(0)=I_{2n}$; that is,  $\hat\Upsilon(t)=\exp(J_n{\rm diag}(a_1,\cdots,  a_n, b_1,\cdots,  b_n)t)$.
Since the orthogonal symplectic matrix $Q:=\left( \begin{array} { c c c c }
	\Xi & 0 \\
	0 &\Xi
\end{array} \right)$ satisfies
\begin{equation*}
	J_n{\rm diag}(a_1,\cdots,
	a_n, b_1,\cdots, b_n)=J_nQ^{-1}\left( \begin{array} { c c c c }
		A & 0  \\
		0 &B
	\end{array} \right)Q=Q^{-1}J_n\left( \begin{array} { c c c c }
		A & 0  \\
		0 &B
	\end{array} \right)Q,
\end{equation*}
we have $Q^{-1}\Upsilon(t) Q=\hat\Upsilon(t)$, and therefore
 (\ref{e:DiagDelay1}) and Lemma~\ref{lem:Dong06+} lead to
\begin{equation}\label{e:DistriDiagDelay2}
 i_{\tau,-I_{2n}}(\Upsilon)=i_{\tau,-I_{2n}}(\hat\Upsilon)
\quad\text{and}\quad
\nu_{\tau,-I_{2n}}(\Upsilon)
=\nu_{\tau,-I_{2n}}(\hat\Upsilon).
\end{equation}

For each $j=1,\cdots,n$, let $\hat\Upsilon^j$ be  the fundamental matrix solution of
$\dot{z}=J_1\operatorname{diag}(a_j, b_j)z$ with $\hat\Upsilon^j(0)=I_2$; that is,
it is precisely $\gamma_{a_j,b_j}$ appearing in Lemma~\ref{lem:distri2}.
As before, since  
\begin{eqnarray*}
J_n{\rm diag}(e_1,\cdots,
 e_n, e_1,\cdots,
 e_n)
=J_1\left( \begin{array} { c c c c }
e_1 & 0  \\
0 &e_1
 \end{array} \right)\diamond\cdots\diamond
 J_1\left( \begin{array} { c c c c }
e_n & 0  \\
0 &e_n
 \end{array} \right).
 \end{eqnarray*}
 Consequently, by Lemma~\ref{lem:diamond-product}(v) we obtain
 \begin{equation}\label{e:DistriUpsilonProduct}
\hat\Upsilon(t)=\gamma_{a_1,b_1}(t)\diamond\cdots\diamond\gamma_{a_n,b_n}(t).
\end{equation}
\eqref{e:UpsilonProduct} together with Lemma~\ref{lem:diamond-product}(iv) yields
 $$
 \hat\Upsilon(\tau)-(-I_{2n})=(\gamma_{a_1,b_1}(\tau)+ I_2)\diamond\cdots\diamond(\gamma_{a_n,b_n}(\tau)+I_2).
 $$
 From this, it  easily follows that 
 $$
 {\rm Ker}(\hat\Upsilon(\tau)+I_{2n})=\{(x_1,\cdots,x_n,y_1,\cdots,y_n)^\top\in\mathbb{R}^{2n}\,|\,
 (x_i, y_i)^\top\in {\rm Ker}(\gamma_{a_i,b_i}(\tau)+I_2),\;i=1,\cdots,n\}
 $$
 and therefore
 \[
\dim \operatorname{Ker}\!\bigl(\hat{\Upsilon}(\tau) + I_{2n}\bigr)
= \sum_{i=1}^{n} \dim \operatorname{Ker}\!\bigl(\gamma_{a_i,b_i}(\tau) + I_2\bigr),
\]
which implies
 \begin{equation}\label{e:DistriDiagDelay4}
 \nu_{\tau, -I_{2n}}(\hat{\Upsilon}) = \sum_{i=1}^{n} \nu_{\tau, -I_2}(\gamma_{a_i,b_i}).
 \end{equation}
The desired equality (\ref{e:DistriDiagPindex1}) follows from this, the second equality in (\ref{e:DistriDiagDelay2}) and Lemma~\ref{lem:distri2}(I).

In order to prove  (\ref{e:DistriDiagPindex2}), let us define the path $\xi^{(k)} \in \mathcal{P}_\tau(2k)$ by
\begin{equation}\label{e:DistriXik}
\xi^{(k)}(t) = \exp\!\left( J_k \frac{\pi t}{\tau} \right) \qquad \text{for all } t \in [0,\tau].
\end{equation}
Then $\xi^{(k)}(\tau) = -I_{2k}$, and $\xi^{(k)} = (\xi^{(1)})^{\diamond k}$ (the $k$-fold $\diamond$-product of $\xi^{(1)}$).
Since $\Delta(\xi^{(n)}) = n \Delta(\xi^{(1)}) = n\pi$, applying \eqref{e:UpsilonProduct} together with the additivity property \eqref{e:sympAddDong1} we obtain
\begin{align*}\label{e:DistriDiagDelay13}
i_{\tau,-I_{2n}}(\hat\Upsilon)
&= \Biggl[ \sum_{k=1}^{n} i_\tau\bigl((-\hat\Upsilon^k ) \ast \xi^{(1)}\bigr) - \frac{n \Delta(\xi^{(1)})}{\pi} \Biggr] \nonumber \\
&= \sum_{k=1}^{n} \left(i_\tau\bigl((-\gamma_{a_k,b_k}) \ast \xi^{(1)}\bigr) -\frac{ \Delta(\xi^{(1)})}{\pi}\right)\nonumber \\
&= \sum_{k=1}^{n}i_{\tau,-I_2}(\gamma_{a_k,b_k})\\
&=\sum_{a_kb_k>0\& a_k>0}2\lceil \frac{\sqrt{a_kb_k}\tau}{2\pi}-\frac{1}{2} \rceil+
\sum_{a_kb_k>0\& a_k<0}
2\lceil \frac{-\sqrt{a_kb_k}\tau}{2\pi}-\frac{1}{2} \rceil,
\end{align*}
where the final equality comes from Lemma~\ref{lem:distri2}(I).
This and the first equality in (\ref{e:DistriDiagDelay2}) yield (\ref{e:DistriDiagPindex2}).

Now we are in a position  to  prove \eqref{e:DistriDiagPindex3} and \eqref{e:DistriDiagPindex4}.

Firstly, let us compute $i_{\tau,-I_{2n}}(\xi_{2n,\tau})$ for $\xi_{2n,\tau}$ appearing in
Lemma~\ref{lem:Dong06}(ii). Note that $\xi_{2n,\tau}=(\xi_{2,\tau})^{\diamond n}$ (the $n$-fold $\diamond$-product of $\xi_{2,\tau}$).
For $\xi^{(k)}$ in (\ref{e:DistriXik}), arguing as above we obtain
\begin{align*}
i_{\tau,-I_{2n}}(\xi_{2n,\tau})
&= \Biggl[ {n} i_\tau\bigl((-\xi_{2,\tau} ) \ast \xi^{(1)}\bigr) - \frac{n \Delta(\xi^{(1)})}{\pi} \Biggr] \nonumber \\
&= {n} \left(i_\tau\bigl((-\xi_{2,\tau}) \ast \xi^{(1)}\bigr) -\frac{ \Delta(\xi^{(1)})}{\pi}\right)\nonumber \\
&= {n}i_{\tau,-I_2}(\xi_{2,\tau}).
\end{align*}
 Observe that $\xi_{2,\tau}=\gamma_{a,b}$ with $a=b=0$ by Lemma~\ref{lem:distri1}(v).
Lemma~\ref{lem:distri2}(II) gives $i_{\tau,-I_2}(\xi_{2,\tau})=0$ and hence $i_{\tau,-I_{2n}}(\xi_{2n,\tau})=0$.

By the equality established above and Lemma~\ref{lem:Dong4.2}, we obtain
\begin{equation}\label{e:Distripositive-negativeA4}
i_{\tau,-I_{2n}}(\Upsilon) =i_{\tau,-I_{2n}}(\hat{\Upsilon})= \sum_{0 \le t < \tau} \nu_{t,-I_{2n}}\bigl(\hat{\Upsilon}|_{[0,t]}\bigr)
\end{equation}
if $A$ and $B$ are positive definite (and so $a_l>0$ and $b_l>0$
for $l=1,\cdots,n$), and
\begin{equation}\label{e:Distripositive-negativeB4}
i_{\tau,-I_{2n}}(\Upsilon) =i_{\tau,-I_{2n}}(\hat{\Upsilon})=  - \sum_{0 < t \le \tau} \nu_{t,-I_{2n}}\bigl(\hat{\Upsilon}|_{[0,t]}\bigr)
\end{equation}
if $A$ and $B$ are negative definite (and so $a_l<0$ and $b_l<0$
for $l=1,\cdots,n$).

Arguing as in (\ref{e:DistriDiagDelay4}),  (\ref{e:DistriUpsilonProduct})  implies
$$
\nu_{t,-I_{2n}}\bigl(\hat{\Upsilon}|_{[0,t]}\bigr)=\sum^n_{i=1}\nu_{t,-I_{2}}\bigl(\gamma_{a_i,b_i}|_{[0,t]}\bigr)
$$
Note that $\nu_{0,-I_{2n}}\bigl(\hat{\Upsilon}|_{[0,0]}\bigr) = \dim\operatorname{Ker}(I_{2n} + I_{2n}) = 0$.
(\ref{e:DistriDiagPindex3}) [resp. (\ref{e:DistriDiagPindex4})]
follows from Lemma~\ref{lem:distri2}
(\ref{e:Distripositive-negativeA4}) [resp. (\ref{e:Distripositive-negativeB4})] immediately.
 \end{proof}
\vspace{5mm}


\noindent{\bf  Funding}\quad
This work was supported by National Natural Science Foundation of China (Grant No. 12371108).\vspace{2mm}

\noindent{\bf  Author Contribution}\quad
The author confirms sole responsibility for the following: study conception and design, data collection, analysis and interpretation of results, and manuscript preparation.

\noindent{\bf  Conflict of Interest}\quad
The author declares that there is no conflict of interest regarding the publication of this paper.
\vspace{2mm}

%
%

%
%
%
%
%
%
%
%
%

\renewcommand{\refname}{REFERENCES}

\medskip

\begin{tabular}{l}
	School of Mathematical Sciences, Beijing Normal University\\
	Laboratory of Mathematics and Complex Systems, Ministry of Education\\
	Beijing 100875, The People's Republic of China\\
	E-mail address: gclu@bnu.edu.cn\\
\end{tabular}

\end{document}